\newcommand{\abs}[1]{\lvert #1 \rvert}
\newcommand{\norm}[1]{\| #1 \|}
\newcommand{\Norm}[1]{\left\lVert #1 \right\rVert}
\newcommand{\floor}[1]{\lfloor #1 \rfloor}
\newcommand{\ceil}[1]{\lceil #1 \rceil}
\newcommand{\rbr}[1]{\left(#1\right)}
\newcommand{\cbr}[1]{\left\{#1\right\}}
\newcommand{\inv}[1]{#1^{-1}}
\newcommand{\N}{\mathbb{N}}
\newcommand{\Z}{\mathbb{Z}}
\newcommand{\R}{\mathbb{R}}
\newcommand{\E}{\mathbb{E}}
\renewcommand{\Pr}{\mathbb{P}}
\newcommand{\ipr}[2]{\langle #1,#2 \rangle}
\newcommand{\restr}[2]{\left.#1\right\vert_{#2}}
\newcommand{\tm}[1]{#1^\top}
\newcommand{\cm}[1]{#1^{c}}
\newcommand{\xst}[1]{#1^*}
\newcommand{\Ind}{\mathbbm{1}}
\DeclareMathOperator\Var{Var}
\DeclareMathOperator\Img{Im}
\DeclareMathOperator\Span{span}
\DeclareMathOperator\diag{diag}
\let\det\relax 
\DeclareMathOperator{\det}{det}
\DeclareMathOperator*\argmax{argmax}
\DeclareMathOperator*\argmin{argmin}
\DeclareMathOperator\tr{tr}
\DeclareMathOperator\Int{Int}
\DeclareMathOperator\relint{relint}
\DeclareMathOperator\Cl{Cl}
\DeclareMathOperator\conv{conv}
\DeclareMathOperator\aff{aff}
\DeclareMathOperator\supp{supp}
\DeclareMathOperator\dom{dom}
\DeclareMathOperator\Exp{Exp}
\DeclareMathOperator\Ext{Ext}
\DeclareMathOperator\rec{rec}
\DeclareMathOperator\Po{Po}
\DeclareMathOperator\KL{KL}
\newcommand{\iid}{\overset{\mathrm{iid}}{\sim}}
\newcommand{\dhell}{d_{\mathrm{H}}}
\newcommand{\dex}{d_{\!X}}
\theoremstyle{plain} 
\newtheorem{theorem}{Theorem} 
\newtheorem{proposition}[theorem]{Proposition}
\newtheorem*{proposition*}{Proposition}
\newtheorem{lemma}[theorem]{Lemma}
\newtheorem*{lemma*}{Lemma}
\newtheorem{corollary}[theorem]{Corollary}
\newtheorem{claim}{Claim}
\newtheorem*{claim*}{Claim}
\newtheorem{definition}{Definition}
\newtheorem*{definition*}{Definition}
\theoremstyle{definition} 
\newtheorem*{remark*}{Remark}
\newtheorem*{aside*}{Aside}
\newtheorem{example}{Example}
\newtheorem*{example*}{Example}
\newcommand{\vertiii}[1]{{\left\vert\kern-0.25ex\left\vert\kern-0.25ex\left\vert #1 
\right\vert\kern-0.25ex\right\vert\kern-0.25ex\right\vert}}
\begin{document}
\title{Adaptation in multivariate log-concave density estimation}
\author{Oliver Y. Feng$^\ast$, Adityanand Guntuboyina$^\dag$, Arlene K. H. Kim$^\ddag$ \\
and Richard J. Samworth$^{\ast,\mathsection}$ \\
$^{\ast}$University of Cambridge, $^\dag$University of
California Berkeley \\ and $^{\ddag}$Korea University}
\footnotetext[2]{Research supported by NSF CAREER Grant DMS-16-54589.}
\footnotetext[3]{Research supported by National Research Foundation of Korea (NRF) grant
2017R1C1B501734.}
\footnotetext[4]{Research supported by EPSRC Fellowship EP/P031447/1 and grant RG81761 from the Leverhulme Trust.} 
\date{\today}

\maketitle

\begin{abstract}
We study the adaptation properties of the multivariate log-concave maximum likelihood estimator over three subclasses of log-concave densities. The first consists of densities with polyhedral support whose logarithms are piecewise affine.  The complexity of such densities~$f$ can be measured in terms of the sum $\Gamma(f)$ of the numbers of facets of the subdomains in the polyhedral subdivision of the support induced by $f$.  Given $n$ independent observations from a $d$-dimensional log-concave density with $d \in \{2,3\}$, we prove a sharp oracle inequality, which in particular implies that the Kullback--Leibler risk of the log-concave maximum likelihood estimator for such densities is bounded above by $\Gamma(f)/n$, up to a polylogarithmic factor.  Thus, the rate can be essentially parametric, even in this multivariate setting. For the second type of adaptation, we consider densities that are bounded away from zero on a polytopal support; we show that up to polylogarithmic factors, the log-concave maximum likelihood estimator attains the rate $n^{-4/7}$ when $d=3$, which is faster than the worst-case rate of $n^{-1/2}$.  Finally, our third type of subclass consists of densities whose contours are well-separated; these new classes are constructed to be affine invariant and turn out to contain a wide variety of densities, including those that satisfy H\"older regularity conditions.  Here, we prove another sharp oracle inequality, which reveals in particular that the log-concave maximum likelihood estimator attains a risk bound of order $n^{-\min\bigl(\frac{\beta+3}{\beta+7},\frac{4}{7}\bigr)}$ when $d=3$ over the class of $\beta$-H\"older log-concave densities with $\beta\in (1,3]$, again up to a polylogarithmic factor.       
\end{abstract}

\addtocontents{toc}{\protect\setcounter{tocdepth}{0}}
\section{Introduction}

The field of nonparametric inference under shape constraints has witnessed remarkable progress on several fronts over the last decade or so.  For instance, the area has been enriched by methodological innovations in new research problems, including convex set estimation \citep{GKM2006,Guntuboyina2012,Brunel2013}, shape-constrained dimension reduction \citep{ChenSamworth2014,XCL2016,GroeneboomHendrickx2018} and ranking and pairwise comparisons \citep{Shah17}.  Algorithmic advances together with increased computing power now mean that certain estimators have become computationally feasible for much larger sample sizes \citep{KoenkerMizera2014, MCIS2018}.  On the theoretical side, new tools developed in recent years have allowed us to make progress in understanding how shape-constrained procedures behave \citep{DSS11,GuntuboyinaSen2013,CaiLow2015}.  Moreover, minimax rates of convergence are now known\footnote{In the interests of transparency, we note that in some of our examples, there remain gaps between the known minimax lower and upper bounds that are polylogarithmic in the sample size.} for a variety of core problems in the area, including decreasing density estimation on the non-negative half-line \citep{Birge1987}, isotonic regression \citep{Zhang2002,CGS2018,DengZhang2018,HWCS2018} and convex regression \citep{HanWellner2016}. \citet{GroeneboomJongbloed2014} provide a book-length introduction to the field; many recent developments are also surveyed in a 2018 special issue of \emph{Statistical Science} devoted to the topic.

One of the most intriguing aspects of many shape-constrained estimators is their ability to \emph{adapt} to unknown features of the underlying data generating mechanism.  To illustrate what we mean by this, consider a general setting in which the goal is to estimate a function or parameter that belongs to a class $\mathcal{D}$.  
Given a subclass $\mathcal{D}' \subseteq \mathcal{D}$, we say that our estimator adapts to $\mathcal{D}'$ with respect to a given loss function if its worst-case rate of convergence over $\mathcal{D}'$ is an improvement on its corresponding worst-case rate over $\mathcal{D}$; in the best case, it may even attain the minimax rates of convergence over both $\mathcal{D}'$ and $\mathcal{D}$, at least up to polylogarithmic factors in the sample size.  As a concrete example of this phenomenon, consider independent observations $Y_1,\ldots,Y_n$ with $Y_i \sim N(\theta_{0i},1)$, where $\theta_0 := (\theta_{01},\ldots,\theta_{0n})$ belongs to the monotone cone $\mathcal{D} := \{\theta = (\theta_1,\ldots,\theta_n) \in \mathbb{R}^n:\theta_1 \leq \ldots \leq \theta_n\}$.  \citet{Zhang2002} established that the least squares estimator $\hat{\theta}_n$ over $\mathcal{D}$ satisfies the worst-case $\ell_2$-risk bound that 
\[
\mathbb{E}\bigl\{\|\hat{\theta}_n - \theta_0\|^2\bigr\} \leq C\,\biggl\{\biggl(\frac{\theta_{0n} - \theta_{01}}{n}\biggr)^{2/3} + \frac{\log n}{n}\biggr\}
\]
for some universal constant $C > 0$; thus, in particular, it attains the minimax rate of $O(n^{-2/3})$ for signals $\theta_0 \in \mathcal{D}$ of bounded uniform norm. On the other hand, the fact that the least squares estimator is piecewise constant motivates the thought that $\hat{\theta}_n$ might adapt to piecewise constant signals.  More precisely, letting $\mathcal{D}' \equiv \mathcal{D}_k'$ denote the subset of $\mathcal{D}$ consisting of signals with at most $k$ constant pieces, a consequence of~\citet[Theorem~3.2]{Bellec2018} is that 
\[
\sup_{\theta_0 \in \mathcal{D}_k'} \mathbb{E}\bigl\{\|\hat{\theta}_n - \theta_0\|^2\bigr\} \leq \frac{k}{n}\log\Bigl(\frac{en}{k}\Bigr).
\]
Note that, up to the logarithmic factor, this rate of convergence (which is parametric when $k$ is a constant) is the same as could be attained by an `oracle' estimator that had access to the locations of the jumps in the signal.  The proof of this beautiful result relies on the characterisation of the least squares estimator as an $\ell_2$-projection onto the closed, convex cone $\mathcal{D}$, as well as the notion of such a cone's statistical dimension, which can be computed exactly in the case of the monotone cone \citep{Amelunxenetal2014,SGP19}.


As a result of intensive work over the past decade, the adaptive behaviour of shape-constrained estimators is now fairly well understood in a variety of univariate problems \citep{BRW2009,DumbgenRufibach2009,Jankowski2014,CGS2015,KGS18,ChatterjeeLafferty2018}. Moreover, in the special cases of isotonic and convex regression, very recent work has shown that shape-constrained least squares estimators exhibit an even richer range of adaptation properties in multivariate settings \citep{HanWellner2016,CGS2018,DengZhang2018,HWCS2018,Han2019}. For instance,~\citet{CGS2018} showed that the least squares estimator in bivariate isotonic regression continues to enjoy parametric adaptation up to polylogarithmic factors when the signal is constant on a small number of rectangular pieces.  On the other hand,~\citet{HWCS2018} proved that, in general dimensions $d \geq 3$, the least squares estimator in fixed, lattice design isotonic regression\footnote{Here and below, the $\tilde{O}$ notation is used to denote rates that hold up to polylogarithmic factors in $n$.} adapts at rate $\tilde{O}(n^{-2/d})$ for constant signals, and that it is not possible to obtain a faster rate for this estimator.  This is still an improvement on the minimax rate of $\tilde{O}(n^{-1/d})$ over all isotonic signals (in the lexicographic ordering) with bounded uniform norm, but is strictly slower than the parametric rate.  We remark that, in addition to the ideas employed by~\citet{Bellec2018}, these higher-dimensional results rely on an alternative characterisation of the least squares estimator due to~\citet{Chatterjee2014}, as well as an argument that controls the statistical dimension of the $d$-dimensional monotone cone by induction on $d$; see~\citet[Theorem~3.9]{Han2019} for an alternative approach to the latter. Given the surprising nature of these results, it is of great interest to understand the extent to which adaptation is possible in other shape-constrained estimation problems.

This paper concerns multivariate adaptation behaviour in log-concave density estimation.  The class of log-concave densities lies at the heart of modern shape-constrained nonparametric inference, due to both the modelling flexibility it affords and its attractive stability properties under operations such as marginalisation, conditioning, convolution and linear transformations \citep{Walther2009,SaumardWellner2014,Samworth2018}. 
However, the class of log-concave densities is not convex, so the maximum likelihood estimator cannot be regarded as a projection onto a convex set, and the results of~\citet{Amelunxenetal2014},~\citet{Chatterjee2014} and~\citet{Bellec2018} cannot be applied.

To set the scene, let $\mathcal{F}_d$ denote the class of upper semi-continuous, log-concave densities on $\mathbb{R}^d$, and suppose that $X_1,\ldots,X_n$ are independent and identically distributed random vectors with density $f_0 \in \mathcal{F}_d$.  Also, we write $d_{\mathrm{H}}(f,g) := \bigl\{\int_{\mathbb{R}^d}\,(f^{1/2} - g^{1/2})^2\bigr\}^{1/2}$ for the Hellinger distance between two densities $f$ and $g$. \citet{KS16} proved the following minimax lower bound\footnote{In fact, more recently, \citet{DaganKur2019} proved that $c_d$ may be chosen independently of the dimension $d$.}: for each $d \in \mathbb{N}$, there exists $c_d > 0$ such that 
\begin{equation}
\label{Eq:MinimaxLB}
\inf_{\tilde{f}_n} \sup_{f_0 \in \mathcal{F}_d} \mathbb{E}\{d_\mathrm{H}^2(\tilde{f}_n,f_0)\} \geq \left\{ \begin{array}{ll} c_1\,n^{-4/5} & \mbox{if $d=1$} \\
c_d\,n^{-2/(d+1)} & \mbox{if $d \geq 2$,} \end{array} \right.
\end{equation}
where the infimum is taken over all estimators $\tilde{f}_n$ of $f_0$ based on $X_1,\ldots,X_n$. Thus, when $d\geq 3$, there is a more severe curse of dimensionality than for the problem of estimating a density with two bounded derivatives and exponentially decaying tails, for which the corresponding minimax rate is $n^{-4/(d+4)}$ in all dimensions \citep{GL14}. See Section~\ref{Subsec:HolderRev} in the supplementary material \citep{FGKS2018} for further details and discussion. The reason why this comparison is interesting is because any concave function is twice differentiable Lebesgue almost everywhere on its effective domain, while a twice differentiable function is concave if and only if its Hessian matrix is non-positive definite at every point.  This observation had led to the prediction that the rates in these problems ought to coincide \citep[e.g.][page~3778]{SereginWellner2010}.

The result~\eqref{Eq:MinimaxLB} is relatively discouraging as far as high-dimensional log-concave density estimation is concerned, and has motivated the definition of alternative procedures that seek improved rates when $d$ is large under additional structure, such as independent component analysis \citep{SamworthYuan2012} or symmetry \citep{XS19}.  Nevertheless, in lower-dimensional settings, the performance of the log-concave maximum likelihood estimator $\hat{f}_n := \argmax_{f \in \mathcal{F}_d} \sum_{i=1}^n \log f(X_i)$ has been studied with respect to the divergence $\dex^2(\hat{f}_n,f_0) := n^{-1}\sum_{i=1}^n \log \frac{\hat{f}_n(X_i)}{f_0(X_i)}$~\citep[cf.][page~2281]{KGS18}. This loss function is closely related to the Kullback--Leibler divergence $\KL(f,g) := \int_{\R^d}f\log (f/g)$ and Hellinger distance. Indeed, we have $\dhell^2(\hat{f}_n,f_0) \leq\KL(\hat{f}_n,f_0) \leq\dex^2(\hat{f}_n,f_0)$, where the the first bound is standard and the second inequality follows by applying~\citet[Remark~2.3]{DSS11} to the function $x\mapsto\log\bigl(f_0(x)/\hat{f}_n(x)\bigr)$. A small modification of the proof of~\citet[Theorem~5]{KS16} yields the following result, which is stated as Theorem~\ref{Thm:WorstCaseRates} in the supplementary material~\citep{FGKS2018} for convenience: 
\begin{equation}
\label{Eq:WorstCaseRate}
\sup_{f_0 \in \mathcal{F}_d} \mathbb{E}\{d_{\!X}^2(\hat{f}_n,f_0)\}=
\left\{ \begin{array}{ll} O(n^{-4/5}) & \mbox{if $d=1$} \\
O(n^{-2/3}\log n) & \mbox{if $d=2$} \\
O(n^{-1/2}\log n) & \mbox{if $d=3$;}\end{array} \right.
\end{equation}
see also~\citet{DossWellner2016} for a related result in the univariate case. Moreover, very recently, \citet{DaganKur2019} proved that\footnote{Here and below, the $O_d(\cdot)$ notation is used as shorthand for an upper bound that holds up to a dimension-dependent quantity.}
\begin{equation}
\label{Eq:DaganKur}
\sup_{f_0 \in \mathcal{F}_d} \mathbb{E}\{d_{\mathrm{H}}^2(\hat{f}_n,f_0)\} = O_d(n^{-2/(d+1)}\log n)
\end{equation}
for $d\geq 4$, so that, at least in squared Hellinger loss, it follows from~\eqref{Eq:MinimaxLB},~\eqref{Eq:WorstCaseRate} and~\eqref{Eq:DaganKur} that $\hat{f}_n$ attains the minimax optimal rate in all dimensions, up to a logarithmic factor.  

Our goal is to explore the potential of the log-concave maximum likelihood estimator to adapt to two different types of subclass of $\mathcal{F}_d$.  The definition of the first of these is motivated by the observation that $\log \hat{f}_n$ is piecewise affine on the convex hull of $X_1,\ldots,X_n$, a polyhedral subset of $\mathbb{R}^d$. It is therefore natural to consider, for $k\in \mathbb{N}$ and $m\in\N\cup\{0\}$, the subclass $\mathcal{F}^k(\mathcal{P}^m) \equiv \mathcal{F}_d^k(\mathcal{P}^m) \subseteq \mathcal{F}_d$ consisting of densities that are both log-$k$-affine on their support (see Section~\ref{Subsec:Notation}), and have the property that this support is a polyhedral set with at most $m$ facets. Note that this class contains densities with unbounded support. By Proposition~\ref{Prop:Logkaff} in Section~\ref{Sec:Logkaffine} below, the complexity of such densities~$f$ can be measured in terms of the sum $\Gamma(f)$ of the numbers of facets of the subdomains in the polyhedral subdivision of the support induced by $f$.  A consequence of our first main result, Theorem~\ref{Thm:MainLogkaffine}, is that for all $f_0 \in \mathcal{F}^k(\mathcal{P}^m)$, we have
\begin{equation}
\label{Eq:OracleRiskBound}
\mathbb{E}\{d_{\!X}^2(\hat{f}_n,f_0)\} = \tilde{O}\biggl(\frac{\Gamma(f_0)}{n}\biggr)
\end{equation}
when $d \in \{2,3\}$; moreover, we also show that $\Gamma(f_0)$ is at most of order $k+m$ when $d=2$, and at most of order $k(k+m)$ when $d=3$.  Thus, when $k$ and $m$ may be regarded as constants, \eqref{Eq:OracleRiskBound} reveals that, up to the polylogarithmic term, the log-concave maximum likelihood estimator adapts at a parametric rate to $\mathcal{F}^k(\mathcal{P}^m)$ when $d \in \{2,3\}$.  Moreover, Theorem~\ref{Thm:MainLogkaffine} offers a complete picture for this type of adaptation by providing a sharp oracle inequality that covers the case where $f_0$ is well approximated (in a Kullback--Leibler sense) by a density in $\mathcal{F}^k(\mathcal{P}^m)$ for some $k,m$.  Unsurprisingly, the proof of this inequality is much more delicate and demanding than the corresponding univariate result given in~\citet{KGS18}, owing to the greatly increased geometric complexity of both the boundaries of convex subsets of $\mathbb{R}^d$ for $d \geq 2$ and the structure of the polyhedral subdivisions induced by the densities in $\mathcal{F}^k(\mathcal{P}^m)$.  In particular, the parameter $m$ plays no role in the univariate problem, since the boundary of a convex subset of the real line has at most two points,  but it turns out to be crucial in this multivariate setting.  Indeed, no form of adaptation would be achievable in the absence of restrictions on the shape of the support of $f_0 \in \mathcal{F}_d$; for instance, when $f_0$ is the uniform density on a closed Euclidean ball in $\mathbb{R}^d$ with $d \geq 2$, consideration of the volume of the convex hull of $X_1,\dotsc,X_n$ yields that $\mathbb{E}\{d_{\mathrm{H}}^2(\hat{f}_n,f_0)\} \geq \tilde{c}_d\,n^{-2/(d+1)}$ for some $\tilde{c}_d > 0$ depending only on $d$ \citep{Wieacker1987}. 

In contrast to the isotonic regression problem described above, Theorem~\ref{Thm:MainLogkaffine} indicates that even when $d=3$, the log-concave maximum likelihood estimator also enjoys essentially parametric adaptation when $f_0$ is close to a density in $\mathcal{F}^k(\mathcal{P}^m)$ for small $k$ and $m$.  Unfortunately, our arguments do not allow us to extend our results to dimensions $d \geq 4$, where the relevant bracketing entropy integral diverges at a polynomial rate.  Recent work by \citet{CDSS2018} derived worst-case rates in squared Hellinger loss for the log-concave maximum likelihood estimator when $d \geq 4$; the crux of their argument involved using Vapnik--Chervonenkis theory to bound
\[
\E\,\biggl(\sup_{K \in \mathcal{K}_d^*}\;\biggl|\frac{1}{n}\sum_{i=1}^n\Ind_{\{X_i \in K\}} - \Pr(X_1 \in K)\,\biggr|\biggr),
\]
where $\mathcal{K}_d^*$ denotes the set of all closed, convex subsets of $\mathbb{R}^d$. \citet{DaganKur2019} obtained an improved bound on this quantity of $O_d(n^{-2/(d+1)})$ using a general chaining argument, and this allowed them to deduce the worst-case guarantees on the performance of the log-concave maximum likelihood estimator stated in~\eqref{Eq:DaganKur}.  Unfortunately, it is unclear whether this approach can provide any adaptation guarantees.

Sections~\ref{Sec:ThetaPolytope} and~\ref{Sec:Smoothness} consider different subclasses of $\mathcal{F}_d$, and are motivated by the hope that if we rule out `bad' log-concave densities such as the uniform densities with smooth boundaries mentioned above, then we may be able to achieve faster rates of convergence, up to the $n^{-4/(d+4)}$ rate conjectured by \citet{SereginWellner2010}.  Since this rate already coincides with the worst-case rate for the log-concave maximum likelihood estimator given in~\eqref{Eq:WorstCaseRate} when $d = 1,2$ (up to a logarithmic factor), and since the same entropy integral divergence issues mentioned above apply when $d \geq 4$, we focus on the case $d=3$ in these sections.  In Section~\ref{Sec:ThetaPolytope}, we restrict attention to densities with polytopal support (that need not satisfy the log-$k$-affine condition of Section~\ref{Sec:Logkaffine}).  Theorem~\ref{Thm:ThetaRisk} therein provides a sharp oracle inequality, which reveals that in such cases, the log-concave maximum likelihood estimator attains the rate $\tilde{O}(n^{-4/7})$ with respect to $d_{\!X}^2$ divergence, at least when the density is bounded away from zero on its support.

In Section~\ref{Sec:Smoothness}, we introduce an alternative way to exclude the bad uniform densities mentioned above, namely by considering subclasses of $\mathcal{F}_d$ consisting of densities~$f$ whose contours are well-separated in regions where $f$ is small.  A major advantage of working with contour separation, as opposed to imposing a conventional smoothness condition such as H\"older regularity, is that we are able to exhibit adaptation over much wider classes of densities, as we illustrate through several examples in Section~\ref{Sec:Smoothness}. A consequence of our main theorem in this section (Theorem~\ref{Thm:Smoothness}) is that the log-concave maximum likelihood estimator attains the rate $\tilde{O}(n^{-4/7})$ with respect to $d_{\!X}^2$ divergence over the class of Gaussian densities; again, one can think of this result as partially restoring the original conjecture of \citet{SereginWellner2010}, in that their rate is achieved with additional restrictions on the class of log-concave densities.  A key feature of our definition of contour separation is that it is affine invariant; since the log-concave maximum likelihood estimator is affine equivariant and our loss functions $\dhell^2$, $\KL$ and $\dex^2$ are affine invariant, this allows us to obtain rates that are uniform over classes without any scale restrictions.

We mention that alternative estimators have also been studied for the class of log-concave densities. One such is the smoothed log-concave maximum likelihood estimator \citep{DumbgenRufibach2009,ChenSamworth2013}, which matches the first two moments of the empirical distribution of the data, but for which results on rates of convergence are less developed. Another proposal is the $\rho$-estimation framework of \citet{BaraudBirge2016}, for which similar adaptation properties as for the log-concave maximum likelihood estimator are known in the univariate case.

Proofs of most of our main results are given in the Appendix.  The remaining proofs, as well as numerous auxiliary results, are presented in the supplementary material \citep{FGKS2018}; these results appear with an `S' before the relevant label number.


\vspace{-0.2cm}
\subsection{Notation and background}
\label{Subsec:Notation}

First, we set up some notation and definitions that will be used throughout the main text as well as in the proofs later on. For a fixed $d \in \mathbb{N}$, we write $\{e_1,\dotsc,e_d\}$ for the standard basis of $\R^d$ and denote the $\ell_2$ norm of $x=(x_1,\dotsc,x_d)=\sum_{j=1}^d x_je_j\in\R^d$ by $\norm{x}\equiv\norm{x}_2=\bigl(\sum_{j=1}^d x_j^2\bigr)^{1/2}$. For $x,y\in\R^d$, let $[x,y]:=\{tx+(1-t)y:t\in [0,1]\}$ denote the closed line segment between them, and define $(x,y)$, $[x,y)$, $(x,y]$ analogously. For $x\in\R^d$ and $r>0$, let $\bar{B}(x,r):=\{w\in\R^d:\norm{w-x}\leq r\}$. For $A\subseteq\R^d$, we write $\dim(A)$ for the \emph{affine dimension} of $A$, i.e.\ the dimension of the affine hull of $A$, and for Lebesgue-measurable $A\subseteq\R^d$, we write $\mu_d(A)$ for the $d$-dimensional Lebesgue measure of $A$. If $0<\dim(A)=k<d$, we can view $A$ as a subset of its affine hull and define $\mu_k(A)$ analogously, whilst also setting $\mu_l(A)=0$ for each integer $l>k$. In addition, we denote the set of positive definite $d \times d$ matrices by $\mathbb{S}^{d\times d}$ and the $d\times d$ identity matrix by $I\equiv I_d$. 

Next, let $\Phi\equiv\Phi_d$ be the set of all upper semi-continuous, concave functions $\phi\colon\R^d\to [-\infty,\infty)$ and let $\mathcal{G}\equiv\mathcal{G}_d:=\{e^\phi:\phi\in\Phi\}$. For $\phi\in\Phi$, we write $\dom\phi:=\{x\in\R^d:\phi(x)>-\infty\}$ for the \textit{effective domain} of $\phi$, and for a general $f\colon\R^d\to\R$, we write $\supp f:=\{x\in\R^d:f(x)\neq 0\}$ for the \textit{support} of $f$. For $k\in\N$, we say that $f\in\mathcal{G}_d$ is \textit{log-$k$-affine} if there exist closed sets $E_1,\dotsc,E_k$ such that $\supp f=\bigcup_{j=1}^{\,k}E_j$ and $\log f$ is affine on each $E_j$. Moreover, let $\mathcal{F}\equiv\mathcal{F}_d$ be the family of all \emph{densities} $f\in\mathcal{G}_d$, and let $\mu_f:=\int_{\R^d}\,xf(x)\,dx$ and $\Sigma_f:=\int_{\R^d}\,(x-\mu_f)\tm{(x-\mu_f)}dx$ for each $f\in\mathcal{F}_d$. In addition, we write $\mathcal{F}^{0,I}\equiv\mathcal{F}_d^{0,I}:=\{f\in\mathcal{F}_d:\mu_f=0,\,\Sigma_f=I\}$ for the class of \emph{isotropic} log-concave densities.

Henceforth, for real-valued functions $a$ and $b$, we write $a\lesssim b$ if there exists a universal constant $C>0$ such that $a\leq Cb$, and we write $a\asymp b$ if $a\lesssim b$ and $b\lesssim a$. More generally, for a finite number of parameters $\alpha_1,\dotsc,\alpha_r$, we write $a\lesssim_{\alpha_1,\dotsc,\alpha_r}\!b$ if there exists $C\equiv C_{\alpha_1,\dotsc,\alpha_r}>0$, depending only on $\alpha_1,\dotsc,\alpha_r$, such that $a\leq Cb$. Also, for $x\in\R$, we write $x^+:=x\vee 0$ and $x^-:=(-x)^+$, and for $x>0$, we define $\log_+x:=1\vee\log x$.

To facilitate the exposition in Section~\ref{Sec:Smoothness}, we now introduce some additional terminology. We say that the densities $f$ and $g$ on $\R^d$ are \emph{affinely equivalent} if there exist an $\R^d$-valued random variable $X$ and an invertible affine transformation $T\colon\R^d\to\R^d$ such that $X$ has density $f$ and $T(X)$ has density $g$; in other words, there exist $b\in\R^d$ and an invertible $A\in\R^{d\times d}$ such that $g(x)=\inv{\abs{\det A}}f(\inv{A}(x-b))$ for all $x\in\R^d$. Thus, each $f\in\mathcal{F}_d$ is affinely equivalent to a unique $f_0\in\mathcal{F}_d^{0,I}$. A class $\mathcal{D}$ of densities is said to be \emph{affine invariant} if it is closed under affine equivalence; in other words, if $f$ belongs to $\mathcal{D}$, then so does every density $g$ that is affinely equivalent to $f$.

The rest of this subsection is devoted to a review of some convex analysis background used in Section~\ref{Sec:Logkaffine}. A \textit{closed half-space} is a set of the form $\{x\in\R^d:\tm{\alpha}x\leq u\}$, where $\alpha\in\R^d\setminus\{0\}$ and $u\in\R$, and the interiors and boundaries of closed half-spaces are known as \textit{open half-spaces} and \textit{affine hyperplanes} respectively. For a non-empty and convex $E\subseteq\R^d$, we say that an affine hyperplane $H$ \emph{supports} $E$ if $H\cap E\neq\emptyset$ and $H$ is the boundary of a closed half-space that contains $E$. A \textit{face} $F\subseteq E$ is a convex set with the property that if $u,v\in E$ and $tu+(1-t)v\in F$ for some $t\in (0,1)$, then $u,v\in F$. We say that $x\in E$ is an \textit{extreme point} if $\{x\}$ is a face of $E$. Also, we say that $F\subseteq E$ is an \textit{exposed face} of $E$ if $F=E\cap H$ for some affine hyperplane $H$ that supports $E$. Exposed faces of affine dimensions 0, 1 and $\dim(E)-1$ are also known as \textit{exposed points} (or \textit{vertices}), \textit{edges} and \textit{facets} respectively. We write $\mathscr{F}(E)$ for the set of all facets of $E$.

A \textit{polyhedral set} is a subset of $\R^d$ that can be expressed as the intersection of finitely many closed half-spaces, and a \textit{polytope} is a bounded polyhedral set, or equivalently the convex hull of a finite subset of $\R^d$; see Theorems~2.4.3 and~2.4.6 in~\citet{Sch14}. As a special case, we also view $\R^d$ as a polyhedral set with 0 facets. Let $\mathcal{P}\equiv\mathcal{P}_d$ denote the collection of all polyhedral sets in $\R^d$ with non-empty interior, and for $m\in\N_0:=\N\cup\{0\}$, let $\mathcal{P}^m\equiv\mathcal{P}_d^m$ denote the collection of all $P\in\mathcal{P}$ with at most $m$ facets. For $1\leq k\leq d$, a $k$-\textit{parallelotope} is the image of $[0,1]^k$ under an injective affine transformation from $\R^k$ to $\R^d$, i.e.\ a polytope of the form $\{v_0+\sum_{\ell=1}^k\lambda_\ell v_\ell:0\leq\lambda_\ell\leq 1\text{ for all }\ell\}$, where $v_0,v_1,\dotsc,v_k\in\R^d$ and $v_1,\dotsc,v_k$ are linearly independent. Recall also that a \textit{$k$-simplex} is the convex hull of $k+1$ affinely independent points in $\R^d$. Finally, for $P\in\mathcal{P}_d$, a \textit{(polyhedral) subdivision} of $P$ is a finite collection of sets $E_1,\dotsc,E_\ell\in\mathcal{P}_d$ such that $P=\bigcup_{j=1}^{\,\ell}E_j$ and $E_i\cap E_j$ is a common face of $E_i$ and $E_j$ for all $i,j \in \{1,\ldots,\ell\}$. A \textit{triangulation} of a polytope $P\in\mathcal{P}_d$ is a subdivision of $P$ consisting solely of $d$-simplices.

\section{Adaptation to log-\texorpdfstring{$k$}{k}-affine densities with polyhedral support}
\label{Sec:Logkaffine}

In order to present the main result of this section, we first need to understand the structure of log-$k$-affine functions $f \in \mathcal{G}_d$ with polyhedral support. Due to the global nature of the constraints on $f$, namely that $\log f$ is concave on $\supp f\in\mathcal{P}$ and affine on each of $k$ closed subdomains, the function $f$ necessarily has a simple and rigid structure. More precisely, Proposition~\ref{Prop:Logkaff} below shows that there is a minimal representation of $f$ in which the subdomains are polyhedral sets that form a subdivision of $\supp f$, and the restrictions of $\log f$ to these sets are distinct affine functions. The proof of this result is deferred to Section~\ref{Subsec:LogkaffineDensities}.
\begin{proposition}
\label{Prop:Logkaff}
Suppose that $f\in\mathcal{G}_d$ is log-$k$-affine for some $k\in\N$ and that $\supp f\in\mathcal{P}$. Then there exist $\kappa(f)\leq k$, $\alpha_1,\dotsc,\alpha_{\kappa(f)}\in\R^d$, $\beta_1,\dotsc,\beta_{\kappa(f)}\in\R$ and a polyhedral subdivision $E_1,\dotsc,E_{\kappa(f)}$ of $\supp f$ such that $f(x)=\exp(\tm{\alpha_j}x+\beta_j)$ for all $x\in E_j$, and $\alpha_i\neq\alpha_j$ whenever $i\neq j$. Moreover, the triples $(\alpha_j,\beta_j,E_j)_{j=1}^{\kappa(f)}$ are unique up to reordering. In addition, if $\supp f\in\mathcal{P}^m$, then $E_j\in\mathcal{P}^{k+m-1}$ for all $j$.
\end{proposition}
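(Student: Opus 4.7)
The plan is to realise $\log f$ on $\supp f$ as the pointwise minimum of a finite collection of affine functions, and to read off the subdivision structure from the cells on which each affine function is minimal. Writing the hypothesis as $\supp f=\bigcup_{j=1}^k E_j'$ with $\log f|_{E_j'}(x)=\tm{(\alpha_j')}x+\beta_j'=:L_j'(x)$, I would first group pieces by their associated affine function to obtain a list of distinct affine functions $L_1,\dotsc,L_\ell$ with $\ell\le k$ and corresponding closed sets $\tilde{E}_j:=\bigcup_{i:\,L_i'=L_j}E_i'$. Let $J\subseteq\{1,\dotsc,\ell\}$ index those $j$ with $\Int(\tilde{E}_j)\neq\emptyset$. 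For each such $j$, since $\log f$ is concave on $\R^d$ and coincides with the affine function $L_j$ on the set $\tilde{E}_j$ with nonempty interior, $L_j$ is a supporting affine functional of $\log f$, so that $\log f(x)\le L_j(x)$ for all $x\in\R^d$.

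The main subtlety is to show that only the indices $j\in J$ are needed. Since $\bigcup_{j\notin J}\tilde{E}_j$ is a finite union of closed sets with empty interior, it has empty interior in $\R^d$, so $\Int(\supp f)\subseteq\bigcup_{j\in J}\tilde{E}_j$. For any $x\in\supp f=\Cl(\Int(\supp f))$, there exist $y_n\in\Int(\supp f)$ with $y_n\to x$; passing to a subsequence along which $y_n\in\tilde{E}_{j^\ast}$ for some fixed $j^\ast\in J$, the upper semi-continuity of $\log f$ combined with $\log f\le L_{j^\ast}$ forces $\log f(x)=L_{j^\ast}(x)$. Defining $E_j:=\{x\in\supp f:\log f(x)=L_j(x)\}$ for $j\in J$ (which are closed since $-\log f$ is lower semi-continuous), this yields $\supp f=\bigcup_{j\in J}E_j$ and $\log f=\min_{j\in J}L_j$ on $\supp f$. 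After further discarding any $j$ for which there is some $i\in J$ with $\alpha_i=\alpha_j$ but $\beta_i<\beta_j$ (so that $L_j>L_i$ pointwise and hence $E_j=\emptyset$), we obtain $\kappa(f):=\abs{J}\le k$ with the vectors $\alpha_j$ pairwise distinct.

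The remaining assertions now follow in a streamlined fashion. Each $E_j=\supp f\cap\bigcap_{i\neq j}\{x:L_j(x)\le L_i(x)\}$ is the intersection of $\supp f\in\mathcal{P}^m$ with $\kappa(f)-1\le k-1$ closed half-spaces, so $E_j\in\mathcal{P}^{k+m-1}$. For $i\neq j$, one checks that $E_i\cap E_j=E_j\cap\{x:L_j(x)=L_i(x)\}$, and since $E_j$ lies entirely in the closed half-space $\{L_j\le L_i\}$, this intersection is an exposed face of $E_j$ (and, symmetrically, of $E_i$), confirming the polyhedral subdivision property. Finally, uniqueness holds because in any valid minimal representation the gradient of $\log f$ on $\Int(E_j)$ must equal $\alpha_j$ and the restriction $L_j|_{E_j}$ then determines $\beta_j$, so the triples $(\alpha_j,\beta_j,E_j)_{j=1}^{\kappa(f)}$ are uniquely determined by $f$ up to reordering. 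I expect the main obstacle to be the reduction from the initial covering to the minimal one, i.e.\ showing that low-dimensional ``parasitic'' pieces can be absorbed — this is where one must interlock the upper semi-continuity of $\log f$, the concavity-based pointwise bound $\log f\le L_j$, and the topological fact that $\supp f$ is the closure of its interior.
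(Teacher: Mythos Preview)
Your proof is correct and takes a genuinely different—and considerably more efficient—route from the paper's. Your central observation is that $\log f=\min_{j\in J}L_j$ on $\supp f$: the upper bound $\log f\le L_j$ follows from concavity (each $L_j$ with $j\in J$ is a global affine majorant since $\log f$ agrees with it on an open set), and equality at every point of $\supp f$ follows from your density/upper-semicontinuity argument. Once this min representation is in hand, the polyhedrality of $E_j=\supp f\cap\bigcap_{i\neq j}\{L_j\le L_i\}$, the facet bound $E_j\in\mathcal{P}^{k+m-1}$, and the common-face property of $E_i\cap E_j$ all drop out from the fact that $\{L_i=L_j\}$ is a supporting hyperplane to $E_j$. (Your ``further discarding'' step is in fact vacuous: if $i,j\in J$ had $\alpha_i=\alpha_j$ and $\beta_i<\beta_j$, then $\log f\le L_i<L_j$ globally, contradicting $\log f=L_j$ on the nonempty open set $\Int\tilde{E}_j$.)

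The paper, by contrast, never writes down the min representation. It defines the pieces $E_s$ as unions of the original closed sets, proves their convexity by a line-segment argument (Claim~1), and then establishes polyhedrality through a delicate boundary analysis (Claims~2--4) that identifies which neighbouring pieces contribute facets to each $E_r$; this requires two auxiliary lemmas—one on path-connectedness of open sets minus low-dimensional pieces, one giving sufficient conditions for a collection of polyhedra to form a subdivision—and runs to several pages. The paper's route yields slightly more explicit facet information (the neighbour sets $I_r$), but your min-of-affines formulation is the more direct and economical way to reach all of the stated conclusions.
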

In particular, for each such $f$, the sum of the numbers of facets of the polyhedral subdomains $E_1,\ldots,E_{\kappa(f)}$, which we denote by
\begin{equation}
\label{Eq:Gammaf}
\Gamma(f):=\sum_{j=1}^{\kappa(f)}\,|\mathscr{F}(E_j)|,
\end{equation}
is well-defined and can be viewed as a parameter that measures the complexity of $f$. Now for $k\in\N$ and $P\in\mathcal{P}$, let $\mathcal{F}^k(P)$ denote the collection of all $f\in\mathcal{F}_d$ for which $\kappa(f) \leq k$ and $\supp f=P$, so that $\mathcal{F}^k(\mathcal{P}^m)=\bigcup_{P\in\mathcal{P}^m}\mathcal{F}^k(P)$ for $m\in\N_0$. It is shown in Proposition~\ref{Prop:FkPmempty} that $\mathcal{F}^k(\mathcal{P}^m)$ is non-empty if and only if $k+m\geq d+1$. We remark here that it is more appropriate to quantify the complexity of a polyhedral support in terms of $m$, which refers to the number of facets of the support, rather than in terms of the number of vertices. Indeed, the former quantity may be much greater than the latter when the support is unbounded; for example, a polyhedral convex cone has just a single vertex but may have arbitrarily many facets. That said, if the support is a polytope with $v$ vertices and $m$ facets, it can be shown that $v=m$ when $d=2$, and that $v \leq 2m-4$ and $m \leq 2v-4$ when $d=3$; see the proof of Lemma~\ref{Lem:euler} and the subsequent remark. 

We are now in a position to state our sharp oracle inequality for the risk of the log-concave maximum likelihood estimator when the true $f_0 \in \mathcal{F}_d$ is close to some element of $\mathcal{F}^k(\mathcal{P}^m)$.
\begin{theorem}
\label{Thm:MainLogkaffine}
Fix $d\in\{2,3\}$. Let $X_1,\ldots,X_n \iid f_0 \in \mathcal{F}_d$ with $n \geq d+1$, and let $\hat{f}_n$ denote the corresponding log-concave maximum likelihood estimator.  Then there exists a universal constant $C > 0$ such that 
\begin{equation}
\label{Eq:OracleIneq}
\E\{\dex^2(\hat{f}_n,f_0)\}\leq \inf_{\substack{k\in\N,\,m\in\N_0:\\ k+m\geq d+1}} \;\inf_{f\in\mathcal{F}^k(\mathcal{P}^m)}\,\Biggl\{\frac{C\,\Gamma(f)}{n}\log^{\gamma_d} n+\KL(f_0,f)\Biggr\},
\end{equation}
where $\gamma_2:=9/2$ and $\gamma_3:=8$.  Moreover, for $d\in\{2,3\}$, we have $\Gamma(f)\lesssim k^{d-2}(k+m)$ for all $f \in \mathcal{F}^k(\mathcal{P}^m)$.
\end{theorem}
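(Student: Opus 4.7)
My plan is to prove the oracle inequality via an MLE basic-inequality argument combined with bracketing entropy bounds for a localized class of log-concave densities, and then to derive the bound on $\Gamma(f)$ via a combinatorial geometric argument. For the oracle inequality, fix $f\in\mathcal{F}^k(\mathcal{P}^m)$ with $\supp f_0\subseteq\supp f$ (otherwise $\KL(f_0,f)=\infty$ and there is nothing to prove). The identity
\[
\dex^2(\hat{f}_n,f_0)=\dex^2(\hat{f}_n,f)+\dex^2(f,f_0),
\]
together with $\E[\dex^2(f,f_0)]=-\KL(f_0,f)$ and the decomposition $P_n=(P_n-P)+P$ (observing $P\log(\hat{f}_n/f)=\KL(f_0,f)-\KL(f_0,\hat{f}_n)\leq\KL(f_0,f)$), reduces the task to bounding $\E\bigl[(P_n-P)\log(\hat{f}_n/f)\bigr]$ by $C\Gamma(f)\log^{\gamma_d}n/n+\KL(f_0,f)$. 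I would achieve this through a standard peeling argument over Hellinger shells $\{g\in\mathcal{F}_d:2^j\delta\leq\dhell(g,f)\leq 2^{j+1}\delta\}$, combined with a modulus-of-continuity estimate for the empirical process on each shell.

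The crucial analytic input, and the main obstacle, is a bracketing entropy bound of the form $\log N_{[\,]}(\epsilon,\mathcal{G}(\delta,f),\dhell)\lesssim\Gamma(f)\,\epsilon^{-2(d-1)/d}\,\mathrm{polylog}(1/\epsilon)$ for the local class $\mathcal{G}(\delta,f):=\{g\in\mathcal{F}_d:\dhell(g,f)\leq\delta\}$. My construction would exploit the rigid structure given by Proposition~\ref{Prop:Logkaff}, assembling a bracket by combining (i) a cover of nearby polyhedral supports with a prescribed number of facets (whose log-cardinality scales like $\Gamma(f)$ times a polylogarithmic factor), (ii) covers of the affine parameters $(\alpha_j,\beta_j)$ on each subdomain $E_j$, and (iii) a global log-concavity test. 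Feeding this into Dudley's entropy integral yields the advertised empirical process bound whenever $\int_0^\delta\sqrt{H_{[\,]}(u,\cdot)}\,du$ converges, which happens precisely for $d\in\{2,3\}$; the extra logarithmic factors (giving $\gamma_2=9/2$ and the larger $\gamma_3=8$) reflect the more intricate combinatorics of three-dimensional polyhedral boundaries and the treatment of unbounded $\supp f\in\mathcal{P}^m$, which is the subtlest part of the construction.

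For the bound $\Gamma(f)\lesssim k^{d-2}(k+m)$, the case $d=3$ is immediate from Proposition~\ref{Prop:Logkaff}, which gives $E_j\in\mathcal{P}^{k+m-1}$: hence $|\mathscr{F}(E_j)|\leq k+m-1$ and $\Gamma(f)\leq\kappa(f)(k+m-1)\leq k(k+m)$. When $d=2$ the naive estimate is too weak, and I would instead apply Euler's formula to the planar graph whose 1-cells are the edges of the subdivision $\{E_1,\dotsc,E_{\kappa(f)}\}$, compactified by a point at infinity to accommodate unbounded $\supp f$: the face count equals $\kappa(f)+O(1)$, so $|\mathcal{E}|=|V|+\kappa(f)+O(1)$. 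A handshake-lemma computation using that interior and added-boundary vertices have degree $\geq 3$ while the $\leq m$ original vertices of $\supp f$ have degree $\geq 2$ yields $2|\mathcal{E}|\geq 3|V|-m$; combining the two relations gives $|V|\lesssim k+m$ and hence $|\mathcal{E}|\lesssim k+m$. Finally, since each internal edge of the subdivision is counted twice in $\Gamma(f)$ and each boundary edge once, $\Gamma(f)\leq 2|\mathcal{E}|\lesssim k+m$.
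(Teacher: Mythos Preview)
Your oracle decomposition and the $\Gamma(f)$ bounds are fine (for $d=3$ your facet count is exactly the paper's; for $d=2$ your primal Euler-formula argument is a valid alternative to the paper's dual-graph proof in Lemma~\ref{Lem:polysub2d}). The genuine gap is in the bracketing step. The local class $\mathcal{G}(\delta,f)=\{g\in\mathcal{F}_d:\dhell(g,f)\leq\delta\}$ consists of \emph{arbitrary} log-concave densities near $f$, not just log-$k$-affine ones; covering ``nearby polyhedral supports'' and ``the affine parameters $(\alpha_j,\beta_j)$'' would bracket only a measure-zero parametric subfamily, not the full Hellinger ball. A general $g\in\mathcal{G}(\delta,f)$ can have curved log-density on an arbitrary convex support, so your items (i)--(iii) do not furnish a valid $\varepsilon$-bracket. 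Relatedly, the entropy rate you posit, $\varepsilon^{-2(d-1)/d}$, is not what actually holds: for $k=1$ the paper's Proposition~\ref{Prop:Log1affEntropy} gives leading order $(\delta/\varepsilon)^{d-1}$, so in $d=3$ the usual entropy integral diverges and one must work with the truncated integral $\int_{\delta^2/2^{13}}^\delta$ and \citet[Corollary~7.5]{vdG00}, rather than an unrestricted Dudley integral.

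The paper circumvents a direct local-entropy bound for general $k$ by a quite different mechanism. It first proves the $k=1$ case (Theorem~\ref{Thm:k1}) via a genuinely hard geometric bracketing construction---not a parameter cover, but pointwise envelopes (Lemma~\ref{Lem:hellunbds}), ``invelope'' regions with controlled boundaries (Lemmas~\ref{Lem:invelopebox}, \ref{Lem:invelopesimp}), polytopal approximations thereof with few simplices (Corollary~\ref{Cor:polyapproxsimp}), and then the global bracketing results of Proposition~\ref{Prop:bebds}. For general $k$, it does \emph{not} attempt a local entropy bound at all: using the subdivision $E_1,\dotsc,E_{\kappa(f)}$, it conditions on the counts $N_j=\#\{i:X_i\in E_j\}$, uses the product inequality $\sum_{j\in J_1}\sum_{i:X_i\in E_j}\log\hat{f}_n(X_i)\leq\sum_{j\in J_1}\sum_{i:X_i\in E_j}\log\bigl((N_j/M_1)\hat{f}^{(j)}(X_i)\bigr)$ to replace the global MLE by local ones, and then applies Theorem~\ref{Thm:k1} on each $E_j$ (where $f^{(j)}\in\mathcal{F}^1(\mathcal{P}^{d_j})$). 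This conditioning/local-MLE device is the missing idea in your outline; without it, you would need a correct $k$-piece local bracketing argument for general log-concave $g$, which is substantially harder than what you sketched.
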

The `sharpness' in this oracle inequality refers to the fact that the approximation term $\KL(f_0,f)$ has leading constant~1.  
A consequence of Theorem~\ref{Thm:MainLogkaffine} is that if $d=2$ and $f_0 \in \mathcal{F}^k(\mathcal{P}^m)$ with $k+m$ small by comparison with $n^{1/3}\log^{-7/2}n$, then the log-concave maximum likelihood estimator attains an adaptive rate that is faster than the rate of decay of the worst-case risk bounds~\eqref{Eq:WorstCaseRate} of~\citet{KS16}.  When $d=3$, the same conclusion holds when $k(k+m)$ is small by comparison with $n^{1/2}\log^{-7} n$.

Theorem~\ref{Thm:MainLogkaffine} is proved in Section~\ref{Sec:LogkaffineProofs} by first considering the case $k=1$, where it turns out that we can prove a slightly stronger version of our result.  We therefore state it separately for convenience:
\begin{theorem}
\label{Thm:k1}
Fix $d \in \{2,3\}$. Let $X_1,\ldots,X_n \iid f_0 \in \mathcal{F}_d$ with $n \geq d+1$, and let $\hat{f}_n$ denote the corresponding log-concave maximum likelihood estimator.  Then there exists a universal constant $\bar{C}>0$ such that 
\begin{equation}
\label{Eq:Oraclek1}
\E\{\dex^2(\hat{f}_n,f_0)\}\leq \inf_{m \geq d}\;\Biggl\{\frac{\bar{C}m}{n}\log^{\gamma_d} n\,+\!\inf_{\substack{f\in\mathcal{F}^1(\mathcal{P}^m)\\\supp f_0 \subseteq \supp f}}\dhell^2(f_0,f)\Biggr\}.
\end{equation}
\end{theorem}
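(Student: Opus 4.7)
The approach is to adapt the univariate MLE oracle inequality strategy of \citet{KGS18} to the multivariate polyhedral setting, using the rigidity of oracle densities $f\in\mathcal{F}^1(\mathcal{P}^m)$ (log-affine on a polyhedral support $P$ with at most $m$ facets) to achieve the essentially parametric rate $m/n$ in dimensions $d\in\{2,3\}$.

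The starting point is the standard log-likelihood comparison for the log-concave MLE. Writing $P_0$ for the distribution of $X_1$ and $\mathbb{P}_n$ for the empirical measure of $X_1,\dotsc,X_n$, for any candidate $f\in\mathcal{F}_d$ with $\supp f_0\subseteq\supp f$ one has
\[
\dex^2(\hat{f}_n,f_0)\leq(\mathbb{P}_n-P_0)\log(\hat{f}_n/f_0)=(\mathbb{P}_n-P_0)\log(\hat{f}_n/f)+(\mathbb{P}_n-P_0)\log(f/f_0),
\]
since $P_0\log(\hat{f}_n/f_0)=-\KL(f_0,\hat{f}_n)\leq 0$. The second centred term has mean zero, so upon taking expectations the task reduces to proving
\[
\E\bigl\{(\mathbb{P}_n-P_0)\log(\hat{f}_n/f)\bigr\}\lesssim \frac{m(\log n)^{\gamma_d}}{n}+\dhell^2(f_0,f)
\]
uniformly over $f\in\mathcal{F}^1(\mathcal{P}^m)$ with $\supp f_0\subseteq\supp f$. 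A localisation argument reduces the left-hand side to controlling $\E\sup_{g}(\mathbb{P}_n-P_0)\log(g/f)$, where the supremum is over $g\in\mathcal{F}_d$ with $\supp g\subseteq\supp f$ and $\dhell(g,f)\leq\delta$ for a suitable Hellinger radius $\delta$, together with tail contributions on $\{\dhell(\hat{f}_n,f)>\delta\}$ that are handled via concentration of $\dhell(\hat{f}_n,f_0)$ together with the Hellinger triangle inequality.

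The core estimate is a bracketing entropy bound for the localised class
\[
\mathcal{H}_{f,\delta}:=\bigl\{\log(g/f):g\in\mathcal{F}_d,\ \supp g\subseteq P,\ \dhell(g,f)\leq\delta\bigr\},
\]
each element of which is a concave function on $P$ (since $\log f$ is affine on $P$ while $\log g$ is concave). Using the affine equivariance of $\hat{f}_n$ and the affine invariance of $\dhell$, $\KL$ and $\dex$, I would first apply an affine change of coordinates to reduce $f$ to a canonical form, so that only the polyhedral geometry---and hence the facet count $m$---enters the bound. I would then combine (a) envelope-style bracketing entropy estimates for uniformly bounded concave functions on a convex body in $\R^d$, which contribute the $\epsilon^{-d/2}$ scaling that drives the worst-case rates in~\eqref{Eq:WorstCaseRate}, with (b) a facet-by-facet decomposition of $P$ that produces a \emph{linear} (rather than polynomial) dependence on $m$. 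The target has the shape
\[
\log N_{[]}\bigl(\epsilon,\mathcal{H}_{f,\delta},L^2(P_0)\bigr)\lesssim m\,(\delta/\epsilon)^{d/2}(\log n)^{\kappa_d},
\]
which, inserted into a Dudley entropy integral and balanced against a peeling scale $\delta\asymp\dhell(f_0,f)\vee\sqrt{m/n}\,(\log n)^{\gamma_d/2}$, yields the claimed $O(m(\log n)^{\gamma_d}/n)$ empirical process bound.

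The main obstacle is the bracketing entropy bound, and in particular isolating a linear dependence on $m$. Two subtleties must be managed. First, $P$ may be unbounded, in which case the log-affine form of $\log f$ forces exponential decay of $f$ in at least one direction; a quantitative truncation to an effective bounded region then exploits this decay at a cost of only polylogarithmic factors in $n$. Second, to construct brackets that are simultaneously concave on all of $P$ and conform to its $m$ facets, one must glue local concave approximations across cells associated to each facet of $P$, preserving global concavity across the interfaces so that the brackets remain admissible as log-densities in $\mathcal{F}_d$. In dimensions $d\in\{2,3\}$ a triangulation-based construction handles this, and the resulting bookkeeping is what ultimately produces the specific polylogarithmic exponents $\gamma_2=9/2$ and $\gamma_3=8$ appearing in the theorem.
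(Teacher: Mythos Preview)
Your high-level strategy---localising $\hat{f}_n$ to a Hellinger ball around the oracle $f$, bounding the bracketing entropy of that ball, and feeding this into an empirical-process bound \`a la \citet[Corollary~7.5]{vdG00}---is exactly what the paper does. The approximation error $\dhell^2(f_0,f)$ enters, as you say, through the localisation radius (the paper writes $\delta_n^2=\tilde C m n^{-1}\log^{\gamma_d}n+\Delta^2$ and applies the triangle inequality $\mathcal{F}(f_0,\delta)\subseteq\mathcal{F}(f,\delta+\Delta)$).

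The genuine gap is in your entropy step. You propose to use ``envelope-style bracketing entropy estimates for uniformly bounded concave functions on a convex body'', but the functions $\log(g/f)$ with $g\in\mathcal{F}_d$, $\dhell(g,f)\leq\delta$ are \emph{not} uniformly bounded below: $g$ may vanish near $\partial(\supp g)$, so $\log g$ can be $-\infty$ on a set of positive $\mu_d$-measure inside $P$. The Gao--Wellner $\varepsilon^{-d/2}$ bound therefore cannot be applied directly to $P$, and the naive ``facet-by-facet'' partition does not resolve this because the lower-bound failure occurs throughout a neighbourhood of $\partial P$, not just on facets. This is precisely the obstruction that drives the paper's construction: for a simplex $S$, one identifies convex ``invelope'' sets $J_\eta^S\subseteq S$ on which every $g\in\mathcal{G}(f_S,\delta)$ satisfies $\log g\gtrsim -\delta\eta^{-1/2}$ (Lemma~\ref{Lem:hellunbds} and Lemma~\ref{Lem:invelopesimp}), then builds nested polytopal approximations $P_{i,j}^\dagger$ to these invelopes with only $O(\log^{d-1}(1/\delta))$ vertices and a controlled volume deficit (Corollary~\ref{Cor:polyapproxsimp}). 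The bounded-concave entropy is applied piecewise on the triangulated shells between successive polytopes, and the unbounded-below boundary region is handled by a one-sided bound (Lemma~\ref{Lem:hellunbds}(ii)) together with the log-concave entropy of~\eqref{Eq:beconvubd}. Only after this simplex analysis does one pass to polytopes (via triangulation into $O(m)$ simplices, with a careful argument accommodating simplices of disparate volume) and then to general log-affine $f$ with possibly unbounded polyhedral support (via slicing in the direction of the log-gradient).

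Two smaller points. First, the brackets need not themselves be log-concave; they are arbitrary measurable functions, so your concern about ``gluing local concave approximations \ldots\ preserving global concavity'' is misplaced. Second, the entropy rate you target, $m(\delta/\varepsilon)^{d/2}$, is not what the paper achieves when $d=3$: the dominant term is $m(\delta/\varepsilon)^2$ (arising from the boundary region), which makes the entropy integral divergent at zero and forces truncation at $\varepsilon\asymp\delta^2$. This boundary contribution is another manifestation of the unboundedness issue above.
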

We suspect that the restriction on the support of the approximating density $f$ in~\eqref{Eq:Oraclek1} is an artefact of our proof.  Indeed, in the case $d=1$, \citet{BaraudBirge2016} obtain an oracle inequality for their $\rho$-estimator where the approximating density $f$ need not have this property (although their result is stated for $\dhell^2$ rather than $\dex^2$); moreover, we have been able to strengthen the corresponding univariate result for the log-concave maximum likelihood estimator \citep[][Theorem~5]{KGS18} by removing this restriction.

The proof of Theorem~\ref{Thm:k1} in fact constitutes the main technical challenge in deriving Theorem~\ref{Thm:MainLogkaffine}.  This entails deriving upper bounds on the (local) Hellinger bracketing entropies of classes of log-concave functions that lie in small Hellinger neighbourhoods of densities in $f\in \mathcal{F}^1(\mathcal{P}^m)$ for each $m\in\N$ with $m\geq d$.  Our argument proceeds via a series of steps, the first of which deals with the case where $f$ is a uniform density on a simplex (Proposition~\ref{Prop:SimplexEntropy}); it turns out that any density in a small Hellinger ball around such an $f$ satisfies a uniform upper bound (Lemma~\ref{Lem:hellunbds}(ii)), and a pointwise lower bound whose contours are characterised geometrically in Lemma~\ref{Lem:invelopesimp} (and illustrated in Figure~\ref{Fig:invelopesimp}). We proceed by considering a finite nested sequence of polytopal subsets of the simplex, each of which has a controlled number of vertices and approximates the region enclosed by one of the aforementioned contours; see the accompanying Figure~\ref{Fig:SimplexEntropy}. After constructing suitable triangulations of the regions between successive polytopes (Corollary~\ref{Cor:polyapproxsimp}), we exploit existing bracketing entropy results for classes of bounded log-concave functions (Proposition~\ref{Prop:bebds}).

In the next step, we consider the uniform density on a polytope in $\mathcal{P}^m$; here, using the fact that there is a triangulation of the support into $O(m)$ simplices (Lemma~\ref{Lem:euler}), we apply our earlier bracketing entropy bounds in conjunction with an additional argument which handles carefully the fact that these simplices may have very different volumes (Proposition~\ref{Prop:PolytopeEntropy}).  

Finally, in the proof of Proposition~\ref{Prop:Log1affEntropy} in Section~\ref{Sec:LogkaffineProofs}, we generalise to settings where $f$ is an arbitrary (not necessarily uniform) log-affine density whose polyhedral support may be unbounded.  There, we subdivide the domain by intersecting it with a sequence of parallel half-spaces whose normal vectors are in the direction of the negative log-gradient of the density.  Our characterisation of such log-affine densities in Section~\ref{Subsec:LogkaffineDensities} ultimately allows us to apply our earlier results to transformations of the original density and thereby obtain the desired local bracketing entropy bounds (Proposition~\ref{Prop:Log1affEntropy}).  The conclusion of Theorem~\ref{Thm:k1} then follows from standard empirical process theory arguments~\citep[e.g.][Corollary~7.5]{vdG00}; see Section~\ref{Sec:LogkaffineProofs}.

We do not claim any optimality of the polylogarithmic factors in Theorems~\ref{Thm:MainLogkaffine} and~\ref{Thm:k1}.  In fact, we can improve these exponents in the special case where $f_0$ is well-approximated by a uniform density $f_P:=\mu_d(P)^{-1}\Ind_P$ on a polytope $P\in\mathcal{P}\equiv\mathcal{P}_d$. Note that every polytope in $\mathcal{P}_d$ has at least as many facets as a $d$-simplex, namely $d+1$; see for example Lemma~\ref{Lem:minfacets}.
\begin{proposition}
\label{Prop:PolytopeRisk}
Fix $d \in \{2,3\}$, and for $m\geq d+1$, denote by $\mathcal{F}^{[1]}(\mathcal{P}^m)$ the subclass of all uniform densities on polytopes in $\mathcal{P}^m$. Let $X_1,\ldots,X_n\iid f_0\in\mathcal{F}_d$ with $n\geq d+1$, and let $\hat{f}_n$ denote the corresponding log-concave maximum likelihood estimator. Then there exists a universal constant $C'>0$ such that
\begin{equation}
\label{Eq:OraclePm}
\E\{\dex^2(\hat{f}_n,f_0)\}\leq \inf_{m \geq d+1}\;\Biggl\{\frac{C'm}{n}\log^{\gamma_d'} n\,+\!\inf_{\substack{f\in\mathcal{F}^{[1]}(\mathcal{P}^m)\\\supp f_0 \subseteq \supp f}}\dhell^2(f_0,f)\Biggr\},
\end{equation}
where $\gamma_2':=3$ and $\gamma_3':=6$.
\end{proposition}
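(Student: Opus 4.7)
The plan is to follow the proof strategy of Theorem~\ref{Thm:k1} sketched above, but to bypass the parallel half-space decomposition that extends the uniform-on-polytope bracketing entropy bound (Proposition~\ref{Prop:PolytopeEntropy}) to general log-affine densities (Proposition~\ref{Prop:Log1affEntropy}). Since the approximating class $\mathcal{F}^{[1]}(\mathcal{P}^m)$ already consists of uniform densities on polytopes, Proposition~\ref{Prop:PolytopeEntropy} can be fed directly into the empirical process machinery. That slab decomposition is precisely what contributes the additional factor $\log^{\gamma_d-\gamma_d'}n$ in Theorem~\ref{Thm:k1}, so skipping it is what yields the sharper exponents $\gamma_2'=3$ and $\gamma_3'=6$.

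In more detail, fix $m\geq d+1$ and $f_P\in\mathcal{F}^{[1]}(\mathcal{P}^m)$ with $\supp f_0\subseteq P$. Proposition~\ref{Prop:PolytopeEntropy} supplies, for every $\delta>0$, a local Hellinger bracketing entropy bound of the form
\begin{equation*}
\log N_{[\,]}\bigl(\eta,\,\mathcal{G}_{f_P,\delta},\,\dhell\bigr)\,\lesssim\,m\,\log^{\gamma_d'-1}(1/\eta)\qquad\text{for }\eta\in(0,\delta],
\end{equation*}
where $\mathcal{G}_{f_P,\delta}$ is an appropriate slight enlargement of the Hellinger ball of radius $\delta$ around $f_P$ in $\mathcal{G}_d$. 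Plugging this bound into standard empirical process arguments for maximum likelihood over classes with bracketing entropy control (e.g.\ Corollary~7.5 of \citet{vdG00}), combined with the usual MLE basic inequality (whose applicability relies on the support inclusion $\supp f_0\subseteq P=\supp f_P$), yields a Hellinger risk bound of the form
\begin{equation*}
\E\bigl\{\dhell^2(\hat f_n,f_0)\bigr\}\,\lesssim\,\frac{m}{n}\log^{\gamma_d'}n\,+\,\dhell^2(f_0,f_P),
\end{equation*}
with leading constant~$1$ in front of the approximation term.

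To obtain the claimed $\dex^2$ risk bound while preserving this sharp constant~$1$ on $\dhell^2(f_0,f_P)$, I would invoke exactly the Hellinger-to-$\dex^2$ conversion used in the proof of Theorem~\ref{Thm:k1}: this relies on uniform envelope and moment control for log-concave densities in a Hellinger ball around $f_P$, together with a second basic inequality comparing $\sum_i\log\hat f_n(X_i)$ to $\sum_i\log f_P(X_i)$. Taking the infimum over $m\geq d+1$ and over $f\in\mathcal{F}^{[1]}(\mathcal{P}^m)$ with $\supp f_0\subseteq\supp f$ then gives~\eqref{Eq:OraclePm}.

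The main obstacle is the Hellinger-to-$\dex^2$ passage in the last step: to preserve the improvement from $\gamma_d$ to $\gamma_d'$, one has to verify that this conversion does not itself introduce any additional logarithmic factor. This reduces to checking that the envelope/moment control for log-concave densities in a Hellinger neighbourhood of $f_P$ depends on $f_P$ only through $m$ and on ambient universal constants, a check that is analogous to the one already required in Theorem~\ref{Thm:k1}. Once that is in place, the remainder of the argument is a routine parallel of the proof of Theorem~\ref{Thm:k1}, with the sole difference being the replacement of Proposition~\ref{Prop:Log1affEntropy} by Proposition~\ref{Prop:PolytopeEntropy}.
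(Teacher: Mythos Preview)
Your approach is correct and matches the paper's: Proposition~\ref{Prop:PolytopeRisk} is deduced from Proposition~\ref{Prop:PolytopeEntropy} by exactly the same empirical process argument that derives Theorem~\ref{Thm:k1} from Proposition~\ref{Prop:Log1affEntropy}, with the smaller polylogarithmic exponents in Proposition~\ref{Prop:PolytopeEntropy} carrying through to give $\gamma_d'$ in place of $\gamma_d$. One clarification on your final paragraph: there is no separate Hellinger-to-$\dex^2$ conversion in the proof of Theorem~\ref{Thm:k1}; \citet[Corollary~7.5]{vdG00} already delivers tail bounds for $\dex^2(\hat f_n,f_0)$ directly from the local bracketing entropy, and the only additional ingredients (the near-isotropic reduction via \citet[Lemma~6]{KS16} and the crude tail control from Lemma~\ref{Lem:dexbd}) cost no logarithmic factors, so the obstacle you flag does not in fact arise.
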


\section{Adaptation to densities bounded away from zero on a polytopal support}
\label{Sec:ThetaPolytope}
Recall from the discussion in the introduction that in order to observe adaptive behaviour for the log-concave maximum likelihood estimator, we need to exclude uniform densities supported on convex sets with smooth boundaries. In fact, we will see from Proposition~\ref{Prop:LSC} below that we also need to rule out subclasses containing sequences of elements of $\mathcal{F}_d$ that approximate such uniform densities. In this section, we continue to work with densities in $\mathcal{F}_d$ that are close to a log-concave density with polyhedral support, but, in contrast to Section~\ref{Sec:Logkaffine}, now drop the requirement that this approximating density be log-$k$-affine.  In fact, we do not impose any extra structural constraints or smoothness conditions that would regulate further the behaviour of the densities on the interiors of their supports.  It will turn out, however, that we will only be able to improve on the worst-case risk bounds of Theorem~\ref{Thm:WorstCaseRates} when the approximating density is also bounded away from zero on its support, which must therefore necessarily be a polytope.  The generality of the resulting new classes means that we can no longer expect near-parametric adaptation, and moreover, for the reasons explained in the introduction, our main result of this section (Theorem~\ref{Thm:ThetaRisk} below) is restricted to the case $d=3$.  As an example of a density that will be covered by this result, we can consider the density of a trivariate Gaussian random vector conditioned to lie in $[-1,1]^3$.

The proofs of both results in this section are given in Section~\ref{Subsec:ThetaProofs}.

Following on from Proposition~\ref{Prop:PolytopeRisk}, we now extend the definition of $\mathcal{F}^{[1]}(\mathcal{P}^m)$ given above and introduce our new family of subclasses of $\mathcal{F}_d$. For $\theta\in (0,\infty)$ and a polytope $P\in\mathcal{P}_d$, let $\mathcal{F}^{[\theta]}(P)\equiv\mathcal{F}_d^{[\theta]}(P)$ denote the collection of all $f\in\mathcal{F}_d$ for which $\supp f=P$ and $f\geq\inv{\theta}f_P$ on $P$. Then $\mathcal{F}^{[1]}(P)=\{f_P\}$ and $\mathcal{F}^{[\theta]}(P)$ is non-empty if and only if $\theta\geq 1$. For $\theta\in [1,\infty)$ and $m\in\N$ with $m\geq d+1$, denote by $\mathcal{F}^{[\theta]}(\mathcal{P}^m)\equiv\mathcal{F}_d^{[\theta]}(\mathcal{P}_d^m)$ the union of those $\mathcal{F}^{[\theta]}(P)$ for which $P$ is a polytope in $\mathcal{P}^m\equiv\mathcal{P}_d^m$, and note that this is a non-empty affine invariant subclass of $\mathcal{F}_d$. Indeed, fix $b\in\R^d$ and an invertible $A\in\R^{d\times d}$, and let $T\colon\R^d\to\R^d$ be the invertible affine transformation defined by $T(x):=Ax+b$. If $X\sim f\in\mathcal{F}^{[\theta]}(P)$ for some polytope $P\in\mathcal{P}^m$, then $\mu_d(T(P))=\abs{\det A}\,\mu_d(P)$, and so the density $g$ of $T(X)$ satisfies $g(x)=\abs{\det A}^{-1}f(\inv{T}(x))\geq\inv{\{\theta\,\abs{\det A}\,\mu_d(P)\}}=\inv{\{\theta\mu_d(T(P))\}}$ for all $x\in T(P)$. Since $\supp g=T(P)$ is also a polytope in $\mathcal{P}^m$, this shows that $g\in\mathcal{F}^{[\theta]}(\mathcal{P}^m)$, as required.

The sharp oracle inequality~\eqref{Eq:OracleTheta} below may be viewed as 
complementary to Theorem~\ref{Thm:k1} and Proposition~\ref{Prop:PolytopeRisk}.
\begin{theorem}
\label{Thm:ThetaRisk}
Let $X_1,\ldots,X_n \iid f_0\in\mathcal{F}_3$ with $n\geq 4$, and let $\hat{f}_n$ denote the corresponding log-concave maximum likelihood estimator. Then there exists a universal constant $C>0$ such that 
\begin{alignat}{3}
&\E\{\dex^2(\hat{f}_n,f_0)\}&&\notag\\
&\hspace{0.1cm}\leq\!\!\inf_{\substack{m\geq 4\\ \theta\in (1,\infty)}}\,\Biggl\{C\,\biggl(\log^{6/7}\theta\, \rbr{\frac{m}{n}}^{4/7}\log_+^{17/7}\rbr{\frac{n}{\log^{3/2}\theta}}&&+\rbr{\frac{m}{n}}^{20/29}\log^{85/29}n+\theta\log^3(e\theta)\,\frac{m\log^6 n}{n}\biggl)\notag\\
\label{Eq:OracleTheta}
&&&\hspace{2.2cm}+\!\!\inf_{\substack{f\in\mathcal{F}_3^{[\theta]}(\mathcal{P}^m)\\\supp f_0\subseteq\supp f}}\dhell^2(f_0,f)\Biggr\}.
\end{alignat}
\end{theorem}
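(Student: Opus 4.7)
The plan is to establish a local Hellinger bracketing entropy bound for the class of log-concave densities lying in a small Hellinger neighbourhood of an arbitrary fixed $f\in\mathcal{F}_3^{[\theta]}(\mathcal{P}^m)$, and then to feed this into a standard empirical process argument along the lines of Corollary~7.5 of \citet{vdG00}, exactly as in the proof of Theorem~\ref{Thm:k1}. Throughout I would exploit the affine invariance of $\mathcal{F}_3^{[\theta]}(\mathcal{P}^m)$ that was established in the discussion preceding the statement, together with the affine equivariance of $\hat{f}_n$ and the affine invariance of $\dhell^2$, $\KL$ and $\dex^2$, to place $P=\supp f$ in a convenient canonical position; this allows $\|f\|_\infty$ and the diameter of $P$ to be controlled purely in terms of $m$ and $\theta$, independently of the original affine embedding.

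Next, for a fixed polytope $P\in\mathcal{P}^m$, I would triangulate $P$ into $N\lesssim m$ $3$-simplices via the analogue of Lemma~\ref{Lem:euler} for $d=3$, and set up envelope bounds. Since $f\geq\theta^{-1}f_P$ on $P$ by definition of $\mathcal{F}_3^{[\theta]}(P)$, the log-concavity of $f$ and of any nearby $g$, combined with Hellinger closeness of $g$ to $f$, forces $g$ to satisfy both uniform upper and lower bounds in terms of $\theta$ and $\|f\|_\infty$ apart from a thin boundary layer near $\partial P$ of controlled Lebesgue measure. This extends Lemma~\ref{Lem:hellunbds} from the uniform-on-a-simplex case of Section~\ref{Sec:Logkaffine} to the general $\theta$-bounded setting on a polytope.

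Then, on each simplex of the triangulation, I would invoke the bracketing entropy bound for bounded log-concave functions of Proposition~\ref{Prop:bebds}, and sum over the $N\lesssim m$ pieces. In dimension $d=3$ this gives a local bracketing entropy of order $m\,(\log\theta)^{a}\,\epsilon^{-3/2}$ in the bulk, up to polylogarithmic factors in $n$, together with an additive boundary-layer contribution of order $m\,\theta\log^3(e\theta)$. Feeding this into the Dudley-type constraint $\sqrt{n}\,\delta^2\gtrsim\int_0^\delta\sqrt{H_{[\,]}(\epsilon)}\,d\epsilon$ and balancing the three regimes produces the three competing terms on the right-hand side of~\eqref{Eq:OracleTheta}: the dominant $(m/n)^{4/7}\log_+^{17/7}\bigl(n/\log^{3/2}\theta\bigr)$ rate with the $\log^{6/7}\theta$ prefactor comes from the bulk $\epsilon^{-3/2}$ entropy; the $(m/n)^{20/29}\log^{85/29} n$ correction arises from a secondary balancing against a refined envelope of the local log-concave class (in the regime where the parametric boundary term and the bulk term cross over); and the parametric $\theta\log^3(e\theta)\,m\log^6 n/n$ term comes from the boundary-layer envelope contribution. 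Converting the resulting Hellinger risk bound into the stated $\dex^2$ oracle inequality with leading-constant-one approximation term $\dhell^2(f_0,f)$ proceeds exactly as in the conclusion of the proof of Theorem~\ref{Thm:k1}.

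The main technical obstacle will be the entropy computation, where the $\theta$-dependence has to be tracked sharply in two different regimes simultaneously: in the bulk it should enter only logarithmically through the envelope ratio and ultimately yield the $\log^{6/7}\theta$ prefactor of the dominant rate, while in the thin boundary layer near $\partial P$ it enters essentially linearly and must yield the $\theta\log^3(e\theta)$ factor of the parametric term. The delicate point is to arrange the envelopes so that the boundary layer has small enough Lebesgue measure that its contribution can indeed be absorbed into the parametric $m\log^6 n/n$ term, while still keeping the bulk entropy bound polynomial in $\log\theta$; this is what forces the somewhat unusual exponent $20/29$ in the intermediate term. Once this local bracketing entropy bound is in place, the rest of the proof is essentially formal and parallels the proofs of Theorem~\ref{Thm:k1} and Proposition~\ref{Prop:PolytopeRisk}.
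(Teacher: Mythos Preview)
Your overall architecture is exactly that of the paper: a local Hellinger bracketing entropy bound for $\mathcal{G}(f_0,\delta)$ with $f_0\in\mathcal{F}_3^{[\theta]}(\mathcal{P}^m)$ (the paper's Proposition~\ref{Prop:ThetaEntropy}, built in turn on a simplex version Proposition~\ref{Prop:ThetaSimplexEntropy}), triangulation of the support into $O(m)$ simplices via Lemma~\ref{Lem:euler}, the nested-polytope ``invelope'' machinery of Section~\ref{Subsec:EntropyCalcs}, and then \citet[Corollary~7.5]{vdG00} exactly as in the proof of Theorem~\ref{Thm:k1}.

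However, your account of where the $(m/n)^{20/29}$ term comes from is wrong in a way that would derail the execution. It does \emph{not} arise from any boundary-layer versus bulk crossover. In the uniform case ($\theta=1$), Lemma~\ref{Lem:hellunbds}(ii) gives a pointwise upper bound of the form $\log f\le O(\delta)-\log\mu_d(K)$ for $f\in\mathcal{G}(f_K,\delta)$. For general $\theta>1$, the corresponding bound in Lemma~\ref{Lem:hellunbds}(iii) is weaker: the $O(\delta)$ becomes $O(\delta^{2/(d+2)})=O(\delta^{2/5})$ when $d=3$. When you feed this into the bracketing-entropy estimate on the interior nested shells $P_{i,j}^\dagger\setminus P_{i+1,j}^\dagger$ (the bulk, not the boundary), the range of $\log f$ on each shell now contains an additive $\delta^{2/5}$ piece in addition to the $\log\theta$ piece. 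This produces an extra entropy term of order $m\,\delta^{3/5}\varepsilon^{-3/2}$, separate from the $m\,(\log^{3/2}\theta)\,\varepsilon^{-3/2}$ term you identified. Integrating and balancing $\sqrt{n}\,\delta^2\asymp m^{1/2}\delta^{3/10}\cdot\delta^{1/4}$ gives $\delta^2\asymp(m/n)^{20/29}$. The $\theta\log^3(e\theta)(\delta/\varepsilon)^2$ boundary-layer term is genuinely a third, distinct contribution and yields the parametric piece, as you say; but the $20/29$ exponent is purely an artefact of the degraded upper envelope in the bulk, and the paper notes explicitly that this is why taking $\theta\searrow 1$ in~\eqref{Eq:OracleTheta} does not recover Proposition~\ref{Prop:PolytopeRisk}.
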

For a fixed $\theta\in (1,\infty)$, note that if $n/m$ is sufficiently large, then the dominant contribution to the right-hand side of~\eqref{Eq:OracleTheta} comes from the first term. It follows that for fixed $\theta,m$, the log-concave maximum likelihood estimator $\hat{f}_n$ of $f_0\in\mathcal{F}_3^{[\theta]}(\mathcal{P}^m)$ converges at rate $\tilde{O}(n^{-4/7})$ as $n\to\infty$, which was the rate originally conjectured by \citet{SereginWellner2010}.

Despite the attractions of the adaptation mentioned in the previous paragraph, it is worth considering the bound~\eqref{Eq:OracleTheta} in the limits as $\theta \searrow 1$ and $\theta \rightarrow \infty$.  In the first case, owing to the presence of the second term on the right-hand side of~\eqref{Eq:OracleTheta}, we do not recover the bound~\eqref{Eq:OraclePm} from Proposition~\ref{Prop:PolytopeRisk} when we take the limit of the right-hand side of~\eqref{Eq:OracleTheta}; see Section~\ref{Subsec:LocalBE} for further discussion.  We also mention here that for a fixed $n$, the bound in~\eqref{Eq:OraclePm} may be stronger than that in~\eqref{Eq:OracleTheta} if for example $f_0\in\mathcal{F}_3^{[\theta]}(\mathcal{P}^m)$ for some $\theta\equiv\theta_n\in (1,\infty)$ sufficiently close to 1. To substantiate this remark, we note that if $\theta\in [1,\infty)$ and $P\in\mathcal{P}_3$ is a polytope, then it follows from the proof of Lemma~\ref{Lem:hellunbds}(iii) that every $f\in\mathcal{F}_3^{[\theta]}(P)$ satisfies $\theta^{-1}f_P\leq f\lesssim\log^3(e\theta)f_P$ on $P$.  Thus, if $f_0\in\mathcal{F}_3^{[\theta]}(P)$, then
\[
\dhell^2(f_0,f_P)=\int_P\,\bigl(\sqrt{f_0}-\sqrt{f_P}\bigr)^2\lesssim (1-\theta^{-1})\vee \bigl(\log^3(e\theta)-1\bigr)\lesssim\theta-1
\]
when $\theta\leq 2$. Consequently, if $n$ is fixed and $\theta,m$ are such that $\theta\leq 1+n^{-20/29}$ and $m\leq n^{9/29}\log^{-6}n$, then for any $f_0\in\mathcal{F}_3^{[\theta]}(P)$ with $P\in\mathcal{P}^m$, the bound in~\eqref{Eq:OraclePm} is at most a universal constant multiple of $(m/n)\log^6 n+(\theta-1)\lesssim n^{-20/29}$, whereas the bound in~\eqref{Eq:OracleTheta} is at least a universal constant multiple of $n^{-20/29}\log^{85/29}n$.

It is also notable that the bound in~\eqref{Eq:OracleTheta} diverges to infinity as $\theta\to\infty$.  In fact, we will deduce from Proposition~\ref{Prop:LSC} below that this is not just an artefact of our analysis; more precisely, the log-concave maximum likelihood estimator does not adapt uniformly over $\bigcup_{\,\theta \geq 1} \mathcal{F}_d^{[\theta]}(P)$, or indeed over any subclass of $\mathcal{F}_d$ containing an approximating sequence for a uniform density on a closed Euclidean ball.
\begin{proposition}
\label{Prop:LSC}
Fix $d \in \mathbb{N}$ and $n \geq d+1$. Let $(f^{(\ell)})$ be a sequence of densities in $\mathcal{F}_d$ for which the corresponding sequence of probability measures $(P^{(\ell)})$ converges weakly to a distribution $P^{(0)}$ with density $f^{(0)}\colon\R^d\to [0,\infty)$. For each $\ell \in \mathbb{N}_0$, let $X_1^{(\ell)},\ldots,X_n^{(\ell)} \stackrel{\mathrm{iid}}{\sim} f^{(\ell)}$, and let $\hat{f}_n^{(\ell)}$ denote the corresponding log-concave maximum likelihood estimator. Then
\[
\liminf_{\ell \rightarrow \infty}\,\mathbb{E} \bigl\{\dex^2\bigl(\hat{f}_n^{(\ell)},f^{(\ell)}\bigr)\bigr\} \geq \mathbb{E} \bigl\{\dex^2\bigl(\hat{f}_n^{(0)},f^{(0)}\bigr)\bigr\}.
\]
\end{proposition}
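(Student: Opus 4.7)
The plan is to couple the samples via Skorokhod's representation theorem, establish almost sure convergence of the loss under this coupling by combining weak continuity properties of log-concave densities and of the log-concave MLE, and then conclude via Fatou's lemma. First, I observe that since each $f^{(\ell)}$ is log-concave and the class $\mathcal{F}_d$ is closed under weak convergence to a density \citep[cf.][]{DSS11}, we have $f^{(0)}\in\mathcal{F}_d$. Applying Skorokhod's theorem to the product measures $\rbr{P^{(\ell)}}^{\otimes n}\to\rbr{P^{(0)}}^{\otimes n}$ on the Polish space $\R^{dn}$, I may work on a single probability space on which, for each $\ell\in\N_0$, the vector $(X_1^{(\ell)},\dotsc,X_n^{(\ell)})$ consists of iid draws from $f^{(\ell)}$, and moreover $X_i^{(\ell)}\to X_i^{(0)}$ almost surely for each $i$; the marginal laws, and hence $\E\bigl\{\dex^2(\hat{f}_n^{(\ell)},f^{(\ell)})\bigr\}$, are unchanged by this construction.

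Next I would show that $\dex^2(\hat{f}_n^{(\ell)},f^{(\ell)})\to\dex^2(\hat{f}_n^{(0)},f^{(0)})$ almost surely. For the $f^{(\ell)}(X_i^{(\ell)})$ terms, I would invoke the fact that weak convergence of a sequence of log-concave densities to a log-concave limit $f^{(0)}$ entails locally uniform convergence on the interior of $\supp f^{(0)}$ \citep[cf.][]{DSS11}. Since $\supp f^{(0)}$ is convex with Lebesgue-null boundary, each $X_i^{(0)}$ lies in $\Int(\supp f^{(0)})$ almost surely, so on a small ball around $X_i^{(0)}$ we have both uniform convergence of the densities and, for large $\ell$, containment of $X_i^{(\ell)}$. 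For the $\hat{f}_n^{(\ell)}(X_i^{(\ell)})$ terms, the analogous ingredient is that the log-concave MLE depends continuously on its input sample via its tent-function representation, yielding locally uniform convergence of $\hat{f}_n^{(\ell)}$ to $\hat{f}_n^{(0)}$ on the relative interior of $\conv\{X_1^{(0)},\dotsc,X_n^{(0)}\}$. Combining these with the almost sure convergence of the sample points gives the desired almost sure convergence of the loss.

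To conclude, by the defining optimality of $\hat{f}_n^{(\ell)}$ we have $\sum_i\log\hat{f}_n^{(\ell)}(X_i^{(\ell)})\geq\sum_i\log f^{(\ell)}(X_i^{(\ell)})$ almost surely, so $\dex^2(\hat{f}_n^{(\ell)},f^{(\ell)})\geq 0$ almost surely, and Fatou's lemma yields
\[
\liminf_{\ell\to\infty}\E\bigl\{\dex^2(\hat{f}_n^{(\ell)},f^{(\ell)})\bigr\}\geq \E\Bigl\{\liminf_{\ell\to\infty}\dex^2(\hat{f}_n^{(\ell)},f^{(\ell)})\Bigr\}=\E\bigl\{\dex^2(\hat{f}_n^{(0)},f^{(0)})\bigr\},
\]
which is the required bound.

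I expect the principal technical difficulty to be the justification of the pointwise convergence $\hat{f}_n^{(\ell)}(X_i^{(\ell)})\to\hat{f}_n^{(0)}(X_i^{(0)})$ in the positive-probability event that $X_i^{(0)}$ is a vertex of $\conv\{X_1^{(0)},\dotsc,X_n^{(0)}\}$, since $\hat{f}_n^{(0)}$ need not be continuous on $\R^d$ at such points and locally uniform convergence is unavailable at the boundary. Here I would rely on the characterisation of the log-concave MLE via a finite-dimensional concave optimisation problem in the heights of the associated tent function at the sample points, and argue that both the optimiser and the value of the resulting density at each vertex depend continuously on the vertex locations; carrying out this argument in detail --- in particular, verifying that the combinatorial structure of the induced polyhedral subdivision poses no obstruction to continuity of the evaluation map --- is where the main work resides.
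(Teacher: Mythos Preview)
Your overall architecture --- Skorokhod coupling, almost sure convergence of the loss, Fatou --- matches the paper exactly, as does your treatment of the $\log f^{(\ell)}(X_i^{(\ell)})$ terms via locally uniform convergence of concave functions on the interior of the support. The difference lies in how you handle $\tfrac{1}{n}\sum_i\log\hat{f}_n^{(\ell)}(X_i^{(\ell)})$. You propose to establish pointwise convergence $\hat{f}_n^{(\ell)}(X_i^{(\ell)})\to\hat{f}_n^{(0)}(X_i^{(0)})$ for each $i$, and you correctly identify the delicate case where $X_i^{(0)}$ is a vertex of the convex hull. The paper sidesteps this entirely by observing that the sum in question is precisely the optimal value $L(\Pr_n^{(\ell)})=\sup_{\phi\in\Phi_d}L(\phi,\Pr_n^{(\ell)})$ of the log-concave MLE problem, and then invoking the continuity of $Q\mapsto L(Q)$ in Wasserstein distance from \citet[Theorem~2.15]{DSS11}, having first checked that $d_W(\Pr_n^{(\ell)},\Pr_n^{(0)})\to 0$ almost surely. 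This yields $\int\log\hat{f}_n^{(\ell)}\,d\Pr_n^{(\ell)}\to\int\log\hat{f}_n^{(0)}\,d\Pr_n^{(0)}$ in one stroke, without ever needing to control the estimator at individual sample points, and in particular without confronting any boundary issue.

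Your route via the finite-dimensional tent-height optimisation is plausible and should ultimately succeed, but it buys nothing over the paper's argument and costs the extra work you anticipate. The paper's observation that only the \emph{value} of the maximised log-likelihood is needed, not the maximiser itself, is the cleaner way through.
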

To understand the consequences of this lower semi-continuity result, fix any polytope $P\in\mathcal{P}_d$ and a closed Euclidean ball $B\subseteq \Int P$.  We can find a sequence $(f^{(\ell)})$ in $\bigcup_{\,\theta \geq 1} \mathcal{F}_d^{[\theta]}(P)$ such that the corresponding probability measures converge weakly to the uniform distribution on $B$.  Such a sequence must necessarily satisfy $\inf_{x \in P} f^{(\ell)}(x) \rightarrow 0$, and Proposition~\ref{Prop:LSC}, together with the result of~\citet{Wieacker1987} mentioned in the introduction, then ensures that $\liminf_{\ell \rightarrow \infty} \mathbb{E} \bigl\{\dex^2\bigl(\hat{f}_n^{(\ell)},f^{(\ell)}\bigr)\bigr\} \gtrsim_d n^{-2/(d+1)}$ for $d \geq 2$.  Thus, indeed, no adaptation is possible.

The proof of Theorem~\ref{Thm:ThetaRisk} follows a similar approach to that set out after the statement of Theorem~\ref{Thm:k1}. The key intermediate results are the local bracketing entropy bounds in Propositions~\ref{Prop:ThetaSimplexEntropy} and~\ref{Prop:ThetaEntropy} in Section~\ref{Subsec:LocalBE}, which are analogous to the Propositions~\ref{Prop:SimplexEntropy} and~\ref{Prop:PolytopeEntropy} that prepare the ground for the proof of Theorem~\ref{Thm:k1}. As we explain in the discussion before the proof of Proposition~\ref{Prop:SimplexEntropy}, some modifications to the previous arguments are necessary, but we once again draw heavily on the technical apparatus developed in Section~\ref{Subsec:EntropyCalcs}. The key reason we are able to apply these techniques here is that the densities in $\mathcal{F}^{[\theta]}(\mathcal{P}^m)$ are bounded away from zero, as evidenced by the fact that the bound~\eqref{Eq:OracleTheta} diverges as $\theta\to\infty$. Once we have obtained Proposition~\ref{Prop:ThetaEntropy}, all that remains is to appeal to standard empirical process theory~\citep[Corollary~7.5]{vdG00}, from which the desired conclusion~\eqref{Eq:OracleTheta} follows readily; see Section~\ref{Subsec:ThetaProofs}.  
In contrast to the proof of Theorem~\ref{Thm:k1}, we do not require an additional argument along the lines of the proof of Proposition~\ref{Prop:Log1affEntropy} given in Section~\ref{Sec:LogkaffineProofs}, which is specific to the log-1-affine densities (with possibly unbounded polyhedral support) studied in Section~\ref{Sec:Logkaffine}.

\section{Adaptation to densities with well-separated contours}
\label{Sec:Smoothness}
In this section, we consider adaptation of the log-concave maximum likelihood estimator over yet further subclasses of $\mathcal{F}_d$.  As discussed in Examples~\ref{Ex:Holder1} and~\ref{Ex:Holder2} below, these are designed to generalise notions of H\"older smoothness, while at the same time satisfying our key property of affine invariance.  Given $S \in \mathbb{S}^{d \times d}$ and $x \in \mathbb{R}^d$, we write $\|x\|_S := (x^\top S^{-1} x)^{1/2}$ for its \emph{$S$-Mahalanobis norm}. 
\begin{definition}
\label{Def:Separation}
For $\beta\geq 1$ and $\Lambda,\tau>0$, let $\mathcal{F}^{(\beta,\Lambda,\tau)}\equiv\mathcal{F}_d^{(\beta,\Lambda,\tau)}$ denote the collection of all $f\in\mathcal{F}_d$ that are continuous on $\R^d$ and satisfy
\begin{equation}
\label{Eq:Separation}
\norm{x-y}_{\Sigma_f}\geq\frac{\{f(x)-f(y)\}\det^{1/2}\Sigma_f}{\Lambda\,\bigl\{f(x)\det^{1/2}\Sigma_f\}^{1-1/\beta}}
\end{equation}
whenever $x,y\in\R^d$ are such that $f(y)<f(x)<\tau\det^{-1/2}\Sigma_f$. In addition, we define $\mathcal{F}^{(\beta,\Lambda)}:=\bigcap_{\tau>0}\mathcal{F}^{(\beta,\Lambda,\tau)}$.
\end{definition}
The defining condition~\eqref{Eq:Separation} imposes a separation condition on contours below some fixed level. For instance, when $f$ is isotropic, the condition asks that for all small $t>0$, the contours of $f$ at levels $t$ and $2t$ are at least a distance of order $\inv{\Lambda}t^{1/\beta}$ apart. See below for further discussion and motivating examples. We now collect together some basic properties of the classes $\mathcal{F}^{(\beta,\Lambda,\tau)}$. 
\begin{proposition}
\label{Prop:SepProperties}
For $\beta\geq 1$ and $\Lambda,\tau>0$, we have the following:
\begin{enumerate}[label=(\roman*)]
\item $\mathcal{F}^{(\beta,\Lambda,\tau)}$ is \emph{affine invariant}; i.e.\ if $X\sim f\in \mathcal{F}^{(\beta,\Lambda,\tau)}$ and $T\colon\R^d\to\R^d$ is an invertible affine transformation, then the density $g$ of $T(X)$ also lies in $\mathcal{F}^{(\beta,\Lambda,\tau)}$.
\item $\mathcal{F}^{(\beta,\Lambda,\tau)}\subseteq\mathcal{F}^{(\beta,\Lambda^*)}$ for all $\Lambda^*\geq \Lambda(B_d/\tau)^{1/\beta}$, where $B_d:=\sup_{h\in\mathcal{F}_d^{0,I}}\sup_{x\in\R^d}h(x) \in (0,\infty)$.
\item If $\alpha\in [1,\beta)$, then $\mathcal{F}^{(\beta,\Lambda,\tau)}\subseteq\mathcal{F}^{(\alpha,\Lambda',\tau)}$ for all $\Lambda'\geq B_d^{1/\alpha-1/\beta}\Lambda$.
\item There exists $\Lambda_{0,d}>0$, depending only on $d$, such that $\mathcal{F}^{(\beta,\Lambda)}$ is non-empty only if $\Lambda\geq\Lambda_{0,d}$.
\end{enumerate}
\end{proposition}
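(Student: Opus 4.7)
The plan is to handle each of~(i)--(iv) in turn, reserving the bulk of the work for~(ii).

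For~(i), write the invertible affine map as $T(x)=Ax+b$, so that $T(X)$ has density $g(x)=\abs{\det A}^{-1}f(\inv{A}(x-b))$ with $\Sigma_g=A\Sigma_f\tm{A}$ and $\det^{1/2}\Sigma_g=\abs{\det A}\det^{1/2}\Sigma_f$. Substituting $x=Au+b$ and $y=Av+b$ into~\eqref{Eq:Separation} for $g$, one checks that $\norm{x-y}_{\Sigma_g}=\norm{u-v}_{\Sigma_f}$ and $g(x)\det^{1/2}\Sigma_g=f(u)\det^{1/2}\Sigma_f$, so the condition for $g$ at $(x,y)$ collapses to the condition for $f$ at $(u,v)$. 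For~(iii), the condition at exponent $\beta$ combined with $f(x)\det^{1/2}\Sigma_f\leq B_d$ gives the factor $\{f(x)\det^{1/2}\Sigma_f\}^{1/\alpha-1/\beta}\leq B_d^{1/\alpha-1/\beta}$, which converts the $\beta$-condition with constant $\Lambda$ into the $\alpha$-condition with constant $\Lambda'=B_d^{1/\alpha-1/\beta}\Lambda$.

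For~(ii), the core task is to bootstrap the separation condition from the low-density region to a global differential inequality by using log-concavity and the ceiling $B_d$. Given $f\in\mathcal{F}^{(\beta,\Lambda,\tau)}$ and $x,y$ with $f(y)<f(x)$, parametrize the line through them in $\Sigma_f$-arc length by $\ell(s):=x+s(y-x)/L$, where $L:=\norm{x-y}_{\Sigma_f}$, and set $g(s):=f(\ell(s))\det^{1/2}\Sigma_f$. Then $g$ is log-concave on $\R$, with $u:=g(0)=f(x)\det^{1/2}\Sigma_f\leq B_d$, $v:=g(L)$ and $g\to 0$ at $\pm\infty$. Let $s^*$ be a maximizer of $g$ (so $g(s^*)\geq u$) and $s_\tau:=\inf\{s\geq s^*:g(s)\leq\tau\}$. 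Applying~\eqref{Eq:Separation} to pairs $(\ell(s),\ell(s+\epsilon))$ with $g(s)<\tau$ and letting $\epsilon\searrow 0$ yields $\abs{(\log g)'(s)}\leq\Lambda g(s)^{-1/\beta}$ on $(s_\tau,\infty)$. By concavity of $\log g$, the derivative $(\log g)'$ is non-increasing past $s^*$, so $\abs{(\log g)'(s)}\leq\abs{(\log g)'(s_\tau^+)}\leq\Lambda\tau^{-1/\beta}$ on $(s^*,s_\tau)$; combined with $g\leq B_d$, this produces the global inequality
\begin{equation*}
-g'(s)/g(s)^{1-1/\beta}=\abs{(\log g)'(s)}g(s)^{1/\beta}\leq\Lambda\tau^{-1/\beta}B_d^{1/\beta}=\Lambda^*\qquad\text{for all }s\in(s^*,L].
\end{equation*}
The substitution $t=g(s)$ in $\int_{s^*}^L-g'(s)/g(s)^{1-1/\beta}\,ds$ gives $\beta(u^{1/\beta}-v^{1/\beta})\leq\beta(g(s^*)^{1/\beta}-v^{1/\beta})\leq\Lambda^*L$, and the elementary inequality $\beta(u^{1/\beta}-v^{1/\beta})u^{1-1/\beta}\geq u-v$ for $0\leq v\leq u$ and $\beta\geq 1$ then delivers the desired condition. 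On setting $r:=(v/u)^{1/\beta}$, this last inequality reduces to $\beta(1-r)\geq 1-r^\beta$, which follows because $\psi(r):=\beta(1-r)-(1-r^\beta)$ satisfies $\psi(1)=0$ and $\psi'(r)=\beta(r^{\beta-1}-1)\leq 0$ on $[0,1]$.

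For~(iv), by~(i) it suffices to consider isotropic $f$, so that $\det\Sigma_f=1$ and $\sup f\leq B_d$. A standard volumetric argument for isotropic log-concave densities (applying Markov's inequality to $\norm{X}^2$ with $X\sim f$ to exhibit a ball containing a definite fraction of the mass) yields $\sup f\geq c_d$ for some $c_d>0$ depending only on $d$, attained at some $x^*\in\R^d$ by continuity. The separation condition gives the pointwise lower bound $f(y)\geq f(x^*)-\Lambda\norm{y-x^*}f(x^*)^{1-1/\beta}$, so setting $r:=f(x^*)^{1/\beta}/\Lambda$ (the largest radius over which this bound remains non-negative) and integrating over $\bar{B}(x^*,r)$,
\begin{equation*}
1\geq\int_{\bar{B}(x^*,r)}\!f\geq\omega_d\,f(x^*)r^d-\frac{d\,\omega_d}{d+1}\,\Lambda f(x^*)^{1-1/\beta}r^{d+1}=\frac{\omega_d\,f(x^*)^{1+d/\beta}}{(d+1)\,\Lambda^d},
\end{equation*}
where $\omega_d$ denotes the volume of the unit Euclidean ball in $\R^d$. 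Rearranging gives $\Lambda\geq\{\omega_d/(d+1)\}^{1/d}f(x^*)^{1/d+1/\beta}$, and since $f(x^*)\geq c_d$ this is bounded below uniformly in $\beta\in[1,\infty)$ by a constant $\Lambda_{0,d}>0$ depending only on $d$. The principal obstacle is~(ii): combining concavity of $\log g$ with the ceiling $B_d$ to control the derivative in the high-density region $\{g\geq\tau\}$, where the separation condition itself does not apply, and then verifying the sharp elementary inequality that recovers the precise form $\Lambda^*=\Lambda(B_d/\tau)^{1/\beta}$.
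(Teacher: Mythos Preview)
Your treatment of~(i) and~(iii) matches the paper's proof exactly. For~(ii) and~(iv), you take genuinely different routes, and there is one small technical slip in~(ii) worth noting.

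\textbf{Part~(ii).} The paper's argument is more economical than yours: rather than passing to a differential inequality and integrating, it locates a single pair of points on the segment to which the separation condition can be applied directly. Specifically, writing $z_t:=x+t(y-x)$ and $h(t):=-\log f(z_t)$, the paper finds $t_1,t_2$ with $h(t_1)=-\log\tau$ and $h(t_2)=h(t_1)+\log f(x)-\log f(y)$, and uses convexity of $h$ to deduce $t_2-t_1\leq 1$ (so $\norm{z_{t_1}-z_{t_2}}_{\Sigma_f}\leq\norm{x-y}_{\Sigma_f}$). One application of~\eqref{Eq:Separation} to $(z_{t_1},z_{t_2})$, together with $f\leq B_d$, then yields the result. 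Your integration approach works too, but as written it has a gap: you integrate from $s^*$ to $L$ and bound by $\Lambda^* L$, which requires $s^*\geq 0$. If the maximizer $s^*$ is negative (which can happen when $g$ is already decreasing at $s=0$), then $L-s^*>L$. The fix is simply to integrate from $\max(s^*,0)$: when $s^*<0$, the integrand is still bounded by $\Lambda^*$ on $[0,L]\subseteq(s^*,\infty)$, and the integral equals exactly $\beta(u^{1/\beta}-v^{1/\beta})$ rather than the larger $\beta(g(s^*)^{1/\beta}-v^{1/\beta})$. Your route is also somewhat informal about differentiability of $\log g$ (log-concave functions need only be differentiable almost everywhere), but this is easily repaired using one-sided derivatives.

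\textbf{Part~(iv).} Your volumetric argument is valid and arguably more self-contained than the paper's. The paper instead appeals to the envelope bound $f(x)\leq\exp(-\tilde{A}_d\norm{x}+\tilde{B}_d)$ for isotropic $f$, which forces any point with $f=t_d/2$ (where $t_d:=(4e\pi)^{-d/2}$ is the Lov\'asz--Vempala lower bound on $\sup f$) to lie in a ball of $d$-dependent radius $R_d$; applying~\eqref{Eq:Separation} to a pair $x,y$ with $f(x)=t_d$, $f(y)=t_d/2$ then gives $\inv{\Lambda}t_d/2\leq\norm{x-y}\leq 2R_d$. Your approach avoids the envelope machinery entirely by integrating the pointwise lower bound $f(y)\geq f(x^*)-\Lambda\norm{y-x^*}f(x^*)^{1-1/\beta}$ over a ball, which is a nice alternative.
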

Note in particular that since the log-concave maximum likelihood estimator $\hat{f}_n$ is affine equivariant \citep[Remark~2.4]{DSS11}, and since our loss functions $\dhell^2$, $\KL$ and $\dex^2$ are affine invariant, property (i) above allows us to restrict attention to \emph{isotropic} $f\in\mathcal{F}^{(\beta,\Lambda,\tau)}$, namely those belonging to $\mathcal{F}_d^{0,I}$. Property (iii) indicates that the classes $\mathcal{F}^{(\beta,\Lambda,\tau)}$ are nested with respect to the exponent $\beta\geq 1$. 

In addition, by taking $\alpha=1$ in (iii) and then applying (ii), we deduce that the densities in $\mathcal{F}^{(\beta,\Lambda,\tau)}$ are all Lipschitz on $\R^d$, but as we will see in Examples~\ref{Ex:Laplace} and~\ref{Ex:Holder1}, they need not be differentiable everywhere. In cases where $f\in\mathcal{F}_d$ is differentiable on an open set of the form $\{x\in\R^d:f(x)<\tau^*\}$ for some $\tau^*>0$, the necessary and sufficient condition in the following proposition provides us with a simpler way of checking whether $f$ lies in $\mathcal{F}^{(\beta,\Lambda,\tau)}$.  For $w \in \mathbb{R}^d$ and $S \in \mathbb{S}^{d \times d}$, let $\|w\|_S' := (\tm{w}\inv{S}w)^{1/2}\det^{-1/2}S$ denote its \emph{scaled $S$-Mahalanobis norm}.
\begin{proposition}
\label{Prop:Differentiable}
Suppose that there exists $\tau^*>0$ such that $f\in\mathcal{F}_d$ is continuous on $\R^d$ and differentiable at every $x\in\R^d$ satisfying $f(x)<\tau^*$. Then for $\beta\geq 1$ and any $\tau\leq\tau^*\det^{1/2}\Sigma_f$, we have $f\in\mathcal{F}^{(\beta,\Lambda,\tau)}$ if and only if
\begin{equation}
\label{Eq:GradBd}
\norm{\nabla f(x)}_{\inv{\Sigma_f}}'\leq\Lambda\,\bigl\{f(x)\textstyle\det^{1/2}\Sigma_f\bigr\}^{1-1/\beta}
\end{equation}
for all $x\in\R^d$ with $f(x) < \tau\det^{-1/2}\Sigma_f$.
\end{proposition}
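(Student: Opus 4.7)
The plan is to reduce to the isotropic case by affine invariance, handle the necessity direction with a one-sided Taylor expansion, and establish sufficiency by integrating $\nabla f$ along a detour through the convex sublevel set $\{z\in\R^d:f(z)<f(x)\}$.

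First I would verify that everything in sight is affine invariant. By Proposition~\ref{Prop:SepProperties}(i), the class $\mathcal{F}^{(\beta,\Lambda,\tau)}$ is affine invariant, and a short chain-rule calculation with $g(y):=\inv{\abs{\det A}}f(\inv{A}(y-b))$ gives $\Sigma_g=A\Sigma_f\tm{A}$ and $\nabla g(Ax+b)=\inv{\abs{\det A}}A^{-\top}\nabla f(x)$, whence $g(Ax+b)\det^{1/2}\Sigma_g=f(x)\det^{1/2}\Sigma_f$ and $\norm{\nabla g(Ax+b)}_{\inv{\Sigma_g}}'=\norm{\nabla f(x)}_{\inv{\Sigma_f}}'$. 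Thus both sides of~\eqref{Eq:GradBd} and the sublevel condition $f(x)<\tau\det^{-1/2}\Sigma_f$ are preserved under affine equivalence, and it suffices to treat isotropic $f$ with $\Sigma_f=I$. In that case~\eqref{Eq:GradBd} reads $\norm{\nabla f(x)}\leq\Lambda f(x)^{1-1/\beta}$ whenever $f(x)<\tau$, and the separation condition~\eqref{Eq:Separation} reads $\norm{x-y}\geq\{f(x)-f(y)\}/\{\Lambda f(x)^{1-1/\beta}\}$ whenever $f(y)<f(x)<\tau$.

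For necessity, I would fix $x$ with $\nabla f(x)\neq 0$ and $f(x)<\tau$ and plug $y_\epsilon:=x-\epsilon\nabla f(x)/\norm{\nabla f(x)}$ into the separation condition. Differentiability at $x$ gives $f(y_\epsilon)=f(x)-\epsilon\norm{\nabla f(x)}+o(\epsilon)<f(x)<\tau$ for all sufficiently small $\epsilon>0$, so the condition yields $\epsilon\geq\{\epsilon\norm{\nabla f(x)}+o(\epsilon)\}/\{\Lambda f(x)^{1-1/\beta}\}$; dividing by $\epsilon$ and letting $\epsilon\searrow 0$ delivers~\eqref{Eq:GradBd}.

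The main obstacle will be sufficiency: the naive strategy of integrating $\nabla f$ along the straight segment from $y$ to $x$ fails because log-concavity permits $f$ to exceed $\tau$ in the interior of $[x,y]$, where we have neither differentiability nor the gradient bound. The fix is to replace this segment by a detour that remains in the sub-$f(x)$ region. I set $T:=\{z\in\R^d:f(z)\geq f(x)\}$, which is closed and convex by log-concavity; since $x\in T$ but $y\notin T$, the nearest point $y^*\in T$ to $y$ is well defined and satisfies $\norm{y-y^*}\leq\norm{y-x}$. By continuity of $f$, the segment $[y,y^*]$ first meets $T$ at a point $z$ with $f<f(x)$ on $[y,z)$ and $f(z)=f(x)$. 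Along the path $\gamma(t):=y+t(z-y)$, $t\in[0,1]$, we have $f(\gamma(t))\leq f(x)<\tau\leq\tau^*$, so $f$ is differentiable at each $\gamma(t)$ and $\norm{\nabla f(\gamma(t))}\leq\Lambda f(\gamma(t))^{1-1/\beta}\leq\Lambda f(x)^{1-1/\beta}$. This uniform bound makes $t\mapsto f(\gamma(t))$ Lipschitz on $[0,1]$, so the fundamental theorem of calculus yields
\begin{align*}
f(x)-f(y)=f(z)-f(y)&=\int_0^1\tm{\nabla f(\gamma(t))}(z-y)\,dt\\
&\leq\Lambda f(x)^{1-1/\beta}\norm{z-y}\leq\Lambda f(x)^{1-1/\beta}\norm{x-y},
\end{align*}
which rearranges to the required separation bound.
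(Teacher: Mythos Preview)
Your proof is correct and follows essentially the same route as the paper's. The necessity direction is identical: differentiate in the steepest-descent direction and pass to the limit. For sufficiency, both arguments avoid the region $\{f\geq\tau\}$ by replacing the segment $[y,x]$ with a shorter segment from $y$ to a point $z$ satisfying $f(z)=f(x)$ and $[y,z]\subseteq\{f\leq f(x)\}$, then apply the gradient bound along that segment.

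The one difference worth noting is how $z$ is chosen. You project $y$ onto the closed convex superlevel set $T=\{f\geq f(x)\}$; the paper simply walks along $[y,x]$ and stops at the \emph{first} point $x'$ where $f$ reaches $f(x)$, i.e.\ $t':=\inf\{t\in(0,1]:f(y+t(x-y))=f(x)\}$ and $x':=y+t'(x-y)$. This achieves the same three properties ($f(x')=f(x)$, $\|x'-y\|_{\Sigma}\leq\|x-y\|_{\Sigma}$, and $f\leq f(x)$ on $[y,x']$) without ever leaving the original segment, so your worry that ``the naive strategy\ldots along the straight segment from $y$ to $x$ fails'' is slightly overstated: you do not need a genuine detour, only to stop early. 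Your projection argument is a perfectly valid alternative, and your preliminary reduction to the isotropic case (which the paper omits, working directly with the Mahalanobis inner product $\langle v,w\rangle'=(\det\Sigma)\,v^\top\Sigma w$) is a reasonable simplification.
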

Our main result in this section is a sharp oracle inequality for the performance of the log-concave maximum likelihood estimator when the true log-concave density is close to $\mathcal{F}_d^{(\beta,\Lambda)}$ when $d=3$. In view of Proposition~\ref{Prop:SepProperties}(ii), we work here with the classes $\mathcal{F}_3^{(\beta,\Lambda)}$ rather than the more general classes $\mathcal{F}_3^{(\beta,\Lambda,\tau)}$ for ease of presentation. Let $\Lambda_0\equiv\Lambda_{0,3}>0$ be the universal constant from Proposition~\ref{Prop:SepProperties}(iv) and its proof, and for each $\beta\geq 1$, let $r_\beta:=\frac{\beta+3}{\beta+7}\wedge\frac{4}{7}$.
\begin{theorem}
\label{Thm:Smoothness}
Let $X_1,\ldots,X_n \iid f_0 \in \mathcal{F}_3$ for some $n \geq 4$, and let $\hat{f}_n$ denote the corresponding log-concave maximum likelihood estimator. Then there exists a universal constant $C > 0$ such that 
\begin{equation}
\label{Eq:Smoothness}
\mathbb{E}\{\dex^2(\hat{f}_n,f_0)\}\leq\inf_{\beta\geq 1,\,\Lambda\geq\Lambda_0}\; \biggl\{C\Lambda^{\frac{4\beta}{\beta+7}\,\wedge\, 1}\,n^{-r_\beta}\log^{\frac{16\beta+39}{2(\beta+3)}r_\beta}n\,+\!\!\inf_{f \in \mathcal{F}_3^{(\beta,\Lambda)}}\dhell^2(f_0,f)\biggr\}.
\end{equation}
\end{theorem}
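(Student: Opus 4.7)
The plan is to reduce Theorem~\ref{Thm:Smoothness} to Theorem~\ref{Thm:ThetaRisk} by constructing, for each approximating density $f\in\mathcal{F}_3^{(\beta,\Lambda)}$, a companion $\tilde{f}\in\mathcal{F}_3^{[\theta]}(\mathcal{P}^m)$ with suitably chosen parameters $(\theta,m)$, and then balancing the right-hand side of~\eqref{Eq:OracleTheta} against the additional approximation error $\dhell^2(f,\tilde{f})$. By Proposition~\ref{Prop:SepProperties}(i) together with the affine equivariance of $\hat{f}_n$ and the affine invariance of $\dex^2$, I may assume $f$ is isotropic, so that the gradient bound~\eqref{Eq:GradBd} simplifies to $\norm{\nabla f(x)}\leq\Lambda f(x)^{1-1/\beta}$ on the set where $f$ is small, and $\sup f$ is bounded by a universal constant.

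The approximation is built around a truncation level $\varepsilon>0$: the super-level set $K_\varepsilon:=\{f\geq\varepsilon\}$ is convex and, by the gradient bound, has a sufficiently regular boundary that classical polytope approximation theory in $\R^3$ (Gruber-type volume-deficit estimates) yields a circumscribing polytope $P_\varepsilon\supseteq K_\varepsilon$ with at most $m$ facets and $\mu_3(P_\varepsilon\setminus K_\varepsilon)\lesssim_\Lambda m^{-1}\mu_3(K_\varepsilon)$. I then take $\tilde{f}$ proportional to the largest log-concave minorant of $f\cdot\Ind_{K_\varepsilon}+\varepsilon\cdot\Ind_{P_\varepsilon\setminus K_\varepsilon}$, which lies in $\mathcal{F}_3^{[\theta]}(P_\varepsilon)$ with $\theta\asymp\varepsilon^{-1}$. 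The Hellinger error $\dhell^2(f,\tilde{f})$ then decomposes into (a) the tail mass $\int_{\{f<\varepsilon\}}f$, handled by the exponential tail of isotropic log-concave densities; (b) a mismatch on the polytopal shell $P_\varepsilon\setminus K_\varepsilon$, bounded by $\varepsilon\cdot\mu_3(P_\varepsilon\setminus K_\varepsilon)\lesssim_\Lambda m^{-1}$; and (c) a contribution from regularising $f$ near the contour $\{f=\varepsilon\}$, of order $\Lambda^2\varepsilon^{2/\beta}$ via~\eqref{Eq:GradBd}.

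Feeding $\tilde{f}$ into~\eqref{Eq:OracleTheta} and combining it with $\dhell(f_0,\tilde{f})\leq\dhell(f_0,f)+\dhell(f,\tilde{f})$ via a $(1+\eta)$-splitting (so as to preserve the leading constant $1$ on $\dhell^2(f_0,f)$) yields, after polylogarithmic simplification and with $\theta\asymp\varepsilon^{-1}$, a bound of the form
\[
\E\{\dex^2(\hat{f}_n,f_0)\}\lesssim \dhell^2(f_0,f)+\inf_{\varepsilon,m}\,\Biggl\{\theta\biggl(\frac{m}{n}\biggr)^{\!4/7}+\theta\cdot\frac{m}{n}+\Lambda^2\varepsilon^{2/\beta}+\frac{1}{m}\Biggr\}.
\]
Optimising with $\varepsilon\asymp n^{-\beta/(\beta+7)}$ and $m\asymp n^{4/(\beta+7)}$ balances the dominant stochastic term $\theta(m/n)^{4/7}$ against $\Lambda^2\varepsilon^{2/\beta}$, producing the rate $n^{-(\beta+3)/(\beta+7)}$. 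The cap at $n^{-4/7}$ for $\beta\geq 7/3$ is inherited directly from the floor already present in~\eqref{Eq:OracleTheta}, whereas the dependence $\Lambda^{4\beta/(\beta+7)\wedge 1}$ and the polylogarithmic exponent $\frac{16\beta+39}{2(\beta+3)}\,r_\beta$ emerge from this same balancing step.

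The principal obstacle I expect is the construction in the second step: producing a genuinely log-concave $\tilde{f}$ on an $m$-faceted polytope $P_\varepsilon$ that lies in $\mathcal{F}_3^{[\theta]}(P_\varepsilon)$ and simultaneously achieves $\dhell^2(f,\tilde{f})\lesssim m^{-1}+\Lambda^2\varepsilon^{2/\beta}$. Converting the pointwise gradient bound~\eqref{Eq:GradBd} into a quantitative polytope approximation with explicit $\Lambda$-dependence, and verifying that the log-concave minorant step does not distort the Hellinger geometry beyond the claimed bound, will likely require a dyadic level-set decomposition rather than a single truncation in order to track the polylogarithmic exponent exactly. Once this construction is in place, the remainder follows the empirical-process peeling argument already underlying Theorem~\ref{Thm:ThetaRisk}.
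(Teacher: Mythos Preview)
Your proposal attempts to reduce Theorem~\ref{Thm:Smoothness} to Theorem~\ref{Thm:ThetaRisk}, but this route has a fatal obstruction that you have not addressed. The oracle inequality~\eqref{Eq:OracleTheta} restricts the inner infimum to $f\in\mathcal{F}_3^{[\theta]}(\mathcal{P}^m)$ with $\supp f_0\subseteq\supp f$. Every density in $\mathcal{F}_3^{[\theta]}(\mathcal{P}^m)$ is supported on a \emph{polytope}, hence compactly supported, whereas a general $f_0\in\mathcal{F}_3$ (or indeed any $f\in\mathcal{F}_3^{(\beta,\Lambda)}$, all of which are continuous on $\R^3$ and hence supported on all of $\R^3$) need not be. For such $f_0$ the feasible set in~\eqref{Eq:OracleTheta} is empty and the bound is vacuous. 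Your truncated $\tilde f$ therefore cannot serve as a valid approximant in Theorem~\ref{Thm:ThetaRisk}, and the triangle-inequality splitting $\dhell(f_0,\tilde f)\leq\dhell(f_0,f)+\dhell(f,\tilde f)$ does not help, because the support condition is a hard constraint on the class over which the infimum is taken, not part of the Hellinger error.

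Even setting this aside, the detailed balancing you sketch is not consistent with~\eqref{Eq:OracleTheta}: the leading stochastic term there carries $\log^{6/7}\theta$, not $\theta$, so with $\theta\asymp\varepsilon^{-1}$ the rate you would actually recover differs from what you wrote; and there is an irreducible $(m/n)^{20/29}$ term that will limit the rate regardless of the choice of $\varepsilon$. Your claim that the precise polylogarithmic exponent $\tfrac{16\beta+39}{2(\beta+3)}r_\beta$ and the $\Lambda$-exponent ``emerge from this same balancing step'' is therefore unsupported; these constants arise in the paper from an entirely different computation, and it would be a coincidence if a reduction to Theorem~\ref{Thm:ThetaRisk} reproduced them.

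The paper does \emph{not} go through Theorem~\ref{Thm:ThetaRisk}. Instead, it establishes a dedicated local bracketing-entropy bound for Hellinger balls centred at densities in $\mathcal{F}_3^{(\beta,\Lambda)}$ (Proposition~\ref{Prop:SmoothEntropy}), and then converts this directly into the risk bound~\eqref{Eq:Smoothness} via the standard empirical-process machinery (\citet[Corollary~7.5]{vdG00}), exactly as in the derivation of Theorem~\ref{Thm:k1} from Proposition~\ref{Prop:Log1affEntropy}. The dyadic level-set idea you mention does appear, but inside the proof of Proposition~\ref{Prop:SmoothEntropy}: the separation condition~\eqref{Eq:Separation} guarantees $U_{f_0,2^{k-1}\ell}\supseteq U_{f_0,2^k\ell}+\bar B(0,r_k)$ for suitable $r_k$, which, via the Bronshteyn--Ivanov polytope approximation (Lemma~\ref{Lem:BI75}), yields inscribing polytopes $P_{k-1}$ with controlled \emph{vertex} counts. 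The regions $P_{k-1}\setminus P_k$ are then triangulated, and the pointwise upper and lower envelope lemmas (Lemmas~\ref{Lem:smoothlb} and~\ref{Lem:smoothub}) feed into the bracketing bounds of Proposition~\ref{Prop:bebds}. The exponents $\tfrac{16\beta+39}{2(\beta+3)}$ and $7r_\beta-3=\tfrac{4\beta}{\beta+7}\wedge 1$ are read off from the resulting entropy integral, not from any balancing against an approximation error.
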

Ignoring polylogarithmic factors and focusing on the case where $f_0 \in \mathcal{F}_3^{(\beta,\Lambda)}$ for some $\beta\geq 1$ and $\Lambda>0$, Theorem~\ref{Thm:Smoothness} presents a continuum of rates that interpolate between the worst-case rate of $\tilde{O}(n^{-1/2})$, corresponding to the rate when $\beta=1$, and $\tilde{O}(n^{-4/7})$, again matching the rate conjectured by~\citet{SereginWellner2010}. 

The proofs of all results in this section are given in Section~\ref{Subsec:SmoothnessProofs}.

As mentioned in the introduction, the main attraction of working with the general contour separation condition~\eqref{Eq:Separation} is that we can give several examples of classes of densities contained within $\mathcal{F}^{(\beta,\Lambda,\tau)}$ for suitable $\beta$, $\Lambda$ and $\tau$.  Since each of the conditions~\eqref{Eq:Separation} and~\eqref{Eq:GradBd} are affine invariant, it suffices to check these conditions for the isotropic elements of the relevant classes (or for any other convenient choice of scaling). Moreover, to verify~\eqref{Eq:Separation} for densities that are spherically symmetric, it suffices to consider pairs $x,y$ of the form $y = \lambda x$ for some $\lambda > 0$; in other words, if $f(x) = g(\|x\|)$, then it is enough to verify the contour separation condition~\eqref{Eq:Separation} for $g$.
\begin{example}[Gaussian densities]
Writing $f\colon x\mapsto(2\pi)^{-d/2}\,e^{-\|x\|^2/2}$ for the standard Gaussian density on $\mathbb{R}^d$ and fixing an arbitrary $\beta\geq 1$, we have
\begin{align*}
\|\nabla f(x)\|_{I}' = \|\nabla f(x)\| = \frac{\|x\|}{(2\pi)^{d/2}}\,e^{-\|x\|^2/2} &= 2^{1/2}f(x)\log^{1/2}\biggl(\frac{1}{(2\pi)^{d/2}f(x)}\biggr) \\
&\leq \frac{\beta^{1/2}}{(2\pi)^{d/(2\beta)}}\,e^{-1/2}f(x)^{1-1/\beta}
\end{align*}
for all $x \in \mathbb{R}^d$.  Hence, it follows from Proposition~\ref{Prop:Differentiable} that $f \in \mathcal{F}^{(\beta,\Lambda)}$ for all $\beta\geq 1$, with $\Lambda = \beta^{1/2}e^{-1/2}\,(2\pi)^{-d/(2\beta)}$. Thus, Theorem~\ref{Thm:Smoothness} implies that when $d=3$, the log-concave maximum likelihood estimator attains the rate $\tilde{O}(n^{-4/7})$ in $d_{\!X}^2$ divergence uniformly over the class of Gaussian densities.
\end{example}
\begin{example}[Spherically symmetric Laplace density]
\label{Ex:Laplace}
Let $V_d:=\mu_d(\bar{B}(0,1))=\pi^{d/2}/\Gamma(1+d/2)$. Then $f\colon x\mapsto (d!\,V_d)^{-1}e^{-\|x\|}$ is a density in $\mathcal{F}_d$ with corresponding covariance matrix $\Sigma\equiv\Sigma_f = (d+1)I$. For $\tau \leq (d+1)^{d/2}\,(d!\,V_d)^{-1}$ and any $\beta\geq 1$, we have
\[
\|\nabla f(x)\|_{\Sigma^{-1}}' = (d+1)^{(d+1)/2}f(x) \leq \frac{(d+1)^{(d+1)/2}}{(d!\,V_d)^{1/\beta}}f(x)^{1-1/\beta}
\]
for all $x \in \mathbb{R}^d$ with $f(x) < \tau\det^{-1/2}\Sigma=\tau(d+1)^{-d/2}$.  Hence, when $d=3$, the log-concave maximum likelihood estimator attains the rate $\tilde{O}(n^{-4/7})$ in $d_{\!X}^2$ divergence uniformly over the class of densities that are affinely equivalent to $f$, even though $f$ is not differentiable at 0. A similar conclusion holds for the densities $f_1,f_2$ satisfying $f_1(x)\propto\exp(-e^{\norm{x}})$ and $f_2(x)\propto\exp\bigl(-e^{e^{\norm{x}}}\bigr)$.
\end{example}
\begin{example}[Spherically symmetric bump function density]
\label{Ex:bump}
Consider the smooth density $f\colon x\mapsto Ce^{-1/(1-\|x\|^2)}\mathbbm{1}_{\{\|x\| < 1\}}$, where $C > 0$ is a normalisation constant. By~\citet[][Proposition~2]{XS19}, $f$ is log-concave. Writing $\Sigma\equiv\Sigma_f = \sigma^2I$ for the covariance matrix corresponding to $f$, and again fixing an arbitrary $\beta\geq 1$, we see that each $x\in\R^d$ with $\|x\| < 1$ satisfies
\begin{align*}
\|\nabla f(x)\|_{\Sigma^{-1}}' &= \sigma^{d+1}\|\nabla f(x)\| = \sigma^{d+1}\frac{2C\|x\|}{(1-\|x\|^2)^2}\,e^{-1/(1-\|x\|^2)} \\
&\leq 2\sigma^{d+1}f(x)\log^2\!\rbr{\frac{C}{f(x)}} \leq \Lambda_\beta\, \bigl\{f(x)\det^{1/2}\Sigma\}^{1-1/\beta},
\end{align*}
where $\Lambda_\beta := 8C^{1/\beta}\beta^2e^{-2}\sigma^{1+d/\beta}$.  Thus, again by Proposition~\ref{Prop:Differentiable}, we deduce that $f \in \mathcal{F}^{(\beta,\Lambda_\beta)}$ for all $\beta\geq 1$.  Consequently, when $d=3$, the log-concave maximum likelihood estimator attains the rate $\tilde{O}(n^{-4/7})$ in $d_{\!X}^2$ divergence uniformly over the class of densities that are affinely equivalent to $f$.
\end{example}

\begin{example}[H\"older condition on the log-density]
\label{Ex:Holder1}
For $\gamma \in (1,2]$ and $L > 0$, let $\tilde{\mathcal{H}}^{\gamma,L}\equiv\tilde{\mathcal{H}}_d^{\gamma,L}$ denote the subset of densities $f\in\mathcal{F}_d$ such that $\phi:=\log f$ is differentiable and
\begin{equation}
\label{Eq:LogDensity1}
\norm{\nabla \phi(y)-\nabla \phi(x)}_{\inv{\Sigma_f}} \leq L\norm{y-x}_{\Sigma_f}^{\gamma-1}
\end{equation}
for all $x,y\in\mathbb{R}^d$. We extend this definition to $\gamma=1$ by writing $\tilde{\mathcal{H}}^{1,L}$ for the subset of densities $f\in\mathcal{F}_d$ for which $\phi=\log f$ satisfies
\begin{equation}
\label{Eq:LogDensity2}
|\phi(y) - \phi(x)| \leq L\norm{y-x}_{\Sigma_f}.
\end{equation}
for all $x,y\in\mathbb{R}^d$. Note that the densities in $\tilde{\mathcal{H}}^{1,L}$ can have points of non-differentiability for arbitrarily small values of the density. For instance, if we define $f\in\mathcal{F}_d$ by 
\[
f(x) \propto \exp\biggl(-\sum_{r=0}^\infty \frac{\|x\|-r}{2^r}\,\Ind_{\{\|x\| \geq r\}}\biggr),
\]
which is not differentiable at any $x\in\R^d$ with integer Euclidean norm, then $f\in\tilde{\mathcal{H}}^{1,L}$ for suitably large $L>0$.  

The careful and non-standard choice of norms in~\eqref{Eq:LogDensity1} and~\eqref{Eq:LogDensity2} ensures that the classes $\tilde{\mathcal{H}}^{\gamma,L}$ are affine invariant. Moreover, Proposition~\ref{Prop:Holder}(iv) in Section~\ref{Subsec:SmoothnessSupp} shows that for each $\beta\geq 1$, there exists $\Lambda'\equiv\Lambda'(\beta,L)$ such that $\bigcup_{\gamma \in [1,2]}\tilde{\mathcal{H}}^{\gamma,L}\subseteq\mathcal{F}^{(\beta,\Lambda')}$. 
Thus, when $d=3$, the log-concave maximum likelihood estimator attains the rate $\tilde{O}(n^{-4/7})$ in $\dex^2$ divergence uniformly over $\bigcup_{\gamma\in [1,2]}\tilde{\mathcal{H}}^{\gamma,L}$. 

A related result in the literature is~\citet[Theorem~4.1]{DumbgenRufibach2009}, which applies when $d=1$, $\gamma\in (1,2]$ and the logarithm of the true fixed $f_0\in\mathcal{F}_1$ is $\gamma$-H\"older on some compact subinterval $T$ of the interior of $\supp f_0$. In this case, the corresponding $\hat{f}_n$ is shown to achieve an adaptive rate of order $\bigl(\frac{\log n}{n}\bigr)^{\frac{\gamma}{2\gamma+1}}$ with respect to the supremum norm over certain compact subintervals of the interior of $T$. We remark that this is not entirely comparable with the rate we obtain in the paragraph above, especially since our loss function $\dex^2$ is rather different.

Observe that the densities in the classes $\tilde{\mathcal{H}}^{\gamma,L}$ must be supported on the whole of $\R^d$, and that conditions~\eqref{Eq:LogDensity1} and~\eqref{Eq:LogDensity2} imply that the rate of tail decay of $f$ is `super-Gaussian'. This is quite a stringent restriction; note for example that the density $f$ satisfying $f(x)\propto\exp(-e^{\norm{x}})$ does not feature in any of the classes $\tilde{\mathcal{H}}^{\gamma,L}$.  Another drawback of this definition of smoothness is that the classes are not nested with respect to the H\"older exponent $\gamma\in (1,2]$; this can be seen by considering a density $f$ satisfying $f(x) \propto \exp(-\|x\|^\gamma)$, which belongs to $\tilde{\mathcal{H}}^{\tilde{\gamma},L}$ for some $L > 0$ if and only if $\tilde{\gamma} = \gamma$.
\end{example}

\begin{example}[H\"older condition on the density]
\label{Ex:Holder2}
To remedy the issues mentioned in the previous example, fix $\beta\in (1,2]$ and $L>0$, and let $\mathcal{H}^{\beta,L} \equiv\mathcal{H}_d^{\beta,L}$ denote the set of $f\in\mathcal{F}_d$ such that $f$ is differentiable on $\R^d$ and
\begin{equation}
\label{Eq:HolderOriginal}
\norm{\nabla f(y)-\nabla f(x)}_{\inv{\Sigma_f}}'\leq L\norm{y-x}_{\Sigma_f}^{\beta-1}
\end{equation}
for all $x,y\in\R^d$.  Again, it can be shown that the classes $\mathcal{H}^{\beta,L}$ are affine invariant, and if $f \in \mathcal{F}_d$ is $\beta$-H\"older in the usual Euclidean sense, i.e.\ $\|\nabla f(y)-\nabla f(x)\| \leq L\|y-x\|^{\beta-1}$ for all $x,y \in \mathbb{R}^d$, then $f \in \mathcal{H}^{\beta,\tilde{L}}$ with $\tilde{L} := L\,\lambda_{\mathrm{max}}^{\beta/2}(\Sigma_f) \det^{1/2}\Sigma_f$, where $\lambda_{\mathrm{max}}(\Sigma_f)$ denotes the maximum eigenvalue of $\Sigma_f$. This is because $\norm{w}_{\Sigma_f}\geq\norm{w}\,\lambda_{\mathrm{max}}^{-1/2}(\Sigma_f)$ and $\norm{w}_{\inv{\Sigma_f}}'\leq\norm{w}\,\lambda_{\mathrm{max}}^{1/2}(\Sigma_f)\det^{1/2}\Sigma_f$  for all $w\in\R^d$. Moreover, Proposition~\ref{Prop:Nested} shows that the classes $\mathcal{H}^{\beta,L}$ are nested with respect to the H\"older exponent $\beta$; more precisely, if $\beta,L$ are as above, then there exists $\tilde{L}\equiv\tilde{L}(d,\beta,L)>0$ such that $\mathcal{H}^{\beta,L}\subseteq\mathcal{H}^{\alpha,\tilde{L}}$ for all $\alpha\in (1,\beta]$.

The condition~\eqref{Eq:HolderOriginal} can in fact be extended to an affine invariant notion of $\beta$-H\"older regularity for all $\beta>1$; see Section~\ref{Subsec:Holder} for full technical details. Here, we present the analogue of~\eqref{Eq:HolderOriginal} for $\beta\in (2,3]$ and $L>0$, for which we require the following additional notation. First, if $g\colon\R^d\to\R$ is twice differentiable at $x\in\R^d$, then denote by $Hg(x)\in\R^{d\times d}$ the Hessian of $g$ at $x$. In addition, for each $S\in\mathbb{S}^{d\times d}$, define a norm $\norm{{\cdot}}_S'$ on $\R^{d\times d}$ by $\norm{M}_S':=\norm{S^{-1/2}MS^{-1/2}}_{\mathrm{F}}\det^{-1/2}S$, where $\norm{A}_{\mathrm{F}}:=\tr(\tm{A}A)^{1/2}$ denotes the Frobenius norm of $A\in\R^{d\times d}$. We now define $\mathcal{H}^{\beta,L}$ to be the collection of $f\in\mathcal{F}_d$ for which $f$ is twice differentiable on $\R^d$ and
\begin{equation}
\label{Eq:HolderFrob}
\norm{Hf(y)-Hf(x)}_{\inv{\Sigma}_f}'\leq L\norm{y-x}_{\Sigma_f}^{\beta-2}
\end{equation}
for all $x,y\in\R^d$. In Section~\ref{Subsec:Holder}, we present a unified argument that establishes the affine invariance of the classes $\mathcal{H}^{\beta,L}$ defined by~\eqref{Eq:HolderOriginal} and~\eqref{Eq:HolderFrob}; see the proof of Lemma~\ref{Lem:holderaffinv}.

In addition, for each $\beta\in(1,3]$ and $L>0$, parts (i) and (iii) of Proposition~\ref{Prop:Holder} in Section~\ref{Subsec:SmoothnessSupp} imply that $\mathcal{H}^{\beta,L} \subseteq \mathcal{F}^{(\beta,\Lambda)}$ for some $\Lambda\equiv\Lambda(\beta,L)$; when $\beta\in (1,2]$, we can take $\Lambda(\beta,L) := L^{1/\beta}(1-1/\beta)^{-1+1/\beta}$.  It was this fact that motivated our choice of parametrisation in~$\beta$ in~\eqref{Eq:Separation}.  Theorem~\ref{Thm:Smoothness} therefore yields the rate $\tilde{O}\bigl(n^{-\min\cbr{\frac{\beta+3}{\beta+7},\frac{4}{7}}}\bigr)$ for the log-concave maximum likelihood estimator, uniformly over $\mathcal{H}^{\beta,L}$. An interesting feature of this rate is that, when $\beta \in (1,9/5)$, it is faster than the rate $O(n^{-\frac{2\beta}{2\beta+3}})$ that can be obtained in squared Hellinger distance for $\beta$-H\"older densities that satisfy a `tail dominance' condition~\citep[Section 4]{GL14}. For further details of this comparison, see Section~\ref{Subsec:HolderRev}. Thus, in this range of $\beta$, the log-concavity shape constraint results in a strict improvement in the rates attainable.
\end{example}

\section{Appendix: Proofs}
\label{Sec:Proofs}

The following notation is used in this section and in the supplementary material.

To define bracketing entropy, let $S \subseteq \mathbb{R}^d$ and let $\mathcal{G}$ be a class of non-negative functions whose domains contain $S$. For $\varepsilon > 0$ and a semi-metric $\rho$ on $\mathcal{G}$, let $N_{[\,]}(\varepsilon, \mathcal{G}, \rho,S)$ denote the smallest $M \in \mathbb{N}$ for which there exist pairs of functions $\{[g_{j}^L, g_{j}^U]: j = 1, \dots, M\}$ such that $\rho(g_j^U,g_j^L) \leq \varepsilon$    
for every $j=1,\ldots,M$, and such that for every $g \in \mathcal{G}$, there exists $j^* \in \{1,\ldots,M\}$ with $g_{j^*}^L(x) \leq g(x) \leq g_{j^*}^U(x)$ for every $x \in S$.  We then define the \emph{$\varepsilon$-bracketing entropy of $\mathcal{G}$ over $S$ with respect to $\rho$} by $H_{[\,]}(\varepsilon, \mathcal{G}, \rho,S) := \log N_{[\,]}(\varepsilon, \mathcal{G}, \rho,S)$ and write $H_{[\,]}(\varepsilon, \mathcal{G}, \rho) := H_{[\,]}(\varepsilon, \mathcal{G}, \rho,\R^d)$ when $S=\R^d$.

For each $f_0\in\mathcal{F}_d$ and $\delta>0$, let $\mathcal{G}(f_0,\delta)\equiv\mathcal{G}_d(f_0,\delta):=\{f\Ind_{\supp f_0}:f\in\mathcal{G}_d,\,\dhell(f,f_0)\leq\delta\}$.  In addition, let $\mathcal{F}(f_0,\delta)\equiv\mathcal{F}_d(f_0,\delta)=\mathcal{F}_d\cap\mathcal{G}_d(f_0,\delta)$ and let $\tilde{\mathcal{F}}(f_0,\delta)\equiv\tilde{\mathcal{F}}_d(f_0,\delta):=\{f\in\mathcal{F}_d:\dhell(f,f_0)\leq\delta\}$. Writing $\norm{M}\equiv\norm{M}_{\mathrm{op}}:=\sup_{\norm{u}\leq 1}\norm{Mu}$ for the operator norm of a matrix $M\in\R^{d\times d}$, we denote by $\tilde{\mathcal{F}}^{1,\eta}\equiv\tilde{\mathcal{F}}_d^{1,\eta_d}:=\{f\in\mathcal{F}_d:\norm{\mu_f}\leq 1,\,\norm{\Sigma_f-I}\leq\eta_d\}$ the class of \emph{`near-isotropic'} log-concave densities, 
where the constant $\eta\equiv\eta_d\in (0,1)$ is taken from \citet[Lemma~6]{KS16} and depends only on $d$. Finally, we define $h_2,\,h_3\colon (0,\infty)\to  (0,\infty)$ by $h_2(x):=\inv{x}\log_+^{3/2}(\inv{x})$ and $h_3(x):=x^{-2}$ respectively.

\subsection{Proofs of main results in Section~\ref{Sec:Logkaffine}}
\label{Sec:LogkaffineProofs}

The proof of Proposition~\ref{Prop:Logkaff} is lengthy and is deferred to Section~\ref{Subsec:LogkaffineDensities}.  The main goal of this subsection, therefore, is to prove Theorem~\ref{Thm:MainLogkaffine}, which proceeds via several intermediate results, including Theorem~\ref{Thm:k1}.  We begin by stating our main local bracketing entropy result, whose proof is summarised at the end of Section~\ref{Sec:Logkaffine}. Note that by Proposition~\ref{Prop:FkPmempty}, the subclass $\mathcal{F}^1(\mathcal{P}^m)$ is non-empty if and only if $m\geq d$.

\begin{proposition}
\label{Prop:Log1affEntropy}
Let $d \in \{2,3\}$ and fix $m\in\N$ with $m\geq d$. Then there exist universal constants $\varrho_2,\varrho_3>0$ such that whenever $0<\varepsilon<\delta<\varrho_d$ and $f_0 \in\mathcal{F}^1(\mathcal{P}^m)$, we have
\begin{equation}
\label{Eq:Log1affEntropy2}
H_{[\,]}(2^{1/2}\varepsilon,\mathcal{G}(f_0,\delta),\dhell)\lesssim m\rbr{\frac{\delta}{\varepsilon}}\log^3\rbr{\frac{1}{\delta}}\log^{3/2}\rbr{\frac{\log(1/\delta)}{\varepsilon}}
\end{equation}
when $d=2$ and 
\begin{equation}
\label{Eq:Log1affEntropy3}
H_{[\,]}(2^{1/2}\varepsilon,\mathcal{G}(f_0,\delta),\dhell)\lesssim m\,\biggl\{\rbr{\frac{\delta}{\varepsilon}}^2\log^6\rbr{\frac{1}{\delta}}+\rbr{\frac{\delta}{\varepsilon}}^{3/2}\log^7\rbr{\frac{1}{\delta}}\biggr\}
\end{equation}
when $d=3$.
\end{proposition}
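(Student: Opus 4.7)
The plan is to reduce the general log-$1$-affine case to the uniform-density-on-a-polytope entropy bound of Proposition~\ref{Prop:PolytopeEntropy}. By Proposition~\ref{Prop:Logkaff} with $k=1$, any $f_0\in\mathcal{F}^1(\mathcal{P}^m)$ takes the form $f_0(x)=\exp(\tm{\alpha}x+\beta)\Ind_P(x)$ for some $\alpha\in\R^d$, $\beta\in\R$ and $P\in\mathcal{P}^m$. There are two cases: if $\alpha=0$, then $P$ must be bounded (since $f_0$ is a density) and $f_0$ is uniform on $P$, so Proposition~\ref{Prop:PolytopeEntropy} applies directly. The main work is in the case $\alpha\neq 0$, where $P$ may be unbounded.

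In this case, I would first use affine equivariance of the log-concave MLE and affine invariance of $\dhell$ to reduce to the normalised situation where $\alpha=-e_1$ and $\beta=0$, so that $f_0(x)=e^{-x_1}\Ind_P(x)$ with $P\subseteq\{x_1\geq 0\}$ (after translation). Define the parallel half-space decomposition
\begin{equation*}
H_k:=\cbr{x\in\R^d:k\leq x_1<k+1}, \qquad P_k:=P\cap H_k \qquad (k\in\N\cup\cbr{0}),
\end{equation*}
so that each $P_k\in\mathcal{P}^{m+2}$ (adding two facets to $P$) and $e^{-k-1}\leq f_0\leq e^{-k}$ uniformly on $P_k$. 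Only the slices with $k\lesssim\log(1/\delta)$ contribute non-trivially: for larger $k$, the total Hellinger mass $\int_{P_k}f_0+\int_{P_k}f$ for any $f\in\mathcal{G}(f_0,\delta)$ is already of order $\delta^2$, so a single constant bracket suffices for the tail.

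Next, on each retained slice $P_k$, given any $f\in\mathcal{G}(f_0,\delta)$, the restriction $f\Ind_{P_k}$ is a log-concave function of the same order of magnitude as $e^{-k}$ on $P_k$, up to a local Hellinger error $\delta_k$ with $\sum_k\delta_k^2\lesssim\delta^2$. Rescaling the restriction on $P_k$ by the factor $e^k$, we obtain a family of functions to which Proposition~\ref{Prop:PolytopeEntropy} (together with the local bracketing bounds for bounded log-concave functions on polytopes that feed into its proof, via Proposition~\ref{Prop:bebds} and Corollary~\ref{Cor:polyapproxsimp}) can be applied on the polytope $P_k\in\mathcal{P}^{m+2}$. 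This yields a slice-wise bracket count bounded by $(m+2)\,h_d(\varepsilon_k/\delta_k)$ times logarithmic corrections, with $h_2(x)=x^{-1}\log_+^{3/2}(1/x)$ and $h_3(x)=x^{-2}$. Summing the bracket widths in squared Hellinger and using Cauchy--Schwarz over the $N\asymp\log(1/\delta)$ active slices, I would balance the local accuracies $\varepsilon_k$ (essentially choosing $\varepsilon_k$ proportional to $\delta_k\cdot\varepsilon/\delta$) so that the assembled global brackets have width at most $2^{1/2}\varepsilon$. Multiplying the per-slice entropies by $N$ and tracking the resulting powers of $\log(1/\delta)$ then produces the claimed bounds \eqref{Eq:Log1affEntropy2} and \eqref{Eq:Log1affEntropy3}.

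The main obstacle I anticipate is bookkeeping in the assembly step: each $f\in\mathcal{G}(f_0,\delta)$ is globally log-concave, and we must ensure that the slice-wise brackets can be combined into a \emph{globally} valid upper/lower pair of functions that bracket $f$ on all of $\supp f_0$ simultaneously, while respecting the slab partition. This is slightly delicate because the upper brackets on adjacent slabs need not agree on the common face $P\cap\cbr{x_1=k+1}$, and the resulting piecewise definition must still control the squared Hellinger width by $2\varepsilon^2$. I would address this by using \emph{constant} upper and lower envelopes on each $P_k$ (proportional to $e^{-k}$ with a multiplicative slack) that depend only on the slab, together with the local brackets inside each slab, and by slightly widening the tail bracket to absorb boundary mismatches. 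The resulting logarithmic overhead in the entropy is then precisely what produces the powers $\log^3(1/\delta)$ and $\log^6(1/\delta)$ appearing in the final bounds.
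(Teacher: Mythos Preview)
Your high-level strategy---slice $P$ by level sets of $\tm{\alpha}x$, tilt by $e^{\tm{\alpha}x}$ to reduce to a uniform density on each slab, then invoke Proposition~\ref{Prop:PolytopeEntropy}---is precisely what the paper does on the head and on the $O(\log(1/\delta))$ ``middle'' slabs, and your bookkeeping correctly accounts for the extra powers of $\log(1/\delta)$ in \eqref{Eq:Log1affEntropy2} and \eqref{Eq:Log1affEntropy3}.

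The substantive gap is your treatment of the tail. Two things go wrong. First, on a slab $P_k$ the rescaled local Hellinger distance between the tilted function and $f_{P_k}$ is of order $\bigl(c_{K,\alpha}\,e^{k}\delta^2/\mu_d(P_k)\bigr)^{1/2}$, and the paper defines the cutoff $a_+\asymp\log(1/\delta)$ precisely as the first index at which this quantity exceeds the threshold needed to apply Proposition~\ref{Prop:PolytopeEntropy}; beyond $a_+$ that proposition is simply unavailable. Second, your claim that ``a single constant bracket suffices for the tail'' is not justified: while each individual $f\in\mathcal{G}(f_0,\delta)$ has small mass on $P_k$, a valid bracket must dominate \emph{every} such $f$ pointwise, and nothing in your sketch controls $\sup_{f}f(x)$ on the tail. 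The paper supplies the missing envelope by a concavity extrapolation: Lemma~\ref{Lem:hellnunbd} produces a point $x_-$ in the head where the tilted log-density $\psi(x):=\log f(x)+\tm{\alpha}x+\log c_{K,\alpha}$ is bounded below by $-2$, and Lemma~\ref{Lem:hellunbds}(ii) applied on $K_{\alpha,a_+-1}^+$ bounds $\psi$ above by $4d$ there; concavity of $\psi$ then forces $\psi(x)\leq\frac{4d+2}{a_+-2}\,\tm{\alpha}x$ throughout the tail (this is \eqref{Eq:logenv}). With this envelope the paper applies the \emph{global} bound~\eqref{Eq:beconvubd} on each tail slab, allotting the slabs geometrically decaying pieces $\varepsilon_j^2\propto e^{-(j-a_+)/2}\varepsilon^2$ so that the resulting series converges. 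This envelope-plus-global-entropy step is the idea you are missing; without it the tail contribution to the bracketing entropy is uncontrolled.
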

See Propositions~\ref{Prop:SimplexEntropy} and~\ref{Prop:PolytopeEntropy} for details of the initial stages of the proof, which deal with the case where $f_0$ is the uniform density $f_K:=\mu_d(K)^{-1}\Ind_K$ on some polytope $K\in\mathcal{P}^m$. Here, we turn our attention to the general non-uniform case, where the support of $f_0$ may be unbounded. Writing $\mathcal{F}^1$ for the subclass of all log-1-affine densities in $\mathcal{F}_d$, we note that any $f\in\mathcal{F}^1$ must take the form $x\mapsto f_{K,\alpha}(x):=c_{K,\alpha}^{-1}\exp(-\tm{\alpha}x)\Ind_{\{x\in K\}}$, where $K\subseteq\R^d$ and $\alpha\in\R^d$ are the support and negative log-gradient of $f$ respectively, and $c_{K,\alpha}:=\int_K\,\exp(-\tm{\alpha}x)\,dx \in (0,\infty)$; see~\eqref{Eq:F1}. It follows from the characterisation of $\mathcal{F}^1$ given in Proposition~\ref{Prop:log1aff} that $K$ and $\alpha$ satisfy the conditions of Proposition~\ref{Prop:kslices}(ii), which in turn implies that $m_{K,\alpha}:=\inf_{x\in K}\tm{\alpha}x$ is finite. In addition, let $M_{K,\alpha}:=\sup_{x\in K}\tm{\alpha}x\in (-\infty,\infty]$, and for $t\in\R$, define the convex sets 
\begin{align*}
&K_{\alpha,t}:=K\cap\{x\in\R^d:\tm{\alpha}x=t\},\\
&K_{\alpha,t}^+:=K\cap\{x\in\R^d:\tm{\alpha}x\leq t\},\\
&\breve{K}_{\alpha,t}:=K\cap\{x\in\R^d:t-1\leq\tm{\alpha}x\leq t\},
\end{align*}
which are all compact by Proposition~\ref{Prop:kslices}; see Figure~\ref{Fig:Kalphat} for an illustration. Finally, we denote by $\mathcal{F}_\star^1$ the collection of all $f=f_{K,\alpha}\in\mathcal{F}^1$ for which $m_{K,\alpha}=0$.
\begin{proof}[Proof of Proposition~\ref{Prop:Log1affEntropy}]
For a fixed $d\in\{2,3\}$, let $\upsilon\equiv\upsilon_d:=2^{-3/2}\wedge \{d^{-1/2}(d+1)^{-(d-1)/2}\}$, $C\equiv C_d:=8d+7$ and $\varrho\equiv\varrho_d:=\nu_d\,\wedge\,\upsilon_d\,e^{-C/2}\,\gamma(d,C)^{1/2}$, where $\gamma\equiv\gamma(d,C)$ and $\nu\equiv\nu_d$ are taken from Lemmas~\ref{Lem:log1affbds} and~\ref{Lem:hellnunbd} respectively. For $0<\varepsilon<\delta<\upsilon$, the important quantity $H_d(\delta,\varepsilon)$ is defined in Proposition~\ref{Prop:SimplexEntropy}. 

Fix $0<\varepsilon<\delta<\varrho$ and $m\in\N$ with $m\geq d$. It follows from Corollary~\ref{Cor:log1affcanon} and the affine invariance of the Hellinger distance that we need only consider densities $f_0=f_{K,\alpha}\in\mathcal{F}_\star^1\cap\mathcal{F}^1(\mathcal{P}^m)$, which have the property that $K\in\mathcal{P}^m$ and $m_{K,\alpha}=0$. Since Proposition~\ref{Prop:PolytopeEntropy} handles the case $\alpha=0$, we fix an arbitrary $f_{K,\alpha}\in\mathcal{F}_\star^1\cap\mathcal{F}^1(\mathcal{P}^m)$ with $\alpha\neq 0$, and set $L:=\ceil{M_{K,\alpha}}\in\N\cup\{\infty\}$. Now define
\[K_j':=
\begin{cases}
K_{\alpha,C}^+&\text{for }j=C\\
\breve{K}_{\alpha,j}\qquad&\text{for each }j\in\N\text{ with }C+1\leq j\leq L,
\end{cases}
\]
which is compact for all integers $C\leq j\leq L$. Note also that since $K\in\mathcal{P}^m$, it follows from~\citet[Theorem~1.6]{BG09} that $K_C'\in\mathcal{P}^{m+1}$ and $K_j'\in\mathcal{P}^{m+2}$ for all integers $C+1\leq j\leq L$. 

Also, let $a_+$ be the smallest integer $C+1\leq j\leq L$ such that $\delta^2e^{j+1}\inv{\mu_d(\breve{K}_{\alpha,j})}c_{K,\alpha}\geq \upsilon^2$ if such a $j$ exists, and let $a_+=L+1$ otherwise. Since $(1/\tilde{\delta})^{d-1}\geq\log^{d-1}(1/\tilde{\delta})\geq d(d+1)^{d-1}\,\upsilon^2\log^{d-1}(1/\tilde{\delta})$ for all $\tilde{\delta}\in (0,1)$, we deduce from~\eqref{Eq:ratioslices} in Lemma~\ref{Lem:log1affbds} that
\[\frac{\delta^2e^{t+1}c_{K,\alpha}}{\textcolor{white}{\bigl(}\mu_d(\breve{K}_{\alpha,t})\textcolor{white}{\bigr)}}\geq\frac{\delta^2e^{t+1}c_{K,\alpha}}{\textcolor{white}{\bigl(}dt^{d-1}\mu_d(K_{\alpha,1}^+)\textcolor{white}{\bigr)}}\geq\frac{\delta^2e^t}{dt^{d-1}}\geq\upsilon^2\]
for all $t\geq (d+1)\log(1/\delta)$, and hence that $a_+\lesssim\log(1/\delta)$. 
Next, set $u_j^2:=c\exp\{-(j-a_+)/2\}$ for each integer $a_+\leq j\leq L$, where $c:=1-e^{-1/2}$ is chosen to ensure that $\sum_{j=a_+}^L u_j^2\leq 1$, and also define
\begin{align*}
\varepsilon_j^2:=
\begin{cases}
2\varepsilon^2/3\quad&\text{for $j=C$}\\
2\varepsilon^2\inv{(a_+-C)}/3\quad&\text{for $j=C+1,\dotsc,a_+-1$}\\
2u_j^2\,\varepsilon^2/3\quad&\text{for $j=a_+,\dotsc,L$}.
\end{cases}
\end{align*}
Since $K=\bigcup_{j=C}^{\,L}\,K_j'$ and $\sum_{j=C}^L\,\varepsilon_j^2\leq 2\varepsilon^2$, we can write
\begin{align}
\label{Eq:bn1}
H_{[\,]}(2^{1/2}\varepsilon,\mathcal{G}(f_{K,\alpha},\delta),\dhell)&\leq H_{[\,]}(\varepsilon_C,\mathcal{G}(f_{K,\alpha},\delta),\dhell,K_C')\\
\label{Eq:bn2}
&\hspace{0.5cm}+
\textstyle\sum_{j=C+1}^{a_+-1}H_{[\,]}(\varepsilon_j,\mathcal{G}(f_{K,\alpha},\delta),\dhell,K_j')\\
\label{Eq:bn3}
&\hspace{0.5cm}+\textstyle\sum_{j=a_+}^{\,L} H_{[\,]}(\varepsilon_j,\mathcal{G}(f_{K,\alpha},\delta),\dhell,K_j'),
\end{align}
and we now address each of the terms~\eqref{Eq:bn1},~\eqref{Eq:bn2} and~\eqref{Eq:bn3} in turn. Note that while there are infinitely many summands in~\eqref{Eq:bn3} when $M_{K,\alpha}=L=\infty$, it will follow from the bounds we obtain that only finitely many of these are non-zero.

For~\eqref{Eq:bn1}, let $A_C:=c_{K,\alpha}/\mu_d(K_C')$, which by Lemma~\ref{Lem:log1affbds} satisfies $e^{-C}\leq A_C\leq\inv{\gamma}$. For $f\in\mathcal{G}(f_{K,\alpha},\delta)$, define $\tilde{f}_C\colon\R^d\to [0,\infty)$ by $\tilde{f}_C(x):=A_C\exp(\tm{\alpha}x)f(x)\Ind_{\{x\in K_C'\}}$ and observe that
\[\delta^2\geq\int_{K_C'}\bigl(f_C^{1/2}-f_{K,\alpha}^{1/2}\bigr)^2 =\int_{K_C'}\frac{e^{-\tm{\alpha}x}}{A_C}\bigl\{\tilde{f}_C^{1/2}(x)-f_{K_C'}^{1/2}(x)\bigr\}^2\,dx\geq\frac{e^{-C}}{A_C}\int_{K_C'}\bigl(\tilde{f}_C^{1/2}-f_{K_C'}^{1/2}\bigr)^2,\]
which shows that $\tilde{f}_C\in\mathcal{G}\bigl(f_{K_C'},A_C^{1/2}e^{C/2}\delta\bigr)$. Since $\delta<\varrho<\upsilon e^{-C/2}\,\gamma^{1/2}$, it follows from the above bounds on $A_C$ that 
\begin{equation}
\label{Eq:ACbds}
\delta\leq A_C^{1/2}e^{C/2}\delta<\upsilon<2^{-3/2}\quad\text{and}\quad A_C^{-1/2}\inv{\varepsilon_C}\lesssim\inv{\varepsilon}.
\end{equation}
Recalling that $K_C'\in\mathcal{P}^{m+1}$, we can now apply Proposition~\ref{Prop:PolytopeEntropy} to deduce that there exists an $\bigl(A_C^{1/2}\varepsilon_C\bigr)$-Hellinger bracketing set $\{[\tilde{g}_\ell^L,\tilde{g}_\ell^U]:1\leq\ell\leq N_C\}$ for $\mathcal{G}\bigl(f_{K_C'},A_C^{1/2}e^{C/2}\delta\bigr)$ such that 
\begin{equation}
\label{Eq:bracketKC}
\log N_C\lesssim (m+1)H_d\bigl(A_C^{1/2}e^{C/2}\delta,A_C^{1/2}\varepsilon_C\bigr)\lesssim m H_d(\delta,\varepsilon).
\end{equation}
We see that $\{f\Ind_{K_C'}:f\in\mathcal{G}(f_{K,\alpha},\delta)\}$ is covered by the brackets $\{[g_\ell^L,g_\ell^U]:1\leq\ell\leq N_C\}$ defined by \[g_\ell^L(x):=\inv{A_C}\exp(-\tm{\alpha}x)\,\tilde{g}_\ell^L(x);\quad g_\ell^U(x):=\inv{A_C}\exp(-\tm{\alpha}x)\,\tilde{g}_\ell^U(x).\] 
Moreover, $\exp(-\tm{\alpha}x)\leq 1$ for all $x\in K_C'$, so \[\int_{K_C'}\,\biggl(\sqrt{g_\ell^U}-\sqrt{g_\ell^L}\biggr)^2=\inv{A_C}\int_{K_C'}\,\biggl(\sqrt{\tilde{g}_\ell^U(x)}-\sqrt{\tilde{g}_\ell^L(x)}\biggr)^2\exp(-\tm{\alpha}x)\,dx \leq\varepsilon_C^2\]
for all $1\leq\ell\leq N_C$. Together with~\eqref{Eq:bracketKC}, this implies that
\begin{equation}
\label{Eq:bn1bd}
H_{[\,]}(\varepsilon_C,\mathcal{G}(f_{K,\alpha},\delta),\dhell,K_C')\leq\log N_C\lesssim m H_d(\delta,\varepsilon).
\end{equation}
For~\eqref{Eq:bn2}, fix an integer $C+1\leq j\leq a_+-1$ (if such a $j$ exists) and let $A_j:=c_{K,\alpha}/\mu_d(K_j')$. For $f\in\mathcal{G}(f_{K,\alpha},\delta)$, define $\tilde{f}_j\colon\R^d\to [0,\infty)$ by $\tilde{f}_j(x):=A_j\exp(\tm{\alpha}x)f(x)\Ind_{\{x\in K_j'\}}$. Now
\[\delta^2\geq\int_{K_j'}\bigl(f^{1/2}-f_{K,\alpha}^{1/2}\bigr)^2 =\!\int_{K_j'}\frac{e^{-\tm{\alpha}x}}{A_j}\bigl\{\tilde{f}_j^{1/2}(x)-f_{K_j'}^{1/2}(x)\bigr\}^2 \,dx
\geq\frac{e^{-j}}{A_j}\int_{K_j'}\bigl(\tilde{f}_j^{1/2}-f_{K_j'}^{1/2}\bigr)^2,\]
so $\tilde{f}_j\in\mathcal{G}\bigl(f_{K_j'},A_j^{1/2}\,e^{j/2}\,\delta\bigr)$. Since $j\leq a_+-1$, it follows from the definition of $a_+$ that $A_j<\delta^{-2}\upsilon^2\,e^{-(j+1)}$. In addition, since $K_j'\subseteq K_{\alpha,j}^+$, we can apply Lemma~\ref{Lem:log1affbds} to deduce that $A_j\geq c_{K,\alpha}/\mu_d(K_{\alpha,j}^+)\geq e^{-j}$. Therefore, \[\delta\leq A_j^{1/2}\,e^{j/2}\,\delta<\upsilon<2^{-3/2}\quad\text{and}\quad A_j^{-1/2}\,e^{-(j-1)/2}\lesssim 1.\]
Since $K_j'\in\mathcal{P}^{m+2}$, we can apply Proposition~\ref{Prop:PolytopeEntropy} to deduce that there exists an $(A_j^{1/2}\,e^{(j-1)/2}\,\varepsilon_j)$-Hellinger bracketing set
$\{[\tilde{g}_\ell^L,\tilde{g}_\ell^U]:1\leq\ell\leq N_j\}$ for $\mathcal{G}\bigl(f_{K_j'},A_j^{1/2}\,e^{j/2}\,\delta\bigr)$ such that
\begin{equation}
\label{Eq:bracketKj}
\log N_j\lesssim (m+2)H_d\bigl(A_j^{1/2}\,e^{j/2}\,\delta,A_j^{1/2}\,e^{(j-1)/2}\,\varepsilon_j\bigr)\lesssim m H_d(\delta,\varepsilon_j).
\end{equation}
We see that $\{f\Ind_{K_j'}:f\in\mathcal{G}(f_{K,\alpha},\delta)\}$ is covered by the brackets $\{[g_\ell^L,g_\ell^U]:1\leq\ell\leq N_j\}$ defined by \[g_\ell^L(x):=\inv{A_j}\exp(-\tm{\alpha}x)\,\tilde{g}_\ell^L(x);\quad g_\ell^U(x):=\inv{A_j}\exp(-\tm{\alpha}x)\,\tilde{g}_\ell^U(x).\]
Moreover, $\exp(-\tm{\alpha}x)\leq e^{-(j-1)}$ for all $x\in K_j'$, so
\[\int_{K_j'}\,\biggl(\sqrt{g_\ell^U}-\sqrt{g_\ell^L}\biggr)^2=\inv{A_j}\int_{K_j'}\,\biggl(\sqrt{\tilde{g}_\ell^U(x)}-\sqrt{\tilde{g}_\ell^L(x)}\biggr)^2\exp(-\tm{\alpha}x)\,dx\leq\varepsilon_j^2\]
for all $1\leq\ell\leq N_j$. Together with~\eqref{Eq:bracketKj} and the fact that $a_+\lesssim\log(1/\delta)$, this implies that 
\[\sum_{j=C+1}^{a_+-1}H_{[\,]}(\varepsilon_j,\mathcal{G}(f_{K,\alpha},\delta),\dhell,K_j')\lesssim\log(1/\delta)\,m H_d\bigl(\delta,\,\varepsilon/\log(1/\delta)^{1/2}\bigr),\]
which is bounded above up to a universal constant by
\begin{equation}
\label{Eq:bn2bd2}
m\rbr{\frac{\delta}{\varepsilon}}\log^3\rbr{\frac{1}{\delta}}\log^{3/2}\rbr{\frac{\log(1/\delta)}{\varepsilon}}
\end{equation}
when $d=2$ and
\begin{equation}
\label{Eq:bn2bd3}
m\cbr{\rbr{\frac{\delta}{\varepsilon}}^2\log^6\rbr{\frac{1}{\delta}}+\rbr{\frac{\delta}{\varepsilon}}^{3/2}\log^7\rbr{\frac{1}{\delta}}}
\end{equation}
when $d=3$.

For~\eqref{Eq:bn3}, if $L\geq C+1$, consider $f=e^\phi\in\mathcal{G}(f_{K,\alpha},\delta)$ and define $\psi\equiv\tilde{\phi}_{K,\alpha}\colon\R^d\to [-\infty,\infty)$ by $\psi(x):=\phi(x)+\tm{\alpha}x+\log c_{K,\alpha}$, as in the statement of Lemma~\ref{Lem:hellnunbd}. First, we claim that
\begin{equation}
\label{Eq:logenv}
\psi(x)\leq\frac{4d+2}{a_+-2}\,\tm{\alpha}x
\end{equation}
for all $x\in K\setminus K_{\alpha,\,a_+-1}^+$. To see this, first set $\tilde{K}:=K_{\alpha,\,a_+-1}^+$ and $\tilde{A}:=c_{K,\alpha}/\mu_d(\tilde{K})$, and define $\tilde{f}\colon\R^d\to [0,\infty)$ by $\tilde{f}(x):=\tilde{A}\exp(\tm{\alpha}x)f(x)\Ind_{\{x\in\tilde{K}\}}$. Observe that
\[\log\tilde{f}(x)=\log f(x)+\tm{\alpha}x+\log c_{K,\alpha}-\log\mu_d(\tilde{K})=\psi(x)-\log\mu_d(\tilde{K}).\]
By similar arguments to those given above, we deduce that $\tilde{f}\in\mathcal{G}\bigl(f_{\tilde{K}},\tilde{A}^{1/2}\,e^{(a_+-1)/2}\,\delta\bigr)$. Moreover, if $a_+\geq C+2$, then it follows from the definitions of $a_+$ and $\upsilon$ that \[\tilde{A}e^{a_+-1}\delta^2\leq\inv{\mu_d(\breve{K}_{\alpha,\,a_+-1})}c_{K,\alpha}\,e^{a_+-1}\delta^2<\inv{e}\upsilon^2<2^{-3}.\]
Otherwise, if $a_+=C+1$, then recall from~\eqref{Eq:ACbds} that
\[\tilde{A}e^{a_+-1}\delta^2=A_C\,e^C\delta^2<\upsilon^2<2^{-3}.\]
Therefore, in all cases, Lemma~\ref{Lem:hellunbds}(ii) implies that $\log\tilde{f}(x)\leq 2^{7/2}d\,(\tilde{A}^{1/2}\,e^{(a_+-1)/2}\,\delta)-\log\mu_d(\tilde{K})$ for all $x\in\tilde{K}$, and hence that $\psi\leq 4d$ on $K_{\alpha,\,a_+-1}$. On the other hand, we know from Lemma~\ref{Lem:hellnunbd} that there exists some $x_-\in K_{\alpha,1}^+$ such that $\psi(x_-)>-2$. Now if $x\in K$ and $\tm{\alpha}x>a_+-1$, then $s:=(a_+-1-\tm{\alpha}x_-)/(\tm{\alpha}x-\tm{\alpha}x_-)$ satisfies $1\geq s\geq (a_+-2)/(\tm{\alpha}x-1)>0$, and $w:=sx+(1-s)x_-$ lies in $K_{\alpha,\,a_+-1}$. It then follows from the concavity of $\psi$ that
\[\psi(x)\leq\frac{1}{s}\,\psi(w)-\frac{1-s}{s}\,\psi(x_-)\leq\frac{4d}{s}+\frac{2\,(1-s)}{s}=\frac{4d+2}{s}-2<\frac{4d+2}{a_+-2}\,\tm{\alpha}x,\]
which yields~\eqref{Eq:logenv}, as required.

Now fix an integer $a_+\leq j\leq L$ (if such a $j$ exists). First, recalling the definition of $a_+$, we deduce from the bound~\eqref{Eq:ratioslices} in Lemma~\ref{Lem:log1affbds} that 
\begin{equation}
\label{Eq:bn3ratio}
\frac{\mu_d(K_j')}{c_{K,\alpha}\,e^{a_+}}\leq \rbr{\frac{j}{a_+-1}}^{d-1}\frac{\mu_d(\breve{K}_{\alpha,\,a_+})}{c_{K,\alpha}\,e^{a_+}}\leq e\upsilon^{-2}\delta^2\rbr{\frac{j}{a_+-1}}^{d-1}.
\end{equation}
Also, it follows from~\eqref{Eq:logenv} that if $f\in\mathcal{G}(f_{K,\alpha},\delta)$, then the function $\tilde{f}_j\colon\R^d\to [0,\infty)$ defined by $\tilde{f}_j(x):=c_{K,\alpha}\exp(\tm{\alpha}x)f(x)\Ind_{\{x\in K_j'\}}$ belongs to $\mathcal{G}_{-\infty,B_j}(K_j'):=\bigl\{g\Ind_{K_j'}:g\in\mathcal{G},\,g\Ind_{K_j'}\leq e^{B_j}\bigr\}$, where $B_j:=(4d+2)j/(a_+-2)$. Now if $\{[\tilde{g}_\ell^L,\tilde{g}_\ell^U]:1\leq\ell\leq N\}$ is a $(c_{K,\alpha}^{1/2}e^{(j-1)/2}\,\varepsilon_j)$-Hellinger bracketing set for $\mathcal{G}_{-\infty,B_j}(K_j')$, then $\{f\Ind_{K_j'}:f\in\mathcal{G}(f_{K,\alpha},\delta)\}$ is covered by the brackets $\{[g_\ell^L,g_\ell^U]:1\leq\ell\leq N\}$ defined by \[g_\ell^L(x):=\inv{c_{K,\alpha}}\exp(-\tm{\alpha}x)\,\tilde{g}_\ell^L(x);\quad g_\ell^U(x):=\inv{c_{K,\alpha}}\exp(-\tm{\alpha}x)\,\tilde{g}_\ell^U(x).\]
Moreover, $\exp(-\tm{\alpha}x)\leq e^{-(j-1)}$ for all $x\in K_j'$, so \[\int_{K_j'}\,\biggl(\sqrt{g_\ell^U}-\sqrt{g_\ell^L}\biggr)^2=\inv{c_{K,\alpha}}\int_{K_j'}\,\biggl(\sqrt{\tilde{g}_\ell^U(x)}-\sqrt{\tilde{g}_\ell^L(x)}\biggr)^2\exp(-\tm{\alpha}x)\,dx \leq\varepsilon_j^2\]
for all $1\leq\ell\leq N$. Recalling that $a_+\geq C=8d+7$ and that $h_d$ is a decreasing function for $d=2,3$, we now apply~\eqref{Eq:bn3ratio} and the bound~\eqref{Eq:beconvubd} from Proposition~\ref{Prop:bebds} to deduce that
\begin{align*}
H_{[\,]}\bigl(\varepsilon_j,\mathcal{G}&(f_{K,\alpha},\delta),\dhell,K_j')\leq
H_{[\,]}\bigl(c_{K,\alpha}^{1/2}e^{(j-1)/2}\,\varepsilon_j,\mathcal{G}_{-\infty,B_j}(K_j'),\dhell\bigr)\\
&\lesssim h_d\rbr{\frac{c_{K,\alpha}^{1/2}e^{(j-1)/2}\,\varepsilon_j}{\mu_d(K_j')^{1/2}\exp\cbr{\frac{(4d+2)\,j}{2(a_+-2)}}}}\\
&\lesssim h_d\rbr{\cbr{\frac{c_{K,\alpha}}{\mu_d(K_j')}}^{1/2}\frac{e^{(j-a_+)/2}e^{a_+/2}e^{-(j-a_+)/4}\,\varepsilon}{\exp\cbr{\frac{(4d+2)(j-a_+)}{2(a_+-2)}+\frac{(4d+2)\,a_+}{2(a_+-2)}}}}\\
&\lesssim h_d\rbr{\cbr{\frac{c_{K,\alpha}\,e^{a_+}}{\mu_d(K_j')}}^{1/2}\varepsilon\exp\cbr{-\rbr{\frac{4d+2}{2(a_+-2)}-\frac{1}{4}}(j-a_+)}}\\
&\lesssim h_d\rbr{\frac{\varepsilon}{\delta}\rbr{\frac{a_+-1}{j}}^{\frac{d-1}{2}}\exp\cbr{-\rbr{\frac{4d+2}{2(a_+-2)}-\frac{1}{4}}(j-a_+)}}\\
&\lesssim h_d\rbr{\frac{\varepsilon}{\delta}\rbr{\frac{a_+-1}{j}}^{\frac{d-1}{2}}\exp\cbr{-\rbr{\frac{4d+2}{2(8d+5)}-\frac{1}{4}}(j-a_+)}},
\end{align*}
and we note that $\frac{4d+2}{2(8d+5)}-\frac{1}{4}<0$. Thus, when $d=2$, the final expression above is bounded above by a constant multiple of 
\[\frac{\delta}{\varepsilon}\log^{3/2}\rbr{\frac{\delta}{\varepsilon}}j^{1/2}\,(\log^{3/2} j)\exp\cbr{-\rbr{\frac{1}{4} - \frac{4d+2}{2(8d+5)}}(j-a_+)},\]
where we have used the fact that $\log_+(ax)\leq (1+\log a)\log_+x$ for all $x>0$ and $a\geq 1$. It follows that 
\begin{equation}
\label{Eq:bn3bd-2}
\sum_{j=a_+}^{\,L} H_{[\,]}(\varepsilon_j,\mathcal{G}(f_{K,\alpha},\delta),\dhell,K_j')\lesssim\dfrac{\delta}{\varepsilon}\log^{3/2}\rbr{\dfrac{\delta}{\varepsilon}}
\end{equation}
when $d=2$. Similarly, when $d=3$, we conclude that 
\begin{equation}
\label{Eq:bn3bd-3}
\sum_{j=a_+}^{\,L} H_{[\,]}(\varepsilon_j,\mathcal{G}(f_{K,\alpha},\delta),\dhell,K_j')\lesssim\rbr{\dfrac{\delta}{\varepsilon}}^2.
\end{equation}
The result follows upon combining the bounds~\eqref{Eq:bn1},~\eqref{Eq:bn2},~\eqref{Eq:bn3},~\eqref{Eq:bn1bd},~\eqref{Eq:bn2bd2}, \eqref{Eq:bn2bd3},~\eqref{Eq:bn3bd-2} and~\eqref{Eq:bn3bd-3}.
\end{proof}
\vspace{-0.3cm}
We are now in a position to give the proof of Theorem~\ref{Thm:k1}.
\begin{proof}[Proof of Theorem~\ref{Thm:k1}]
By the affine equivariance of the log-concave maximum likelihood estimator \citep[Remark~2.4]{DSS11} and the affine invariance of $\dhell$, we may assume without loss of generality that $f_0\in\mathcal{F}_d^{0,I}$. In addition, by~\citet[Lemma~6]{KS16}, we have
\begin{equation}
\label{Eq:KSmv}
\sup_{f_0\in\mathcal{F}_d^{0,I}}\Pr(\hat{f}_n\notin\tilde{\mathcal{F}}_d^{1,\eta_d})=O(\inv{n}),
\end{equation}
where $\tilde{\mathcal{F}}_d^{1,\eta_d}$ is the class of `near-isotropic' log-concave densities defined at the start of Section~\ref{Sec:LogkaffineProofs}. For fixed $f_0\in\mathcal{F}_d$ and $m\geq d$, let \[\Delta:=\inf_{\substack{f\in\mathcal{F}^1(\mathcal{P}^m)\\\supp f_0\subseteq\supp f}}\dhell^2(f_0,f).\]
First we consider the case $d=2$ and assume for the time being that $\Delta\leq\varrho_2/2$, where $\varrho_2$ is taken from Proposition~\ref{Prop:Log1affEntropy}. If $\delta\in (0,\varrho_2-\Delta)$, then for all $\eta'\in (0,\varrho_2-\Delta-\delta)$, there exists $f\in\mathcal{F}^1(\mathcal{P}^m)$ with $\supp f_0\subseteq\supp f$ such that $\dhell(f_0,f)\leq\Delta+\eta'$. It follows from the triangle inequality that $\mathcal{F}(f_0,\delta)\subseteq\mathcal{F}(f,\delta+\Delta+\eta')\subseteq\mathcal{F}(f,\varrho_2)$, and we deduce from the first bound~\eqref{Eq:Log1affEntropy2} in Proposition~\ref{Prop:Log1affEntropy} that
\[H_{[\,]}(2^{1/2}\varepsilon,\mathcal{F}(f_0,\delta),\dhell)\lesssim m\rbr{\frac{\delta+\Delta+\eta'}{\varepsilon}}\log^3\rbr{\frac{1}{\delta}}\log^{3/2}\rbr{\frac{\log(1/\delta)}{\varepsilon}}.\]
But since $\eta'\in (0,\varrho_2-\Delta-\delta)$ was arbitrary, it follows that
\begin{equation}
\label{Eq:Log1affEntropy2G}
H_{[\,]}(2^{1/2}\varepsilon,\mathcal{F}(f_0,\delta),\dhell)\lesssim m\rbr{\frac{\delta+\Delta}{\varepsilon}}\log^3\rbr{\frac{1}{\delta}}\log^{3/2}\rbr{\frac{\log(1/\delta)}{\varepsilon}}
\end{equation}
and hence that
\begin{equation}
\label{Eq:Log1affEntInt1}
\int_{\delta^2/2^{13}}^\delta \!H_{[\,]}^{1/2}(\varepsilon,\mathcal{F}(f_0,\delta),\dhell)\,d\varepsilon\lesssim m^{1/2}(\delta+\Delta)^{1/2}\log^{3/2}\rbr{\frac{1}{\delta}}\int_0^\delta\varepsilon^{-1/2}\log^{3/4}\rbr{\frac{\log(1/\delta)}{\varepsilon}}d\varepsilon.
\end{equation}
Now for any $a>e\delta$, we can integrate by parts to establish that
\begin{align}
\int_0^\delta\!\varepsilon^{-1/2}\log^{3/4}\rbr{\frac{a}{\varepsilon}}d\varepsilon&=a^{1/2}\!\int_{\,\log(a/\delta)}^{\,\infty}\!u^{3/4}e^{-u/2}\,du=2\delta^{1/2}\log^{3/4}\rbr{\frac{a}{\delta}}+\frac{3a^{1/2}}{2}\!\!\int_{\,\log(a/\delta)}^{\,\infty}\!\frac{e^{-u/2}}{u^{1/4}}\,du\notag\\
\label{Eq:Log1affEntInt2}
&\leq 5\delta^{1/2}\log^{3/4}(a/\delta).
\end{align}
Thus, setting $a:=\log(1/\delta)$ and combining the bounds in~\eqref{Eq:Log1affEntInt1} and~\eqref{Eq:Log1affEntInt2}, we see that
\[\frac{1}{\delta^2}\!\int_{\delta^2/2^{13}}^\delta H_{[\,]}^{1/2}(\varepsilon,\mathcal{F}(f_0,\delta)\cap\tilde{\mathcal{F}}^{1,\eta_2},\dhell)\,d\varepsilon\lesssim m^{1/2}\rbr{\frac{\delta+\Delta}{\delta^3}}^{1/2}\log^{9/4}\rbr{\frac{1}{\delta}},\]
where the right-hand side is a decreasing function of $\delta\in (0,\varrho_2-\Delta)$. On the other hand, if $\delta\geq\varrho_2-\Delta$, which is at least $\varrho_2/2$, then it follows from~\citet[Theorem~4]{KS16} that \[H_{[\,]}(\varepsilon,\tilde{\mathcal{F}}^{1,\eta_2},\dhell)\lesssim h_2(\varepsilon)\lesssim\frac{1}{\varepsilon}\log_+^{3/2}\rbr{\frac{1}{\varepsilon}}\lesssim\frac{\delta}{\varepsilon}\log_+^{3/2}\rbr{\frac{1}{\varepsilon}}\]
and hence that
\[\frac{1}{\delta^2}\!\int_{\delta^2/2^{13}}^\delta \!H_{[\,]}^{1/2}(\varepsilon,\mathcal{F}(f_0,\delta)\cap\tilde{\mathcal{F}}^{1,\eta_2},\dhell)\,d\varepsilon\lesssim\frac{1}{\delta}\log_+^{3/4}\rbr{\frac{1}{\delta}}.\]
Consequently, there exists a universal constant $C_2'>0$ such that the function $\Psi_2\colon (0,\infty)\to (0,\infty)$ defined by \[\Psi_2(\delta):=C_2'\,m^{1/2}\,\delta^{1/2}(\delta+\Delta)^{1/2}\,\log_+^{9/4}(1/\delta)\]
satisfies $\Psi_2(\delta)\geq\delta\vee\int_{\delta^2/2^{13}}^\delta H_{[\,]}^{1/2}(\varepsilon,\mathcal{F}(f_0,\delta)\cap\tilde{\mathcal{F}}^{1,\eta_2},\dhell)\,d\varepsilon$ for all $\delta>0$ and has the property that $\delta\mapsto\delta^{-2}\,\Psi_2(\delta)$ is decreasing. Setting $c_2:=2^{69/4}\,C_2'\vee 1$ and $\delta_n:=(c_2^2\,mn^{-1}\log^{9/2}n+\Delta^2)^{1/2}$, we have $\Delta\leq\delta_n$ and $\inv{\delta_n}\leq\inv{c_2}m^{-1/2}\,n^{1/2}\log^{-9/4}n\leq n^{1/2}$, so
\begin{equation}
\label{Eq:Psi2}
\delta_n^{-2}\,\Psi_2(\delta_n)\leq 2^{1/2}\,C_2'\,m^{1/2}\,\inv{\delta_n}\log^{9/4}(n^{1/2})\leq 2^{-19}n^{1/2}.
\end{equation}
We are now in a position to apply~\citet[Corollary~7.5]{vdG00}, which is restated as Theorem~10 in the online supplement to~\citet{KGS18}. It follows from this,~\eqref{Eq:KSmv} and the bound~\eqref{Eq:tailprdex} from Lemma~\ref{Lem:dexbd} that there are universal constants $\bar{C},c,c',c''>0$ such that
\begin{align}
\E\{\dex^2(\hat{f}_n,f_0)\}&\leq\int_0^{8d\log n}\Pr\bigl[\{\dex^2(\hat{f}_n,f_0)\geq t\}\cap\{\hat{f}_n\in\tilde{\mathcal{F}}^{1,\eta_2}\}\bigr]\,dt\notag\\
&\hspace{1.25cm}+(8d\log n)\,\Pr(\hat{f}_n\notin\tilde{\mathcal{F}}^{1,\eta_2})+\int_{8d\log n}^\infty\Pr\bigl\{\dex^2(\hat{f}_n,f_0)\geq t\bigr\}\,dt\notag\\
&\leq\delta_n^2+\int_{\delta_n^2}^\infty c\exp(-nt/c^2)\,dt+c'\inv{n}\log n+c''n^{-3}\leq\delta_n^2+2c'\inv{n}\log n\notag\\
\label{Eq:vdGBound}
&\leq\frac{\bar{C}m}{n}\log^{9/2}n+\Delta^2
\end{align}
for all $n\geq 3$, provided that $\Delta\leq\varrho_2/2$. On the other hand, when $\Delta>\varrho_2/2$, observe that by Theorem~\ref{Thm:WorstCaseRates}, which is a small modification of~\citet[Theorem~5]{KS16}, we have $\E\{\dex^2(\hat{f}_n,f_0)\}\lesssim n^{-2/3}\log n\lesssim (\varrho_2/2)^2\leq\Delta^2$. We have now established the $d=2$ case of the desired result.

The proof for the case $d=3$ is very similar in most respects, except that the first term in the local bracketing entropy bound~\eqref{Eq:Log1affEntropy3} from Proposition~\ref{Prop:Log1affEntropy} gives rise to a divergent entropy integral. If $\Delta\leq\varrho_3/2$, then
\[\frac{1}{\delta^2}\!\int_{\delta^2/2^{13}}^\delta H_{[\,]}^{1/2}(\varepsilon,\mathcal{F}(f_0,\delta)\cap\tilde{\mathcal{F}}^{1,\eta_3},\dhell)\,d\varepsilon
\lesssim m\rbr{\frac{\delta+\Delta}{\delta^2}}\log_+^4\rbr{\frac{1}{\delta}}\]
for all $\delta>0$, where we once again appeal to the global entropy bound 
\[H_{[\,]}(\varepsilon,\tilde{\mathcal{F}}^{1,\eta_3},\dhell)\lesssim h_3(\varepsilon)\lesssim\frac{1}{\varepsilon^2}\]
from~\citet[Theorem~4]{KS16} to handle the case $\delta\geq\varrho_3-\Delta$. We conclude as above that there exists $C_3'>0$ such that the function $\Psi_3\colon (0,\infty)\to (0,\infty)$ defined by
\[\Psi_3(\delta):=C_3'\,m^{1/2}(\delta+\Delta)\log_+^4(1/\delta)\]
has all the required properties. Also, if we set $c_3:=2^{16}\,C_3'\vee 1$, then $\delta_n:=\bigl(c_3^2\,m\inv{n}\log^8 n+\Delta^2\bigr)^{1/2}$ satisfies $\delta_n^{-2}\,\Psi_3(\delta_n)\leq 2^{-19}n^{1/2}$ for all $n\geq 4$. The rest of the argument above then goes through, and we once again use the worst-case bound $\E\{\dex^2(\hat{f}_n,f_0)\}\lesssim n^{-1/2}\log n$ from Theorem~\ref{Thm:WorstCaseRates} to handle the case where $\Delta>\varrho_3/2$.
\end{proof}
\begin{proof}[Proof of Proposition~\ref{Prop:PolytopeRisk}]
Observe that in Proposition~\ref{Prop:PolytopeEntropy}, the polylogarithmic exponents in the local bracketing entropy bounds for uniform densities on polytopes in $\mathcal{P}^m$ are smaller than those that appear in Proposition~\ref{Prop:Log1affEntropy}.  We can therefore exploit this and deduce Proposition~\ref{Prop:PolytopeRisk} from Proposition~\ref{Prop:PolytopeEntropy} in the same way as Theorem~\ref{Thm:k1} is derived from Proposition~\ref{Prop:Log1affEntropy}.  We omit the details for brevity.
\end{proof}
Now that we have established our main novel results of this section, the proof of Theorem~\ref{Thm:MainLogkaffine} is broadly similar to that of the univariate oracle inequality stated as Theorem~3 in~\citet{KGS18}, so our exposition will be brief, and we will seek to emphasise the main points of difference.
\begin{proof}[Proof of Theorem~\ref{Thm:MainLogkaffine}]
Fix $f_0\in\mathcal{F}$ and an arbitrary $f\in \bigcup_{m \in \mathbb{N}} \mathcal{F}^{k}(\mathcal{P}^m)$ such that $\KL(f_0,f)<\infty$. Note that we must have $\supp f_0\subseteq\supp f$. Proposition~\ref{Prop:Logkaff} yields a polyhedral subdivision $E_1,\dotsc,E_\ell$ of $\supp f\in\mathcal{P}$ with $\ell:=\kappa(f)\leq k$ such that $\log f$ is affine on each $E_j$, and recall that $\Gamma(f)=\sum_{j=1}^\ell\,d_j$, where $d_j := |\mathscr{F}(E_j)|$. Setting $p_j:=\int_{E_j}f_0$ and $q_j:=\int_{E_j}f$ for each $j\in \{1,\ldots,\ell\}$, we see that $\sum_{j=1}^\ell\,p_j=\sum_{j=1}^\ell\,q_j=1$. Moreover, let $N_j:=\sum_{i=1}^n\Ind_{\{X_i\in E_j\}}$ for each $j\in \{1,\ldots,\ell\}$, and partition the set of indices $\{1,\dotsc,\ell\}$ into the subsets $J_1:=\{j:N_j\geq d+1\}$ and $J_2:=\{j:N_j\leq d\}$.  Then $\abs{J_2}\leq d\ell$ and
\begin{equation}
\label{Eq:dexbd1}
\dex^2(\hat{f}_n,f_0)\leq
\frac{1}{n}\sum_{j\in J_1}\sum_{i:X_i\in E_j}\log\frac{\hat{f}_n(X_i)}{f_0(X_i)}+
\frac{d\ell}{n}\max_{1\leq i\leq n}\log\frac{\hat{f}_n(X_i)}{f_0(X_i)}.
\end{equation}
The bound~\eqref{Eq:exdex} from Lemma~\ref{Lem:dexbd} controls the expectation of the second term on the right-hand side of~\eqref{Eq:dexbd1}, so it remains to handle the first term. For each $j\in J_1$, let $f_0^{(j)},f^{(j)}\in\mathcal{F}$ be the functions defined by $f_0^{(j)}(x):=\inv{p_j}f_0(x)\Ind_{\{x\in E_j\}}$ and $f^{(j)}(x):=\inv{q_j}f(x)\Ind_{\{x\in E_j\}}$. We also denote by $\hat{f}^{(j)}$ the maximum likelihood estimator based on $\{X_1,\dotsc,X_n\}\cap E_j$, which exists and is unique with probability 1 for each $j\in J_1$ \citep[][Theorem~2.2]{DSS11}. Writing $M_1:=\sum_{j\in J_1} N_j$ and arguing as in~\citet{KGS18}, we find that
\[
\sum_{j \in J_1} \sum_{i:X_i \in E_j} \hat{f}_n(X_i) \leq \sum_{j \in J_1} \sum_{i:X_i \in E_j} \frac{N_j}{M_1}\hat{f}^{(j)}(X_i).
\]
It follows that 
\begin{align}
&\frac{1}{n}\,\E\cbr{\sum_{j\in J_1}\sum_{i:X_i\in E_j}\log\frac{\hat{f}_n(X_i)}{f_0(X_i)}}\leq\frac{1}{n}\,\E\cbr{\sum_{j\in J_1}\sum_{i:X_i\in E_j}\log\frac{N_j\hat{f}^{(j)}(X_i)/M_1}{p_jf_0^{(j)}(X_i)}}\notag\\
\label{Eq:dexbd3terms}
&=\frac{1}{n}\,\E\cbr{\sum_{j\in J_1}\sum_{i:X_i\in E_j}\log\frac{\hat{f}^{(j)}(X_i)}{f_0^{(j)}(X_i)}}+\E\rbr{\sum_{j\in J_1}\frac{N_j}{n}\log\frac{N_j}{np_j}}+\E\rbr{\frac{M_1}{n}\log\frac{n}{M_1}}\\
&=:r_1+r_2+r_3.\notag
\end{align}
To bound $r_1$, we observe that $f^{(j)}\in\mathcal{F}^1(\mathcal{P}^{d_j})$ and $\supp f_0^{(j)}\subseteq\supp f^{(j)}$ for each $j\in J_1$. Consequently, after conditioning on the set of random variables $\{N_j:j=1,\dotsc,\ell\}$, we can apply the risk bound in Theorem~\ref{Thm:k1} to each $f_0^{(j)}$ and the corresponding $\hat{f}^{(j)}$ to deduce that
\begin{align}
r_1&\leq\frac{1}{n}\,\E\,\Biggl(\,\sum_{j\in J_1}N_j\,\Biggl\{\frac{\bar{C}d_j}{N_j}\log^{\gamma_d}N_j\,+\inf_{\substack{f_1\in\mathcal{F}^1(\mathcal{P}^{d_j})\\\supp f_0^{(j)}\subseteq\supp f_1}}\dhell^2\bigl(f_0^{(j)},f_1\bigr)\Biggr\}\Biggl)\notag\\
&\leq\frac{\bar{C}\,\Gamma(f)}{n}\log^{\gamma_d}n+\sum_{j=1}^\ell\,p_j\,\dhell^2\bigl(f_0^{(j)},f^{(j)}\bigr)\notag\\
\label{Eq:r1}
&\leq\frac{\bar{C}\,\Gamma(f)}{n}\log^{\gamma_d}n+\KL(f_0,f),
\end{align}
where the penultimate inequality follows as in the proof of~\citet[Theorem~3]{KGS18}.  
Moreover,
\begin{align}
\label{Eq:r2}
r_2&\leq\sum_{j=1}^\ell\E\cbr{\frac{N_j}{n}\rbr{\frac{N_j}{np_j}-1}}-\E\,\Biggl(\,\sum_{j\in J_2}\frac{N_j}{n}\log\frac{N_j}{np_j}\Biggr)\leq\frac{\ell}{n}+\frac{d\ell}{n}\log n.
\end{align}
Finally, for $r_3$, we first suppose that $d\ell<n/2$, in which case $M_1/n\geq 1-(d\ell)/n>1/2$. Thus, arguing as in~\citet{KGS18}, we deduce that $r_3\leq (2\ell d)/n$. Together with~\eqref{Eq:dexbd3terms},~\eqref{Eq:r1},~\eqref{Eq:r2} and the fact that $\ell\leq\Gamma(f)$, this implies that the desired bound~\eqref{Eq:OracleIneq} holds whenever $d\ell<n/2$. On the other hand, if $d\ell\geq n/2$, then $\Gamma(f)/n\gtrsim 1$ and we can apply Lemma~\ref{Lem:dexbd} again to conclude that
\[\E\{\dex^2(\hat{f}_n,f_0)\}\leq\E\cbr{\max_{1\leq i\leq n}\log\frac{\hat{f}_n(X_i)}{f_0(X_i)}}\lesssim\log n\lesssim\frac{\Gamma(f)}{n}\log^{\gamma_d}n.\]
This completes the proof of~\eqref{Eq:OracleIneq}. The final assertion of Theorem~\ref{Thm:MainLogkaffine} follows from Lemma~\ref{Lem:euler} in the case $d=2$ and from the final assertion of Proposition~\ref{Prop:Logkaff} in the case $d=3$.
\end{proof}

\subsection{Proofs of results in Section~\ref{Sec:ThetaPolytope}}
\label{Subsec:ThetaProofs}
\begin{proof}[Proof of Theorem~\ref{Thm:ThetaRisk}]
As in the proof of Theorem~\ref{Thm:k1}, we apply some empirical process theory to convert the local bracketing entropy bound in Proposition~\ref{Prop:ThetaEntropy} into a statistical risk bound. Fix $f_0\in\mathcal{F}_3$, $m\geq d+1=4$ and $\theta\in (1,\infty)$, and let \[\Delta:=\inf_{\substack{f\in\mathcal{F}^{[\theta]}(\mathcal{P}^m)\\\supp f_0\subseteq\supp f}}\dhell^2(f_0,f).\]
Suppose first that $\Delta<(8\theta)^{-1/2}/2=:\theta'/2$. If $\delta\in (0,\theta'-\Delta)$, then by analogy with the derivation of~\eqref{Eq:Log1affEntropy2G} in the proof of Theorem~\ref{Thm:k1}, we deduce from Proposition~\ref{Prop:ThetaEntropy} and the triangle inequality that
\begin{alignat}{3}
&\hspace{-1.2cm}H_{[\,]}(2^{1/2}\varepsilon,\mathcal{F}(f_0,\delta),\dhell)&&\notag\\
&\lesssim m\,\Biggl\{\frac{\log^{3/2}\theta+(\delta+\Delta)^{3/5}}{\varepsilon^{3/2}}\log^{17/4}\rbr{\frac{1}{\theta\delta^2}}&&+\theta^{3/4}\rbr{\frac{\delta+\Delta}{\varepsilon}}^{3/2}\log^{21/4}\rbr{\frac{1}{\theta\delta^2}}\notag\\
\label{Eq:ThetaEntropyG1}
&&&+\theta\log^3(e\theta)\rbr{\frac{\delta+\Delta}{\varepsilon}}^2\log^4\rbr{\frac{1}{\theta\delta^2}}\Biggr\}.
\end{alignat}
On the other hand, if $\delta\geq\theta'-\Delta$, then an application of the global entropy bound in \citet[Theorem~4]{KS16} yields
\begin{equation}
\label{Eq:ThetaEntropyG2}
H_{[\,]}(2^{1/2}\varepsilon,\mathcal{F}(f_0,\delta)\cap\tilde{\mathcal{F}}^{1,\eta},\dhell)\leq H_{[\,]}(2^{1/2}\varepsilon,\tilde{\mathcal{F}}^{1,\eta},\dhell)\lesssim\frac{1}{\varepsilon^2}\lesssim\theta\rbr{\frac{\delta+\Delta}{\varepsilon}}^2,
\end{equation}
where the final inequality follows since $\theta^{-1/2}\lesssim\theta'\leq\delta+\Delta$. We deduce from~\eqref{Eq:ThetaEntropyG1} and~\eqref{Eq:ThetaEntropyG2} that
\begin{alignat*}{3}
&\hspace{-1cm}\frac{1}{\delta^2}\int_{\delta^2/2^{13}}^\delta H_{[\,]}^{1/2 }(2^{1/2}\varepsilon,\mathcal{F}(f_0,\delta)\cap\tilde{\mathcal{F}}^{1,\eta},\dhell)\,d\varepsilon&&\\
&\lesssim m^{1/2}\,\Biggl\{\frac{\log^{3/4}\theta+(\delta+\Delta)^{3/10}}{\delta^{7/4}}\log_+^{17/8}\rbr{\frac{1}{\theta\delta^2}}&&+\theta^{3/8}\,\frac{(\delta+\Delta)^{3/4}}{\delta^{7/4}}\log_+^{21/8}\rbr{\frac{1}{\theta\delta^2}}\\
&&&+\theta^{1/2}\log^{3/2}(e\theta)\,\frac{\delta+\Delta}{\delta^2}\log_+^3\rbr{\frac{1}{\delta}}\Biggr\}\\
&\lesssim m^{1/2}\,\Biggl\{\frac{\log^{3/4}\theta+(\delta+\Delta)^{3/10}}{\delta^{7/4}}\log_+^{17/8}\rbr{\frac{1}{\theta\delta^2}}&&+\theta^{1/2}\log^{3/2}(e\theta)\,\frac{\delta+\Delta}{\delta^2}\log_+^3\rbr{\frac{1}{\delta}}\Biggr\}
\end{alignat*}
for all $\delta>0$. Therefore, setting
\begin{align*}
\Phi_1(\delta)&:=m^{1/2}\,(\log^{3/4}\theta)\,\delta^{-7/4}\,\log_+^{17/8}\bigl(1/(\theta\delta^2)\bigr)\\
\Phi_2(\delta)&:=m^{1/2}\,(\delta+\Delta)^{3/10}\,\delta^{-7/4}\,\log_+^{17/8}\bigl(1/(\theta\delta^2)\bigr)\\
\Phi_3(\delta)&:=m^{1/2}\,\theta^{1/2}\log^{3/2}(e\theta)(\delta+\Delta)\,\delta^{-2}\,\log_+^3(1/\delta)\\
\Phi(\delta)&:=\Phi_1(\delta)\vee\Phi_2(\delta)\vee\Phi_3(\delta)
\end{align*}
for all $\delta>0$, we conclude that there exists a universal constant $C>0$ such that $\Psi(\delta):=C\delta^2\,\Phi(\delta)\geq\delta\vee\int_{\delta^2/2^{13}}^\delta H_{[\,]}^{1/2}(\varepsilon,\tilde{\mathcal{F}}(f_0,\delta)\cap\tilde{\mathcal{F}}^{1,\eta},\dhell)\,d\varepsilon$ for all such $\delta$. Note also that $\Phi_1,\Phi_2,\Phi_3$ and hence $\Phi$ are decreasing on $(0,\infty)$. Now for some universal constant $\tilde{C}\geq 1$, define 
\begin{align*}
\delta_1&:=\{\tilde{C}(\log^{6/7}\theta)\,(m/n)^{4/7}\log_+^{17/7}(n/\log^{3/2}\theta)\}^{1/2}\\
\delta_2&:=\{\tilde{C}(m/n)^{20/29}\log^{85/29}n+\Delta^2\}^{1/2}\\
\delta_3&:=\{\tilde{C}m\theta\log^3(e\theta)\,(m/n)\log^6 n+\Delta^2\}^{1/2}.
\end{align*}
Since $\delta_1^{-7/4}\leq\tilde{C}^{-7/8} (\log^{-3/4}\theta)\,(m/n)^{-1/2}\log_+^{-17/8}(n/\log^{3/2}\theta)\leq (n/\log^{3/2}\theta)^{1/2}$, it follows that $\log_+(1/\delta_1^2)\lesssim\log_+(n/\log^{3/2}\theta)$, so if $\tilde{C}\geq 1$ is chosen to be sufficiently large, then
\[C\Phi_1(\delta_1)\leq Cm^{1/2}\,(\log^{3/4}\theta)\,\delta_1^{-7/4}\log_+^{17/8}(1/\delta_1^2)\leq 2^{-19}n^{1/2}.\]
Similarly, since $\delta_k+\Delta\leq 2\delta_k$ for $k=2,3$, it can be verified that $C\Phi_k(\delta_k)\leq 2^{-19}n^{1/2}$ for $k=2,3$ so long as $\tilde{C}\geq 1$ is taken to be sufficiently large; see~\eqref{Eq:Psi2} in the proof of Theorem~\ref{Thm:k1} for details of a similar calculation. Since $\Phi_1,\Phi_2,\Phi_3$ are decreasing, we conclude that if $\delta_1,\delta_2,\delta_3$ are defined as above for some suitably large universal constant $\tilde{C}\geq 1$, then every $\delta\geq\delta_*:=\delta_1\vee\delta_2\vee\delta_3$
satisfies \[\delta^{-2}\,\Psi(\delta)=C\Phi(\delta)\leq C\{\Phi_1(\delta_1)\vee\Phi_2(\delta_2)\vee\Phi_3(\delta_3)\}\leq 2^{-19}n^{1/2}.\]
Thus, arguing as in the proof of Theorem~\ref{Thm:k1} and recalling the derivation of~\eqref{Eq:vdGBound} in particular, we can now apply~\citet[Corollary~7.5]{vdG00} and Lemma~\ref{Lem:dexbd} to conclude that there exists a universal constant $c'>0$ such that
\[\E\{\dex^2(\hat{f}_n,f_0)\}\leq\delta_*^2+C'n^{-1}\log n=(\delta_1^2\vee\delta_2^2\vee\delta_3^2)+c'n^{-1}\log n,\]
which implies the bound~\eqref{Eq:OracleTheta}. This completes the proof of the theorem in the case where $\Delta<\theta'/2$. Finally, suppose on the other hand that $\Delta\geq\theta'/2=(32\theta)^{-1/2}$. By Theorem~\ref{Thm:WorstCaseRates}, a small modification of~\citet[Theorem~5]{KS16}, there exists a universal constant $C'>0$ such that $\E\{\dex^2(\hat{f}_n,f_0)\}\leq C'n^{-1/2}\log n$. Observe that there exists a universal constant $\bar{c}>0$ such that if $n\geq\bar{c}\,\theta^2\log^2\theta$, then $C'n^{-1/2}\log n\leq 1/(32\theta)\leq\Delta^2$, in which case the desired bound~\eqref{Eq:OracleTheta} follows. Otherwise, if $4\leq n<\bar{c}\,\theta^2\log^2\theta$, then $n^{1/2}\log^{-5}n\leq (4^{1/2}\log^{-5}4)\vee \{(\theta\log\theta)\log^{-5}\theta\}\lesssim\theta$,
so again by Theorem~\ref{Thm:WorstCaseRates}, we conclude that \[\E\{\dex^2(\hat{f}_n,f_0)\}\lesssim n^{-1/2}\log n\lesssim\theta n^{-1}\log^6 n\lesssim\theta\log^3(e\theta)(m/n)\log^6 n.\]
This completes the proof of~\eqref{Eq:OracleTheta} in the remaining cases.
\end{proof}
Next, we study the map $f_0\mapsto\E\{\dex^2(\hat{f}_n,f_0)\}$ and prove the lower semi-continuity result stated as Proposition~\ref{Prop:LSC}, for which we require the following additional definitions. We write $d_W(Q_1,Q_2):=\inf_{(X,Y)}\,\E(\norm{X-Y})$ for the \emph{1-Wasserstein distance} between probability measures $Q_1,Q_2$ on $\R^d$, where the infimum is taken over all pairs of random variables $X,Y$ that are defined on a common probability space and have marginal distributions $Q_1,Q_2$ respectively. For probability measures $Q,Q_1,Q_2,\dotsc$ on $\R^d$, recall that $d_W(Q_n,Q)\to 0$ if and only if $Q_n\to Q$ weakly and $\int\norm{x}\,dQ_n(x)\to\int\norm{x}\,dQ(x)$. For $\phi\in\Phi_d$ and a probability measure $Q$ on $\R^d$, we also define $L(\phi,Q):=1+\int\phi\,dQ-\int e^\phi$ and $L(Q):=\sup_{\phi\in\Phi_d}L(\phi,Q)$, as in~\citet{DSS11}.
\begin{proof}[Proof of Proposition~\ref{Prop:LSC}]
Since $P^{(\ell)}\to P^{(0)}$ weakly, Skorokhod's representation theorem~\citep[e.g.][Theorem~2.19]{vdV98} implies that there exist random variables $\{X^{(\ell)}:\ell\in\N_0\}$ defined on a common probability space $(\tilde{\Omega},\tilde{\mathcal{A}},\tilde{\Pr})$, with the property that $X^{(\ell)}\sim P^{(\ell)}$ for all $\ell\in\N_0$ and $X^{(\ell)}\to X^{(0)}$ almost surely. Now consider the $n$-fold product space $(\Omega,\mathcal{A},\Pr):=(\tilde{\Omega}^n,\tilde{\mathcal{A}}^{\otimes n},\tilde{\Pr}^{\otimes n})$, where $\tilde{\Pr}^{\otimes n}$ denotes the $n$-fold product measure, and for $i\in\{1,\dotsc,n\}$ and $\ell\in\N_0$, define $X_i^{(\ell)}\colon\tilde{\Omega}^n\to\R^d$ by $X_i^{(\ell)}(\omega_1,\dotsc,\omega_n):=X^{(\ell)}(\omega_i)$. Then for each such $i$, we certainly have $X_i^{(\ell)}\to X_i^{(0)}$ almost surely as $\ell\to\infty$. Moreover, if $A_1,\dotsc,A_n\in\tilde{\mathcal{A}}$ and $\ell\in\N_0$ is fixed, then \[\textstyle\Pr\bigl(\bigcap_{\,i=1}^{\,n}\bigl\{X_i^{(\ell)}\in A_i\bigr\}\bigr)=\tilde{\Pr}^{\otimes n}\bigl(\prod_{i=1}^n (X^{(\ell)})^{-1}(A_i)\bigr)=\prod_{i=1}^n\tilde{\Pr}\bigl((X^{(\ell)})^{-1}(A_i)\bigr)=\prod_{i=1}^n P^{(\ell)}(A_i),\]
which shows that $X_1^{(\ell)},\dotsc,X_n^{(\ell)}\iid P^{(\ell)}$.

Next, for each $\ell\in\N_0$ (and $\omega\in\Omega$), denote by $\Pr_n^{(\ell)}\equiv\Pr_n^{(\ell)}(\omega):=n^{-1}\sum_{i=1}^n\delta_{X_i^{(\ell)}(\omega)}$ the empirical measure of $X_1^{(\ell)}(\omega),\dotsc,X_n^{(\ell)}(\omega)$, where we write $\delta_x$ for a Dirac (point) mass at $x\in\R^d$. Since $X_i^{(\ell)}\to X_i^{(0)}$ almost surely for each $i\in\{1,\dotsc,n\}$, it follows that there exists $\Omega_0\in\mathcal{A}$ with $\Pr(\Omega_0)=1$ such that whenever $\omega\in\Omega_0$, we have \[\int g(x)\,\Pr_n^{(\ell)}(\omega)(dx)=\frac{1}{n}\,\sum_{i=1}^n g\bigl(X_i^{(\ell)}(\omega)\bigr)\to \frac{1}{n}\,\sum_{i=1}^n g\bigl(X_i^{(0)}(\omega)\bigr)=\int g(x)\,\Pr_n^{(0)}(\omega)(dx)\]
as $\ell\to\infty$ for all continuous $g\colon\R^d\to\R$. In particular, this implies that $\Pr_n^{(\ell)}(\omega)\to\Pr_n^{(0)}(\omega)$ weakly and $\int\norm{x}\,\Pr_n^{(\ell)}(\omega)(dx)\to\int\norm{x}\,\Pr_n^{(0)}(\omega)(dx)$ for all $\omega\in\Omega_0$. Therefore, $d_W(\Pr_n^{(\ell)},\Pr_n^{(0)})\to 0$ almost surely as $\ell\to\infty$. 

By assumption, $f^{(\ell)}\in\mathcal{F}_d$ for all $\ell\in\N$ and $P^{(0)}$ has a Lebesgue density. Since $P^{(\ell)}\to P^{(0)}$ weakly, it follows from~\citet[Proposition~2]{CS10} that $P^{(0)}$ has a log-concave density $f^{(0)}$ with $f^{(\ell)}\to f^{(0)}$ almost everywhere. Since replacing $f^{(0)}$ by an equivalent density alters $\dex^2(\hat{f}_n^{(0)},f^{(0)})$ only up to almost sure equivalence, we may assume without loss of generality that $f^{(0)}$ is upper semi-continuous, i.e.\ that $f^{(0)}\in\mathcal{F}_d$. Therefore, the random variables $\dex^2(\hat{f}_n^{(\ell)},f^{(\ell)})\colon\Omega\to\R$ satisfy $\dex^2(\hat{f}_n^{(\ell)},f^{(\ell)})\geq\KL(\hat{f}_n^{(\ell)},f^{(\ell)})\geq 0$ for all $\ell\in\N_0$, so in view of Fatou's lemma, the desired conclusion will follow if we can show that 
\begin{equation}
\label{Eq:dex2conv}
\dex^2(\hat{f}_n^{(\ell)},f^{(\ell)})=\int\log\bigl(\hat{f}_n^{(\ell)}/f^{(\ell)}\bigr)\,d\Pr_n^{(\ell)}\to\int\log\bigl(\hat{f}_n^{(0)}/f^{(0)}\bigr)\,d\Pr_n^{(0)}=\dex^2(\hat{f}_n^{(0)},f^{(0)})
\end{equation}
almost surely as $\ell\to\infty$.

To this end, note that since $d_W(\Pr_n^{(\ell)},\Pr_n^{(0)})\to 0$ almost surely as $\ell\to\infty$, we deduce from the definition of $\hat{f}_n^{(\ell)}$ and~\citet[Theorem~2.15]{DSS11} that 
\begin{equation}
\label{Eq:dex2conv1}
\int\log\hat{f}_n^{(\ell)}\,d\Pr_n^{(\ell)}=L\bigl(\Pr_n^{(\ell)}\bigr)\to L\bigl(\Pr_n^{(0)}\bigr)=\int\log\hat{f}_n^{(0)}\,d\Pr_n^{(0)}
\end{equation}
almost surely as $\ell\to\infty$. To establish~\eqref{Eq:dex2conv}, it will suffice to show that $\log f^{(\ell)}\bigl(X_i^{(\ell)}\bigr)\to\log f^{(0)}\bigl(X_i^{(0)}\bigr)$ almost surely for each $i\in\{1,\dotsc,n\}$, since this will imply that
\begin{equation}
\label{Eq:dex2conv2}
\int\log f^{(\ell)}\,d\Pr_n^{(\ell)}=\frac{1}{n}\,\sum_{i=1}^n\log f^{(\ell)}\bigl(X_i^{(\ell)}\bigr)\to \frac{1}{n}\,\sum_{i=1}^n\log f^{(0)}\bigl(X_i^{(0)}\bigr)=\int\log f^{(0)}\,d\Pr_n^{(0)}
\end{equation}
almost surely as $\ell\to\infty$. Recall that $\phi_\ell:=\log f^{(\ell)}$ is concave for each $\ell\in\N_0$ and that $\phi_\ell\to\phi_0$ almost everywhere. First fix $x\in\Int\dom\phi_0$ and note that we can find $\delta>0$ and $w_1,\dotsc,w_{d+1}\in\Int\dom\phi_0$ such that $\phi_\ell(w_k)\to\phi_0(w_k)$ for all $1\leq k\leq d+1$ and $B(x,\delta)\subseteq\conv\{w_1,\dotsc,w_{d+1}\}\subseteq\Int\dom\phi_0$. Then $\inf_{B(x,\delta)}\phi_\ell\geq\inf_{1\leq k\leq d+1}\phi_\ell(w_k)$ by the concavity of $\phi_\ell$ for each $\ell\in\N$, and since the latter quantity converges to $\inf_{1\leq k\leq d+1}\phi_0(w_k)>-\infty$, we deduce that $B(x,\delta)\subseteq\Int\dom\phi_\ell$ for all sufficiently large $\ell\in\N$. Since $\phi_\ell\to\phi_0$ (almost everywhere) on $B(x,\delta)\subseteq\Int\dom\phi_0$,~\citet[Theorem~10.8]{Rock97} implies that
$\phi_\ell\to\phi_0$ uniformly on compact subsets of $B(x,\delta)$. In view of this and the continuity of $\phi_0$ on $B(x,\delta)\subseteq\Int\dom\phi_0$~\citep[Theorem~1.5.3]{Sch14}, it follows that if $x_\ell\to x$, then
\begin{equation}
\label{Eq:contconv}
\abs{\phi_\ell(x_\ell)-\phi_0(x)}\leq\abs{\phi_\ell(x_\ell)-\phi_0(x_\ell)}+\abs{\phi_0(x_\ell)-\phi_0(x)}\to 0
\end{equation}
since $x_\ell\in\bar{B}(x,\delta/2)$ for all sufficiently large $\ell$. Thus, writing $\Omega_0'\in\mathcal{A}$ for the event on which $X_i^{(\ell)}\to X_i^{(0)}$ and $X_i^{(0)}\in\Int\dom\phi_0=\Int\supp f^{(0)}$ for all $i\in\{1,\dotsc,n\}$, we see that $\Pr(\Omega_0')=1$ and deduce from~\eqref{Eq:contconv} that $\phi_\ell\bigl(X_i^{(\ell)}(\omega)\bigr)\to\phi_0\bigl(X_i^{(0)}(\omega)\bigr)$ for all $i\in\{1,\dotsc,n\}$ whenever $\omega\in\Omega_0'$. This yields~\eqref{Eq:dex2conv2}, which together with~\eqref{Eq:dex2conv1} implies~\eqref{Eq:dex2conv}, as required.
\end{proof}
\subsection{Proofs of results in Section~\ref{Sec:Smoothness}}
\label{Subsec:SmoothnessProofs}
Our first task in this section is to give the proofs of Propositions~\ref{Prop:SepProperties} and~\ref{Prop:Differentiable}, which are fairly routine.
\begin{proof}[Proof of Proposition~\ref{Prop:SepProperties}]
For (i), fix a density $f\in\mathcal{F}^{(\beta,\Lambda,\tau)}$ and define $g\in\mathcal{F}_d$ by $g(x):=\inv{\abs{\det A}}f(\inv{A}(x-b))$, where $b\in\R^d$ and $A\in\R^{d\times d}$ is invertible. Since $\Sigma_g=A\Sigma_f\tm{A}$, we have $g(x)\det^{1/2}\Sigma_g=f(\inv{A}(x-b))\det^{1/2}\Sigma_f$ and $\norm{\inv{A}x}_{\Sigma_f}=\norm{x}_{\Sigma_g}$ for all $x\in\R^d$. Now fix $x,y\in\R^d$ satisfying $g(y)<g(x)<\tau\det^{-1/2}\Sigma_g$, and let $x':=\inv{A}(x-b)$ and $y':=\inv{A}(y-b)$. Since $f(y')<f(x')<\tau\det^{-1/2}\Sigma_f$, it follows from~\eqref{Eq:Separation} that
\begin{align*}
\norm{x-y}_{\Sigma_g}=\norm{x'-y'}_{\Sigma_f}\geq\frac{\{f(x')-f(y')\}\det^{1/2} \Sigma_f}{\Lambda\,\bigl\{f(x')\det^{1/2}\Sigma_f\}^{1-1/\beta}}=\frac{\{g(x)-g(y)\}\det^{1/2} \Sigma_g}{\Lambda\,\bigl\{g(x)\det^{1/2}\Sigma_g\}^{1-1/\beta}}.
\end{align*}
This shows that $g\in\mathcal{F}^{(\beta,\Lambda,\tau)}$, as required.

For (ii), fix $f\in\mathcal{F}^{(\beta,\Lambda,\tau)}\cap\mathcal{F}^{0,I}$ and consider $x,y\in\R^d$ such that $f(x)\geq\tau$ and $f(y)<f(x)$. Now let $z_t:=x+t(y-x)$ and define $h(t):=-\log f(z_t)$ for $t\geq 0$, so that $h\colon [0,\infty)\to\R$ is continuous and convex. 
Then there exist unique $t_2>t_1\geq 0$ such that $h(t_1)=-\log\tau$, $h(t_2)=-\log\tau+\log f(x)-\log f(y)$ and $h$ is strictly increasing on $[t_1,\infty)$. It follows from the convexity of $h$ that $h(t_1+1)-h(t_1)\geq h(1)-h(0)=\log f(x)-\log f(y)=h(t_2)-h(t_1)$. Thus, $h(t_1+1)\geq h(t_2)>h(t_1)$ and so $0<t_2-t_1\leq 1$. Since $\tau\,f(y)/f(x)=f(z_{t_2})<f(z_{t_1})=\tau$ and $f\leq B_d$ on $\R^d$, we deduce from~\eqref{Eq:Separation} that 
\begin{align*}
\norm{x-y}&\geq\norm{z_{t_1}-z_{t_2}}\\
&\geq\inv{\Lambda}\,\frac{f(z_{t_1})-f(z_{t_2})}{f(z_{t_1})^{1-1/\beta}}=\inv{\Lambda}\,\frac{\tau\{1-f(y)/f(x)\}}{\tau^{1-1/\beta}}=\inv{\Lambda}\rbr{\frac{\tau}{f(x)}}^{1/\beta}\,\frac{f(x)-f(y)}{f(x)^{1-1/\beta}}\\[5pt]
&\geq\inv{\Lambda}\rbr{\frac{\tau}{B_d}}^{1/\beta}\,\frac{f(x)-f(y)}{f(x)^{1-1/\beta}}.
\end{align*}
This shows that $\mathcal{F}^{(\beta,\Lambda,\tau)}\cap\mathcal{F}^{0,I}\subseteq\mathcal{F}^{(\beta,\Lambda^*)}\cap\mathcal{F}^{0,I}$ for all $\Lambda^*\geq\Lambda(B_d/\tau)^{1/\beta}$. Since we established in (i) that the classes $\mathcal{F}^{(\beta,\Lambda,\tau)}$ and $\mathcal{F}^{(\beta,\Lambda^*)}$ are affine invariant, the desired conclusion (ii) follows. 

As for (iii), we rely on affine invariance once again in order to reduce to the isotropic case, and the result is an immediate consequence of the fact that $f\leq B_d$ on $\R^d$ for all $f\in\mathcal{F}^{0,I}$; indeed, for each such $f$, we have $\Lambda'f(x)^{1-1/\alpha}\geq\Lambda f(x)^{1-1/\beta}$ for all $x\in\R^d$ whenever $\Lambda'\geq B_d^{1/\alpha-1/\beta}\Lambda$.

To establish (iv), we appeal to~\citet[Theorem~5.14(c)]{LV06}, which asserts that $\max_{x\in\R^d}h(x)>(4e\pi)^{-d/2}=:t_d$ for all $h\in\mathcal{F}^{0,I}$. We also recall that there exist $\tilde{A}_d>0$ and $\tilde{B}_d\in\R$, which depend only on $d$, such that $h(x)\leq\exp(-\tilde{A}_d\norm{x}+\tilde{B}_d)$ for all $h\in\mathcal{F}^{0,I}$ and $x\in\R^d$ \citep[e.g.][Theorem~2(a)]{KS16}. This implies that there exists $R_d>0$, which depends only on $d$, such that $h(x)<t_d/2$ whenever $h\in\mathcal{F}^{0,I}$ and $\norm{x}>R_d$. Now if $\beta\geq 1$ and $\Lambda>0$ are such that $\mathcal{F}^{(\beta,\Lambda)}$ is non-empty, then by affine invariance, there must exist $f\in\mathcal{F}^{(\beta,\Lambda)}\cap\mathcal{F}^{0,I}$. By the facts above and the continuity of $f$, there exist $x,y\in\R^d$ such that $f(x)=t_d$ and $f(y)=t_d/2$. It follows from the defining condition~\eqref{Eq:Separation} that
\[\inv{\Lambda}\,t_d/2\leq\inv{\Lambda}\,t_d^{1/\beta}/2=\frac{f(x)-f(y)}{\Lambda f(x)^{1-1/\beta}}\leq\norm{x-y}\leq\norm{x}+\norm{y}\leq 2R_d\]
and hence that $\Lambda\geq\inv{R}_d\,t_d/4=:\Lambda_{0,d}$, as required.
\end{proof}
\begin{proof}[Proof of Proposition~\ref{Prop:Differentiable}]
Throughout, we write $\Sigma\equiv\Sigma_f$ for convenience. First suppose that \eqref{Eq:GradBd} holds for all $x\in\R^d$ satisfying $f(x)<\tau\det^{-1/2}\Sigma$, where $\tau\leq\tau^*\det^{1/2}\Sigma$ is fixed. Fix $x,y\in\R^d$ such that $f(y)<f(x)<\tau\det^{-1/2}\Sigma$, and let $t':=\inf\,\{t\in (0,1]:f(y+t(x-y))=f(x)\}$. It follows from the continuity of $f$ that $t'>0$ and that $x':=y+t'(x-y)$ satisfies $f(x')=f(x)>0$. Moreover, since $[y,x']\subseteq\{w:f(w)<\tau\det^{-1/2}\Sigma_f\}$, an open set on which $f$ is differentiable, the mean value theorem guarantees the existence of $z\in [y,x']$ such that $\tm{\nabla f(z)}\,\Sigma\,\bigl\{\inv{\Sigma}(x'-y)\bigr\}=\tm{\nabla f(z)}(x'-y)=f(x')-f(y)=f(x)-f(y)$. By considering the inner product $\ipr{v}{w}':=(\det\Sigma)\,(\tm{v}\Sigma w)$ on $\R^d$ that gives rise to the norm $\norm{{\cdot}}_{\inv{\Sigma}}'$, we can apply the Cauchy--Schwarz inequality together with~\eqref{Eq:GradBd} to deduce that
\[f(x)-f(y)\leq\frac{\norm{\nabla f(z)}_{\inv{\Sigma}}'\,\norm{\inv{\Sigma}(x'-y)}_{\inv{\Sigma}}'}{\det\Sigma}\leq\frac{\Lambda\,\bigl\{f(z)\det^{1/2}\Sigma\bigr\}^{1-1/\beta}\,\norm{x'-y}_\Sigma}{\det^{1/2}\Sigma}.\]
By the choice of $t'$, we have $f(z)\leq f(x)$, so we obtain the desired conclusion that
\[\norm{x-y}_\Sigma\geq\norm{x'-y}_\Sigma\geq\inv{\Lambda}\,\frac{\{f(x)-f(y)\}\det^{1/2}\Sigma}{\bigl\{f(x)\det^{1/2}\Sigma\bigr\}^{1-1/\beta}}.\]
Turning to the reverse implication, suppose that~\eqref{Eq:Separation} holds whenever $x,y\in\R^d$ satisfy $f(y)<f(x)<\tau\det^{-1/2}\Sigma_f$. Now fix $x\in\R^d$ such that $f(x)<\tau\det^{-1/2}\Sigma$, which by assumption is a point at which $f$ is differentiable, and let $u:=-\Sigma\,\nabla f(x)$. It can be assumed without loss of generality that $u\neq 0$, since otherwise the desired conclusion follows trivially.
Setting $h(t):=f(x+tu)$ for $t\geq 0$, we now apply the chain rule to deduce that
\begin{align*}
\norm{\nabla f(x)}_{\inv{\Sigma}}'&=-\frac{h'(0)\det^{1/2}\Sigma}{\norm{\Sigma\,\nabla f(x)}_{\Sigma}}=\lim_{t\searrow 0}\frac{\{h(0)-h(t)\}\det^{1/2}\Sigma}{\norm{tu}_\Sigma}\\[5pt]
&=\lim_{t\searrow 0}\frac{\{f(x)-f(x+tu)\}\det^{1/2}\Sigma}{\norm{tu}_\Sigma}\leq\Lambda\,\bigl\{f(x)\det^{1/2}\Sigma\bigr\}^{1-1/\beta},
\end{align*}
as required, where we have used~\eqref{Eq:Separation} to obtain the final bound.
\end{proof}
Next, we establish a local bracketing entropy bound from which we will subsequently deduce our main result, Theorem~\ref{Thm:Smoothness}.
\begin{proposition}
\label{Prop:SmoothEntropy}
Let $d=3$ and let $\Lambda_0\equiv\Lambda_{0,3}>0$ be the universal constant from Proposition~\ref{Prop:SepProperties}(iv). Then there exists a universal constant $\bar{c}\in (0,1)$ such that whenever $0<\varepsilon<\delta<\inv{e}\wedge(\bar{c}\Lambda^{-3/2}\log_+^{-1}\Lambda)$ and $f_0\in\mathcal{F}^{(\beta,\Lambda)}$ for some $\beta\geq 1$ and $\Lambda\geq\Lambda_0$, we have
\begin{align}
H_{[\,]}(2^{1/2}\varepsilon,\tilde{\mathcal{F}}(f_0,\delta)\cap\tilde{\mathcal{F}}^{1,\eta},\dhell)&\lesssim \Lambda\,\Biggl\{\frac{(\Lambda^3\delta^2)^{(\beta-1)/(\beta+3)}}{\varepsilon^2}\log^{6(\beta+2)/(\beta+3)}(\Lambda^{-3}\delta^{-2})\notag\\
\label{Eq:SmoothEntropy}
&\hspace{1.2cm}+\frac{(\Lambda^3\delta^2)^{-\frac{(4-3\beta)^+}{4(\beta+3)}}}{\varepsilon^{3/2}}\,\log^{(16\beta+39)/\{2(\beta+3)\}}(\Lambda^{-3}\delta^{-2})\Biggr\}.
\end{align}
\end{proposition}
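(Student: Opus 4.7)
The plan is to adapt the level-set decomposition strategy that underpins the proofs of Propositions~\ref{Prop:Log1affEntropy} and~\ref{Prop:ThetaEntropy} to the contour-separation setting. By Proposition~\ref{Prop:SepProperties}(i) and the affine invariance of $\dhell$, I would first reduce to the case $f_0\in\mathcal{F}^{(\beta,\Lambda)}\cap\mathcal{F}_3^{0,I}$, so that~\eqref{Eq:Separation} simplifies to $\norm{x-y}\geq\inv{\Lambda}\{f_0(x)-f_0(y)\}f_0(x)^{1/\beta-1}$. The key quantitative consequences are that consecutive level sets $L_t:=\{f_0\geq t\}$ and $L_{t/2}$ are separated in Hausdorff distance by at least of order $t^{1/\beta}/\Lambda$, while each $L_t$ has diameter $\lesssim\log(1/t)$ by the standard sub-exponential tail bound for isotropic log-concave densities that features in the proof of Proposition~\ref{Prop:SepProperties}(iv).

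Next, I would fix a geometric sequence $t_k:=2^{-k}B_3$ for $k=0,1,\dotsc,K_\delta$, with the cutoff $K_\delta$ of order $\log(1/\delta)+\log\Lambda$ chosen so that the tail region $\{f_0<t_{K_\delta}\}$ only contributes at the scale of the target bound. For each $k$, using a covering argument on $\partial L_{t_k}$ at scale $t_k^{1/\beta}/\Lambda$, I would construct an inner polytope $P_k\subseteq L_{t_k}$ whose facet count $N_k$ is controlled polynomially in $\Lambda\log(1/t_k)/t_k^{1/\beta}$; the separation condition ensures that $P_{k+1}\subseteq L_{t_k}$, so that the annular pieces $A_k:=P_k\setminus P_{k+1}$ form a polyhedral subdivision of $L_0=\supp f_0$ on which $f_0$ oscillates by at most a universal constant factor. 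On each $A_k$, every $f\in\tilde{\mathcal{F}}(f_0,\delta)\cap\tilde{\mathcal{F}}^{1,\eta}$ obeys a pointwise upper bound of order $t_k$, as in Lemma~\ref{Lem:hellunbds}(ii); hence triangulating $A_k$ into $O(N_k+N_{k+1})$ simplices and applying Proposition~\ref{Prop:bebds} to each with upper bound $e^{B_k}\asymp t_k$ yields a local bracketing entropy of order $\mu_3(A_k)\,t_k/\varepsilon_k^2$ plus a boundary contribution scaling with the total simplex count. The tail $\{f_0<t_{K_\delta}\}$ can then be handled by the sub-exponential slicing argument used in the $a_+\leq j\leq L$ step of the proof of Proposition~\ref{Prop:Log1affEntropy}, producing a geometrically decaying entropy series that is absorbed into the main term.

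Finally, I would aggregate these contributions via Cauchy--Schwarz, choosing $\varepsilon_k^2$ proportional to $\sqrt{\mu_3(A_k)t_k}$ for the bulk part and exploiting the normalization $\sum_k\mu_3(A_k)t_k\leq 2\int f_0=2$ inherited from the level-shell structure. The main $\Lambda(\Lambda^3\delta^2)^{(\beta-1)/(\beta+3)}/\varepsilon^2$ term in~\eqref{Eq:SmoothEntropy} then arises from optimally balancing the bulk entropy against the polylogarithmic volume estimates for $L_{t_k}$ and the number of levels $K_\delta$, while the auxiliary $\varepsilon^{-3/2}$ term traces back to the total boundary facet count $\sum_k N_k\asymp\Lambda\,t_{K_\delta}^{-1/\beta}$ up to polylogarithms, which dominates precisely when $\beta<4/3$. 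The main obstacle will be the polytopal approximation step: one needs polytopes that simultaneously (i) hit the separation scale $t_k^{1/\beta}/\Lambda$, (ii) have the claimed facet count coming from the covering of $\partial L_{t_k}$, and (iii) nest compatibly across all $K_\delta$ levels so that their relative triangulations fit into a genuine polyhedral subdivision of $L_0$; matching the sharp $(\beta-1)/(\beta+3)$ exponent in~\eqref{Eq:SmoothEntropy} requires tracking the precise interplay between $\mu_3(A_k)\,t_k$, $N_k$ and $K_\delta$ across levels, which is where all of the delicate bookkeeping lies.
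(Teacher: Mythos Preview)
Your level-set decomposition, the use of the separation condition to obtain Hausdorff gaps of order $t^{1/\beta}/\Lambda$ between consecutive level sets, and the polytopal approximation of each $L_{t_k}$ via a Bronshteyn--Ivanov type argument are all in line with the paper's proof. However, there is a genuine gap in how you extract the two terms in~\eqref{Eq:SmoothEntropy}.

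The decisive missing ingredient is a pointwise \emph{lower} bound on $f$ over the inner annuli. The paper shows (Lemma~\ref{Lem:smoothlb}) that for $f\in\tilde{\mathcal{F}}(f_0,\delta)$ one has $f\geq f_0/2$ wherever $f_0\geq\ell:=(\tilde c\Lambda^3\delta^2)^{\beta/(\beta+3)}$, and pairs this with an upper bound (Lemma~\ref{Lem:smoothub}, not Lemma~\ref{Lem:hellunbds}(ii), which requires $f_0$ uniform and does not apply here). With two-sided control $2^{k-2}\ell\lesssim f\lesssim 2^k\ell\log^{\ldots}$ on each $P_{k-1}\setminus P_k$, the sharper simplex bound~\eqref{Eq:besimp} from Proposition~\ref{Prop:bebds} gives an entropy contribution of order $N_k\,t_k^{3/4}\mu_3^{3/4}/\varepsilon_k^{3/2}$ rather than $N_k\,t_k\,\mu_3/\varepsilon_k^2$. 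Summing $N_k\,t_k^{3/4}\asymp\Lambda\,t_k^{3/4-1/\beta}$ over $k$ is what produces the $\Lambda\varepsilon^{-3/2}$ term with exponent $-\tfrac{(4-3\beta)^+}{4(\beta+3)}$. The $\Lambda\varepsilon^{-2}(\Lambda^3\delta^2)^{(\beta-1)/(\beta+3)}$ term arises from a \emph{single} outer shell $D\setminus P_0$ at level $\ell$ (plus the region outside a cube $D$ of side $\asymp\log(\Lambda^{-3}\delta^{-2})$, handled by unit-cube tiling), where only the upper bound is available and~\eqref{Eq:beconvubd} is used; the small prefactor $\ell^{1-1/\beta}=(\Lambda^3\delta^2)^{(\beta-1)/(\beta+3)}$ comes precisely from $N_0\cdot\ell\asymp\Lambda\ell^{1-1/\beta}$.

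Your scheme applies the one-sided $\varepsilon^{-2}$ bound on \emph{every} annulus, yielding $\sum_k N_k t_k\mu_3(A_k)/\varepsilon_k^2$ with $N_k t_k\asymp\Lambda t_k^{1-1/\beta}$; for $\beta>1$ this sum is dominated by the largest $t_k\asymp 1$ and carries no $\delta$-dependent prefactor, so it cannot recover the first term of~\eqref{Eq:SmoothEntropy}. Your attribution of the $\varepsilon^{-3/2}$ term to a ``total boundary facet count'' is not how it arises: it is the two-sided bracketing rate on the bulk, not a boundary effect. The tail region is also handled differently from Proposition~\ref{Prop:Log1affEntropy}; the log-affine slicing there is not available here, and the paper instead uses the near-isotropic envelope on $D^c$.
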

\begin{proof}
Let $\bar{c}:=e^{-1/2}\wedge\tilde{c}\wedge\tilde{c}'$, where $\tilde{c},\tilde{c}'$ are the universal constants from Lemmas~\ref{Lem:smoothlb} and~\ref{Lem:smoothub} respectively, and fix $0<\varepsilon<\delta<\inv{e}\wedge(\bar{c}\Lambda^{-3/2}\log_+^{-1}\Lambda)$. Since $\dhell$ is affine invariant, we may assume without loss of generality that $f_0\in\mathcal{F}^{(\beta,\Lambda)}\cap\mathcal{F}^{0,I}$. First, recall from~\citet[Corollary~3(a)]{KS16} that there exist universal constants $\tilde{a}_3>0$ and $\tilde{b}_3\in\R$ such that
\begin{equation}
\label{Eq:nisoenv}
\sup_{\,h\in\tilde{\mathcal{F}}^{1,\eta}}h(x)\leq\exp\bigl(-\tilde{a}_3\norm{x}+\tilde{b}_3\bigr)=:M(x).
\end{equation}
for all $x\in\R^3$. Thus, there exists a universal constant $C_3^*>0$ such that $f_0(x)\leq M(x)\leq\Lambda^3\delta^2$ whenever $\norm{x}>C_3^*\log(\Lambda^{-3}\delta^{-2})$. 
Now let $r:=\ceil{C_3^*\log(\Lambda^{-3}\delta^{-2})\vee\inv{(2\Lambda_0\bar{\eta})}}$
and $D:=[-r,r]^3$, where the universal constants $\Lambda_0>0$ and $\bar{\eta}\equiv\bar{\eta}_3>0$ are taken from Proposition~\ref{Prop:SepProperties} and Lemma~\ref{Lem:BI75} respectively, and observe that
\begin{equation}
\label{Eq:befirst}
H_{[\,]}(2^{1/2}\varepsilon,\tilde{\mathcal{F}}(f_0,\delta)\cap\tilde{\mathcal{F}}^{1,\eta},\dhell)\leq H_{[\,]}(\varepsilon,\tilde{\mathcal{F}}^{1,\eta},\dhell,\cm{D})+H_{[\,]}(\varepsilon,\tilde{\mathcal{F}}(f_0,\delta)\cap\tilde{\mathcal{F}}^{1,\eta},\dhell,D).
\end{equation}

We begin by considering the first quantity on the right-hand side. For each $z=(z_1,z_2,z_3)\in\Z^3$, let $R_z:=\prod_{j=1}^3\,[z_j,z_j+1)$ and note that $m_z:=\max_{y\in R_z}M(y)\leq e^{\sqrt{3}\tilde{a}_3}M(w)$ for all $w\in R_z$. Writing $A\,\triangle\,B$ for the symmetric difference of sets $A,B$, we set $J:=\{z\in\Z^3:R_z\not\subseteq D\}$ and observe that $\mu_3\bigl(\cm{D}\,\triangle\,\bigcup_{z\in J}R_z\bigr)=0$, from which we deduce that
\begin{align}
b:=\sum_{z\in J}m_z^{1/2}\leq e^{\sqrt{3}\tilde{a}_3/2}\int_{\cm{D}}M^{1/2}\leq e^{\sqrt{3}\tilde{a}_3/2}\int_{\cm{\bar{B}(0,r)}}M^{1/2}&=4\pi e^{\sqrt{3}\tilde{a}_3/2}\int_r^\infty t^2e^{-(\tilde{a}_3t-\tilde{b}_3)/2}\,dt\notag\\
\label{Eq:sumboxes}
&\lesssim\Lambda^{3/2}\delta\log^2(\Lambda^{-3}\delta^{-2}).
\end{align}
We now apply the bound~\eqref{Eq:beconvubd} from Proposition~\ref{Prop:bebds} to establish that 
\begin{align}
\label{Eq:be1}
H_{[\,]}(\varepsilon,\tilde{\mathcal{F}}^{1,\eta},\dhell,\cm{D})&\leq\sum_{z\in J}H_{[\,]}(m_z^{1/4}\,b^{-1/2}\varepsilon,\tilde{\mathcal{F}}^{1,\eta},\dhell,R_z)\notag\\
&\lesssim\sum_{z\in J}\,\frac{m_z^{1/2}}{b^{-1}\varepsilon^2}\lesssim\frac{\Lambda^3\delta^2\log^4(\Lambda^{-3}\delta^{-2})}{\varepsilon^2}.
\end{align}

To handle the second term on the right-hand side of~\eqref{Eq:befirst}, we subdivide $D$ further into regions that are derived from polytopal approximations to the closed, convex sets defined by $U_{f_0,t}:=\{x\in\R^d:f_0(x)\geq t\}$ for $t\geq 0$. We start by making some further definitions.
Let $\ell:=(\tilde{c}\Lambda^3\delta^2)^{\beta/(\beta+3)}$, where $\tilde{c}>1$ is the universal constant defined in Lemma~\ref{Lem:smoothlb}. Since $\delta<\bar{c}\Lambda^{-3/2}\log_+^{-1}\Lambda$ and $\bar{c}<1$, we have $\Lambda^3\delta^2<1$, so $\ell\geq\tilde{c}^{\beta/(\beta+3)}\Lambda^3\delta^2>\Lambda^3\delta^2$. Also, by~\citet[Theorem~5.14(c)]{LV06} and the proof of~\citet[Corollary~3(b)]{KS16}, we have $\inf_{h\in\tilde{\mathcal{F}}^{1,\eta}}\sup_{x\in\R^d}h(x)>(1+\eta)^{-3/2}\,(4e\pi)^{-3/2}=:t_0$, and note that $k_0:=\floor{\log_2(t_0\,\inv{\ell})}\lesssim\log(\Lambda^{-3}\delta^{-2})$.

Now for $k\in\{0,\dotsc,k_0\}$, define $U_k:=U_{f_0,\,2^k\ell}$ and $r_k:=\inv{(2\Lambda)}(2^k\ell)^{1/\beta}$. Then $r_k\leq r_{k_0}\leq\inv{(2\Lambda)}t_0^{1/\beta}\leq\inv{(2\Lambda)}$ for all such $k$. Recalling the definition of $r$ and the fact that $\ell>\Lambda^3\delta^2$, we see that $D\supseteq\bar{B}(0,r)\supseteq U_{f_0,\,\Lambda^3\delta^2}\supseteq U_0\supseteq U_1\supseteq\dotsm\supseteq U_{k_0}\supseteq U_{f_0,\,t_0}\neq\emptyset$.
Also, since $f$ satisfies \eqref{Eq:Separation}, it follows that $U_{k-1}\supseteq U_k+\bar{B}(0,r_k)$ for every $k\in\{1,\dotsc,k_0\}$. 

Next, we obtain suitable approximating polytopes $P_0\supseteq\dotsm\supseteq P_{k_0-1}$. For each fixed $k\in\{1,\dotsc,k_0\}$, note that $r_k/r\leq\inv{(2\Lambda)}/\inv{(2\Lambda_0\bar{\eta})}\leq\bar{\eta}$ in view of the definition of $r$, and that $U_k\subseteq\bar{B}(0,r)$ is compact and convex. Thus, by Lemma~\ref{Lem:BI75}, there exists a polytope $P_{k-1}$ with at most $\bar{C}^*\inv{(r_k/r)}\lesssim (2^k\ell)^{-1/\beta}\Lambda\log(\Lambda^{-3}\delta^{-2})$ vertices such that $U_k\subseteq P_{k-1}\subseteq U_k+\bar{B}(0,r_k)\subseteq U_{k-1}$. 
We emphasise that the hidden constant here does not depend on $k$. In addition, let $P_{k_0}:=\emptyset$.

In the argument below, we also use the fact that if $P\subseteq Q\subseteq\R^3$ are polytopes with $p$ and $q$ vertices respectively, then there is a triangulation of $Q\setminus\Int P$ containing $\lesssim p+q$ simplices~\citep{WY00}.
We will apply this to the nested pairs $P_0\subseteq D$ and $P_k\subseteq P_{k-1}$ for $1\leq k\leq k_0$. 

First, we consider the region $D\setminus P_0$. By the facts above, $D\setminus\Int P_0$ can be triangulated into $\lesssim\ell^{-1/\beta}\Lambda\log(\Lambda^{-3}\delta^{-2})$ simplices. Also, since $U_1\subseteq P_0$, we have $f_0(x)\leq 2\ell$ for all $x\in D\setminus P_0$. Thus, by Lemma~\ref{Lem:smoothub} and the fact that $\ell\asymp (\Lambda^3\delta^2)^{\beta/(\beta+3)}$, each $f\in\tilde{\mathcal{F}}(f_0,\delta)\cap\tilde{\mathcal{F}}^{1,\eta}$ satisfies $f\lesssim\ell\log^{2\beta/(\beta+3)}(\Lambda^{-3}\delta^{-2})$ on $D\setminus P_0$. We now apply the final assertion of Proposition~\ref{Prop:bebds} together with the bound $\mu_3(D\setminus P_0)\leq\mu_3(D)\lesssim\log^3(\Lambda^{-3}\delta^{-2})$ to deduce that
\begin{align}
H_{[\,]}(2^{-1/2}\varepsilon,\tilde{\mathcal{F}}(f_0,\delta)\cap\tilde{\mathcal{F}}^{1,\eta},\dhell,D\setminus P_0)\notag\\
&\hspace{-4cm}\lesssim\ell^{-1/\beta}\Lambda\log(\Lambda^{-3}\delta^{-2})\,\frac{\ell\log^{2\beta/(\beta+3)}(\Lambda^{-3}\delta^{-2})\log^3(\Lambda^{-3}\delta^{-2})}{\varepsilon^2}\notag\\
\label{Eq:beouter}
&\hspace{-4cm}\lesssim\Lambda\,\frac{(\Lambda^3\delta^2)^{(\beta-1)/(\beta+3)}}{\varepsilon^2}\log^{6(\beta+2)/(\beta+3)}(\Lambda^{-3}\delta^{-2}).
\end{align}

Next, fix $1\leq k\leq k_0$ and consider $P_{k-1}\setminus P_k$. By the facts above, $P_{k-1}\setminus\Int P_k$ can be triangulated into $\lesssim (2^k\ell)^{-1/\beta}\Lambda\log(\Lambda^{-3}\delta^{-2})$ simplices. Moreover, since $P_{k-1}\setminus P_k\subseteq U_{k-1}\setminus U_{k+1}$, we have $2^{k-1}\ell\leq f_0(x)<2^{k+1}\ell$ for all $x\in P_{k-1}\setminus P_k$. Thus, by the choice of $\bar{c}$ at the start of the proof and the fact that $\ell\asymp (\Lambda^3\delta^2)^{\beta/(\beta+3)}$, it follows from Lemmas~\ref{Lem:smoothlb} and~\ref{Lem:smoothub} that \[2^{k-2}\ell\leq f(x)\lesssim 2^k\ell\log^{2\beta/(\beta+3)}(\Lambda^{-3}\delta^{-2})\]
whenever $f\in\tilde{\mathcal{F}}(f_0,\delta)\cap\tilde{\mathcal{F}}^{1,\eta}$ and $x\in P_{k-1}\setminus P_k$. In addition, $\mu_3(P_{k-1}\setminus P_k)\leq\mu_3(D)\lesssim\log^3(\Lambda^{-3}\delta^{-2})$, so by applying the first bound~\eqref{Eq:besimp} from Proposition~\ref{Prop:bebds}, we conclude that
\begin{align}
H_{[\,]}(\varepsilon/\sqrt{2k_0},\tilde{\mathcal{F}}(f_0,\delta)\cap\tilde{\mathcal{F}}^{1,\eta},\dhell,P_{k-1}\setminus P_k)\notag\\
&\hspace{-6cm}\lesssim (2^k\ell)^{-1/\beta}\Lambda\log(\Lambda^{-3}\delta^{-2})\rbr{\frac{(2^k\ell)^{1/2}\log^{\beta/(\beta+3)}(\Lambda^{-3}\delta^{-2})\log^{5/2}(\Lambda^{-3}\delta^{-2})}{k_0^{-1/2}\,\varepsilon}}^{3/2}\notag\\
\label{Eq:be2}
&\hspace{-6cm}\lesssim\Lambda\,\frac{(2^k\ell)^{3/4-1/\beta}}{\varepsilon^{3/2}}\,\log^{(14\beta+33)/\{2(\beta+3)\}}(\Lambda^{-3}\delta^{-2}),
\end{align}
where we emphasise again that the hidden constant here does not depend on $k$. Now for any $\alpha\in\R$, we have the simple bound $\sum_{k=0}^{k_0}\,(2^k\ell)^{-\alpha}\leq\ell^{-\alpha^+}\,k_0\lesssim\ell^{-\alpha^+}\log(\Lambda^{-3}\delta^{-2})$. Since $P_0=\bigcup_{k=1}^{\,k_0} \,(P_{k-1}\setminus P_k)$, it follows from the above that
\begin{align}
H_{[\,]}(2^{-1/2}\varepsilon,\tilde{\mathcal{F}}(f_0,\delta)\cap\tilde{\mathcal{F}}^{1,\eta},\dhell,P_0)&\leq\sum_{k=1}^{k_0}\,H_{[\,]}(\varepsilon/\sqrt{2k_0},\tilde{\mathcal{F}}(f_0,\delta)\cap\tilde{\mathcal{F}}^{1,\eta},\dhell,P_{k-1}\setminus P_k)\notag\\
\label{Eq:be3}
&\lesssim\Lambda\,\frac{(\Lambda^3\delta^2)^{-\frac{(4-3\beta)^+}{4(\beta+3)}}}{\varepsilon^{3/2}}\,\log^{(16\beta+39)/\{2(\beta+3)\}}(\Lambda^{-3}\delta^{-2}).
\end{align}
Finally, since 
\begin{align*}
H_{[\,]}(\varepsilon,\tilde{\mathcal{F}}(f_0,\delta)\cap\tilde{\mathcal{F}}^{1,\eta},\dhell,D)&\leq H_{[\,]}(2^{-1/2}\varepsilon,\tilde{\mathcal{F}}(f_0,\delta)\cap\tilde{\mathcal{F}}^{1,\eta},\dhell,D\setminus P_0)\\
&\hspace{1.5cm}+H_{[\,]}(2^{-1/2}\varepsilon,\tilde{\mathcal{F}}(f_0,\delta)\cap\tilde{\mathcal{F}}^{1,\eta},\dhell,P_0),
\end{align*}
we can combine the bounds~\eqref{Eq:befirst},~\eqref{Eq:be1},~\eqref{Eq:beouter} and~\eqref{Eq:be3} to obtain the desired local bracketing entropy bound~\eqref{Eq:SmoothEntropy}.
\end{proof}
Theorem~\ref{Thm:Smoothness} now follows from Proposition~\ref{Prop:SmoothEntropy} and standard empirical process theory in much the same way that Theorem~\ref{Thm:k1} follows from Proposition~\ref{Prop:Log1affEntropy}.
\begin{proof}[Proof of Theorem~\ref{Thm:Smoothness}]
Fix $f_0\in\mathcal{F}_3$, $\beta\geq 1$ and $\Lambda\geq\Lambda_0$, and let
$\tilde{\Delta}:=\inf_{f \in \mathcal{F}_3^{(\beta,\Lambda)}}\dhell^2(f_0,f)$.
Defining the universal constant $\bar{c}\in (0,1)$ as in Proposition~\ref{Prop:SmoothEntropy}, we first suppose that $\tilde{\Delta}<\inv{2}\{\inv{e}\wedge(\bar{c}\Lambda^{-3/2}\log_+^{-1}\Lambda)\}=:2^{-1}\tilde{\Lambda}$. If $\delta\in (0,\tilde{\Lambda}-\tilde{\Delta})$, then by analogy with the derivation of~\eqref{Eq:Log1affEntropy2G} in the proof of Theorem~\ref{Thm:k1}, we deduce from Proposition~\ref{Prop:SmoothEntropy} and the triangle inequality that
\begin{align}
H_{[\,]}(2^{1/2}\varepsilon,\tilde{\mathcal{F}}(f_0,\delta)\cap\tilde{\mathcal{F}}^{1,\eta},\dhell)&\lesssim \Lambda\,\Biggl\{\frac{\bigl\{\Lambda^3(\delta+\tilde{\Delta})^2\bigr\}^{\alpha}}{\varepsilon^2}\log^\gamma(\Lambda^{-3}\delta^{-2})\notag\\
\label{Eq:SmoothEntropyG}
&\hspace{3.2cm}+\frac{\bigl\{\Lambda^3(\delta+\tilde{\Delta})^2\bigr\}^{-\tilde{\alpha}}}{\varepsilon^{3/2}}\,\log^{\tilde{\gamma}}(\Lambda^{-3}\delta^{-2})\Biggr\},
\end{align}
where we set $\alpha:=(\beta-1)/(\beta+3)$, $\gamma:=6(\beta+2)/(\beta+3)$, $\tilde{\alpha}:=-(4-3\beta)^+/\{4(\beta+3)\}$ and $\tilde{\gamma}:=(16\beta+39)/\{2(\beta+3)\}$. Since $\Lambda^{-3}\leq\Lambda_0^{-3}\lesssim 1$ and $1+\gamma/2\leq\tilde{\gamma}/2$, it follows from~\eqref{Eq:SmoothEntropyG} that
\begin{align*}
&\hspace{-3pt}\frac{1}{\delta^2}\int_{\delta^2/2^{13}}^\delta H_{[\,]}^{1/2 }(2^{1/2}\varepsilon,\tilde{\mathcal{F}}(f_0,\delta)\cap\tilde{\mathcal{F}}^{1,\eta},\dhell)\,d\varepsilon\\
&\hspace{0.4cm}\lesssim\frac{\Lambda^{1/2}}{\delta^2}\bigl\{\Lambda^3(\delta+\tilde{\Delta})^2\bigr\}^{\alpha/2}\log^{\gamma/2}(\Lambda^{-3}\delta^{-2})\log(1/\delta)+\frac{\Lambda^{1/2}}{\delta^{7/4}}\bigl\{\Lambda^3(\delta+\tilde{\Delta})^2\bigr\}^{-\tilde{\alpha}/2}\,\log^{\tilde{\gamma}/2}(\Lambda^{-3}\delta^{-2})\\
&\hspace{0.4cm}\lesssim\log^{\tilde{\gamma}/2}(1/\delta)\rbr{\frac{\Lambda^{1/2}}{\delta^2}\bigl\{\Lambda^3(\delta+\tilde{\Delta})^2\bigr\}^{\alpha/2}+\frac{\Lambda^{1/2}}{\delta^{7/4}}\bigl\{\Lambda^3(\delta+\tilde{\Delta})^2\bigr\}^{-\tilde{\alpha}/2}}=:\Phi_1(\delta)
\end{align*}
for all $\delta\in (0,\tilde{\Lambda}-\tilde{\Delta})$. Setting $\tilde{\delta}:=\Lambda^3\delta^2$, observe that since  \[r_\beta^{-1}=(2-\alpha)\vee(\tilde{\alpha}+7/4)=\left\{\begin{array}{ll}\!2-\alpha=(\beta+7)/(\beta+3)&\quad\mbox{if $\alpha<1/4$}\\ \!7/4&\quad\mbox{if $\alpha\geq 1/4$}\end{array}\right.\]
and $\Lambda^{25/8}\leq\Lambda_0^{-3/8}\Lambda^{7/2}$, we have
\begin{align}
\Phi_1(\delta)&\leq 2\log^{\tilde{\gamma}/2}(\Lambda^{3/2}\tilde{\delta}^{-1})\bigl(\Lambda^{7/2}\,\tilde{\delta}^{\alpha-2}+\Lambda^{25/8}\,\tilde{\delta}^{-(\tilde{\alpha}+7/4)}\bigr)\notag\\
\label{Eq:Phi1}
&\leq 2\bigl(1+\Lambda_0^{-3/8}\bigr)\Lambda^{7/2}\,\tilde{\delta}^{-1/r_\beta}\log^{\tilde{\gamma}/2}(\Lambda^{3/2}\tilde{\delta}^{-1})
\end{align}
for all $\delta\in (\tilde{\Delta},\tilde{\Lambda}-\tilde{\Delta})$. On the other hand, if $\delta\geq\tilde{\Lambda}-\tilde{\Delta}>\tilde{\Lambda}/2$, then by~\citet[Theorem~4]{KS16}, we have
\[H_{[\,]}(2^{1/2}\varepsilon,\tilde{\mathcal{F}}(f_0,\delta)\cap\tilde{\mathcal{F}}^{1,\eta},\dhell)\leq H_{[\,]}(2^{1/2}\varepsilon,\tilde{\mathcal{F}}^{1,\eta},\dhell)\lesssim\frac{1}{\varepsilon^2}\lesssim\frac{(\delta+\tilde{\Delta})^2}{\tilde{\Lambda}^2\,\varepsilon^2}\lesssim\frac{(\delta+\tilde{\Delta})^2\,\Lambda^3\log_+^2\Lambda}{\varepsilon^2},\]
so
\[\frac{1}{\delta^2}\int_{\delta^2/2^{13}}^\delta H_{[\,]}^{1/2 }(2^{1/2}\varepsilon,\tilde{\mathcal{F}}(f_0,\delta)\cap\tilde{\mathcal{F}}^{1,\eta},\dhell)\,d\varepsilon\lesssim\frac{\delta+\tilde{\Delta}}{\delta^2}\log_+\!\rbr{\frac{1}{\delta}}\Lambda^{3/2}\log_+^2\Lambda=:\Phi_2(\delta)\]
for all $\delta\geq\tilde{\Lambda}-\tilde{\Delta}$. It is straightforward to verify that $\Phi_1$ and $\Phi_2$ are decreasing functions of $\delta$. Moreover, since $\tilde{\Lambda}/2<\tilde{\Lambda}-\tilde{\Delta}\leq\tilde{\Lambda}$, we see that $\Phi_1(\delta)\gtrsim\Lambda^{7/2}\log_+^{2+\tilde{\gamma}/2}\Lambda$ for all $\delta\in (0,\tilde{\Lambda}-\tilde{\Delta})$ and $\Phi_2(\delta)\lesssim\Lambda^3\log_+^3\Lambda$ for all $\delta\geq\tilde{\Lambda}-\tilde{\Delta}$. Consequently, there exist universal constants $\tilde{C}',\tilde{C}''>0$ such that the function $\Psi\colon (0,\infty)\to (0,\infty)$ defined by \[\Psi(\delta):=\left\{\begin{array}{ll} \!\!\tilde{C}'\delta^2\,\Phi_1(\delta) & \mbox{\quad if $\delta\in (0,\tilde{\Lambda}-\tilde{\Delta})$}\\
\!\!\tilde{C}''\delta^2\,\Phi_2(\delta) & \mbox{\quad if $\delta\geq\tilde{\Lambda}-\tilde{\Delta}$}\end{array}\right.\]
satisfies $\Psi(\delta)\geq\delta\vee\int_{\delta^2/2^{13}}^\delta H_{[\,]}^{1/2}(\varepsilon,\tilde{\mathcal{F}}(f_0,\delta)\cap\tilde{\mathcal{F}}^{1,\eta},\dhell)\,d\varepsilon$ for all $\delta>0$ and has the property that $\delta\mapsto\delta^{-2}\,\Psi(\delta)$ is decreasing. Next, let $\tilde{C}:=\Lambda_0^{-1}(1\vee\Lambda_0^{1/2})\vee\bigl\{2^{20}\tilde{C}'\bigl(1+\Lambda_0^{-3/8}\bigr)\bigr\}^{2r_\beta}$, which satisfies $\tilde{C}\Lambda^{7r_\beta-3}\geq\tilde{C}\Lambda_0^{7r_\beta-3}\geq 1$ in view of the fact that $r_\beta\in (1/2,4/7]$, and define \[\delta_n:=\bigl(\tilde{C}\Lambda^{7r_\beta-3} n^{-r_\beta}\log^{\tilde{\gamma}r_\beta}n+\tilde{\Delta}^2\bigr)^{1/2}.\] 
It is straightforward to verify that there exists a universal constant $\tilde{K}>1$ such that $\bar{\delta}_n:=(\tilde{C}\Lambda^{7r_\beta-3}n^{-r_\beta}\log^{\tilde{\gamma}r_\beta}n)^{1/2}\leq \tilde{\Lambda}/2<\tilde{\Lambda}-\tilde{\Delta}$ for all $n\geq\ceil{\tilde{K}\Lambda^8}$. Since $\delta_n>\tilde{\Delta}$ and $\log(1/\bar{\delta}_n)\leq\log n$, it follows from~\eqref{Eq:Phi1} that for all $n\geq\tilde{K}\Lambda^8$, we have 
\begin{align*}
\delta_n^{-2}\,\Psi(\delta_n)&\leq\bar{\delta}_n^{-2}\,\Psi(\bar{\delta}_n)=\tilde{C}'\,\Phi_1(\bar{\delta}_n)\\
&\leq 2\tilde{C}'\bigl(1+\Lambda_0^{-3/8}\bigr)\Lambda^{7/2}(\tilde{C}^{-1}\Lambda^{-7r_\beta} n^{r_\beta}\log^{-\tilde{\gamma}r_\beta}n)^{1/(2r_\beta)}\log^{\tilde{\gamma}/2}n\\
&\leq 2^{-19}n^{1/2}.
\end{align*}
Thus, arguing as in the proof of Theorem~\ref{Thm:k1} and recalling the derivation of~\eqref{Eq:vdGBound} in particular, we can now apply~\citet[Corollary~7.5]{vdG00} and Lemma~\ref{Lem:dexbd} to conclude that there exists a universal constant $\tilde{C}^*>0$ such that \[\E\{\dex^2(\hat{f}_n,f_0)\}\leq\tilde{C}^*\Lambda^{7r_\beta-3} n^{-r_\beta}\log^{\tilde{\gamma}r_\beta}n+\tilde{\Delta}^2\]
for all $n\geq\tilde{K}\Lambda^8$, provided that $\tilde{\Delta}<\tilde{\Lambda}/2$. 

Suppose on the other hand that $\tilde{\Delta}\geq\tilde{\Lambda}/2$. By Theorem~\ref{Thm:WorstCaseRates}, a small modification of~\citet[Theorem~5]{KS16}, there exists a universal constant $C'>0$ such that $\E\{\dex^2(\hat{f}_n,f_0)\}\leq C'n^{-1/2}\log n$, and observe that there exists a universal constant $K'\geq\tilde{K}$ such that if  $n\geq K'\Lambda^8$ then 
\[\E\{\dex^2(\hat{f}_n,f_0)\}\leq C'n^{-1/2}\log n\leq\{ e^{-2}\wedge(\bar{c}^2\Lambda^{-3}\log_+^{-2}\Lambda)\}/4=\tilde{\Lambda}^2/4\leq\tilde{\Delta}^2.\]
Otherwise, if $4\leq n<K'\Lambda^8$ and $\tilde{\Delta}\geq 0$, then since $8(r_\beta-1/2)\leq 7r_\beta-3$, it again follows from Theorem~\ref{Thm:WorstCaseRates} that 
\[\E\{\dex^2(\hat{f}_n,f_0)\}\lesssim n^{-1/2}\log n\lesssim\Lambda^{7r_\beta-3}\,n^{-r_\beta}\log^{\tilde{\gamma}r_\beta}n.\]
This completes the proof of the oracle inequality~\eqref{Eq:Smoothness}.
\end{proof}

\section*{Acknowledgements}

The authors would like to thank the Isaac Newton Institute for Mathematical Sciences for support and hospitality during the programme `Statistical Scalability' when work on this paper was undertaken. This work was supported by EPSRC grant number EP/R014604/1.  The first author is grateful to Adam P. Goucher for helpful conversations. Finally, we thank the anonymous reviewers for their constructive comments, which led to several improvements in the paper.

\clearpage

\setcounter{section}{0}
\setcounter{equation}{0}
\setcounter{theorem}{0}
\def\theequation{S\arabic{equation}}
\def\thesection{S\arabic{section}}
\def\thetheorem{S\arabic{theorem}}
\def\thefigure{S\arabic{figure}}

\begin{center}
\Large{Supplementary material for `Adaptation in multivariate log-concave density estimation'} \\ \vspace{0.2in}
\large{Oliver Y. Feng, Adityanand Guntuboyina, Arlene K. H. Kim \\
  and Richard J. Samworth} 
\end{center}

This is the supplementary material for~\citet{FGKS18}, hereafter referred to as the main text.
\addtocontents{toc}{\protect\setcounter{tocdepth}{2}}
\tableofcontents
\vspace{0.5cm}
Section~\ref{Subsec:LocalBE} contains the supporting results that are most directly relevant to the proofs of the main theorems in Sections~\ref{Sec:Logkaffine} and~\ref{Sec:ThetaPolytope}. Much of the groundwork for Section~\ref{Subsec:LocalBE} is laid in Section~\ref{Subsec:EntropyCalcs}, where many of the technical lemmas have a strong geometric flavour. The structural results in Section~\ref{Subsec:LogkaffineDensities} are rooted in convex analysis and underpin many of the key definitions and calculations in Sections~\ref{Sec:Logkaffine} and~\ref{Sec:LogkaffineProofs}. Other supplementary results of a more statistical nature may be found in Sections~\ref{Subsec:MaxBound} and~\ref{Subsec:Envelope} (such the envelope result stated as Proposition~\ref{Prop:Envelope}, which may be of interest in its own right). 

The technical preparation for the proof of the main result in Section~\ref{Sec:Smoothness} is carried out in Section~\ref{Subsec:SmoothnessSupp}, in which the key auxiliary results play a similar role to those in Section~\ref{Subsec:EntropyCalcs}. Two of the examples in Section~\ref{Sec:Smoothness} draw on the background material in Section~\ref{Subsec:Holder}, which develops a notion of affine invariant smoothness and reviews some existing results on nonparametric density estimation over H\"older classes.
\section{Supplementary proofs for Sections~\ref{Sec:LogkaffineProofs} and~\ref{Subsec:ThetaProofs}}
\label{Sec:StatSupp}

\subsection{Tail bounds for \texorpdfstring{$\dex^2$}{squared d\_X} divergence and their consequences}
\label{Subsec:MaxBound}

\begin{lemma}
\label{Lem:dexbd}	
Fix $d\in\N$. Let $X_1,\dotsc,X_n\iid f_0\in\mathcal{F}_d$ with $n\geq d+1$ and let $\hat{f}_n$ denote the corresponding log-concave maximum likelihood estimator. Then
\begin{equation}
\label{Eq:exdex}
\E\cbr{\sup_{x\in\R^d}\log\hat{f}_n(x)+\max_{i=1,\dotsc,n}\log\frac{1}{f_0(X_i)}}\lesssim_d\log n,
\end{equation}
and
\begin{equation}
\label{Eq:tailprdex}
\int_{8d\log n}^\infty\Pr\bigl\{\dex^2(\hat{f}_n,f_0)\geq t\bigr\}\,dt\,\lesssim_d n^{-3}.
\end{equation}
\end{lemma}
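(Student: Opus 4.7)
The plan is to leverage three ingredients: the affine equivariance of $\hat{f}_n$ \citep[Remark~2.4]{DSS11}, the near-isotropicity estimate $\Pr(\hat{f}_n \notin \tilde{\mathcal{F}}_d^{1,\eta_d}) = O(1/n)$ from \citet[Lemma~6]{KS16}, and sub-exponential concentration of the self-information $-\log f_0(X)$ for log-concave $f_0$ (a dimension-refined Bobkov--Madiman-type bound). Since $\sup_x \log \hat{f}_n(x) + \max_i (-\log f_0(X_i))$ is affine invariant---an affine change of variables $T(x) = Ax+b$ shifts $\log \hat{f}_n$ by $-\log|\det A|$ and $-\log f_0$ by $+\log|\det A|$, with the contributions cancelling in the sum---I would first reduce to the case $f_0 \in \mathcal{F}_d^{0,I}$.

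For \eqref{Eq:exdex}, I would bound the two summands separately. Applying a sub-exponential concentration inequality, $\Pr(-\log f_0(X) \geq h(f_0) + t) \lesssim_d e^{-t/c_d}$ for some $c_d > 0$, and using that $h(f_0) \lesssim_d 1$ for isotropic log-concave $f_0$ (by comparison with the Gaussian entropy), a standard maximal inequality yields $\E[\max_i (-\log f_0(X_i))] \lesssim_d \log n$. For $\E[\sup_x \log \hat{f}_n(x)]$, I would split according to the near-isotropic event: on $\{\hat{f}_n \in \tilde{\mathcal{F}}_d^{1,\eta_d}\}$ the envelope~\eqref{Eq:nisoenv} gives $\sup_x \hat{f}_n(x) \leq e^{\tilde{b}_d}$, contributing $O_d(1)$; on its $O(1/n)$-probability complement, I would apply the envelope to the affine rescaling of $\hat{f}_n$ that has identity sample covariance to obtain $\sup_x \log \hat{f}_n(x) \leq C_d - \tfrac{1}{2}\log \det \hat{\Sigma}_n$, and control the resulting moment of $\log \det \hat{\Sigma}_n^{-1}$ by a Cauchy--Schwarz step combined with small-ball estimates for the sample covariance under log-concave sampling, keeping the bad-event contribution $\lesssim_d \log n$.

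For \eqref{Eq:tailprdex}, the starting point is the deterministic inequality $\dex^2(\hat{f}_n,f_0) \leq \sup_x \log \hat{f}_n(x) + \max_i (-\log f_0(X_i))$ together with a union bound. The max term satisfies $\Pr(\max_i (-\log f_0(X_i)) \geq t/2) \leq n \cdot C_d e^{-t/(2c_d)}$ from the same sub-exponential estimate, so integrating from $t = 8d \log n$ yields $O_d(n^{1 - 4d/c_d})$; calibrating $c_d \leq d$ (which the Bobkov--Madiman argument delivers) gives the claimed $O_d(n^{-3})$. The supremum term is handled analogously by combining the near-isotropic probability estimate with a quantitative tail bound on $-\log \det \hat{\Sigma}_n$.

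The principal obstacle is the quantitative control of $\sup_x \log \hat{f}_n(x)$ on the rare event that $\hat{\Sigma}_n$ is near-singular, since in that regime the MLE can be highly peaked. A careful small-ball estimate for $\det \hat{\Sigma}_n$ under log-concave sampling, combined with a truncation argument to tame the extreme realisations, is needed to upgrade the crude $O(1/n)$ probability bound into a genuine sub-exponential tail whose decay rate is commensurate with the $8d \log n$ threshold in the statement of \eqref{Eq:tailprdex}.
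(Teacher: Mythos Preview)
Your architecture matches the paper's: affine reduction to isotropic $f_0$, separate control of the two summands, and reduction of the supremum piece to a small-ball estimate for $\det\hat{\Sigma}_n$. The substantive differences lie in the execution of each piece.

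For $\max_i(-\log f_0(X_i))$, the paper does \emph{not} invoke Bobkov--Madiman. Instead it factorises $f_0(X)=\prod_{j=1}^d f_{j\mid 1:(j-1)}(X^j\mid X^1,\dots,X^{j-1})$ into univariate log-concave conditionals, applies the bound $I_g(u)\gtrsim\sigma_g^{-1}\min(u,1-u)$ to each factor, and controls the random conditional variances via an exponential-moment bound derived from the univariate envelope (Proposition~\ref{Prop:Envelope}). This yields $\Pr(-\log f_0(X)>t)\lesssim_d t\,e^{-t/d}$, i.e.\ $c_d=d$ exactly, which is precisely what the $8d\log n$ threshold requires. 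Your Bobkov--Madiman route is more direct and asymptotically delivers $c_d=O(\sqrt{d})$; just check the universal constant for small $d$, since if it pushes $c_d$ above $d$ the integral from $8d\log n$ only gives $n^{-3+\epsilon}$ (this is patchable given that the statement allows $d$-dependent constants).

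For $\sup_x\log\hat{f}_n(x)$, your near-isotropic splitting for~\eqref{Eq:exdex} is a detour: the Cauchy--Schwarz step still requires a moment bound on $(-\log\det\hat{\Sigma}_n)^+$ that comes from the same small-ball estimate you need anyway. The paper skips the splitting and goes straight to the tail, bounding $\Pr\bigl(\sup_x\log\hat{f}_n(x)>\tfrac{t}{2}\log n\bigr)\leq\Pr\bigl(\lambda_{\min}(\hat{\Sigma}_n)\leq\tilde{C}_d^{1/d}n^{-t/(2d)}\bigr)$ and then controlling $\lambda_{\min}$ by an $\varepsilon$-net on $S^{d-1}$ combined with the Lov\'asz--Vempala pointwise density bound (which gives $\Pr(\tilde{u}^\top\hat{\Sigma}_n\tilde{u}\leq a)\lesssim (Cn^2a)^{(n-1)/2}$ after an orthogonal change of variables) and Gu\'edon--Milman concentration for $\lambda_{\max}$. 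The resulting tail integrates to $O_d(n^{-n/2})$, so the maximum term is the bottleneck for the $n^{-3}$ in~\eqref{Eq:tailprdex}. You correctly flagged the small-ball estimate as the crux; the paper resolves it by this explicit net-plus-density argument rather than a black-box result.
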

\begin{proof}
The case $d=1$ of this result was proved in Lemma~2 in the online supplement to~\citet{KGS18}, so suppose now that $d\geq 2$. Let $T\colon\R^d\to\R^d$ be the invertible affine transformation defined by $T(x):=\Sigma_{f_0}^{1/2}x+\mu_{f_0}$ for all $x\in\R^d$, and let $Y_i:=\inv{T}(X_i)$ for all $i=1,\dotsc,n$. Setting $g_0(x):=f_0(T(x))\det^{1/2}\Sigma_{f_0}$ for all $x\in\R^d$, we have $Y_1,\dotsc,Y_n\iid g_0\in\mathcal{F}_d^{0,I}$, and
\[\max_{i=1,\dotsc,n}\log\frac{1}{g_0(Y_i)}=\max_{i=1,\dotsc,n}\log\frac{1}{f_0(X_i)}-\log\det^{1/2}\Sigma_{f_0}.\]
Also, by the affine equivariance of the log-concave maximum likelihood estimator \citep[Remark~2.4]{DSS11}, the corresponding $\hat{g}_n$ based on $Y_1,\dotsc,Y_n$ is given by $\hat{g}_n(x):=\hat{f}_n(T(x))\det^{1/2}\Sigma_{f_0}$ for all $x\in\R^d$, so
\[\sup_{x\in\R^d}\log\hat{g}_n(x)=\sup_{x\in\R^d}\log\hat{f}_n(x)+\log\det^{1/2}\Sigma_{f_0}.\]
It follows that \[\sup_{x\in\R^d}\log\hat{g}_n(x)+\max_{i=1,\dotsc,n}\log\frac{1}{g_0(Y_i)}=\sup_{x\in\R^d}\log\hat{f}_n(x)+\max_{i=1,\dotsc,n}\log\frac{1}{f_0(X_i)},\]
and a similar argument shows that $\dex^2(\hat{f}_n,f_0)=\dex^2(\hat{g}_n,g_0)$, i.e.\ that $\dex^2$ is affine invariant. Therefore, for the purposes of establishing~\eqref{Eq:exdex} and~\eqref{Eq:tailprdex}, there is no loss of generality in assuming henceforth that $f_0\in\mathcal{F}_d^{0,I}$.

%
To begin with, we control the first term on the left-hand side of~\eqref{Eq:exdex}. Write $\hat{\Sigma}_n:= n^{-1}\sum_{i=1}^n (X_i-\bar{X})(X_i-\bar{X})^\top$ for the sample covariance matrix, where $\bar{X}:=n^{-1}\sum_{i=1}^n X_i$, and let $\lambda_{\min}(\hat{\Sigma}_n)$ and $\lambda_{\max}(\hat{\Sigma}_n)$ denote the smallest and largest eigenvalues of $\hat{\Sigma}_n$ respectively. By the affine equivariance of the log-concave maximum likelihood estimator, together with straightforward modifications of arguments in the proof of Lemma~2 in the online supplement to~\citet{KGS18}, there exists $\tilde{C}_d>0$, depending only on $d$, such that for every $t>0$,
\[\Pr\rbr{\sup_{x \in\R^d} \log\hat{f}_n(x)>\frac{t}{2} \log n}\leq\Pr\bigl(\det\hat{\Sigma}_n\leq \tilde{C}_dn^{-t/2}\bigr)\leq \Pr\bigl(\lambda_{\min}(\hat{\Sigma}_n) \leq \tilde{C}_d^{1/d}n^{-t/(2d)}\bigr).\]
To handle the final expression above, we now seek an upper bound on $\Pr\bigl(\lambda_{\min}(\hat{\Sigma}_n)\leq s\bigr)$ for each $s>0$. To this end, let $\mathcal{N}\equiv\mathcal{N}(s^2/2) \subseteq S^{d-1}$ be an $(s^2/2)$-net of $S^{d-1}:=\{x\in\R^d:\norm{x}\leq 1\}$ of cardinality $K_d\,s^{-2(d-1)}$, say, where $K_d>0$ depends only on $d$. Then for each $u\in S^{d-1}$, there exists $\tilde{u}\in\mathcal{N}$ such that $\norm{u-\tilde{u}}\leq s^2/2$, whence
\begin{equation}
\label{Eq:net}
-s^2\lambda_{\max}(\hat{\Sigma}_n)\leq u^\top \hat{\Sigma}_nu-\tilde{u}^\top \hat{\Sigma}_n\tilde{u}=\tilde{u}^\top \hat{\Sigma}_n (u-\tilde{u}) + (u - \tilde{u})^\top \hat{\Sigma}_n u\leq s^2\lambda_{\max}(\hat{\Sigma}_n).
\end{equation}
In particular, if $\mathcal{N}'$ is a (1/4)-net of $S^{d-1}$ of cardinality $\tilde{K_d}:=2^{d-1}K_d$, then setting $s=1/\sqrt{2}$ and taking $u\in S^{d-1}$ to be a unit eigenvector of $\hat{\Sigma}_n$ with corresponding eigenvalue $\lambda_{\max}(\hat{\Sigma}_n)$, we deduce from~\eqref{Eq:net} that
\vspace{-0.15cm} 
\begin{equation}
\label{Eq:maxeval}
\max_{u\in\mathcal{N}'}\tm{u}\hat{\Sigma}_nu\geq\frac{1}{2}\max_{u\in S^{d-1}}\tm{u}\hat{\Sigma}_nu=\frac{1}{2}\,\lambda_{\max}(\hat{\Sigma}_n).
\end{equation}
Next, fix $\tilde{u}\in S^{d-1}$ and let $Y:=(Y_1,\dotsc,Y_n)$, where $Y_i:=\tilde{u}^\top X_i$ for $i=1,\dotsc,n$. Also, let $Q\in\R^{n\times n}$ be an orthogonal matrix such that $Q_{nj}=n^{-1/2}$ for all $j=1,\dotsc,n$, and define $Z:=QY$ and $W:=(Z_1,\dotsc,Z_{n-1})$. Then $Y$ has an isotropic log-concave density, so the same is true of $Z$ and $W$. Writing $f_W$ for the density of $W$, we deduce from~\citet[Theorem~5.14(e)]{LV06} that $f_W\leq \{2^{16}(n-1)\}^{(n-1)/2}$. Moreover, setting $\bar{Y}:=\tilde{u}^\top\bar{X}$, we have
\[n(\tm{\tilde{u}}\hat{\Sigma}_n\tilde{u})=\sum_{i=1}^n (Y_i-\bar{Y})^2=\norm{Y}^2-n\bar{Y}^2=\norm{Z}^2-Z_n^2=\norm{W}^2,\]
Thus, for all $a>0$, it follows that
\begin{align}
\Pr\bigl(\tilde{u}^\top\hat{\Sigma}_n\tilde{u}\leq a\bigr)&= \Pr\bigl(\norm{W}^2\leq na\bigr)=\int_{\bar{B}(0,n^{1/2}a^{1/2})}\!f_W(w)\,dw\notag\\
\label{Eq:weval}
&\leq\{2^{16}(n-1)\}^{(n-1)/2}\,\mu_{n-1}(\bar{B}(0,n^{1/2}a^{1/2}))\leq\frac{(2^{16}\pi n^2a)^{(n-1)/2}}{\Gamma\bigl((n+1)/2\bigr)},
\end{align}
where we have used the fact that $\mu_{n-1}(\bar{B}(0,r))=(\pi^{1/2}r)^{n-1}/\Gamma\bigl((n+1)/2\bigr)$ for all $r>0$. Furthermore, by~\citet[Theorem~1.1]{GM11}, there exist universal constants $C,c>0$ such that for all $b>0$, we have 
\begin{align}
\Pr\bigl(\tilde{u}^\top\hat{\Sigma}_n\tilde{u}>b\bigr)&\leq 
\Pr\rbr{\sum_{i=1}^n\,(\tilde{u}^\top X_i)^2/n > b}\notag\\
\label{Eq:aeval}
&\leq C \exp\rbr{-cn^{1/2}\min\cbr{n^{3/2}(b^{1/2}-1)^3,n^{1/2}(b^{1/2}-1)}},
\end{align}
which is at most $C\exp(-cn(b^{1/2}-1))\leq C\exp(-cnb^{1/2}/2)$ when $b\geq 4$. Now let $s:=\tilde{C}_d^{1/d}n^{-t/(2d)}$, and for this value of $s$, let $\mathcal{N}$ be an $(s^2/2)$-net of $S^{d-1}$. Then taking $a=2s$ and $b=(2s)^{-1}$ in~\eqref{Eq:weval} and~\eqref{Eq:aeval} respectively, we deduce that
\begin{align}
\hspace{-0.2cm}\Pr\rbr{\sup_{x\in\R^d} \log\hat{f}_n(x)>\frac{t}{2} \log n}\notag\\
&\hspace{-4.7cm}\leq\Pr\bigl(\lambda_{\min}(\hat{\Sigma}_n)\leq s\bigr)=\Pr\rbr{\inf_{u\in S^{d-1}} u^\top\hat{\Sigma}_n u \leq s}\notag\\
&\hspace{-4.7cm}\leq\Pr\rbr{\min_{u\in\mathcal{N}} u^\top\hat{\Sigma}_n u - s^2\lambda_{\max}(\hat{\Sigma}_n) \leq s}\leq\Pr\rbr{\min_{u\in\mathcal{N}} u^\top\hat{\Sigma}_n u \leq 2s}
+\Pr\bigl(\lambda_{\max}(\hat{\Sigma}_n)>s^{-1}\bigr)\notag\\
&\hspace{-4.7cm}\leq\Pr\rbr{\min_{u\in\mathcal{N}} u^\top\hat{\Sigma}_n u \leq 2s}+\Pr\rbr{\max_{u\in\mathcal{N}'} u^\top\hat{\Sigma}_nu>(2s)^{-1}}\notag\\
\label{Eq:errterm1}
&\hspace{-4.7cm}\leq K_d\,\tilde{C}_d^{-2(d-1)/d}n^{t(d-1)/d}\,\frac{(2^{16}\pi)^{(n-1)/2}}{\Gamma\bigl((n+1)/2\bigr)}n^{n-1}\bigl(2\tilde{C}_d^{1/d}n^{-t/(2d)}\bigr)^{(n-1)/2}\\
\label{Eq:errterm2}
&\hspace{-4.2cm}+\tilde{K}_d\,C \exp\rbr{-cn^{1/2}\min\cbr{n^{3/2}(\tilde{C}_d^{-1/(2d)}n^{t/(4d)}-1)^3,n^{1/2}(\tilde{C}_d^{-1/(2d)}n^{t/(4d)}-1)}},\\
&\hspace{-4.7cm}=:R_1+R_2\notag,
\end{align}
where the second, fourth and fifth inequalities follow from~\eqref{Eq:net},~\eqref{Eq:maxeval} and a union bound respectively. We now consider $R_1$ and $R_2$ separately. Setting $\zeta_d:=(d+1)\vee 2^{17}\tilde{C}_d^{1/d}\pi$, note that we can find $n_d>\zeta_d$ depending only on $d$ such that 
\begin{equation}
\label{Eq:errtermint1}
\zeta_d^{(n-1)/2}\,n^{n-1}\int_{8d}^\infty n^{t\,(\frac{d-1}{d}-\frac{n-1}{4d})}\,dt\leq \zeta_d^{(n-1)/2}\,n^{n-1}\,n^{8d\,(\frac{d-1}{d}-\frac{n-1}{4d})}\leq n^{-n/2}
\end{equation}
for all $n\geq n_d$. This takes care of $R_1$.
Also, $\tilde{C}_d^{-1/d}n^{t/(2d)}\geq 4$ whenever $t\geq 4d$ and $n\geq n_d>2\tilde{C}_d^{1/(2d)}$, so $R_2\lesssim_d\exp(-c_d'n^{t/(4d)})$ for all such $t$ and $n$, where $c_d'>0$ depends only on $d$. But since 
\begin{equation}
\label{Eq:errtermint2}
\int_{8d}^\infty\exp(-c_d'n^{t/(4d)})\,dt=\frac{4d}{\log n}\int_{n^2}^\infty\inv{s}\,e^{-c_d's}\,ds\lesssim_d e^{-c_d'n^2},
\end{equation}
it follows from~\eqref{Eq:errterm1},~\eqref{Eq:errterm2},~\eqref{Eq:errtermint1} and~\eqref{Eq:errtermint2} that there exists $n_d'>0$ depending only on $d$ such that
\begin{equation}
\label{Eq:tailprbd1}
\int_{8d}^\infty\Pr\rbr{\sup_{x\in\R^d}\log\hat{f}_n(x)>\frac{t}{2}\log n}dt\lesssim_d n^{-n/2}
\end{equation}
for all $n\geq n_d'$. We deduce that 
\begin{equation}
\label{Eq:exdexbd1}
\E\cbr{\sup_{x\in\R^d}\log\hat{f}_n(x)}\leq\frac{\log n}{2}\cbr{8d+\int_{8d}^\infty\Pr\rbr{\,\sup_{x\in C_n}\log\hat{f}_n(x)\geq\frac{t}{2}\log n}dt}\lesssim_d\log n
\end{equation}
for all $n\geq n_d'$. By increasing the multiplicative constants to deal with smaller values of $n$ if necessary, we can ensure that~\eqref{Eq:tailprbd1} and~\eqref{Eq:exdexbd1} hold for all $n\geq d+1$.

Next, we address the second term on the left-hand side of~\eqref{Eq:exdex}. Let $X\equiv (X^1,\dotsc,X^d)\sim f_0$, and for $j=1,\dotsc,d$, let $f_{j|1:(j-1)}(\cdot\,|\,x^1,\dotsc,x^{j-1})$ denote the conditional density of $X^j$ given $(X^1,\dotsc,X^{j-1}) = (x^1,\dotsc,x^{j-1})$, where we adopt the convention that the $j=1$ case refers to the marginal density of $X^1$. By~\citet[Proposition~1(a)]{CSS10}, each of these densities is then log-concave. For $j=1,\dotsc,d$, let $F_{j|1:(j-1)}(\cdot\,|\,x^1,\dotsc,x^{j-1})$ denote the corresponding distribution function, and define $U^j := F_{j|1:(j-1)}(X^j\,|\,X^1,\dotsc,X^{j-1})$. Then $U^j\,|\,(X^1,\dotsc,X^{j-1})\sim U(0,1)$ for all $j$, so in particular each $U^j$ has a marginal $U(0,1)$ distribution. In addition, for $j=1,\dotsc,d$, let
\[I_{j|1:(j-1)}(\cdot\,|\,X^1,\dotsc,X^{j-1}) := f_{j |1:(j-1)}\bigl(F_{j |1:(j-1)}^{-1} (\cdot\,|\,X^1,\dotsc,X^{j-1})\,|\,X^1,\dotsc,X^{j-1}\bigr),\]
which by~\citet[Proposition~A.1(c)]{Bob96} is positive and concave on $(0,1)$. We now need to understand how the functions $I_{j|1:(j-1)}$ transform under affine maps. To this end, consider a real-valued random variable $Y$ with density $f$ and distribution function $F$, and let $I:=f\circ F^{-1}$. Now for $\mu\in\R$ and $\sigma>0$, let $Z:=\sigma Y+\mu$ and let $f_{\mu,\sigma}$, $F_{\mu,\sigma}$ and $I_{\mu,\sigma}$ denote the corresponding functions for $Z$. Then
\[f_{\mu,\sigma}(z) = \frac{1}{\sigma}\,f\biggl(\frac{z-\mu}{\sigma}\biggr), \quad F_{\mu,\sigma}(z) = F\biggl(\frac{z-\mu}{\sigma}\biggr), \quad F_{\mu,\sigma}^{-1}(u) = \sigma F^{-1}(u) + \mu,\]
so that
\[I_{\mu,\sigma}(u) = f_{\mu,\sigma}\bigl(F_{\mu,\sigma}^{-1}(u)\bigr) = \frac{1}{\sigma}f\bigl(F^{-1}(u)\bigr).\]
It follows from this and Lemma~3 in the online supplement to~\citet{KGS18} that there exists a universal constant $\alpha>0$ such that for every $u\in (0,1)$ and $j\in\{1,\dotsc,d\}$, we have
\[I_{j|1:(j-1)}(u\,|\,X^1,\dotsc,X^{j-1})\geq\frac{\alpha}{\Var^{1/2}(X^j\,|\,X^1,\dotsc,X^{j-1})}\min(u,1-u).\]
Therefore, if $X\sim f_0$, then for all $t>0$, we have
\begin{align*}
\Pr\biggl(\log\frac{1}{f_0(X)} \geq t\biggr)
&=\Pr\big(f_0(X) \leq e^{-t}\bigr)
=\Pr\rbr{\textstyle\prod_{j=1}^d\, f_{j|1:(j-1)}(X^j\,|\,X^1,\dotsc,X^{j-1})\leq e^{-t}}\\
&=\Pr\rbr{\textstyle\prod_{j=1}^d\,I_{j|1:(j-1)}(U^j\,|\,X^1,\dotsc,X^{j-1})\leq e^{-t}}\\
&\leq\Pr\rbr{\min_{j=1,\dotsc,d}\,I_{j|1:(j-1)}(U^j\,|\,X^1,\dotsc,X^{j-1}) \leq e^{-t/d}}\\
&\leq\Pr\rbr{\min_{j=1,\dotsc,d}\,\frac{\alpha}{\Var^{1/2}(X^j\,|\,X^1,\dotsc,X^{j-1})}\min(U^j,1-U^j) \leq e^{-t/d}}.
\end{align*}
Also, the function $h(x):=e\vee\exp(\sqrt{x}/2)$ is convex and increasing on $[0,\infty)$. By applying Proposition~\ref{Prop:Envelope}(iii) to the density of $X^j$, which lies in $\mathcal{F}_1^{0,I}$, we find that
\begin{align*}
\E\bigl\{e^{\Var^{1/2}(X^j\,|\,X^1,\dotsc,X^{j-1})/2}\bigr\}
&\leq\E\bigl\{e^{\E^{1/2}((X^j)^2 \,|\,X^1,\dotsc,X^{j-1})/2}\}\\
&\leq\E\bigl\{h\bigl(\E\bigl\{\!(X^j)^2\,|\,X^1,\dotsc,X^{j-1}\bigr\}\bigr)\bigr\}\\
&\leq\E\bigl\{e\vee\exp\,(|X^j|/2)\bigr\}
\leq e+2\int_0^\infty e^{-x/2+1}\,dx=5e,
\end{align*}
where we have used Jensen's inequality to obtain the penultimate bound. Defining the event $B:=\bigl\{\max_{j=1,\dotsc,d}\Var^{1/2}(X^j\,|\,X^1,\dotsc,X^{j-1})\leq t\bigr\}$, we deduce that if $n\geq d+1$ and $X_1,\dotsc,X_n\iid f_0$, then
\begin{align*}
&\Pr\rbr{\min_{j=1,\dotsc,d}\,\frac{\alpha}{\mathrm{Var}^{1/2}(X^j\,|\,X^1,\dotsc,X^{j-1})}\min(U^j,1-U^j) \leq e^{-t/d}}\\
&\leq\Pr\rbr{\cbr{\min_{j=1,\dotsc,d} \frac{\alpha\min(U^j,1-U^j)}{\Var^{1/2}(X^j\,|\,X^1,\dotsc,X^{j-1})}\!\leq e^{-t/d}}\cap B}+\Pr(\cm{B}) \\
&\leq 2\inv{\alpha}dte^{-t/d}+5de^{1-t/2}.
\end{align*}
It follows that for all $t>0$, we have
\[\Pr\rbr{\max_{i=1,\dotsc,n}\log\frac{1}{f_0(X_i)}>\frac{t}{2}\log n}
\leq n\,\Pr\rbr{\log \frac{1}{f_0(X)}>\frac{t}{2}\log n}
\leq\frac{dt\log n}{\alpha n^{t/(2d)-1}} + \frac{5ed}{n^{t/4-1}}.\]
Since $\int_{8d}^\infty\,(t\log n)\,n^{1-t/(2d)}\,dt\lesssim_d n\int_{4\log n}^\infty \,\inv{(\log n)}\,se^{-s}\,ds\lesssim_d n^{-3}$, we conclude that
\begin{equation}
\label{Eq:tailprbd2}
\int_{8d}^\infty\Pr\rbr{\max_{i=1,\dotsc,n}\log\frac{1}{f_0(X_i)}>\frac{t}{2}\log n}dt\lesssim_d n^{-3}
\end{equation}
and hence that
\begin{equation}
\label{Eq:exdexbd2}
\E\cbr{\max_{i=1,\dotsc,n}\log\frac{1}{f_0(X_i)}}\leq\frac{\log n}{2}\cbr{8d+\int_{8d}^\infty\Pr\rbr{\max_{i=1,\dotsc,n}\log\frac{1}{f_0(X_i)}>\frac{t}{2}\log n}dt}\lesssim_d\log n
\end{equation}
for all $n\geq d+1$. The required bounds~\eqref{Eq:exdex} and~\eqref{Eq:tailprdex} follow by combining~\eqref{Eq:exdexbd1} and~\eqref{Eq:exdexbd2}, and~\eqref{Eq:tailprbd1} and~\eqref{Eq:tailprbd2} respectively.
\end{proof}
Recalling that $\dhell^2(\hat{f}_n,f_0)\leq\KL(\hat{f}_n,f_0)\leq\dex^2(\hat{f}_n,f_0)$, as mentioned in the introduction, we record here that in~\citet[Theorem~5]{KS16}, the worst-case $\dhell^2$ risk bounds for $\hat{f}_n$ in dimensions $d=1,2,3$ can be strengthened to $\dex^2$ risk bounds of the same form.
This requires only a small modification to the original proof, as we now explain.
\begin{theorem}
\label{Thm:WorstCaseRates}
Let $X_1,\dotsc,X_n\iid f_0\in\mathcal{F}_d$ with $n\geq d+1$, and let $\hat{f}_n$ denote the corresponding log-concave maximum likelihood estimator. Then
\begin{equation}
\label{Eq:WorstCaseRates}
\sup_{f_0\in\mathcal{F}_d}\E\{\dex^2(\hat{f}_n,f_0)\}=
\left\{
\begin{array}{ll} O(n^{-4/5}) &\quad\mbox{if $d=1$} \\
O(n^{-2/3}\log n) &\quad\mbox{if $d=2$} \\
O(n^{-1/2}\log n) &\quad\mbox{if $d=3$.}
\end{array} 
\right.
\end{equation}
\end{theorem}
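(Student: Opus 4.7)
The plan is to mirror the proof of Theorem~\ref{Thm:k1}, but replacing the local bracketing entropy bounds with the global bound from~\citet[Theorem~4]{KS16} and then converting the resulting tail probability for $\dex^2$ into an expectation bound using Lemma~\ref{Lem:dexbd}. By the affine equivariance of $\hat{f}_n$ and the affine invariance of $\dex^2$ (established in the proof of Lemma~\ref{Lem:dexbd}), I may assume without loss of generality that $f_0\in\mathcal{F}_d^{0,I}$. The two main inputs are then at hand: the global bracketing entropy bound $H_{[\,]}(\varepsilon,\tilde{\mathcal{F}}^{1,\eta},\dhell)\lesssim h_d(\varepsilon)$, where $h_1(\varepsilon):=\varepsilon^{-1/2}$, $h_2(\varepsilon):=\varepsilon^{-1}\log_+^{3/2}(1/\varepsilon)$ and $h_3(\varepsilon):=\varepsilon^{-2}$; and the probability estimate $\Pr(\hat{f}_n\notin\tilde{\mathcal{F}}^{1,\eta})=O(n^{-1})$ from~\citet[Lemma~6]{KS16}, which was already recorded in~\eqref{Eq:KSmv}.

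Computing the entropy integral $\delta^{-2}\int_{\delta^2/2^{13}}^\delta H_{[\,]}^{1/2}(\varepsilon,\tilde{\mathcal{F}}^{1,\eta},\dhell)\,d\varepsilon$ yields upper bounds of order $\delta^{-5/4}$, $\delta^{-3/2}\log_+^{3/4}(1/\delta)$ and $\delta^{-2}\log_+(1/\delta)$ when $d=1,2,3$ respectively. Following the template in the proof of Theorem~\ref{Thm:k1}, I construct a decreasing majorant $\Psi_d$ with $\delta\mapsto\delta^{-2}\Psi_d(\delta)$ decreasing and solve $\delta_n^{-2}\Psi_d(\delta_n)\leq 2^{-19}n^{1/2}$; this produces $\delta_n^2\asymp n^{-4/5}$, $\delta_n^2\asymp n^{-2/3}\log n$ and $\delta_n^2\asymp n^{-1/2}\log n$ in dimensions~$1$, $2$ and~$3$ respectively. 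An appeal to~\citet[Corollary~7.5]{vdG00} (restated as Theorem~10 in the online supplement to~\citet{KGS18}) then delivers the tail bound
\[
\Pr\bigl[\{\dex^2(\hat{f}_n,f_0)\geq t\}\cap\{\hat{f}_n\in\tilde{\mathcal{F}}^{1,\eta}\}\bigr]\leq c\exp(-nt/c^2)
\]
for every $t\geq\delta_n^2$ and a universal constant $c>0$. The fact that the left-hand side involves $\dex^2$ rather than $\dhell^2$ is a direct consequence of applying the corollary to the MLE's basic inequality, which controls $\dex^2(\hat{f}_n,f_0)$ on its left-hand side; this observation is the essential point of contact with Lemma~\ref{Lem:dexbd}.

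To convert the tail probability into an expectation bound, I split
\begin{align*}
\E\{\dex^2(\hat{f}_n,f_0)\}&\leq\delta_n^2+\int_{\delta_n^2}^{8d\log n}\Pr\bigl[\{\dex^2\geq t\}\cap\{\hat{f}_n\in\tilde{\mathcal{F}}^{1,\eta}\}\bigr]dt\\
&\hspace{0.8cm}+(8d\log n)\,\Pr\bigl(\hat{f}_n\notin\tilde{\mathcal{F}}^{1,\eta}\bigr)+\int_{8d\log n}^\infty\Pr\bigl\{\dex^2\geq t\bigr\}dt,
\end{align*}
and bound the three remaining terms by $O(n^{-1})$, $O(n^{-1}\log n)$ and $O_d(n^{-3})$ respectively, the last via the tail bound~\eqref{Eq:tailprdex} in Lemma~\ref{Lem:dexbd}. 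Summing yields $\E\{\dex^2(\hat{f}_n,f_0)\}\leq\delta_n^2+O_d(n^{-1}\log n)$, which matches the advertised rates in~\eqref{Eq:WorstCaseRates} in all three cases, since $\delta_n^2\gtrsim n^{-1}\log n$ in dimensions~$2$ and~$3$ and $\delta_n^2=n^{-4/5}$ dominates $n^{-1}\log n$ in dimension~$1$.

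The main obstacle, and really the only substantive point of departure from the $\dhell^2$ argument in~\citet[Theorem~5]{KS16}, lies in the control of the far tail of $\dex^2$ supplied by the third term above. This is precisely the content of Lemma~\ref{Lem:dexbd}, whose proof requires separate estimates for $\sup_{x\in\R^d}\log\hat{f}_n(x)$ and $\max_i\log\{1/f_0(X_i)\}$: the former is handled via a minimum-eigenvalue bound for the sample covariance matrix, combining an $\varepsilon$-net discretisation with the~\citet{GM11} concentration inequality, and the latter via the conditional quantile transform of~\citet{Bob96} applied iteratively across coordinates. Once Lemma~\ref{Lem:dexbd} is in hand, the upgrade of~\citet{KS16}'s squared Hellinger bound to the desired $\dex^2$ bound is essentially a routine bookkeeping exercise, which is why the theorem can honestly be described as a \emph{small} modification of~\citet[Theorem~5]{KS16}.
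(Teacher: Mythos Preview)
Your proof is correct and takes essentially the same approach as the paper: the paper's proof simply says to repeat the calculations from~\citet[Theorem~5]{KS16} with~\citet[Corollary~7.5]{vdG00} in place of~\citet[Theorem~7.4]{vdG00}, and what you have written is precisely that argument spelled out in detail, following the template of the proof of Theorem~\ref{Thm:k1} (in particular the derivation of~\eqref{Eq:vdGBound}). Your explicit invocation of Lemma~\ref{Lem:dexbd} to control the far tail of $\dex^2$ is exactly the ingredient that distinguishes the $\dex^2$ bound from the original $\dhell^2$ bound, and is implicit in the paper's one-paragraph proof.
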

\begin{proof}
In the original proof of~\citet[Theorem~5]{KS16}, the key bracketing entropy bounds from~\citet[Theorem~4]{KS16} are converted into $\dhell^2$ risk bounds by appealing to~\citet[Theorem~7.4]{vdG00}, a result from empirical process theory that is restated as Theorem~5 in the online supplement to~\citet{KS16}. Note that  $\dex^2(\hat{f}_n,f_0)=n^{-1}\sum_{i=1}^n\log\frac{\hat{f}_n(X_i)}{f_0(X_i)}=\int\log(\hat{f}_n/f_0)\,d\Pr_n$, where $\Pr_n$ denotes the empirical measure of $X_1,\dotsc,X_n$. In view of this, we can derive~\eqref{Eq:WorstCaseRates} from~\citet[Theorem~4]{KS16} by carrying out identical calculations to those in the proof of~\citet[Theorem~5]{KS16} but instead appealing to~\citet[Corollary~7.5]{vdG00}. The latter is restated as Theorem~10 in the online supplement to~\citet{KGS18} and holds under the same conditions as those required for~\citet[Theorem~7.4]{vdG00}.
\end{proof}
\subsection{The envelope function for the class of isotropic log-concave densities on \texorpdfstring{$\R$}{R}}
\label{Subsec:Envelope}
In the proof of Lemma~\ref{Lem:dexbd}, we make use of Proposition~\ref{Prop:Envelope}, a result of independent interest that characterises the envelope function for the class $\mathcal{F}_1^{0,1}\equiv\mathcal{F}_1^{0,I}$ of all real-valued isotropic log-concave densities. Previously, it was known that $F(x):=\sup_{f\in\mathcal{F}_1^{0,1}}f(x)\leq 1$ for all $x\in\R$ \citep[Lemma~5.5(a)]{LV06} and that there exist $A>0$ and $B\in\R$ such that $F(x)\leq e^{-A\abs{x}+B}$ for all $x\in\R$ \citep[e.g.][Theorem~2(a)]{KS16}. The proposition below shows that we can in fact take $A=B=1$ and moreover that this is the optimal choice of $A$ and $B$, in the sense that the bound above does not hold for all $x\in\R$ if either $A>1$ or $A=1$ and $B<1$. Furthermore, there is a simple closed form expression for $F(x)$ when $\abs{x}\leq 1$, and it is somewhat surprising that $F(x)$ increases as $\abs{x}$ increases from 0 to 1.
\begin{proposition}
\label{Prop:Envelope}
The envelope function $F$ for $\mathcal{F}_1^{0,1}$ is even and piecewise smooth, and has the following properties:
\begin{enumerate}[label=(\roman*)]
\item $F(x)=(2-x^2)^{-1/2}$ for all $x\in (-1,1)$;
\item $F(x)\geq e^{-(x+1)}$ for all $x\geq -1$ and $e^{x+1}F(x)\to 1$ as $x\to\infty$;
\item $F(x)\leq 1\wedge e^{-\abs{x}+1}$ for all $x\in\R$.
\end{enumerate}
\end{proposition}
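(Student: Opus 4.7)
My proof plan would start from the evenness of $F$, which follows since $x \mapsto f(-x)$ is an involution on $\mathcal{F}_1^{0,1}$ (it preserves both the mean-zero and unit-variance conditions). Piecewise smoothness would then emerge as a by-product of the closed-form formulas obtained in (i)--(iii).

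For the explicit formula in (i), I would first establish the lower bound by exhibiting an explicit two-sided (asymmetric) Laplace family: for $\alpha,\beta>0$, set
\[
h_{\alpha,\beta}(x):=\frac{\alpha\beta}{\alpha+\beta}\,\Bigl(e^{\alpha(x-x_0)}\Ind_{x\leq x_0}+e^{-\beta(x-x_0)}\Ind_{x>x_0}\Bigr).
\]
Standard gamma integrals yield $h_{\alpha,\beta}(x_0)=\alpha\beta/(\alpha+\beta)$ with mean $x_0-(1/\alpha-1/\beta)$ and variance $1/\alpha^2+1/\beta^2$. Writing $p:=1/\alpha$, $q:=1/\beta$, the isotropy constraints become $p-q=x_0$ and $p^2+q^2=1$, from which $p+q=(2-x_0^2)^{1/2}$ (requiring $|x_0|<1$ for $p,q>0$), and hence $h_{\alpha,\beta}(x_0)=(2-x_0^2)^{-1/2}$. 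For the matching upper bound I would use an extremality/localization-style argument: any $f\in\mathcal{F}_1^{0,1}$ with $f(x_0)=c$ satisfies, by concavity of $\log f$ at $x_0$, a pointwise dominance $f(x)\leq c\,e^{\alpha_-(x-x_0)}$ for $x\leq x_0$ and $f(x)\leq c\,e^{-\alpha_+(x-x_0)}$ for $x\geq x_0$, with $\alpha_-,\alpha_+$ the left/right slopes of $\log f$ at $x_0$. A convex-analytic replacement argument (thinning the tangent envelope so that the first two moment constraints are preserved while keeping $f(x_0)$ fixed and $f$ log-concave) would then show $\int x^2 f\geq p^2+q^2$, forcing $p^2+q^2\leq 1$ and so $c\leq (p+q)^{-1}\leq(2-x_0^2)^{-1/2}$; equality is achieved only by the two-sided Laplace, which also identifies the boundary case $|x_0|=1$.

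For (ii), the explicit exponential $f_\star(x):=e^{-(x+1)}\Ind_{x\geq-1}$ lies in $\mathcal{F}_1^{0,1}$ (routine integration by parts), immediately giving $F(x)\geq e^{-(x+1)}$ for $x\geq -1$. For the asymptotic $e^{x+1}F(x)\to 1$, I would combine the lower bound above with a matching upper bound obtained by pushing the extremality analysis from (i) into the regime $x_0\to\infty$: the optimal pair $(\alpha_-,\alpha_+)$ must degenerate with $\alpha_+\to\infty$ (concentrating all remaining mass to the left of $x_0$), and the remaining one-sided exponential family, subject to mean zero and variance one, forces $\alpha_-\to 1$ and $c\,e^{x_0+1}\to 1$. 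Finally, (iii) consists of $F\leq 1$ from \citet[Lemma~5.5(a)]{LV06} together with $F(x)\leq e^{-|x|+1}$, which I would derive as a coarser consequence of the same tangent-envelope bound by invoking the standard isotropic log-concave tail estimate \citep[e.g.][Theorem~2(a)]{KS16} with explicit constants.

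The principal obstacle is the upper bound in (i) (and its refinement driving the $\to 1$ limit in (ii)): the feasible set of log-concave densities is not convex, so one cannot just maximise a linear functional over a convex set, and a clean localization/reduction to two-piece log-linear extremals must be pushed through carefully while respecting the three linear constraints ($\int f=1$, $\int xf=0$, $\int x^2f=1$) and the pointwise value $f(x_0)=c$.
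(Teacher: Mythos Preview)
Your lower-bound constructions for (i) and the first part of (ii) match the paper exactly. However, the proposal has real gaps in the upper-bound arguments.

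\textbf{Upper bound in (i).} Your tangent-envelope idea is in the right spirit but the logic as written does not close. From $f\leq g$ (the two-piece exponential envelope) you only get $1=\int f\leq c(p+q)$, i.e.\ $c\geq (p+q)^{-1}$, which is the wrong direction for your conclusion $c\leq (p+q)^{-1}$. The claimed inequality $\int x^2 f\geq p^2+q^2$ via ``thinning'' is not substantiated; moreover, for a generic $f$ the mode need not be at $x_0$, so one of your $\alpha_\pm$ can be negative and the whole $p,q$ bookkeeping breaks down. The paper bypasses this entirely via a crossing/stochastic-domination argument (its Lemma~\ref{Lem:stocdom}): if some $g\in\mathcal{F}_1^{0,1}$ had $g(x_0)>f_{x_0}(x_0)$, then since $\log f_{x_0}$ is two-piece affine with kink at $x_0$, the concave $\log g$ can cross it at most once on each side of $x_0$, forcing a mean or variance mismatch. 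No replacement construction is needed.

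\textbf{Extremals for $|x_0|\geq 1$ and the limit in (ii).} Your plan treats the large-$x_0$ regime as a degeneration of the two-sided Laplace family with $\alpha_+\to\infty$. In fact the two-sided system has no solution for $|x_0|\geq 1$, and the paper shows the extremal density changes form: it becomes a \emph{truncated} one-sided exponential supported on a finite interval (uniform when $|x_0|=\sqrt{3}$). The paper parametrises these by the rate $\lambda\in(0,1)$, proves the relevant maps $m(\lambda),a(\lambda),K(\lambda)$ are smooth bijections (Lemma~\ref{Lem:maK}), and uses the same crossing lemma for optimality. The asymptotic $e^{x+1}F(x)\to 1$ then falls out of the explicit formula together with $(1-\lambda)K(\lambda)\to 0$ as $\lambda\nearrow 1$; your hand-wave about $(\alpha_-,\alpha_+)$ degenerating does not reach this.

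\textbf{Part (iii).} This is the most serious gap. You propose to obtain $F(x)\leq e^{-|x|+1}$ by invoking the standard isotropic tail estimate ``with explicit constants''. But the cited result only gives \emph{some} $A>0$, $B\in\R$; the sharp values $A=B=1$ are precisely the new content of (iii) (and are optimal, as (ii) shows). The paper's proof of (iii) occupies the bulk of the argument: it uses the explicit formula for $F$ from the previous parts, reparametrises in $s=\lambda K(\lambda)$, and establishes two delicate analytic inequalities (\eqref{Eq:smallx} and \eqref{Eq:largex}) via careful bounds on $\sqrt{V(s)}$ and a mix of numerical verification for small $s$ and asymptotic analysis for large $s$. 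There is no shortcut through existing envelope bounds.
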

As a by-product of the proof, we obtain an explicit expression for $F$: see~\eqref{Eq:Envelope} below. Moreover, we will see that for each $x\in\R$, there exists a piecewise log-affine density $f_x$ that achieves the supremum in the definition of $F(x)$. This extremal distribution $f_x$ can take one of two forms depending on whether $\abs{x}<1$ or $\abs{x}\geq 1$, and we treat these cases separately. The proof relies heavily on stochastic domination arguments based on the following lemma.
\begin{lemma}
\label{Lem:stocdom}
Let $X,Y$ be real-valued random variables with densities $f,g$ and corresponding distribution functions $F,G$ respectively. Then we have the following:
\begin{enumerate}[label=(\roman*)]
\item If there exists $a\in\R$ such that $f\leq g$ on $(-\infty,a)$ and $f\geq g$ on $(a,\infty)$, then $F\leq G$, i.e.\ $X$ stochastically dominates $Y$. If in addition $X$ and $Y$ are integrable and $f,g$ differ on a set of positive Lebesgue measure, then $\E(X)>\E(Y)$.
\item Suppose that there exist $a<b$ such that $f\geq g$ on $(a,b)$ and $f\leq g$ on $(-\infty,a)\cup(b,\infty)$, and moreover that $f$ and $g$ are not equal almost everywhere when restricted to either $(-\infty,a)$ or $(b,\infty)$. Then there exists a unique $c\in\R$ with $0<F(c)=G(c)<1$ such that $F\leq G$ on $(-\infty,c)$ and $F\geq G$ on $(c,\infty)$. If in addition $X$ and $Y$ are square-integrable and $\E(X)=\E(Y)$, then $\Var(X)<\Var(Y)$.
\end{enumerate}
\end{lemma}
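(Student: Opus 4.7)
For part (i), the plan is to split the analysis at the sign-change point $a$. For $x \leq a$, integrating the inequality $f \leq g$ on $(-\infty,x]$ immediately gives $F(x) \leq G(x)$. For $x > a$, I would use the complementary representation $F(x) = 1 - \int_x^\infty f$, together with $f \geq g$ on $(x,\infty)$, to obtain $F(x) \leq G(x)$ again. For the strict mean comparison, the key identity is
\begin{equation*}
\E(X) - \E(Y) = \int_{-\infty}^{\infty} (G(x) - F(x))\,dx,
\end{equation*}
valid for integrable $X,Y$. Since $f$ and $g$ have equal total mass and differ on a set of positive Lebesgue measure, they must differ on positive-measure subsets of both $(-\infty,a)$ and $(a,\infty)$; in particular $G(a) - F(a) > 0$ strictly, and by continuity of $F$ and $G$ this forces $\int (G-F) > 0$.

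For part (ii), I would set $H := F - G$ and carefully track its sign and monotonicity. Because $H' = f - g$ is $\leq 0$ on $(-\infty,a)$, $\geq 0$ on $(a,b)$ and $\leq 0$ on $(b,\infty)$, the function $H$ is non-increasing, then non-decreasing, then non-increasing on these three intervals, with $H(\pm\infty) = 0$. The tail hypotheses (that $f \neq g$ on positive-measure subsets of $(-\infty,a)$ and of $(b,\infty)$) force $H(a) < 0$ and $H(b) > 0$ strictly, by integrating the strict inequalities $\int_{-\infty}^a (g - f) > 0$ and $\int_b^\infty (g - f) > 0$. Continuity of $H$ on $[a,b]$ together with the intermediate value theorem then yields $c \in (a,b)$ with $H(c) = 0$, automatically satisfying $0 < F(c) < 1$. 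The monotonicity pattern immediately delivers $F \leq G$ on $(-\infty,c)$ and $F \geq G$ on $(c,\infty)$.

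For the variance comparison, assuming $\E(X) = \E(Y)$ and $\E(X^2), \E(Y^2) < \infty$, I would integrate by parts. Using the tail estimates $x^2(1 - F(x)) \to 0$ as $x \to \infty$ and $x^2 F(x) \to 0$ as $x \to -\infty$, which follow from the second moment being finite via a Chebyshev-type bound, the boundary terms vanish and
\begin{equation*}
\E(X^2) - \E(Y^2) = \int_{-\infty}^{\infty} x^2(f - g)(x)\,dx = -2\int_{-\infty}^{\infty} x\,(F - G)(x)\,dx.
\end{equation*}
Since $\int (G - F) = \E(X) - \E(Y) = 0$ by the identity from part~(i), I may recentre at the crossing point $c$ to write this as $-2\int (x - c)(F - G)(x)\,dx$. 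By the sign structure of $F - G$ established in the previous paragraph, the integrand $(x - c)(F - G)(x)$ is pointwise non-negative (non-positive times non-positive on $(-\infty,c)$, non-negative times non-negative on $(c,\infty)$) and strictly positive on a set of positive measure by the tail hypothesis, giving $\Var(X) - \Var(Y) = \E(X^2) - \E(Y^2) < 0$.

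The main obstacle I expect is the handling of uniqueness of $c$: if $H$ vanishes identically on a subinterval of $(a,b)$ then several points satisfy the stated properties, so the uniqueness claim must be interpreted as uniqueness of the sign change of $H$ rather than of its zero, and the variance argument goes through for any such $c$. Verifying the decay of the boundary terms in the integration by parts is also slightly delicate, but standard once the square-integrability hypothesis is used correctly.
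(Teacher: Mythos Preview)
Your proposal is correct. Part (i) is essentially identical to the paper's proof: both split at $a$ and use the tail/cumulative representations of $F$, and both derive the strict mean inequality from the integrated form $\E(X)-\E(Y)=\int(G-F)$ (the paper writes this via $\E(X)=\int_0^\infty\{\Pr(X\geq s)-\Pr(X<-s)\}\,ds$, but this is the same identity).

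For part (ii), your sign analysis of $H=F-G$ matches the paper's exactly. The one genuine difference is in the variance step: the paper derives the identity
\[
\E(W^2)=2\int_0^\infty\!\int_s^\infty\bigl\{\Pr(W\geq t)+\Pr(W\leq -t)\bigr\}\,dt\,ds
\]
via Fubini and applies it directly to $W=X-c$ and $W=Y-c$, so the tail ordering $\Pr(X\geq c+t)\leq\Pr(Y\geq c+t)$ and $\Pr(X\leq c-t)\leq\Pr(Y\leq c-t)$ immediately gives $\E\{(X-c)^2\}<\E\{(Y-c)^2\}$. Your route---integration by parts to obtain $\E(X^2)-\E(Y^2)=-2\int x(F-G)$, then recentring at $c$ using $\int(F-G)=0$---reaches the same comparison and is arguably more direct, at the cost of having to justify the vanishing of the boundary terms (which you do correctly). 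The paper's Fubini formulation sidesteps that issue entirely, since all integrands are non-negative. Your observation about the non-uniqueness of $c$ when $H$ is flat on a subinterval of $(a,b)$ is valid and is a minor imprecision in the lemma's statement; as you note, it is immaterial for the variance conclusion.
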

\begin{proof}[Proof of Lemma~\ref{Lem:stocdom}]
Note that $\Pr(X\geq t)=\int_t^\infty f(s)\,ds\geq\int_t^\infty g(s)\,ds=\Pr(Y\geq t)$ when $t\geq a$. Similarly, $\Pr(X\geq t)=1-\int_{-\infty}^t f(s)\,ds\geq 1-\int_{-\infty}^t g(s)\,ds=\Pr(Y\geq t)$ when $t\leq a$. Part (i) now follows immediately from the identity \[\E(X)=\E(X^+)-\E(X^-)=\int_0^\infty\bigl\{\Pr(X\geq s)-\Pr(X<-s)\bigr\}\,ds.\]
For (ii), $G-F$ is an (absolutely) continuous function that is increasing on the intervals $(-\infty,a)$ and $(b,\infty)$, so the first assertion is an immediate consequence of the fact that $(G-F)(a)>0>(G-F)(b)$. Also, if $W$ is a square-integrable random variable, then Fubini's theorem implies that
\begin{equation}
\label{Eq:stocdom1}
\int_0^\infty\int_s^\infty\Pr(W\geq t)\,dt\,ds=\E\rbr{\int_0^\infty\int_0^\infty\Ind_{\{W\geq t\geq s\}}\,dt\,ds}=\E\rbr{\int_0^\infty(W-s)\Ind_{\{W\geq s\}}\,ds}.
\end{equation}
Now since $\E\bigl(W\int_0^\infty\Ind_{\{W\geq s\}}\,ds\bigr)=\E\{(W^+)^2\}$ and 
\[\E\rbr{\int_0^\infty s\,\Ind_{\{W\geq s\}}\,ds}=\int_0^\infty s\,\Pr(W\geq s)\,ds=\frac{1}{2}\int_0^\infty\Pr\bigl\{(W^+)^2\geq s\bigr\}\,ds=\frac{1}{2}\,\E\{(W^+)^2\},\]
the right-hand side of~\eqref{Eq:stocdom1} is equal to $2^{-1}\,\E\{(W^+)^2\}$. By applying similar reasoning to $W^-$, we conclude that  \[\E(W^2)=\E\{(W^+)^2\}+\E\{(W^-)^2\}=2\rbr{\int_0^\infty\int_s^\infty\bigl\{\Pr(W\geq t)+\Pr(W\leq -t)\bigr\}\,dt\,ds}.\]
Since $\Pr(X\geq c+t)\leq\Pr(Y\geq c+t)$ and $\Pr(X\leq c-t)\leq\Pr(Y\leq c-t)$ for all $t\geq 0$, and since $F$ and $G$ do not agree almost everywhere by hypothesis, it follows from this that $\E\{(X-c)^2\}<\E\{(Y-c)^2\}$. This implies the second assertion in view of our assumption that $\E(X)=\E(Y)$.
\end{proof}
The proofs of parts (ii) and (iii) of Proposition~\ref{Prop:Envelope} require some additional probabilistic input. For $K\in (0,\infty]$ and a non-degenerate random variable $W$ that takes non-negative values, let $W_K$ be a random variable whose distribution is that of $W$ conditioned to lie in $[0,K)$. Letting $Y$ be a random variable with an $\Exp(1)$ distribution, we now set
\begin{align}
h(K)&:=\E(Y_K)=\frac{1-(K+1)\,e^{-K}}{1-e^{-K}}\\
V(K)&:=\Var(Y_K)=1-\frac{K^2}{2\,(\cosh K-1)}.
\end{align}
It is easily verified that $V$ is positive, strictly increasing and tends to 1 as $K\to\infty$. Also, $h$ and $V$ are smooth, so in particular, $V$ has a smooth inverse $\inv{V}\colon (0,1)\to (0,\infty)$. Moreover, for each $\lambda\in(0,1]$, let $W^\lambda$ be a random variable distributed as $\Exp(\lambda)$. We now make crucial use of the scaling property $W^{\lambda}\overset{d}{=}Y/\lambda$ of exponential random variables, a consequence of which is that we need only work with the functions $h$ and $V$. Indeed, we have $\E(W_K^\lambda)=h(\lambda K)/\lambda$ and $\Var(W_K^\lambda)=V(\lambda K)/\lambda^2$, so we can find a unique
\begin{equation}
\label{Eq:K}
K=K(\lambda):=\inv{V}(\lambda^2)/\lambda\in (0,\infty)
\end{equation}
such that $\Var(W_K^\lambda)=1$. Thus, the density $f_{\lambda}$ of $X_{\lambda}:=W_{K(\lambda)}^\lambda-\E(W_{K(\lambda)}^\lambda)$ lies in $\mathcal{F}_1^{0,1}$ and is log-affine on its support $[-m(\lambda),a(\lambda)]$, where
\begin{align}
\label{Eq:m}
m(\lambda):=\E(W_{K(\lambda)}^\lambda)&=\frac{1}{\lambda}\rbr{1-\frac{\lambda K}{e^{\lambda K}-1}},\\
\label{Eq:a}
a(\lambda):=K(\lambda)-m(\lambda)&=\frac{1}{\lambda}\rbr{\frac{\lambda Ke^{\lambda K}}{e^{\lambda K}-1}-1}
\end{align}
are smooth, non-negative functions of $\lambda$. We now show that:
\begin{lemma}
\label{Lem:maK}
The functions $m, a, K$ are smooth bijections from $(0,1)$ to $(1,\sqrt{3})$, $(\sqrt{3},\infty)$ and $(2\sqrt{3},\infty)$ respectively. Moreover, $m$ is strictly decreasing and $a,K$ are strictly increasing. We also have $(1-\lambda)K(\lambda)\to 0$ as $\lambda\nearrow 1$.
\end{lemma}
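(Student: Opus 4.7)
The plan is to introduce the auxiliary parameter $s := \lambda K$, which allows us to work with explicit functions of $s$ throughout. Since $V$ is smooth and strictly increasing from $V(0^+) = 0$ to $V(\infty) = 1$ with $V'(s) > 0$ for $s > 0$, inverting the defining relation $V(\lambda K) = \lambda^2$ gives a smooth bijection $\lambda(s) := \sqrt{V(s)}$ from $(0,\infty)$ onto $(0,1)$ with smooth inverse. In these coordinates,
\[
K(\lambda) = \frac{s}{\sqrt{V(s)}}, \qquad m(\lambda) = \frac{h(s)}{\sqrt{V(s)}}, \qquad a(\lambda) = K(\lambda) - m(\lambda) = \frac{s - h(s)}{\sqrt{V(s)}}
\]
are smooth functions of $s$. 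The Taylor expansions $V(s) = s^2/12 + O(s^4)$ and $h(s) = s/2 + O(s^2)$ near $0$, together with $V(s) \to 1$ and $h(s) \to 1$ as $s \to \infty$, yield the boundary values $(K,m,a) \to (2\sqrt{3}, \sqrt{3}, \sqrt{3})$ as $\lambda \to 0^+$ and $(K,m,a) \to (\infty, 1, \infty)$ as $\lambda \nearrow 1$.

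Because $s \mapsto \lambda(s)$ is strictly increasing, the monotonicity claims are equivalent to showing that $K(s)$ and $a(s)$ are strictly increasing on $(0,\infty)$ while $m(s)$ is strictly decreasing there. Differentiating the squared quantities $K^2 = s^2/V$, $m^2 = h^2/V$ and $a^2 = (s-h)^2/V$, these amount respectively to
\begin{equation*}
(\mathrm{I})\;\; 2V - sV' > 0, \qquad (\mathrm{II})\;\; hV' - 2h'V > 0, \qquad (\mathrm{III})\;\; 2(1-h')V - (s-h)V' > 0
\end{equation*}
on $(0,\infty)$. The key observation is that $(\mathrm{III}) = (\mathrm{I}) + (\mathrm{II})$, so only the first two need to be proved directly. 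For $(\mathrm{I})$, algebraic manipulation using $\cosh s - 1 = 2\sinh^2(s/2)$ and $\sinh s = 2\sinh(s/2)\cosh(s/2)$ reduces it to the Lazarevic inequality $\sinh^3 t > t^3 \cosh t$ for $t := s/2 > 0$, which I would establish by taking logarithms, writing the resulting derivative inequality as $3\coth t - 3/t > \tanh t$, clearing positive factors, and substituting $u := 2t$ to arrive at $\rho(u) := u(2+\cosh u) - 3\sinh u > 0$; the chain $\rho(0) = \rho'(0) = 0$ and $\rho''(u) = \cosh u\,(u-\tanh u) > 0$ then closes the argument.

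The main technical obstacle is $(\mathrm{II})$. Writing $h = I_1/I_0$ and $V + h^2 = I_2/I_0$ with $I_k(s) := \int_0^s t^k e^{-t}\,dt$, and clearing denominators, I expect to reduce the sign of $hV' - 2h'V$ to that of
\[
P(s) := (s^2 - 4s + 2) + (s^3 + s^2 + 4s - 4)\,e^{-s} + 2 e^{-2s}.
\]
Setting $Q(s) := e^{2s}P(s)$ and differentiating repeatedly should give $Q'(s) = se^s R(s)$ with $R(s) := 2(s-3)e^s + s^2 + 4s + 6$, then $R'(s) = 2S(s)$ with $S(s) := (s-2)e^s + s + 2$, and finally $S'(s) = (s-1)e^s + 1$ with $S''(s) = se^s > 0$ on $(0,\infty)$. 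Since $Q$, $R$, $S$ and $S'$ all vanish at $s = 0$, iterating the ``positive derivative from zero initial value'' principle propagates positivity upward and yields $P > 0$ on $(0,\infty)$, as required. Finally, for the limit assertion, the identity $1 - \lambda^2 = 1 - V(s) = s^2/\bigl(2(\cosh s - 1)\bigr) \sim s^2 e^{-s}$ as $s \to \infty$ gives $s \sim -\log(1 - \lambda^2)$ as $\lambda \nearrow 1$, whence $(1-\lambda)K(\lambda) = (1-\lambda)s/\lambda \sim -(1-\lambda)\log(1-\lambda^2) \to 0$.
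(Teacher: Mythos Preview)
Your argument is correct; I have checked the reduction to inequalities $(\mathrm{I})$ and $(\mathrm{II})$, the identification of $(\mathrm{I})$ with Lazarevic's inequality, the expression for $P(s)$ via the incomplete-gamma identities $I_1N'-2I_1'N=e^{-s}P(s)$ with $N=I_0I_2-I_1^2$, and the chain $Q\to R\to S\to S'\to S''=se^s$, and all steps go through as stated. The asymptotic for $(1-\lambda)K(\lambda)$ is also fine.

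Your route is, however, genuinely different from the paper's. You proceed by direct calculus throughout, reparametrising by $s=\lambda K$ and reducing each monotonicity claim to an explicit one-variable inequality that you then prove by repeated differentiation. The paper instead handles the monotonicity of $m$ and $a$ by a soft stochastic-domination argument: for $0<\lambda_1<\lambda_2<1$ it observes that a line of slope $-\lambda_2$ meets the graph of $\log f_{\lambda_1}$ in exactly two points, so if the supports of $f_{\lambda_1}$ and $f_{\lambda_2}$ were not nested in the right way, the two log-densities would cross at most twice and Lemma~\ref{Lem:stocdom} would force a mismatch in mean or variance---contradicting membership of both in $\mathcal{F}_1^{0,1}$. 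For $K$, the paper reduces monotonicity to the fact that $s\mapsto s^{-2}-\tfrac12(\cosh s-1)^{-1}$ is strictly decreasing and invokes a short power-series ratio lemma (Lemma~\ref{Lem:radconv}). What your approach buys is complete self-containment: you avoid any appeal to the probabilistic Lemma~\ref{Lem:stocdom}, at the price of heavier algebra (the verification of $P(s)>0$). What the paper's approach buys is a shorter and more conceptual treatment of $m$ and $a$ that explains \emph{why} their monotonicity must hold, not merely that it does.
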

While much of the following argument relies only on elementary analysis, a little probabilistic reasoning based on Lemma~\ref{Lem:stocdom} helps to simplify the proof. 
\begin{proof}[Proof of Lemma~\ref{Lem:maK}]
The functions $K,m$ and $a$ defined in~\eqref{Eq:K},~\eqref{Eq:m} and~\eqref{Eq:a} are certainly smooth, and since $\inv{V}(y)\to\infty$ as $y\!\nearrow1$, we see that $m(\lambda)\to 1$ and $K(\lambda),\,a(\lambda)\to\infty$ when $\lambda\!\nearrow 1$. Furthermore, $s:=\inv{V}(\lambda^2)\to 0$ as $\lambda\to 0$, and since \[V(s)=1-\frac{1}{1+s^2/12+o(s^2)}=\frac{s^2}{12}+o(s^2)\]
as $s\to 0$, we have $K(\lambda)=s\,V(s)^{-1/2}\to\sqrt{12}=2\sqrt{3}$ as $\lambda\to 0$. Consequently, $m(\lambda)=K(\lambda)\,(1-s/(e^s-1))/s\to 2\sqrt{3}\times 1/2=\sqrt{3}$ as $\lambda\to 0$.

Finally, to show that $K(\lambda)(1-\lambda)\to 0$ as $\lambda\to\infty$, note that $\cosh y\,/e^y\to 1/2$ as $y\to\infty$ and moreover that for each $\delta>0$, we have $y^2e^{-y}<e^{-(1-\delta)y}$ for all sufficiently large $y$. This then implies that $V(y)>1-\exp(-y/2)$ for all sufficiently large $y$. Thus, $\inv{V}(\lambda^2)<2\log(1-\lambda^2)$ for all $\lambda$ sufficiently close to 1, so $K(\lambda)(1-\lambda)=\inv{V}(\lambda^2)(1-\lambda)/\lambda\to 0$ as $\lambda\!\nearrow 1$, as required.

It remains to show that $m$ and $a$ are strictly monotone. Fix $0<\lambda_1<\lambda_2<1$ and for $i=1,2$, let $\phi_{\lambda_i}:=\log f_{\lambda_i}$ and recall that the density $f_{\lambda_i}\in\mathcal{F}_1^{0,1}$ is supported on $[-m(\lambda_i),a(\lambda_i)]$. In addition, observe that an (infinite) straight line in $\R^2$ with slope $-\lambda_2$ intersects the graph of $\phi_{\lambda_1}$ in exactly two points in $\R^2$, one of which has $x$-coordinate $a(\lambda_1)$. Now if the graphs of $\phi_{\lambda_1},\phi_{\lambda_2}$ intersect in at most two points, then the densities $f_{\lambda_1},f_{\lambda_2}\in\mathcal{F}_1^{0,1}$ satisfy one of the two sets of hypotheses in Lemma~\ref{Lem:stocdom}. However this then implies that either the means or the variances of $f_{\lambda_1},f_{\lambda_2}$ do not match, which yields a contradiction. Therefore, it follows that $m(\lambda_2)<m(\lambda_1)$ and $a(\lambda_2)>a(\lambda_1)$.

The fact that $K$ is strictly increasing follows readily from the observation that the function $s\mapsto s^{-2}-2^{-1}\inv{(\cosh s-1)}$ is strictly decreasing on $(0,\infty)$, as can be seen by applying the simple fact Lemma~\ref{Lem:radconv} below. 
\end{proof}
\begin{lemma}
\label{Lem:radconv}
If $\sum_{n\geq 0}a_nz^n$ and $\sum_{n\geq 0} b_nz^n$ are power series with infinite radii of convergence such that $a_n,b_n>0$ for all $n\geq 0$ and $(a_n/b_n)_{n\geq 0}$ is a strictly decreasing sequence, then $z\mapsto\bigl(\sum a_nz^n\bigr)/\bigl(\sum b_nz^n\bigr)$ is a strictly decreasing function of $z\in (0,\infty)$.
\end{lemma}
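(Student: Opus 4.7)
My plan is to prove the lemma by direct differentiation and a pairing/symmetrization argument on the resulting double power series. Writing $f(z) := \sum_{n\geq 0}a_n z^n$, $g(z) := \sum_{n\geq 0}b_n z^n$ and $h := f/g$, since $g > 0$ on $(0,\infty)$ and both series have infinite radius of convergence, $h$ is smooth there and it suffices to show that
\[
f'(z)g(z) - f(z)g'(z) < 0 \quad \text{for every } z > 0.
\]

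Expanding the two Cauchy products (which is justified by absolute convergence on $(0,\infty)$) yields
\[
f'(z)g(z) - f(z)g'(z) = \sum_{j,k \geq 0}(j - k)\,a_j b_k\, z^{\,j+k-1}.
\]
The diagonal terms $j=k$ vanish, and for each unordered pair $\{j,k\}$ with $j<k$ the two symmetric contributions combine to
\[
\bigl[(j-k)a_j b_k + (k-j)a_k b_j\bigr]z^{j+k-1} = -(k-j)\bigl(a_j b_k - a_k b_j\bigr)z^{j+k-1}.
\]
The hypothesis that $(a_n/b_n)_{n \geq 0}$ is strictly decreasing (and that $b_n > 0$) forces $a_j/b_j > a_k/b_k$ whenever $j < k$, so $a_j b_k - a_k b_j > 0$; together with $k - j > 0$ and $z^{j+k-1} > 0$, every paired coefficient is strictly negative.

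Summing over $j < k$ gives $f'(z)g(z) - f(z)g'(z) < 0$, with strictness secured for instance by the $\{0,1\}$ term $-(a_0 b_1 - a_1 b_0)z^0 < 0$. I anticipate no real obstacle: the only point that warrants care is the rearrangement of the double series into the pairwise grouping, but this is immediate from absolute convergence of both $f'g$ and $fg'$ on $(0,\infty)$ (as products of power series with infinite radii of convergence and positive coefficients). A stylistic alternative would be the probabilistic reformulation $h(z) = \mathbb{E}_{w(z)}[a_N/b_N]$ with $w_n(z) := b_n z^n/g(z)$, whose monotone likelihood ratio in $z$ yields stochastic monotonicity and hence the same conclusion, but the direct pairing proof above is shorter and entirely self-contained.
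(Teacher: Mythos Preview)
Your proof is correct and rests on the same symmetrization/pairing idea as the paper's. The only cosmetic difference is that you differentiate and show $f'(z)g(z)-f(z)g'(z)<0$ at each point, whereas the paper fixes $0<w<z$ and proves $f(z)g(w)<f(w)g(z)$ directly by the identical pairing of the $(m,n)$ and $(n,m)$ terms in the double sum.
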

\begin{proof}
Fix $0<w<z$ and let $c_n:=a_n/b_n$ for each $n$. Then for fixed $n>m\geq 0$, we have $c_n<c_m$, which implies that $c_nz^nw^m+c_mz^mw^n<c_nz^mw^n+c_mz^nw^m$. Thus, summing over all $m,n\geq 0$, we obtain the desired conclusion that 
\begin{align*}
\hspace{-0.5cm}\textstyle\bigl(\sum_{n\geq 0} a_nz^n\bigr)\bigl(\sum_{m\geq 0} b_mw^m\bigr)&=\textstyle\sum_{n,m\geq 0}c_nb_nb_mz^nw^m\\
&<\textstyle\sum_{n,m\geq 0}c_nb_nb_mz^mw^n=\bigl(\sum_{n\geq 0} a_nw^n\bigr)\bigl(\sum_{m\geq 0} b_mz^m\bigr).
\end{align*}

\vspace{-0.8cm}
\end{proof}
\begin{proof}[Proof of Proposition~\ref{Prop:Envelope}]
For a fixed $x\in (-1,1)$, we make the ansatz
\begin{equation}
\label{Eq:ans1}
f_x(w):=C\cbr{\exp\rbr{-\frac{w-x}{a_1}}\Ind_{\{w\geq x\}}+\exp\rbr{\frac{w-x}{a_2}}\Ind_{\{w<x\}}},
\end{equation}
where $C,a_1,a_2>0$ are to be determined. Note that $\log f_x$ is continuous on $\R$ and affine on the intervals $(-\infty,x]$ and $[x,\infty)$. If we are to ensure that $f_x\in\mathcal{F}_1^{0,1}$, then the parameters $C,a_1,a_2>0$ must satisfy the constraints
\begin{align}
&C(a_1+a_2)=1\\
&C(a_2^2-a_1^2)=x\\
&C\bigl\{2(a_1^3+a_2^3)+2x(a_1^2-a_2^2)+x^2(a_1+a_2)\bigr\}=1,
\end{align}
which respectively guarantee that $f_x$ integrates to 1 and has mean 0 and variance 1. The first two equations yield $a_1=(\inv{C}-x)/2$ and $a_2=(\inv{C}+x)/2$, so in particular we require $Cx<1$. After substituting these expressions into the final equation, we conclude that $1+x^2=2C(a_1^3+a_2^3)=2x^2+(C^{-2}-x^2)/2$, so $C=(2-x^2)^{-1/2}$. Since $\abs{x}<1$, it is indeed the case that $Cx<1$, so these equations uniquely determine the form of $f_x$ in terms of $x$.

Next, to show that $g(x)\leq C=f_x(x)$ for all $g\in\mathcal{F}_1^{0,1}$, we work on the logarithmic scale and suppose for a contradiction that there exists $g\in\mathcal{F}_1^{0,1}$ such that $g(x)>C$. Now since the functions $\phi:=\log g$ and $\phi_x:=\log f_x$ are concave and upper semi-continuous, it follows from the assumption $\phi(x)>\phi_x(x)$ that the graphs of $\phi$ and $\phi_x$ (viewed as subsets of $\R^2$) intersect in at most one point in each of the regions $(-\infty,x)\times\R$ and $(x,\infty)\times\R$. Note that here, we also take into account those $x'$ which satisfy $\phi(x')\geq\phi_x(x')$ and which correspond to intersection points on the boundary of the support of $g$. To obtain the required contradiction, observe that the densities $g,f_x\in\mathcal{F}_1^{0,1}$ must therefore satisfy one of the two sets of hypotheses in Lemma~\ref{Lem:stocdom}. It follows from this that either the means or the variances of $g,f_x$ do not match. This concludes the proof of part (i) of the proposition.

If instead our fixed $x\in\R$ satisfies $\abs{x}\geq 1$, then the previous system of equations does not admit suitable solutions, so we take a different approach. Suppose first that there exists a compactly supported density in $\mathcal{F}_1^{0,1}$ of the form
\begin{equation}
\label{Eq:ans2}
f_x(w):=C\exp\{-\lambda(w-x)\}\Ind_{\{w\in [-a,x]\}},
\end{equation}
for some $C>0$, $a\in(0,\infty]$ and $\lambda\in\R$. Then by appealing to Lemma~\ref{Lem:stocdom} and arguing as in the previous paragraph, it follows that $F(x)=f_x(x)$; the key observation is that if there did exist $g\in\mathcal{F}_1^{0,1}$ satisfying $g(x)>f_x(x)$, then the graphs of $g$ and $f_x$ would intersect in either one or two points (in the sense described above), and these would necessarily lie in the region $[-a,x)\times\R$. 

It therefore remains to show that, for each $x\in\R$ with $\abs{x}\geq 1$, the class $\mathcal{F}_1^{0,1}$ does indeed contain a log-affine density $f_x$ of this type (which in view of Lemma~\ref{Lem:stocdom} is clearly unique if it exists). To see this, we reparametrise the densities of the form~\eqref{Eq:ans2} in terms of $\lambda$, and then appeal to~\eqref{Eq:m},~\eqref{Eq:a}, Lemma~\ref{Lem:maK} and the probabilistic setup on which these are based.
When $x\in (1,\sqrt{3})$, we can take $f_x$ to be the density of $-X_{\lambda}$ with $\lambda=\inv{m}(x)$, and when $x\in (\sqrt{3},\infty)$, we can take $f_x$ to be the density of $X_\lambda$ with $\lambda=\inv{a}(x)$. We can then argue by symmetry to handle negative values of $x$.

Finally, we consider the borderline cases $\abs{x}=1,\sqrt{3}$. If $\abs{x}=1$, then the extremal density $f_x$ is the density of $\pm(Y-1)$ where $Y\sim\Exp(1)$: this can be realised either as the limit of~\eqref{Eq:ans1} as $\abs{x}\!\nearrow 1$ or as the limit of~\eqref{Eq:ans2} as $\abs{x}\!\searrow 1$, so (unsurprisingly) $F$ is continuous at 1. The case $\abs{x}=\sqrt{3}$ corresponds to $\lambda=0$; indeed, $f_x$ is the density of $U[-\sqrt{3},\sqrt{3}]$ in this instance.

In summary, using the fact that $f_x$ integrates to 1 for each $x$, we deduce that
\begin{equation}
\label{Eq:Envelope}
F(x)=f_x(x)=
\begin{cases}
\,\lambda\bigl(e^{\lambda K(\lambda)}-1\bigr)^{-1}&\text{with }\lambda=\inv{a}(\abs{x})\text{ when }\abs{x}>\sqrt{3}\\
\,(2\sqrt{3})^{-1}&\text{when }\abs{x}=\sqrt{3}\\
\,\lambda\bigl(1-e^{-\lambda K(\lambda)}\bigr)^{-1}&\text{with }\lambda=\inv{m}(\abs{x})\text{ when }1<\abs{x}<\sqrt{3}\\
\,(2-x^2)^{-1/2}&\text{when }\abs{x}\leq 1,\\
\end{cases}
\end{equation}
which is smooth on $(-1,1)$ and $(-\infty,-1)\cup (1,\infty)$. Also, $a\circ\inv{m}$ is a smooth and strictly decreasing function from $(1,\sqrt{3})$ to $(\sqrt{3},\infty)$, and the values of $F$ on these intervals are related by the identity
\begin{equation}
\label{Eq:duality}
F(a(\lambda))=F(m(\lambda))e^{-\lambda K(\lambda)},
\end{equation}
which holds for all $\lambda\in (0,1)$. We now return to the assertions in part (ii) of the proposition. The first of these follows immediately from the fact that the density of an $\Exp(1)-1$ random variable lies in $\mathcal{F}_1^{0,1}$. For the second, we write $\lambda=\inv{a}(\abs{x})$, and in view of~\eqref{Eq:Envelope}, it suffices to compute the ratio of $\lambda/(e^{\lambda K(\lambda)}-1)$ and $e^{-a(\lambda)-1}=e^{-K(\lambda)+m(\lambda)-1}$ as $\lambda\!\nearrow 1$. Note that $\log\lambda\to 0$ and $m(\lambda)\to 1$ as $\lambda\!\nearrow 1$. Thus, after taking logarithms, it is enough to consider the difference of $\lambda K(\lambda)$ and $K(\lambda)$, which does indeed tend to 0 by the final part of Lemma~\ref{Lem:maK}. This completes the proof of (ii), and also shows that there exists  a constant $B\in\R$ such that $F(x)\leq\exp(-\abs{x}+B)$ for all $x\in\R$.

To establish part (iii) of the result, it remains to show that we can take $B=1$. The expressions above can be made more analytically tractable by reparametrising everything in terms of $s=\inv{V}(\lambda^2)=\lambda K(\lambda)$, which is strictly increasing in $\lambda\in (0,1)$, so in a slight abuse of notation, we start by redefining
\begin{align*}
&V(s)=1-2^{-1}s^2/(\cosh s-1),\\
&K(s)=s\,{V(s)}^{-1/2},\\
&m(s)=(1-s/(e^s-1))\,{V(s)}^{-1/2},\\
&a(s)=(-1+se^s/(e^s-1))\,{V(s)}^{-1/2}
\end{align*}
as functions of $s\in (0,\infty)$. Lemma~\ref{Lem:maK} implies that all of these are strictly monotone. In view of~\eqref{Eq:Envelope}, we need to show that
\begin{align}
\label{Eq:smallx}
&{V(s)}^{1/2}\,e^{m(s)-1}\,\inv{(1-e^{-s})}\leq 1;\\
\label{Eq:largex}
&{V(s)}^{1/2}\,e^{a(s)-1}\,\inv{(e^s-1)}\leq 1
\end{align}
for all $s\in(0,\infty)$. First we address~\eqref{Eq:smallx}, which corresponds to values of $x\in (1,\sqrt{3})$. This can be verified directly by numerical calculation for $s\in (0,5]$. Indeed, for $s\in (0,3/2]$, the left-hand side can be rewritten as \[\inv{K(s)}\,e^{m(s)-1}\,\frac{s}{1-e^{-s}}.\]
For $s\in (0,1]$, this is bounded above by $\inv{(2\sqrt{3})}\,e^{\sqrt{3}-1}\,s/(1-e^{-s})\leq 1$ in view of Lemma~\ref{Lem:maK}, and for $s\in [1,3/2]$, this is at most $\inv{K(1)}\,e^{m(1)-1}\,s/(1-e^{-s})\leq 1$. Similarly, for each $k\in\{3,\ldots,9\}$, the left-hand side of~\eqref{Eq:smallx} is at most \[{V\rbr{\frac{k+1}{2}}}^{1/2}e^{m(k/2)-1}\,\inv{(1-e^{-k/2})}\]
for all $s\in [k/2,(k+1)/2]$, and all these values can be checked to be less than 1, as required.

Now we present a general argument that handles the case $s\geq 5$. We certainly have \[1-s^2\inv{(e^s-2)}\leq V(s)\leq 1-s^2\inv{(e^s-1)}\]
for all $s\geq 0$, and we claim that
\begin{equation}
\label{Eq:sqrtVbds}
1-s^2e^{-s}/2\geq\sqrt{V(s)}\geq\rbr{1-\frac{s^2}{e^s-2}}^{1/2}\geq 1-s^2e^{-s}/2-s^4e^{-2s}/2
\end{equation}
for all $s\geq 5/2$. To obtain the final inequality, observe that $(1-z)^{1/2}\geq 1-z/2-z^2/4$ for all $0\leq z\leq 2(\sqrt{2}-1)$ and that $u:=s^2\inv{(e^s-2)}\leq 2^2/(e^2-2)<2(\sqrt{2}-1)$ for $s\geq 2$, so it is enough to prove that the right-hand side above is at most $1-u/2-u^2/4$. This reduces to showing that $s^2(1-2e^{-s})-s^2(e^{-s}+\inv{(e^s-2)})-2\geq 0$ for all $s\geq 5/2$, but as the left-hand side of this final inequality is an increasing function of $s$ which is non-negative at $s=5/2$, the claim in~\eqref{Eq:sqrtVbds} follows.

The original inequality~\eqref{Eq:smallx} can be rewritten in the form \[\frac{1}{\sqrt{V(s)}}\rbr{1-\frac{s}{e^s-1}}-1\leq\log\rbr{\frac{1-e^{-s}}{\sqrt{V(s)}}},\]
and since $s^2\leq e^s$ when $s\geq 0$, we have $e^{-s}(s^2/2-1)\inv{(1-s^2e^{-s}/2)}\leq 1$. So by the bounds in~\eqref{Eq:sqrtVbds} and the fact that $\log(1+z)\geq z-z^2/2$ for $\abs{z}<1$, it is enough to show that \[\frac{1-se^{-s}}{1-s^2e^{-s}/2-s^4e^{-2s}/2}-1\leq e^{-s}\frac{s^2/2-1}{1-s^2e^{-s}/2}\rbr{1-\frac{e^{-s}(s^2/2-1)}{2\rbr{1-s^2e^{-s}/2}}}.\]
This is equivalent to showing that \[\frac{(s^2-2s)+s^4e^{-s}}{s^2-2}\leq\rbr{1-\frac{s^4e^{-2s}}{2\rbr{1-s^2e^{-s}/2}}}\rbr{1-\frac{e^{-s}(s^2/2-1)}{2\rbr{1-s^2e^{-s}/2}}}\]
for all $s\geq 5$. In fact, we will establish the slightly stronger bound \[\frac{(s^2-2s)+s^4e^{-s}}{s^2-2}\leq 1-\frac{s^4e^{-2s}+e^{-s}(s^2/2-1)}{2\rbr{1-s^2e^{-s}/2}}\]
for all $s\geq 5$. After clearing denominators and simplifying, we arrive at the equivalent inequality \[4e^s(s-1)+2s^4e^{-s}+4s^2\geq 5s^4/2+2s^3+2,\]
which certainly holds whenever $s\geq 5$. Indeed, $16e^s-5s^4/2-2s^3\geq 0$ for all $s\geq 0$, since $5s^4e^{-s}+4s^3e^{-s}\leq 5(4/e)^4+4(3/e)^3\leq 32$, so we are done.

Now that we have established~\eqref{Eq:smallx}, it is relatively straightforward to obtain~\eqref{Eq:largex}. For $s\leq 3$, we again proceed by direct calculation: here, the left-hand side equals \[\inv{K(s)}\,e^{a(s)-1}\,l(s),\]
where $l(s):=s/(e^s-1)$ for $s>0$ and $l(0)=1$. Each of the terms in this product is a monotone in $s$ by Lemma~\ref{Lem:maK}, so for each $k=0,1,2$ and for all $s\in[k,k+1]$, their product is at most $\inv{(2\sqrt{3})}\,e^{a(k+1)-1}\,l(k)\leq 1$. On the other hand, for $s\geq 3$, the desired result will follow if we can establish that $a(s)\leq m(s)+s$. This is equivalent to the inequality
\begin{equation}
\label{eq:sqrtVlb}
\sqrt{V(s)}\geq 1-2/s+2/(e^s-1),
\end{equation}
so it suffices to prove that the lower bound in~\eqref{Eq:sqrtVbds} is at least $1-2/s+2/(e^s-1)$, which amounts to showing that \[s^3e^{-s}/2+s^5e^{-2s}/2+2s\,\inv{(e^s-1)}\leq 2\]
for all $s\geq 3$. To establish this, we simply bound each summand on the left-hand side by its global maximum in $[3,\infty)$. This completes the proof of (iii).
\end{proof}

\subsection{Local bracketing entropy bounds} 
\label{Subsec:LocalBE}
The aim of this section is to prove some local bracketing entropy results that form the backbone of the proofs of Theorems~\ref{Thm:k1} and~\ref{Thm:ThetaRisk} in Sections~\ref{Sec:Logkaffine} and~\ref{Sec:ThetaPolytope} respectively. Theorem~\ref{Thm:k1} is a consequence of the key local bracketing entropy bound stated as Proposition~\ref{Prop:Log1affEntropy} in Section~\ref{Sec:LogkaffineProofs}, which in turn builds on two intermediate results that we establish below, namely Propositions~\ref{Prop:SimplexEntropy} and~\ref{Prop:PolytopeEntropy}. By modifying the proofs of these two results, we obtain the analogous bounds in Propositions~\ref{Prop:ThetaSimplexEntropy} and~\ref{Prop:ThetaEntropy}, which constitute the crux of the proof of Theorem~\ref{Thm:ThetaRisk}. 
Throughout, we rely heavily on the technical tools developed in Section~\ref{Subsec:EntropyCalcs}. We will use the notation introduced at the start of Sections~\ref{Subsec:Notation},~\ref{Sec:Proofs} and~\ref{Sec:LogkaffineSupp}, as well as the key definitions from Sections~\ref{Sec:Logkaffine} and~\ref{Sec:ThetaPolytope}. 

We start by collecting together some global bracketing entropy bounds which are minor modifications of those that appear in~\citet{GW15},~\citet{KS16}, and~\citet{KGS18}. Recall that, as in~\citet{KS16}, we define $h_2,\,h_3\colon (0,\infty)\to  (0,\infty)$ by $h_2(x):=\inv{x}\log_+^{3/2}(\inv{x})$ and $h_3(x):=x^{-2}$ respectively. For measurable $f,g\colon\R^d\to\R$, let $L_2(f,g):=\bigl\{\int_{\R^d}\,(f-g)^2\bigr\}^{1/2}$. In addition, denote by $\mathcal{K}^b\equiv\mathcal{K}_d^b$ the collection of all compact, convex sets $K\subseteq\R^d$ with non-empty interior. For $K\in\mathcal{K}^b$, let $\Phi(K):=\{\restr{\phi}{K}:\phi\in\Phi\}$, and for $-\infty\leq B_1<B_2<\infty$ and $K_1,\dotsc,K_m\in\mathcal{K}^b$, define $\Phi_{B_1,B_2}(K_1,\dotsc,K_m):=\{\phi\colon\bigcup_{j=1}^{\,m} K_j\to [B_1,B_2]:\restr{\phi}{K_j}\in\Phi(K_j)\text{ for all }1\leq j\leq m\}$ and $\mathcal{G}_{B_1,B_2}(K_1,\dotsc,K_m):=\bigl\{e^\phi\,\Ind_{\bigcup_{j=1}^{\,m} K_j}:\phi\in\Phi,\,\restr{\phi}{\,\bigcup_{j=1}^{\,m} K_j}\in\Phi_{B_1,B_2}(K_1,\dotsc,K_m)\bigr\}$.
\begin{proposition}
\label{Prop:bebds}
For $d\in\N$, let $S_1,\dotsc,S_m\subseteq\R^d$ be $d$-simplices with pairwise disjoint interiors. If $\varepsilon>0$ and $-\infty<B_1<B_2<\infty$, then
\begin{equation}
\label{Eq:besimp}
H_{[\,]}\bigl(\varepsilon,\textstyle\mathcal{G}_{B_1,B_2}(S_1,\dotsc,S_m),\dhell\bigr)\displaystyle\lesssim_d m\rbr{\frac{e^{B_2/2}\rbr{B_2-B_1}\mu_d^{1/2}\bigl(\bigcup_{j=1}^{\,m}S_j\bigr)}{\varepsilon}}^{d/2}.
\end{equation}
Suppose henceforth that $d\in\{2,3\}$. If $K\in\mathcal{K}^b\equiv\mathcal{K}_d^b$ and $\varepsilon>0$, then
\begin{equation}
\label{Eq:beconvbd}
H_{[\,]}(\varepsilon,\mathcal{G}_{B_1,B_2}(K),\dhell)\lesssim h_d\rbr{\frac{\varepsilon}{e^{B_2/2}\rbr{B_2-B_1}\mu_d^{1/2}(K)}}
\end{equation}
whenever $-\infty<B_1<B_2<\infty$, and
\begin{equation}
\label{Eq:beconvubd}
H_{[\,]}(\varepsilon,\mathcal{G}_{-\infty,B}(K),\dhell)\lesssim h_d\rbr{\frac{\varepsilon}{e^{B/2}\mu_d^{1/2}(K)}}
\end{equation}
for all $B\in\R$. Finally, for any family of sets $K_1,\dotsc,K_m\in\mathcal{K}^b$ with pairwise disjoint interiors, we can obtain
bounds for $H_{[\,]}\bigl(\varepsilon,\mathcal{G}_{B_1,B_2}(K_1,\dotsc,K_m),\dhell\bigr)$ and $H_{[\,]}\bigl(\varepsilon,\mathcal{G}_{-\infty,B}(K_1,\dotsc,K_m),\dhell\bigr)$ by multiplying the right-hand sides of~\eqref{Eq:beconvbd} and~\eqref{Eq:beconvubd} respectively by $m$, and replacing $\mu_d(K)$ with $\mu_d\bigl(\bigcup_{j=1}^{\,m}K_j\bigr)$ throughout.
\end{proposition}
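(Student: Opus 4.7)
The plan is to derive each of the four bounds by reducing to existing global bracketing entropy results in the literature and then invoking standard rescaling and splitting arguments. For~\eqref{Eq:besimp}, I would first use affine equivariance (noting that Hellinger distance and the parameters $B_1,B_2,e^{B_2/2}\mu_d^{1/2}$ transform consistently) to reduce to the case where each $S_j$ is a fixed $d$-simplex of unit volume. Then for functions $\phi$ concave on $S_j$ and taking values in $[B_1, B_2]$, I would apply the bracketing entropy bound for bounded concave functions on a simplex with respect to the supremum metric, as given in \citet{GW15} (or alternatively an $L_\infty$ bracketing result). The transfer from $L_\infty$ brackets on $\phi$ to Hellinger brackets on $e^\phi$ uses the elementary inequality $|e^{\phi_1/2} - e^{\phi_2/2}| \leq 2^{-1}e^{B_2/2}|\phi_1 - \phi_2|$ when $\phi_1,\phi_2 \leq B_2$, which accounts for the factors $e^{B_2/2}(B_2 - B_1)$ in the stated bound. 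Summing the bracket counts over the $m$ simplices then yields~\eqref{Eq:besimp}.

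For~\eqref{Eq:beconvbd} and~\eqref{Eq:beconvubd} in dimensions $d\in\{2,3\}$, I would invoke \citet[Theorem~4]{KS16}, which controls the bracketing entropy of the class of log-concave functions on $\R^d$ that are bounded above by a fixed constant. After translating $K$ to a canonical position and rescaling, the bound~\eqref{Eq:beconvubd} for $\mathcal{G}_{-\infty,B}(K)$ follows directly from that result applied to $h\colon y \mapsto e^{-B}\mu_d(K)f(A y + b)\det^{1/2}\Sigma$ for an appropriate affine map sending $K$ to a unit-volume body and $B$ to $0$. The two-sided bound~\eqref{Eq:beconvbd} is slightly stronger and follows by the same argument; the lower envelope $e^{B_1}$ controls the discretisation of the range, yielding the extra factor $(B_2 - B_1)$ inside $h_d$. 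Both of these appear essentially in \citet[Theorem~4]{KS16} and \citet{KGS18}, so what is needed here is mostly a careful bookkeeping of how the constants scale under affine transformations of $K$ and additive shifts of $\log f$.

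For the final multi-set extension, I would allocate a bracketing budget of $\varepsilon/\sqrt{m}$ to each $K_j$, construct a bracketing set on each $K_j$ using the single-body bounds~\eqref{Eq:beconvbd} or~\eqref{Eq:beconvubd}, and then take products: to each tuple $(j_1,\ldots,j_m)$ of bracket indices one assigns the function equal to the corresponding bracket on each $K_j$ (and $0$ outside). Since the $K_j$ have pairwise disjoint interiors, squared Hellinger distance is additive across them, and the total squared bracket size is at most $\varepsilon^2$. Monotonicity of $h_d$ then gives the stated bound with $\mu_d(K)$ replaced by $\mu_d\bigl(\bigcup_j K_j\bigr)$, at the cost of the multiplicative factor $m$.

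The main obstacle is the careful propagation of the $e^{B_2/2}$ and $(B_2 - B_1)$ factors through the translation from $L_\infty$ (or $L_2$) brackets on the concave exponent $\phi$ to Hellinger brackets on $e^\phi$; a naive estimate can over-count these by an extra power, so I would be careful to use the midpoint $e^{(\phi_1 + \phi_2)/2}$ when forming Hellinger brackets and to exploit the fact that the local Lipschitz constant of $u \mapsto e^{u/2}$ on $(-\infty, B_2]$ is $e^{B_2/2}/2$. Checking that the combined bookkeeping matches the exponents and constants recorded in~\eqref{Eq:besimp}--\eqref{Eq:beconvubd} is the most delicate part of the argument; the rest is largely an assembly of known ingredients.
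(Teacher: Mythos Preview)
Your overall strategy---reduce to known $L_2$ bracketing results for bounded concave functions, convert to Hellinger brackets via the Lipschitz bound $|e^{\phi_1/2}-e^{\phi_2/2}|\le\tfrac12 e^{B_2/2}|\phi_1-\phi_2|$, then split across disjoint pieces---is exactly the route taken in the paper. Two points of execution differ from the paper and are worth tightening.

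First, your equal allocation $\varepsilon/\sqrt{m}$ does not yield the stated bounds in general. The paper instead allocates $\varepsilon_j:=\{\mu_d(S_j)/\mu_d(D)\}^{1/2}\varepsilon$ (and similarly for the $K_j$), which makes the single-body bound on each piece identical to $((B_2-B_1)\mu_d^{1/2}(D)/\varepsilon)^{d/2}$ or $h_d(\varepsilon/(e^{B_2/2}(B_2-B_1)\mu_d^{1/2}(D)))$, so the sum is exactly $m$ times one term. With equal allocation and $d=2$, after summing and using Cauchy--Schwarz you pick up $h_2(\varepsilon/(\sqrt{m}\,\cdot\,))$ rather than $h_2(\varepsilon/\cdot)$, i.e.\ an extra $\sqrt{m}$ inside the logarithm, which is strictly worse than~\eqref{Eq:beconvbd} as stated. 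The volume-proportional split avoids this. (Relatedly, your remark about ``reducing each $S_j$ to a unit-volume simplex by affine equivariance'' is imprecise: a single affine map cannot do this simultaneously, and separate maps per simplex are exactly equivalent to the volume-proportional allocation.)

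Second, for~\eqref{Eq:beconvubd} you cannot simply ``invoke \citet[Theorem~4]{KS16}'': that theorem is stated for the near-isotropic class $\tilde{\mathcal{F}}^{1,\eta}$ on $\R^d$, not for $\mathcal{G}_{-\infty,B}(K)$. What the paper does is extract an intermediate step from the \emph{proof} of that theorem---namely the bound $H_{[\,]}((4+e)\varepsilon,\mathcal{G}_{-\infty,-2}([0,1]^d),\dhell)\lesssim h_d(\varepsilon)$---and then observe that the argument depends on the domain only through its volume (via Propositions~2 and~4 in the KS16 supplement), so $[0,1]^d$ may be replaced by any $K\in\mathcal{K}_d^b$ with $\mu_d(K)=1$. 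A scaling argument then gives the general $B$ and general $\mu_d(K)$. This is a short but genuine step beyond citing the theorem.
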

\begin{proof}
We first address~\eqref{Eq:besimp}, which is a $d$-dimensional version of Proposition~7 in the online supplement to~\citet{KGS18}. First, for a fixed $\varepsilon>0$, set $D:=\bigcup_{j=1}^{\,m} S_j$ and $\varepsilon_j:=\{\mu_d(S_j)/\mu_d(D)\}^{1/2}\,\varepsilon$ for each $1\leq j\leq m$, and observe that by~\citet[Theorem~1.1(ii)]{GW15}, we have
\begin{align}
H_{[\,]}\bigl(\varepsilon,\Phi_{B_1,B_2}(S_1,\dotsc,S_m),L_2\bigr)&\leq\sum_{j=1}^m H_{[\,]}(\varepsilon_j,\Phi_{B_1,B_2}(S_j),L_2)\lesssim_d\sum_{j=1}^m\rbr{\frac{(B_2-B_1)\,\mu_d^{1/2}(S_j)}{\varepsilon_j}}^{d/2}\notag\\
\label{Eq:besimpgw}
&\lesssim_d m\rbr{\frac{(B_2-B_1)\,\mu_d^{1/2}(D)}{\varepsilon}}^{d/2}.
\end{align}
To obtain~\eqref{Eq:besimp}, fix $\varepsilon>0$ and set $\zeta:=2\varepsilon e^{-B_2/2}$. We deduce from~\eqref{Eq:besimpgw} that there exists a bracketing set $\{[\phi_j^L,\phi_j^U]:1\leq j\leq M\}$ for $\Phi_{B_1,B_2}(S_1,\dotsc,S_m)$ such that $L_2(\phi_j^L,\phi_j^U)\leq\zeta$ and $\phi_j^U\leq B_2$, where $\log M$ is bounded above by the right hand side of~\eqref{Eq:besimp} up to a multiplicative factor that depends only on $d$. Since
\[\int_D\,\bigl(e^{\phi_j^U/2}-e^{\phi_j^L/2}\bigr)^2\leq\frac{e^{B_2}}{4}\int_D\,\bigl(\phi_j^U-\phi_j^L\bigr)^2\leq\varepsilon^2,\]
it follows that $\{[e^{\phi_j^L},e^{\phi_j^U}]:1\leq j\leq M\}$ is an $\varepsilon$-Hellinger bracketing set for $\{\restr{f}{D}:f\in\mathcal{G}_{B_1,B_2}(S_1,\dotsc,S_m)\}$, as required.

In view of Proposition~4 in the online supplement to~\citet{KS16}, a similar proof to that given above yields~\eqref{Eq:beconvbd}. As for~\eqref{Eq:beconvubd}, we fix $d\in\{2,3\}$ and begin by outlining a simple scaling argument that allows us to deduce the general result from the special case where $B=-2$ and $\mu_d(K)=1$. For $B'\in\R$ and $K'\in\mathcal{K}_d^b$, let $K:=\inv{\lambda}K'$ and $\lambda:=\mu_d(K')^{1/d}$, and suppose that we have already shown that 
\begin{equation}
\label{Eq:beconvuspec}
H_{[\,]}(\varepsilon,\mathcal{G}_{-\infty,-2}(K),\dhell)\lesssim h_d(\varepsilon)
\end{equation}
for all $\varepsilon>0$. Then for each $\varepsilon>0$, we can find a bracketing set $\{[f_j^L,f_j^U]:1\leq j\leq M\}$ for $\mathcal{G}_{-\infty,-2}(K)$ such that \[\int_K\rbr{\sqrt{f_j^U}-\sqrt{f_j^L}}^2\leq\varepsilon^2e^{-(B'+2)}\lambda^{-d}\]
for each $1\leq j\leq M$ and $\log M \lesssim h_d\bigl(\varepsilon e^{-(B'+2)/2}\lambda^{-d/2}\bigr)\lesssim h_d\bigl(\varepsilon e^{-B'/2}\mu_d^{-1/2}(K')\bigr)$. Since every $g\in\mathcal{G}_{-\infty,B'}(K')$ takes the form $x\mapsto e^{B'+2}f(\inv{\lambda}x)$ for some $f\in\mathcal{G}_{-\infty,-2}(K)$, it follows that $\mathcal{G}_{-\infty,B'}(K')$ is covered by the brackets $\{[g_j^L,g_j^U]:1\leq j\leq M\}$ defined by \[g_j^L(x):=e^{B'+2}f_j^L(\inv{\lambda}x),\quad g_j^U(x):=e^{B'+2} f_j^U(\inv{\lambda}x).\]
To see that this constitutes a valid $\varepsilon$-bracketing set and thereby implies the desired conclusion, observe that
\begin{align*}
\int_{K'}\rbr{\sqrt{g_j^U}-\sqrt{g_j^L}}^2&=e^{B'+2}\int_{K'}\cbr{f_j^U\bigl(\inv{\lambda}x\bigr)^{1/2}-f_j^L\bigl(\inv{\lambda}x\bigr)^{1/2}}^2\,dx\\
&=e^{B'+2}\,\lambda^d\int_K\rbr{\sqrt{f_j^U}-\sqrt{f_j^L}}^2\leq\varepsilon^2
\end{align*}
for all $1\leq j\leq M$, as required. Therefore, it remains to establish~\eqref{Eq:beconvuspec}. This will require only a few small adjustments to the arguments in steps 2 and 3 of the proof of Theorem~4 in~\citet{KS16}, where it was shown that 
\begin{equation}
\label{Eq:ksinterm}
H_{[\,]}((4+e)\varepsilon,\mathcal{G}_{-\infty,-2\,}(K),\dhell)=H_{[\,]}((4+e)\varepsilon,\mathcal{G}_{-\infty,-1\,}(K),L_2)\lesssim h_d(\varepsilon)
\end{equation}
for all $0<\varepsilon<\inv{e}$ when $K=[0,1]^d$. For $\varepsilon>\inv{e}$, we may use a single bracketing pair $[f^L,f^U]$ with $f^L\equiv 0$ and $f^U\equiv 1$, so the left-hand side is 0 in this case. Therefore~\eqref{Eq:ksinterm} holds for all $\varepsilon>0$ when $K=[0,1]^d$. Furthermore, since the bounds in Propositions~2 and 4 in the online supplement to~\citet{KS16} depend on the convex domain $K$ only through $\mu_d(K)$, all the intermediate steps in the proof of~\eqref{Eq:ksinterm} in~\citet{KS16} remain valid when $[0,1]^d$ is replaced with an arbitrary $K\in\mathcal{K}_d^b$ with $\mu_d(K)=1$. This crucial observation completes the proof of~\eqref{Eq:beconvuspec} and hence that of~\eqref{Eq:beconvubd}.

The final assertion of Proposition~\ref{Prop:bebds} follows from~\eqref{Eq:beconvbd} and~\eqref{Eq:beconvubd} in much the same way that~\eqref{Eq:besimp} follows from the special case $m=1$.
\end{proof}
\begin{figure}[htb]
\centering
\includegraphics[trim=3cm 13.7cm 0 1.9cm, width=0.8\textwidth]{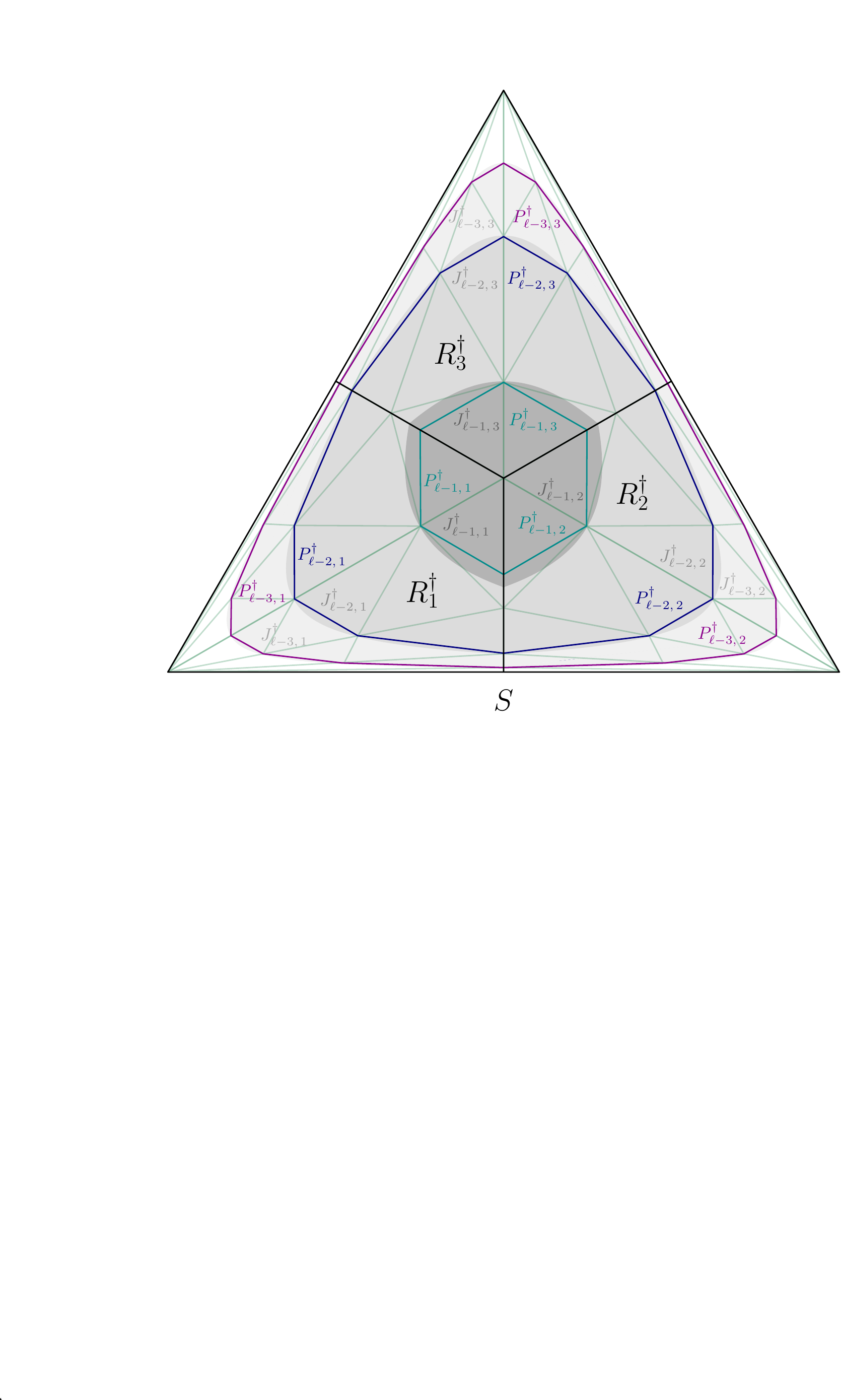}
\caption{Illustration of the proof of \protect{Proposition~\ref{Prop:SimplexEntropy}} when $d=2$ (and $\delta<1/24$, so that $\ell\geq 4$). The polytopes $R_1^\dagger,R_2^\dagger,R_3^\dagger\subseteq S$ are demarcated by the black lines. For $i\in\{\ell-3,\ell-2,\ell-1\}$ and $j\in\{1,2,3\}$, the `invelopes' $J_{i,j}^\dagger\subseteq R_j^\dagger$ are represented by the grey shaded regions (see Lemma~\ref{Lem:invelopesimp}) and the boundaries of the approximating polytopes $P_{i,j}^\dagger\subseteq J_{i,j}^\dagger$ are outlined in colour (see Corollary~\ref{Cor:polyapproxsimp}). Moreover, the regions between the nested polytopes $R_j^\dagger\supseteq P_{1,j}^\dagger\supseteq P_{2,j}^\dagger\supseteq\cdots\supseteq P_{\ell,j}^\dagger$ may be triangulated, as is indicated by the green line segments. The main reason for considering these sets is that by Lemma~\ref{Lem:invelopesimp}(iii), every $f\in\mathcal{G}(f_S,\delta)$ satisfies a pointwise lower bound $\log f+\log\mu_d(S)\geq -2^{-i+2}$ on $J_{i,j}^\dagger\supseteq P_{i,j}^\dagger$.}
\label{Fig:SimplexEntropy}
\end{figure}
As a first step towards proving Proposition~\ref{Prop:Log1affEntropy} for general $f_0\in\mathcal{F}^1(\mathcal{P}^m)$, we consider here the special case where $K$ is a $d$-simplex and $f_0=f_K=\mu_d(K)^{-1}\Ind_K$ is the uniform density on $K$. For further discussion of the proof techniques we employ, see the discussion after the statement of Theorem~\ref{Thm:k1} in Section~\ref{Sec:Logkaffine} and page~\getpagerefnumber{Disc:SimplexEntropy} of Section~\ref{Subsec:EntropyCalcs}. See Figure~\ref{Fig:SimplexEntropy} for an illustration of the proof. For $d\in\N$, we define a `canonical' regular $d$-simplex $\triangle\equiv\triangle_d:=\conv\{e_1,\dotsc,e_{d+1}\}\subseteq\R^{d+1}$ of side length $\sqrt{2}$, which will be viewed as a subset of its affine hull, namely $\aff\triangle=\{x = (x_1,\dotsc,x_{d+1})\in\R^{d+1}:\sum_{j=1}^{d+1}x_j=1\}$.
\begin{proposition}
\label{Prop:SimplexEntropy}
Let $d\in\{2,3\}$. If $0<\varepsilon<\delta<(d+1)^{-d/2}$ and $S\subseteq\R^d$ is a $d$-simplex, then
\begin{equation}
\label{Eq:lbebd2}
H_{[\,]}(2^{1/2}\varepsilon,\mathcal{G}(f_S,\delta),\dhell)\lesssim\rbr{\frac{\delta}{\varepsilon}}\log^{3/2}\rbr{\frac{1}{\delta}}\log^{3/2}\rbr{\frac{\log(1/\delta)}{\varepsilon}}=:H_2(\delta,\varepsilon)
\end{equation}
when $d=2$ and 
\begin{equation}
\label{Eq:lbebd3}
H_{[\,]}(2^{1/2}\varepsilon,\mathcal{G}(f_S,\delta),\dhell)\lesssim\rbr{\frac{\delta}{\varepsilon}}^2\log^4\rbr{\frac{1}{\delta}}+\rbr{\frac{\delta}{\varepsilon}}^{3/2}\log^{21/4}\rbr{\frac{1}{\delta}}=:H_3(\delta,\varepsilon)
\end{equation}
when $d=3$.
\end{proposition}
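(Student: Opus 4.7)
The strategy is to partition $S$ into regions on which every $f\in\mathcal{G}(f_S,\delta)$ has $\log f$ of controlled range, and then to apply the bracketing entropy bounds of Proposition~\ref{Prop:bebds} piecewise and sum. By the affine invariance of $\dhell$, we may reduce to the canonical regular simplex $S=\triangle_d$.

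The first ingredient is Lemma~\ref{Lem:hellunbds}(ii), which provides a uniform upper bound of order $\delta$ on $\log(f\mu_d(S))$ for every $f\in\mathcal{G}(f_S,\delta)$. A matching lower bound cannot hold globally, since $f$ may vanish near the vertices of $S$; however, Lemma~\ref{Lem:invelopesimp} furnishes nested ``invelope'' regions $J_{1,j}^\dagger\supseteq J_{2,j}^\dagger\supseteq\dotsb\supseteq J_{\ell,j}^\dagger$, indexed by levels $i\in\{1,\dotsc,\ell\}$ and by sub-polytopes $R_j^\dagger$ ($j=1,\dotsc,d+1$) that together cover $S$; see Figure~\ref{Fig:SimplexEntropy}. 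The number of levels is taken as $\ell\asymp\log(1/\delta)$, and the defining property is that $\log f+\log\mu_d(S)\geq -2^{-i+2}$ pointwise on $J_{i,j}^\dagger$ for every $f\in\mathcal{G}(f_S,\delta)$.

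Next, approximate each invelope from within by a polytope $P_{i,j}^\dagger\subseteq J_{i,j}^\dagger$ with a slowly growing vertex count via Corollary~\ref{Cor:polyapproxsimp}, and triangulate each annular shell $P_{i-1,j}^\dagger\setminus\Int P_{i,j}^\dagger$ (taking $P_{0,j}^\dagger:=R_j^\dagger$) into a controlled number of $d$-simplices. On each such shell, the range of $\log(f\mu_d(S))$ is at most $2^{-i+3}+C\delta$, so the bound~\eqref{Eq:besimp} from Proposition~\ref{Prop:bebds} applies with $B_2-B_1\asymp 2^{-i}$ and $e^{B_2/2}\mu_d^{1/2}(S)\asymp 1$. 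The innermost region $\bigcup_j P_{\ell,j}^\dagger$ contributes negligibly because $2^{-\ell}\lesssim\delta$, while for the outermost shells $R_j^\dagger\setminus\Int P_{1,j}^\dagger$ adjacent to the vertices of $S$, where no nontrivial lower bound on $f$ survives, the plan is to apply the unbounded bound~\eqref{Eq:beconvubd} involving $h_d$. The construction of $J_{i,j}^\dagger$ in Lemma~\ref{Lem:invelopesimp} is precisely calibrated to ensure that these outer regions have small $\mu_d$-volume, which is what keeps the resulting $h_d$-contribution tame.

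Finally, $\varepsilon$ is allocated across the $O(\log(1/\delta))$ shells, typically as $\varepsilon_i^2\lesssim\varepsilon^2/\ell$ or with tiered weights satisfying $\sum_i\varepsilon_i^2\leq\varepsilon^2$, and the piecewise bracketing entropies are summed. The polylogarithmic factors $\log^{3/2}(1/\delta)$ for $d=2$ and $\log^4(1/\delta)$, $\log^{21/4}(1/\delta)$ for $d=3$ will arise from this summation over levels, while the dominant $(\delta/\varepsilon)^{d/2}$-type behaviour originates from the outer vertex regions through $h_d$. The hard part will be the geometric bookkeeping: verifying that Corollary~\ref{Cor:polyapproxsimp} delivers polytopes whose facet and vertex counts scale slowly enough in $i$ that the sum $\sum_{i,j}(\text{\# simplices at level }i)\cdot(2^{-i}/\varepsilon_i)^{d/2}$ matches the target, and reconciling the outer $h_d$-contribution with the bounded-log shell contributions to reproduce exactly the bounds $H_2(\delta,\varepsilon)$ and $H_3(\delta,\varepsilon)$ stated.
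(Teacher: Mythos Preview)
Your outline matches the paper's proof almost step for step: reduce to the regular simplex by affine invariance, partition into the sub-polytopes $R_j^\dagger$, use the nested invelopes and approximating polytopes from Lemma~\ref{Lem:invelopesimp} and Corollary~\ref{Cor:polyapproxsimp} with $\ell\asymp\log(1/\delta)$ levels, apply~\eqref{Eq:besimp} on each shell and the $h_d$-bound~\eqref{Eq:beconvubd} on the outer region $R_j^\dagger\setminus P_{1,j}^\dagger$, and allocate $\varepsilon_i=\varepsilon'/\sqrt{\ell}$.

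Two points in your sketch need correcting. First, in~\eqref{Eq:besimp} the relevant volume is that of the \emph{shell}, not of $S$: Corollary~\ref{Cor:polyapproxsimp} together with Lemma~\ref{Lem:invelopebox}(iii) give $\mu_d(P_{i,j}^\dagger\setminus P_{i+1,j}^\dagger)\lesssim 4^i\delta^2\log^{d-1}(1/\delta)$, so each shell contributes of order $\log^{d-1}(1/\delta)\cdot\bigl\{(2^{-i})(2^i\delta)\log^{(d-1)/2}(1/\delta)\,\ell^{1/2}/\varepsilon\bigr\}^{d/2}$. The cancellation $2^{-i}\cdot 2^i$ between the log-range and the square-root volume is what makes the sum over $i$ cost only an extra $\log(1/\delta)$ factor rather than a power of $1/\delta$; your expression $(2^{-i}/\varepsilon_i)^{d/2}$ omits the volume and would give the wrong bound. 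Second, your attribution of terms is reversed for $d=3$: the interior shells via~\eqref{Eq:besimp} produce the $(\delta/\varepsilon)^{3/2}\log^{21/4}(1/\delta)$ contribution, while the outer $h_3$-term yields $(\delta/\varepsilon)^2\log^4(1/\delta)$. (Also, the paper simply sets $P_{\ell,j}^\dagger:=\emptyset$, so there is no separate innermost region to treat.)
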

\begin{proof}
Fix $d\in\{2,3\}$ and suppose that $0<\varepsilon<\delta<(d+1)^{-d/2}$. In addition, define $\varepsilon':=\varepsilon/\sqrt{d+1}$ and $\ell:=\ceil{\log_2((d+1)^{-d/2}\inv{\delta})}$, so that $\ell$ is the smallest integer $i$ such that $4^i\delta^2\geq (d+1)^{-d}$, and note that $1\leq\ell\lesssim\log(1/\delta)$. Since $\dhell$ is affine invariant, we may assume without loss of generality that $S$ is a regular $d$-simplex with side length $\sqrt{2}$. Then since $\triangle\equiv\triangle_d=\conv\{e_1,\dotsc,e_{d+1}\}\subseteq\R^{d+1}$ is also a $d$-simplex with side length $\sqrt{2}$, there is an (affine) isometry $T\colon\aff\triangle\to\R^d$ such that $T(\triangle)=S$. For each $j\in\{1,\dotsc,d+1\}$, define $R_j\subseteq\triangle$ as in~\eqref{Eq:Rj} and let $R_j^\dagger:=T(R_j)\subseteq S$. Then $S$ is the union of the polytopes $R_1^\dagger,\dotsc,R_{d+1}^\dagger$, whose interiors are pairwise disjoint. 

Fix $j\in\{1,\dotsc,d+1\}$, and for each $i\in\{1,\dotsc,\ell-1\}$, define $J_{i,j}^\dagger:=T\bigl(R_j\cap J_{4^i\delta^2}^\triangle\bigr)$ and $P_{i,j}^\dagger:=T\bigl(P_{4^i\delta^2,\,j}^\triangle\bigr)$,
where $J_{4^i\delta^2}^\triangle\subseteq\triangle$ and $P_{4^i\delta^2,\,j}^\triangle\subseteq R_j$ are taken from Lemma~\ref{Lem:invelopebox}, Lemma~\ref{Lem:invelopesimp} and Corollary~\ref{Cor:polyapproxsimp} respectively. In addition, set $J_{\ell,j}^\dagger:=\emptyset$ and $P_{\ell,j}^\dagger:=\emptyset$. It follows from Corollary~\ref{Cor:polyapproxsimp} that $P_{i+1,\,j}^\dagger\subseteq P_{i,j}^\dagger\subseteq J_{i,j}^\dagger\subseteq R_j^\dagger$ for all $1\leq i\leq \ell-1$, so 
\begin{equation}
\label{Eq:Rjshells}
\textstyle R_j^\dagger=\bigl(R_j^\dagger\setminus\Int P_{1,j}^\dagger\bigr)\cup\,\bigcup_{\,i=1}^{\,\ell-1}\, \bigl(P_{i,j}^\dagger\setminus\Int P_{i+1,\,j}^\dagger\bigr).
\end{equation}
By our choice of $\ell$, the interiors of $(R_j^\dagger\setminus\Int P_{1,j}^\dagger),(P_{1,j}^\dagger\setminus\Int P_{2,j}^\dagger),\dotsc,(P_{\ell-1,\,j}^\dagger\setminus\Int P_{\ell,j}^\dagger)$ are non-empty and pairwise disjoint, and by Corollary~\ref{Cor:polyapproxsimp}(ii), each of these $\ell$ sets can be expressed as the union of $\lesssim\log^{d-1}(1/\delta)$ $d$-simplices with pairwise disjoint interiors. Moreover, defining $Q^\triangle$ as in Lemma~\ref{Lem:invelopesimp} and $J_\eta$ as in Lemma~\ref{Lem:invelopebox} for $\eta>0$, we can apply Corollary~\ref{Cor:polyapproxsimp}(i), Lemma~\ref{Lem:invelopesimp} and Lemma~\ref{Lem:invelopebox}(iii) in that order to deduce that
\begin{equation}
\label{Eq:Rjvols}
\mu_d(R_j^\dagger\setminus P_{i,j}^\dagger)\lesssim\mu_d(R_j^\dagger\setminus J_{i,j}^\dagger)\lesssim\mu_d(Q^\triangle\setminus J_{4^i\delta^2})\lesssim\mu_d([0,1/2]^d\setminus J_{4^i\delta^2})\lesssim 4^i\delta^2\log^{d-1}(1/\delta)
\end{equation}
for all $1\leq i\leq\ell$.
It follows that $\mu_d(P_{i,j}^\dagger\setminus P_{i+1,\,j}^\dagger)\leq\mu_d(R_j^\dagger\setminus P_{i+1,\,j}^\dagger)\lesssim 4^i\delta^2\log^{d-1}(1/\delta)$ for all $1\leq i\leq\ell-1$. We emphasise here that the hidden multiplicative constants in these bounds do not depend on $i$. 

Now if $f\in\mathcal{G}(f_S,\delta)$, then Lemma~\ref{Lem:hellunbds}(ii) implies that $\log f\leq 2^{7/2}d\delta-\log\mu_d(S)\leq 2^{7/2}d\,(d+1)^{-d/2}-\log\mu_d(S)$ on $S$. 
Also, for each $1\leq i\leq\ell-1$, we deduce from Lemma~\ref{Lem:invelopesimp}(iii) that $\log f(x)\geq -2^{-i+2}(d!)^{-1/2}-\log\mu_d(S)\geq -2^{-i+2}-\log\mu_d(S)$ for all $x\in P_{i,j}^\dagger\setminus\Int P_{i+1,\,j}^\dagger\subseteq J_{i,j}^\dagger$. Thus, for each $1\leq i\leq\ell-1$, it follows from the observations above and~\eqref{Eq:besimp} from Proposition~\ref{Prop:bebds} that
\begin{align*}
H_{[\,]}(\varepsilon'/\sqrt{\ell},\mathcal{G}(f_S,\delta),\dhell,P_{i,j}^\dagger\setminus P_{i+1,\,j}^\dagger)&\lesssim\log^{d-1}\!\rbr{\frac{1}{\delta}}\!\rbr{\frac{(2^{7/2}d\delta+2^{-i+2})\{4^i\delta^2\log^{d-1}(1/\delta)\}^{1/2}}{\ell^{-1/2}\,\varepsilon}}^{d/2}\\
&\lesssim\rbr{\frac{\delta}{\varepsilon}}^{d/2}\log^{\frac{d(d+4)}{4}-1}\rbr{\frac{1}{\delta}}\bigl(2^{7/2}d\,(2^i\delta)+4\bigr)^{d/2}\\
&\lesssim\rbr{\frac{\delta}{\varepsilon}}^{d/2}\log^{\frac{d(d+4)}{4}-1}\rbr{\frac{1}{\delta}},
\end{align*}
where the final bound follows from the fact that $2^i\delta\leq 2^{\ell-1}\delta\leq (d+1)^{-d/2}\leq 2^{-d/2}$. Since $1\leq\ell\lesssim\log(1/\delta)$ and the hidden multiplicative constants in the bounds above do not depend on $i$, we conclude that
\vspace{-0.3cm}
\begin{align}
H_{[\,]}(\varepsilon',\mathcal{G}(f_S,\delta),\dhell,P_{1,j}^\dagger)&\leq\sum_{i=1}^{\ell-1}H_{[\,]}(\varepsilon'/\sqrt{\ell},\mathcal{G}(f_S,\delta),\dhell,P_{i,j}^\dagger\setminus P_{i+1,\,j}^\dagger)\notag\\
\label{Eq:beint}
&\lesssim\rbr{\frac{\delta}{\varepsilon}}^{d/2}\log^{\frac{d(d+4)}{4}}\rbr{\frac{1}{\delta}}.
\end{align}
Furthermore, recalling that every $f\in\mathcal{G}(f_S,\delta)$ satisfies $f\leq e^{2^{7/2}d\delta-\log\mu_d(S)}\lesssim 1$ on $R_j^\dagger\setminus\Int P_{1,j}^\dagger$, we may apply the final assertion of Proposition~\ref{Prop:bebds} together with~\eqref{Eq:Rjvols} to deduce that
\begin{equation}
\label{Eq:beboundary}
H_{[\,]}(\varepsilon',\mathcal{G}(f_S,\delta),\dhell,R_j^\dagger\setminus P_{1,j}^\dagger)\lesssim\log^{d-1}\rbr{\frac{1}{\delta}}h_d\rbr{\frac{\varepsilon}{\delta\log^{(d-1)/2}(1/\delta)}}.
\end{equation}
Having now established~\eqref{Eq:beint} and~\eqref{Eq:beboundary} for each fixed $1\leq j\leq d+1$, we finally note that 
\begin{equation}
\label{Eq:bepolysum}
H_{[\,]}(2^{1/2}\varepsilon,\mathcal{G}(f_S,\delta),\dhell)\leq \sum_{j=1}^{d+1}\,\bigl\{H_{[\,]}(\varepsilon',\mathcal{G}(f_S,\delta),\dhell,R_j^\dagger\setminus P_{1,j}^\dagger)+H_{[\,]}(\varepsilon',\mathcal{G}(f_S,\delta),\dhell,P_{1,j}^\dagger)\bigr\}.
\end{equation}
Thus, when $d=2$, we conclude that
\begin{equation}
\label{Eq:bepoly2}
H_{[\,]}(2^{1/2}\varepsilon,\mathcal{G}(f_S,\delta),\dhell)\lesssim\rbr{\frac{\delta}{\varepsilon}}\log^{3/2}\rbr{\frac{1}{\delta}}\cbr{\log^{3/2}\rbr{\frac{1}{\delta}}+\log^{3/2}\rbr{\frac{\delta\log^{1/2}(1/\delta)}{\varepsilon}}},
\end{equation}
which is bounded above by the quantity $H_2(\delta,\varepsilon)$ in~\eqref{Eq:lbebd2} up to a universal constant. 
Similarly, when $d=3$, the bound~\eqref{Eq:lbebd3} follows immediately on combining~\eqref{Eq:beint},~\eqref{Eq:beboundary} and~\eqref{Eq:bepolysum}.
\end{proof}
We now extend Proposition~\ref{Prop:SimplexEntropy} to the case where $f_0$ is the uniform density $f_K$ on a polytope $K\in\mathcal{P}^m$. By Lemma~\ref{Lem:minfacets}, every polytope in $\mathcal{P}_d$ has at least as many facets as a $d$-simplex, namely $d+1$.
\begin{proposition}
\label{Prop:PolytopeEntropy}
Let $d\in\{2,3\}$ and fix $m\in\N$ with $m\geq d+1$. If $0<\varepsilon<\delta<2^{-3/2}$ and $K\in\mathcal{P}^m$ is a polytope, then
\begin{equation}
\label{Eq:lbebd-dm1}
H_{[\,]}(2^{1/2}\varepsilon,\mathcal{G}(f_K,\delta),\dhell)\lesssim m\rbr{\frac{\delta}{\varepsilon}}\log^{3/2}\rbr{\frac{1}{\delta}}\log^{3/2}\rbr{\frac{\log(1/\delta)}{\varepsilon}}=m H_2(\delta,\varepsilon)
\end{equation}
when $d=2$ and 
\begin{equation}
\label{Eq:lbebd-dm2}
H_{[\,]}(2^{1/2}\varepsilon,\mathcal{G}(f_K,\delta),\dhell)\lesssim m\cbr{\rbr{\frac{\delta}{\varepsilon}}^2\log^4\rbr{\frac{1}{\delta}}+\rbr{\frac{\delta}{\varepsilon}}^{3/2}\log^{21/4}\rbr{\frac{1}{\delta}}}=m H_3(\delta,\varepsilon)
\end{equation}
when $d=3$.
\end{proposition}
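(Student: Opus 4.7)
The plan is to reduce to Proposition~\ref{Prop:SimplexEntropy} via a triangulation of $K$. By Lemma~\ref{Lem:euler}, $K$ admits a triangulation $K = \bigcup_{j=1}^M S_j$ into $M \lesssim m$ $d$-simplices with pairwise disjoint interiors. For $f \in \mathcal{G}(f_K,\delta)$ and each $j$, set $c_j := \mu_d(K)/\mu_d(S_j) \geq 1$ and define the rescaled function $\tilde{f}_j := c_j f\Ind_{S_j}$, which lies in $\mathcal{G}(f_{S_j}, c_j^{1/2}\delta)$ by the same Hellinger-scaling argument that appears in the proof of Proposition~\ref{Prop:Log1affEntropy}. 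Undoing the rescaling by dividing by $c_j$ maps $\bar{\varepsilon}$-Hellinger brackets for $\tilde{f}_j$ to $(c_j^{-1/2}\bar{\varepsilon})$-brackets for $f\Ind_{S_j}$.

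Partition the indices into $J_L := \{j : c_j^{1/2}\delta < (d+1)^{-d/2}\}$ (large simplices, for which Proposition~\ref{Prop:SimplexEntropy} is directly applicable to $\tilde{f}_j$) and $J_S := \{1,\ldots,M\}\setminus J_L$ (small simplices, satisfying $\mu_d(S_j) \lesssim_d \mu_d(K)\delta^2$). The small-simplex union $U_S := \bigcup_{j \in J_S}S_j$ has total volume $\lesssim_d m\mu_d(K)\delta^2$, and by Lemma~\ref{Lem:hellunbds}(ii) every $f \in \mathcal{G}(f_K,\delta)$ satisfies the uniform upper bound $f \leq e^{2^{7/2}d\delta}/\mu_d(K) \lesssim 1/\mu_d(K)$ on $K$; its contribution is therefore controlled via the multi-convex variant of~\eqref{Eq:beconvubd} (the final assertion of Proposition~\ref{Prop:bebds}) applied to $\mathcal{G}_{-\infty,B}(\{S_j\}_{j \in J_S})$ with $B := 2^{7/2}d\delta - \log\mu_d(K)$.

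The main obstacle is handling the contribution from $J_L$ without incurring an unwanted $\sqrt{m}$ factor. A naive simplex-by-simplex application of Proposition~\ref{Prop:SimplexEntropy} to each $\tilde{f}_j$ produces per-simplex entropy budgets $H_d(c_j^{1/2}\delta, c_j^{1/2}\bar{\varepsilon}_j) \lesssim (\delta/\bar{\varepsilon}_j) \cdot (\text{polylog})$, in which the $c_j$ factors cancel in the leading ratio; however, the constraint $\sum_{j \in J_L} \bar{\varepsilon}_j^2 \lesssim \varepsilon^2$ together with Cauchy--Schwarz forces $\sum 1/\bar{\varepsilon}_j \geq |J_L|^{3/2}/\varepsilon$, so that summing naively over $J_L$ produces $m^{3/2}$-type rather than $m$-type dependence. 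The ``additional argument'' referenced in the overview is to refuse Proposition~\ref{Prop:SimplexEntropy} as a black box and instead reproduce its internal shell decomposition inside each $S_j$: via Lemma~\ref{Lem:invelopesimp} and Corollary~\ref{Cor:polyapproxsimp}, each $S_j$ splits into an outer boundary region and $O(\log(1/\delta))$ nested shells on which $\tilde{f}_j$ admits matched pointwise upper and lower bounds of the form $\lesssim 1/\mu_d(S_j)$ and $\gtrsim 2^{-i}/\mu_d(S_j)$ with shell volumes $4^i(c_j^{1/2}\delta)^2\log^{d-1}(1/\delta)\mu_d(S_j)$. The resulting shell sub-simplices from matching depths across all $j \in J_L$ are pooled into a single multi-simplex system and handled by a single application of~\eqref{Eq:besimp} per depth; the telescoping identity $c_j\mu_d(S_j) = \mu_d(K)$ then keeps the aggregate $(B_2 - B_1)e^{B_2/2}\mu_d^{1/2}(\,\cdot\,)$ factor in~\eqref{Eq:besimp} under control, yielding the linear-in-$m$ dependence.

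Finally, summing the per-shell contributions over depths $i = 1,\ldots,O(\log(1/\delta))$ with uniform bracketing budget $\bar{\varepsilon}_i = \varepsilon/\sqrt{\ell}$, combining with the contribution of $U_S$, and tracking the polylogarithmic factors through the calculation --- in exactly the same manner as in the proof of Proposition~\ref{Prop:SimplexEntropy} --- produces the bounds~\eqref{Eq:lbebd-dm1} and~\eqref{Eq:lbebd-dm2}, with the polylogarithmic exponents inherited from $H_d(\delta,\varepsilon)$ multiplied by $m$.
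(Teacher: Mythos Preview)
Your diagnosis of the obstacle is correct, but the proposed fix --- pooling the shell decompositions across simplices and applying~\eqref{Eq:besimp} once per depth --- does not achieve linear dependence on $m$. Consider the equal-volume case $\mu_d(S_j)=\mu_d(K)/m$, so $c_j=m$ for all $j$. At depth $i$, each simplex contributes $\lesssim\log^{d-1}(1/\delta)$ sub-simplices of total volume $\lesssim 4^i c_j\delta^2\log^{d-1}(1/\delta)\,\mu_d(S_j)=4^i\delta^2\log^{d-1}(1/\delta)\,\mu_d(K)$; pooling over $m$ simplices gives $\asymp m\log^{d-1}(1/\delta)$ sub-simplices with total volume $\asymp m\cdot 4^i\delta^2\log^{d-1}(1/\delta)\,\mu_d(K)$. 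Substituting into~\eqref{Eq:besimp} with $e^{B_2/2}\asymp\mu_d(K)^{-1/2}$ and $B_2-B_1\asymp 2^{-i}$ yields, up to polylogarithms,
\[
m\log^{d-1}(1/\delta)\cdot\Bigl(\frac{m^{1/2}\delta}{\varepsilon_i}\Bigr)^{d/2}\;\asymp\;m^{1+d/4}\,(\delta/\varepsilon_i)^{d/2},
\]
so the pooling still produces $m^{3/2}$ when $d=2$ and $m^{7/4}$ when $d=3$. The identity $c_j\mu_d(S_j)=\mu_d(K)$ does make each simplex's shell volume independent of $j$, but both the number of sub-simplices and the pooled volume scale with $m$, and~\eqref{Eq:besimp} multiplies these. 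Your small-simplex argument has the same defect: $\mu_d(U_S)\lesssim m\mu_d(K)\delta^2$ combined with the multi-convex~\eqref{Eq:beconvubd} also gives $m^{1+d/4}$-type dependence.

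The paper's actual ``additional argument'' is quite different: it stratifies $\mathcal{G}(f_K,\delta)$ by the integer tuple $(n_1(f),\ldots,n_M(f))$, where $n_j(f)$ is the least integer with $\int_{S_j}(\sqrt f-\sqrt{f_K})^2\le\alpha_j^2 n_j\delta^2$ and $\alpha_j^2=\mu_d(S_j)/\mu_d(K)$. The point is that $\sum_j\alpha_j^2 n_j(f)\le 2$ and $n_j(f)\lesssim\delta^{-2}$, so there are at most $\exp\{O(m\log(1/\delta))\}$ strata. Within a stratum, one applies Proposition~\ref{Prop:SimplexEntropy} to $\alpha_j^{-2}f\Ind_{S_j}\in\mathcal{G}(f_{S_j},\sqrt{n_j}\delta)$ with bracket size $\sqrt{n_j}\varepsilon$, obtaining $H_d(\sqrt{n_j}\delta,\sqrt{n_j}\varepsilon)\lesssim H_d(\delta,\varepsilon)$; rescaling gives brackets of size $\alpha_j\sqrt{n_j}\varepsilon$ on $S_j$, and these satisfy $\sum_j(\alpha_j\sqrt{n_j}\varepsilon)^2\le 2\varepsilon^2$. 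This adaptive budget allocation --- rather than the geometric pooling you propose --- is what collapses the dependence to $m\,H_d(\delta,\varepsilon)$.
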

\begin{proof}
Fix $d\in\{2,3\}$ and suppose that $0<\varepsilon<\delta<2^{-3/2}$. By Proposition~\ref{Lem:euler}, we can find $M\leq 6m$ $d$-simplices $S_1,\dotsc,S_M$ with pairwise disjoint interiors whose union is $K$. Set $\alpha_j:=\{\mu_d(S_j)/\mu_d(K)\}^{1/2}$ for each $1\leq j\leq M$, so that $\sum_{j=1}^M\alpha_j^2=1$. For each $f\in\mathcal{G}(f_K,\delta)$ and $1\leq j\leq M$, let $n_j(f)$ be the smallest $n_j\in\N$ for which $\int_{S_j}\,\bigl(\sqrt{f}-\sqrt{f_K}\bigr)^2\leq\alpha_j^2\,n_j\delta^2$.
By the minimality of $n_j(f)$, we have $\alpha_j^2\,(n_j(f)-1)\,\delta^2\leq\int_{S_j}\,\bigl(\sqrt{f}-\sqrt{f_K}\bigr)^2$
for each $j$, so
\begin{align}
\sum_{j=1}^M\alpha_j^2\,n_j(f)=1+\sum_{j=1}^M\alpha_j^2\,(n_j(f)-1)&\leq 1+\delta^{-2}\,\sum_{j=1}^M\int_{S_j}\,\bigl(\sqrt{f}-\sqrt{f_K}\bigr)^2\notag\\
\label{Eq:njfsum}
&=1+\delta^{-2}\,\int_K\,\bigl(\sqrt{f}-\sqrt{f_K}\bigr)^2\leq 2,
\end{align}
where the final inequality follows because $f\in\mathcal{G}(f_K,\delta)$. We also claim that $n_j(f)\lesssim\delta^{-2}$ for all $1\leq j\leq M$. To see this, note that since $f\in\mathcal{G}(f_K,\delta)$ and $\delta<2^{-3/2}$, it follows from Lemma~\ref{Lem:hellunbds}(ii) that
\begin{equation}
\label{Eq:unifbdK}
0\leq f\leq e^{8\sqrt{2}d\delta}f_K\leq e^{4d}f_K=e^{4d}\mu_d(K)^{-1}\:\;\text{on $K$}.
\end{equation}
Thus, we have $\bigl(\sqrt{f}-\sqrt{f_K}\bigr)^2\leq f\vee f_K\lesssim f_K=\inv{\mu_d(K)}$ on $K$, so $\int_{S_j}\,\bigl(\sqrt{f}-\sqrt{f_K}\bigr)^2\lesssim\mu_d(S_j)/\mu_d(K)=\alpha_j^2$ for all $j$. Recalling the definition of $n_j(f)$, we deduce that $n_j(f)\lesssim\delta^{-2}$ for all $j$, as required.

Now let $U:=\{(n_1(f),\dotsc,n_M(f)):f\in\mathcal{G}(f_K,\delta)\}$, and for each $(n_1,\dotsc,n_M)\in U$, define \[\mathcal{G}(f_K,\delta;n_1,\dotsc,n_M):=\{f\in\mathcal{G}(f_K,\delta):n_j(f)=n_j\text{ for all }1\leq j\leq M\}.\]
Since $\mathcal{G}(f_K,\delta)$ is the union of these subclasses, it follows that
\[N_{[\,]}(2^{1/2}\varepsilon,\mathcal{G}(f_K,\delta),\dhell)\leq\sum_{(n_1,\dotsc,n_M)\in U}N_{[\,]}(2^{1/2}\varepsilon,\mathcal{G}(f_K,\delta;n_1,\dotsc,n_M),\dhell),\]
so
\begin{equation}
\label{Eq:lbesum}
H_{[\,]}(2^{1/2}\varepsilon,\mathcal{G}(f_K,\delta),\dhell)\leq\log\,\abs{U}+\max_{(n_1,\dotsc,n_M)\in U}H_{[\,]}(2^{1/2}\varepsilon,\mathcal{G}(f_K,\delta;n_1,\dotsc,n_M),\dhell).
\end{equation}
Since $n_j(f)\lesssim\delta^{-2}$ for all $1\leq j\leq M$, we deduce that $\abs{U}\lesssim\delta^{-2M}$ and hence that
\begin{equation}
\label{Eq:ucardbd}
\log\,\abs{U}\lesssim M\log(1/\delta)\lesssim m\log(1/\delta).
\end{equation}
Next, we bound the second term on the right-hand side of~\eqref{Eq:lbesum}. Fix $j\in\{1,\dotsc,M\}$ and $(n_1,\dotsc,n_M)\in U$. If $f\in\mathcal{G}(f_K,\delta;n_1,\dotsc,n_M)$, then $f_K\Ind_{S_j}=\alpha_j^2\,f_{S_j}$ and $\int_{S_j}\,\bigl(\sqrt{f}-\sqrt{f_K}\bigr)^2\leq\alpha_j^2\,n_j\delta^2$, so $\int_{S_j}\,\bigl\{(\alpha_j^{-2}f)^{1/2}-f_{S_j}^{1/2}\bigr\}^2\leq n_j\delta^2$. This shows that
$\alpha_j^{-2}f\Ind_{S_j}\in\mathcal{G}(f_{S_j},\sqrt{n_j}\,\delta)$. In addition, it follows from~\eqref{Eq:unifbdK} that 
\begin{equation}
\label{Eq:unifbdSj}
0\leq\alpha_j^{-2}f\Ind_{S_j}=\mu_d(K)f\Ind_{S_j}/\mu_d(S_j)\leq e^{4d}\mu_d(S_j)^{-1}\Ind_{S_j}=e^{4d}f_{S_j}\:\;\text{on $S_j$}.
\end{equation}

Suppose first that $\sqrt{n_j}\,\delta<(d+1)^{-d/2}$. Since $0<\sqrt{n_j}\,\varepsilon<\sqrt{n_j}\,\delta<(d+1)^{-d/2}$, we can apply Proposition~\ref{Prop:SimplexEntropy} to deduce that there exists a $\sqrt{n_j}\,\varepsilon$-Hellinger bracketing set $\{[g_\ell^L,g_\ell^U]:1\leq\ell\leq N_j\}$ for $\mathcal{G}(f_{S_j},\sqrt{n_j}\,\delta)$ such that \[\log N_j\lesssim H_d(\sqrt{n_j}\,\delta,\sqrt{n_j}\,\varepsilon)\lesssim H_d(\delta,\varepsilon).\]
Note that we can find $1\leq\ell\leq N_j$ such that $g_\ell^L(x)\leq\alpha_j^{-2}f(x)\leq g_\ell^U(x)$ for all $x\in S_j$. Therefore, $\{f\Ind_{S_j}:f\in\mathcal{G}(f_K,\delta;n_1,\dotsc,n_M)\}$ is covered by the brackets $\{[\alpha_j^2\,g_\ell^L,\alpha_j^2\,g_\ell^U]:1\leq\ell\leq N_j\}$. Since
\begin{equation}
\label{Eq:bracketsize}
\int_{S_j}\,\biggl(\sqrt{\alpha_j^2\,g_\ell^U}-\sqrt{\alpha_j^2\,g_\ell^L}\biggr)^2=\alpha_j^2\int_{S_j}\,\biggl(\sqrt{g_\ell^U}-\sqrt{g_\ell^L}\biggr)^2\leq\alpha_j^2\,n_j\,\varepsilon^2
\end{equation}
for all $1\leq\ell\leq N_j$, we conclude that 
\begin{equation}
\label{Eq:beSj}
H_{[\,]}\bigl(\alpha_j\sqrt{n_j}\,\varepsilon,\mathcal{G}(f_K,\delta;n_1,\dotsc,n_M),\dhell,S_j\bigr)\lesssim H_d(\delta,\varepsilon),
\end{equation}
provided that $\sqrt{n_j}\,\delta<(d+1)^{-d/2}$. 

We now verify that~\eqref{Eq:beSj} remains valid when $\sqrt{n_j}\,\delta\geq (d+1)^{-d/2}$. In this case, we define $B_j:=4d\log(\mu_d(S_j)^{-1})$ and deduce from~\eqref{Eq:unifbdSj} that $\{\alpha_j^{-2}f\Ind_{S_j}:f\in\mathcal{G}(f_K,\delta;n_1,\dotsc,n_M)\}\subseteq\mathcal{G}_{-\infty,B_j}(S_j)$. By the final bound~\eqref{Eq:beconvubd} from Proposition~\ref{Prop:bebds}, we can find a $\sqrt{n_j}\,\varepsilon$-Hellinger bracketing set $\{[\tilde{g}_\ell^L,\tilde{g}_\ell^U]:1\leq\ell\leq\tilde{N}_j\}$ for $\mathcal{G}_{-\infty,B_j}(S_j)$ such that 
\[\log\tilde{N}_j\lesssim h_d\rbr{\frac{\sqrt{n_j}\,\varepsilon}{e^{B_j/2}\mu_d(S_j)^{1/2}}}=h_d\rbr{\frac{\sqrt{n_j}\,\varepsilon}{e^{2d}}}\lesssim h_d\rbr{\frac{\varepsilon}{\delta}}\lesssim H_d(\delta,\varepsilon).\]
Indeed, the penultimate inequality above follows since $\sqrt{n_j}\,\delta\geq(d+1)^{-d/2}\gtrsim 1$ and $h_d$ is decreasing, and the final inequality can be verified separately for $d=2,3$; see~\eqref{Eq:bepoly2} for example to obtain the bound when $d=2$. As above, we see that $\{f\Ind_{S_j}:f\in\mathcal{G}(f_K,\delta;n_1,\dotsc,n_M)\}$ is covered by the brackets $\{[\alpha_j^2\,\tilde{g}_\ell^L,\alpha_j^2\,\tilde{g}_\ell^U]:1\leq\ell\leq\tilde{N}_j\}$, and as in~\eqref{Eq:bracketsize}, we have \[\int_{S_j}\,\biggl(\sqrt{\alpha_j^2\,\tilde{g}_\ell^U}-\sqrt{\alpha_j^2\,\tilde{g}_\ell^L}\biggr)^2\leq\alpha_j^2\,n_j\,\varepsilon^2\]
for all $j$. It follows that 
\[H_{[\,]}(\alpha_j\sqrt{n_j}\,\varepsilon,\mathcal{G}(f_K,\delta;n_1,\dotsc,n_M),\dhell,S_j)\leq\log\tilde{N}_j\lesssim H_d(\delta,\varepsilon),\]
so~\eqref{Eq:beSj} holds when $\sqrt{n_j}\,\delta\geq (d+1)^{-d/2}$, as required.

Finally, since $(n_1,\dotsc,n_M)\in U$, it follows from~\eqref{Eq:njfsum} and the definition of $U$ above that $\sum_{j=1}^M\alpha_j^2\,n_j\,\varepsilon^2\leq 2\varepsilon^2$. Having established~\eqref{Eq:beSj} for all $1\leq j\leq M$, we conclude that
\begin{align*}
H_{[\,]}\bigl(2^{1/2}\varepsilon,\mathcal{G}(f_K,\delta;n_1,\dotsc,n_M),\dhell\bigr)&\leq\sum_{j=1}^M H_{[\,]}\bigl(\alpha_j\sqrt{n_j}\,\varepsilon,\mathcal{G}(f_K,\delta;n_1,\dotsc,n_M),\dhell,S_j\bigr)\\
&\lesssim M H_d(\delta,\varepsilon)\lesssim m H_d(\delta,\varepsilon)
\end{align*}
whenever $0<\varepsilon<\delta<2^{-3/2}$ and $(n_1,\dotsc,n_M)\in U$. Together with~\eqref{Eq:lbesum} and~\eqref{Eq:ucardbd}, this implies the desired conclusion.
\end{proof}

Turning now to the subclasses $\mathcal{F}^{[\theta]}(\mathcal{P}^m)$ defined in Section~\ref{Sec:ThetaPolytope}, we first establish an analogue of Proposition~\ref{Prop:SimplexEntropy}.
\begin{proposition}
\label{Prop:ThetaSimplexEntropy}
Let $d=3$ and let $S\subseteq\R^3$ be a 3-simplex. If $0<\varepsilon<\delta<2^{-3}\,\theta^{-1/2}$ and $f_0 \in\mathcal{F}^{[\theta]}(S)\equiv\mathcal{F}_3^{[\theta]}(S)$ for some $\theta\in (1,\infty)$, then
\begin{alignat}{3}
H_{[\,]}(2^{1/2}\varepsilon,\mathcal{G}(f_0,\delta),\dhell)&\lesssim\frac{\log^{3/2}\theta+\delta^{3/5}}{\varepsilon^{3/2}}\log^{17/4}\rbr{\frac{1}{\theta\delta^2}}&&+\theta^{3/4}\rbr{\frac{\delta}{\varepsilon}}^{3/2}\log^{21/4}\rbr{\frac{1}{\theta\delta^2}}\notag\\
\label{Eq:ThetaSimplexEntropy}
&&&+\theta\log^3(e\theta)\rbr{\frac{\delta}{\varepsilon}}^2\log^4\rbr{\frac{1}{\theta\delta^2}}\\
&=:H_{3,\theta}(\delta,\varepsilon).&&\notag
\end{alignat}
\end{proposition}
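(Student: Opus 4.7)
The plan is to adapt the layered argument used in the proof of Proposition~\ref{Prop:SimplexEntropy}, exploiting the two-sided uniform comparison $\theta^{-1}f_S \leq f_0 \lesssim \log^3(e\theta)\,f_S$ that holds for every $f_0\in\mathcal{F}_3^{[\theta]}(S)$ (as noted in the remarks following Theorem~\ref{Thm:ThetaRisk} and derived from Lemma~\ref{Lem:hellunbds}(iii)). By affine invariance of $\dhell$, I may assume $S=T(\triangle_3)$ for an isometry $T\colon\aff\triangle_3\to\R^3$, then split $S$ into the three canonical sub-polytopes $R_j^\dagger=T(R_j)$ and treat each separately.

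For each $j$, the crucial scale parameter is now $\ell:=\lceil\log_2(1/(c\,\theta\delta^2))\rceil\asymp\log(1/(\theta\delta^2))$ rather than $\log(1/\delta)$, since a density $f\in\mathcal{G}(f_0,\delta)$ satisfies $\int_S(\sqrt{f}-\sqrt{f_0})^2\leq\delta^2$ and the relevant contour level beyond which we can no longer distinguish $f$ from a constant multiple of $f_S$ is of order $\theta^{-1}/\mu_3(S)$. I will construct nested approximating polytopes $P_{i,j}^\dagger\subseteq J_{i,j}^\dagger\subseteq R_j^\dagger$ for $1\leq i\leq\ell-1$ exactly as in the proof of Proposition~\ref{Prop:SimplexEntropy}, using the invelope results (Lemma~\ref{Lem:invelopesimp} and Corollary~\ref{Cor:polyapproxsimp}) applied to the deformed density $\theta f_0$ (which lies within a constant multiplicative factor of $f_S$). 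The shell $P_{i,j}^\dagger\setminus P_{i+1,j}^\dagger$ will then admit both a pointwise lower bound of order $e^{-2^{-i+2}}\theta^{-1}/\mu_3(S)$ and the global upper bound $\log^3(e\theta)/\mu_3(S)$, while being triangulable into $\lesssim\log^2(1/(\theta\delta^2))$ simplices of total volume $\lesssim 4^i\delta^2\theta\log^2(1/(\theta\delta^2))/\mu_3(S)$.

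The bracketing entropy of each interior shell is controlled via the bound~\eqref{Eq:besimp} of Proposition~\ref{Prop:bebds}. The oscillation $B_2-B_1\lesssim 2^{-i+2}+\log(e\theta)$ now has a mild $\theta$-dependence through the upper bound, and the supremum $e^{B_2/2}\lesssim\log^{3/2}(e\theta)\cdot\mu_3(S)^{-1/2}$ is where the prefactor $\theta\log^3(e\theta)$ and $\log^{3/2}\theta$ will enter. Summing the resulting geometric series over $i$ and using $\ell\lesssim\log(1/(\theta\delta^2))$ gives a contribution that I expect to match the second and third terms of $H_{3,\theta}(\delta,\varepsilon)$, after careful bookkeeping of the interplay between $2^i\delta\sqrt{\theta}$ (which reaches $\asymp 1$ at $i=\ell$) and the $\log(e\theta)$ upper bound. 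The boundary region $R_j^\dagger\setminus P_{1,j}^\dagger$, whose volume is $\lesssim\theta\delta^2\log^2(1/(\theta\delta^2))/\mu_3(S)$, will be handled by the upper-bound-only entropy estimate~\eqref{Eq:beconvubd} from Proposition~\ref{Prop:bebds} (with $B$ of order $\log(e\theta)$), yielding the first term in $H_{3,\theta}(\delta,\varepsilon)$ involving $\log^{3/2}\theta+\delta^{3/5}$.

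The main obstacle will be the geometric combinatorics of picking the thresholds $4^i\delta^2$ in the invelope construction so that they adapt to the $\theta$-dependent normalization of $f_0$, and then executing the summation carefully enough to separate the three distinct asymptotic regimes encoded in $H_{3,\theta}$. In particular, the factor $\delta^{3/5}$ in the first term suggests a sub-geometric contribution from shells where $2^{-i+2}$ and $\log(e\theta)$ are of comparable size, and disentangling this from the $\theta^{3/4}(\delta/\varepsilon)^{3/2}$ term will require the right choice of bracketing tolerances $\varepsilon_i\asymp \varepsilon/\sqrt{\ell}$ together with the fact that the sum $\sum_{i=1}^{\ell-1}(2^{-i}+\log(e\theta))^{3/2}$ splits naturally across the threshold $i^*$ where $2^{-i^*}\asymp\log(e\theta)$. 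Once these entropy contributions are assembled, the proof will conclude along the same lines as~\eqref{Eq:bepolysum} in the proof of Proposition~\ref{Prop:SimplexEntropy}.
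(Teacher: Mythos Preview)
Your overall architecture is right, but two concrete errors would prevent the argument from delivering the stated bound.

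First, you have the roles of the interior and boundary regions reversed. The boundary piece $R_j^\dagger\setminus P_{1,j}^\dagger$ is handled via the upper-bound-only estimate~\eqref{Eq:beconvubd}, and since $h_3(x)=x^{-2}$ this necessarily produces an $\varepsilon^{-2}$ contribution: it yields the \emph{third} term $\theta\log^3(e\theta)(\delta/\varepsilon)^2\log^4(1/(\theta\delta^2))$, not the first. Conversely, the interior shells (handled via~\eqref{Eq:besimp}, which has exponent $d/2=3/2$) are what produce both the first and second terms of $H_{3,\theta}$.

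Second, and more seriously, your oscillation bound $B_2-B_1\lesssim 2^{-i}+\log(e\theta)$ is too crude to recover the $\delta^{3/5}$ in the first term. The bound $f_0\lesssim\log^3(e\theta)\,f_S$ that you cite is for $f_0$, not for $f\in\mathcal{G}(f_0,\delta)$; the correct upper bound on $\log f$ from Lemma~\ref{Lem:hellunbds}(iii) is $t(\theta)+s'(3^4\delta)^{2/5}-\log\mu_3(S)$, where crucially $t(\theta)\lesssim\log\theta$ with $t(1)=0$. Thus the oscillation on each shell is $\lesssim\log\theta+\delta^{2/5}+2^{-i}$, and after multiplying by $2^i$ (from the volume factor) and raising to the $3/2$ power, the $\delta^{2/5}$ becomes the $\delta^{3/5}$ you see in the statement --- it is simply $(\delta^{2/5})^{3/2}$, not a threshold-crossing phenomenon. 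If you instead absorb $\delta^{2/5}$ into a constant (which is what $\log(e\theta)$ effectively does, since $\log(e\theta)\geq 1$), the sum over shells picks up a spurious $\varepsilon^{-3/2}\log^{17/4}(1/(\theta\delta^2))$ contribution with no vanishing prefactor, and this is \emph{not} dominated by $H_{3,\theta}(\delta,\varepsilon)$ when $\theta$ is close to $1$ and $\delta$ is small. So the sharper oscillation estimate is essential. (A minor aside: for $d=3$ there are $d+1=4$ sub-polytopes $R_j^\dagger$, not three.)
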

The following proof is similar in structure and content to that of Proposition~\ref{Prop:SimplexEntropy}, although alterations must be made to the arguments that previously relied on the pointwise upper bound from Lemma~\ref{Lem:hellunbds}(ii), which applies only when $f_0$ is uniform. For general $\theta\in (1,\infty)$ and $f_0\in\mathcal{F}^{[\theta]}(S)$, we instead turn to Lemma~\ref{Lem:hellunbds}(iii) for a pointwise upper bound on functions $f\in\mathcal{G}(f_0,\delta)$. 
While the bound in Lemma~\ref{Lem:hellunbds}(ii) features a term of order $\delta$, the corresponding term in Lemma~\ref{Lem:hellunbds}(iii) is of order $\delta^{2/(d+2)}=\delta^{2/5}$ when $d=3$. The latter manifests itself in the overall bound~\eqref{Eq:ThetaSimplexEntropy} through the contribution of order $(\delta^{2/5}/\varepsilon)^{3/2}\log^{17/4}(1/(\theta\delta^2))$, which in turn is ultimately responsible for the term of order $(m/n)^{20/29}\log^{85/29}n$ in the adaptive risk bound~\eqref{Eq:OracleTheta} in Theorem~\ref{Thm:ThetaRisk}. This explains why we do not recover the local bracketing entropy bounds~\eqref{Eq:lbebd3} and~\eqref{Eq:lbebd-dm2} in Propositions~\ref{Prop:SimplexEntropy} and~\ref{Prop:PolytopeEntropy} or the risk bound~\eqref{Eq:OraclePm} in Proposition~\ref{Prop:PolytopeRisk} when we take the limit $\theta\searrow 1$ in~\eqref{Eq:ThetaSimplexEntropy},~\eqref{Eq:ThetaEntropy} and~\eqref{Eq:OracleTheta} respectively.

On the other hand, since $f_0\in\mathcal{F}^{[\theta]}(S)$ is bounded below by $\theta^{-1}f_S$ (and hence bounded away from 0) on $S$, the pointwise lower bound on $f\in\mathcal{G}(f_0,\delta)$ from Lemma~\ref{Lem:hellunbds}(i) can still be applied in this context. By extension, the same is true of the constructions and reasoning based on Corollary~\ref{Cor:polyapproxsimp} and Lemmas~\ref{Lem:invelopebox} and~\ref{Lem:invelopesimp}. As such, we will reuse much of the notation from the proof of Proposition~\ref{Prop:SimplexEntropy}, and we will also repeat many of the key definitions and intermediate assertions without further justification or explanation.
\begin{proof}
Suppose that $0<\varepsilon<\delta<2^{-3}\,\theta^{-1/2}$. Define $\varepsilon':=\varepsilon/\sqrt{d+1}=\varepsilon/2$ and $\ell:=\ceil{\log_2(\theta^{-1/2}\inv{\delta}/8)}$, so that $\ell$ is the smallest integer $i$ such that $4^i\theta\delta^2\geq 4^{-3}=(d+1)^{-d}$, and note that $1\leq\ell\lesssim\log(1/(\theta\delta^2))$. As in the proof of Proposition~\ref{Prop:SimplexEntropy}, we may assume without loss of generality that $S$ is a regular $3$-simplex with side length $\sqrt{2}$. Once again, let $T\colon\aff\triangle\to\R^3$ be an affine isometry such that $T(\triangle)=S$, and define $R_j^\dagger:=T(R_j)$ for $1\leq j\leq d+1=4$, so that $R_1^\dagger,\dotsc,R_4^\dagger$ are polytopes with disjoint interiors whose union is $S$. 

For a fixed $j\in\{1,\dotsc,4\}$, we now follow closely the second paragraph of the proof of Proposition~\ref{Prop:SimplexEntropy} and construct a family of nested polytopes within $R_j^\dagger$. For each $i\in\{1,\dotsc,\ell-1\}$, define $J_{i,j}^\dagger:=T\bigl(R_j\cap J_{4^i\theta\delta^2}^\triangle\bigr)$ and $P_{i,j}^\dagger:=T\bigl(P_{4^i\theta\delta^2,\,j}^\triangle\bigr)$. In addition, let $J_{\ell,j}^\dagger:=\emptyset$ and $P_{\ell,j}^\dagger:=\emptyset$. Then $P_{i+1,\,j}^\dagger\subseteq P_{i,j}^\dagger\subseteq J_{i,j}^\dagger\subseteq R_j^\dagger$ for all $1\leq i\leq \ell-1$, and as in~\eqref{Eq:Rjshells}, we can write $R_j^\dagger$ as the union of $(R_j^\dagger\setminus\Int P_{1,j}^\dagger),(P_{1,j}^\dagger\setminus\Int P_{2,j}^\dagger),\dotsc,(P_{\ell-1,\,j}^\dagger\setminus\Int P_{\ell,j}^\dagger)$, whose interiors are non-empty and pairwise disjoint. Each of these $\ell$ sets may be expressed as the union of $\lesssim\log^2(1/(\theta\delta^2))$ $3$-simplices with pairwise disjoint interiors, and similarly to~\eqref{Eq:Rjvols}, we have
\begin{equation}
\label{Eq:thetaRjvols}
\mu_3(R_j^\dagger\setminus P_{i,j}^\dagger)\lesssim\mu_3(R_j^\dagger\setminus J_{i,j}^\dagger)\lesssim\mu_3(Q^\triangle\setminus J_{4^i\theta\delta^2})\lesssim\mu_3([0,1/2]^3\setminus J_{4^i\theta\delta^2})\lesssim 4^i\theta\delta^2\log^2\bigl(1/(\theta\delta^2)\bigr)
\end{equation}
for all $1\leq i\leq\ell$. Thus, $\mu_3(P_{i,j}^\dagger\setminus P_{i+1,\,j}^\dagger)\leq\mu_3(R_j^\dagger\setminus P_{i+1,\,j}^\dagger)\lesssim 4^i\theta\delta^2\log^2(1/(\theta\delta^2))$ for all $1\leq i\leq\ell-1$. We emphasise again that the hidden multiplicative constants in these bounds do not depend on $i$. 

Now let the universal constants $s_3\geq 1$ and $s'>0$ be as in Lemma~\ref{Lem:hellunbds}(iii). For $\tilde{\theta}\in [1,\infty)$, define $t(\tilde{\theta})\equiv t_3(\tilde{\theta})=\log(s_3\log^3(e\tilde{\theta})-s_3+1)$ as in the proof of this result, and note that $t(\tilde{\theta})\lesssim\log\tilde{\theta}$ and $e^{t(\tilde{\theta})}\lesssim\log^3(e\tilde{\theta})$. If $f\in\mathcal{G}(f_0,\delta)$, then it follows from Lemma~\ref{Lem:hellunbds}(iii) that $\log f\leq t(\theta)+s'(3^4\delta)^{2/5}-\log\mu_3(S)$ on $S$. 
Also, for each $1\leq i\leq\ell-1$, we deduce from Lemma~\ref{Lem:invelopesimp}(iii) that $\log f(x)\geq -2^{-i+2}(3!)^{-1/2}-\log\theta-\log\mu_3(S)\geq -2^{-i+2}-\log\theta-\log\mu_3(S)$ for all $x\in P_{i,j}^\dagger\setminus\Int P_{i+1,\,j}^\dagger\subseteq J_{i,j}^\dagger$. Thus, for each $1\leq i\leq\ell-1$, it follows from the observations above and~\eqref{Eq:besimp} from Proposition~\ref{Prop:bebds} that
\begin{align*}
&H_{[\,]}(\varepsilon'/\sqrt{\ell},\mathcal{G}(f_0,\delta),\dhell,P_{i,j}^\dagger\setminus P_{i+1,\,j}^\dagger)\\
&\hspace{1cm}\lesssim\log^2\!\rbr{\frac{1}{\theta\delta^2}}\!\rbr{\frac{\bigl\{\log\theta+t(\theta)+s'(3^4\delta)^{2/5}+2^{-i+2}\bigr\}\bigl\{4^i\theta\delta^2\log^2\bigl(1/(\theta\delta^2)\bigr)\!\bigr\}^{1/2}}{\ell^{-1/2}\,\varepsilon}}^{3/2}\\
&\hspace{1cm}\lesssim\rbr{\frac{\delta}{\varepsilon}}^{3/2}\log^{17/4}\rbr{\frac{1}{\theta\delta^2}}\theta^{3/4}\,\bigl(2^i\log\theta+2^i\delta^{2/5}+4\bigr)^{3/2}\\
&\hspace{1cm}\lesssim\rbr{\frac{\delta}{\varepsilon}}^{3/2}\log^{17/4}\rbr{\frac{1}{\theta\delta^2}}\theta^{3/4}\,\bigl(2^{3i/2}\log^{3/2}\theta+2^{3i/2}\delta^{3/5}+1\bigr).
\end{align*}
By our choice of $\ell$, we have $\ell\lesssim\log(1/(\theta\delta^2))$ and $\sum_{i=1}^{\ell-1}\,2^{3i/2}\lesssim 2^{3(\ell-1)/2}-1\lesssim (\theta\delta^2)^{-3/4}$, and since $\theta>1$ and $\theta\delta^2<2^{-3}$, we conclude that
\begin{align}
H_{[\,]}(\varepsilon',\mathcal{G}(f_0,\delta),\dhell,P_{1,j}^\dagger)&\leq\sum_{i=1}^{\ell-1}H_{[\,]}(\varepsilon'/\sqrt{\ell},\mathcal{G}(f_0,\delta),\dhell,P_{i,j}^\dagger\setminus P_{i+1,\,j}^\dagger)\notag\\
&\lesssim\frac{1}{\varepsilon^{3/2}}\log^{17/4}\rbr{\frac{1}{\theta\delta^2}}\bigl(\log^{3/2}\theta+\delta^{3/5}+(\theta\delta^2)^{3/4}\,\ell\bigr)\notag\\
\label{Eq:thetabeint}
&\lesssim\frac{1}{\varepsilon^{3/2}}\log^{17/4}\rbr{\frac{1}{\theta\delta^2}}\bigl(\log^{3/2}\theta+\delta^{3/5}\bigr)+\theta^{3/4}\rbr{\frac{\delta}{\varepsilon}}^{3/2}\log^{21/4}\rbr{\frac{1}{\theta\delta^2}}.
\end{align}
Furthermore, recalling that every $f\in\mathcal{G}(f_0,\delta)$ satisfies $f\leq e^{t(\theta)+s'(3^4\delta)^{2/5}-\log\mu_3(S)}\lesssim e^{t(\theta)}\lesssim\log^3(e\theta)$ on $R_j^\dagger\setminus\Int P_{1,j}^\dagger$, we may apply the final assertion of Proposition~\ref{Prop:bebds} together with~\eqref{Eq:thetaRjvols} to deduce that
\begin{align}
H_{[\,]}(\varepsilon',\mathcal{G}(f_0,\delta),\dhell,R_j^\dagger\setminus P_{1,j}^\dagger)&\lesssim\log^2\rbr{\frac{1}{\theta\delta^2}}\frac{\theta\delta^2\log^2(1/(\theta\delta^2))\log^3(e\theta)}{\varepsilon^2}\notag\\
\label{Eq:thetabeboundary}
&\lesssim\theta\log^3(e\theta)\rbr{\frac{\delta}{\varepsilon}}^2\log^4\rbr{\frac{1}{\theta\delta^2}}.
\end{align}
Now that we have established~\eqref{Eq:thetabeint} and~\eqref{Eq:thetabeboundary} for each $1\leq j\leq d+1=4$, the result follows on noting that
\[H_{[\,]}(2^{1/2}\varepsilon,\mathcal{G}(f_0,\delta),\dhell)\leq \sum_{j=1}^4\,\bigl\{H_{[\,]}(\varepsilon',\mathcal{G}(f_0,\delta),\dhell,R_j^\dagger\setminus P_{1,j}^\dagger)+H_{[\,]}(\varepsilon',\mathcal{G}(f_0,\delta),\dhell,P_{1,j}^\dagger)\bigr\}.\]

\vspace{-0.6cm}
\end{proof}
By imitating the proof of Proposition~\ref{Prop:PolytopeEntropy}, we obtain the key local bracketing entropy result that enables us to prove Theorem~\ref{Thm:ThetaRisk}.
\begin{proposition}
\label{Prop:ThetaEntropy}
Let $d=3$ and fix $\theta\in (1,\infty)$. If $0<\varepsilon<\delta<(8\theta)^{-1/2}$ and $f_0\in\mathcal{F}^{[\theta]}(\mathcal{P}^m)$ for some $m\geq 4$, then
\begin{alignat}{3}
H_{[\,]}(2^{1/2}\varepsilon,\mathcal{G}(f_0,\delta),\dhell)&\lesssim m\,\Biggl\{\frac{\log^{3/2}\theta+\delta^{3/5}}{\varepsilon^{3/2}}\log^{17/4}\rbr{\frac{1}{\theta\delta^2}}&&+\theta^{3/4}\rbr{\frac{\delta}{\varepsilon}}^{3/2}\log^{21/4}\rbr{\frac{1}{\theta\delta^2}}\notag\\
&&&+\theta\log^3(e\theta)\rbr{\frac{\delta}{\varepsilon}}^2\log^4\rbr{\frac{1}{\theta\delta^2}}\Biggr\}\notag\\
\label{Eq:ThetaEntropy}
&=m H_{3,\theta}(\delta,\varepsilon).&&
\end{alignat}
\end{proposition}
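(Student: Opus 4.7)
The plan is to imitate the proof of Proposition~\ref{Prop:PolytopeEntropy}, using Proposition~\ref{Prop:ThetaSimplexEntropy} as the simplex-level ingredient in place of Proposition~\ref{Prop:SimplexEntropy}. First I would invoke Lemma~\ref{Lem:euler} to triangulate the polytopal support $K$ of $f_0 \in \mathcal{F}^{[\theta]}(K) \subseteq \mathcal{F}^{[\theta]}(\mathcal{P}^m)$ into $M \leq 6m$ three-simplices $S_1,\dotsc,S_M$ with pairwise disjoint interiors. Set $\alpha_j := (\mu_3(S_j)/\mu_3(K))^{1/2}$ and $p_j := \int_{S_j} f_0$. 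Since $f_0 \geq \theta^{-1} f_K$ on $K$ and (by Lemma~\ref{Lem:hellunbds}(iii)) $f_0 \lesssim \log^3(e\theta)\, f_K$, we obtain $\theta^{-1}\alpha_j^2 \leq p_j \lesssim \log^3(e\theta)\,\alpha_j^2$ for every~$j$.

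The key rescaling is that for each $j$, the renormalised restriction $f_0^{(j)} := p_j^{-1} f_0 \Ind_{S_j}$ is a density on $S_j$ satisfying $f_0^{(j)} \geq (p_j\theta/\alpha_j^2)^{-1} f_{S_j}$, hence $f_0^{(j)} \in \mathcal{F}_3^{[\theta^\ast]}(S_j)$ with $\theta^\ast := C\theta\log^3(e\theta)$ for an appropriate universal constant~$C$. For $f \in \mathcal{G}(f_0,\delta)$, define $n_j(f)$ as the smallest integer such that $\int_{S_j}(\sqrt{f}-\sqrt{f_0})^2 \leq p_j n_j \delta^2$; the identity $\int_{S_j}(\sqrt{p_j^{-1}f}-\sqrt{f_0^{(j)}})^2 = p_j^{-1}\int_{S_j}(\sqrt{f}-\sqrt{f_0})^2$ then shows that $p_j^{-1} f \Ind_{S_j} \in \mathcal{G}(f_0^{(j)},\sqrt{n_j}\,\delta)$. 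Arguing as in~\eqref{Eq:njfsum} gives $\sum_{j=1}^M p_j n_j(f) \leq 2$, and the uniform upper bound from Lemma~\ref{Lem:hellunbds}(iii) applied to $f$ on $K$ yields $n_j(f) \lesssim \theta\log^3(e\theta)\,\delta^{-2}$, so that the set $U \subseteq \mathbb{N}^M$ of attainable tuples $(n_1(f),\dotsc,n_M(f))$ has cardinality satisfying $\log |U| \lesssim m\log(\theta/\delta) \lesssim m\log(1/(\theta\delta^2)) + m\log\theta$, which is dominated by the target bound~\eqref{Eq:ThetaEntropy}.

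For each fixed tuple $(n_1,\dotsc,n_M) \in U$ and each $j \in \{1,\dotsc,M\}$, I would split into two cases exactly as in the proof of Proposition~\ref{Prop:PolytopeEntropy}. When $\sqrt{n_j}\,\delta < (8\theta^\ast)^{-1/2}$, Proposition~\ref{Prop:ThetaSimplexEntropy} applied to $f_0^{(j)}$ on $S_j$ furnishes a $(\sqrt{n_j}\,\varepsilon)$-Hellinger bracketing set of log-cardinality at most $H_{3,\theta^\ast}(\sqrt{n_j}\,\delta,\sqrt{n_j}\,\varepsilon) \lesssim H_{3,\theta}(\delta,\varepsilon)$, where the final inequality uses $\theta^\ast \asymp \theta \log^3(e\theta)$ together with the monotonicity in each argument of each summand of $H_{3,\theta}$ (noting the ratios $\sqrt{n_j}\,\delta/\sqrt{n_j}\,\varepsilon = \delta/\varepsilon$ and $1/(\theta^\ast \cdot n_j\delta^2) \geq 1/(\theta^\ast)$, which only absorbs log factors into the existing polylogarithmic terms). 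When $\sqrt{n_j}\,\delta \geq (8\theta^\ast)^{-1/2}$, the uniform upper bound $f_0^{(j)} \lesssim \log^3(e\theta)\,\mu_3(S_j)^{-1}$ permits the application of~\eqref{Eq:beconvubd} from Proposition~\ref{Prop:bebds} to $\mathcal{G}_{-\infty,B_j}(S_j)$ with $e^{B_j} \asymp \log^3(e\theta)/\mu_3(S_j)$, yielding a bound of order $h_3\bigl(\sqrt{n_j}\,\varepsilon/\log^{3/2}(e\theta)\bigr) \lesssim \theta\log^3(e\theta)(\varepsilon/\delta)^{-2}$, which is absorbed into the third summand of $H_{3,\theta}(\delta,\varepsilon)$. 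In both cases, after multiplying the brackets by $p_j$ and using $\int_{S_j}(\sqrt{p_jg_\ell^U}-\sqrt{p_jg_\ell^L})^2 = p_j \int_{S_j}(\sqrt{g_\ell^U}-\sqrt{g_\ell^L})^2 \leq p_j n_j \varepsilon^2$, we obtain local brackets for $\{f\Ind_{S_j} : f \in \mathcal{G}(f_0,\delta;n_1,\dotsc,n_M)\}$ of squared Hellinger diameter at most $p_j n_j \varepsilon^2$.

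Finally, since $\sum_{j=1}^M p_j n_j \varepsilon^2 \leq 2\varepsilon^2$, summing over $j$ gives $H_{[\,]}(2^{1/2}\varepsilon,\mathcal{G}(f_0,\delta;n_1,\dotsc,n_M),\dhell) \lesssim M\,H_{3,\theta}(\delta,\varepsilon) \lesssim m\,H_{3,\theta}(\delta,\varepsilon)$ uniformly over tuples in $U$, and the overall bound~\eqref{Eq:ThetaEntropy} follows from the analogue of~\eqref{Eq:lbesum} combined with the bound on $\log|U|$. The main obstacle I anticipate is verifying cleanly that the inflation of the parameter from $\theta$ to $\theta^\ast \asymp \theta\log^3(e\theta)$, together with the additional factors $\sqrt{n_j}$ in the arguments of $H_{3,\theta^\ast}$, does not produce spurious contributions beyond those already present on the right-hand side of~\eqref{Eq:ThetaEntropy}; this requires careful case analysis of each of the three summands in $H_{3,\theta}$ and exploitation of the fact that $\log(1/(\theta\delta^2)) \asymp \log(1/(\theta^\ast\delta^2))$ up to additive constants when $\theta\delta^2$ is bounded away from~$1$.
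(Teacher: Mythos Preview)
Your overall architecture matches the paper's proof closely: triangulate the support via Lemma~\ref{Lem:euler}, introduce integer labels $n_j(f)$ per simplex, bound $\log|U|$, split each $j$ into a ``small'' and ``large'' case handled respectively by Proposition~\ref{Prop:ThetaSimplexEntropy} and by~\eqref{Eq:beconvubd}, and recombine. The one substantive difference is the rescaling, and this is precisely where your anticipated obstacle becomes a genuine gap rather than a bookkeeping nuisance.

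You normalise by $p_j^{-1}$ so that $f_0^{(j)}=p_j^{-1}f_0\Ind_{S_j}$ is a bona fide density; the price is that the lower bound becomes $f_0^{(j)}\geq(\theta p_j/\alpha_j^2)^{-1}f_{S_j}$, forcing the parameter inflation $\theta\mapsto\theta^\ast\asymp\theta\log^3(e\theta)$. Your claimed reduction $H_{3,\theta^\ast}(\sqrt{n_j}\,\delta,\sqrt{n_j}\,\varepsilon)\lesssim H_{3,\theta}(\delta,\varepsilon)$ then fails. In the third summand one picks up $\theta^\ast\log^3(e\theta^\ast)\asymp\theta\log^6(e\theta)$ in place of $\theta\log^3(e\theta)$, and in the second summand $(\theta^\ast)^{3/4}\asymp\theta^{3/4}\log^{9/4}(e\theta)$; these extra $\log(e\theta)$ powers cannot be absorbed into $\log(1/(\theta\delta^2))$, which may be bounded (take $\delta$ close to $(8\theta)^{-1/2}$) while $\log(e\theta)$ is large. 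Even more decisively, in the first summand $\log^{3/2}\theta^\ast$ is bounded below by a universal positive constant (since $\theta^\ast\geq C>1$ regardless of $\theta$), whereas $\log^{3/2}\theta+\delta^{3/5}$ can be made arbitrarily small by sending $\theta\searrow 1$ and $\delta\searrow 0$ simultaneously. So the first term of $H_{3,\theta^\ast}$ is not controlled by $H_{3,\theta}(\delta,\varepsilon)$.

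The paper sidesteps this entirely by rescaling with the \emph{volume} ratio $\alpha_j^{-2}=\mu_3(P)/\mu_3(S_j)$ instead of the probability $p_j^{-1}$. Then $\alpha_j^{-2}f_0\Ind_{S_j}\geq\theta^{-1}f_{S_j}$ with the \emph{same} $\theta$, so Proposition~\ref{Prop:ThetaSimplexEntropy} yields $H_{3,\theta}(\sqrt{n_j}\,\delta,\sqrt{n_j}\,\varepsilon)\lesssim H_{3,\theta}(\delta,\varepsilon)$ directly, with no parameter inflation. The function $\alpha_j^{-2}f_0\Ind_{S_j}$ is not itself a density, but this is harmless: the only place density-ness enters the proof of Proposition~\ref{Prop:ThetaSimplexEntropy} is through the upper bound of Lemma~\ref{Lem:hellunbds}(iii), and that bound is already established globally on $P$ (where $f_0$ \emph{is} a density) in~\eqref{Eq:thetaunifbd}, then transferred to each $S_j$ as in~\eqref{Eq:thetaunifbdSj}. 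Correspondingly, the paper defines $n_j(f)$ via $\int_{S_j}(\sqrt{f}-\sqrt{f_0})^2\leq\alpha_j^2 n_j\delta^2$ and scales brackets by $\alpha_j^2$ rather than $p_j$. If you adopt this volume-based rescaling, the rest of your argument goes through unchanged and matches the paper exactly.
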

\begin{proof}
Suppose that $0<\varepsilon<\delta<(8\theta)^{-1/2}$. By Proposition~\ref{Lem:euler}, we can find $M\leq 6m$ 3-simplices $S_1,\dotsc,S_M$ with pairwise disjoint interiors whose union is $P:=\supp f_0$. Set $\alpha_j:=\{\mu_3(S_j)/\mu_3(P)\}^{1/2}$ for each $1\leq j\leq M$, so that $\sum_{j=1}^M\alpha_j^2=1$. For each $f\in\mathcal{G}(f_0,\delta)$ and $1\leq j\leq M$, let $n_j(f)$ be the smallest $n_j\in\N$ for which $\int_{S_j}\,\bigl(\sqrt{f}-\sqrt{f_0}\bigr)^2\leq\alpha_j^2\,n_j\delta^2$, so that $\sum_{j=1}^M\alpha_j^2\,n_j(f)\leq 2$, as in~\eqref{Eq:njfsum}. We also claim that $n_j(f)\lesssim\log^3(e\theta)\,\delta^{-2}$ for all $1\leq j\leq M$. To see this, let $t(\theta)\equiv t_3(\theta)$ be as in the proof of Lemma~\ref{Lem:hellunbds} and note that since $f_0\in\mathcal{F}^{[\theta]}(P)$, $\delta<(8\theta)^{-1/2}$ and $f\in\mathcal{G}(f_0,\delta)$, it follows from Lemma~\ref{Lem:hellunbds}(iii) that
\begin{equation}
\label{Eq:thetaunifbd}
0\leq f\vee f_0\lesssim e^{t(\theta)-\log\mu_3(P)}\lesssim\log^3(e\theta)\,\mu_3(P)^{-1}=\log^3(e\theta)\,f_P\:\;\text{on $P$}.
\end{equation}
Thus, we have $\bigl(\sqrt{f}-\sqrt{f_0}\bigr)^2\leq f\vee f_0\lesssim\log^3(e\theta)\,f_P=\log^3(e\theta)\,\inv{\mu_3(P)}$ on $P$, so $\int_{S_j}\,\bigl(\sqrt{f}-\sqrt{f_0}\bigr)^2\lesssim\log^3(e\theta)\,\mu_3(S_j)/\mu_3(P)=\log^3(e\theta)\,\alpha_j^2$ for all $j$. Recalling the definition of $n_j(f)$, we deduce that $n_j(f)\lesssim\log^3(e\theta)\,\delta^{-2}$ for all $j$, as required.

Now let $U:=\{(n_1(f),\dotsc,n_M(f)):f\in\mathcal{G}(f_0,\delta)\}$, and for each $(n_1,\dotsc,n_M)\in U$, define \[\mathcal{G}(f_0,\delta;n_1,\dotsc,n_M):=\{f\in\mathcal{G}(f_0,\delta):n_j(f)=n_j\text{ for all }1\leq j\leq M\}.\]
Since $\mathcal{G}(f_0,\delta)$ is the union of these subclasses, it follows that
\[N_{[\,]}(2^{1/2}\varepsilon,\mathcal{G}(f_0,\delta),\dhell)\leq\sum_{(n_1,\dotsc,n_M)\in U}N_{[\,]}(2^{1/2}\varepsilon,\mathcal{G}(f_0,\delta;n_1,\dotsc,n_M),\dhell),\]
so
\begin{equation}
\label{Eq:thetalbesum}
H_{[\,]}(2^{1/2}\varepsilon,\mathcal{G}(f_0,\delta),\dhell)\leq\log\,\abs{U}+\max_{(n_1,\dotsc,n_M)\in U}H_{[\,]}(2^{1/2}\varepsilon,\mathcal{G}(f_0,\delta;n_1,\dotsc,n_M),\dhell).
\end{equation}
Since $n_j(f)\lesssim\log^3(e\theta)\,\delta^{-2}$ for all $1\leq j\leq M$, we deduce that $\abs{U}\lesssim\log^{3M}(e\theta)\,\delta^{-2M}$ and hence that
\begin{equation}
\label{Eq:thetaucardbd}
\log\,\abs{U}\lesssim M\bigl(\log\log(e\theta)+\log(1/\delta)\bigr)\lesssim m\bigl(\log\log(e\theta)+\log(1/\delta)\bigr).
\end{equation}
Next, we bound the second term on the right-hand side of~\eqref{Eq:thetalbesum}. Fix $j\in\{1,\dotsc,M\}$ and $(n_1,\dotsc,n_M)\in U$. If $f\in\mathcal{G}(f_0,\delta;n_1,\dotsc,n_M)$, then $\int_{S_j}\,\bigl(\sqrt{f}-\sqrt{f_0}\bigr)^2\leq\alpha_j^2\,n_j\delta^2$, so $\int_{S_j}\,\bigl\{(\alpha_j^{-2}f)^{1/2}-(\alpha_j^{-2}f_0)^{1/2}\bigr\}^2\leq n_j\delta^2$. This shows that
$\alpha_j^{-2}f\Ind_{S_j}\in\mathcal{G}(\alpha_j^{-2}f_0\Ind_{S_j},\sqrt{n_j}\,\delta)$. Observe also that $\alpha_j^{-2}f_0\Ind_{S_j}\geq\inv{\theta}f_{S_j}$, whence $\alpha_j^{-2}f_0\Ind_{S_j}\in\mathcal{F}^{[\theta]}(S_j)$. Furthermore, it follows from~\eqref{Eq:thetaunifbd} that 
\begin{equation}
\label{Eq:thetaunifbdSj}
0\leq\alpha_j^{-2}f\Ind_{S_j}=\mu_3(P)f\Ind_{S_j}/\mu_3(S_j)\lesssim\log^3(e\theta)\,\mu_3(S_j)^{-1}\Ind_{S_j}=\log^3(e\theta)\,f_{S_j}\:\;\text{on $S_j$}.
\end{equation}
Suppose first that $\sqrt{n_j}\,\delta<2^{-3}\,\theta^{-1/2}$. Since $\alpha_j^{-2}f_0\Ind_{S_j}\in\mathcal{F}^{[\theta]}(S_j)$ and $0<\sqrt{n_j}\,\varepsilon<\sqrt{n_j}\,\delta<2^{-3}\,\theta^{-1/2}$, we can apply Proposition~\ref{Prop:ThetaSimplexEntropy} to deduce that there exists a $\sqrt{n_j}\,\varepsilon$-Hellinger bracketing set $\{[g_\ell^L,g_\ell^U]:1\leq\ell\leq N_j\}$ for $\mathcal{G}(\alpha_j^{-2}f_0\Ind_{S_j},\sqrt{n_j}\,\delta)$ such that \[\log N_j\lesssim H_{3,\theta}(\sqrt{n_j}\,\delta,\sqrt{n_j}\,\varepsilon)\lesssim H_{3,\theta}(\delta,\varepsilon).\]
Arguing as in the proof of Proposition~\ref{Prop:PolytopeEntropy}, we see that $\{[\alpha_j^2\,g_\ell^L,\alpha_j^2\,g_\ell^U]:1\leq\ell\leq N_j\}$ is an $(\alpha_j\sqrt{n_j}\,\varepsilon)$-Hellinger bracketing set for $\{f\Ind_{S_j}:f\in\mathcal{G}(f_0,\delta;n_1,\dotsc,n_M)\}$, which implies that
\begin{equation}
\label{Eq:thetabeSj}
H_{[\,]}\bigl(\alpha_j\sqrt{n_j}\,\varepsilon,\mathcal{G}(f_0,\delta;n_1,\dotsc,n_M),\dhell,S_j\bigr)\lesssim H_{3,\theta}(\delta,\varepsilon),
\end{equation}
provided that $\sqrt{n_j}\,\delta<2^{-3}\,\theta^{-1/2}$. 

We now verify that~\eqref{Eq:thetabeSj} remains valid even when $\sqrt{n_j}\,\delta\geq 2^{-3}\,\theta^{-1/2}$. In this case, we define $B_j:=\log(\log^3(e\theta)\mu_3(S_j)^{-1})$ and deduce from~\eqref{Eq:thetaunifbdSj} that for a sufficiently large universal constant $C>0$, we have  $\{\alpha_j^{-2}f\Ind_{S_j}:f\in\mathcal{G}(f_0,\delta;n_1,\dotsc,n_M)\}\subseteq\mathcal{G}_{-\infty,\,CB_j}(S_j)$. By the final bound~\eqref{Eq:beconvubd} from Proposition~\ref{Prop:bebds}, we can find a $\sqrt{n_j}\,\varepsilon$-Hellinger bracketing set $\{[\tilde{g}_\ell^L,\tilde{g}_\ell^U]:1\leq\ell\leq\tilde{N}_j\}$ for $\mathcal{G}_{-\infty,\,CB_j}(S_j)$ such that 
\[\log\tilde{N}_j\lesssim h_3\rbr{\frac{\sqrt{n_j}\,\varepsilon}{e^{B_j/2}\mu_3(S_j)^{1/2}}}\lesssim h_3\rbr{\frac{\sqrt{n_j}\,\varepsilon}{\log^{3/2}(e\theta)}}\lesssim h_3\rbr{\frac{\varepsilon}{\{\theta\log^3(e\theta)\}^{1/2}\,\delta}}\lesssim H_{3,\theta}(\delta,\varepsilon).\]
Indeed, the penultimate inequality above follows since $\sqrt{n_j}\,\delta\gtrsim\theta^{-1/2}$ and $h_3\colon\eta\mapsto\eta^{-2}$ is decreasing, and the final inequality is easily verified. As above, we see that $\{[\alpha_j^2\,\tilde{g}_\ell^L,\alpha_j^2\,\tilde{g}_\ell^U]:1\leq\ell\leq\tilde{N}_j\}$ an $(\alpha_j\sqrt{n_j}\,\varepsilon)$-Hellinger bracketing set for $\{f\Ind_{S_j}:f\in\mathcal{G}(f_0,\delta;n_1,\dotsc,n_M)\}$, which implies that
\[H_{[\,]}\bigl(\alpha_j\sqrt{n_j}\,\varepsilon,\mathcal{G}(f_0,\delta;n_1,\dotsc,n_M),\dhell,S_j\bigr)\leq\log\tilde{N}_j\lesssim H_{3,\theta}(\delta,\varepsilon)\]
and hence that~\eqref{Eq:thetabeSj} holds when $\sqrt{n_j}\,\delta\geq 2^{-3}\,\theta^{-1/2}$, as required.

Finally, since $(n_1,\dotsc,n_M)\in U$ and $\sum_{j=1}^M\alpha_j^2\,n_j(f)\leq 2$ for all $f\in\mathcal{G}(f_0,\delta)$, it follows from the definition of $U$ that $\sum_{j=1}^M\alpha_j^2\,n_j\,\varepsilon^2\leq 2\varepsilon^2$. Having established~\eqref{Eq:thetabeSj} for all $1\leq j\leq M$, we conclude that
\begin{align*}
H_{[\,]}\bigl(2^{1/2}\varepsilon,\mathcal{G}(f_0,\delta;n_1,\dotsc,n_M),\dhell\bigr)&\leq\sum_{j=1}^M H_{[\,]}\bigl(\alpha_j\sqrt{n_j}\,\varepsilon,\mathcal{G}(f_0,\delta;n_1,\dotsc,n_M),\dhell,S_j\bigr)\\
&\lesssim M H_{3,\theta}(\delta,\varepsilon)\lesssim m H_{3,\theta}(\delta,\varepsilon)
\end{align*}
whenever $0<\varepsilon<\delta<(8\theta)^{-1/2}$ and $(n_1,\dotsc,n_M)\in U$. Together with~\eqref{Eq:thetalbesum} and~\eqref{Eq:thetaucardbd}, this implies the desired conclusion.
\end{proof}

\section{Technical preparation for Sections~\ref{Sec:Logkaffine} and~\ref{Sec:StatSupp}}
\label{Sec:LogkaffineSupp}
In this section, we require some further concepts and definitions that are of a more technical nature compared with those already outlined in Section~\ref{Subsec:Notation} in the main text. Accessible introductions to this material can be found in~\citet{Sch14} and~\citet{Rock97}.

For $x\in\R^d$ and $r>0$, let $B(x,r):=\{w\in\R^d:\norm{w-x}<r\}$ and $\bar{B}(x,r):=\{w\in\R^d:\norm{w-x}\leq r\}$.
Recall that a \textit{line} is a set of the form $\{x+\lambda u:\lambda\in\R\}$ and that a \textit{ray} is a set of the form $\{x+\lambda u:\lambda\geq 0\}$, where $x\in\R^d$ and $u\in\R^d\setminus\{0\}$. For $A\subseteq\R^d$, let $\conv A$, $\aff A$, $\Span A$ respectively denote the convex hull, affine hull and linear span of $A$. 

A \textit{cone} is a set $C\subseteq\R^d$ with the property that $\lambda C\subseteq C$ for all $\lambda>0$. We say that $C$ is \textit{pointed} if $C\cap(-C)=\{0\}$. If $C$ is a non-empty, closed, convex cone, then the \textit{dual cone} $\xst{C}:=\{\alpha\in\R^d:\tm{\alpha}x\geq 0\text{ for all } x\in C\}$ is also closed and convex, and we have $C^{**}=C$ \citep[Theorem~1.6.1]{Sch14}. 

If $E\subseteq\R^d$ is non-empty and convex, then its \textit{relative interior} $\relint E$ is defined as the interior of $E$ within the ambient space $\aff E$, and we write $\partial E:=(\Cl E)\setminus(\relint E)$ for the \textit{relative boundary} of $E$. It is always the case that $\partial E=\partial(\Cl E)$ and $\mu_d(\partial E)=0$; see~\citet[Theorem~1.1.15(c)]{Sch14} and~\citet{Lang86} for example. If in addition $E$ is closed, then the \textit{recession cone} $\rec(E):=\{u\in\R^d:E+u\subseteq E\}$ is closed and convex, and we have $\rec(E)=\{0\}$ if and only if $E$ is compact \citep[Theorem 8.4]{Rock97}. 

Let $\mathcal{K}\equiv\mathcal{K}_d$ denote the collection of all closed, convex sets $K\subseteq\R^d$ with non-empty interior, and let $\mathcal{K}^b\equiv\mathcal{K}_d^b$ be the collection of all bounded $K\in\mathcal{K}_d$. 
We say that $K\in\mathcal{K}$ is \textit{line-free} if $K$ does not contain a line; i.e.\ for all $x\in K$ and $u\in\R^d\setminus \{0\}$, there exists some $\lambda\in\R$ such that $x+\lambda u\notin K$. Also, if $K\in\mathcal{K}$, then $K=\Cl\Int K$ \citep[Theorem~1.1.15(b)]{Sch14} and $\Exp K\subseteq\Ext K$, where $\Ext K$ and $\Exp K$ respectively denote the sets of extreme points and exposed points of $K$; see Section~\ref{Subsec:Notation} for the relevant definitions. For $K\in\mathcal{K}$, Straszewicz's theorem \citep[Theorem~1.4.7]{Sch14} asserts that $\Ext K \subseteq \Cl\Exp K$. Moreover, for each $K\in\mathcal{K}$ with $0\in\Int K$, the \textit{Minkowski functional} of $K$ is the function $\rho_K\colon\R^d\to [0,\infty)$ defined by $\rho_K(x):=\inf\{\lambda>0:x\in\lambda K\}$, which is easily seen to be positively homogeneous
(i.e.\ $\rho_K(\lambda x)=\lambda\rho_K(x)$ for all $\lambda>0$ and $x\in\R^d$) and subadditive (i.e.\ $\rho_K(x+y)\leq\rho_K(x)+\rho_K(y)$ for all $x,y\in\R^d$), and therefore convex; see~\citet[Section~1.7]{Sch14} for example.

Finally, if $f\colon S\to\R\cup\{\infty\}$ is a function whose domain $S$ is a subset of $\R^d$, then the \emph{epigraph} of $f$ is the set $\{(x,t)\in S\times\R:f(x)\leq t\}$.

\subsection{Properties of log-concave, log-\texorpdfstring{$k$}{k}-affine densities}
\label{Subsec:LogkaffineDensities}

The results in this section provide a basis for the definition of the complexity parameter $\Gamma(f)$ in Section~\ref{Sec:Logkaffine}, as well as for some key calculations 
in the derivation of the key local bracketing entropy bound (Proposition~\ref{Prop:Log1affEntropy}) in Section~\ref{Sec:LogkaffineProofs}. Some of the propositions below are of independent interest; in particular, we obtain an explicit parametrisation of the subclass $\mathcal{F}^1$ of log-1-affine densities in $\mathcal{F}_d$ (Proposition~\ref{Prop:log1aff}) and also provide a proof of Proposition~\ref{Prop:Logkaff} in the main text, which elucidates the geometric structure of log-concave, log-$k$-affine functions with polyhedral support. Much of the requisite convex analysis background and notation is set out above.
The subclass $\mathcal{F}^1$ is not to be confused with the subclass $\mathcal{F}_1^{0,1}$ studied in Section~\ref{Subsec:Envelope}.

To begin with, we state and prove two results from convex analysis, the second of which (Proposition~\ref{Prop:kslices}) plays a crucial role in the subsequent theoretical development. A key ingredient in the proof of Proposition~\ref{Prop:kslices} is the powerful Brunn--Minkowski inequality~\citep[Theorem~7.1.1]{Sch14}. 

\begin{lemma}
\label{Lem:coneprops}
Let $C\subseteq\R^d$ be a non-empty, closed, convex cone. Then we have the following:
\begin{enumerate}[label=(\roman*)]
\item $\Int\xst{C}=\{\alpha\in\R^d:\tm{\alpha}x>0\text{ for all }x\in C\setminus\{0\}\}$.
\item $C$ is pointed if and only if $\Int(\xst{C})$ is non-empty.
\end{enumerate}
\end{lemma}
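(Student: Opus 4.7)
The plan is to prove the two parts in order, using (i) as a tool for (ii). Throughout I will handle the degenerate case $C = \{0\}$ separately, where $\xst{C} = \R^d$ and both sides of (i) equal $\R^d$ vacuously, and pointedness is trivial.

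For part (i), let $A$ denote the right-hand side. For the inclusion $A \subseteq \Int\xst{C}$, I would fix $\alpha \in A$, note that $\alpha \in \xst{C}$ automatically (strict positivity on $C\setminus\{0\}$ plus the value $0$ at the origin), and then exploit compactness of $K := C \cap S^{d-1}$. The continuous function $x \mapsto \tm{\alpha}x$ attains a positive minimum $m > 0$ on $K$, and a Cauchy--Schwarz estimate shows that any $\beta$ with $\norm{\beta-\alpha} < m$ still satisfies $\tm{\beta}x \geq 0$ on $K$, hence on all of $C$ by positive homogeneity. This places a whole ball around $\alpha$ inside $\xst{C}$. For the reverse inclusion, I would argue by contradiction: if $\alpha \in \Int\xst{C}$ but some $x_0 \in C \setminus\{0\}$ satisfies $\tm{\alpha}x_0 = 0$ (the case $<0$ being ruled out by $\alpha \in \xst{C}$), then the perturbation $\beta := \alpha - \varepsilon x_0/\norm{x_0}^2$ lies in $\xst{C}$ for all sufficiently small $\varepsilon > 0$, but $\tm{\beta}x_0 = -\varepsilon < 0$, contradicting $\beta \in \xst{C}$.

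For part (ii), the easy direction is $(\Leftarrow)$: if $\alpha \in \Int\xst{C}$, then by (i) we have $\tm{\alpha}x > 0$ on $C\setminus\{0\}$; applying this to both $x$ and $-x$ whenever $x \in C \cap(-C) \setminus \{0\}$ gives a contradiction, so $C$ is pointed. For the direction $(\Rightarrow)$, the strategy is to produce an $\alpha$ belonging to the set $A$ of part (i) via a separation argument. I would set $K := \conv(C \cap S^{d-1})$, which is compact and convex, and show that pointedness of $C$ forces $0 \notin K$: if $0$ were a convex combination $\sum \lambda_i x_i$ with $x_i \in C \cap S^{d-1}$ and $\lambda_i \geq 0$ not all zero, then each $\lambda_j x_j$ would equal $-\sum_{i \neq j}\lambda_i x_i \in C \cap (-C) = \{0\}$, forcing all $\lambda_i = 0$, a contradiction. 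Applying the strict separation theorem to $\{0\}$ and $K$ yields $\alpha \in \R^d$ with $\tm{\alpha}y > 0$ for all $y \in K$, and positive homogeneity then extends this to all of $C \setminus \{0\}$, placing $\alpha$ in $A = \Int\xst{C}$.

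The only mildly delicate point, which I would flag as the main thing to get right, is the verification that $0 \notin \conv(C \cap S^{d-1})$ under pointedness; the rest of the argument is a clean combination of standard compactness and separation facts. The degenerate case $C = \{0\}$ should be disposed of at the outset so that the compact set $C \cap S^{d-1}$ is non-empty throughout the main argument.
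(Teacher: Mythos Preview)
Your proposal is correct. Part (i) follows essentially the same line as the paper: both arguments use compactness of $C\cap S^{d-1}$ to obtain a positive lower bound on $\tm{\alpha}x$ and then a Cauchy--Schwarz/Lipschitz perturbation, and both handle the reverse inclusion by perturbing $\alpha$ in a direction that drives $\tm{\beta}x_0$ negative. (The paper packages the first step via the function $h_C(\alpha):=\inf_{x\in C\cap S^{d-1}}\tm{\alpha}x$ and its $1$-Lipschitz property, but this is just a reformulation of your compactness-plus-Cauchy--Schwarz step.)

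Part (ii) is where you take a genuinely different route. The paper argues purely via duality and linear algebra: $\Int\xst{C}=\emptyset$ if and only if $\xst{C}$ spans a proper subspace, i.e.\ there exists $x\neq 0$ with $\tm{\alpha}x=0$ for all $\alpha\in\xst{C}$; then the bidual identity $C^{**}=C$ shows that such an $x$ satisfies $\pm x\in C$, so $C$ is not pointed, and conversely. Your approach is instead geometric and constructive: you form $K=\conv(C\cap S^{d-1})$, verify that pointedness forces $0\notin K$, and strictly separate $\{0\}$ from $K$ to produce an explicit $\alpha\in\Int\xst{C}$. The paper's argument is shorter and leans on the bidual theorem (which needs $C$ closed), while yours avoids the bidual entirely at the cost of the small combinatorial check that $0\notin K$ and an appeal to strict separation; both are clean, and your version has the minor advantage of actually exhibiting the interior point.
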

This appears as Exercise~B.16 in~\citet{BTN15}, and we provide a proof here for convenience.

\begin{proof}
Let $h_C\colon\R^d\to\R$ be the function defined by $h_C(\alpha):=\inf\{\tm{\alpha}x:x\in C\cap S^{d-1}\}$, and observe that since $h_C(\alpha)\geq h_C(\alpha')+h_C(\alpha-\alpha')$ for all $\alpha,\alpha'\in\R^d$, it follows that $h_C$ is in fact 1-Lipschitz with respect to the Euclidean norm. Indeed, we have
\[\abs{h_C(\alpha)-h_C(\alpha')}\leq\max\{-h_C(\alpha-\alpha'),-h_C(\alpha'-\alpha)\}\leq\norm{\alpha-\alpha'}\]
for all $\alpha,\alpha'\in\R^d$. Since $h_C$ is positively homogeneous (i.e.\ $h_C(\lambda\alpha)=\lambda h_C(\alpha)$ for all $\lambda>0$ and $\alpha\in\R^d$), we have $\alpha\in\xst{C}$ if and only if $h_C(\alpha)\geq 0$. Now fix $\alpha\in\R^d$. If $\tm{\alpha}x>0$ for all $x\in C\setminus\{0\}$, then since $h_C$ is continuous and $C\cap S^{d-1}$ is compact, it follows that $h_C(\alpha)>0$ and hence that $\alpha\in\Int(\xst{C})$. Conversely, if there exists $x\in C\setminus\{0\}$ such that $\tm{\alpha}x\leq 0$, then fix $v\in\R^d$ such that $\tm{\alpha}v<0$ and note that $\tm{\alpha}(x+\varepsilon v)<0$ for all $\varepsilon>0$. This implies that $\alpha\notin\Int(\xst{C})$, so the proof of (i) is complete.

For (ii), observe that $\xst{C}$ has empty interior if and only if $\Span(\xst{C})$ has dimension at most $d-1$, which is equivalent to saying that there exists $x\in\R^d\setminus\{0\}$ such that $\tm{\alpha}x=0$ for all $\alpha\in\xst{C}$. If this latter condition holds, then $x$ and $-x$ both belong to $C^{**}=C$, so $C$ is not pointed. On the other hand, if there exists $x\in\R^d\setminus\{0\}$ such that $x$ and $-x$ lie in $C$, it follows from the definition of $\xst{C}$ that $\tm{\alpha}x=0$ for all $\alpha\in\xst{C}$, so the converse is also true.
\end{proof}
For $K\in\mathcal{K}$ and $\alpha\in\R^d$, let $m_{K,\alpha}:=\inf_{x\in K}\tm{\alpha}x$ and $M_{K,\alpha}:=\sup_{x\in K}\tm{\alpha}x$, and for each $t\in\R$, define the closed, convex sets
\begin{equation}
\label{Eq:kalphat}
K_{\alpha,t}^+:=K\cap\{x\in\R^d:\tm{\alpha}x\leq t\}\quad\text{and}\quad K_{\alpha,t}:=K\cap\{x\in\R^d:\tm{\alpha}x=t\}.
\end{equation}
\begin{proposition}
\label{Prop:kslices}
Let $K\in\mathcal{K}$ and $\alpha\in\R^d$. Then we have the following:
\begin{enumerate}[label=(\roman*)]
\item $K_{\alpha,t}^+$ is compact for all $t\in\R$ if and only if $\alpha\in\Int(\xst{\rec(K)})$. 
\item If $\alpha\in\Int(\xst{\rec(K)})\setminus\{0\}$, then $m_{K,\alpha}$ is finite and $K_{\alpha,\,m_{K,\alpha}}$ is a non-empty exposed face of $K$. Moreover, if $d\geq 2$, then the function $t\mapsto\mu_{d-1}(K_{\alpha,t})^{1/(d-1)}$ is concave, finite-valued and strictly positive on $(m_{K,\alpha},M_{K,\alpha})$. 
\end{enumerate}
\end{proposition}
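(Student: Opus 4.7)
The plan is to treat (i) and (ii) separately, using only convex analysis and the Brunn--Minkowski inequality (plus Lemma~\ref{Lem:coneprops}, which is already available).

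For part (i), I would first note that $K_{\alpha,t}^+$ is closed (intersection of two closed sets) and convex, so it is compact if and only if bounded, which (when non-empty) is equivalent to $\rec(K_{\alpha,t}^+)=\{0\}$. A direct computation gives
\[
\rec(K_{\alpha,t}^+)=\rec(K)\cap\{u\in\R^d:\tm{\alpha}u\leq 0\},
\]
and this set equals $\{0\}$ iff $\tm{\alpha}u>0$ for every $u\in\rec(K)\setminus\{0\}$, which by Lemma~\ref{Lem:coneprops}(i) is equivalent to $\alpha\in\Int(\xst{\rec(K)})$. Since this recession cone does not depend on $t$, the bi-conditional holds uniformly in $t$ (empty slices being vacuously compact).

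For part (ii), fix $\alpha\in\Int(\xst{\rec(K)})\setminus\{0\}$. To show $m_{K,\alpha}>-\infty$, suppose for contradiction that $x_n\in K$ with $\tm{\alpha}x_n\to-\infty$; then $\norm{x_n}\to\infty$, and fixing any $x_0\in K$ and passing to a subsequence, $u_n:=(x_n-x_0)/\norm{x_n-x_0}$ converges to some unit vector $u$. A standard closedness-of-recession-cone argument gives $u\in\rec(K)$, while $\tm{\alpha}u\leq 0$, contradicting Lemma~\ref{Lem:coneprops}(i). Once $m_{K,\alpha}$ is finite, the hyperplane $H:=\{x:\tm{\alpha}x=m_{K,\alpha}\}$ is well-defined (as $\alpha\neq 0$) and supports $K$; part~(i) ensures $K_{\alpha,t}^+$ is compact for all $t$, so for $t>m_{K,\alpha}$ the continuous linear function $x\mapsto\tm{\alpha}x$ attains its infimum on the non-empty compact set $K_{\alpha,t}^+$, and that minimiser lies in $K\cap H=K_{\alpha,m_{K,\alpha}}$, which is therefore a non-empty exposed face of $K$.

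For the concavity assertion (with $d\geq 2$), I would apply the Brunn--Minkowski inequality in the $(d{-}1)$-dimensional linear subspace $\alpha^\perp$. Given $s,t\in(m_{K,\alpha},M_{K,\alpha})$ and $\lambda\in[0,1]$, pick any $y_s\in K_{\alpha,s}$, $y_t\in K_{\alpha,t}$; translating by $-y_s$ and $-y_t$ respectively moves both slices into $\alpha^\perp$, so viewing them as convex bodies there, the convexity of $K$ gives
\[
\lambda K_{\alpha,s}+(1-\lambda)K_{\alpha,t}\subseteq K_{\alpha,\lambda s+(1-\lambda)t},
\]
and Brunn--Minkowski in $\alpha^\perp$ yields the concavity of $t\mapsto\mu_{d-1}(K_{\alpha,t})^{1/(d-1)}$ on $(m_{K,\alpha},M_{K,\alpha})$. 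Finiteness on this interval is immediate since part~(i) makes $K_{\alpha,t}\subseteq K_{\alpha,t}^+$ bounded, and strict positivity follows because $K$ has non-empty interior: the image $\{\tm{\alpha}x:x\in\Int K\}$ is the open interval $(m_{K,\alpha},M_{K,\alpha})$, so for each such $t$ one finds $y\in\Int K$ with $\tm{\alpha}y=t$, and a small Euclidean ball around $y$ inside $K$ intersects $H$ in a $(d{-}1)$-dimensional ball of positive measure contained in $K_{\alpha,t}$.

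The most delicate step will be the finiteness of $m_{K,\alpha}$, where one must combine an escape-sequence argument with the sharp characterisation of $\Int(\xst{\rec(K)})$ from Lemma~\ref{Lem:coneprops}(i); all other pieces are either direct consequences of part~(i) or standard applications of Brunn--Minkowski.
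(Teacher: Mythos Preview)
Your proposal is correct and follows essentially the same overall strategy as the paper: recession-cone analysis for (i), and Brunn--Minkowski in the hyperplane $\alpha^\perp$ for the concavity in (ii). Two sub-steps differ. For the finiteness of $m_{K,\alpha}$, which you flag as ``the most delicate step'', the paper avoids the escape-sequence argument entirely: it simply fixes any $y\in K$, sets $s:=\tm{\alpha}y$, and observes that $m_{K,\alpha}=\inf_{x\in K_{\alpha,s}^+}\tm{\alpha}x$ is finite (and attained) by the compactness of $K_{\alpha,s}^+$ already established in part~(i). This is shorter and more in keeping with the logical flow. Conversely, for strict positivity of $\mu_{d-1}(K_{\alpha,t})$, the paper argues indirectly via the identity $\mu_d(K)=\norm{\alpha}^{-1}\int_{m_{K,\alpha}}^{M_{K,\alpha}}\mu_{d-1}(K_{\alpha,t})\,dt$ combined with concavity, whereas your interior-ball argument is more direct and does not require deriving that identity.
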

\begin{proof}
(i) By taking $C:=\rec(K)$ in Lemma~\ref{Lem:coneprops}(i), we see that
\begin{equation}
\label{Eq:intreccone}
\Int(\xst{\rec(K)})=\bigl\{\alpha\in\R^d:\tm{\alpha}u>0\text{ for all }u\in\rec(K)\setminus\{0\}\bigr\}.
\end{equation}
If $\alpha\notin\Int(\xst{\rec(K)})$, then there exists $u\in\rec(K)\setminus\{0\}$ such that $\tm{\alpha}u\leq 0$. Thus, if $K_{\alpha,t}^+$ is non-empty, then $x+\lambda u\in K$ and $\tm{\alpha}(x+\lambda u)\leq\tm{\alpha}x\leq t$ for all $x\in K_{\alpha,t}^+$ and $\lambda>0$, so $K_{\alpha,t}^+$ is unbounded.

If $\alpha\in\Int(\xst{\rec(K)})$ and $t\in\R$ are such that $K_{\alpha,t}^+$ is non-empty, let $H^+:=\{u\in\R^d:\tm{\alpha}u>0\}$ and $H^-:=\{u\in\R^d:\tm{\alpha}u\leq 0\}$. Note that $\rec(K_{\alpha,t}^+)\setminus\{0\}\subseteq\rec(K)\setminus \{0\}$, which by~\eqref{Eq:intreccone} is disjoint from $H^-$. Moreover, if $x\in K_{\alpha,t}^+$ and $u\in H^+$, then there exists $\lambda>0$ such that $\tm{\alpha}(x+\lambda u)>t$. Thus, since $x+\lambda u\notin K_{\alpha,t}^+$, it follows that $u\notin\rec(K_{\alpha,t}^+)$. We conclude that $\rec(K_{\alpha,t}^+)=\{0\}$ and therefore that $K_{\alpha,t}^+$ is compact \citep[Theorem~8.4]{Rock97}.

(ii) For $\alpha\in\Int(\xst{\rec(K)})\setminus\{0\}$, if we fix $y\in K_{\alpha,t}^+$ and set $s:=\tm{\alpha}y$, then it follows from the compactness of $K_{\alpha,s}^+$ that
\[
s\geq m_{K,\alpha}=\inf_{x \in K} \tm{\alpha}x = \inf_{x\in K_{\alpha,s}^+}\tm{\alpha}x > -\infty,
\]
and that the infimum is attained at some $z\in K_{\alpha,s}^+$ with $\tm{\alpha}z=m_{K,\alpha}$. It is now clear that $K_{\alpha,\,m_{K,\alpha}}$ is a non-empty exposed face of $K$. Finally, if $d\geq 2$, fix $\lambda\in (0,1)$ and $t_1,t_2\in\R$ with $m_{K,\alpha}\leq t_1<t_2\leq M_{K,\alpha}$, and set $t:=\lambda t_1+(1-\lambda)t_2$. Also, for $j=1,2$, fix $a_j\in K_{\alpha,t_j}$ and let $K_j':=K_{\alpha,t_j}-a_j$. Setting $a:=\lambda a_1+(1-\lambda)a_2\in K_{\alpha,t}$, we see that $K':=K_{\alpha,t}-a$, $K_1'$ and $K_2'$ are contained in the $(d-1)$-dimensional subspace $H:=\{u\in\R^d:\tm{\alpha}u=0\}$. Since $K_{\alpha,t_2}^+$ is compact and convex, the sets $K_{\alpha,t}$, $K_{\alpha,t_1}$ and $K_{\alpha,t_2}$ are all non-empty and compact, and we have $K_{\alpha,t}\supseteq\lambda K_{\alpha,t_1}+(1-\lambda)K_{\alpha,t_2}$. This implies that $K'\supseteq\lambda K_1'+(1-\lambda)K_2'$, so taking the ambient space to be $H$, we can apply the Brunn--Minkowski inequality \citep[Theorem~7.1.1]{Sch14} to deduce that
\begin{align*}
\mu_{d-1}(K_{\alpha,t})^{1/(d-1)}=\mu_{d-1}(K')^{1/(d-1)}&\geq\lambda\mu_{d-1}(K_1')^{1/(d-1)}+(1-\lambda)\mu_{d-1}(K_2')^{1/(d-1)}\\
&=\lambda\mu_{d-1}(K_{\alpha,t_1})^{1/(d-1)}+(1-\lambda)\mu_{d-1}(K_{\alpha,t_2})^{1/(d-1)}.
\end{align*}
Thus, $t\mapsto\mu_{d-1}(K_{\alpha,t})^{1/(d-1)}$ is indeed concave and finite-valued on $(m_{K,\alpha},M_{K,\alpha})$. Since 
\begin{equation}
\label{Eq:volident}
0<\mu_d(K)=\inv{\norm{\alpha}}\int_{m_{K,\alpha}}^{M_{K,\alpha}}\mu_{d-1}(K_{\alpha,t})\,dt,
\end{equation}
we deduce from this that $\mu_{d-1}(K_{\alpha,t})>0$ for all $t\in (m_{K,\alpha},M_{K,\alpha})$, as required. To verify the identity~\eqref{Eq:volident}, one can proceed as follows: let $\{u_1,\dotsc,u_d\}$ be an orthonormal basis for $\R^d$ such that $u_d=\alpha/\norm{\alpha}$, and let $Q\colon\R^d\to\R^d$ be the invertible linear map defined by setting $Qu_j=e_j$ for $j = 1,\dotsc,d-1$ and $Qu_d=\norm{\alpha}e_d$. Since $\det Q=\norm{\alpha}$ and $\tm{\alpha}\inv{Q}w=\tm{\alpha}(w_d\,u_d/\norm{\alpha})=w_d$ for all $w=(w_1,\dotsc,w_d)\in\R^d$, it follows that 
\begin{align*}
\mu_d(K)&=\int_{Q(K)}\!\inv{\norm{\alpha}}\,dw=\inv{\norm{\alpha}}\!\int_{m_{K,\alpha}}^{M_{K,\alpha}}\!\!\mu_{d-1}(\{w\in\inv{Q}(K):w_d=t\})\,dw_d\\
&=\inv{\norm{\alpha}}\!\int_{\,m_{K,\alpha}}^{\,M_{K,\alpha}}\!\!\mu_{d-1}(K_{\alpha,t})\,dt,
\end{align*}
as claimed, where we have used Fubini's theorem to obtain the first equality.
\end{proof}

Next, we obtain a useful geometric characterisation of the sets $K\in\mathcal{K}$ for which $\Int(\xst{\rec(K)})$ is non-empty.
\begin{proposition}
\label{Prop:linefree}
For a fixed $K\in\mathcal{K}$, the following are equivalent:

\begin{minipage}{0.45\textwidth}
\vspace{0.2cm}
\begin{enumerate}[label=(\roman*),leftmargin=0.9cm]
\item $K$ is line-free;
\setcounter{enumi}{2}
\item $\Int(\xst{\rec(K)})$ is non-empty;
\end{enumerate}
\end{minipage}
\begin{minipage}{0.5\textwidth}
\vspace{0.2cm}
\begin{enumerate}[label=(\roman*),leftmargin=0.9cm]
\setcounter{enumi}{1}
\item $\rec(K)$ is a pointed cone;	
\setcounter{enumi}{3}
\item $K$ has at least one exposed point.
\end{enumerate}
\end{minipage}
\end{proposition}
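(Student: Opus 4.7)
The plan is to establish the four equivalences by proving (i) $\iff$ (ii), (ii) $\iff$ (iii), (iv) $\implies$ (i), and (i) $\implies$ (iv), with the last being the substantive step. The first equivalence follows directly from the definitions: for a closed convex set $K$, the standard fact that $u \in \rec(K)$ iff $\{x + \lambda u : \lambda \geq 0\} \subseteq K$ for some (equivalently, every) $x \in K$ means that $K$ contains a line with direction $u \neq 0$ precisely when both $u$ and $-u$ lie in $\rec(K)$, i.e., exactly when $\rec(K)$ fails to be pointed. The second equivalence (ii) $\iff$ (iii) is an immediate application of Lemma~\ref{Lem:coneprops}(ii) to the non-empty closed convex cone $\rec(K)$.

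For (iv) $\implies$ (i), I would use that every exposed point is in particular extreme, together with the observation that if $K$ contained a line in direction $u \neq 0$, then $u, -u \in \rec(K)$, and every $x \in K$ would be the midpoint of the distinct points $x \pm u \in K$, so $K$ could have no extreme points at all, let alone exposed ones.

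The substantive direction is (i) $\implies$ (iv). Having already established (iii), I would select $\alpha \in \Int(\xst{\rec(K)}) \setminus \{0\}$, which is possible because the interior of $\xst{\rec(K)}$ is a non-empty open set and therefore contains nonzero vectors. Proposition~\ref{Prop:kslices} then yields that $m_{K,\alpha}$ is finite, that the exposed face $F := K_{\alpha, m_{K,\alpha}}$ is non-empty, and that $F$ is compact as a closed subset of the compact set $K_{\alpha, m_{K,\alpha}}^+$. Applying the Krein--Milman theorem to $F$ produces an extreme point $x$ of $F$; since $F$ is a face of $K$, extremality propagates upward so $x$ is also an extreme point of $K$, whence $\Ext K \neq \emptyset$. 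Invoking Straszewicz's theorem, recorded in Section~\ref{Sec:LogkaffineSupp} as $\Ext K \subseteq \Cl \Exp K$, then gives $\Exp K \neq \emptyset$, delivering (iv). The subtle point, and the reason the argument is not entirely trivial, is that an extreme (or even exposed) point of the face $F$ need not itself be an exposed point of the ambient set $K$, so Straszewicz's closure statement is essential for the last step rather than a direct construction from the slicing.
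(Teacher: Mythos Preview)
Your proof is correct and follows essentially the same route as the paper: the equivalences (i)$\iff$(ii), (ii)$\iff$(iii), and (iv)$\Rightarrow$(i) are handled identically, and for the substantive direction you likewise slice with $\alpha\in\Int(\xst{\rec(K)})\setminus\{0\}$ via Proposition~\ref{Prop:kslices}, extract an extreme point of the compact face $K_{\alpha,m_{K,\alpha}}$, and finish with Straszewicz. The only cosmetic difference is that the paper applies Straszewicz to the compact truncation $K_{\alpha,t}^+$ and then observes that any exposed point of $K_{\alpha,t}^+$ is an exposed point of $K$, whereas you invoke the version of Straszewicz for $K\in\mathcal{K}$ directly (which the paper also records); both variants are valid.
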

\begin{proof}
(i) $\Leftrightarrow$ (ii): If $K$ contains the line $L := \{y+\lambda u:\lambda \in \R\}$ and $x \in K$, then $x+\lambda u\in\Cl\conv\bigl(\{x\}\cup L\bigr)\subseteq K$ for every $\lambda\in\R$, so $\{\lambda u:\lambda\in\R\}\subseteq\rec(K)$.  Therefore, $\rec(K)$ is not pointed.  Conversely, if $\rec(K)$ is not pointed, then there exists $u\in\R^d\setminus\{0\}$ such that $u\in\rec(K)\cap (-\rec(K))$, so $K+\lambda u\subseteq K$ for all $\lambda\in\R$.  Therefore, $K$ is not line-free.

(iv) $\Rightarrow$ (i): As above, if there exist $y\in K$ and $u\in\R^d\setminus \{0\}$ such that $y+\lambda u\in K$ for all $\lambda\in\R$, then $x+\lambda u\in K$ for all $x\in K$ and $\lambda\in\R$. In particular, $K$ has no extreme points, so it has no exposed points.

(ii) $\Leftrightarrow$ (iii): This follows directly from Lemma~\ref{Lem:coneprops}(ii).

(iii) $\Rightarrow$ (iv): If (iii) holds, then there exists $\alpha\in\Int(\xst{\rec(K)})\setminus\{0\}$. For a fixed $t\in (m_{K,\alpha},\infty)$, we know from Proposition~\ref{Prop:kslices} that $K_{\alpha,\,m_{K,\alpha}}$ is a non-empty exposed face of $K_{\alpha,t}^+$, which is $d$-dimensional, compact and convex. Thus, $K_{\alpha,\,m_{K,\alpha}}$ must itself be compact and convex, so it has at least one extreme point $z$ \citep[Corollary~1.4.4]{Sch14}. Now $z$ is necessarily an extreme point of $K_{\alpha,t}^+$, so it follows from Straszewicz's theorem \citep[Theorem~1.4.7]{Sch14} that $z$ is the limit of a sequence of exposed points of $K_{\alpha,t}^+$. But by the convexity of $K$, every exposed point of $K_{\alpha,t}^+$ must be an exposed point of $K$, so (iv) holds, as required.
\end{proof}
Using the above results, we now derive necessary and sufficient conditions for a density $f\colon\R^d\to [0,\infty)$ to belong to the subclass $\mathcal{F}^1$ of log-1-affine densities in $\mathcal{F}_d$. 
\begin{proposition}
\label{Prop:log1aff}
For $K\in\mathcal{K}$ and $\alpha\in\R^d$, the function $g_{K,\alpha}\colon\R^d\to[0,\infty)$ defined by $g_{K,\alpha}(x):=\exp(-\tm{\alpha}x)\Ind_{\{x\in K\}}$ is integrable if and only if $K$ is line-free and $\alpha\in\Int(\xst{\rec(K)})$. It follows from this that 
\begin{equation}
\label{Eq:F1}
\mathcal{F}^1=\bigl\{f_{K,\alpha}:=\inv{c_{K,\alpha}}\,g_{K,\alpha}:K\in\mathcal{K},\, K\text{ is line-free and }\alpha\in\Int(\xst{\rec(K)})\bigr\},
\end{equation}
where $c_{K,\alpha}:=\int_K\,\exp(-\tm{\alpha}x)\,dx$.
\end{proposition}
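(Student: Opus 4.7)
\medskip

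\noindent\textbf{Proof proposal.} The plan is to first prove the integrability equivalence for $g_{K,\alpha}$ and then derive the description of $\mathcal{F}^1$ as an immediate corollary. Throughout, I will lean heavily on Proposition~\ref{Prop:kslices} and Lemma~\ref{Lem:coneprops}(i), which together make available both the geometric characterization $\Int(\xst{\rec(K)})=\{\alpha:\tm{\alpha}u>0\text{ for all }u\in\rec(K)\setminus\{0\}\}$ and the slicing/volume-concavity toolkit.

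For the ``if'' direction, suppose $K$ is line-free and $\alpha\in\Int(\xst{\rec(K)})$. The case $\alpha=0$ is handled separately: $\xst{\{0\}}=\R^d$, so $0\in\Int(\xst{\rec(K)})$ forces $\rec(K)=\{0\}$, hence $K$ is compact \citep[Theorem~8.4]{Rock97} and $g_{K,0}=\Ind_K$ is integrable. For $\alpha\neq 0$, Proposition~\ref{Prop:kslices}(i) gives that each $K_{\alpha,t}^+$ is compact, while Proposition~\ref{Prop:kslices}(ii) gives that $m_{K,\alpha}$ is finite, so by Fubini in the coordinate system aligned with $\alpha/\norm{\alpha}$ (exactly as in~\eqref{Eq:volident}),
\begin{equation*}
\int_K e^{-\tm{\alpha}x}\,dx=\norm{\alpha}^{-1}\!\int_{m_{K,\alpha}}^{M_{K,\alpha}}\!e^{-t}\,\mu_{d-1}(K_{\alpha,t})\,dt.
\end{equation*}
The concavity of $t\mapsto\mu_{d-1}(K_{\alpha,t})^{1/(d-1)}$ from Proposition~\ref{Prop:kslices}(ii) implies that $\mu_{d-1}(K_{\alpha,t})$ grows at most polynomially of degree $d-1$ in $t$, which is swamped by $e^{-t}$, yielding finiteness of the integral.

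For the ``only if'' direction, I will argue contrapositively in both halves. First, if $K$ is not line-free, then it contains a line in direction $u\neq 0$, so $\pm u\in\rec(K)$; combined with non-empty interior, $K$ contains a translate of a cylinder $B'\times\R u$, where $B'$ is a $(d-1)$-dimensional ball. Integrating by Fubini along the line direction gives $+\infty$ regardless of the sign of $\tm{\alpha}u$ (if $\tm{\alpha}u\neq 0$ the line integral diverges at one end; if $\tm{\alpha}u=0$ the integrand is constant on a line of infinite length). Second, assuming $K$ is line-free, suppose for contradiction that there exists $u\in\rec(K)\setminus\{0\}$ with $\tm{\alpha}u\leq 0$ (using Lemma~\ref{Lem:coneprops}(i) to negate $\alpha\in\Int(\xst{\rec(K)})$). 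Fix $x_0\in\Int K$ and $r>0$ with $B(x_0,r)\subseteq K$. Since $u\in\rec(K)$, the half-cylinder $A:=B(x_0,r)+\R_{\geq 0}u$ lies in $K$, and by Fubini along the $u$-direction, $\int_A e^{-\tm{\alpha}x}\,dx$ factorises into an integral over the transverse ball (finite and positive) times $\int_0^\infty e^{-\lambda\tm{\alpha}u}\,d\lambda$, which diverges whenever $\tm{\alpha}u\leq 0$. This contradicts integrability, establishing $\alpha\in\Int(\xst{\rec(K)})$.

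For the final identification~\eqref{Eq:F1}, let $f\in\mathcal{F}^1$. By the log-1-affine definition in Section~\ref{Subsec:Notation}, $\supp f$ equals a single closed set on which $\log f$ is affine; since $f$ is a log-concave density, $\supp f$ is also convex with non-empty interior, so $K:=\supp f\in\mathcal{K}$, and $f(x)=e^{-\tm{\alpha}x-\beta}\Ind_{\{x\in K\}}$ for some $\alpha\in\R^d$ and $\beta\in\R$. Integrating to $1$ shows $g_{K,\alpha}$ is integrable, so by the equivalence just proved, $K$ is line-free and $\alpha\in\Int(\xst{\rec(K)})$, and $f=f_{K,\alpha}$ with $c_{K,\alpha}=e^\beta$. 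Conversely, for any such $(K,\alpha)$, the function $f_{K,\alpha}$ is clearly a density, and it lies in $\mathcal{F}_d$ because $\phi(x):=-\tm{\alpha}x-\log c_{K,\alpha}$ extended by $-\infty$ off the closed set $K$ is upper semi-continuous and concave; by construction $\log f_{K,\alpha}$ is affine on $\supp f_{K,\alpha}=K$, so $f_{K,\alpha}\in\mathcal{F}^1$.

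The main obstacle is the Fubini bookkeeping in the ``only if'' direction: one has to be careful to choose orthogonal coordinates aligned with $u$ (or with a line contained in $K$) so that the iterated integral over the half-cylinder (or full cylinder) genuinely factorises and produces a divergent one-dimensional integral. Everything else is an essentially routine application of Proposition~\ref{Prop:kslices}, Lemma~\ref{Lem:coneprops}, and the definitions.
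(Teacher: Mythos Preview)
Your proposal is correct and follows essentially the same route as the paper: both directions hinge on Lemma~\ref{Lem:coneprops}(i) and Proposition~\ref{Prop:kslices}, with the ``if'' part using the slice-volume concavity to get a polynomial bound on $\mu_{d-1}(K_{\alpha,t})$ and the ``only if'' part exhibiting a semi-infinite region in $K$ on which the one-dimensional integral along the $u$-direction diverges. The only notable streamlining in the paper is that it does not treat the non-line-free case separately: by Proposition~\ref{Prop:linefree}, if $K$ contains a line then $\Int(\xst{\rec(K)})=\emptyset$, so \emph{every} $\alpha$ already fails the interior condition and one can run the single half-infinite-region argument (the paper takes $u=e_d$ after a rotation and uses an axis-aligned box rather than a ball, which sidesteps the Jacobian bookkeeping you flag).
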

\begin{proof}
The result is clear if $d=1$, so suppose now that $d\geq 2$. First we consider the case where $\alpha = (\alpha_1,\dotsc,\alpha_d)\notin\Int(\xst{\rec(K)})$, which by~Proposition~\ref{Prop:linefree} covers all instances where $K$ is not line-free. By~\eqref{Eq:intreccone}, there exists $u\in\rec(K)\setminus \{0\}$ such that $\tm{\alpha}u\leq 0$, and we may assume without loss of generality that $u=e_d$. Then for a fixed $x\in\Int K$, we can find $\varepsilon>0$ such that $R_{x,\varepsilon}:=\bigl(\prod_{j=1}^{d-1}\,[x_j-\varepsilon,x_j+\varepsilon]\bigr)\times [x_d-\varepsilon,\infty)\subseteq K$. But since $\alpha_d=\tm{\alpha}e_d\leq 0$, we have
\[c_{K,\alpha}\geq\int_{R_{x,\varepsilon}}\exp(-\tm{\alpha}w)\,dw=\int_{x_d-\varepsilon}^\infty\,\exp(-\alpha_dw_d)\,dw_d\times\prod_{j=1}^{d-1}\,\int_{x_j-\varepsilon}^{x_j+\varepsilon}\,\exp(-\alpha_jw_j)\,dw_j=\infty,\]
so $f_{K,\alpha}$ is not integrable.

Now suppose that $K$ is line-free and $\alpha\in\Int(\xst{\rec(K)})$. By~\eqref{Eq:intreccone}, the case $\alpha=0$ is possible if and only if $\rec(K) = \{0\}$. But by~\citet[Theorem~8.4]{Rock97}, this is equivalent to requiring that $K$ be compact, in which case the result is clear. We can therefore assume that $\alpha\neq 0$. By the final assertion of Proposition~\ref{Prop:kslices}(ii), the function $t\mapsto\mu_{d-1}(K_{\alpha,t})^{1/(d-1)}$ is concave and takes strictly positive values on $(m_{K,\alpha},M_{K,\alpha})$, so there exist $a,b\in\R$ such that $\mu_{d-1}(K_{\alpha,t})\leq\abs{at+b}^{d-1}$ for all $t$ in this range. By analogy with~\eqref{Eq:volident} and its derivation, we have
\[c_{K,\alpha}=\int_K e^{-\tm{\alpha}x}\,dx=\inv{\norm{\alpha}}\int_{\,m_{K,\alpha}}^{\,M_{K,\alpha}}\mu_{d-1}(K_{\alpha,t})\,e^{-t}\,dt\leq\inv{\norm{\alpha}}\int_{\,m_{K,\alpha}}^{\,M_{K,\alpha}}\abs{at+b}^{d-1}e^{-t}\,dt<\infty,\]
as required. The final assertion of the proposition now follows immediately.
\end{proof}
\begin{figure}[htb]
\vspace{-0.2cm}
\centering
\includegraphics[width=0.8\textwidth]{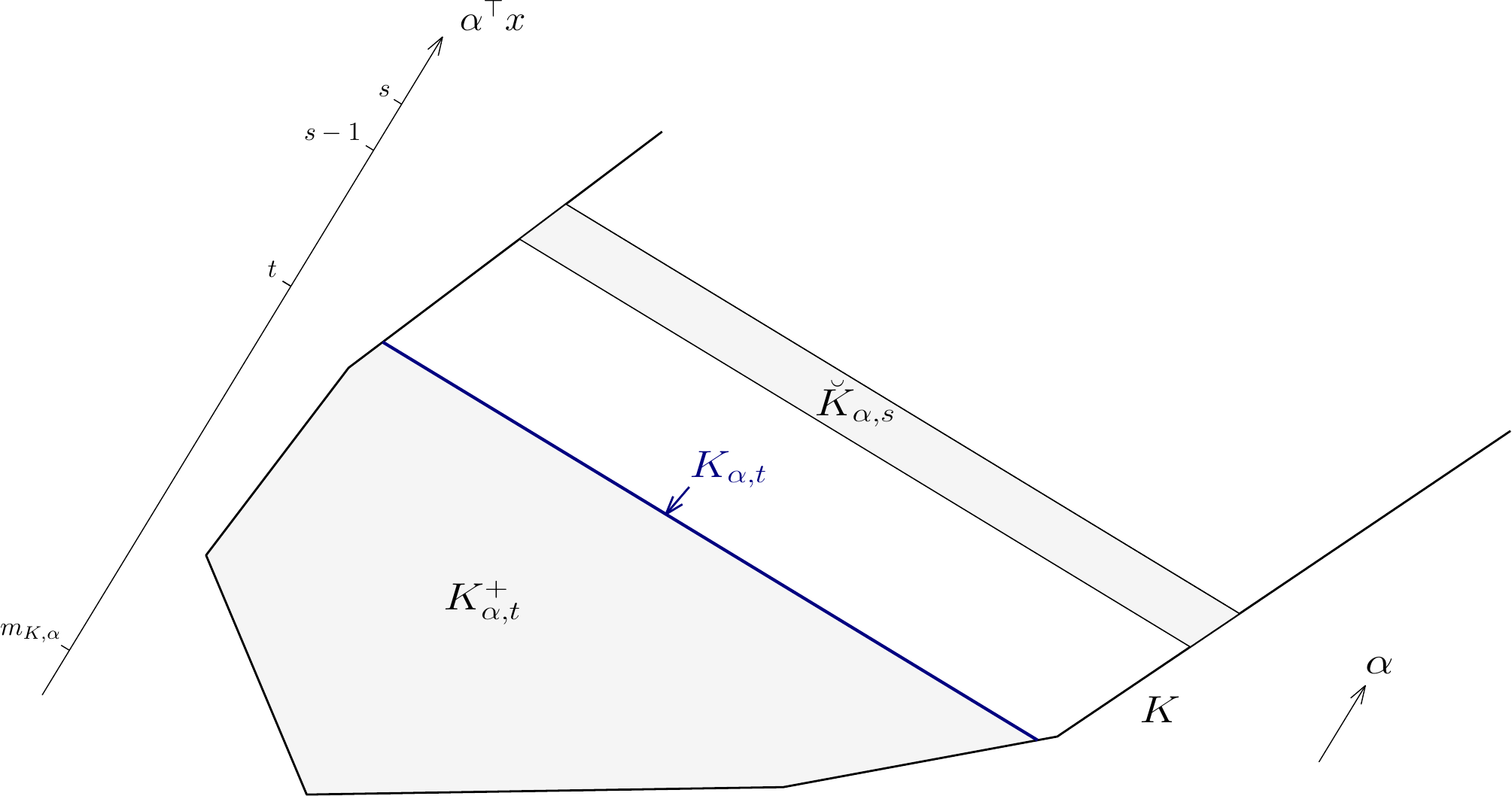}

\vspace{0.3cm}
\caption{Illustration of the sets $K_{\alpha,t}^+$, $K_{\alpha,t}$ and $\breve{K}_{\alpha,s}$.}
\label{Fig:Kalphat}
\end{figure}
Now let $\mathcal{F}_\star^1\equiv\mathcal{F}_{d,\star}^1$ denote the collection of all $f_{K,\alpha}\in\mathcal{F}^1$ for which $m_{K,\alpha}=0$. An immediate consequence of Proposition~\ref{Prop:kslices}(ii) and Proposition~\ref{Prop:log1aff} is the following: 
\begin{corollary}
\label{Cor:log1affcanon}
If $X\sim f\in\mathcal{F}^1$, there exists $x\in\R^d$ such that the density of $X-x$ lies in $\mathcal{F}_\star^1$. 
\end{corollary}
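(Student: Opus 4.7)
The plan is to reduce this immediately to Proposition~\ref{Prop:log1aff} together with Proposition~\ref{Prop:kslices}(ii). By Proposition~\ref{Prop:log1aff}, every $f\in\mathcal{F}^1$ is of the form $f=f_{K,\alpha}$ for some line-free $K\in\mathcal{K}$ and some $\alpha\in\Int(\xst{\rec(K)})$. Given $X\sim f_{K,\alpha}$ and any $x_0\in\R^d$, a direct change-of-variables computation shows that $Y:=X-x_0$ has density
\[
g(y)=c_{K,\alpha}^{-1}\,e^{-\tm{\alpha}x_0}\exp(-\tm{\alpha}y)\,\Ind_{\{y\in K-x_0\}},\qquad y\in\R^d.
\]
Thus $g$ is proportional to $g_{K-x_0,\alpha}$ in the notation of Proposition~\ref{Prop:log1aff}.

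Next, I would check that the pair $(K-x_0,\alpha)$ qualifies as a parameter pair for an element of $\mathcal{F}^1$. Translation is a bijection of $\mathcal{K}$ that preserves line-freeness, and $\rec(K-x_0)=\rec(K)$, so line-freeness of $K-x_0$ and the condition $\alpha\in\Int(\xst{\rec(K-x_0)})$ are inherited from $(K,\alpha)$. Hence $g=f_{K-x_0,\alpha}\in\mathcal{F}^1$ by Proposition~\ref{Prop:log1aff}, and its parameter $m_{K-x_0,\alpha}$ satisfies
\[
m_{K-x_0,\alpha}=\inf_{y\in K-x_0}\tm{\alpha}y=\inf_{x\in K}\tm{\alpha}(x-x_0)=m_{K,\alpha}-\tm{\alpha}x_0.
\]
So it remains to exhibit an $x_0$ with $\tm{\alpha}x_0=m_{K,\alpha}$.

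The choice of $x_0$ is handled by a brief case split. If $\alpha=0$, then $\rec(K)=\{0\}$ (by~\eqref{Eq:intreccone}), so $K$ is compact, $m_{K,\alpha}=0$, and $x_0=0$ works trivially. If $\alpha\neq 0$, then $\alpha\in\Int(\xst{\rec(K)})\setminus\{0\}$, and Proposition~\ref{Prop:kslices}(ii) guarantees that $m_{K,\alpha}$ is finite and that the exposed face $K_{\alpha,m_{K,\alpha}}$ is non-empty; I would simply pick any $x_0$ in this face. With this choice, $\tm{\alpha}x_0=m_{K,\alpha}$, hence $m_{K-x_0,\alpha}=0$, and therefore $Y=X-x_0$ has density $f_{K-x_0,\alpha}\in\mathcal{F}_\star^1$, as required.

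There is no real obstacle here: the corollary is almost purely a bookkeeping consequence of the two preceding results. The only point worth emphasising is that Proposition~\ref{Prop:kslices}(ii) is essential, since a priori $m_{K,\alpha}$ need not be attained; its non-emptiness of $K_{\alpha,m_{K,\alpha}}$ is precisely what lets us translate $K$ so that the minimum of $\tm{\alpha}x$ on the support is exactly zero.
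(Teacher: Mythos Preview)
Your proposal is correct and matches the paper's approach exactly: the paper simply declares the corollary to be ``an immediate consequence of Proposition~\ref{Prop:kslices}(ii) and Proposition~\ref{Prop:log1aff}'', and you have spelled out precisely that immediate consequence, including the case split on $\alpha=0$ versus $\alpha\neq 0$ and the use of the non-emptiness of $K_{\alpha,m_{K,\alpha}}$ to locate a suitable translation vector.
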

If $f_{K,\alpha}\in\mathcal{F}^1$, then by Proposition~\ref{Prop:kslices}(i) and Proposition~\ref{Prop:log1aff}, the sets $K_{\alpha,t}^+$ and 
\begin{equation}
\label{Eq:kbaralphat}
\breve{K}_{\alpha,t}:=K\cap\{x\in\R^d:t-1\leq\tm{\alpha}x\leq t\}
\end{equation}
are compact and convex for all $t\in\R$. See Figure~\ref{Fig:Kalphat} for an illustration of the sets $K_{\alpha,t}^+$ and $\breve{K}_{\alpha,s}$. We now derive simple bounds on $\mu_d(K_{\alpha,t}^+)$ and $\mu_d(\breve{K}_{\alpha,t})$ that apply to all $f_{K,\alpha}\in\mathcal{F}_\star^1$ with $\alpha\neq 0$.
\begin{lemma}
\label{Lem:log1affbds}
If $f_{K,\alpha}\in\mathcal{F}_\star^1$ and $\alpha\neq 0$, then 
\begin{equation}
\label{Eq:ratiobds}
\gamma(d,t):=1-e^{-t}\,\sum_{\ell=0}^{d-1}\,\frac{t^\ell}{\ell!} \leq \frac{\mu_d(K_{\alpha,t}^+)}{c_{K,\alpha}} \leq e^t
\end{equation}
for all $t>0$. Moreover, if $1\leq s\leq t-1$, then
\begin{equation}
\label{Eq:ratioslices}
\mu_d(\breve{K}_{\alpha,t})\leq\frac{t^d-(t-1)^d}{s^d-(s-1)^d}\,\mu_d(\breve{K}_{\alpha,s}). 
\end{equation}
\end{lemma}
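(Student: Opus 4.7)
The plan is to reduce everything to one-dimensional integrals by slicing $K$ along the $\alpha$-direction. Using the coordinate change detailed in the proof of Proposition~\ref{Prop:kslices}(ii) and setting $V(s):=\mu_{d-1}(K_{\alpha,s})$ for $s\in[0,M_{K,\alpha}]$ (extended by zero elsewhere), one obtains
\[
c_{K,\alpha}=\|\alpha\|^{-1}\!\int_0^{M_{K,\alpha}}\!\!V(s)\,e^{-s}\,ds,\qquad \mu_d(K_{\alpha,t}^+)=\|\alpha\|^{-1}\!\int_0^t\!V(s)\,ds,
\]
and likewise $\mu_d(\breve{K}_{\alpha,t})=\|\alpha\|^{-1}\int_{t-1}^t V(u)\,du$. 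The upper bound $\mu_d(K_{\alpha,t}^+)/c_{K,\alpha}\leq e^t$ is then immediate from the pointwise estimate $c_{K,\alpha}\geq\int_{K_{\alpha,t}^+}e^{-\tm{\alpha}x}\,dx\geq e^{-t}\mu_d(K_{\alpha,t}^+)$. I henceforth assume $d\geq 2$, as the $d=1$ case is trivial with $V$ constant on $(0,M_{K,\alpha})$; Proposition~\ref{Prop:kslices}(ii) then guarantees that $V^{1/(d-1)}$ is non-negative and concave on $[0,M_{K,\alpha}]$ with limit value $V^{1/(d-1)}(0)\geq 0$.

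For the lower bound $\mu_d(K_{\alpha,t}^+)/c_{K,\alpha}\geq\gamma(d,t)$, observe first that when $t\geq M_{K,\alpha}$ the bound is immediate from $\gamma(d,t)<1$ and $e^{-s}\leq 1$ on $[0,M_{K,\alpha}]$. For $t<M_{K,\alpha}$, the crux is to compare $V$ with the extremal function $V_0(s):=V(t)(s/t)^{d-1}$. The concavity of $V^{1/(d-1)}$ combined with $V^{1/(d-1)}(0)\geq 0$, applied to the secant from $0$ to $t$ and its extrapolation, yields $V(s)\geq V_0(s)$ on $[0,t]$ and $V(s)\leq V_0(s)$ on $[t,M_{K,\alpha}]$. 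The target inequality rearranges to $\int_0^{M_{K,\alpha}}V(s)\,\phi(s)\,ds\geq 0$, where $\phi(s):=\Ind_{[0,t]}(s)-\gamma(d,t)\,e^{-s}$ is strictly positive on $[0,t]$ and strictly negative on $(t,M_{K,\alpha}]$. Since $V-V_0$ has precisely the same sign pattern as $\phi$, we get $\int_0^{M_{K,\alpha}}(V-V_0)\phi\geq 0$, so it suffices to verify the inequality with $V_0$ in place of $V$. Plugging in $V_0$ and using $\int_0^{M_{K,\alpha}}s^{d-1}e^{-s}\,ds\leq (d-1)!$ reduces the task to the elementary estimate $\gamma(d,t)\leq t^d/d!$, which in turn follows from $e^{-s}\leq 1$ in the integral representation $\gamma(d,t)=\int_0^t s^{d-1}e^{-s}/(d-1)!\,ds$.

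For~\eqref{Eq:ratioslices}, I will exploit the fact that $u\mapsto V(u)/u^{d-1}$ is non-increasing on $(0,M_{K,\alpha})$ — equivalently, $V^{1/(d-1)}(u)/u$ is non-increasing, which is a standard consequence of the concavity of $V^{1/(d-1)}$ together with $V^{1/(d-1)}(0)\geq 0$. Pulling out the maximum and minimum of $V(u)/u^{d-1}$ over the respective intervals gives
\[
\int_{t-1}^t\!V(u)\,du\leq\frac{V(t-1)}{(t-1)^{d-1}}\cdot\frac{t^d-(t-1)^d}{d},\qquad \int_{s-1}^s\!V(u)\,du\geq\frac{V(s)}{s^{d-1}}\cdot\frac{s^d-(s-1)^d}{d};
\]
the hypothesis $s\leq t-1$ then ensures $V(t-1)/(t-1)^{d-1}\leq V(s)/s^{d-1}$ by monotonicity, and dividing the two bounds yields the claim. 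The main obstacle, as I see it, will be the extremal comparison in the second paragraph: one must spot the correct auxiliary function $V_0$ and recognise that $V-V_0$ and $\phi$ share the same sign pattern across $[0,M_{K,\alpha}]$, after which everything collapses to a one-line estimate on $\gamma(d,t)$.
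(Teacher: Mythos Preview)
Your proposal is correct and follows essentially the same approach as the paper: both reduce to one-dimensional integrals via the slicing identity~\eqref{Eq:volident}, and both exploit the concavity of $V^{1/(d-1)}$ through the extremal comparison function $V_0(s)=V(t)(s/t)^{d-1}$. The only organisational difference is that for the lower bound in~\eqref{Eq:ratiobds}, the paper factors the ratio as $\frac{\int_0^t V}{\int_0^t V e^{-s}}\cdot\frac{\int_0^t V e^{-s}}{\int_0^\infty V e^{-s}}$ and bounds the second factor below by $\gamma(d,t)$ directly via the Gamma CDF identity, whereas your sign-pattern argument with $\phi$ instead reduces to the slightly cruder (but sufficient) estimate $\gamma(d,t)\leq t^d/d!$.
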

\begin{proof}
The result is clear if $d=1$, so suppose now that $d\geq 2$. Fix $t>0$ and observe that, by analogy with~\eqref{Eq:volident}, we can write 
\begin{equation}
\label{Eq:ratioprod}
\dfrac{\mu_d(K_{\alpha,t}^+)}{c_{K,\alpha}}=
\dfrac{\textcolor{white}{\bigl)}\int_0^t\mu_{d-1}(K_{\alpha,u})\,du\textcolor{white}{\bigr)}}{\textcolor{white}{\Bigl(}\int_0^t\mu_{d-1}(K_{\alpha,u})\,e^{-u}\,du\textcolor{white}{\Bigr)}}\times
\frac{\textcolor{white}{\bigl(}\int_0^t\mu_{d-1}(K_{\alpha,u})\,e^{-u}\,du\textcolor{white}{\bigr)}}{\textcolor{white}{\Bigl)}\int_0^\infty\mu_{d-1}(K_{\alpha,u})\,e^{-u}\,du\textcolor{white}{\Bigr)}}.
\end{equation}
The first term on the right-hand side is bounded above and below by $e^t$ and $1$ respectively. 
The second term is clearly at most 1, and we now show that it is bounded below by $\gamma(d,t)$. This is certainly the case when $t\geq M_{K,\alpha}$,
so suppose henceforth that $t<M_{K,\alpha}$. We know from Proposition~\ref{Prop:kslices}(ii) that $u\mapsto\mu_{d-1}(K_{\alpha,u})^{1/(d-1)}$ is concave and strictly positive on $(0,M_{K,\alpha})$, so $\mu_{d-1}(K_{\alpha,u})\geq (u/t)^{d-1}\mu_{d-1}(K_{\alpha,t})$ for all $0\leq u\leq t$ and $\mu_{d-1}(K_{\alpha,u})\leq (u/t)^{d-1}\mu_{d-1}(K_{\alpha,t})$ for all $u\geq t$.
Therefore, introducing random variables $Y\sim\Gamma(d,1)$ and $W\sim\Po(t)$, we conclude that the second term in~\eqref{Eq:ratioprod} is bounded below by
\begin{equation}
\label{Eq:poigam}
\dfrac{\textcolor{white}{\bigl)}\int_0^t u^{d-1}\mu_{d-1}(K_{\alpha,t})\,e^{-u}\,du\textcolor{white}{\bigr)}}{\textcolor{white}{\Bigl(}\int_0^\infty u^{d-1}\mu_{d-1}(K_{\alpha,t})\,e^{-u}\,du\textcolor{white}{\Bigr)}}=\Pr(Y\leq t)=\Pr(W\geq d)=\gamma(d,t).
\end{equation}
This establishes~\eqref{Eq:ratiobds}. Similarly, if $1\leq s\leq t-1$ and $s<M_{K,\alpha}$, then
\[\frac{\mu_d(\breve{K}_{\alpha,t})}{\textcolor{white}{\bigl)}\mu_d(\breve{K}_{\alpha,s})\textcolor{white}{\Bigr)}}=
\dfrac{\textcolor{white}{\bigl)}\int_{t-1}^t\mu_{d-1}(K_{\alpha,u})\,du\textcolor{white}{\bigr)}}{\textcolor{white}{\Bigl(}\int_{s-1}^s\mu_{d-1}(K_{\alpha,u})\,du\textcolor{white}{\Bigr)}}\leq
\dfrac{\textcolor{white}{\bigl)}\int_{t-1}^t (u/s)^{d-1}\mu_{d-1}(K_{\alpha,s})\,du\textcolor{white}{\bigr)}}{\textcolor{white}{\Bigl(}\int_{s-1}^s(u/s)^{d-1}\mu_{d-1}(K_{\alpha,s})\,du\textcolor{white}{\Bigr)}}=
\frac{t^d-(t-1)^d}{s^d-(s-1)^d}.
\]
The bound~\eqref{Eq:ratioslices} holds trivially when $s\geq M_{K,\alpha}$, so we are done.
\end{proof}
\begin{remark*}
By appealing to Proposition~\ref{Prop:kslices}(ii) and stochastic domination arguments, one can in fact show that the reciprocal of the first term on the right-hand side of~\eqref{Eq:ratioprod} is bounded below by $dt^{-d}\gamma(d,t)$ and above by $d!\,\sum_{\ell=1}^{d-1}\,(-1)^{\ell-1}t^{-\ell}/(d-\ell)!$ (and also that these bounds are tight). 
\end{remark*}
We now turn to the proof of Proposition~\ref{Prop:Logkaff} in the main text, which makes use of the following two facts from general topology and convex analysis. Recall that $\mathcal{P}\equiv\mathcal{P}_d$ denotes the collection of all polyhedral subsets of $\R^d$, namely those that can be expressed as the intersection of finitely many closed half-spaces (and $\R^d$ itself).
\begin{lemma}
\label{Lem:clint}
If $K$ is a subset of a topological space $E$ and if $K_1,\dotsc,K_\ell\subseteq E$ are closed sets such that $\Cl\Int K=\bigcup_{j=1}^{\,\ell} K_j$, then $\Cl\Int K$ is in fact the union of those $K_j$ for which $\Int K_j\neq\emptyset$. Moreover, $\bigcup_{j=1}^{\,\ell}\Int K_j$ is dense in $\Cl\Int K$. 
\end{lemma}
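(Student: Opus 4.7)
I would start by setting $L := \Cl\Int K$ and noting that $\Int K \subseteq \Int L \subseteq L$, so $\Int L$ is already dense in $L$ by the very definition of $L$. The natural strategy is to prove the ``moreover'' statement first, since the main assertion then follows almost for free: if $J := \{j : \Int K_j \neq \emptyset\}$ and $\bigcup_{j \in J}\Int K_j$ is dense in $L$, then because every $K_j$ is closed and contained in $L$, we obtain
\[
L \;\subseteq\; \Cl\!\Bigl(\bigcup_{j \in J}\Int K_j\Bigr) \;\subseteq\; \bigcup_{j \in J} K_j \;\subseteq\; L,
\]
which forces equality throughout.

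For the density claim, I would take an arbitrary non-empty open $V \subseteq E$ with $V \cap L \neq \emptyset$ and produce some $j \in J$ with $V \cap \Int K_j \neq \emptyset$. Because $\Int L$ is dense in $L$, the set $W := V \cap \Int L$ is non-empty and open in $E$, and is contained in $L = \bigcup_{j=1}^\ell K_j$. The core of the argument is then a finite Baire-style observation: in any topological space a finite intersection of open dense sets is open and dense (an easy induction). If every $W \cap K_j$ had empty interior, then each $W \setminus K_j$ would be open and dense in $W$, but their intersection equals $W \setminus L = \emptyset$, contradicting the fact that $W$ itself is non-empty and open. Hence some $W \cap K_j$ has non-empty interior, and since $W$ is open one checks directly that $\Int(W \cap K_j) = W \cap \Int K_j$, giving $\Int K_j \neq \emptyset$ (so $j \in J$) and $V \cap \Int K_j \neq \emptyset$, as required.

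There is no real obstacle here; the only subtle point is that the statement makes no Baire-type hypothesis on $E$, so I would want to emphasise that the contradiction only uses a \emph{finite} intersection of open dense sets, which is open and dense in complete generality, rather than any countable Baire category argument.
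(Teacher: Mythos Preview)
Your argument is correct and takes a genuinely different route from the paper. The paper first establishes the identity
\[
\Cl\Int(A\cup B)=(\Cl\Int A)\cup(\Cl\Int B)=\Cl\bigl((\Int A)\cup(\Int B)\bigr)
\]
for $A$ closed and $B$ arbitrary (the non-trivial inclusion $\Cl\Int(A\cup B)\subseteq(\Cl\Int A)\cup(\Cl\Int B)$ is proved by a direct neighbourhood argument exploiting closedness of $A$), and then iterates this together with the idempotence $\Cl\Int\Cl\Int K=\Cl\Int K$ to obtain
\[
\Cl\Int K=\Cl\Int\Bigl(\textstyle\bigcup_j K_j\Bigr)=\Cl\Bigl(\textstyle\bigcup_j\Int K_j\Bigr)=\textstyle\bigcup_j\Cl\Int K_j,
\]
from which both assertions drop out. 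Your proof instead isolates the key fact as a finite Baire-type statement: a non-empty open set cannot be covered by finitely many closed sets each with empty interior. This is arguably the more transparent conceptual core, and your ordering (density first, then the main claim via $\Cl(\bigcup_{j\in J}\Int K_j)\subseteq\bigcup_{j\in J}K_j$) is clean. The paper's route, on the other hand, yields a reusable two-set closure--interior identity as a by-product. Both arguments are elementary and work in an arbitrary topological space; your explicit remark that only \emph{finite} intersections of open dense sets are needed is a worthwhile clarification.
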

\begin{proof}[Proof of \protect{Lemma~\ref{Lem:clint}}]
We first verify that if $A,B\subseteq E$ and $A$ is closed, then
\begin{equation}
\label{Eq:clintcup}
\Cl\Int(A\cup B)=(\Cl\Int A)\cup(\Cl\Int B)=\Cl((\Int A)\cup(\Int B)).
\end{equation}
Indeed, recalling that $(\Cl P)\cup (\Cl Q)=\Cl(P\cup Q)$ and $(\Int P)\cup(\Int Q)\subseteq\Int(P\cup Q)$ for all $P,Q\subseteq E$, we immediately obtain the second equality and the inclusion $\Cl\Int(A\cup B)\supseteq(\Cl\Int A)\cup(\Cl\Int B)$. It now remains to show that $\Cl\Int(A\cup B)\subseteq(\Cl\Int A)\cup(\Cl\Int B)$. To this end, fix $x\in\Cl\Int(A\cup B)$ and suppose that $x\notin\Cl\Int A$, in which case there exists an open neighbourhood $U$ of $x$ that is disjoint from $\Int A$. Now let $V$ be an arbitrary open neighbourhood of $x$ and let $W:=U\cap V\cap\Int(A\cup B)$. Then $W\subseteq A\cup B$ is a non-empty open set that is disjoint from $\Int A$, so $W\not\subseteq A$. Thus, since $A$ is closed by assumption, it follows that $W\cap\cm{A}$ is a non-empty open set contained within $B$, and hence that $W\cap\cm{A}\subseteq\Int B$. We conclude that $V\cap\Int B\neq\emptyset$ for all open neighbourhoods $V$ of $x$, whence $x\in\Cl\Int B$. This completes the proof of the first equality in~\eqref{Eq:clintcup}.

Moreover, if $K\subseteq E$, then $\Cl\Int\Cl\Int K=\Cl\Int K$; indeed, note that $\Int\Cl\Int K\supseteq\Int\Int\Int K=\Int K$ and $\Cl\Int\Cl\Int K\subseteq\Cl\Cl\Cl\Int K=\Cl\Int K$. We deduce from this and~\eqref{Eq:clintcup} that if $K,K_1,\dotsc,K_\ell$ are as in the statement of the lemma, then
\[
\textstyle\Cl\Int K=\Cl\Int\Bigl(\bigcup_{j=1}^{\,\ell}K_j\Bigr)=\Cl\,\Bigl(\bigcup_{j=1}^{\,\ell}\Int K_j\Bigr)=\bigcup_{j=1}^{\,\ell}\Cl\Int K_j\subseteq\bigcup_{\,j\,:\,\Int K_j\neq\emptyset\,}K_j.
\]
Since $\bigcup_{\,j\,:\,\Int K_j\neq\emptyset\,}K_j\subseteq\bigcup_{j=1}^{\,\ell}K_j=\Cl\Int K$, this yields the first assertion of the lemma. The second assertion follows from the first two equalities in the display above.
\end{proof}
\begin{lemma}
\label{Lem:polycomplex}
Suppose that $E_1,\dotsc,E_\ell\in\mathcal{P}$ have pairwise disjoint interiors and that any intersection $E_r\cap E_s$ with affine dimension $d-1$ is a common face of $E_r$ and $E_s$. If $P:=\bigcup_{s=1}^{\,\ell} E_s\in\mathcal{P}$, then $E_1,\dotsc,E_\ell$ constitutes a polyhedral subdivision of $P$.
\end{lemma}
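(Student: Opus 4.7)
The first requirement in the definition of a polyhedral subdivision is already given (namely $P=\bigcup_{s=1}^{\ell}E_s$ with each $E_s\in\mathcal{P}_d$), so the task reduces to verifying that every pairwise intersection $F_{rs}:=E_r\cap E_s$ is a common face of $E_r$ and $E_s$. The case $F_{rs}=\emptyset$ is trivial, and the case $\dim F_{rs}=d-1$ is exactly the hypothesis, so the substantive content lies in the case $0\leq\dim F_{rs}\leq d-2$, which I would handle by bootstrapping off of the $(d-1)$-dimensional hypothesis.

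My first step would be to localize at a point $x\in\relint F_{rs}$. For $\delta>0$ small enough, $B(x,\delta)$ meets $E_j$ only if $x\in E_j$, and for each such $E_j$ one has $B(x,\delta)\cap E_j=B(x,\delta)\cap(x+C_j)$, where $C_j$ is the full-dimensional polyhedral tangent cone of $E_j$ at $x$. Being a face is a local notion at $x$, so the claim that $F_{rs}$ is a face of $E_r$ (and of $E_s$) is equivalent to the cone-theoretic assertion that $C_r\cap C_s$ is a face of $C_r$ (and of $C_s$). In this cone picture, the cones $\{C_j:j\in I\}$ with $I:=\{j:x\in E_j\}$ have pairwise disjoint interiors, their union is the convex polyhedral cone $T_P(x)$ (convex because $P\in\mathcal{P}$), and any pair $(C_i,C_j)$ with $\dim(C_i\cap C_j)=d-1$ shares it as a common facet by hypothesis.

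The key geometric input is Lemma~\ref{Lem:clint}, applied locally: $\bigcup_{j\in I}\Int C_j$ is dense in $T_P(x)$, so any $y\in\partial C_r\cap\Int T_P(x)$ must lie on a boundary shared with some $C_{j^*}$, and a dimension count forces $\dim(C_r\cap C_{j^*})=d-1$ near such points, making $C_r\cap C_{j^*}$ a common facet cut out by a supporting hyperplane that strictly separates $\Int C_r$ from the opposing $\Int C_{j^*}$. I would then exhibit $F_{rs}$ as a face of $C_r$ via a segment argument: take $z\in\relint(C_r\cap C_s)$ and an arbitrary $y\in C_r\cap\aff(C_r\cap C_s)$; the segment $[z,y]$ lies in $C_r\cap\aff(C_r\cap C_s)$, and any first exit point from $C_s$ along this segment would lie on a facet of $C_r$ containing all of $C_r\cap C_s$, shared with some $C_{j^*}\neq C_s$ whose interior is strictly on the opposite side of the corresponding hyperplane, which would contradict the segment being in $C_r$. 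Hence $C_r\cap C_s=C_r\cap\aff(C_r\cap C_s)$ is the intersection of $C_r$ with a flat cut out by facet-hyperplanes, identifying it as an exposed face.

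The main obstacle will be the case where $\relint F_{rs}$ meets $\partial P$: at such a point $x$, some facets of $C_r$ containing $F_{rs}$ may lie in $\partial P$ rather than in $\Int T_P(x)$, and so are not automatically shared with another cell via the hypothesis. This can be resolved by using that $P\in\mathcal{P}$ is itself polyhedral, so $\partial P$ is locally described at $x$ by finitely many supporting hyperplanes of $P$; each such hyperplane also supports every $E_j$ that meets $\partial P$ near $x$ (since the $E_j$ lie inside $P$), and so contributes additional facet-hyperplanes to the tangent cones $C_j$ along which the segment argument above still goes through, completing the identification of $F_{rs}$ as a common face.
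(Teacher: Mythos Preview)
Your reduction to tangent cones is reasonable, and the use of Lemma~\ref{Lem:clint} to see that facets of $C_r$ lying in $\Int T_P(x)$ are shared with neighbouring cells is correct. However, the segment argument in the third paragraph has a genuine gap. You take $z\in\relint(C_r\cap C_s)$, $y\in C_r\cap\aff(C_r\cap C_s)$, and claim that the first exit point $w$ from $C_s$ along $[z,y]$ lies on a facet of $C_r$ containing all of $C_r\cap C_s$, shared with some $C_{j^*}\neq C_s$, so that the segment past $w$ would leave $C_r$---a contradiction. But the segment lies entirely in the flat $\aff(C_r\cap C_s)$, which has dimension at most $d-2$; even if $w$ sits on a facet hyperplane, the segment need not cross it into $\Int C_{j^*}$, so no contradiction arises. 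Moreover, nothing forces the cell on the far side of that facet to differ from $C_r$ itself, and you have conflated exit from $C_s$ (a condition on $\partial C_s$) with crossing a facet of $C_r$. The assertion ``lie on a facet of $C_r$ containing all of $C_r\cap C_s$'' is simply unjustified.

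The paper supplies the missing bridge by constructing a \emph{chain of cells} rather than arguing along a single segment inside a low-dimensional flat. For $x\in\relint(E_r\cap E_s)$, choose $y\in\Int E_r$ and $z\in\Int E_s$ inside a small ball $B(x,\delta)$ and use Lemma~\ref{Lem:rdpcon} to connect them by a piecewise linear path in $\Int P\cap B(x,\delta)$ that avoids every pairwise intersection $E_i\cap E_j$ of affine dimension at most $d-2$. The cells visited along this path yield a chain $E_r=E_{r_0},E_{r_1},\dotsc,E_{r_L}=E_s$, all containing $x$, with each consecutive pair meeting in affine dimension $d-1$, so the hypothesis applies at every step. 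Iterating the face-inclusion principle from \citet[Theorem~2.1.2]{Sch14} (a face meeting the relative interior of another must contain it) along this chain gives $E_r\cap E_s=\bigcap_{j=0}^L E_{r_j}$, and a final pass along the chain shows this intersection is a face of $E_r$ (and by symmetry of $E_s$). The chain is exactly what lets one bootstrap the low-dimensional case from repeated applications of the $(d-1)$-dimensional hypothesis; your direct approach lacks such a mechanism.
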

The proof of Lemma~\ref{Lem:polycomplex} relies on the auxiliary result below.
\begin{lemma}
\label{Lem:rdpcon}
Let $d\geq 2$ and let $U\subseteq\R^d$ be a path-connected open set. If we have a finite collection of sets $E_1,\dotsc,E_\ell\subseteq\R^d$, each of affine dimension at most $d-2$, then $A:=U\setminus\bigcup_{s=1}^{\,\ell}E_s$ is path-connected. In fact, for any $x,y\in A$, there is a piecewise linear path $\gamma\colon [0,1]\to A$ with $x=\gamma(0)$ and $y=\gamma(1)$.
\end{lemma}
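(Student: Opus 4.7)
The plan is to combine a standard piecewise-linear (PL) connectivity argument in $U$ with a general-position perturbation that exploits the codimension-at-least-$2$ hypothesis on each $E_s$.

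First, using that $U$ is open and path-connected, I would show that any two points of $U$ can be joined by a PL path in $U$: the set of points reachable from $x$ by such a path is open (in any small ball around a point of $U$, endpoints can be joined by a straight segment) and its complement in $U$ is open by the same argument, so by connectedness it equals $U$. Applying this to $x, y \in A \subseteq U$ gives a PL path $\gamma_0$ with vertices $x = p_0, p_1, \dotsc, p_n = y$ lying in $U$. Since the image of $\gamma_0$ is compact and $U$ is open, there exists $\varepsilon > 0$ such that every point within Euclidean distance $\varepsilon$ of this image lies in $U$.

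Next, I would perturb each interior vertex $p_i$ (for $1 \leq i \leq n-1$) within distance $\varepsilon/2$ to a nearby point $p_i' \in A$. This is possible because $\bigcup_{s=1}^{\,\ell} E_s$ has Lebesgue measure zero in $\R^d$ (each $E_s$ has affine dimension at most $d-2 < d$), hence empty interior, so $A$ is dense in $U$. The endpoints $x$ and $y$ already lie in $A$ by hypothesis. The key step is then to replace each resulting segment $[p, q]$ (with endpoints $p, q \in A$ lying in the $\varepsilon$-tube around $\mathrm{image}(\gamma_0)$) by a two-segment detour $[p, r] \cup [r, q]$ lying entirely in $A$. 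I would fix a small ball $B$ about the midpoint of $[p, q]$, chosen so that for every $r \in B$ both $[p, r]$ and $[r, q]$ stay within the $\varepsilon$-tube around $[p, q]$, hence in $U$. The set of $r \in B$ for which $[p, r] \cap E_s \neq \emptyset$ is the image of $(0, 1] \times E_s$ under the smooth map $(t, w) \mapsto p + (w - p)/t$, and therefore has Hausdorff dimension at most $1 + (d - 2) = d - 1$, hence $d$-dimensional Lebesgue measure zero. The analogous statement holds for $[r, q]$, and $\{r \in B : r \in \bigcup_s E_s\}$ is also null. Choosing $r \in B$ outside this finite union of null sets yields an admissible detour, and concatenating over all original segments produces the required PL path from $x$ to $y$ in $A$.

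The principal technical subtlety is handling the two constraints simultaneously: avoiding $\bigcup_s E_s$ \emph{while} remaining in $U$. Compactness of $\mathrm{image}(\gamma_0)$ together with openness of $U$ yields the uniform $\varepsilon$-tube, which decouples the two requirements: once a detour is confined to this tube it automatically lies in $U$, and the dimension bound on $E_s$ guarantees that generic choices within the tube avoid $\bigcup_s E_s$. The codimension hypothesis $\dim E_s \leq d - 2$ is used in an essential way, since a set of dimension $d-1$ could separate $\R^d$ and so could not be avoided by mere perturbation of a given PL path.
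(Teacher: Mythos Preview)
Your proof is correct and takes a genuinely different route from the paper's. The paper proceeds by induction on $d$: it first treats the case where $U$ is an open ball by finding a hyperplane $H$ through $x$ and $y$ with $\dim(H\cap E_s)\leq d-3$ for all $s$, then applies the $(d-1)$-dimensional result inside $H\cap U$; the general open $U$ is handled by covering a path by finitely many balls and patching. Your approach is more direct: build a PL path in $U$, perturb interior vertices into $A$, and replace each edge by a two-segment detour through a generically chosen midpoint, using that the bad set of midpoints is a smooth image of a $(d-1)$-dimensional set (indeed, it lies in the affine span of the endpoint and $\aff E_s$, a flat of dimension at most $d-1$, which gives an even quicker justification than the Hausdorff-dimension step you wrote). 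Both methods exploit the same core fact---a cone over a $(d-2)$-dimensional set is null in $\R^d$---but the paper uses it to slice down one dimension at a time, whereas you use it to perturb locally; your argument avoids induction and handles general $U$ in one shot, while the paper's inductive version makes explicit that for $x-y$ in general position the PL path can be taken inside a single hyperplane.
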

\begin{proof}[Proof of \protect{Lemma~\ref{Lem:rdpcon}}]
Before proving the result in full generality, we first specialise to the case where $U$ is an open ball and proceed by induction on $d\geq 2$. The base case $d=2$ is trivial, 
so now consider a general $d>2$ and fix $x,y\in A$. For each $1\leq s\leq\ell$, define a linear subspace $W_s:=\{z-w:z,w\in\aff E_s\}$ and suppose for the time being that $x-y\notin\bigcup_{s=1}^{\,\ell} W_s$. Thus, we cannot have $[x,y]\subseteq\aff E_s$ for any $s$. Consequently, if $\dim(E_s)=d-2$, then $\dim(E_s\cup\{x,y\})=d-1$, so $H_s:=\aff(E_s\cup\{x,y\})$ is the unique affine hyperplane $H$ for which $x,y\in H$ and $E_s\subseteq H$. In other words, if $H\neq H_s$ is any other affine hyperplane through $x,y$, then $E_s\setminus H\neq\emptyset$, in which case $\aff(E_s\cup H)=\R^d$ and $\dim(H\cap E_s)=\dim(H)+\dim(E_s)-\dim(\R^d)=d-3$. On the other hand, if $\dim(E_s)\leq d-3$, then clearly $\dim(H\cap E_s)\leq\dim(E_s)\leq d-3$ for any affine hyperplane $H$ through $x,y$. We therefore conclude that there is an affine hyperplane $H$ (of dimension $d-1$) such that $x,y\in H$ and $\dim(H\cap E_s)\leq d-3$ for all $s$. Since $H\cap U$ is an open ball inside $H$, it follows by induction that there is a piecewise linear path $\gamma\colon [0,1]\to H$ from $x$ to $y$, as required.

In general, if $x,y$ are arbitrary points of $A$, then since $A$ and 
$\R^d\setminus\bigcup_{s=1}^{\,\ell}W_s$ have non-empty interior,
we can find $z\in A$ such that neither $x-z$ nor $z-y$ lie in $\bigcup_{s=1}^{\,\ell}W_s$. 
We have already established that there exist piecewise linear paths in $A$ from $x$ to $z$ and from $z$ to $y$, so we can concatenate these to obtain a suitable path from $x$ to $y$. This shows that $U\setminus\bigcup_{s=1}^{\,\ell}E_s$ is path-connected whenever $U$ is an open ball.

Now let $U$ be an arbitrary path-connected open set. Then for fixed $x,y\in A=U\setminus\bigcup_{s=1}^{\,\ell}E_s$, there exists a path $\alpha\colon [0,1]\to U$ with $\alpha(0)=x$ and $\alpha(1)=y$. Since $\cm{U}$ is closed and $\Img\alpha$ is compact, there exists $\delta>0$ such that $B(\alpha(t),\delta)\subseteq U$ for all $t\in [0,1]$. We now claim that there exist $K\in\N$ and $0\leq t_0,\dotsc,t_K\leq 1$ such that $x\in B(\alpha(t_0),\delta)$, $y\in B(\alpha(t_K),\delta)$ and $B(\alpha(t_{j-1}),\delta)\cap B(\alpha(t_j),\delta)\neq\emptyset$ for all $1\leq j\leq K$. Indeed, 
by the compactness of $\Img\alpha$, we can extract a finite subset $T\subseteq [0,1]$ such that $\Img\alpha\subseteq\bigcup_{t\in T}B(\alpha(t),\delta)$. Now consider the graph with vertex set $T$ in which $r,s\in T$ are joined by an edge if and only if $B(\alpha(r),\delta)\cap B(\alpha(s),\delta)\neq\emptyset$. If $\emptyset\neq S\subseteq T$ constitutes a connected component of this graph, then $\bigcup_{t\in S}B(\alpha(t),\delta)$ and $\bigcup_{t\in T\setminus S}B(\alpha(t),\delta)$ are disjoint open sets that cover $\Img\alpha$. But since $\Img\alpha$ is connected, it follows that $S=T$ and hence that the graph is connected. Choosing $r,s\in T$ such that $x\in B(\alpha(r),\delta)$ and $y\in B(\alpha(s),\delta)$, we deduce that there is a path in the graph from $r$ to $s$, as claimed.

Thus, since $\bigcup_{s=1}^{\,\ell}E_s$ has empty interior, we can find $x_1,x_2,\dotsc,x_K\in A$ such that $x_j\in B(\alpha(t_{j-1}),\delta)\cap B(\alpha(t_j),\delta)$ for all $1\leq j\leq K$. Setting $x_0:=x$ and $x_{K+1}:=y$, we have $x_j,x_{j+1}\in B(\alpha(t_j),\delta)\setminus\bigcup_{s=1}^{\,\ell}E_s$ for $0\leq j\leq K$, so it follows from the previous argument that there exists a piecewise linear path in $A$ from $x_{j-1}$ to $x_j$ for each $j$. As before, we can concatenate these to obtain a suitable path from $x$ to $y$.
\end{proof}
\begin{proof}[Proof of Lemma~\ref{Lem:polycomplex}]
Let $d\geq 2$ and fix $1\leq r,s\leq\ell$. First we claim that for any fixed $x\in\relint (E_r\cap E_s)$, there exist $r=r_0,r_1,\dotsc,r_L=s$ such that $x\in\bigcap_{\,j=0}^{\,L}E_{r_j}$ and $E_{r_{j-1}}\cap E_{r_j}$ has affine dimension $d-1$ for all $1\leq j\leq L$. Indeed, let $J$ be the set of indices $t\in\{1,\dotsc,m\}$ such that $x\in E_t$. Since each $E_t$ is closed, we can find $\delta>0$ such that $B(x,\delta)\cap E_t\neq\emptyset$ if and only if $t\in J$. Now fix $y\in B(x,\delta)\cap\Int E_r$ and $z\in B(x,\delta)\cap\Int E_s$,
and let $E'$ be the union of all sets of the form $E_j\cap E_k$ with affine dimension at most $d-2$, where $j,k\in J$.

By Lemma~\ref{Lem:rdpcon}, there is a piecewise linear path $\gamma\colon [0,1]\to \Int P\cap B(x,\delta)\setminus E'$ with $\gamma(0)=y$ and $\gamma(1)=z$.
Let $J'$ be the set of indices $t\in J$ for which $\Img\gamma\cap E_t\neq\emptyset$, and define $\theta(t):=\inf\{u\in [0,1]:\gamma(u)\in E_t\}$ for each $t\in J'$.
Now enumerate the elements of $J'$ as $t_0',t_1',\dotsc,t_K'$ in such a way that $0=\theta(t_0')<\theta(t_1')\leq\theta(t_2')\leq\dotsc\leq\theta(t_K')<1$. 
Then for each $2\leq J\leq K$, there exists $1\leq I<J$ such that $\gamma(\theta(t_J'))\in E_{t_I'}\cap E_{t_J'}$, so by the definition of $E'$, it follows that $E_{t_I'}\cap E_{t_J'}$ must have affine dimension $d-1$. Consequently, we can extract a subsequence $r=r_0,r_1,\dotsc,r_L=s$ of $t_0',\dotsc,t_K'$ with the required properties.

Setting $E_j':=E_{r_j}$ for $0\leq j\leq L$, we now show that $E_r\cap E_s=E_0'\cap E_L'=\bigcap_{\,j=0}^{\,L}E_j'$. Indeed, first note that since $\relint(E_0'\cap E_L')$ is a relatively open convex subset of $E_0'$, it follows from~\citet[Theorem~2.1.2]{Sch14} that there is a unique face $F_0'$ of $E_0'$ with $\relint(E_0'\cap E_L')\subseteq\relint F_0'$. Moreover, since $E_0'\cap E_1'$  has affine dimension $d-1$ by construction, the conditions of the lemma imply that $E_0'\cap E_1'$ is a face of $E_0'$ that contains $x$. Thus, $x\in (E_0'\cap E_1')\cap\relint F_0'$, and we now appeal to the following consequence of the proof of~\citet[Theorem~2.1.2]{Sch14}, which applies to any closed, convex and non-empty $K\subseteq\R^d$: if $G,G'$ are faces of $K$ such that $G\cap\relint G'\neq\emptyset$, then $G\supseteq G'$. Applying this with $K=E_0'$, $G=E_0'\cap E_1'$ and $G'=F_0'$, and invoking~\citet[Theorem~1.1.15(b)]{Sch14},
we deduce that $E_0'\cap E_1'\supseteq F_0'\supseteq E_0'\cap E_L'$ and 
hence that $E_0'\cap E_L'=E_0'\cap E_1'\cap E_L'$. 

Next, it follows from this and~\citet[Theorem~2.1.2]{Sch14} that there is a face $F_1'$ of $E_1'$ with $x\in\relint(E_0'\cap E_L')=\relint(E_0'\cap E_1'\cap E_L')\subseteq\relint F_1'$. Since $E_1'\cap E_2'$ is a face of $E_1'$ that contains $x$, we can apply the fact above with $K=E_1'$, $G=E_1'\cap E_2'$ and $G'=F_1'$ to deduce that $E_1'\cap E_2'\supseteq F_1'\supseteq E_0'\cap E_1'\cap E_L'$ and hence that 
$E_0'\cap E_L'=E_0'\cap E_1'\cap E_L'=E_0'\cap E_1'\cap E_2'\cap E_L'$. Continuing inductively in this vein, we obtain the conclusion that $E_r\cap E_s=E_0'\cap E_L'=\bigcap_{\,j=0}^{\,L}E_j'$, as claimed.

Now suppose that we have $z\in\bigcap_{\,j=0}^{\,L}E_j'$ and $x,y\in E_0'$ such that $z\in\relint [x,y]$. Then $E_0'\cap E_1'$ is a face of $E_0'$ that contains $z$, so $x,y\in E_1'$. In view of this and the fact that $E_1'\cap E_2'$ is a face of $E_1'$ that contains $z$, it then follows that $x,y\in E_2'$. By repeating this argument, we conclude that $x,y\in \bigcap_{\,j=0}^{\,L}E_j'$. This shows that $E_r\cap E_s$ is a face of $E_r=E_0'$, and it follows by symmetry that $E_r\cap E_s$ is a face of $E_s$.
\end{proof}
We are now ready to assemble the proof of Proposition~\ref{Prop:Logkaff}. This proceeds via a series of intermediate claims which together imply the result. In Section~\ref{Subsec:Notation}, we provide only a `bare-bones' definition of a log-$k$-affine function $f\in\mathcal{G}_d$ in the sense that the subdomains on which $f$ is log-affine are assumed only to be closed. Starting from this, we show in Claim~\ref{Claim:logkaff1} that $f$ has a `minimal' representation in which the subdomains are closed, convex sets and the restrictions of $f$  to these sets are distinct log-affine functions. In the remainder of the proof, we investigate the boundary structure of these subdomains more closely and establish that, under the hypotheses of Proposition~\ref{Prop:Logkaff}, these are in fact polyhedral sets that form a subdivision of $\supp f$.
\begin{proof}[Proof of Proposition~\ref{Prop:Logkaff}]
For convenience, we set $g:=\log f$. The case $k=1$ is trivial, so we assume throughout that $f$ is not log-1-affine. By Lemma~\ref{Lem:clint} and the fact that $K=\Cl\Int K$ for all $K\in\mathcal{K}$~\citep[Theorem~1.1.15(b)]{Sch14}, we may assume without loss of generality that there exist $k\geq 2$ closed sets $E_1',\dotsc,E_k'$ with non-empty interiors such that $P:=\supp f=\bigcup_{j=1}^{\,k}E_j'=\Cl\,\bigl(\bigcup_{j=1}^{\,k}\Int E_j'\bigr)$, and $\theta_1,\dotsc,\theta_k\in\R^d$, $\zeta_1,\dotsc,\zeta_k\in\R$ such that $g(x)=\tm{\theta_j}x+\zeta_j$ for all $x\in E_j'$. Moreover, we may suppose that there exist pairwise distinct $\alpha_1,\dotsc,\alpha_\ell\in\R^d$ and a subsequence $0=k_0<k_1<\dotsc<k_\ell=k$ of the indices $0,1,\dotsc,k$ such that $\theta_j=\alpha_s$ whenever $k_{s-1}<j\leq k_s$. Let $\kappa(f):=\ell$, and for each $1\leq s\leq\ell$, let $E_s:=\bigcup_{j=k_{s-1}+1}^{\,k_s}E_j'$.
\begin{claim}
\label{Claim:logkaff1}
$E_1,\dotsc,E_\ell$ are closed, convex sets with pairwise disjoint and non-empty interiors. Moreover, if $\theta_i=\theta_j$, then $\zeta_i=\zeta_j$, i.e.\ there exist $\beta_1,\dotsc,\beta_\ell\in\R$ such that $g(s)=g_s(x):=\tm{\alpha_s}x+\beta_s$ for all $x\in E_s$. Thus, the restrictions of $g$ to the sets $E_1,\dotsc,E_\ell$ are distinct affine functions $g_1,\dotsc,g_\ell$.
\end{claim}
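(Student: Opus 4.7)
The plan is to work with $g := \log f$, which is upper semi-continuous and concave on the convex polyhedral set $P = \supp f$, and to leverage global concavity to promote the local affine structure on each $E_j'$ into a global upper bound. First I would observe that at any $x_0 \in \Int E_j'$, the function $g$ coincides with the affine map $x \mapsto \tm{\theta_j}x + \zeta_j$ on a neighborhood of $x_0$, hence is differentiable there with gradient $\theta_j$; by a standard supporting-hyperplane argument for concave functions (e.g.\ via~\citet[Theorem~25.1]{Rock97}), the tangent affine function is a global upper envelope, so
\[
g(x) \le \tm{\theta_j}x + \zeta_j \qquad \text{for every } x \in P \text{ and every } j \in \{1,\dotsc,k\}.
\]

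From this global bound, the $\zeta$-consistency inside a $\theta$-class is automatic: if $\theta_i = \theta_j = \alpha_s$, then evaluating the bound coming from piece $j$ at any $x_i \in \Int E_i'$ (where $g(x_i) = \tm{\theta_i}x_i + \zeta_i$) yields $\zeta_i \le \zeta_j$, and symmetry gives $\zeta_i = \zeta_j$. This lets me set $\beta_s := \zeta_j$ for any $j$ with $\theta_j = \alpha_s$ and define $g_s(x) := \tm{\alpha_s}x + \beta_s$, so that $g \equiv g_s$ on $E_s$; the distinctness of the $\alpha_s$'s then makes the $g_s$'s distinct affine functions.

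For convexity, the idea is to refine the initial decomposition (permissible because the $E_j'$'s were chosen only up to such refinements at the outset, via Lemma~\ref{Lem:clint}) and replace $E_s$ by the canonical \emph{level set} $\{x \in P : g(x) = g_s(x)\}$. Using the established upper bound $g \le g_s$, this equals $\{x \in P : (g - g_s)(x) \ge 0\}$, which is the upper level set at zero of the concave function $g - g_s$, hence closed and convex. It contains $\Int E_j'$ for every $j$ with $\theta_j = \alpha_s$, so it has non-empty interior; its union over $s$ still equals $P$ because every $x \in P$ lies in some original $E_j'$ and therefore satisfies $g(x) = g_{s(j)}(x)$. Finally, if $x \in \Int E_s \cap \Int E_{s'}$ for some $s \ne s'$, then on a small open neighborhood of $x$ we would have $g = g_s$ and $g = g_{s'}$ simultaneously; since two affine functions agreeing on a non-empty open set must coincide, this forces $\alpha_s = \alpha_{s'}$, contradicting distinctness.

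The main obstacle is the global supporting-hyperplane inequality $g \le g_s$ on all of $P$. While standard for a concave function that is finite and differentiable at an interior point, some care is needed because $g$ may equal $-\infty$ on $\partial P$ (when $f$ vanishes at the boundary of its polyhedral support). I would handle this by first applying the supporting-hyperplane argument on $\Int P$, where $g$ is finite and continuous by~\citet[Theorem~10.1]{Rock97}, and then extending the bound to all of $P$ by upper semi-continuity of $g$ together with continuity of $g_s$. Everything else in the claim is then a short convex-analytic consequence of this inequality.
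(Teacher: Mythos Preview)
Your approach is correct and takes a genuinely different route from the paper. The paper works locally throughout: for $\zeta_i=\zeta_j$ it restricts $g$ to the segment $[x_i',x_j']$ between interior points of $E_i',E_j'$ and uses concavity of $g$ along that segment; for convexity of the originally defined $E_s=\bigcup_{j:\theta_j=\alpha_s}E_j'$ it argues by contradiction, showing that a point of $(\conv E_s)\setminus E_s$ would force $g_r=g_s$ on an open set for some $r\neq s$. You instead extract the single global inequality $g\le g_s$ on $P$ from the subgradient characterisation of concavity, after which $\zeta$-consistency and convexity of the level set $\{x\in P:g(x)=g_s(x)\}$ are immediate. This is cleaner and noticeably shorter.

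Two remarks. First, your closing concern about extending $g\le g_s$ to $\partial P$ is unnecessary: the subgradient inequality at a point of differentiability in $\Int\dom g$ already holds for all $x\in\R^d$ (trivially where $g=-\infty$), so no separate boundary step is needed. Second, and more to the point, the claim as stated is about the \emph{original} $E_s$, whereas you only prove convexity of the level set $\tilde E_s:=\{x\in P:g(x)=g_s(x)\}$; your appeal to Lemma~\ref{Lem:clint} does not justify enlarging the pieces (that lemma discards empty-interior pieces, it does not grow them). You can close this either by noting that Proposition~\ref{Prop:Logkaff} only asserts existence, so $\tilde E_s$ serves just as well, or by showing $E_s=\tilde E_s$ directly: since $E_s\subseteq\tilde E_s$ and $\tilde E_s=\Cl\Int\tilde E_s$, it suffices to check $\Int\tilde E_s\subseteq E_s$. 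For $x\in\Int\tilde E_s$ we have $g=g_s$ on some $B(x,\varepsilon)$; density of $\bigcup_j\Int E_j'$ in $P$ then forces every $y\in B(x,\varepsilon)\cap\Int E_{j'}'$ to satisfy $g_{s'}=g_s$ on a neighbourhood, hence $s'=s$ and $y\in E_s$, and closedness of $E_s$ gives $B(x,\varepsilon)\subseteq E_s$.
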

\begin{proof}[Proof of Claim~\ref{Claim:logkaff1}]
If $E\subseteq\R^d$ has non-empty interior and $g_1,g_2\colon E\to\R$ are affine functions that agree on a non-empty open subset of $E$, then $g_1,g_2$ must in fact agree everywhere on $E$, so $(\Int E_r)\cap(\Int E_s)=\emptyset$ whenever $r\neq s$. Moreover, for distinct $i,j\in\{k_{s-1}+1,\dotsc,k_s\}$, fix $x_i'\in\Int E_i'$ and $x_j'\in\Int E_j'$, so that $g(x)=\tm{\alpha_s}x+\zeta_j$ for all $x\in [x_i',x_j']$ sufficiently close to $x_j'$ and $g(x)=\tm{\alpha_s}x+\zeta_i$ for all $x\in [x_i',x_j']$ sufficiently close to $x_i'$. But since $g$ is concave on $[x_i',x_j']$, it follows that $\zeta_i=\zeta_j$, as required. Thus, there exist $\beta_1,\dotsc,\beta_\ell\in\R$ such that $g(x)=g_s(x):=\tm{\alpha_s}x+\beta_s$ for all $x\in E_s$.

It remains to show that each $E_s$ is convex. Fix $y\in\Int E_s$ and suppose for a contradiction that $(\conv E_s)\setminus E_s$ is non-empty. Since $\conv E_s\subseteq\Cl(\conv E_s)=\Cl\Int(\conv E_s)$, it follows that $\Int(\conv E_s)\setminus E_s\subseteq P$ is a non-empty open set.
We know from Lemma~\ref{Lem:clint} that $\bigcup_{r=1}^{\,\ell}\Int E_r$ is dense in $P$, i.e.\ that it has non-empty intersection with any non-empty open subset of $P$. Thus, there exist $r\neq s$ such that $W:=(\Int E_r)\cap\Int(\conv E_s)\setminus E_s$ is a non-empty open set. Now for each $x\in W$, there exist $x_1,x_2\in E_s$ such that $x\in [x_1,x_2]$. Since $g$ is concave on $[x_1,x_2]$ and $g(x_j)=\tm{\alpha_s}x_j+\beta_s$ for $j=1,2$, we have $g(x)\geq\tm{\alpha_s}x+\beta_s$. On the other hand, since $g$ is concave on $[x,y]$ and $g(z)=\tm{\alpha_s}z+\beta_s$ for all $z\in [x,y]$ sufficiently close to $y$, it follows that $g(x)\leq\tm{\alpha_s}x+\beta_s$. Thus, $g(x)=\tm{\alpha_s}x+\beta_s=\tm{\alpha_r}x+\beta_r$ for all $x\in W\subseteq\Int E_r$, so $\alpha_r=\alpha_s$ and $\beta_r=\beta_s$. This contradicts the fact that $\alpha_1,\dotsc,\alpha_\ell$ are pairwise distinct, so the proof of the claim is complete.
\end{proof}
\begin{claim}
\label{Claim:logkaff2}
If $E_r\cap E_s\neq\emptyset$ and $r\neq s$, then $E_r\cap E_s$ is a closed, convex subset of $\partial E_r$. Moreover, if $E_r\cap E_s$ has affine dimension $d-1$, then there exists a unique closed half-space $H_{rs}^+$ containing $E_r$ such that $E_r\cap E_s=E_r\cap H_{rs}$, where $H_{rs}:=\partial H_{rs}^+$.
Thus, in this case, $E_r\cap E_s$ is a common (exposed) facet of $E_r$ and $E_s$, and we must have $H_{sr}^+=\cm{(\Int H_{rs}^+)}$ and $H_{rs}=H_{sr}$.
\end{claim}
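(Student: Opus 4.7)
My plan is to reduce everything to the algebraic identity $g|_P = \min_{1 \leq t \leq \ell} g_t$, where $g := \log f$, which turns the geometric statement into a combinatorial comparison of the affine pieces $g_1, \dotsc, g_\ell$. Closedness and convexity of $E_r \cap E_s$ are immediate from Claim~\ref{Claim:logkaff1}. For the inclusion $E_r \cap E_s \subseteq \partial E_r$, I would suppose for contradiction that $x \in (\Int E_r) \cap E_s$ and pick any $y \in \Int E_s$; the line-segment principle for convex sets gives $(x,y] \subseteq \Int E_s$, while for sufficiently small $\lambda > 0$ we also have $x + \lambda(y - x) \in \Int E_r$, contradicting the disjointness of $\Int E_r$ and $\Int E_s$ established in Claim~\ref{Claim:logkaff1}.

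The crucial lemma is that each affine function $g_t$ is a global upper envelope for $g$. Indeed, for any $y \in \Int E_t$ (non-empty by Claim~\ref{Claim:logkaff1}), a neighbourhood of $y$ in $\R^d$ lies inside $E_t$, so on this neighbourhood $g$ coincides with the affine function $g_t$; hence $g$ is differentiable at $y$ with $\nabla g(y) = \alpha_t$, and concavity of $g$ yields $g(x) \leq g(y) + \tm{\alpha_t}(x - y) = g_t(x)$ for every $x \in \R^d$. Since $g(x) = g_{s(x)}(x)$ at each $x \in P$ (where $s(x)$ is any index with $x \in E_{s(x)}$), this forces $g(x) = \min_{1 \leq t \leq \ell} g_t(x)$ on $P$, and consequently
\[E_s = P \cap \bigcap_{t \neq s} \{g_s \leq g_t\}, \qquad 1 \leq s \leq \ell.\]

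With this description in hand, the identification of $E_r \cap E_s$ collapses cleanly. If $x \in E_r$, then $g_r(x) \leq g_s(x)$, and the extra condition $x \in E_s$ forces $g_r(x) = g_s(x)$; conversely, any $x \in E_r$ with $g_r(x) = g_s(x)$ lies in $E_s$, since then $g_s(x) = g_r(x) \leq g_t(x)$ for every $t$. Hence $E_r \cap E_s = E_r \cap H_{rs}$, where $H_{rs} := \{g_r = g_s\}$ is an affine hyperplane (because $\alpha_r \neq \alpha_s$), and $E_r$ lies in the closed half-space $H_{rs}^+ := \{g_r \leq g_s\}$. Under the extra hypothesis $\dim(E_r \cap E_s) = d - 1$, uniqueness of $H_{rs}^+$ follows because $\aff(E_r \cap E_s) = H_{rs}$ is then forced, while $\Int E_r \neq \emptyset$ rules out $E_r \subseteq H_{rs}$; the exposed facet assertions follow by symmetry, and the identities $H_{sr}^+ = \cm{(\Int H_{rs}^+)}$ and $H_{rs} = H_{sr}$ drop out of the symmetric formula $H_{sr}^+ = \{g_s \leq g_r\}$.

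The main obstacle will be establishing the global majorization $g \leq g_t$ on $\R^d$: it rests on the concavity of the extended-real-valued function $g$ together with full-dimensionality of each $E_t$, and is precisely the step that links the local affine structure of $f$ to a global supporting-hyperplane description. Everything after it is essentially bookkeeping.
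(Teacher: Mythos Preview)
Your approach is correct and takes a genuinely different route from the paper's. The paper works through the face theory of convex sets: it invokes \citet[Theorem~2.1.2]{Sch14} to locate the smallest face $F$ of $E_r$ with $\relint(E_r\cap E_s)\subseteq\relint F$, obtains a supporting hyperplane $H_{rs}$ for this facet, and then runs a fairly intricate contradiction argument to show $E_r\cap E_s=F$, by constructing balls and half-balls near a hypothetical point of $F\setminus(E_r\cap E_s)$ and eventually forcing $g$ to agree with two distinct $g_t$'s on an open set. Your global majorisation $g\le g_t$ short-circuits all of this: once $E_s=P\cap\bigcap_{t\neq s}\{g_s\le g_t\}$ is in hand, the identity $E_r\cap E_s=E_r\cap\{g_r=g_s\}$ is a one-liner, and as a bonus your characterisation subsumes the paper's Claim~\ref{Claim:logkaff4} as well.

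There is one step you should not skip: the word ``consequently'' in passing from $g=\min_t g_t$ on $P$ to the displayed formula $E_s=P\cap\bigcap_{t\neq s}\{g_s\le g_t\}=:\tilde E_s$. The inclusion $E_s\subseteq\tilde E_s$ is immediate (since $g=g_s$ on $E_s$ and $g\le g_t$ everywhere), but the reverse is not, because $E_s$ was \emph{defined} in Claim~\ref{Claim:logkaff1} as a union of the original pieces $E_j'$, not as $\{x\in P:g(x)=g_s(x)\}$. To close the gap, argue that if $x\in\Int\tilde E_s$, i.e.\ $x\in\Int P$ and $g_s(x)<g_t(x)$ for all $t\neq s$, then $g(x)=g_s(x)\neq g_t(x)$, so $x\notin E_t$ for any $t\neq s$; since $x\in P=\bigcup_t E_t$, this forces $x\in E_s$. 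Hence $\Int\tilde E_s\subseteq E_s$, and as both sets are closed, convex and full-dimensional, $E_s=\Cl\Int E_s=\Cl\Int\tilde E_s=\tilde E_s$. Without this, your converse step ``any $x\in E_r$ with $g_r(x)=g_s(x)$ lies in $E_s$'' only delivers $x\in\tilde E_s$.
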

\begin{proof}[Proof of \protect{Claim~\ref{Claim:logkaff2}}]
Since $E_s$ is convex and $\Int E_s\subseteq\cm{(\Int E_r)}$, we have $E_s=\Cl\Int E_s\subseteq\cm{(\Int E_r)}$, so $E_r\cap E_s$ is a closed, convex subset of $\partial E_r$. Then by~\citet[Theorem~2.1.2]{Sch14}, there exists a unique proper face $F$ of $E_r$ (whose affine dimension is at most $d-1$) such that $\relint(E_r\cap E_s)\subseteq\relint F$.
Now suppose that $E_r\cap E_s$ has affine dimension $d-1$. Then $F$ is a facet of $E_r$, so by~\citet[Theorem~2.1.2]{Sch14} and the final observation in the paragraph after the proof of this result~\citep[page~75]{Sch14}, there exists a closed half-space $H_{rs}^+\supseteq E_r$ such that $H_{rs}:=\partial H_{rs}^+$ is a supporting hyperplane to $E_r$ with $F=E_r\cap H_{rs}$. 
Note that a closed half-space $H_{rs}^+$ with these properties must be unique.
Furthermore, since the affine functions $g_r$ and $g_s$ agree on a relatively open subset of $H_{rs}$, namely $\relint(E_r\cap E_s)$, they must agree everywhere on $H_{rs}$.

We now show that $E_r\cap E_s=F$. If this is not the case, then there exist $y\in F\setminus (E_r\cap E_s)$ and $z\in\relint(E_r\cap E_s)\subseteq\relint F$. Thus, there is some $x\in (y,z]$ that belongs to $\partial(E_r\cap E_s)\cap (\relint F)$
and there exists $\eta>0$ such that $B(x,\eta)\cap H_{rs}\subseteq F\subseteq E_r$. 
Now fix $w\in\Int E_r$ and
observe that we can find $\delta\in (0,\eta)$ such that $E_s\not\subseteq B(x,\delta)$ and $B(x,\delta)\cap H_{rs}^+\subseteq\conv(\{w\}\cup B(x,\eta)\cap H_{rs})\subseteq E_r$. Since $E_r\subseteq H_{rs}^+$, it follows that $B(x,\delta)\cap H_{rs}^+=B(x,\delta)\cap E_r$. Writing $H_{sr}^+:=\cm{(\Int H_{rs}^+)}$ for the other closed half-space bounded by $H_{rs}$, we note in addition that $E_s\subseteq H_{sr}^+$; indeed, if there did exist $\tilde{x}\in E_s\setminus H_{sr}^+=E_s\cap\Int H_{rs}^+$, then $[x,\tilde{x}]$ would be contained within $E_s$ and also have non-empty intersection with $B(x,\delta)\cap\Int H_{rs}^+=\Int(B(x,\delta)\cap H_{rs}^+)\subseteq\Int E_r$, 
which would contradict the fact that $E_s=\Cl\Int E_s\subseteq\cm{(\Int E_r)}$.

Next, fix $x'\in E_s\setminus B(x,\delta)\subseteq H_{sr}^+\setminus B(x,\delta)$ and note that there exists $\delta'\in (0,\delta]$ such that for every $y'\in B(x,\delta')\cap\Int H_{sr}^+$, we can find $z'\in B(x,\eta)\cap H_{rs}\subseteq F\subseteq H_{rs}$ with $y'\in [x',z']$.
Thus, if $y',z'$ are as above, then since $g(x')=g_s(x')$ and $g(z')=g_r(z')=g_s(z')$, it follows from the concavity of $g$ on $[x',z']$ that $g(y')\geq g_s(y')$. On the other hand, there exists $w'\in [x,x']\subseteq E_s$ sufficiently close to $x$ such that $w'\in [y',z'']$ for some $z''\in B(x,\eta)\cap H_{rs}\subseteq F\subseteq H_{rs}$. As before, we have $g(w')=g_s(w')$ and $g(z'')=g_r(z'')=g_s(z'')$, so $g(y')\leq g_s(y')$ by the concavity of $g$ on $[w',z'']$. We therefore conclude that $g(y')=g_s(y')$ for all $y'\in B(x,\delta')\cap\Int H_{sr}^+$.
Note that we cannot have $B(x,\delta')\cap\Int H_{sr}^+\subseteq\Int E_s$; indeed, it would follow that
$B(x,\delta')\cap H_{rs}\subseteq E_r\cap (\Cl\Int E_s)=E_r\cap E_s$, and since $\aff(E_r\cap E_s)=H_{rs}$, this would contradict the fact that $x\in\partial(E_r\cap E_s)$. Thus, there exists $t\neq r,s$ such that the intersection of $\Int E_t$ with $B(x,\delta')\cap\Int H_{sr}^+$ is a non-empty open set, which we denote by $U$. We see that the affine functions $g_s$ and $g_t$ both agree with $g$ on $U$, so in fact $g_s=g_t$ on $\R^d$. This contradicts Claim~\ref{Claim:logkaff1}, so it must therefore be the case that $E_r\cap E_s=F=E_r\cap H_{rs}$, as required.

By interchanging $E_r$ and $E_s$ in the argument above, we deduce that there exists a closed half-space $H^+$ containing $E_s$ such that $E_r\cap E_s=E_s\cap\partial H^+$. Then $E_r\cap E_s\subseteq H_{rs}\cap\partial H^+\subseteq H_{rs}$, so $\dim(E_r\cap E_s)=\dim(H_{rs}\cap\partial H^+)=\dim(H_{rs})=d-1$.
It follows that $\partial H^+=H_{rs}$ and hence that $H^+=H_{sr}^+$, which yields the final assertion of the claim. 
\end{proof}
Since $P\in\mathcal{P}$ by hypothesis, there exist closed half-spaces $H_1^+,\dotsc,H_M^+$ such that $P=\bigcap_{\,j=1}^{\,M} H_j^+$. For each $1\leq j\leq M$, let $H_j:=\partial H_j^+$, and for each $1\leq r\leq\ell$, let $I_r$ be the set of indices $s\in\{1,\dotsc,\ell\}$ for which $E_r\cap E_s$ has affine dimension $d-1$.
\begin{claim}
\label{Claim:logkaff3}
If $1\leq r\leq\ell$ and $x\in\partial E_r$, then either $x\in E_r\cap H_j$ for some $1\leq j\leq M$ or $x\in E_r\cap E_s=E_r\cap H_{rs}$ for some $s\in I_r$.
\end{claim}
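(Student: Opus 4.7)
The plan is to dispose of the case $x \in \partial P$ first and then handle the remaining case $x \in \Int P$ by a connectedness argument on a small punctured ball. If $x \in \partial P = \partial\,\bigcap_{j=1}^M H_j^+$, then $x \in H_j$ for some $j$ and the first alternative holds, so it suffices to treat the case $x \in \Int P$.

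For this case, I would exploit the fact that the $E_s$ are finitely many and closed, so there exists $\eta > 0$ with $B(x,\eta) \subseteq \Int P$ and $B(x,\eta) \cap E_s = \emptyset$ for every $s \notin J \cup \{r\}$, where $J := \{s \neq r : x \in E_s\}$. I would first observe that $J$ is non-empty: if it were empty, then $B(x,\eta) \cap P \subseteq E_r$, forcing $x \in \Int E_r$, which contradicts $x \in \partial E_r$. Writing $E_r = \Cl\Int E_r$ (Claim~\ref{Claim:logkaff1} together with~\citet[Theorem~1.1.15(b)]{Sch14}) one also sees that both $\Int E_r \cap B(x,\eta)$ and $B(x,\eta) \setminus E_r$ are non-empty.

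The heart of the argument is the following dichotomy. Suppose for contradiction that no $s \in J$ belongs to $I_r$, so that $\dim(E_r \cap E_s) \leq d-2$ for every $s \in J$. Then Lemma~\ref{Lem:rdpcon} (applied in the open ball $B(x,\eta)$, which is path-connected) shows that
\[
U := B(x,\eta) \setminus \textstyle\bigcup_{s \in J}\,(E_r \cap E_s)
\]
is path-connected. I would then split $U$ as $U = A \sqcup B'$ with $A := U \cap E_r$ and $B' := U \setminus E_r$. Both $A$ and $B'$ are non-empty by the previous paragraph. The set $A$ is closed in $U$ since $E_r$ is closed, and $B'$ is open in $U$ since $\cm{E_r}$ is open; the key point to verify is that $B'$ is also closed in $U$. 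This uses a pigeonhole argument on the finite set $J$: any $y_n \in B'$ lies in some $E_{s_n}$ with $s_n \in J$ (as $y_n \in P$, $y_n \notin E_r$, and $B(x,\eta)$ avoids the other $E_s$), so by finiteness we may pass to a subsequence with $s_n = s$ constant, whence any limit $y \in U$ lies in $E_s$ but not in $E_r \cap E_s$ (since $y \in U$), hence not in $E_r$, so $y \in B'$. This would contradict the connectedness of $U$.

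The principal obstacle is setting up the connectedness argument cleanly: one needs to choose $\eta$ so that only the `relevant' $E_s$ meet $B(x,\eta)$, ensure that the removed set $\bigcup_{s \in J}(E_r \cap E_s)$ has codimension at least two so Lemma~\ref{Lem:rdpcon} applies, and then verify that $B'$ is closed in $U$ using the pigeonhole step above. Once the contradiction is reached, we conclude that some $s \in J \cap I_r$ exists, and Claim~\ref{Claim:logkaff2} immediately identifies $E_r \cap E_s$ with $E_r \cap H_{rs}$, giving the second alternative. The case $d=1$ can be handled directly since there $\partial E_r$ consists of at most two points, each of which is either in $\partial P$ or an endpoint shared with some $E_s$.
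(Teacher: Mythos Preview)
Your proof is correct and takes a genuinely different route from the paper's.

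The paper argues globally: it first writes $\partial E_r$ as the union of the finitely many closed sets $E_r\cap H_1,\dotsc,E_r\cap H_M$ and $E_r\cap E_s$ for $s\neq r$, then invokes Lemma~\ref{Lem:clint} (applied in the subspace topology of $\partial E_r$) to discard those pieces with empty interior in $\partial E_r$, and finally shows that $E_r\cap E_s$ has non-empty interior in $\partial E_r$ precisely when $s\in I_r$. The harder direction of that last equivalence (empty interior when $s\notin I_r$) is handled by intersecting with a suitably chosen two-dimensional affine subspace $A$ through $x$ and arguing that $\partial(A\cap E_r)\setminus\{x\}$ approaches $x$ from outside $E_r\cap E_s$.

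Your argument is local and bypasses both Lemma~\ref{Lem:clint} and the two-plane slicing. You instead reuse Lemma~\ref{Lem:rdpcon} (already present in the paper for the proof of Lemma~\ref{Lem:polycomplex}) to show that removing the low-dimensional intersections $E_r\cap E_s$, $s\in J$, from a small ball leaves a connected set, and then obtain a clopen decomposition contradiction. The pigeonhole step that shows $B'$ is closed in $U$ is the point that requires care, and you have it right: any limit of points in $B'$ lies in some fixed $E_s$ with $s\in J$, hence outside $E_r$ once the sets $E_r\cap E_s$ have been removed. Your approach is arguably more economical here, since it avoids the boundary-interior analysis of $\partial E_r$; the paper's approach, on the other hand, yields slightly more structural information about which pieces of the cover of $\partial E_r$ are essential, though this extra information is not used later.
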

\begin{proof}[Proof of \protect{Claim~\ref{Claim:logkaff3}}]
Fix $1\leq\ell\leq r$, and note that $\partial P\subseteq\bigcup_{j=1}^{\,M} H_j$
and $\Cl\cm{E_r}\subseteq\Cl\,\bigl(\cm{P}\cup\bigcup_{s\neq r}E_s\bigr)=(\Cl\cm{P})\cup\bigl(\bigcup_{s\neq r}E_s\bigr)=\cm{P}\cup (\partial P)\cup\bigl(\bigcup_{s\neq r}E_s\bigr)$. Since $\Int E_r\cap\partial P\subseteq\Int P\cap\partial P=\emptyset$, we have $E_r\cap\partial P\subseteq\partial E_r$. Recalling from Claim~\ref{Claim:logkaff2} that $\bigcup_{s\neq r}\,(E_r\cap E_s)\subseteq\partial E_r$, we deduce that
$\partial E_r=E_r\cap\Cl\cm{E_r}$ is the union of the sets $E_r\cap H_1,\dotsc,E_r\cap H_M$ and $\bigcup_{s\neq r}\,(E_r\cap E_s)$, all of which are closed. In view of Claim~\ref{Claim:logkaff2} and Lemma~\ref{Lem:clint}, in which we set $E:=\partial E_r$ (equipped with the subspace topology), it suffices to show that $E_r\cap E_s$ has non-empty interior in $\partial E_r$ if and only if $s\in I_r$. 

Suppose first that $s\in I_r$, so that $\dim(E_r\cap E_s)=d-1$. Then $E_r\cap E_s=E_r\cap H_{rs}\subseteq\partial E_r$ and $\aff(E_r\cap E_s)=H_{rs}$ by Claim~\ref{Claim:logkaff2}, so for a fixed $x\in\relint(E_r\cap E_s)$, there exists $\delta>0$ such that $B(x,\delta)\cap H_{rs}\subseteq E_r\cap E_s$. Now fix $w\in\Int E_r$ and note that there exists $\delta'\in (0,\delta]$ such that $B(x,\delta')\cap H_{rs}^+\subseteq\conv(\{w\}\cup B(x,\delta)\cap H_{rs})\subseteq E_r$. Since $B(x,\delta')\cap\Int H_{sr}^+\subseteq\cm{E_r}$
and $B(x,\delta')\cap\Int H_{rs}^+=\Int(B(x,\delta)\cap H_{rs}^+)\subseteq\Int E_r$, we deduce that $B(x,\delta')\setminus H_{rs}=\emptyset$ and hence that $B(x,\delta')\cap\partial E_r=(B(x,\delta')\cap H_{rs})\cap\partial E_r\subseteq E_r\cap E_s$. Thus, $E_r\cap E_s$ has non-empty interior in $E=\partial E_r$, as required. 

On the other hand, if $s\notin I_r$, then $F:=E_r\cap E_s$ has affine dimension at most $d-2$. Fix $x\in F$ and $w\in\Int E_r$, and let $\eta>0$ be such that $B(w,\eta)\subseteq\Int E_r$. Then there exist $w_1,w_2\in B(w,\eta)$ such that $w_1\notin\aff F$ and $w_2\notin\aff(F\cup\{w_1\})$. Now let $A:=\aff\{x,w_1,w_2\}$. Then $A\cap F=\{x\}$ and $A\cap B(w,\eta)\neq\emptyset$ by construction, so $A\cap E_r$ is a closed, convex set with $\dim(A\cap E_r)=2$. 

We now verify that $\partial(A\cap E_r)=A \cap\partial E_r$. 
Indeed, since $A\cap\Int E_r$ and $\partial(A\cap E_r)$ are disjoint, 
we have $\partial(A\cap E_r)\subseteq A\cap\partial E_r$. For the reverse inclusion, note that if $x\in A\cap\partial E_r$, then there exists an open half-space $H^-\subseteq\R^d$ such that $H^-\cap E_r=\emptyset$ and $\partial H^-$ is a supporting hyperplane to $E_r$ at $x$. Since $A\cap\Int E_r\neq\emptyset$, we cannot have $A\subseteq\partial H^-$, so $\dim(A\cap\partial H^-)=1$ and therefore $x\in\Cl(A\cap H^-)\subseteq\Cl(A\setminus E_r)\subseteq\cm{(\relint(A\cap E_r))}$, as required. 

Since $x\in F\subseteq A\cap\partial E_r=\partial(A\cap E_r)$, it follows that $x\in\Cl\,(\partial(A\cap E_r)\setminus\{x\})$. 
By combining the observations above, we see that $\partial(A\cap E_r)\setminus\{x\}= (A\setminus\{x\})\cap\partial E_r\subseteq\cm{F}\cap\partial E_r$, so $x\in\Cl(\cm{F}\cap\partial E_r)$. Since $x\in F$ was arbitrary, we conclude that $F\subseteq\Cl(\cm{F}\cap\partial E_r)$, 
which implies that $F=E_r\cap E_s$ has non-empty interior in $\partial E_r$.
\end{proof}
\begin{claim}
\label{Claim:logkaff4}
For each $1\leq r\leq\ell$, we have $E_r=P\,\cap\,\bigcap_{\,s\in I_r\!}H_{rs}^+$, so in particular $E_r\in\mathcal{P}$.
\end{claim}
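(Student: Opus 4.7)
The forward inclusion $E_r\subseteq P\cap\bigcap_{s\in I_r}H_{rs}^+$ is immediate: $E_r\subseteq P$ by construction, and for each $s\in I_r$ we have $E_r\subseteq H_{rs}^+$ by the statement of Claim~\ref{Claim:logkaff2}. So the content of the claim lies in the reverse inclusion, which I plan to prove by contradiction. Write $Q:=P\cap\bigcap_{s\in I_r}H_{rs}^+$ and suppose there exists $x\in Q\setminus E_r$; I will fix an arbitrary $y\in\Int E_r$ and derive a contradiction by examining the first time the segment $[x,y]$ meets $E_r$.

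The preliminary observation I will need is that $\Int E_r$ avoids each of the bounding hyperplanes. Since $\Int E_r\subseteq\Int P$, we have $\Int E_r\subseteq\Int H_j^+$ for all $j$, so $y\notin H_j$. Similarly, for any $s\in I_r$, Claim~\ref{Claim:logkaff2} gives $E_s\subseteq\cm{(\Int E_r)}$, whence $\Int E_r\cap H_{rs}\subseteq\Int E_r\cap(E_r\cap E_s)=\emptyset$, so $y$ lies in the open half-space $\Int H_{rs}^+$. Next, parametrise $\gamma(t):=x+t(y-x)$ for $t\in[0,1]$ and set $t^*:=\inf\{t\in[0,1]:\gamma(s)\in E_r\text{ for all }s\in[t,1]\}$. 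Since $y\in\Int E_r$ and $x\notin E_r$, and since $E_r$ is closed, one checks that $t^*\in(0,1)$, that $z:=\gamma(t^*)\in E_r$, and that there exist $t_n\nearrow t^*$ with $\gamma(t_n)\notin E_r$; thus $z\in\partial E_r$.

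At this point I invoke Claim~\ref{Claim:logkaff3}: either $z\in H_j$ for some $j\in\{1,\dots,M\}$, or $z\in H_{rs}$ for some $s\in I_r$. In either case let $H^+$ be the associated closed half-space containing $E_r$, written as $\{w\in\R^d:\tm{\alpha}w\leq c\}$. The key point is that $x\in Q\subseteq H^+$ (either through $x\in P\subseteq H_j^+$ or through the defining intersection for $s\in I_r$), so $\tm{\alpha}x\leq c$; meanwhile $\tm{\alpha}y<c$ by the preliminary observation, and $\tm{\alpha}z=c$. The affine function $t\mapsto\tm{\alpha}\gamma(t)$ therefore takes values $\leq c$ at $t=0$ and $<c$ at $t=1$, but equals $c$ at the strictly interior point $t=t^*$. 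Splitting into the cases $\tm{\alpha}x<c$ (so the function stays strictly below $c$ throughout) and $\tm{\alpha}x=c$ (so the function is strictly decreasing and equals $c$ only at $t=0$) gives the desired contradiction.

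This shows $Q\subseteq E_r$, and combined with the forward inclusion yields $E_r=P\cap\bigcap_{s\in I_r}H_{rs}^+$. Since $P$ and each $H_{rs}^+$ are polyhedral, so is $E_r$. The only subtle step, and the one I expect to require the most care, is the verification that $y\in\Int H_{rs}^+$ for $s\in I_r$; this is where the proof truly uses the characterisation of the facets from Claim~\ref{Claim:logkaff2} (and the fact that $E_s\subseteq\cm{(\Int E_r)}$) rather than just the inclusion $E_r\subseteq H_{rs}^+$, and it is essential for the affine-function argument to produce a strict inequality at $t=1$.
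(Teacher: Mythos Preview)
Your proof is correct and follows essentially the same approach as the paper's: pick an interior point of $E_r$, find the boundary point of $E_r$ on the segment to $x$, invoke Claim~\ref{Claim:logkaff3} to identify a bounding half-space $H^+$, and then use affinity along the segment to reach a contradiction. The paper frames it as a direct contrapositive (for $x\notin E_r$ it concludes $x\notin H^+$, hence $x\notin Q$) rather than assuming $x\in Q$ and deriving a contradiction, but the substance is identical; your ``preliminary observation'' that $y\in\Int H^+$ is exactly what the paper uses implicitly when it writes ``we have $w\in H^+$, so it follows that $x\notin H^+$''.
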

\begin{proof}[Proof of Claim~\ref{Claim:logkaff4}]
For a fixed $1\leq r\leq\ell$, we already know that $E_r\subseteq P\cap\bigcap_{\,s\in I_r\!}H_{rs}^+$. Now fix $x\in\cm{E_r}$ and $w\in\Int E_r$, and note that there exists $y\in\partial E_r\cap [x,w)$. By Claims~\ref{Claim:logkaff2} and~\ref{Claim:logkaff3}, there is a closed half-space $H^+\supseteq E_r$ with $y\in\partial H^+$ such that either $H^+=H_j^+$ for some $1\leq j\leq M$, or $H^+=H_{rs}^+$ for some $s\in I_r$. In all cases, we have $w\in H^+$, so it follows that $x\notin H^+$ and hence that $x\notin P\cap\bigcap_{\,s\in I_r\!}H_{rs}^+$.
\end{proof}
We have now established the first part of Proposition~\ref{Prop:Logkaff}, as well as the fact that $E_r\cap E_s$ is a common face of $E_r$ and $E_s$ whenever this intersection has affine dimension $d-1$. In view of Claim~\ref{Claim:logkaff1}, a direct application of Lemma~\ref{Lem:polycomplex} yields the conclusion that $E_1,\dotsc,E_\ell$ constitutes a polyhedral subdivision of $P$. Since $\abs{I_r}\leq k-1$ for all $1\leq r\leq\ell$, it follows from Claim~\ref{Claim:logkaff4} that each $E_r$ can be expressed as the intersection of at most $M+\abs{I_r}\leq M+k-1$ closed half-spaces. In view of \citet[Theorem~1.6]{BG09}, this implies the last assertion of Proposition~\ref{Prop:Logkaff}. 

Finally, to show that the triples $(\alpha_j,\beta_j,E_j)_{j=1}^{\kappa(f)}$ are unique up to reordering, we make the following observation: if $g$ is affine on some $\tilde{E}\in\mathcal{K}$ with $\tilde{E}\subseteq P$, then there exists a unique $1\leq r\leq\ell$ such that $\tilde{E}_j\subseteq E_r$. Indeed, it cannot happen that there exist distinct $1\leq r,s\leq\ell$ such that $\Int\tilde{E}$ intersects both $\Int E_r$ and $\Int E_s$, since $g_1,\dotsc,g_\ell$ are distinct affine functions by Claim~\ref{Claim:logkaff1}. Thus, there exists a unique $1\leq r\leq\ell$ such that $\Int\tilde{E}\subseteq\Int E_r$, so $\tilde{E}=\Cl\Int\tilde{E}\subseteq\Cl\Int E_r=E_r$, whereas for $s\neq r$, we cannot have $\tilde{E}\subseteq E_s$ since $\Int\tilde{E}\not\subseteq\Int E_s$.

Consequently, if $\tilde{E}_1,\dotsc,\tilde{E}_{\ell'}\in\mathcal{K}$ are such that $P=\bigcup_{j=1}^{\,\ell'}\tilde{E}_j$ and the restrictions of $g$ to these sets are distinct affine functions, then the observation above
implies that $\ell'=\ell$ and that there is some permutation $\pi\colon\{1,\dotsc,\ell\}\to\{1,\dotsc,\ell\}$ such that $\tilde{E}_{\pi(j)}\subseteq E_j$ for all $1\leq j\leq\ell$. In fact, we must have $\tilde{E}_{\pi(j)}=E_j$ for all $j$. Indeed, if $E_j\setminus\tilde{E}_{\pi(j)}\neq\emptyset$ for some $j$, then since $E_j=\Cl\Int E_j$, it would follow that $W:=(\Int E_j)\setminus\tilde{E}_{\pi(j)}$ is a non-empty open subset of $P$. Since $\tilde{E}_{\pi(j)}\cap W=\emptyset$ and $\tilde{E}_{\pi(j')}\cap W\subseteq E_{j'}\cap\Int E_j=\emptyset$ for all $j'\neq j$, this would imply that $\bigcup_{j=1}^{\,\ell'}\tilde{E}_j\subseteq P\setminus W\subsetneqq P$. This contradiction completes the proof.
\end{proof}
Now for $k\in\N$ and $P\in\mathcal{P}\equiv\mathcal{P}_d$, denote by $\mathcal{F}^k(P)\equiv\mathcal{F}_d^k(P)$ the collection of all $f\in\mathcal{F}_d$ for which $\kappa(f)\leq k$ and $\supp f=P$. For $m\in\N_0$, recall that $\mathcal{P}^m\equiv\mathcal{P}_d^m$ denotes the collection of all $P\in\mathcal{P}_d$ with at most $m$ facets (and that we view $\R^d$ as a polyhedral set with 0 facets). Finally, for $k\in\N$ and $m\in\N_0$, define $\mathcal{F}^k(\mathcal{P}^m)\equiv\mathcal{F}_d^k(\mathcal{P}_d^m):=\bigcup_{P\in\mathcal{P}^m}\mathcal{F}^k(P)$.
\begin{proposition}
\label{Prop:FkPmempty}
For $k\in\N$ and $m\in\N_0$, the subclass $\mathcal{F}^k(\mathcal{P}^m)$ is non-empty if and only if $k+m\geq d+1$.
\end{proposition}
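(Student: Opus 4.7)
For necessity, suppose $f\in\mathcal{F}^k(\mathcal{P}^m)$. Proposition~\ref{Prop:Logkaff} then yields a polyhedral subdivision $E_1,\dotsc,E_{\kappa(f)}$ of $\supp f$ with $E_j\in\mathcal{P}^{k+m-1}$ and $f(x)=\exp(\tm{\alpha_j}x+\beta_j)$ on each $E_j$. Since $E_j\in\mathcal{P}_d$ has non-empty interior, $f>0$ on $E_j$ and $0<\int_{E_j}f\le 1$; by Proposition~\ref{Prop:log1aff} (the integrability characterisation for log-affine functions), this forces $E_j$ to be line-free. It then remains to observe that a line-free polyhedron with non-empty interior in $\R^d$ must have at least $d$ facets: from an irredundant half-space representation $E_j=\bigcap_{i=1}^{\,r}\{x:\tm{a_i}x\le b_i\}$ with $r=\abs{\mathscr{F}(E_j)}$, if $r<d$ then the outer normals $a_1,\dotsc,a_r$ cannot span $\R^d$, and any non-zero $u\in\R^d$ orthogonal to all of them generates a line $x+\R u\subseteq E_j$ for each $x\in E_j$. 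Combining $\abs{\mathscr{F}(E_j)}\ge d$ with $\abs{\mathscr{F}(E_j)}\le k+m-1$ yields $k+m\ge d+1$.

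For sufficiency, first note that $\mathcal{F}^k(\mathcal{P}^m)\subseteq\mathcal{F}^{k'}(\mathcal{P}^{m'})$ whenever $k\le k'$ and $m\le m'$, so it suffices to exhibit a density in $\mathcal{F}^k(\mathcal{P}^m)$ for each $(k,m)$ with $k+m=d+1$ and $1\le k\le d+1$; given $(k',m')$ with $k'+m'\ge d+1$, the pair $k:=\min(k',d+1)$, $m:=d+1-k$ has these properties and satisfies $k\le k'$, $m\le m'$. For $k=1$, the product exponential $f(x):=e^{-\sum_{i=1}^d x_i}\Ind_{[0,\infty)^d}(x)$ lies in $\mathcal{F}^1(\mathcal{P}^d)$. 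For $k\ge 2$, I would use a product construction: write $\R^d=\R^m\times\R^{k-1}$, take $P:=[0,\infty)^m\times\R^{k-1}\in\mathcal{P}^m$, and subdivide the $\R^{k-1}$ factor via the fan over a centred regular $(k-1)$-simplex. Concretely, choose unit vectors $v_1,\dotsc,v_k\in\R^{k-1}$ with $\sum_j v_j=0$ and $\tm{v_i}v_j=-1/(k-1)$ for $i\ne j$, set $C_i:=\bigl\{\sum_{j\ne i}\lambda_jv_j:\lambda_j\ge 0\bigr\}$, and let $E_i:=[0,\infty)^m\times C_i$. The candidate density
\[
f(x,y):=c\exp\bigl(-\textstyle\sum_{i=1}^m x_i+\min_{1\le j\le k}\tm{v_j}y\bigr)\Ind_P(x,y)
\]
is continuous and log-concave on $P$, log-affine on each $E_i$ with pairwise distinct affine pieces (so $\kappa(f)=k$), and integrable because $\tm{(-v_i)}v_j=1/(k-1)>0$ for all $j\ne i$ places $-v_i\in\Int(\xst{\rec(C_i)})$, so that Proposition~\ref{Prop:log1aff} applies on each piece.

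The main obstacle lies in the sufficiency construction: confirming that $\{C_i\}_{i=1}^k$ genuinely forms a polyhedral subdivision of $\R^{k-1}$ (not merely a cover) and that $\min_{1\le j\le k}\tm{v_j}y=\tm{v_i}y$ on $C_i$. Together these secure continuity of $f$ across shared facets and the log-$k$-affine structure with the stated piece count. Both facts reduce to the elementary identity $\tm{v_r}y-\tm{v_i}y=\bigl(k/(k-1)\bigr)\lambda_r\ge 0$ for $y=\sum_{l\ne i}\lambda_lv_l\in C_i$ and $r\ne i$, which follows directly from $\tm{v_i}v_j=-1/(k-1)$ and can be closed by a brief direct verification.
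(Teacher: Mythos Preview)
Your necessity argument is essentially the paper's: apply Proposition~\ref{Prop:Logkaff} to obtain pieces $E_j\in\mathcal{P}^{k+m-1}$, invoke Proposition~\ref{Prop:log1aff} to see each $E_j$ is line-free, and then argue such a polyhedron has at least $d$ facets. The paper cites its Lemma~\ref{Lem:minfacets} for this last step, whereas you give the direct span argument inline, but the content is identical.

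For sufficiency the two approaches arrive at the same densities but are organised differently. The paper proceeds by induction on $d$: for $k\le d$ it takes a density $f_{k,d-k}\in\mathcal{F}_{d-1}^k(\mathcal{P}_{d-1}^{d-k})$ from the inductive hypothesis and tensors with $e^{-x_d}\Ind_{\{x_d\ge 0\}}$, while for $k=d+1$ it uses $x\mapsto ce^{-\rho_S(x)}$ with $\rho_S$ the Minkowski functional of a centred $d$-simplex. Unrolling the induction gives exactly your product of an exponential on $[0,\infty)^m$ with a ``fan'' density on $\R^{k-1}$; your $\min_j \tm{v_j}y$ is just $-\rho_S$ up to scaling when $S$ is the regular simplex. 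Your presentation is more explicit and handles all $(k,m)$ with $k+m=d+1$ uniformly in one formula; the paper's inductive organisation trades this for shorter verifications at each step and avoids the explicit linear algebra of the simplex fan. The identity $\tm{v_r}y-\tm{v_i}y=\tfrac{k}{k-1}\lambda_r$ you isolate is correct and, as you note, immediately yields both the subdivision property of $\{C_i\}$ and the agreement of $\min_j\tm{v_j}y$ with $\tm{v_i}y$ on $C_i$, so the proposal is complete.
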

For one direction of the proof, we require the following basic result:
\begin{lemma}
\label{Lem:minfacets}
Every line-free $P\in\mathcal{P}_d$ has at least $d$ facets. Moreover, every bounded $P\in\mathcal{P}_d$ (i.e.\ every $d$-dimensional polytope) has at least $d+1$ facets. 
\end{lemma}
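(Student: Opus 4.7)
The plan is to argue via an irredundant half-space representation of $P$ and to read off both statements from the dimension of the lineality space of $P$ (for the first part) and from the structure of the recession cone (for the second).

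Since $P\in\mathcal{P}_d$ has non-empty interior, it admits a unique (up to reordering and positive rescaling) irredundant representation
\[
P \;=\; \bigcap_{i=1}^{m}H_i^+, \qquad H_i^+:=\{x\in\R^d:\tm{\alpha_i}x\leq\beta_i\},
\]
with $\alpha_i\in\R^d\setminus\{0\}$ and $\beta_i\in\R$, and the facets of $P$ are precisely the sets $P\cap H_i$ with $H_i:=\partial H_i^+$, so the number of facets equals $m$ (this is standard and may be quoted from \citet[Sections~2.4--2.5]{Sch14}). I will use the elementary fact that $\rec(P)=\bigcap_{i=1}^{m}\{x:\tm{\alpha_i}x\leq 0\}$ and that its lineality space equals $L:=\bigcap_{i=1}^m\ker(\tm{\alpha_i})=\Span\{\alpha_1,\dotsc,\alpha_m\}^{\perp}$, and the fact that $P+L\subseteq P$, so any non-zero vector in $L$ generates lines contained in $P$.

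For the first assertion, suppose for a contradiction that $m\leq d-1$. Then $\dim L\geq d-m\geq 1$, so picking any $x\in P$ and any $u\in L\setminus\{0\}$, the line $\{x+\lambda u:\lambda\in\R\}$ lies in $P$, contradicting the line-freeness of $P$. Hence $m\geq d$.

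For the second assertion, since a bounded set is automatically line-free, the first part gives $m\geq d$. Suppose for contradiction that $m=d$. If $\alpha_1,\dotsc,\alpha_d$ were linearly dependent, then $L$ would again be non-trivial and $P$ would contain a line, contradicting boundedness. Hence $\alpha_1,\dotsc,\alpha_d$ form a basis of $\R^d$; let $v_1,\dotsc,v_d$ denote the dual basis, so that $\tm{\alpha_i}v_j=\delta_{ij}$. Then $\tm{\alpha_i}(-v_1)=-\delta_{i1}\leq 0$ for every $i$, which places $-v_1\in\rec(P)\setminus\{0\}$. By \citet[Theorem~8.4]{Rock97}, this is incompatible with $P$ being bounded, so we must have $m\geq d+1$.

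The only substantive point is the identification of facets with the half-spaces appearing in an irredundant representation, which is entirely standard; the rest is elementary linear algebra applied to the (reces\-sion-cone) lineality space and the dual basis. No technical obstacle is anticipated.
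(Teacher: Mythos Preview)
Your proof is correct and follows essentially the same approach as the paper's: both argue via the normal vectors $\alpha_1,\dotsc,\alpha_m$ of a half-space representation, showing that line-freeness forces $\Span\{\alpha_i\}=\R^d$ (hence $m\geq d$) and that boundedness rules out $m=d$ because $d$ vectors cannot positively span $\R^d$. The only cosmetic difference is that the paper phrases this via cone duality ($\rec(P)=-C^*$ and Proposition~\ref{Prop:linefree}), whereas you work directly with the lineality space and a dual-basis witness in the recession cone; your route is slightly more self-contained.
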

\begin{proof}[Proof of Lemma~\ref{Lem:minfacets}]
Fix $P\in\mathcal{P}$ and consider any 
representation of $P$ as the intersection of finitely many closed half-spaces $H_1^+,\dotsc,H_m^+$, where $H_j^+=\{x\in\R^d:\tm{\alpha_j}x\leq b_j\}$ for some $\alpha_j\in\R^d\setminus\{0\}$ and $b_j\in\R$. Then $C:=\bigl\{\sum_{j=1}^m\lambda_j\alpha_j:\lambda_j\geq 0\text{ for all }j\bigr\}$ is a closed, convex cone, and note that $\rec(P)=\{u\in\R^d:\tm{\alpha_j}u\leq 0\text{ for all }j\}=-C^*$. Thus, if $P$ is line-free, then $\Int(\rec(P)^*)=-\Int C$ is non-empty by Proposition~\ref{Prop:linefree}, so $m\geq\dim(C)=d$. On the other hand, if $P$ is bounded, then $\rec(P)=-C^*=\{0\}$ by~\citet[Theorem~8.4]{Rock97}, so $C=\R^d$. As above, this implies that $m\geq\dim(C)=d$, and observe that we cannot have $m=\dim(C)=d$, since then $\alpha_1,\dotsc,\alpha_d$ would be linearly independent, in which case $C\neq\R^d$. This implies that $m\geq d+1$, as required. In both the line-free and bounded cases, the result follows on applying~\citet[Theorem~1.6]{BG09}.
\end{proof}
\begin{proof}[Proof of Proposition~\ref{Prop:FkPmempty}]
Suppose first that $k\in\N$ and $m\in\N_0$ are such that $\mathcal{F}^k(\mathcal{P}^m)$ is non-empty. Then for $f\in\mathcal{F}^k(\mathcal{P}^m)$, we deduce from the final assertion of Proposition~\ref{Prop:Logkaff} that there are polyhedral sets $E_1,\dotsc,E_{\kappa(f)}\in\mathcal{P}^{k+m-1}$ such that $\restr{f}{E_j}$ is log-1-affine and integrable for each $1\leq j\leq\kappa(f)$. Thus, by Proposition~\ref{Prop:log1aff} and Lemma~\ref{Lem:minfacets}, each $E_j$ is line-free and therefore has at least $d$ facets. It follows that $k+m-1\geq d$, as desired.

To establish the converse, note that since the classes $\mathcal{F}^k(\mathcal{P}^m)$ are nested in $k$ and $m$ by definition, it will suffice to consider each $k\in\{1,\dotsc,d+1\}$ in turn and exhibit a density $f_{k,\,d+1-k}$ on $\R^d$ that lies in $\mathcal{F}^{k}(\mathcal{P}^{d+1-k})\equiv\mathcal{F}_d^{k}(\mathcal{P}_d^{d+1-k})$. We proceed by induction on $d\in\N$. When $d=1$, the univariate densities $f_{1,1}\colon x\mapsto e^{-x}\,\Ind_{\{x\geq 0\}}$ and $f_{2,0}\colon x\mapsto e^{-\abs{x}}/2$ have the required properties. For a general $d\geq 2$, first fix $k\in\{1,\dotsc,d\}$ and define $f_{k,\,d+1-k}\colon\R^d\to [0,\infty)$ by \[f_{k,\,d+1-k}(x_1,\dotsc,x_d):=f_{k,\,d-k}(x_1,\dotsc,x_{d-1})\,e^{-x_d}\,\Ind_{\{x_d\geq 0\}},\]
where $f_{k,\,d-k}\colon\R^{d-1}\to [0,\infty)$ is an element of $\mathcal{F}_{d-1}^k(\mathcal{P}_{d-1}^{d-k})$ whose existence is guaranteed by the inductive hypothesis. Then $\int_{\R^d}\,f_{k,\,d+1-k}=\bigl(\int_{\R^{d-1}}f_{k,\,d-k}\bigr)\bigl(\int_0^\infty e^{-x_d}\,dx_d\bigr)=1$ by Fubini's theorem, so $f_{k,\,d+1-k}$ is a density, which is easily seen to lie in $\mathcal{F}_d$. Observe also that $\supp f_{k,\,d+1-k}=(\supp f_{k,\,d-k})\times [0,\infty)\in\mathcal{P}_d^{d+1-k}$; indeed, $P:=\supp f_{k,\,d-k}$ has (at most) $d-k$ facets by induction, and we see that $F$ is a facet of $\supp f_{k,\,d+1-k}=P\times [0,\infty)$ if and only if either $F=P\times\{0\}$ or $F=F'\times [0,\infty)$ for some facet $F'$ of $P$. Furthermore, since $f_{k,\,d-k}\in\mathcal{F}_{d-1}^k(\mathcal{P}_{d-1}^{d-k})$, there are closed sets $E_1',\dotsc,E_k'\subseteq\R^{d-1}$ such that $P=\supp f_{k,\,d-k}=\bigcup_{j=1}^{\,k}E_j'$ and $\log f_{k,\,d-k}$ is affine on each $E_j'$. It follows that $\log f_{k,\,d+1-k}$ is affine on each of the sets $E_1'\times [0,\infty),\dotsc,E_k'\times [0,\infty)$, whose union is $P\times [0,\infty)=\supp f_{k,\,d+1-k}$. This shows that $f_{k,\,d+1-k}\in\mathcal{F}_d^{k}(\mathcal{P}_d^{d+1-k})$.

Finally, to define $f_{d+1,\,0}$, we fix $u_1,\dotsc,u_{d+1}\in\R^d$ such that $S:=\conv\{u_1,\dotsc,u_{d+1}\}$ is a $d$-simplex with $0\in\Int S$. Then as remarked at the start of Section~\ref{Sec:LogkaffineSupp}, the Minkowski functional $\rho_S\colon w\mapsto\inf\{\lambda>0:w\in\lambda S\}\in [0,\infty)$ is a convex (and therefore continuous) function on $\R^d$, and by~\citet[Proposition~2]{XS19}, there exists $c>0$ such that $f_{d+1,\,0}\colon x\mapsto ce^{-\rho_S(x)}$ is a density in $\mathcal{F}_d$. Defining $F_j:=\conv\{u_1,\dotsc,u_{j-1},u_{j+1},\dotsc,u_{d+1}\}$ for $1\leq j\leq d+1$, we see that the facets of $S$ are precisely $F_1,\dotsc,F_{d+1}$, and hence that $\log f_{d+1,\,0}=\log c-\rho_S$ is affine on $C_j:=\bigcup_{\lambda\in [0,\infty)}\lambda F_j$ for each $1\leq j\leq d+1$.
Since $\supp f_{d+1,\,0}=\R^d=\bigcup_{\lambda\in [0,\infty)}\lambda S=\bigcup_{j=1}^{\,d+1}C_j$, it follows that $f_{d+1,\,0}\in\mathcal{F}_d^{d+1}(\mathcal{P}_d^0)$, as required.
\end{proof}
To conclude this subsection, we also record the fact that if $d\leq 3$, then every polytope in $\mathcal{P}^m\equiv\mathcal{P}_d^m$ can be triangulated into $O(m)$ simplices.
\begin{lemma}
\label{Lem:euler}
If $d\leq 3$ and $P\in\mathcal{P}^m$ is a polytope, then $P$ has at most $2m$ vertices and there is a triangulation of $P$ that contains at most $6m$ simplices.
\end{lemma}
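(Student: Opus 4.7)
The plan is to handle the three dimensions separately, with the only real content being the case $d=3$. When $d=1$, a polytope in $\mathcal{P}^m$ with $m\geq 2$ is a closed interval with exactly two vertices and is itself a $1$-simplex, so both claims are immediate. When $d=2$, Euler's formula for a convex polygon gives $v=m$ (equal numbers of vertices and edges), so the vertex bound holds; taking the fan triangulation from any vertex of $P$ produces $m-2$ triangles, which is clearly at most $6m$.

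The bulk of the work is $d=3$. First I would establish the inequality $v\leq 2m-4$ via Euler's formula $v-e+m=2$, combined with the two incidence inequalities $2e\geq 3m$ (each facet is a polygon with at least three edges, and each edge lies in two facets) and $2e\geq 3v$ (each vertex is incident to at least three edges). Substituting the first into Euler's relation gives $v\geq m/2+2$, i.e.\ $m\leq 2v-4$, and substituting the second gives $m\geq v/2+2$, i.e.\ $v\leq 2m-4\leq 2m$.

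Next I would construct an explicit triangulation. Fix a vertex $v_0$ of $P$ and let $\mathscr{F}_0\subseteq\mathscr{F}(P)$ denote the set of facets not containing $v_0$. For each $F\in\mathscr{F}_0$, the pyramid $\conv(F\cup\{v_0\})$ is a $3$-polytope, and the collection of these pyramids forms a polyhedral subdivision of $P$ (this is essentially~\cite[Lemma~2.3.6 or similar]{Sch14}-style convex-hull bookkeeping). For each such $F$, which is a convex polygon with $v_F$ vertices, triangulate $F$ by the fan from a chosen vertex of $F$ into $v_F-2$ triangles; coning each of these to $v_0$ produces $v_F-2$ tetrahedra. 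Because these triangulations of the polygonal faces use only edges that already belong to $P$ (the fan diagonals lie strictly inside $F$ and hence cannot coincide with anything in an adjacent pyramid), the collection of all tetrahedra so obtained is a valid polyhedral subdivision of $P$, i.e.\ a triangulation.

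It then remains to count. Using $\sum_{F\in\mathscr{F}(P)}v_F=2e$ (each edge lies in two facets) together with Euler's formula,
\[
\sum_{F\in\mathscr{F}_0}(v_F-2)\;\leq\;\sum_{F\in\mathscr{F}(P)}(v_F-2)\;=\;2e-2m\;=\;2(v+m-2)-2m\;=\;2v-4\;\leq\;4m-12,
\]
which is at most $6m$. The step I expect to require the most care is the verification that coning the fan triangulations produces a \emph{bona fide} polyhedral subdivision in the sense of Section~\ref{Subsec:Notation}, i.e.\ that any two tetrahedra meet in a common face; this follows because two adjacent pyramids (from adjacent facets sharing an edge $e$ of $P$) intersect in the triangle $\conv(\{v_0\}\cup e)$, which is a face of each fan-cone triangulation on either side, and pyramids over non-adjacent facets intersect only along edges or at $v_0$ itself.
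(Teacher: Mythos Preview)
Your proof is correct and takes a genuinely different route from the paper for the case $d=3$. The paper's argument is essentially a one-liner: after deriving $v\leq 2(m-2)$ from Euler's formula and $2e\geq 3v$ exactly as you do, it simply invokes the result of \citet{EPW90} that any $3$-polytope with $v$ vertices admits a triangulation into at most $3v-11$ tetrahedra, giving at most $6m-23\leq 6m$ simplices. You instead build an explicit cone-from-a-vertex triangulation and count directly via $\sum_F v_F=2e$ and Euler's formula, obtaining $2v-4\leq 4m-12$, which is sharper than $3v-11$ for $v\geq 8$ and more than sufficient for the stated bound.

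The benefit of your approach is that it is entirely self-contained and elementary, requiring no external citation. The cost is that the step you correctly flag as delicate---verifying that the coned fan triangulations form a genuine polyhedral subdivision in the sense of Section~\ref{Subsec:Notation}---takes some care to write out in full. Your sketch captures the essential point: adjacent pyramids over facets $F_1,F_2$ sharing an edge $e$ meet in the triangle $\conv(\{v_0\}\cup e)$, which is a facet of exactly one tetrahedron on each side (the one whose base triangle contains the boundary edge $e$), and all other pairwise intersections among the tetrahedra reduce to edges through $v_0$ or to $\{v_0\}$ itself. This is the standard ``pulling'' triangulation, so the verification is routine, but you would want to spell it out a bit more carefully than the final paragraph currently does.
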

\begin{proof}
The cases $d=1,2$ are trivial, so suppose now that $d=3$. If $P$ has $v$ vertices, $e$ edges and $f$ facets, then Euler's formula asserts that $v-e+f=2$~\citep[e.g.][Section~20.1]{K04}. The edges of $P$ induce a graph structure on the set of vertices of $P$, and it is easy to see that the degree of every vertex is at least 3. This implies that $2e\geq 3v$, and we deduce from Euler's formula that $v\leq 2(f-2)$. The result above follows from the fact that $P$ has a triangulation that contains at most $3v-11$ simplices \citep{EPW90}.
\end{proof}
\begin{remark*}
In the case $d=3$, we also have the bound $2e\geq 3f$, since every face has at least 3 edges and every edge belong to exactly 2 faces. It then follows from Euler's formula that $f\leq 2(v-2)$.
\end{remark*}
In addition, when $d=2$, we have the following useful result about polyhedral subdivisions.
\begin{lemma}
\label{Lem:polysub2d}
If $d=2$ and $E_1,\dotsc,E_k$ is a subdivision of a polyhedral set $P\in\mathcal{P}^m$, then $\sum_{j=1}^k \,\abs{\mathscr{F}(E_j)}\lesssim k+m$.
\end{lemma}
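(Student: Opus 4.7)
My plan is to apply Euler's formula to the planar cell complex induced by the subdivision and then to leverage a lower bound on vertex degrees. First, I would reduce to the bounded case by intersecting with a large square $Q=[-R,R]^2$, choosing $R$ large enough that every vertex of every $E_j$ lies in $\Int Q$. One then checks that $P\cap Q\in\mathcal{P}^{m+4}$, that $\{E_j\cap Q\}_{j=1}^k$ remains a polyhedral subdivision of $P\cap Q$ (the common-face condition is preserved because truncation merely shortens unbounded edges to bounded segments, while any new facets of $E_j\cap Q$ come from $\partial Q$), and that truncation never decreases the facet count, so that $\sum_{j=1}^{k}|\mathscr{F}(E_j)|\leq\sum_{j=1}^{k}|\mathscr{F}(E_j\cap Q)|$. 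This reduces the problem to the bounded case with $m$ replaced by $m+4$.

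For bounded $P$, I would view the subdivision as a regular planar CW complex on a topological disk, whose $0$-, $1$-, and $2$-cells are the vertices and facets of the $E_j$ and the $E_j$ themselves. The common-face hypothesis rules out T-junctions (one first shows that a vertex of $E_j$ cannot lie in the relative interior of a facet of a neighbouring $E_i$, using the classification of faces through a point in the relative interior of a facet), so Euler's formula gives $v-e+k=1$. Writing $v=v_I+p+p'$, where $v_I$ counts interior $0$-cells, $p\leq m$ counts vertices of $P$ (via Lemma~\ref{Lem:euler}), and $p'$ counts auxiliary $0$-cells in the relative interiors of facets of $P$, and using that $\partial P$ is a topological circle so $e_B=v_B=p+p'$, a short computation would give
\begin{equation*}
\sum_{j=1}^{k}|\mathscr{F}(E_j)|=2e_I+e_B=2v_I+p+p'+2k-2.
\end{equation*}

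The crux, which I expect to be the most delicate step, is a degree lower bound of $3$ at every $0$-cell other than the vertices of $P$. For an interior $0$-cell $v$, the $E_j$ containing it carve out angular sectors at $v$ summing to $2\pi$, each strictly less than $\pi$ since $v$ is a vertex of every such $E_j$, forcing at least three incident $E_j$ and hence three incident edges. For a boundary $0$-cell $v$ in the relative interior of a facet of $P$, one must exclude the configuration where a single $E_j$ accounts for $v$ via two collinear incident facets; the point is that such collinearity would make $v$ a smooth boundary point rather than a vertex of $E_j$, contradicting the cell-complex structure. Hence at least two $E_j$ meet at $v$, contributing two boundary edges and at least one interior edge. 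Together with $\deg\geq 2$ at vertices of $P$, the handshake identity $2e=\sum_v\deg(v)\geq 3v_I+3p'+2p$ combined with Euler's $2e=2v_I+2p+2p'+2k-2$ yields $v_I+p'\leq 2k-2$; substituting into the display above gives $\sum_j|\mathscr{F}(E_j)|=(v_I+p')+v_I+p+2k-2\leq 6k+m-6\lesssim k+m$.
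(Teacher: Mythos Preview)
Your argument is correct, but the paper takes a different and shorter route via the \emph{dual graph}. Rather than reducing to the bounded case and analysing the primal complex, the paper first subdivides $\cm{P}$ into $m$ polyhedral regions, one for each facet of $P$ (this works whether or not $P$ is bounded). It then forms the dual graph on the resulting $k+m$ regions, joining two regions by an edge whenever they share a nontrivial line segment. In this graph the degree of the vertex corresponding to $E_j$ is exactly $|\mathscr{F}(E_j)|$, and the common-face hypothesis ensures the graph is simple. Since it is planar, it has at most $3(k+m)-6$ edges, and the handshake identity gives $\sum_j|\mathscr{F}(E_j)|\leq 2\bigl(3(k+m)-6\bigr)\lesssim k+m$ in one line.

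Your primal approach, by contrast, applies Euler's formula directly to the subdivision of $P$ and controls the vertex count through degree lower bounds at interior, boundary-smooth, and corner $0$-cells. This is more hands-on and in fact yields a slightly sharper constant ($6k+m-6$ versus the paper's $6k+6m-12$), but at the price of the truncation step and the case analysis on vertex types. The dual-graph argument sidesteps both: unbounded $P$ is handled without modification, and the degree analysis is replaced by the single fact that simple planar graphs on $V$ vertices have at most $3V-6$ edges.
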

\begin{proof}
\begin{figure}[htb]
\vspace{-0.3cm}
\centering
\includegraphics[width=0.9\textwidth]{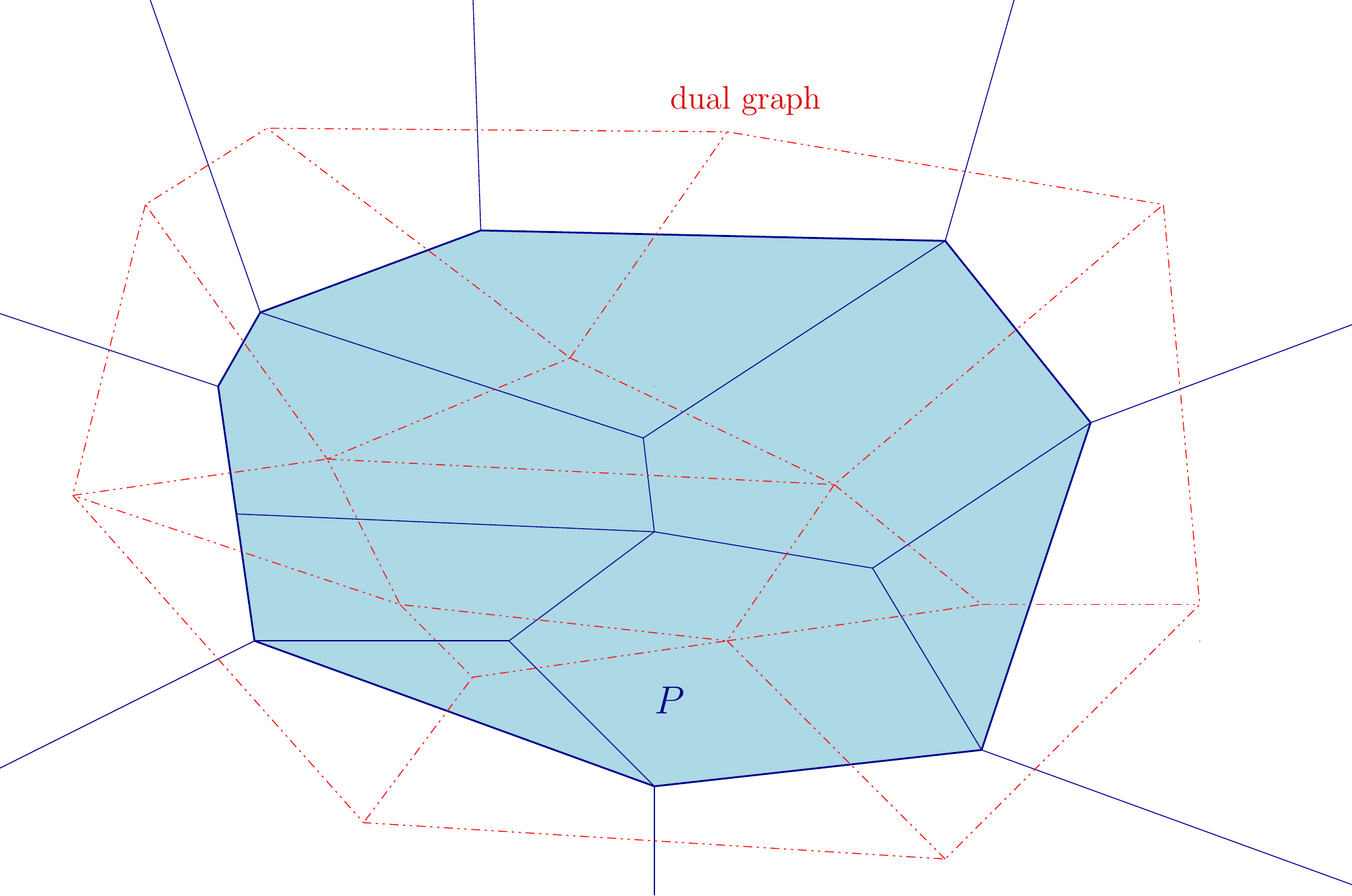}

\vspace{0.3cm}
\caption{Illustration of the configuration in \protect{Lemma~\ref{Lem:polysub2d}}.}
\label{Fig:polysub2d}
\end{figure}
Regardless of whether or not $P$ is bounded, observe that $\cm{P}$ can be subdivided into $m$ polyhedral sets in such a way that the intersection of each of these sets with $P$ is a facet of $P$. It may be helpful to refer to Figure~\ref{Fig:polysub2d}, in which $P$ is represented by the blue shaded region and the subdivisions of $P$ and $\cm{P}$ are indicated by the blue lines. 

Having dissected $\R^2=P\cup\cm{P}$ into $k+m$ polyhedral regions, we now form the \textit{dual graph} $G'$ of this configuration by fixing a point in each region and joining two points by an edge if the corresponding regions share a (non-trivial) line segment. This is highlighted in red in Figure~\ref{Fig:polysub2d}. In this graph, the degree of the vertex inside $E_j$ is simply the number of facets of $E_j$, and the sum of the degrees of all $k+m$ vertices is equal to twice the number of edges of $G'$. But $G'$ is \textit{planar} (i.e.\ it can be drawn in the plane in such a way that no two edges cross), so it has at most $3(k+m)-6$ edges \citep[Section~20.1]{K04}. Together with the previous observations, this implies the desired result.
\end{proof}

\subsection{Auxiliary results for bracketing entropy calculations}
\label{Subsec:EntropyCalcs}

The results in this subsection hold for any $d\in\N$. First, we consider log-concave functions $f_0,f$ whose restrictions to some $K\in\mathcal{K}^b$ are close in Hellinger distance, and obtain pointwise bounds on $f$ under the assumption that $f_0$ is bounded away from $0$ on $K$. Henceforth, we write $f_K:=f_{K,0}=\inv{\mu_d(K)}\Ind_K\in\mathcal{F}^1$ for the uniform density on $K$.

Recall that $\Phi\equiv\Phi_d$ denotes the set of all upper semi-continuous, concave functions $\phi\colon\R^d\to [-\infty,\infty)$, and that $\mathcal{G}\equiv\mathcal{G}_d=\{e^\phi:\phi\in\Phi\}$. For $\phi\in\Phi$ and $x\in\R^d$, let $D_{\phi,x}:=\{w\in\R^d:\phi(w)>\phi(x)\}$.
\begin{lemma}
\label{Lem:hellunbds}
Fix $K\in\mathcal{K}^b$ and $f_0\in\mathcal{F}$, and suppose that there exists $\theta\in [1,\infty)$ such that $f_0\geq\inv{\theta}f_K$ on $K$. Let $f\in\mathcal{G}$ and $\delta>0$ be such that $\int_K\,\bigl(\sqrt{f}-\sqrt{f_0}\bigr)^2\leq\delta^2$. Then setting $\phi:=\log f\in\Phi$, we have the following: 
\begin{enumerate}[label=(\roman*)]
\item If $x\in K$ satisfies $\mu_d(K\setminus D_{\phi,x})\geq 4\delta^2\theta\mu_d(K)$, then\\ $\phi(x)+\log\mu_d(K)\geq -4\delta\{\theta\mu_d(K)/\mu_d(K\setminus D_{\phi,x})\}^{1/2}-\log\theta$.
\item If $\theta=1$ and $\delta\in (0,2^{-3/2}]$, then $f_0=f_K$ and $\phi+\log\mu_d(K)\leq (8\sqrt{2}d)\delta$ on $K$.
\item There exist $s_d\geq 1$, depending only on $d$, and a universal constant $s'>0$ such that if $\theta>1$ and $\delta\in (0,(8\theta)^{-1/2}]$, then $\phi+\log\mu_d(K)\leq\log(s_d\log^d(e\theta)-s_d+1)+s'(d^{\,(d+1)}\delta)^{2/(d+2)}$ on $K$.
\end{enumerate}
\end{lemma}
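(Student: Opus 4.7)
For part (i), the plan is a direct Hellinger bound calculation. The hypothesis $f_0\geq\theta^{-1}f_K$ on $K$ gives $\sqrt{f_0}\geq(\theta\mu_d(K))^{-1/2}$, while on $K\setminus D_{\phi,x}$ the definition yields $\sqrt{f}\leq e^{\phi(x)/2}$ pointwise. Restricting the squared Hellinger integral to $K\setminus D_{\phi,x}$ and taking the positive part gives
\[
\delta^2\;\geq\;\mu_d(K\setminus D_{\phi,x})\bigl\{(\theta\mu_d(K))^{-1/2}-e^{\phi(x)/2}\bigr\}_+^{\!2}.
\]
Solving for $e^{\phi(x)/2}$ and using $\mu_d(K\setminus D_{\phi,x})\geq 4\delta^2\theta\mu_d(K)$ to guarantee that the factor $1-\delta\{\theta\mu_d(K)/\mu_d(K\setminus D_{\phi,x})\}^{1/2}$ is at least $1/2$, taking logarithms and applying $\log(1-z)\geq -2z$ for $z\in[0,1/2]$ delivers the stated lower bound.

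For the first assertion of part (ii), when $\theta=1$ the inequality $f_0\geq f_K$ on $K$ combined with $\int_{\R^d}f_0=1$ forces $\int_Kf_0=1$, and hence $f_0=f_K$ on $K$ with $\supp f_0\subseteq K$. For the pointwise upper bound on $\phi$ in both parts (ii) and (iii), the unifying strategy is a level-set scaling argument. Let $x_0\in K$ achieve $\sup_K\phi$, write $T:=\phi(x_0)-\phi_0$ where $\phi_0:=-\log\mu_d(K)$, and define $F(s):=\mu_d(\{w\in K:\phi(w)\geq\phi(x_0)-s\})$. The concavity of $\phi$ together with the convexity of $K$ yields the inclusion $x_0+\lambda(\{\phi\geq\phi(x_0)-s\}\cap K-x_0)\subseteq\{\phi\geq\phi(x_0)-\lambda s\}$, which upon taking measures gives the scaling lower bound $F(\lambda s)\geq\lambda^d F(s)$ for $\lambda\in[0,1]$. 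Complementarily, restricting the Hellinger integral to $\{\phi\geq\phi_0+t\}$ and bounding $(\sqrt{f}-\sqrt{f_0})^2$ from below there by a positive quantity (using an envelope for $f_0$, which in (iii) is first obtained as $\sup_K f_0\lesssim\log^d(e\theta)f_K$ via the same scaling technique applied to $f_0$ together with $\int f_0\leq 1$ and $f_0\geq f_K/\theta$) yields an upper bound on $F(T-t)$. Optimising the cutoff $t$ against the scaling lower bound then extracts the claimed bounds on $T$.

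The main obstacle is isolating the sharp linear rate $O(d\delta)$ in part (ii) rather than the weaker $O((d^{d+1}\delta)^{2/(d+2)})$ rate of part (iii). In (ii) this exploits $\theta=1$ crucially: Cauchy--Schwarz applied to $\int_K(\sqrt{f}-\sqrt{f_K})^2\leq\delta^2$ pins $\int_Kf\in[(1-\delta)^2,(1+\delta)^2]$, and this two-sided mass control combined with a Markov estimate on $\{w\in K:\sqrt{f(w)/f_K}<1-s\}$ establishes that $F(T)=\mu_d(\{\phi\geq\phi_0\})$ is at least a constant fraction of $\mu_d(K)$, the missing ingredient needed to close the scaling optimisation at the linear rate. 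In (iii) only one-sided mass control is available, so the optimisation proceeds without an a priori lower bound on $F(T)$, producing the $\delta^{2/(d+2)}$ exponent, while the worst-case constant $\log(s_d\log^d(e\theta)-s_d+1)$ enters through the preliminary envelope on $f_0$.
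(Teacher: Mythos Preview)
Your part~(i) is identical to the paper's. For (ii) and (iii), your scaling law $F(\lambda s)\geq\lambda^d F(s)$ and the paper's Minkowski-functional minorant $\psi(w)=b\bigl(1-\rho_{L-x}(w-x)\bigr)+t$ encode the same concavity fact; the paper integrates $(e^{\psi/2}-e^{t/2})^2$ radially over $L:=\{\phi\geq t\}$, while you work with level-set volumes directly. One minor slip in (ii): Chebyshev on $\{\sqrt{f/f_K}<1-s\}$ yields $F\bigl(T-2\log(1-s)\bigr)\geq c\,\mu_d(K)$, not $F(T)\geq c\,\mu_d(K)$; since $-2\log(1-s)=O(\delta)$ for $s\asymp\delta$, this only shifts the anchor by $O(\delta)$ and the linear rate $T\lesssim d\delta$ survives the optimisation. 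The two-sided mass control $\int_K f\in[(1-\delta)^2,(1+\delta)^2]$ is not actually needed here.

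There is, however, a genuine gap in (iii). The scaling inequality is vacuous without an anchor $F(s_0)\gtrsim\mu_d(K)$ at some finite $s_0$; pairing it only with the Hellinger upper bound $F(T-t')\lesssim\delta^2 e^{-t_d(\theta)}/(e^{(t'-t_d(\theta))/2}-1)^2$ cannot bound $T$, because both are consistent with $T$ large and every superlevel set tiny. Your phrase ``the optimisation proceeds without an a priori lower bound on $F(T)$'' leaves this unaddressed. The paper resolves it via a dichotomy on $\mu_d(L)$ with $L=\{\phi\geq t_d(\theta)\}$: if $\mu_d(L)\geq\alpha$, integrate over $L$; if $\mu_d(L)<\alpha$, dilate outward to $L_\beta=x+(1+\beta)(L-x)$, use concavity to force $\phi\leq-\log(4\theta)-\log\mu_d(K)$ on $K\setminus L_\beta$, and extract a \emph{second} Hellinger bound there from $f_0\geq\theta^{-1}f_K$. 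In your framework the natural fix is to run the same Chebyshev argument with $f_0\geq\theta^{-1}f_K$ (rather than $f_0=f_K$) to anchor at $s_0=T+\log(4\theta)$ with $F(s_0)\geq\mu_d(K)/2$ for $\delta\leq(8\theta)^{-1/2}$; the optimisation then closes because $\bigl(t_d(\theta)+\log\theta\bigr)^d e^{-t_d(\theta)}\lesssim d^d$, exactly the estimate the paper invokes. Without either this anchor or the paper's case-2 argument, the step ``optimising the cutoff'' is not yet a proof.
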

\begin{proof}
For (i), we may assume that $\phi(x)+\log\mu_d(K)< -\log\theta$, for otherwise there is nothing to prove. Setting $c:=\theta\mu_d(K)$, we have $e^{\phi(w)/2}\leq e^{\phi(x)/2}<c^{-1/2}\leq\sqrt{f_0(w)}$ for all $w\in K\setminus D_{\phi,x}$, so
\[\delta^2\geq\int_{K\setminus D_{\phi,x}}\,\bigl(\sqrt{f_0}-e^{\phi/2}\bigr)^2\geq\int_{K\setminus D_{\phi,x}}\,\bigl(c^{-1/2}-e^{\phi/2}\bigr)^2\geq\mu_d(K\setminus D_{\phi,x})\bigl(c^{-1/2}-e^{\phi(x)/2}\bigr)^2.\]
Since $\log(1-t)\geq -2t$ for all $t\in [0,1/2]$, we deduce that if $\mu_d(K\setminus D_{\phi,x})\geq 4\delta^2 c$, then
\[\phi(x)\geq 2\log\rbr{1-\frac{\delta c^{1/2}}{\mu_d(K\setminus D_{\phi,x})^{1/2}}}-\log c\geq-\frac{4\delta c^{1/2}}{\mu_d(K\setminus D_{\phi,x})^{1/2}}-\log c,\]
as required. Turning to assertions (ii) and (iii), we suppose for now that $\mu_d(K)=1$ and begin by establishing that:
\begin{claim*}
There exists $s_d>0$, depending only on $d$, such that $\phi_0:=\log f_0\leq\log(s_d\log^d(e\theta)-s_d+1)=:t_d(\theta)$ on $K$.
\end{claim*}
\begin{proof}[Proof of Claim]
Since $\phi_0\in\Phi$ and $K\in\mathcal{K}$, we have $\sup_{x\in K}\phi_0(x)=\sup_{x\in\Int K}\phi_0(x)$. Indeed, for any $z\in K$, note that if $y\in\Int K$, then $[y,z)\subseteq\Int K$ \citep[Lemma~1.1.9]{Sch14}, so it follows from the concavity of $\phi_0$ that $\phi_0(z)\leq\sup_{x\in [y,z)}\phi_0(x)\leq\sup_{x\in\Int K}\phi_0(x)$. Thus, it will suffice to show that a bound of the above form holds on $\Int K$.

Fix $x\in\Int K$ and assume that $a:=\phi_0(x)+\log\theta>0$, for otherwise there is nothing to prove. Now $K-x\in\mathcal{K}$ and $0\in\Int(K-x)$, so as remarked at the start of Section~\ref{Sec:LogkaffineSupp}, the Minkowski functional $\rho_{K-x}\colon w\mapsto\inf\{\lambda>0:w\in\lambda(K-x)\}\in [0,\infty)$ is convex. Thus, the function $\psi_0\colon\R^d\to\R$ defined by
\[\psi_0(w):=a\bigl(1-\rho_{K-x}(w-x)\bigr)-\log\theta\]
is concave, and note that the restriction of $\psi_0$ to any ray with endpoint $x$ is an affine function. Also, $\psi_0(x)=\phi_0(x)$, and since $f_0\geq\inv{\theta}f_K=\inv{\theta}$ on $K$ by hypothesis, we have $\psi_0(w)=-\log\theta\leq\phi_0(w)$ for all $w\in\partial K$. Since $\phi_0$ is concave, this implies that $-\log\theta\leq\psi_0\leq\phi_0$ on $K$. Recalling that $f_0$ is a density and setting $g(r):=\theta^{-1}e^{a(1-r)}$ for $r\geq 0$ in~\citet[Lemma~S1]{XS19}, we deduce that
\begin{equation}
\label{Eq:thetaubd}
1\geq\int_K e^{\phi_0}\geq\int_K e^{\psi_0}=\theta^{-1}d\int_0^1 r^{d-1}e^{a(1-r)}\,dr=\theta^{-1}\,\frac{d!\,e^a}{a^d}\gamma(d,a)=\theta^{-1}\,\sum_{\ell=0}^\infty\frac{d!}{(\ell+d)!}\,a^\ell,
\end{equation}
where $\gamma(d,a)$ is defined as in~\eqref{Eq:ratiobds} and the second equality above follows by similar reasoning to that used to obtain~\eqref{Eq:poigam}. The final expression in~\eqref{Eq:thetaubd} is bounded below by $(1+a/(d+1))/\theta$, so
\begin{equation}
\label{Eq:thetaubd1}
\phi_0(x)=a-\log\theta\leq (d+1)(\theta-1)-\log\theta\leq (d+1)(\theta-1).
\end{equation}

Suppose now that $a\geq 1$. Since $s\mapsto\sum_{\ell=0}^\infty\,d!\,s^\ell/(\ell+d)!$ is increasing on $[0,\infty)$, it follows from~\eqref{Eq:thetaubd} that $\theta\geq d!\,e\gamma(d,1)=:\theta_d$. In addition, introducing random variables $W\sim\Po(1)$ and $W_a\sim\Po(a)$, we see that $\gamma(d,a)=\Pr(W_a\geq d)\geq\Pr(W\geq d)=\gamma(d,1)$. Thus, defining $l\colon (0,\infty)\to\R$ by $l(s):=e^s/s^d$, we deduce from~\eqref{Eq:thetaubd} that
\begin{equation}
\label{Eq:thetaubd2}
l(a)=e^a/a^d\leq\theta/(d!\,\gamma(d,1)).
\end{equation}
Observe now that there exists $C_d>1$, depending only on $d$, such that for all $\tilde{\theta}\geq\theta_d$, we have $\log(C_d\,\tilde{\theta}\log^d\tilde{\theta})\geq d$ and
$d!\,\gamma(d,1)C_d\log^d\tilde{\theta}\geq 2^{d-1}\bigl(\log^d C_d+\log^d(\tilde{\theta}\log^d\tilde{\theta})\bigr)\geq\log^d(C_d\,\tilde{\theta}\log^d\tilde{\theta})$,
so that
\begin{equation}
\label{Eq:thetaubd3}
l\bigl(\log(C_d\,\tilde{\theta}\log^d\tilde{\theta})\bigr)=\frac{C_d\,\tilde{\theta}\log^d\tilde{\theta}}{\log^d(C_d\,\tilde{\theta}\log^d\tilde{\theta})}\geq\frac{\tilde{\theta}}{d!\,\gamma(d,1)}.
\end{equation}
Since $l$ is increasing on $[d,\infty)$, it follows from~\eqref{Eq:thetaubd2} and~\eqref{Eq:thetaubd3} that
\begin{equation}
\label{Eq:thetaubd4}
\phi_0(x)=a-\log\theta\leq\log_+(C_d\log^d\theta),
\end{equation}
provided that $\phi_0(x)+\log\theta=a\geq 1$. In fact,~\eqref{Eq:thetaubd4} holds even when $a<1$, so in all cases, we deduce from this and~\eqref{Eq:thetaubd1} that $\phi_0(x)\leq\min\{(d+1)(\theta-1),\log_+(C_d\log^d\theta)\}$. Note that there exists $\tilde{\theta}_d>1$, depending only on $d$, such that $(d+1)(\tilde{\theta}-1)\geq\log_+(C_d\log^d\tilde{\theta})$ for all $\tilde{\theta}\geq\tilde{\theta}_d$. Also, since $\tilde{\theta}\mapsto\log^d(e\tilde{\theta})-1$ is increasing and has strictly positive derivative at $\tilde{\theta}=1$, we have $e^{(d+1)(\tilde{\theta}-1)}-1\lesssim_d\tilde{\theta}-1\lesssim_d\log^d(e\tilde{\theta})-1$ for all $\tilde{\theta}\in [1,\tilde{\theta}_d]$. We conclude that there exists $s_d\geq 1$, depending only on $d$, such that $\phi_0(x)\leq\log(s_d\log^d(e\theta)-s_d+1)$, as required.
\end{proof}
Proceeding with the proofs of (ii) and (iii), we may assume without loss of generality that $\supp f=\dom\phi\subseteq K$, since otherwise we can replace $f$ by $f\Ind_K$; indeed, the hypotheses and conclusions depend on $f$ only through $\restr{f}{K}$. Then $\sqrt{f_0}-e^{\phi/2}\geq\theta^{-1/2}$ on $K\setminus\dom\phi$ under our assumption that $\mu_d(K)=1$, so $\delta^2\geq\int_K\,(\sqrt{f_0}-e^{\phi/2})^2\geq\theta^{-1}\mu_d(K\setminus\dom\phi)=\theta^{-1}(1-\mu_d(\dom\phi))$. Since $\dom\phi$ is convex, this implies that $\Int\dom\phi$ is non-empty, and since $\phi$ is concave, it follows as in the first paragraph of the proof of the Claim above that $\sup_{\,x\in K}\phi(x)=\sup_{\,x\in\dom\phi}\phi(x)=\sup_{x\in\Int\dom\phi}\phi(x)$. Thus, it will suffice to show that the bounds in (ii) and (iii) hold on $\Int\dom\phi$. 

Fix $x\in\Int\dom\phi$ and assume that $\phi(x)>t\equiv t_d(\theta)$, for otherwise there is nothing to prove. Since $\phi$ is upper semi-continuous, the set $L:=\{u\in K:\phi(u)\geq t\}$ is compact and convex \citep[Theorem~7.1]{Rock97},
and since $\phi$ is continuous on $\Int\dom\phi$ \citep[Theorem~1.5.3]{Sch14}, we have $x\in\Int L$.
Thus, $L-x\in\mathcal{K}$ and $0\in\Int(L-x)$, so the Minkowski functional $\rho_{L-x}\colon\R^d\to [0,\infty)$ is convex.
Since $b:=\phi(x)-t>0$, the function $\psi\colon\R^d\to\R$ defined by
\[\psi(w):=b\bigl(1-\rho_{L-x}(w-x)\bigr)+t\]
is concave, and as was the case for the function $\psi_0$ defined in the proof of the Claim, the restriction of $\psi$ to any ray with endpoint $x$ is an affine function. Also, $\psi(x)=\phi(x)$ and $\psi(w)=t\leq\phi(w)$ for all $w\in\partial L$, so by the Claim above and the concavity of $\phi$, we deduce that
\begin{align}
\label{Eq:largealpha}
&\phi_0\leq t\leq\psi\leq\phi\;\:\text{on $L$,\;\,and}\\
\label{Eq:smallalpha}
&\phi_0\geq -\log\theta\geq t-\beta b\geq\psi\geq\phi\;\:\text{on }K\setminus\bigl(x+(1+\beta)(L-x)\bigr)
\end{align}
for all $\beta\geq (t+\log\theta)/b$. Now fix any $\alpha\in (0,1)$ and suppose first that $\mu_d(L)\geq\alpha$. By applying~\eqref{Eq:largealpha} and setting $g(r):=b^2(1-r)^2$ for $r\geq 0$ in~\citet[Lemma~S1]{XS19}, we find that
\begin{align}
\delta^2\geq\int_{L}\,(e^{\phi/2}-e^{\phi_0})^2\geq\int_{L}\,(e^{\psi/2}-e^{t/2})^2\geq e^t\int_{L}\,\frac{(\psi-t)^2}{4}&=\frac{d\mu_d(L)e^t}{4}\int_0^1 b^2(1-r)^2\,r^{d-1}\,dr\notag\\
\label{Eq:ubhell1}
&\geq\frac{\alpha e^t}{2(d+1)(d+2)}\,b^2.
\end{align}
On the other hand, suppose instead that $\mu_d(L)<\alpha$. Fix $\beta\geq (t+\log\theta)/b$, and let $t':=t-\beta b$ and $L_\beta:=x+(1+\beta)(L-x)$. Then $\mu_d(K\setminus L_\beta)\geq\mu_d(K)-\mu_d(L_\beta)=1-(1+\beta)^d\,\mu_d(L)>1-(1+\beta)^d\,\alpha$. Together with~\eqref{Eq:smallalpha}, this implies that
\begin{equation}
\label{Eq:ubhell2}
\delta^2\geq\int_{K\setminus L_\beta}\,\bigl(e^{\phi_0/2}-e^{\phi/2}\bigr)^2\geq\mu_d(K\setminus L_\beta)\bigl(\theta^{-1/2}-e^{t'/2}\bigr)^2\geq\cbr{1-\rbr{1+\frac{t-t'}{b}}^d\alpha}\bigl(\theta^{-1/2}-e^{t'/2}\bigr)^2.
\end{equation}

To obtain the bounds in (ii) and (iii), we now substitute suitably chosen values of $\alpha$ and $\beta$ into~\eqref{Eq:ubhell1} and~\eqref{Eq:ubhell2}. For (ii), let $\alpha=1/4$ and $\beta=2^{1/d}-1$. Since $t=0$, it follows from~\eqref{Eq:ubhell1} that if $\mu_d(L)\geq 1/4$, then $\phi(x)=b\leq\sqrt{8(d+1)(d+2)}\,\delta$. Otherwise, if $\mu_d(L)<1/4$, then since $\theta=1$ and $t'=-\beta\phi(x)$, we deduce from~\eqref{Eq:ubhell2} that
\begin{align*}
\phi(x)\leq-\frac{2}{\beta}\log(1-\sqrt{2}\delta)\leq\frac{2\sqrt{2}\delta}{\beta(1-\sqrt{2}\delta)}&=\frac{2\sqrt{2}\delta}{1-\sqrt{2}\delta}\rbr{1+2^{1/d}+\dotsc+2^{(d-1)/d}}\\
&\leq 4\sqrt{2}\delta\cdot 2d=(8\sqrt{2}d)\delta,
\end{align*}
where the second inequality above follows since $\delta\in (0,2^{-3/2}]$ by assumption. Therefore, (ii) holds when $\mu_d(K)=1$. As for (iii), suppose that $\theta>1$, and let $\alpha=(\delta/d)^{2d/(d+2)}e^{-t}/4\in (0,1/4)$ and $\beta=\{t+\log(4\theta)\}/b$, so that $t'=-\log(4\theta)$. If $\mu_d(L)\geq\alpha$, then $\phi(x)-t=b\leq\sqrt{8(d+1)(d+2)}\,d^{\,d/(d+2)}\delta^{2/(d+2)}\lesssim (d^{\,d+1}\delta)^{2/(d+2)}$ by~\eqref{Eq:ubhell1}. Otherwise, if $\mu_d(L)<\alpha$, then since $8\theta\delta^2\leq 1$ by assumption, we deduce from~\eqref{Eq:ubhell2} that \[\rbr{1+\frac{t+\log(4\theta)}{b}}^d\alpha\geq 1-\frac{\delta^2}{(\theta^{-1/2}/2)^2}=1-4\theta\delta^2\geq 1/2.\]
Now since $\alpha\leq 1/4$, we have $(2\alpha)^{-1/d}-1\geq (1-2^{-1/d})(2\alpha)^{-1/d}\geq\alpha^{-1/d}/(4d)>0$,
so by rearranging the inequality above, we conclude that
\begin{equation}
\label{Eq:ubhell3}
b\leq\frac{t+\log(4\theta)}{(2\alpha)^{-1/d}-1}\leq 4d\alpha^{1/d}\bigl(t+\log(4\theta)\bigr)\leq 4d^{\,d/(d+2)}\delta^{2/(d+2)}\,\frac{t+\log(4\theta)}{e^{t/d}}.
\end{equation}
Recalling that $e^{t_d(\tilde{\theta})}=s_d\log^d(e\tilde{\theta})-s_d+1\geq\log^d(e\tilde{\theta})$ for all $\tilde{\theta}\in [1,\infty)$ and that $se^{-s/d}\leq d/e$ for all $s\in [0,\infty)$, we see that there exists a universal constant $C'>0$ such that $\{t_d(\tilde{\theta})+\log(4\tilde{\theta})\}/e^{t_d(\tilde{\theta})/d}\leq C'd$ for all $\tilde{\theta}\in [1,\infty)$. 
Together with~\eqref{Eq:ubhell3}, this implies that $\phi(x)-t=b\lesssim (d^{\,d+1}\delta)^{2/(d+2)}$ when $\mu_d(L)<\alpha$. This completes the proof of (iii) in the special case where $\mu_d(K)=1$.

Having established (ii) and (iii) under the assumption that $\mu_d(K)=1$, we now extend these results to arbitrary $K\in\mathcal{K}^b$ by means of a simple scaling argument. For a general $K\in\mathcal{K}^b$, suppose that $K,\theta,f_0,f$ satisfy the conditions of the lemma and that $\delta\in (0,(8\theta)^{-1/2}]$. Let $\lambda:=\mu_d(K)^{1/d}$ and $K':=\inv{\lambda}K$, so that $\mu_d(K')=1$. Then defining $\tilde{f}_0,\tilde{f}\colon\R^d\to [0,\infty)$ by $\tilde{f}_0(x):=\lambda^df_0(\lambda x)$ and $\tilde{f}(x):=\lambda^df(\lambda x)$, we see that $\tilde{f}_0\in\mathcal{F}$ and $\tilde{f}\in\mathcal{G}$. Moreover, $\tilde{f}_0(x)\geq\lambda^d\,\inv{\theta}f_K(\lambda x)=\inv{\theta}f_{K'}(x)$ for all $x\in K'$ and $\int_{K'}\,\bigl(\tilde{f}^{1/2}-\tilde{f}_0^{1/2}\bigr)^2=\int_K\,\bigl(f^{1/2}-f_0^{1/2}\bigr)^2\leq\delta^2$.
This shows that $K',\theta,\tilde{f}_0,\tilde{f}$ satisfy the conditions of the lemma. Now for any $x\in K$, we have $\lambda^{-1}x\in K'$, and since $\mu_d(K')=1$, it follows from the bounds obtained hitherto that
\[\log f(x)+\log\mu_d(K)=\log f(x)+\log(\lambda^d)=\log\tilde{f}(\lambda^{-1}x)\leq
\begin{cases}
(8\sqrt{2}d)\delta\quad&\text{if }\theta=1\\
t_d(\theta)+s'(d^{\,(d+1)}\delta)^{2/(d+2)}\quad&\text{if }\theta>1,
\end{cases}
\]
as required.
\end{proof}
In addition, we derive a lower bound on $\sup_{x\in K_{\alpha,1}^+}\{\phi(x)+\tm{\alpha}x+\log c_{K,\alpha}\}$ that holds whenever $\phi\in\Phi$ and $e^\phi$ is close in Hellinger distance to some $f_{K,\alpha}\in\mathcal{F}_\star^1$ with $\alpha\neq 0$. For $d\in\N$, define $\nu_d:=\{2^{-3}d^{-d}e^{-1}\gamma(d,1)\}^{1/2}$, where $\gamma(d,1)$ is taken from Lemma~\ref{Lem:log1affbds}. Recall the definitions of $K_{\alpha,t}^+$ and $c_{K,\alpha}$ from~\eqref{Eq:kalphat} and Proposition~\ref{Prop:log1aff} respectively.
\begin{lemma}
\label{Lem:hellnunbd}
Fix $f_{K,\alpha}\in\mathcal{F}_\star^1$ with $K\in\mathcal{K}$ and $\alpha\neq 0$. For $\phi\in\Phi$, define $\tilde{\phi}_{K,\alpha}\colon\R^d\to [-\infty,\infty)$ by $\tilde{\phi}_{K,\alpha}:=\phi(x)+\tm{\alpha}x+\log c_{K,\alpha}$. If $\int_K\,\bigl(e^{\phi/2}-f_{K,\alpha}^{1/2}\bigr)^2\leq\delta^2$ for some $\delta\in (0,\nu_d]$, then there exists $x_-\in K_{\alpha,1}^+$ such that $\tilde{\phi}_{K,\alpha}(x_-)>-2$.
\end{lemma}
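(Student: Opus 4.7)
The plan is to argue by contradiction: suppose every $x\in K_{\alpha,1}^+$ satisfies $\tilde{\phi}_{K,\alpha}(x)\leq -2$. The key observation is that $\tilde{\phi}_{K,\alpha}$ is precisely the log-ratio $\log(e^\phi/f_{K,\alpha})$, so this assumption means $e^{\phi(x)/2}\leq e^{-1}f_{K,\alpha}(x)^{1/2}$ pointwise on $K_{\alpha,1}^+$ (with the convention $e^{-\infty}=0$ handled trivially, since the gap is then even larger).

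First I would convert the supposition into a lower bound on the squared Hellinger integral via the factorisation $e^{\phi/2}-f_{K,\alpha}^{1/2}=f_{K,\alpha}^{1/2}(e^{\tilde{\phi}_{K,\alpha}/2}-1)$. Under the assumption, $(e^{\tilde{\phi}_{K,\alpha}/2}-1)^2\geq (1-e^{-1})^2>1/4$ on $K_{\alpha,1}^+$, whence
\[
\delta^2\geq\int_{K_{\alpha,1}^+}\bigl(e^{\phi/2}-f_{K,\alpha}^{1/2}\bigr)^2\geq\frac{1}{4}\int_{K_{\alpha,1}^+}f_{K,\alpha}.
\]

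The substantive step is then lower bounding $\int_{K_{\alpha,1}^+}f_{K,\alpha}$, and this is exactly where the hypothesis $f_{K,\alpha}\in\mathcal{F}_\star^1$ enters: it gives $m_{K,\alpha}=0$, so $\tm{\alpha}x\in[0,1]$ throughout $K_{\alpha,1}^+$ and consequently
\[
\int_{K_{\alpha,1}^+}f_{K,\alpha}=\frac{1}{c_{K,\alpha}}\int_{K_{\alpha,1}^+}e^{-\tm{\alpha}x}\,dx\geq\frac{e^{-1}\mu_d(K_{\alpha,1}^+)}{c_{K,\alpha}}\geq e^{-1}\gamma(d,1),
\]
where the last inequality is precisely the lower bound in~\eqref{Eq:ratiobds} of Lemma~\ref{Lem:log1affbds} at $t=1$. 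Combining the two displays yields $\delta^2\geq\gamma(d,1)/(4e)$, which contradicts $\delta\leq\nu_d$ because $\nu_d^2=\gamma(d,1)/(8d^d e)\leq\gamma(d,1)/(8e)<\gamma(d,1)/(4e)$ for every $d\in\N$.

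I do not anticipate any real obstacle here; the choice of the level set $K_{\alpha,1}^+$ is tailored so that $e^{-\tm{\alpha}x}$ is bounded below by a universal constant, and the numerical constants in $\nu_d$ are comfortably stronger than what the contradiction requires (the factor $d^{-d}$ in $\nu_d$ provides extra slack beyond what is strictly necessary for this lemma). The only care needed is in treating points where $\phi(x)=-\infty$, but such $x$ automatically satisfy $\tilde{\phi}_{K,\alpha}(x)=-\infty\leq -2$ and $(0-1)^2=1>(1-e^{-1})^2$, so they cause no issue.
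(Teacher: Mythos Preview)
Your proof is correct and genuinely more elementary than the paper's. The paper does not argue by contradiction over the whole slab; instead it first invokes Fritz John's theorem to locate a specific ``centre'' $x_-\in K_{\alpha,1}^+$ with the property that every half-space through $x_-$ cuts off at least a $2^{-1}d^{-d}$ fraction of $\mu_d(K_{\alpha,1}^+)$, then applies a separating-hyperplane argument to the level set $D_{\psi,x_-}$ and derives the quantitative bound $\psi(x_-)\geq 2\log\bigl(1-\delta/(2\nu_d)\bigr)$. This explains the factor $d^{-d}$ in $\nu_d$, which in your argument is indeed slack. Your route bypasses both Fritz John and the separating hyperplane entirely by integrating over all of $K_{\alpha,1}^+$; the price is that you obtain only the existential conclusion $\psi(x_-)>-2$ rather than a named point with a quantitative lower bound, but the paper never uses those refinements downstream (the sole application, in the proof of Proposition~\ref{Prop:Log1affEntropy}, invokes only the statement as written).
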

\begin{proof}
Let $\psi:=\tilde{\phi}_{K,\alpha}$. We first establish that there exists $x_-\in K':=K_{\alpha,1}^+$ with the property that $\mu_d(K'\cap H^+)\geq 2^{-1}d^{-d}\mu_d(K')$ whenever $H^+$ is a half-space whose boundary contains $x_-$. Then we show that any such $x_-$ necessarily satisfies $\psi(x_-)\geq -2$. To justify the first claim above, we apply Fritz John's theorem \citep{J48}, which asserts that there exists an invertible affine map $T\colon\R^d\to\R^d$ such that $\bar{B}(0,1/d)\subseteq\tilde{K}:=T(K')\subseteq\bar{B}(0,1)$. Now if $H^+$ is any hyperplane whose boundary contains 0, then $\mu_d(\tilde{K}\cap H^+)\geq 2^{-1}\mu_d(\bar{B}(0,1/d))=2^{-1}d^{-d}\mu_d(\bar{B}(0,1))\geq 2^{-1}d^{-d}\mu_d(\tilde{K})$, so $x_-:=\inv{T}(0)\in K'$ has the required property.

We may now assume that $\psi(x)\leq 0$ for every $x\in K'$, since otherwise the desired conclusion follows trivially. Then $\psi\in\Phi$ and $e^{\psi(u)/2}\leq e^{\psi(x_-)/2}\leq 1$ for all $u\in K'\setminus D_{\psi,x_-}\subseteq K$, so
\[\delta^2\geq\int_{K'\setminus D_{\psi,x_-}}\!\!\bigl(f_{K,\alpha}^{1/2}-e^{\phi/2}\bigr)^2
=\int_{K'\setminus D_{\psi,x_-}}\!\!\!\frac{e^{-\tm{\alpha}u}}{c_{K,\alpha}}\,\bigl(1-e^{\psi(u)/2}\bigr)^2\,du\geq (1-e^{\psi(x_-)/2})^2\!\int_{K'\setminus D_{\psi,x_-}}\!\!\!\frac{e^{-\tm{\alpha}u}}{c_{K,\alpha}}\,du.\]
Since $x_-\notin D_{\psi,x_-}$ and $D_{\psi,x_-}$ is convex, the separating hyperplane theorem \citep[Theorem~1.3.4]{Sch14} implies that there exists a open half-space $H^+$ such that $x_-\in\partial H^+$ and $K'\cap H^+\subseteq K'\setminus D_{\psi,x_-}$. In view of the defining property of $x_-$ and Lemma~\ref{Lem:log1affbds}, it follows that \[\int_{K'\setminus D_{\psi,x_-}}\!\frac{e^{-\tm{\alpha}u}}{c_{K,\alpha}}\,du\geq\int_{K'\cap H^+}\!\frac{e^{-\tm{\alpha}u}}{c_{K,\alpha}}\,du\geq\frac{e^{-1\,}\mu_d(K'\cap H^+)}{c_{K,\alpha}}\geq\frac{(2e)^{-1}d^{-d}\mu_d(K')}{c_{K,\alpha}}\geq 4\nu_d^2.\]
By combining the bounds in the two previous displays, we conclude that
\[\psi(x_-)\geq 2\log\rbr{1-\frac{\delta}{2\nu_d}}>-2,
\]
where we have used the fact that $\delta\in (0,\nu_d]$ to obtain the final inequality. This completes the proof of the lemma.
\end{proof}
\label{Disc:SimplexEntropy}
The remaining results in this subsection prepare the ground for the proof of Proposition~\ref{Prop:SimplexEntropy}, which establishes a local bracketing entropy bound for classes of log-concave functions $f$ that lie within small Hellinger neighbourhoods $\mathcal{G}(f_S,\delta)$ of the uniform density $f_S$ on a $d$-simplex $S$. As mentioned after the statement of Theorem~\ref{Thm:k1} in Section~\ref{Sec:Logkaffine} of the main text, we now develop further the pointwise lower bound from Lemma~\ref{Lem:hellunbds}(i) by identifying subsets (or `invelopes') $J_\eta^S$ of a $d$-simplex $S$ with the property that $\mu_d(S\setminus D_{\phi,x})\gtrsim\eta\mu_d(S)$ for all $x\in J_\eta^S$, which ensures that $\log f+\log\mu_d(S)\gtrsim -\delta/\eta^{1/2}$ for all $f\in\mathcal{G}(f_S,\delta)$. This is the purpose of Lemma~\ref{Lem:invelopesimp}(iii), which handles the case where $S$ is a regular $d$-simplex. However, in the proof of Proposition~\ref{Prop:SimplexEntropy}, it turns out that for technical reasons, we cannot work directly with the invelopes we obtain in this lemma; instead, the strategy we pursue involves constructing polytopal approximations that satisfy the conditions of Corollary~\ref{Cor:polyapproxsimp}. For technical convenience, we first derive analogous results in the case where the domain is $[0,1]^d$ (Lemmas~\ref{Lem:invelopebox} and~\ref{Lem:polyapprox}), before adapting the relevant geometric constructions to the simplicial setting described above. The volume bound in Lemma~\ref{Lem:invelopebox}(iii) and the properties in Corollary~\ref{Cor:polyapproxsimp} are exploited in the derivation of the local bracketing entropy bounds in Proposition~\ref{Prop:SimplexEntropy}, where they help to ensure that the exponent of $\delta$ matches that of $\varepsilon$; see the paragraph containing~\eqref{Eq:Rjvols}. This in turn is ultimately responsible for the essentially parametric adaptive rates that we are able to establish in Section~\ref{Sec:Logkaffine}.

Before proceeding, we make some further definitions. Fix $1\leq k\leq d$ and let $P\subseteq\R^d$ be a $k$-simplex or a $k$-parallelotope. Then for each vertex $v$ of $P$, there are exactly $k$ other vertices $v_1,\dotsc,v_k$ of $P$ for which $[v,v_1],\dotsc,[v,v_k]$ are edges of $P$.
Setting $w_j:=v_j-v$ for $1\leq j\leq k$, we note that
\[P=
\begin{cases}
\;\bigl\{v+\sum_{j=1}^k\lambda_jw_j:\lambda_j\geq 0,\,\sum_{j=1}^k\lambda_j\leq 1\text{ for all }j\bigr\}&\quad\text{if $P$ is a $k$-simplex}\\
\;\bigl\{v+\sum_{j=1}^k\lambda_jw_j:0\leq\lambda_j\leq 1\text{ for all }j\bigr\}&\quad\text{if $P$ is a $k$-parallelotope}.
\end{cases}
\]
For any fixed $x\in\relint P$, there exist unique $\tilde{x}_1,\dotsc,\tilde{x}_k\in (0,1)$ such that $x=v+\sum_{j=1}^k\tilde{x}_jw_j$. Then \[P^v(x):=\bigl\{v+\textstyle\sum_{j=1}^k\lambda_j\tilde{x}_jw_j:0\leq\lambda_j\leq 1\text{ for all }j\bigr\}\]
is a closed $k$-parallelotope, two of whose vertices are $v$ and $x$. Observe that $P^v(x)\subseteq P$; indeed, if $\lambda_1,\dotsc,\lambda_k\in [0,1]$, then $\lambda_j\tilde{x}_j\in [0,1]$ for all $1\leq j\leq k$ and $\sum_{j=1}^k\lambda_j\tilde{x}_j\leq\sum_{j=1}^k\tilde{x}_j$. Also, a simple calculation shows that
\begin{equation}
\label{Eq:pvx}
\mu_k(P^v(x))=
\begin{cases}
\,k!\,\mu_k(P)\,\prod_{j=1}^k\tilde{x}_j&\quad\text{if $P$ is a $k$-simplex}\\
\,\phantom{k!\,}\mu_k(P)\,\prod_{j=1}^k\tilde{x}_j&\quad\text{if $P$ is a $k$-parallelotope}.
\end{cases}
\end{equation}
The following elementary geometric result will be used in the proofs of Lemmas~\ref{Lem:invelopebox} and~\ref{Lem:invelopesimp}. 
\begin{lemma}
\label{Lem:simplevsets}
Let $P\subseteq\R^d$ be as above and fix $x\in\relint P$. If $H^+\subseteq\R^d$ is a closed half-space such that $x\in\partial H^+$, then there exists a vertex $v$ of $P$ for which $P^v(x)\subseteq P\cap H^+$.
\end{lemma}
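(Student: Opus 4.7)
The plan is to reduce the containment $P^v(x)\subseteq H^+$ to a finite check on the vertices of the parallelotope $P^v(x)$, and then to choose $v$ so that this check is automatic. Writing $H^+=\{y\in\R^d:\tm{\alpha}y\leq u\}$ with $u=\tm{\alpha}x$, and recalling that the vertices of $P^v(x)$ are precisely the points $v+\sum_{j\in S}\tilde{x}_jw_j$ for $S\subseteq\{1,\dotsc,k\}$, I would first observe, using convexity of $H^+$, that $P^v(x)\subseteq H^+$ if and only if $\tm{\alpha}v+\sum_{j\in S}\tilde{x}_j\,\tm{\alpha}w_j\leq u$ for every such $S$. Since $\tm{\alpha}x=\tm{\alpha}v+\sum_{j=1}^k\tilde{x}_j\,\tm{\alpha}w_j=u$ and each $\tilde{x}_j>0$, this family of inequalities collapses (by taking $S=\{1,\dotsc,k\}\setminus\{j\}$ in one direction and summing in the other) to the single condition
\[
\tm{\alpha}(v_j-v)=\tm{\alpha}w_j\geq 0\quad\text{for every }j=1,\dotsc,k.
\]

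The next step is therefore to exhibit a vertex $v$ of $P$ with this property, i.e.\ a vertex at which the linear functional $\tm{\alpha}(\cdot)$ does not decrease along any edge of $P$ incident to $v$. I would simply take $v$ to be a vertex of $P$ that minimises $\tm{\alpha}(\cdot)$ over the (finite) vertex set of $P$; then every edge-neighbour $v_j$ of $v$ automatically satisfies $\tm{\alpha}v_j\geq\tm{\alpha}v$, which is exactly what is required. Combining with $P^v(x)\subseteq P$ (already recorded just before the statement of the lemma) gives $P^v(x)\subseteq P\cap H^+$.

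The only potential subtlety is that the reduction in the first paragraph uses implicitly that every edge-neighbour of $v$ in $P$ appears in the parametrisation $w_j=v_j-v$; this is built into the set-up for both $k$-simplices and $k$-parallelotopes, since in those two cases each vertex has exactly $k$ edge-neighbours and they span all directions needed to reach the other vertices of $P$ from $v$. For the $k$-simplex this is immediate (every pair of vertices is joined by an edge, so minimising $\tm{\alpha}(\cdot)$ trivially yields the required $v$), while for the $k$-parallelotope the edge graph is the $k$-cube graph, on which linear functionals attain their minimum at a vertex whose neighbours all have values at least as large. I do not anticipate a genuine obstacle in the argument; the main point to present carefully is the equivalence between the vertex-level inequalities and the containment $P^v(x)\subseteq H^+$, after which the choice of $v$ is forced.
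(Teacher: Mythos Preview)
Your proposal is correct and follows essentially the same approach as the paper: write $H^+=\{y:\tm{\alpha}y\le\tm{\alpha}x\}$, choose $v$ to minimise the linear functional $\tm{\alpha}(\cdot)$ (the paper minimises over all of $P$ rather than just the vertex set, but this is immaterial), deduce $\tm{\alpha}w_j\ge 0$ for every $j$, and conclude that every point $v+\sum_j\lambda_j\tilde{x}_jw_j$ of $P^v(x)$ satisfies $\tm{\alpha}(\cdot)\le\tm{\alpha}x$. The paper's write-up is slightly more direct in that it does not pass through the vertex description of $P^v(x)$, but the argument is the same.
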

\begin{proof}
Since $x\in\partial H^+$, there exists a unit vector $\theta\in\R^d$ such that $H^+=\{u\in\R^d:\tm{\theta}u\leq\tm{\theta}x\}$, and since $P$ is compact and convex, we can find a vertex $v$ of $P$ such that $v\in\argmin_{u\in P}\tm{\theta}u$. To see that $v$ has the required property, define $v_j,w_j,\tilde{x}_j$ as above for $1\leq j\leq k$, and observe that $\tm{\theta}w_j=\tm{\theta}(v_j-v)\geq 0$ for all $j$ by our choice of $v$. Therefore, since $\tilde{x}_j>0$ for all $j$, it follows from the definition of $P^v(x)$ that $\tm{\theta}u\leq\tm{\theta}x$ for all $u\in P^v(x)$, so $P^v(x)\subseteq H^+$, as required.
\end{proof}
We now consider $Q=Q_d:=[0,1]^d$. For each $\xi=(\xi_1,\dotsc,\xi_d)\in\{0,1\}^d$, let $g_{\xi}\colon Q\to Q$ be the function $(x_1,\dotsc,x_d)\mapsto (\xi_1+(-1)^{\xi_1}x_1,\dotsc,\xi_d+(-1)^{\xi_d}x_d)$. Then $G_R(Q)\equiv G_R(Q_d):=\{g_{\xi}:\xi\in\{0,1\}^d\}$ is the subgroup of (affine) isometries of $Q$ generated by reflections in the affine hyperplanes $\{(w_1,\dotsc,w_d)\in\R^d:w_j=1/2\}$, where $j=1,\dotsc,d$. We say that $D\subseteq [0,1]^d$ is \emph{$G_R(Q)$-invariant} if $g(D)=D$ for all $g\in G_R(Q)$.

Moreover, let $M\colon Q\to [0,1/2]^d$ be the function $(x_1,\dotsc,x_d)\mapsto (x_1\wedge (1-x_1),\dotsc,x_d\wedge (1-x_d))$. Then for each $x\in Q$, note that $M(x)$ is an element of the orbit of $x$ under $G_R(Q)$ that lies in $[0,1/2]^d$, and also that $M(g(x))=M(x)$ for all $g\in G_R(Q)$.

\begin{figure}[htb]
\centering
\includegraphics[width=0.7\textwidth]{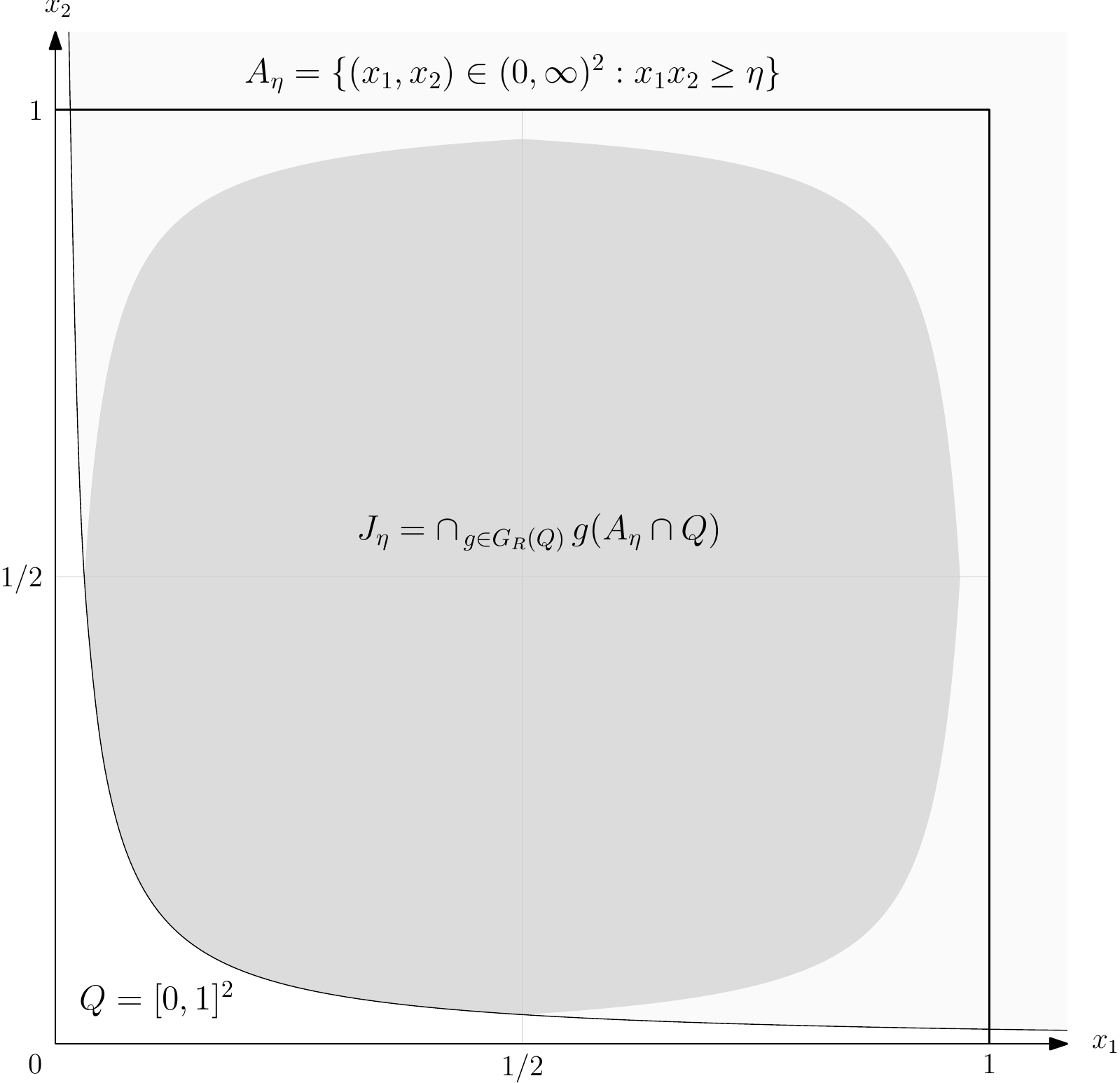}

\vspace{0.3cm}
\caption{Illustration of the sets $A_\eta$ (union of the lighter and darker regions) and $J_\eta$ (darker region) in \protect{Lemma~\ref{Lem:invelopebox}} when $d=2$.} 
\label{Fig:invelopebox}
\end{figure}
\begin{lemma}
\label{Lem:invelopebox}
For each $\eta>0$, the sets $A_\eta\equiv A_{d,\eta}:=\{(x_1,\dotsc,x_d)\in (0,\infty)^d:\prod_{j=1}^d x_j\geq\eta\}$ and $J_\eta\equiv J_{d,\eta}:=\{x\in Q:M(x)\in A_\eta\}$ are closed and convex, and have the following properties:

\begin{enumerate}[label=(\roman*)]
\item $A_\eta=A_{d,\eta}=\eta^{1/d}A_{d,1}$ and $[0,1/2]^d\cap J_\eta=[0,1/2]^d\cap A_\eta$.
\item 
$J_\eta=\bigcap_{\,g\in G_R(Q)}g(A_\eta\cap Q)$, so $J_\eta$ is $G_R(Q)$-invariant.
\item If $\eta\leq 2^{-d}$, then $\mu_d(Q\setminus J_\eta)=2^d\eta\,\textstyle\sum_{\ell=0}^{d-1}\,\log^\ell(2^{-d}\inv{\eta})/\ell!$ and therefore $\mu_d(Q\setminus J_{\alpha\eta})\leq\alpha\mu_d(Q\setminus J_{\eta})$ for all $\alpha\geq 1$.
\item If $C\subseteq Q$ is convex and $J_\eta\not\subseteq\Int C$, then $\mu_d(Q\setminus C)\geq\eta$.
\item If $\phi\in\Phi$ and $\delta,\eta>0$ are such that $4\delta^2\leq\eta\leq 2^{-d}$ and $\int_Q\,(e^{\phi/2}-1)^2\leq\delta^2$, then $\phi(x)\geq -4\delta\eta^{-1/2}$ for all $x\in J_\eta$.
\end{enumerate}
\end{lemma}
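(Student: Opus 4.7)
I would prove the five assertions in turn, exploiting the symmetries of $Q$ and the structure of $A_\eta$ as a sublevel set of a convex function.

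For (i), I would first note that $A_\eta = \{x \in (0,\infty)^d : \sum_{j=1}^d (-\log x_j) \leq \log(1/\eta)\}$ is a sublevel set of a continuous convex function and hence closed and convex (viewed in $\R^d$ as well, since any limit point with some zero coordinate would force the product to be zero, contradicting $\prod_j x_j \geq \eta > 0$). The scaling identity $A_\eta = \eta^{1/d} A_1$ is immediate from the definition, and since $M(x) = x$ on $[0,1/2]^d$, the identity $[0,1/2]^d \cap J_\eta = [0,1/2]^d \cap A_\eta$ follows. For (ii), the orbit of $x \in Q$ under $G_R(Q)$ consists of the $2^d$ points obtained by replacing each coordinate $x_j$ with $x_j$ or $1-x_j$ independently, so $\min_g \prod_j (g(x))_j = \prod_j M(x)_j$. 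Since each $g \in G_R(Q)$ is an involution with $g(Q) = Q$, membership $x \in \bigcap_g g(A_\eta \cap Q)$ is equivalent to $x \in Q$ together with $g(x) \in A_\eta$ for every $g$, which holds iff $M(x) \in A_\eta$, i.e.\ $x \in J_\eta$. Closedness and convexity of $J_\eta$ then follow as an intersection of closed convex sets.

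For (iii), the $G_R(Q)$-invariance from (ii) combined with (i) gives $\mu_d(Q \setminus J_\eta) = 2^d \mu_d([0,1/2]^d \setminus A_\eta)$. The change of variable $x_j = 2^{-1} e^{-y_j}$ transforms this into $\int_{\{y \in [0,\infty)^d : \sum_j y_j > \tau\}} e^{-\sum_j y_j}\, dy = \Pr(W > \tau)$ with $\tau := \log(2^{-d}/\eta) \geq 0$ and $W \sim \Gamma(d,1)$, and the standard Erlang tail formula yields $\mu_d(Q \setminus J_\eta) = 2^d \eta \sum_{\ell=0}^{d-1} \tau^\ell/\ell!$. For the monotonicity claim with $\alpha \geq 1$, I would split into two cases: if $\alpha\eta \leq 2^{-d}$, then the ratio $\mu_d(Q \setminus J_{\alpha\eta})/(\alpha \mu_d(Q \setminus J_\eta))$ equals $\sum_{\ell=0}^{d-1}(\tau - \log\alpha)^\ell/\ell!\,/\,\sum_{\ell=0}^{d-1}\tau^\ell/\ell! \leq 1$ since $0 \leq \tau - \log\alpha \leq \tau$; while if $\alpha\eta > 2^{-d}$, then $\mu_d(Q \setminus J_{\alpha\eta}) \leq 1 \leq 2^d \alpha\eta \leq \alpha \mu_d(Q \setminus J_\eta)$, using that $\sum_\ell \tau^\ell/\ell! \geq 1$.

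For (iv), I would first observe that $\eta > 0$ forces $J_\eta \subseteq (0,1)^d = \Int Q$. Given a convex $C \subseteq Q$ and $x \in J_\eta \setminus \Int C$, the separating hyperplane theorem provides a closed half-space $H^+$ with $x \in \partial H^+$ and $\Int C \subseteq \Int H^+$. Applying Lemma~\ref{Lem:simplevsets} to the complementary closed half-space, with $P = Q$ viewed as a $d$-parallelotope and $x \in \relint Q$, yields a vertex $v$ of $Q$ with $Q^v(x) \subseteq Q \setminus \Int H^+ \subseteq Q \setminus \Int C$. By~\eqref{Eq:pvx}, $\mu_d(Q^v(x)) = \prod_j |x_j - v_j| \geq \prod_j M(x)_j \geq \eta$, where the penultimate inequality uses that $v_j \in \{0,1\}$ implies $|x_j - v_j| \in \{x_j, 1-x_j\}$ and hence $|x_j - v_j| \geq \min(x_j, 1-x_j) = M(x)_j$, and the final inequality is the definition of $J_\eta$. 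Since $\mu_d(\partial C) = 0$, this yields $\mu_d(Q \setminus C) \geq \eta$. Finally for (v), the super-level set $D_{\phi,x}$ is convex since $\phi$ is concave, so $D := D_{\phi,x} \cap Q$ is a convex subset of $Q$; as $x \notin D_{\phi,x}$ and $x \in \Int Q$, we have $x \notin \Int D$, so $J_\eta \not\subseteq \Int D$, and the contrapositive of (iv) yields $\mu_d(Q \setminus D_{\phi,x}) = \mu_d(Q \setminus D) \geq \eta \geq 4\delta^2$. Applying Lemma~\ref{Lem:hellunbds}(i) with $K = Q$, $\theta = 1$, $f_0 \equiv 1$ and $f = e^\phi$ then gives $\phi(x) \geq -4\delta\,\mu_d(Q \setminus D_{\phi,x})^{-1/2} \geq -4\delta\eta^{-1/2}$. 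The main obstacle is (iv), which must carefully connect the geometry of $J_\eta$ to the volume left outside an arbitrary convex set not enveloping $J_\eta$; the remaining parts are essentially routine once (ii) and (iv) are in place.
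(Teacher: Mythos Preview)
Your proposal is correct and tracks the paper's proof closely for parts (i), (ii), (iv) and (v); in particular, your use of the separating hyperplane theorem together with Lemma~\ref{Lem:simplevsets} for (iv), and the reduction of (v) to (iv) plus Lemma~\ref{Lem:hellunbds}(i), match the paper exactly. The one genuine difference is in (iii): the paper establishes the volume formula $\mu_d(Q\setminus J_\eta)=2^d\eta\sum_{\ell=0}^{d-1}\log^\ell(2^{-d}\eta^{-1})/\ell!$ by induction on $d$, slicing $[0,1/2]^d$ along the last coordinate and invoking the $(d-1)$-dimensional formula for each slice. Your substitution $x_j=2^{-1}e^{-y_j}$, which identifies $\mu_d([0,1/2]^d\setminus A_\eta)$ with $2^{-d}\Pr(W>\tau)$ for $W\sim\Gamma(d,1)$ and $\tau=\log(2^{-d}/\eta)$, then appeals to the Erlang tail formula, is a cleaner and more direct route that avoids induction entirely. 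For the monotonicity clause the paper also offers a one-line scaling argument, namely $[0,1/2]^d\setminus J_{\alpha\eta}\subseteq\alpha^{1/d}\bigl([0,1/2]^d\setminus J_\eta\bigr)$, which is slicker than your case split but yours is perfectly valid.
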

\begin{proof}
The assertions in (i) are immediate from the definitions above. In addition, the function $r\colon (0,\infty)^d\to (0,\infty)$ defined by $r(x_1,\dotsc,x_d):=\prod_{j=1}^d\inv{x_j}$ is convex since its Hessian matrix is positive definite everywhere, so $A_\eta=\{x\in (0,\infty)^d:r(x)\leq\inv{\eta}\}$ is convex for all $\eta>0$. Now for $x\in Q$, note that if $M(x)\in A_\eta$ (i.e.\ $x\in J_\eta$), then $g(x)\in A_\eta$ for all $g\in G_R(Q)$ by the definition of $M$, and since $M(x)=\tilde{g}(x)$ for some $\tilde{g}\in G_R(Q)$, the converse is also true. This shows that $J_\eta=\bigcap_{\,g\in G_R(Q)}g(A_\eta\cap Q)$, as claimed in (ii), and since $g(A_\eta\cap Q)$ is convex for all $g\in G_R(Q)$, we see that $J_\eta$ is convex for all $\eta>0$. 

Turning to (iii), the formula for $\mu_d(Q\setminus J_\eta)$ certainly holds when $d=1$, and we now extend this to all $d\geq 1$ by induction. For $d\geq 2$, we partition $[0,1/2]^d$ into the sets $D_1:=[0,1/2]^{d-1}\times [0,2^{d-1}\eta)\subseteq\cm{J_\eta}$ and $D_2:=[0,1/2]^{d-1}\times [2^{d-1}\eta,1/2]$, and write
\begin{align*}
\mu_d([0,1/2]^d\setminus J_\eta)=\mu_d(D_1)+\mu_d(D_2\setminus J_\eta)&=\eta+\int_{2^{d-1}\eta}^{1/2}\,\frac{\eta}{x_d}\,\sum_{\ell=0}^{d-2}\,\log^\ell\!\rbr{\frac{x_d}{2^{d-1}\eta}}\frac{1}{\ell!}\,dx_d \\
&= \eta + \eta\,\sum_{\ell=1}^{d-1}\log^{\ell}(2^{-d}\inv{\eta})\frac{1}{\ell!},
\end{align*}
where we have used the inductive hypothesis to obtain the integrand above. Since $\mu_d(J_\eta)=2^d\mu_d([0,1/2]^d\setminus J_\eta)$ by symmetry, this completes the inductive step. In particular, $\mu_d(Q\setminus J_\eta)\geq 2^d\eta$ when $\eta\leq 2^{-d}$. It follows from this and the formula for $\mu_d(Q\setminus J_\eta)$ that $\mu_d(Q\setminus J_{\alpha\eta})\leq\alpha\mu_d(Q\setminus J_\eta)$ for all $\alpha\geq 1$, including when $\alpha\eta\geq 2^{-d}$, in which case $\mu_d(Q\setminus J_{\alpha\eta})=1$. Alternatively, to obtain the final assertion of (iii), simply note that $[0,1/2]^d\setminus J_{\alpha\eta}\subseteq\alpha^{1/d}\,([0,1/2]^d\setminus J_\eta)$ for all $\alpha\geq 1$ and $\eta>0$.

To establish (iv), fix a convex set $C\subseteq Q$ and suppose that there exists $x\in J_\eta\setminus\Int C$. By the separating hyperplane theorem, there is a closed half-space $H^+$ such that $x\in\partial H^+$ and $C\cap\Int H^+=\emptyset$. 
Since $x\in\Int Q$, it follows from Lemma~\ref{Lem:simplevsets} that there exists a vertex $v$ of $Q$ such that $Q^v(x)\subseteq Q\cap H^+$. Therefore, $\mu_d(Q\setminus C)\geq\mu_d(Q\cap H^+)\geq\mu_d(Q^v(x))=\prod_{j=1}^d\,\abs{x_j-v_j}\geq\eta$ by~\eqref{Eq:pvx}, as desired.

Finally, for fixed $x\in J_\eta$ and $\eta\geq 4\delta^2$, let $C:=Q\cap D_{\phi,x}$. Since $x\in J_\eta\setminus \Int C$ by the definition of $D_{\phi,x}$, it follows from (iv) that $\mu_d(Q\setminus D_{\phi,x})\geq\eta$, and we deduce from Lemma~\ref{Lem:hellunbds}(i) that $\phi(x)\geq -4\delta\,\mu_d(Q\setminus D_{\phi,x})^{-1/2}\geq -4\delta\eta^{-1/2}$, as required.
\end{proof}
We now obtain an analogous result for $\triangle\equiv\triangle_d:=\conv\{e_1,\dots,e_{d+1}\}\subseteq\R^{d+1}$, a regular $d$-simplex of side length $\sqrt{2}$ that will be viewed as a subset of its affine hull $\aff\triangle=\{x = (x_1,\dotsc,x_{d+1})\in\R^{d+1}:\sum_{j=1}^{d+1}x_j=1\}$. Note that $\triangle$ can be subdivided into $d+1$ congruent polytopes $R_1,\dotsc,R_{d+1}$, where 
\begin{equation}
\label{Eq:Rj}
R_j:=\bigl\{(x_1,\dotsc,x_{d+1})\in\triangle:x_j=\textstyle\max_{1\leq\ell\leq d+1}x_\ell\bigr\}. 
\end{equation}
The proof of Lemma~\ref{Lem:invelopesimp} makes use of another elementary fact from linear algebra.
\begin{lemma}
\label{Lem:detproj}
Let $u_1,u_2\in\R^d$ be unit vectors and for $i=1,2$, let $H_i:=\{x\in\R^d:\tm{u_i}x=0\}$. For $x\in\R^d$, write $\Pi(x):=x-(\tm{u_2}x)u_2$ for the orthogonal projection of $x$ onto $H_2$. Then $\mu_{d-1}(\Pi(A))=\abs{\tm{u_1}u_2}\,\mu_{d-1}(A)$ for all Lebesgue-measurable $A\subseteq H_1$, where $\Pi(A)$ denotes the image of $A$ under $\Pi$.
\end{lemma}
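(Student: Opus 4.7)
The plan is to realise the restriction $\Pi|_{H_1}\colon H_1\to H_2$ as a linear map between $(d-1)$-dimensional inner product spaces and show that its Jacobian determinant has absolute value $|\tm{u_1}u_2|$; the claim will then follow from the standard change of variables formula for Lebesgue measure.

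First I would dispose of the trivial case. If $u_1=\pm u_2$, then $H_1=H_2$ and $\tm{u_2}x=0$ for every $x\in H_1$, so $\Pi$ restricted to $H_1$ is the identity; here both sides equal $\mu_{d-1}(A)$ and $|\tm{u_1}u_2|=1$. Assume henceforth that $u_1\neq\pm u_2$, so that $c:=\tm{u_1}u_2\in(-1,1)$ and $H_1\cap H_2$ is a genuine $(d-2)$-dimensional subspace. I would then choose an orthonormal basis $v_1,\dots,v_{d-2}$ of $H_1\cap H_2$ and extend it to an orthonormal basis $v_1,\dots,v_{d-1}$ of $H_1$, so that $v_{d-1}$ is the unique (up to sign) unit vector in $H_1$ orthogonal to $H_1\cap H_2$; in particular $v_{d-1}\in\Span\{u_1,u_2\}$.

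The key computation is then straightforward. Writing $v_{d-1}=au_2+bu_1$ and imposing $\tm{u_1}v_{d-1}=0$ and $\|v_{d-1}\|=1$ gives $b=-ac$ and $a^2(1-c^2)=1$, whence
\[
\tm{u_2}v_{d-1}=a(1-c^2),\qquad \|\Pi(v_{d-1})\|^2=1-a^2(1-c^2)^2=1-(1-c^2)=c^2.
\]
For $i\le d-2$ one has $v_i\in H_2$, so $\Pi(v_i)=v_i$, and moreover $\tm{v_i}\Pi(v_{d-1})=-(\tm{u_2}v_{d-1})\tm{v_i}u_2=0$ since $v_i\in H_2$. Hence $v_1,\dots,v_{d-2},\Pi(v_{d-1})/|c|$ is an orthonormal basis of $H_2$, and the matrix of $\Pi|_{H_1}$ with respect to the chosen bases is $\diag(1,\dots,1,|c|)$, so $|\det(\Pi|_{H_1})|=|c|=|\tm{u_1}u_2|$.

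Finally, the change of variables formula (applied either within the $(d-1)$-dimensional Hausdorff measures on $H_1$ and $H_2$, which coincide with $\mu_{d-1}$ via our isometric identifications with $\R^{d-1}$) yields $\mu_{d-1}(\Pi(A))=|\tm{u_1}u_2|\,\mu_{d-1}(A)$ for every Lebesgue-measurable $A\subseteq H_1$, as required. No real obstacle is expected here; the only mildly delicate point is the bookkeeping to ensure that the Lebesgue measures on the two hyperplanes are identified with the intrinsic $(d-1)$-dimensional Hausdorff measure, which is automatic once orthonormal coordinates are fixed on each.
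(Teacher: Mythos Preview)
Your proposal is correct and follows essentially the same route as the paper: both choose an orthonormal basis of $H_1\cap H_2$, extend it to orthonormal bases of $H_1$ and $H_2$ using vectors in $\Span\{u_1,u_2\}$, compute that the matrix of $\Pi|_{H_1}$ with respect to these bases is diagonal with a single non-trivial entry of absolute value $|\tm{u_1}u_2|$, and then invoke the linear change of variables formula. The only cosmetic differences are that the paper gives explicit formulas $v_1=u_2-\alpha u_1$, $v_2=-u_1+\alpha u_2$ for the extending vectors (rather than characterising $v_{d-1}$ implicitly as you do), and spells out the final measure identification via explicit isometries $T_i\colon\R^{d-1}\to H_i$ and a $\pi$-system argument on hyperrectangles; one very small gap in your version is that the sentence ``$v_1,\dots,v_{d-2},\Pi(v_{d-1})/|c|$ is an orthonormal basis of $H_2$'' breaks down when $c=0$, though the conclusion $\mu_{d-1}(\Pi(A))=0$ is then trivial since $\Pi|_{H_1}$ has rank $d-2$.
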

\begin{proof}
Let $\alpha:=\tm{u_1}u_2$. The result holds trivially if $u_1=u_2$, so we now assume that $u_1\neq u_2$, in which case $H_1\cap H_2$ has dimension $d-2$. Fix an orthonormal basis $B=\{v_3,\dotsc,v_d\}$ of $H_1\cap H_2$, and let $v_1:=u_2-\alpha u_1$ and $v_2:=-u_1+\alpha u_2$. Observe that $\norm{v_i}^2=\tm{v_i}v_i=1-\alpha^2$ for $i=1,2$ and let $v_i':=v_i/\norm{v_i}$ for each $i$. Then $B_i:=\{v_i'\}\cup B$ is an orthonormal basis of $H_i$ for each $i$, and note that $\Pi(v_1)=u_2-(\tm{u_1}u_2)u_1-(1-\alpha^2)u_2=\alpha v_2$, whence $\Pi(v_1')=\alpha v_2'$. Thus, $\Pi$ is represented by the matrix $\diag(\alpha,1,\dotsc,1)$ with respect to the bases $B_1$ and $B_2$.

Now for $i=1,2$, let $T_i\colon\R^{d-1}\to H_i$ be the linear map defined by setting $T_i\,e_1=v_i'$ and $T_i\,e_j=v_{j+1}$ for $2\leq j\leq d-1$. Then $\mu_{d-1}(A)=\mu_{d-1}(T_i(A))$ for all $i$ and for all measurable $A\subseteq\R^{d-1}$. Defining $D\colon\R^{d-1}\to\R^{d-1}$ by $D(x_1,\dotsc,x_{d-1}):=(\alpha x_1,\dotsc,x_{d-1})$, we see that if $R$ is a hyperrectangle of the form $\prod_{j=1}^{d-1}\,[a_j,b_j]$, then $(\Pi\circ T_1)(R)=(T_2\circ D)(R)$ by the final observation in the previous paragraph. Thus, if $A=T_1(R)$ for some hyperrectangle $R=\prod_{j=1}^{d-1}\,[a_j,b_j]\subseteq\R^{d-1}$, then
\begin{align*}
\mu_{d-1}(\Pi(A))=\mu_{d-1}((T_2\circ D)(R))&=\mu_{d-1}(D(R))\\
&=\abs{\alpha}\mu_{d-1}(R)=\abs{\alpha}\mu_{d-1}(T_1(R))=\abs{\alpha}\mu_{d-1}(A).
\end{align*}
Since the Borel $\sigma$-algebra of $H_1$ is generated by the $\pi$-system of sets of the form $T_1(R)$, where $R$ is a hyperrectangle, it follows that the claimed identity $\mu_{d-1}(\Pi(A))=\abs{\alpha}\mu_{d-1}(A)$ holds for all Lebesgue-measurable $A\subseteq H_1$, as required.
\end{proof}
\begin{remark*}
The key fact that underlies this result is that $\abs{\tm{u_1}u_2}$ is the determinant of any matrix which represents $\restr{\Pi}{H_1}$ with respect to orthonormal bases of $H_1$ and $H_2$. This follows from the first paragraph of the proof, 
which amounts to a derivation of the principal angles between $H_1$ and $H_2$ from first principles.
\end{remark*}
\begin{figure}[htb]
\centering
\includegraphics[trim=4cm 13cm 0 0, width=0.7\textwidth]{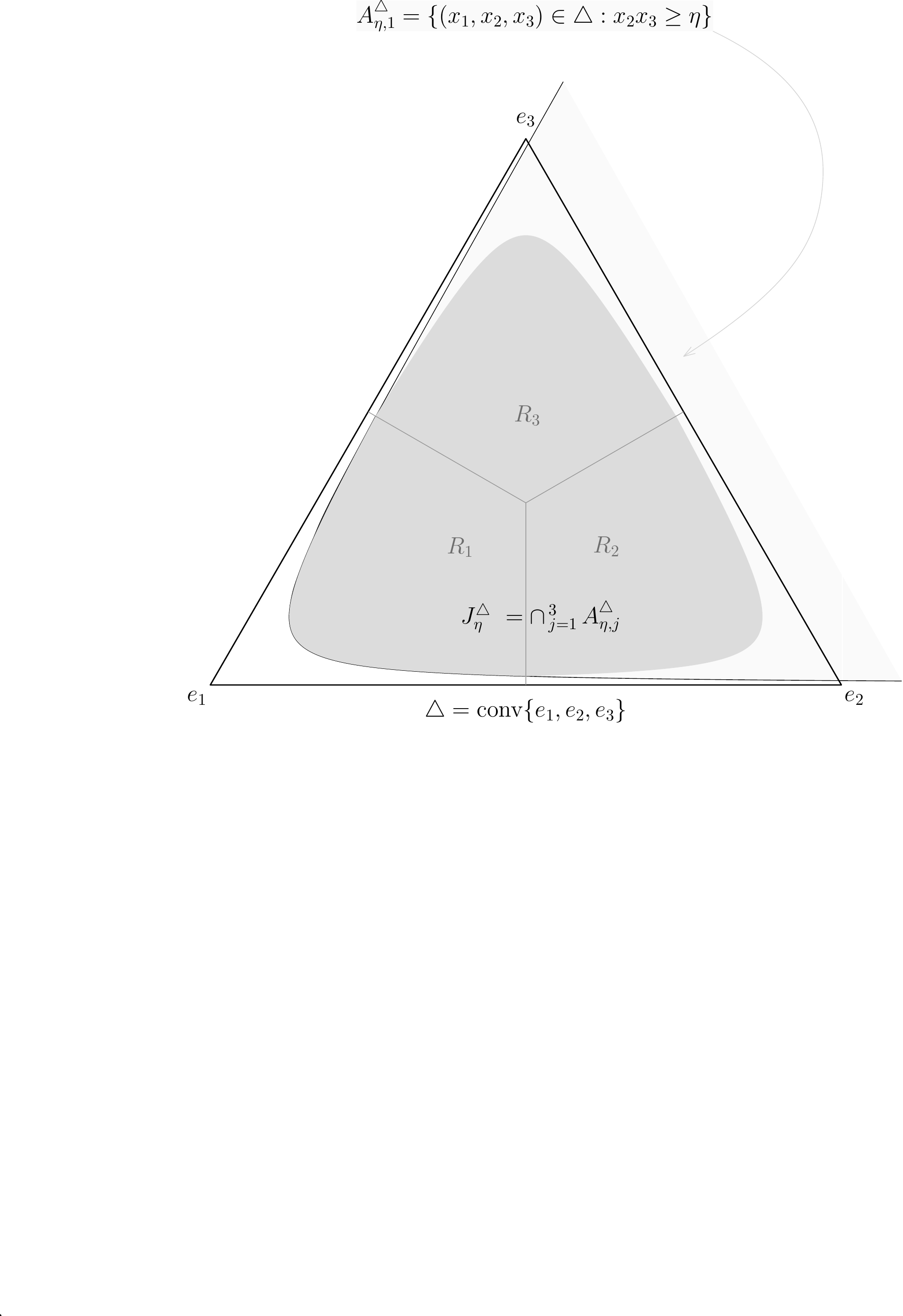}

\vspace{0.3cm}
\caption{Illustration of the polytopes $R_1,R_2,R_3$, and the sets $A_{\eta,1}^\triangle$ (union of the lighter and darker regions) and $J_\eta^\triangle$ (darker region) in \protect{Lemma~\ref{Lem:invelopesimp}} when $d=2$. Lemma~\ref{Lem:invelopesimp}(iii) is a key property that we exploit in the proof of Proposition~\ref{Prop:SimplexEntropy}; see Figure~\ref{Fig:SimplexEntropy}.}
\label{Fig:invelopesimp}
\end{figure}
\begin{lemma}
\label{Lem:invelopesimp}
Let $\Pi\colon\R^{d+1}\to\R^d$ denote the projection onto the first $d$ coordinates, so that $\Pi(x_1,\dotsc,x_{d+1}):=(x_1,\dotsc,x_d)$, and let $Q^\triangle\equiv Q_d^\triangle$ denote the image of $R_{d+1}$ under $\Pi$. Then $Q^\triangle$ is a polytope with $2d$ facets, and
$[0,1/(d+1)]^d\subseteq Q^\triangle\subseteq [0,1/2]^d$. Moreover,
$\mu_d(\Pi(A))=\mu_d(A)/\sqrt{d+1}$ for all Lebesgue-measurable $A\subseteq R_{d+1}$.

In addition, for each $\eta>0$, the sets $A_{\eta,j}^\triangle\equiv A_{d,\eta,j}^\triangle:=\{(x_1,\dotsc,x_{d+1})\in\triangle:\prod_{\,\ell\neq j}x_\ell\geq\eta\}$ and $J_\eta^\triangle\equiv J_{d,\eta}^\triangle:=\bigcap_{\,j=1}^{\,d+1}A_{d,\eta,j}^\triangle$ are convex and have the following properties:
\begin{enumerate}[label=(\roman*)]
\item $\Pi(R_{d+1}\cap J_\eta^\triangle)=Q^\triangle\cap J_\eta$ and $R_j\cap J_\eta^\triangle=R_j\cap A_{\eta,j}^\triangle$ for every $1\leq j\leq d+1$.
\item If $C\subseteq\triangle$ is convex and $J_\eta^\triangle\not\subseteq\relint C$, then $\mu_d(\triangle\setminus C)\geq d!\,\eta\mu_d(\triangle)$.
\item Suppose that $\phi,\phi_0\colon\aff\triangle\to [-\infty,\infty)$ are concave and upper semi-continuous, and that there exists $\theta\in [1,\infty)$ such that $e^{\phi_0}\geq\inv{(\theta\mu_d(\triangle))}$ on $\triangle$. Let $\delta,\eta>0$ be such that $4\delta^2/d!\leq\eta\leq (d+1)^{-d}$ and $\int_\triangle\,(e^{\phi/2}-e^{\phi_0/2})^2\leq\delta^2$. Then\\ $\phi(x)+\log\mu_d(\triangle)\geq -4\delta\{\theta/(d!\,\eta)\}^{1/2}-\log\theta$ for all $x\in J_\eta^\triangle$.
\end{enumerate}
\end{lemma}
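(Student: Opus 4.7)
The plan has four stages, one for each cluster of claims. For the preliminary facts about $Q^\triangle$, I would parametrise points of $R_{d+1}$ by their first $d$ coordinates and eliminate $x_{d+1} = 1 - \sum_{j=1}^d x_j$ using the simplex constraint; then the condition $x_{d+1} = \max_\ell x_\ell$ translates into the $2d$ inequalities $x_j \geq 0$ and $2x_j + \sum_{\ell\neq j,\,\ell\leq d} x_\ell \leq 1$ for $j=1,\dotsc,d$. A direct check shows these are non-redundant (each saturates at the barycentre of a distinct facet), and the elementary inequalities $2x_j \leq 1$ and $2 x_j + (d-1)/(d+1) \leq 1$ give the sandwich $[0,1/(d+1)]^d \subseteq Q^\triangle \subseteq [0,1/2]^d$. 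The Jacobian statement $\mu_d(\Pi(A)) = \mu_d(A)/\sqrt{d+1}$ follows from Lemma~\ref{Lem:detproj} applied with $u_2 = e_{d+1}$ and $u_1 = (1,\dotsc,1)/\sqrt{d+1}$, the unit normal to the linear translate of $\aff \triangle$; we first translate $R_{d+1}$ by $-e_1$ to enter a linear subspace so that the lemma applies.

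Convexity of $A^\triangle_{\eta,j}$ is immediate because $(x_1,\dotsc,x_{d+1}) \mapsto \sum_{\ell\neq j}\log x_\ell$ is concave on $(0,\infty)^{d+1}$, so its superlevel sets are convex; hence $J^\triangle_\eta$ is convex as an intersection. For (i), the key observation is that on $R_{d+1}$ the coordinate $x_{d+1}$ is maximal, so $\prod_{\ell\neq j} x_\ell \geq \prod_{\ell\neq d+1} x_\ell$ for every $j$; therefore $R_{d+1} \cap A^\triangle_{\eta,d+1} \subseteq R_{d+1} \cap A^\triangle_{\eta,j}$ for all $j$, and symmetry handles the remaining $R_j$. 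The identity $\Pi(R_{d+1} \cap J^\triangle_\eta) = Q^\triangle \cap J_\eta$ then follows because the image $\Pi(x)$ of any $x \in R_{d+1}$ lies in $Q^\triangle \subseteq [0,1/2]^d$, where $M$ acts as the identity, so $J_\eta$ reduces to the product condition $\prod_{\ell=1}^d y_\ell \geq \eta$ --- precisely the $j = d+1$ case of the defining condition for $J^\triangle_\eta$.

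Property (ii) is the main ingredient and I would prove it as follows. Given $x \in J^\triangle_\eta \setminus \relint C$ (the case $J^\triangle_\eta \cap \triangle \setminus \relint C = \emptyset$ being trivial), since $\eta \leq (d+1)^{-d}$ we have $x \in \relint \triangle$ (otherwise some coordinate vanishes, contradicting membership in every $A^\triangle_{\eta,j}$). The supporting hyperplane theorem inside $\aff \triangle$ gives a closed half-space $H^+$ with $x \in \partial H^+$ and $C \cap \relint H^+ = \emptyset$, and Lemma~\ref{Lem:simplevsets} --- applied to the $d$-simplex $\triangle$ --- produces a vertex $v = e_j$ with $\triangle^v(x) \subseteq \triangle \cap H^+$. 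The barycentric coordinates relative to $v = e_j$ are simply $\tilde x_\ell = x_\ell$ for $\ell \neq j$, so \eqref{Eq:pvx} yields $\mu_d(\triangle^v(x)) = d!\, \mu_d(\triangle) \prod_{\ell \neq j} x_\ell \geq d!\,\eta\, \mu_d(\triangle)$, where the inequality uses $x \in A^\triangle_{\eta,j}$. Since $(\triangle \cap H^+) \setminus \relint H^+$ lies in a lower-dimensional affine subspace and has $\mu_d$-measure zero, the conclusion follows.

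Finally, for (iii) I would apply (ii) with $C = D_{\phi,x}$, which is convex (as a strict superlevel set of the concave, upper semi-continuous function $\phi$) and, because $x \in J^\triangle_\eta \setminus D_{\phi,x}$ by definition, is not a superset of $J^\triangle_\eta$ through its relative interior; hence $\mu_d(\triangle \setminus D_{\phi,x}) \geq d!\,\eta\,\mu_d(\triangle)$. The remainder of the argument reproduces the Hellinger computation from the proof of Lemma~\ref{Lem:hellunbds}(i): the pointwise lower bound $e^{\phi_0} \geq (\theta \mu_d(\triangle))^{-1} =: c^{-1}$ on $\triangle$, combined with the hypothesis $\int_\triangle (e^{\phi/2} - e^{\phi_0/2})^2 \leq \delta^2$ and the trivial upper bound $e^{\phi(w)/2} \leq e^{\phi(x)/2}$ on $\triangle \setminus D_{\phi,x}$, gives $(c^{-1/2} - e^{\phi(x)/2})^2 \leq \delta^2/\mu_d(\triangle \setminus D_{\phi,x})$ in the regime where $e^{\phi(x)} < c^{-1}$ (the complementary regime yields the conclusion for free since the right-hand side of the inequality we want to prove is non-positive). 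Taking logarithms and bounding $\log(1-t) \geq -2t$ (valid because the hypothesis $\eta \geq 4\delta^2/d!$ together with the volume bound keeps the argument in the appropriate range) delivers the claimed inequality. The main obstacle is (ii); once the correct vertex is identified through Lemma~\ref{Lem:simplevsets} and the exponent structure in \eqref{Eq:pvx} is matched to the defining products of $A^\triangle_{\eta,j}$, the rest is routine.
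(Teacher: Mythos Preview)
Your proposal is correct and follows essentially the same approach as the paper's proof: the same parametrisation of $R_{d+1}$ by its first $d$ coordinates, the same appeal to Lemma~\ref{Lem:detproj} after translation, the same maximality argument for (i), the same separating-hyperplane-plus-Lemma~\ref{Lem:simplevsets} route for (ii), and the same reduction of (iii) to (ii) combined with the Hellinger computation of Lemma~\ref{Lem:hellunbds}(i). Two cosmetic points: in (ii) the clause ``since $\eta \leq (d+1)^{-d}$'' is unnecessary (membership in $J_\eta^\triangle$ already forces all coordinates to be positive), and in (iii) you should take $C = \triangle \cap D_{\phi,x}$ rather than $D_{\phi,x}$ itself so that $C \subseteq \triangle$ as required by (ii).
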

\begin{proof}
Note that $H:=\aff\triangle=\{x=(x_1,\dotsc,x_{d+1})\in\R^{d+1}:\sum_{j=1}^{d+1}x_j=1\}$ is an affine hyperplane with unit normal $u:=(1/\sqrt{d+1},\dotsc,1/\sqrt{d+1})\in\R^{d+1}$. For $1\leq k\leq d$, let $H_k^+:=\{x\in\R^{d+1}:x_k\geq 0\}$ and $H_{d+k}^+:=\{x\in\R^{d+1}:x_k\leq x_{d+1}\}$. Then $R_{d+1}=H\cap\bigcap_{\,k=1}^{\,2d}H_k^+\subseteq\triangle$,
and it is easy to verify that $R_{d+1}\subsetneqq H\cap\bigcap_{\,k\neq k'}H_k^+$ for all $1\leq k'\leq 2d$. Therefore, $R_{d+1}$ is a polytope when viewed as a subset of $H$, and we deduce from~\citet[Theorem~1.6]{BG09} that $F$ is a facet of $R_{d+1}$ if and only if $F=R_{d+1}\cap H_k^+$ for some $1\leq k\leq 2d$. It follows that $R_{d+1}$ is a polytope with exactly $2d$ facets, and since $\restr{\Pi}{H}\colon H\to\R^d$ is linear and bijective, the same is true of $Q^\triangle=\Pi(R_{d+1})$.

To see that $[0,1/(d+1)]^d\subseteq Q^\triangle$, fix $x=(x_1,\dotsc,x_d)\in[0,1/(d+1)]^d$ and set $x':=(x_1,\dotsc,x_d,x_{d+1})$, where $x_{d+1}:=1-\sum_{j=1}^d x_j$. Then $x_{d+1}\geq 1/(d+1)\geq x_j$ for all $1\leq j\leq d$, so $x'\in R_{d+1}\subseteq\triangle$ and therefore $x=\Pi(x')\in\Pi(R_{d+1})=Q^\triangle$, as required. In addition, if $x'=(x_1,\dotsc,x_{d+1})\in R_{d+1}$, then $0\leq x_k\leq x_{d+1}$ and $x_k+x_{d+1}\leq\sum_{j=1}^{d+1}x_j=1$ for all $1\leq k\leq d$, so $x_k\in [0,1/2]$ for all such $k$. It follows that $\Pi(x')\in [0,1/2]^d$ for all $x'\in R_{d+1}$ and hence that $Q^\triangle\subseteq [0,1/2]^d$.

Furthermore, to establish that $\mu_d(\Pi(A))=\mu_d(A)/\sqrt{d+1}$ for all Lebesgue-measurable $A\subseteq R_{d+1}$, we apply Lemma~\ref{Lem:detproj} to the hyperplanes $H-e_1=\{x\in\R^{d+1}:\tm{u}x=0\}$ and $\{x\in\R^{d+1}:\tm{e_{d+1}}x=0\}$, whose unit normals $u$ and $e_{d+1}$ satisfy $\abs{\tm{e_{d+1}}u}=1/\sqrt{d+1}$. Since $\Pi$ is linear and Lebesgue measure is translation invariant, we see that \[\mu_d(\Pi(A))=\mu_d(\Pi(A)-e_1)=\mu_d(\Pi(A-e_1))=\mu_d(A-e_1)/\sqrt{d+1}=\mu_d(A)/\sqrt{d+1}\]
for all Lebesgue-measurable $A\subseteq R_{d+1}$, as required.

As for (i), note that if $x'=(x_1,\dotsc,x_{d+1})\in R_j\cap A_{\eta,j}^\triangle$ for some $1\leq j\leq d+1$, then $x_j=\max_{1\leq\ell\leq d+1}x_\ell$ and $\prod_{\,\ell\neq j}x_\ell\geq\eta$, so $\prod_{\,\ell\neq j'}x_\ell\geq\prod_{\,\ell\neq j}x_\ell\geq\eta$ for all $1\leq j'\leq d+1$, and therefore $x'\in R_j\cap A_{\eta,j'}^\triangle$ for all $j'$. This shows that $R_j\cap A_{\eta,j}^\triangle\subseteq R_j\cap J_\eta^\triangle$ for each $1\leq j\leq d+1$, and the reverse inclusion is clear. To see that $\Pi(R_{d+1}\cap J_\eta^\triangle)= Q^\triangle\cap J_\eta$, first note that since $Q^\triangle\subseteq [0,1/2]^d$, it follows from Lemma~\ref{Lem:invelopebox}(i) that $Q^\triangle\cap A_\eta=Q^\triangle\cap J_\eta$. Thus, $x\in Q^\triangle\cap J_\eta=Q^\triangle\cap A_\eta$ if and only if $x=\Pi(x')$ for some $x'=(x_1,\dotsc,x_{d+1})\in R_{d+1}$ with $\prod_{j=1}^d x_j\geq\eta$, i.e.\ precisely when $x\in\Pi(R_{d+1}\cap A_{\eta,d+1}^\triangle)=\Pi(R_{d+1}\cap J_\eta^\triangle)$, as required.

The proof of (ii) is very similar to that of Lemma~\ref{Lem:invelopebox}(iv). Suppose that $C\subseteq\triangle$ is convex and that there exists $x\in J_\eta^\triangle\setminus\relint C$. Then it once again follows from the separating hyperplane theorem and Lemma~\ref{Lem:simplevsets} that there exists a vertex $v$ of $\triangle$ such that $\triangle^v(x)\subseteq\triangle\cap H^+$ for some closed half-space $H^+\subseteq\R^{d+1}$ with $x\in\partial H^+$, $H\neq\partial H^+$ and $C\cap\Int H^+=\emptyset$.
Then $v=e_k$ for some $1\leq k\leq d+1$, and note that $x=\sum_{j=1}^{d+1}x_je_j=e_k+\sum_{j\neq k}x_j(e_j-e_k)$. Thus,
\[\mu_d(\triangle\setminus C)\geq\mu_d(\triangle\cap H^+)\geq\mu_d(\triangle^v(x))=d!\,\mu_d(\triangle)\,\textstyle\prod_{j\neq k}x_j\geq d!\,\eta\mu_d(\triangle)\]
by~\eqref{Eq:pvx} and the fact that $x\in J_\eta^\triangle$, as required. Finally, the final assertion (iii) follows from (ii) and Lemma~\ref{Lem:hellunbds}(i) in much the same way that Lemma~\ref{Lem:invelopebox}(v) follows from Lemma~\ref{Lem:invelopebox}(iv).
\end{proof}
In view of Lemma~\ref{Lem:invelopebox}(iv) and Lemma~\ref{Lem:invelopesimp}(ii), we shall henceforth refer to the sets $J_\eta$ and $J_\eta^\triangle$ as \emph{(convex) invelopes}. Next, we show that the sets $J_\eta\subseteq Q$ can be approximated from within by polytopes $P_\eta$ satisfying $\mu_d(Q\setminus P_\eta)\lesssim_d\mu_d(Q\setminus J_\eta)$, in such a way that the number of vertices of $P_\eta$ does not grow too quickly as $\eta\!\searrow 0$. This is the content of Lemma~\ref{Lem:polyapprox}, whose proof hinges on an inductive construction based on the following fact.
\begin{lemma}
\label{Lem:conveasy}
Let $E\subseteq (0,\infty)^d$ be a convex set with the property that $\lambda E\subseteq E$ for all $\lambda\geq 1$, and let $h\colon [0,\infty)\to [0,\infty]$ be a convex function. Then
\begin{align*}
E^h:=\{(x,z)\in (0,\infty)^d\times (0,\infty):h(z)\in (0,\infty),\,x/h(z)\in E\}\\
\cup\,\{(x,z)\in (0,\infty)^d\times (0,\infty):h(z)=0,\,x\in\textstyle\bigcup_{\lambda>0}\lambda E\}
\end{align*}
is a convex subset of $\R^{d+1}\cong\R^d\times\R$. Suppose further that $E$ is closed and $(0,\infty)^d=\bigcup_{\lambda>0}\lambda E$. Then $\tau_E(x):=\sup\{\lambda>0:x\in\lambda E\}$ lies in $(0,\infty)$ for all $x\in (0,\infty)^d$ and $\tau_E(\lambda x)=\lambda\tau_E(x)$ for all $x\in (0,\infty)^d$ and $\lambda\in (0,\infty)$. Moreover, we have the following:
\begin{enumerate}[label=(\roman*),leftmargin=0.9cm]
\item $\lambda E=\{x\in (0,\infty)^d:\tau_E(x)\geq\lambda\}$ and $\lambda\Int E=\{x\in (0,\infty)^d:\tau_E(x)>\lambda\}$ for all $\lambda>0$, so $\tau_E$ is continuous on $(0,\infty)^d$ and $\lambda E\subsetneqq E$ for all $\lambda>1$.
\item $E^h=\{(x,z)\in (0,\infty)^d\times (0,\infty):\tau_E(x)\geq h(z)\}$.
\end{enumerate}
If in addition $h$ is lower semi-continuous and decreasing on $[0,\infty)$, and if $\lim_{x\searrow\,0}h(x)=\infty$, then
\begin{enumerate}[label=(\roman*),resume]
\item $\inv{h}(c):=\inf\{z\in [0,\infty):h(z)\leq c\}\in (0,\infty]$ for all $c\in [0,\infty)$, where we set $\inf\emptyset:=\infty$. The function $\inv{h}\colon [0,\infty)\to (0,\infty]$ is convex, decreasing and lower semi-continuous. For $z,c\in [0,\infty)$, we have $h(z)\leq c$ if and only if $\inv{h}(c)\leq z$.
\item $\inv{h}\circ\tau_E\colon (0,\infty)^d\to [0,\infty]$ is a convex function whose epigraph is $E^h$. If $h(z)\in (0,\infty)$ for all $z\in (0,\infty)$, then $E^h$ is closed and $\inv{h}\circ\tau_E$ is lower semi-continuous.
\item $\lambda E^h\subseteq E^h$ for all $\lambda\geq 1$, and if $h$ is not identically $\infty$, then $(0,\infty)^{d+1}=\bigcup_{\lambda>0}\lambda E^h$.
\end{enumerate}
\end{lemma}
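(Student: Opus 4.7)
The plan is to dispatch the seven claims in the order stated: first the convexity of $E^h$ (which does not use the extra hypotheses), then (i)--(ii) once $E$ is closed and $\bigcup_{\lambda>0}\lambda E = (0,\infty)^d$, and finally (iii)--(v) once $h$ is lower semi-continuous, decreasing and blows up at $0$.

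\textbf{Convexity of $E^h$.} As a preliminary, I would observe that the cone $\tilde E := \bigcup_{\lambda>0}\lambda E$ is convex: writing $x_i = \lambda_i y_i$ with $y_i \in E$, a convex combination equals $\nu\bigl[\frac{t\lambda_1}{\nu}y_1 + \frac{(1-t)\lambda_2}{\nu}y_2\bigr]$ with $\nu = t\lambda_1 + (1-t)\lambda_2$, and the bracketed factor is in $E$ by convexity of $E$. For $E^h$ itself, fix $(x_i,z_i) \in E^h$ and let $(x,z)$ be a convex combination with weight $t$. The principal case is $h(z_1),h(z_2) \in (0,\infty)$, where convexity of $h$ gives $h(z) \leq th(z_1)+(1-t)h(z_2)$; if $h(z)\in(0,\infty)$ I would write $x_i = h(z_i)y_i$ with $y_i \in E$, set $\alpha := \frac{th(z_1)+(1-t)h(z_2)}{h(z)} \geq 1$, and use the absorption property together with convexity of $E$ to get $x/h(z) = \alpha\bigl[\frac{th(z_1)}{\alpha h(z)}y_1 + \frac{(1-t)h(z_2)}{\alpha h(z)}y_2\bigr] \in \alpha E \subseteq E$; the case $h(z)=0$ reduces to $x \in \tilde E$. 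The remaining boundary cases where some $h(z_i) \in \{0,\infty\}$ are similar and all reduce to convex combinations inside $\tilde E$.

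\textbf{Properties (i) and (ii).} Under the added hypotheses, $\tau_E(x) > 0$ is immediate from the covering assumption, while $\tau_E(x) = \infty$ would force $x/\lambda_n \in E$ for some $\lambda_n \to \infty$, hence $x/\lambda_n \to 0$, contradicting closedness of $E$ in $\R^d$ since $E \subseteq (0,\infty)^d$. Positive homogeneity is a direct unpacking of the definition. For $\lambda E = \{\tau_E \geq \lambda\}$: if $\tau_E(x) \geq \lambda$, closedness gives $x/\tau_E(x) \in E$, and then $x = \lambda \cdot \bigl(\frac{\tau_E(x)}{\lambda}\bigr)\bigl(\frac{x}{\tau_E(x)}\bigr)$ lies in $\lambda\bigl(\frac{\tau_E(x)}{\lambda}E\bigr) \subseteq \lambda E$ by absorption. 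The direction $\lambda\Int E \subseteq \{\tau_E > \lambda\}$ is a one-line scaling. The reverse requires the key geometric fact that every $y_0 \in \partial E$ satisfies $\tau_E(y_0)=1$, equivalently $ty_0 \notin E$ for $t \in (0,1)$: I would take a supporting hyperplane $\{\tm{\alpha}x = b\}$ at $y_0$, deduce $b \leq 0$ from absorption (since $t y_0 \in E$ for $t \geq 1$ forces $tb \leq b$ and hence $(t-1)b \leq 0$), and rule out $b=0$ using the covering hypothesis (else $E \subseteq \{\tm{\alpha}x \leq 0\}$ would force $\bigcup_\lambda\lambda E$ to lie in a proper sub-cone of $(0,\infty)^d$). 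With $b<0$, the inequality $tb \leq b$ for all $t \geq a$ forces $a=1$, giving the claim. Consequently $y \in \Int E \Leftrightarrow \tau_E(y) > 1$, which scales up to $\lambda\Int E = \{\tau_E > \lambda\}$; continuity of $\tau_E$ then follows because superlevel sets are closed and strict superlevel sets are open, and strict inclusion $\lambda E \subsetneqq E$ for $\lambda > 1$ follows from the non-emptiness of $\partial E$ (since $E \neq (0,\infty)^d$ as $E$ is closed in $\R^d$). Property (ii) then drops out by distinguishing $h(z) \in (0,\infty)$, $h(z) = 0$ (where both sides become $(0,\infty)^d$), and $h(z) = \infty$ (where both are empty since $\tau_E < \infty$).

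\textbf{Properties (iii)--(v).} For (iii): lower semi-continuity of $h$ makes $\{z : h(z) \leq c\}$ closed, $h(0+)=\infty$ forces $\inv{h}(c)>0$, and the Galois equivalence $h(z) \leq c \Leftrightarrow z \geq \inv{h}(c)$ follows from $h$ being decreasing and lower semi-continuous; convexity of $\inv{h}$ comes from the chain $h(t\inv{h}(c_1)+(1-t)\inv{h}(c_2)) \leq th(\inv{h}(c_1))+(1-t)h(\inv{h}(c_2)) \leq tc_1+(1-t)c_2$. For (iv): I would first show $\tau_E$ is concave — by positive homogeneity plus the fact that $\lambda E$ is convex, a direct computation shows $\tau_E(tx_1+(1-t)x_2) \geq t\tau_E(x_1)+(1-t)\tau_E(x_2)$ — and then observe that the composition of a convex decreasing function with a concave function is convex. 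The epigraph identification $\{\inv{h}\circ\tau_E \leq z\} = \{h(z) \leq \tau_E(x)\} = E^h$ is immediate from (iii) and (ii); closedness of $E^h$ (and lower semi-continuity of $\inv{h}\circ\tau_E$) when $h$ is finite on $(0,\infty)$ follows from continuity of $\tau_E$ and lower semi-continuity of $h$. For (v), if $(x,z) \in E^h$ and $\lambda \geq 1$ then $\tau_E(\lambda x) = \lambda\tau_E(x) \geq \lambda h(z) \geq h(\lambda z)$ using nonnegativity and monotonicity of $h$; and for the covering statement, fixing any $z_0$ with $h(z_0) < \infty$ (which exists since $h \not\equiv \infty$), for $\lambda > 0$ sufficiently small we have $z/\lambda \geq z_0$ and so $h(z/\lambda) \leq h(z_0) \leq \tau_E(x)/\lambda$, placing $(x/\lambda,z/\lambda)$ in $E^h$.

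\textbf{Main obstacle.} The central technical step is the identity $\lambda\Int E = \{\tau_E > \lambda\}$, which rests entirely on the claim that every $y_0 \in \partial E$ has $ty_0 \notin E$ for $t \in (0,1)$; this is exactly where the covering assumption $\bigcup_\lambda\lambda E = (0,\infty)^d$ enters, ruling out supporting hyperplanes through the origin. A secondary delicate point is the convexity of $E^h$ in the degenerate cases $h(z_i) \in \{0,\infty\}$, where one must lean on the convexity of $\tilde E$ rather than on a direct identification $x_i = h(z_i)y_i$.
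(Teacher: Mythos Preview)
Your proposal is correct and follows essentially the same route as the paper, including the supporting-hyperplane argument for $\tau_E=1$ on $\partial E$ and the Galois-connection treatment of $h^{-1}$. Two small economies in the paper's version are worth noting: for the convexity of $E^h$, the paper writes $x_i=\lambda_i y_i$ with $\lambda_i\geq h(z_i)$ and $y_i\in E$ (which covers $h(z_i)=0$ and $h(z_i)>0$ uniformly), avoiding your case split; and for (iv), the paper simply reads off convexity of $h^{-1}\circ\tau_E$ from the already-established convexity of its epigraph $E^h$, so your separate concavity argument for $\tau_E$, while correct, is not needed.
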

\begin{proof}
Fix $(x_1,z_1),(x_2,z_2)\in E^h$ and $t\in (0,1)$, and let $(x,z):=t(x_1,z_1)+(1-t)(x_2,z_2)$. Since $h(z_i)<\infty$ for $i=1,2$, we must have $h(z)\leq th(z_1)+(1-t)h(z_2)<\infty$. Moreover, it follows from the definition of $E^h$ that there exist $\lambda_1,\lambda_2\in (0,\infty)$ and $y_1,y_2\in E$ such that $\lambda_i\geq h(z_i)$ and $x_i=\lambda_iy_i$ for each $i=1,2$. Let $\lambda:=t\lambda_1+(1-t)\lambda_2\in (0,\infty)$ and observe that 
\[x':=x/\lambda=\frac{t\lambda_1}{\lambda}y_1+\frac{(1-t)\lambda_2}{\lambda}y_2\in [y_1,y_2]\subseteq E.\]
Thus, if $h(z)=0$, then $x=\lambda x'\in\bigcup_{\lambda>0}\,\lambda E$. Otherwise, if $h(z)\in (0,\infty)$, then \[\lambda':=\frac{\lambda}{h(z)}\geq\frac{th(z_1)+(1-t)h(z_2)}{h(tz_1+(1-t)z_2)}\geq 1,\]
so $x/h(z)=\lambda'x'\in\lambda'E\subseteq E$. In both cases, we deduce that $(x,z)\in E^h$, and this establishes the convexity of $E^h$.

Henceforth, suppose that $E$ is closed and $(0,\infty)^d=\bigcup_{\lambda>0}\lambda E$. 
It follows from these assumptions that $0\notin E$ and moreover that for each $x\in (0,\infty)^d$, there exists $\alpha_x\in (0,\infty)$ such that $R_x:=E\cap\{ax:a\geq 0\}=\{ax:a\geq\alpha_x\}$. Thus, $\tau_E(x)=\max\{\lambda>0:x/\lambda\in E\}=1/\alpha_x$, and it is clear that $\tau_E(\lambda x)=\lambda\tau_E(x)$ for all $\lambda\in (0,\infty)$. Now if $x\notin E$, then $x\notin R_x$, so $\tau_E(x)<1$. On the other hand, if $x\in\Int E$, then there exists $\delta>0$ such that $(1-\delta)x\in E$, so $\tau_E(x)>1$. Otherwise, if $x\in\partial E$, then by the supporting hyperplane theorem \citep[Theorem~1.3.2]{Sch14}, there exists an open half-space $H^+$ such that $x\in\partial H^+$ and $H^+\cap E=\emptyset$. Note that we cannot have $R_x\subseteq\partial H^+$; indeed, this would imply that $0\in\partial H^+$, and since there exists $\eta>0$ such that $B(x,\eta)\subseteq (0,\infty)^d$, it would then follow that $H^+\cap (0,\infty)^d$ is a non-empty cone that is disjoint from $E$. But this contradicts the assumption that $(0,\infty)^d=\bigcup_{\lambda>0}\lambda E$, so it is indeed the case that $R_x\not\subseteq\partial H^+$, as claimed. We conclude that $R_x\cap\partial H^+=\{x\}$ and hence that $\tau_E(x)=1$.

In summary, for $\lambda\in (0,\infty)$, we have $\tau_E(x)=\lambda\tau_E(x/\lambda)\geq\lambda$ if and only if $x/\lambda\in E$, and $\tau_E(x)=\lambda\tau_E(x/\lambda)>\lambda$ if and only if $x/\lambda\in\Int E$. In other words, $\inv{\tau_E}\bigl((\lambda,\infty)\bigr)=\lambda\Int E$ and $\inv{\tau_E}\bigl((-\infty,\lambda)\bigr)=\cm{(\lambda E)}$ for each $\lambda>0$, and since these sets are all open, we conclude that $\tau_E$ is continuous on $(0,\infty)^d$. Moreover, we see that $\partial E\cap\lambda E=\emptyset$ for all $\lambda>1$, which yields the final assertion of (i). 

For (ii), observe that if $h(z)=0$, then $\tau_E(x)\geq h(z)=0$ for all $x\in (0,\infty)^d=\bigcup_{\lambda>0}\lambda E$. On the other hand, if $h(z)\in (0,\infty)$, then (i) implies that $\tau_E(x)\geq h(z)$ if and only if $x/h(z)\in E$. We conclude that $E^h=\{(x,z)\in (0,\infty)^d\times (0,\infty):\tau_E(x)\geq h(z)\}$, as required. 


Suppose further that $h$ is convex, decreasing and lower semi-continuous, and that $h(x)\nearrow\infty$ as $x\searrow 0$. Now for $c\in [0,\infty)$, let $I_c:=\{z\in [0,\infty):h(z)\leq c\}$ and note that either $I_c=\emptyset$, in which case $\inv{h}(c)=\infty$, or $I_c=[z',\infty)$ for some $z'\in (0,\infty)$, in which case $\inv{h}(c)=z'$. For $z,c\in [0,\infty)$, this shows that $h(z)\leq c$ if and only if $\inv{h}(c)\leq z$; in other words, $(z,c)\in [0,\infty)^2$ lies in the epigraph of $h$ if and only if $(c,z)$ lies in the epigraph of $\inv{h}$. Since $h$ is convex and lower semi-continuous, the epigraph of $h$ is closed and convex~\citep[][Theorem~7.1]{Rock97}, so the same is true of the epigraph of $\inv{h}$. This in turn implies that $\inv{h}$ is convex and lower semi-continuous. Moreover, since $h$ is decreasing, the same is true of $\inv{h}$. This yields (iii). 

For (iv), we deduce from (ii) and (iii) that
\[E^h=\{(x,z)\in (0,\infty)^{d+1}:\tau_E(x)\geq h(z)\}=\{(x,z)\in (0,\infty)^d\times\R:z\geq (\inv{h}\circ\tau_E)(x)\},\]
where we have used the fact that $\inv{h}(c)>0$ for all $c\in [0,\infty)$ to obtain the second equality. Thus, $\inv{h}\circ\tau_E$ is a convex function whose epigraph is $E^h$, and if $h(z)\in (0,\infty)$ for all $z\in (0,\infty)$, then it follows from (i) that
\[\{x\in (0,\infty)^d:z\geq (\inv{h}\circ\tau_E)(x)\}=\{x\in (0,\infty)^d:\tau_E(x)\geq h(z)\}=h(z)\cdot E\]
is closed for all $z\in (0,\infty)$. Together with~\citet[Theorem~7.1]{Rock97}, this implies that $\inv{h}\circ\tau_E$ is lower semi-continuous and $E^h$ is closed, as required.

Finally, if $(x,z)\in E^h$ and $\lambda\geq 1$, then $\tau_E(x)\geq h(z)\geq h(\lambda z)/\lambda$ by (ii) and the fact that $h$ is decreasing, so it follows from (ii) that $\lambda (x,z)\in E^h$. Also, for a fixed $(x,z)\in (0,\infty)^{d+1}$, note that since $h(tz)/t\to 0$ as $t\to\infty$ if $h$ is not identically $\infty$,
there exists $\lambda>0$ such that $h(\lambda z)/\lambda\leq\tau_E(x)$. We deduce from (ii) that $\lambda (x,z)\in E^h$, as claimed in (v).
\end{proof}
\begin{lemma}
\label{Lem:polyapprox}
There exists $\alpha_d>0$, depending only on $d\in\N$, such that for every $\eta\in (0,2^{-d}]$, we can construct a $G_R(Q)$-invariant polytope $P_\eta\equiv P_{d,\eta}\subseteq J_{d,\eta}\equiv J_\eta$ with the following properties:
\begin{enumerate}[label=(\roman*)]
\item $P_\eta$ has at most $\alpha_d\log^{d-1}(1/\eta)$ vertices and $\mu_d(Q\setminus P_\eta)\leq\alpha_d\,\mu_d(Q\setminus J_\eta)$.
\item If $0<\eta<\tilde{\eta}\leq 2^{-d}$, then $P_{\tilde{\eta}}\subsetneqq P_\eta$, and the regions $Q\setminus\Int P_\eta$ and $P_\eta\setminus\Int P_{\tilde{\eta}}$ can each be expressed as the union of at most $\alpha_d\log^{d-1}(1/\eta)$ $d$-simplices with pairwise disjoint interiors.
\item $\tilde{P}_\eta\equiv\tilde{P}_{d,\eta}:=Q^\triangle\cap P_{d,\eta}$ is a polytope and $\mu_d(Q^\triangle\setminus\tilde{P}_\eta)\leq\alpha_d\,\mu_d(Q^\triangle\setminus J_\eta)$, where $Q^\triangle$ is defined as in Lemma~\ref{Lem:invelopesimp}. 
\item If $0<\eta<\tilde{\eta}\leq (d+1)^{-d}$, then $\tilde{P}_{\tilde{\eta}}\subsetneqq\tilde{P}_\eta$, and the regions $Q^\triangle\setminus\Int\tilde{P}_{\eta}$ and $\tilde{P}_\eta\setminus\Int\tilde{P}_{\tilde{\eta}}$ can each be expressed as the union of at most $\alpha_d\log^{d-1}(1/\eta)$ $d$-simplices with pairwise disjoint interiors.
\end{enumerate}
\end{lemma}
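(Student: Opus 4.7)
The strategy is to use the $G_R(Q)$-invariance of $J_\eta$ (Lemma~\ref{Lem:invelopebox}(ii)) to reduce the construction to the orthant $[0,1/2]^d$, where $J_\eta$ coincides with $A_\eta\cap[0,1/2]^d$ and is bounded by the hyperbolic surface $\prod_j x_j=\eta$. A logarithmic-dyadic sampling of this surface will produce polytopes with the right vertex count, and the canonical ordering of the grid will give nesting and a compatible family of triangulations.

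First I would define the polytope explicitly. For $\eta\in(0,2^{-d}]$, set $N(\eta):=\lceil\log_2(1/\eta)\rceil\geq d$, and consider the index set
\[
\mathcal{I}_\eta:=\bigl\{(k_1,\dotsc,k_d)\in\Z_{\geq 1}^d:k_1+\dotsb+k_d=N(\eta)\bigr\},
\]
of cardinality $\binom{N(\eta)-1}{d-1}\lesssim_d\log^{d-1}(1/\eta)$. Each $k\in\mathcal{I}_\eta$ corresponds to a vertex $v(k):=(2^{-k_1},\dotsc,2^{-k_d})$ lying on the surface $\prod_j x_j=2^{-N(\eta)}\in[\eta/2,\eta]$. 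Define the "quadrant piece"
\[
P_\eta^{(0)}:=\conv\bigl(\{v(k):k\in\mathcal{I}_\eta\}\cup\{(1/2,\dotsc,1/2)\}\bigr)\subseteq[0,1/2]^d,
\]
and set $P_\eta:=\bigcup_{g\in G_R(Q)}g(P_\eta^{(0)})$. Since the vertices $v(k)$ lie inside or on $\partial A_\eta$ and $A_\eta$ is convex, we have $P_\eta^{(0)}\subseteq A_\eta\cap[0,1/2]^d$; invariance under $G_R(Q)$ then gives $P_\eta\subseteq J_\eta$ by construction. The vertex bound in (i) follows directly from $|\mathcal{I}_\eta|\lesssim_d\log^{d-1}(1/\eta)$.

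Next I would establish the volume bound in (i) by showing the reverse inclusion $P_\eta^{(0)}\supseteq A_{c_d\eta}\cap[0,1/2]^d$ for some $c_d\gtrsim_d 1$. This follows from the fact that for any hyperplane section of the form $x_j=2^{-k}$, the restriction of $\partial A_\eta$ to that section is again a hyperbolic surface in $d-1$ variables, and the piecewise-linear interpolation through the dyadic vertices $v(k)$ lies within a bounded multiplicative factor of the true surface (a direct consequence of the convexity of $-\log x$ on a dyadic interval, which gives $-\log x\leq 2\max\{-\log 2^{-k},-\log 2^{-(k+1)}\}$). Combined with the formula $\mu_d(Q\setminus J_\eta)\asymp_d\eta\log^{d-1}(1/\eta)$ from Lemma~\ref{Lem:invelopebox}(iii), this yields $\mu_d(Q\setminus P_\eta)=2^d\mu_d([0,1/2]^d\setminus P_\eta^{(0)})\leq 2^d\mu_d([0,1/2]^d\setminus A_{c_d\eta})\lesssim_d\eta\log^{d-1}(1/\eta)$.

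For the nesting in (ii), if $\eta<\tilde\eta\leq 2^{-d}$ then $N(\eta)\geq N(\tilde\eta)$. The case $N(\eta)=N(\tilde\eta)$ requires a small perturbation: I would replace $v(k)$ by the point $v_\eta(k):=(\eta/2^{-N(\eta)})^{1/d}\cdot v(k)$, scaled so that $\prod_j v_\eta(k)_j=\eta$ exactly, so that the polytope depends continuously and strictly monotonically on $\eta$. For $\eta<\tilde\eta$ with $N(\eta)>N(\tilde\eta)$, every vertex $v_{\tilde\eta}(k)$ with $k\in\mathcal{I}_{\tilde\eta}$ lies on the face of $P_\eta^{(0)}$ determined by $x_j=2^{-k_j}$ for the coordinate where $k_j=1$, hence in $P_\eta^{(0)}$; by convexity $P_{\tilde\eta}^{(0)}\subseteq P_\eta^{(0)}$, with strictness following from the volume gap. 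The triangulation claim is the most laborious bookkeeping step: the shell $P_\eta^{(0)}\setminus\Int P_{\tilde\eta}^{(0)}$ can be triangulated by taking each $d$-simplex $\conv\{v_\eta(k^{(0)}),\dotsc,v_\eta(k^{(d-1)}),(1/2,\dotsc,1/2)\}$ in the natural "fan" triangulation of $P_\eta^{(0)}$ from the apex and subtracting the corresponding inner fan simplex, which can be further split into $O(1)$ simplices per pair. The total count is controlled by the number of $(d-1)$-dimensional boundary faces of $\mathcal{I}_\eta$, which is $\lesssim_d\log^{d-1}(1/\eta)$. Parts (iii) and (iv) then follow easily: $\tilde P_\eta=Q^\triangle\cap P_\eta$ is an intersection of polytopes each with $\lesssim_d\log^{d-1}(1/\eta)$ facets, so $\tilde P_\eta$ is a polytope with a triangulation of comparable size (since $Q^\triangle$ has $2d$ facets by Lemma~\ref{Lem:invelopesimp}), and the volume bound follows from $Q^\triangle\setminus\tilde P_\eta\subseteq[0,1/2]^d\setminus P_\eta^{(0)}$.

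The main obstacle will be executing the nesting and simultaneous triangulation steps cleanly. Achieving strict inclusion $P_{\tilde\eta}\subsetneqq P_\eta$ while also keeping the outer and inner fan triangulations combinatorially compatible (so that the shell decomposes naturally into a bounded number of simplices per boundary face) forces the dyadic construction to be interpolated into a continuous family indexed by $\eta$, and care is needed to ensure that the constants in the vertex count, volume approximation, and simplex count all remain $\lesssim_d\log^{d-1}(1/\eta)$ uniformly in $\eta$.
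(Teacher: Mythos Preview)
Your dyadic sampling of the hypersurface $\prod_j x_j=\eta$ is indeed the right idea, and the paper's construction is built on it too. However, there are genuine gaps.

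The most serious is the nesting in (ii). Your proposed perturbation $v_\eta(k):=(\eta/2^{-N(\eta)})^{1/d}\,v(k)$ does not work: when $k_j=1$ the scaled vertex has $j$th coordinate $(\eta 2^{N(\eta)})^{1/d}/2>1/2$ as soon as $\eta>2^{-N(\eta)}$, so it leaves $[0,1/2]^d$ altogether. More fundamentally, even if you patch this, the inclusion $P_\eta^{(0)}\supseteq A_{c_d\eta}\cap[0,1/2]^d$ that you aim for necessarily has $c_d>1$ (for instance $c_2=9/8$ by a direct AM--GM computation on the chords), so it only yields $P_{\tilde\eta}^{(0)}\subseteq P_\eta^{(0)}$ when $\tilde\eta\geq c_d\eta$; and your alternative justification via ``vertices on faces'' covers only those $v_{\tilde\eta}(k)$ with some $k_j=1$. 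The paper's resolution is the structural insight you are missing: instead of anchoring a dyadic grid at integers and then intersecting with $[0,1/2]^d$, it first builds a single \emph{unbounded} master set $E_d\subseteq(0,\infty)^d$ satisfying $\lambda E_d\subseteq E_d$ for all $\lambda\geq 1$, and then \emph{defines} $L_{d,\eta}:=\eta^{1/d}E_d$. This exact scaling makes the strict nesting automatic for every pair $\eta<\tilde\eta$, lets the shell be written as $\bigcup_{\lambda\in[\eta^{1/d},\tilde\eta^{1/d}]}\lambda\,\partial E_d$, and is what drives the compatible triangulation. The set $E_d$ is constructed inductively on $d$ as $(E_{d-1})^{h_1}$ via Lemma~\ref{Lem:conveasy}, with $h_1$ a fixed piecewise-affine approximation to $z\mapsto z^{-1/(d-1)}$; this inductive description also supplies the explicit facet structure (property (v) in the proof) that you would need, but have not provided, to carry out the triangulation in dimension $d\geq 3$.

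A secondary issue: you define $P_\eta$ as the \emph{union} $\bigcup_{g}g(P_\eta^{(0)})$, which is not automatically convex. For this to be a polytope you would need $P_\eta^{(0)}$ to be upward-closed in $[0,1/2]^d$, which is plausible but not argued. The paper sidesteps this by taking $P_{d,\eta}:=\bigcap_{g}g(L_{d,\eta}\cap Q)$, an intersection of convex sets, and then showing that the monotonicity of $L_{d,\eta}$ forces $[0,1/2]^d\cap P_{d,\eta}=[0,1/2]^d\cap L_{d,\eta}$.
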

\begin{figure}[htb]
\centering
\includegraphics[trim=3cm 14cm 0 0.3cm,width=0.7\textwidth]{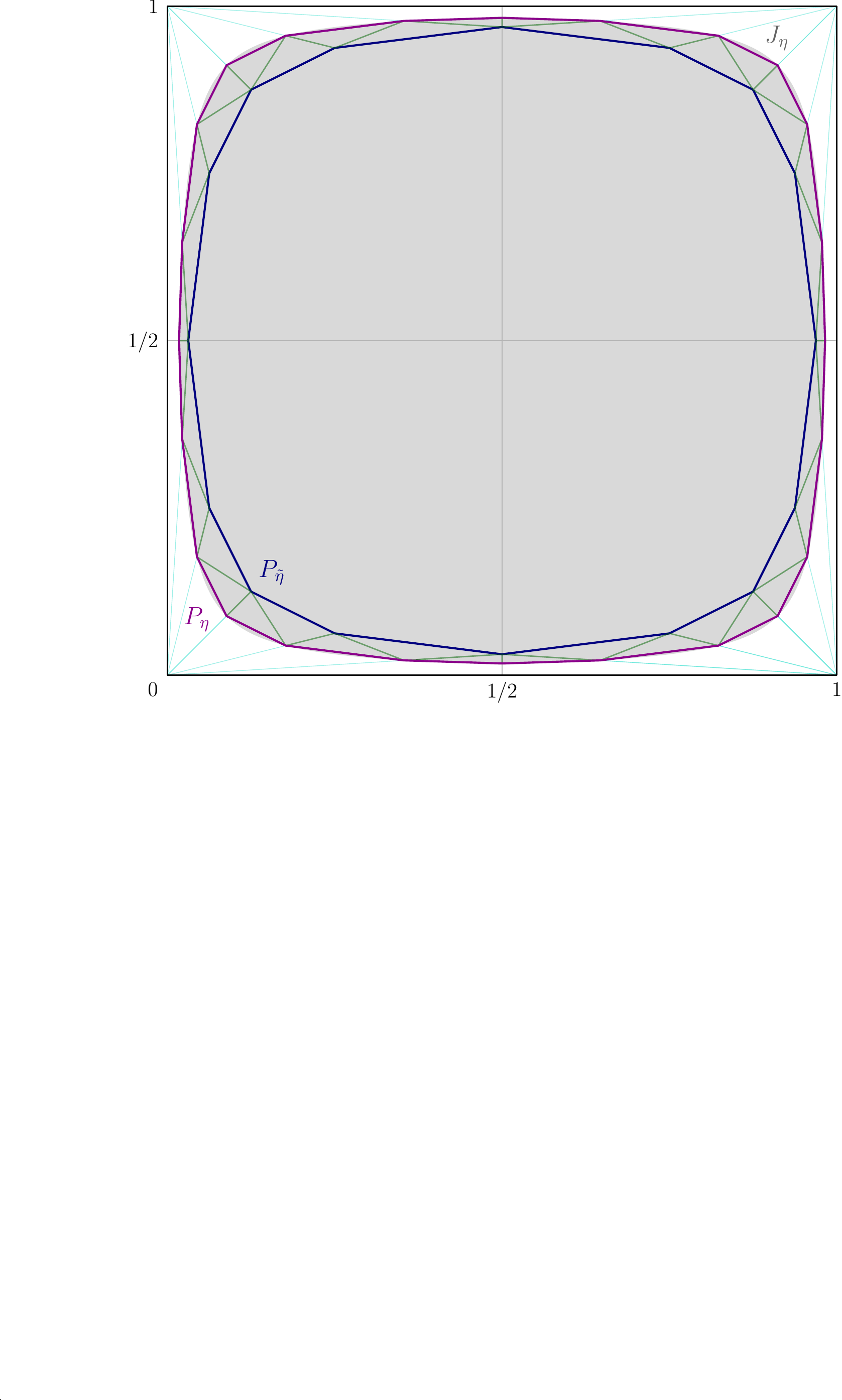}
\caption{Diagram of the nested sets $J_\eta\supseteq P_\eta\supseteq P_{\tilde{\eta}}$ in \protect{Lemma~\ref{Lem:polyapprox}} for $0<\eta<\tilde{\eta}\leq 2^{-d}$ when $d=2$. The grey shaded region is $J_\eta$, and the boundaries of $P_\eta$ and $P_{\tilde{\eta}}$ are outlined in purple and blue respectively. The $x_1$-coordinates of the vertices of $P_\eta$ in $[0,1/2)^2$ take the form $2^k\eta^{1/2}$, where $k\in\Z$. The green line segments indicate a triangulation of $P_\eta\setminus\Int P_{\tilde{\eta}}$ that is constructed using a scaling argument, which we illustrate using faint blue line segments; see properties (iv) and (ix) in the proof.}
\label{Fig:polyapproxbox}
\end{figure}
A key feature of this result is that the bounds on the number of simplices in (ii) and (iv) have a polylogarithmic (rather than polynomial) dependence on $\eta^{-1}$. 
The proof below is constructive and `bare-hands' in that it does not appeal to the general theory of polytopal approximations to compact, convex sets. It is instructive to compare our conclusions with what could be obtained by directly applying an off-the-shelf result such as Lemma~\ref{Lem:BI75} in Section~\ref{Subsec:SmoothnessSupp}, or~\citet[Theorem~3]{GMR95}, which states that for any $K\in\mathcal{K}^b$ and $\zeta>0$, there exists a polytope $P\subseteq K$ with $\lesssim_d\zeta^{-(d-1)/2}$ vertices such that $\mu_d(K\setminus P)\leq\zeta\mu_d(K)$. In (i), we seek a polytope $P\subseteq J_\eta$ such that $\mu_d(Q\setminus P)\leq\alpha_d\,\mu_d(Q\setminus J_\eta)$, and in view of Lemma~\ref{Lem:invelopebox}(iii), the requirement is that \[\mu_d(J_\eta\setminus P)\leq (\alpha_d-1)\,\mu_d(Q\setminus J_\eta)\lesssim_d\eta\log^{d-1}(1/\eta)\lesssim_d\eta\log^{d-1}(1/\eta)\,\mu_d(J_\eta),\]
at least when $\eta\leq 2^{-(d+1)}$. It follows from~\citet[Theorem~3]{GMR95} that there exists a suitable polytope $P$ with $\lesssim_d\{\eta\log^{d-1}(1/\eta)\}^{-(d-1)/2}$ vertices, but this bound is much weaker than what is claimed in (i). Similarly, the bounds of order $\log^{d-1}(1/\eta)$ in (ii) and (iv) do not follow straightforwardly from general schemes for approximating and subdividing the region between two nested convex sets or polytopes~\citep[cf.][page~390]{Lee04}.

Instead, we exploit the special structure of the regions $J_\eta$ and $A_\eta$ defined in Lemma~\ref{Lem:invelopebox}. In particular, the scaling property in Lemma~\ref{Lem:invelopebox}(i) implies that $[0,1/2]^d\cap J_\eta=[0,1/2]^d\cap A_\eta=\eta^{1/d}\,\bigl([0,\eta^{-1/d}/2]^d\cap A_1\bigr)$. In the proof, we aim to construct a set $E\equiv E_d\subseteq A_{d,1}\equiv A_1$ such that $[0,\eta^{-1/d}/2]^d\cap E$ is a polytope satisfying $\mu_d([0,\eta^{-1/d}/2]^d\setminus E)\lesssim_d\mu_d([0,\eta^{-1/d}/2]^d\setminus A_1)$ for all $\eta>0$. It turns out that this can be achieved whilst also ensuring that $[0,\eta^{-1/d}/2]^d\cap E$ has $\lesssim_d\log^{d-1}(1/\eta)$ vertices in $[0,\eta^{-1/d}/2)^d$. Intuitively, the reason for this is that the boundary of $A_1$ becomes much `flatter' away from the origin, as can be seen in Figures~\ref{Fig:invelopebox} and~\ref{Fig:polyapproxbox}. This means that the volume bound above can be satisfied by an approximating polytope whose vertices are spread much more diffusely over the boundary of $A_1$ in regions further away from the origin. 

Returning to the original domain $[0,1/2]^d$, we scale $E$ to get $L_\eta:=\eta^{1/d}E\subseteq\eta^{1/d}A_1=A_\eta$, so that $\mu_d([0,1/2]^d\cap L_\eta)\lesssim_d\mu_d([0,1/2]^d\cap J_\eta)$. The polytope $P_\eta$ is then constructed by applying the isometries in $G_R(Q)$ to $[0,1/2]^d\cap L_\eta$. By elucidating the facial structure of $E$ and scaling $E$ as above, we proceed to obtain simplicial decompositions of $Q\setminus\Int P_\eta$ and $P_\eta\setminus\Int P_{\tilde{\eta}}$ that satisfy the conditions of (ii). 
\begin{proof}[Proof of Lemma~\ref{Lem:polyapprox}]
Throughout, for $d\in\N$, we identify $\R^d$ and $\Z^d$ with $\R^{d-1}\times\R$ and $\Z^{d-1}\times\Z$ respectively. First, we will show by induction on $d$ that for all $d\in\N$ and $\eta>0$, there exists a closed, convex set $L_\eta\equiv L_{d,\eta}$ with the following properties:
\begin{enumerate}[label=(\roman*)]
\item $L_{d,\eta}$ is the epigraph of a continuous, convex function $g_{\eta\,}\colon (0,\infty)^{d-1}\to (0,\infty)$ with the property that $g_\eta(u)\geq s_\eta(u):=\eta\,\prod_{j=1}^{d-1}\inv{u_j}$ for all $u\in\R^d,$ and $\partial L_{d,\eta}=\{(x,g_\eta(x)):x\in (0,\infty)^{d-1}\}$.
\item $\lambda L_{d,\eta}\subseteq L_{d,\eta}\subseteq A_{d,\eta}$ for all $\lambda\geq 1$ and $(0,\infty)^d=\bigcup_{\lambda>0}\lambda L_{d,\eta}$.
\item If $(x_1,\dotsc,x_d)\in L_{d,\eta}$, then $(x_1',\dotsc,x_d')\in L_{d,\eta}$ whenever $x_j'\geq x_j$ for all $1\leq j\leq d$. Also, if $(x_1,\dotsc,x_d)\in\Int L_{d,\eta}$, then $(x_1',\dotsc,x_d')\in\Int L_{d,\eta}$ whenever $x_j'\geq x_j$ for all $j$.
\item $L_{d,\eta}=\eta^{1/d}L_{d,1}=:\eta^{1/d}E_d$ and $(\eta^{1/d},\dotsc,\eta^{1/d})\in\R^d$ lies in $L_{d,\eta}$. If $\tilde{\eta}>\eta$, then $L_{d,\tilde{\eta}}=(\tilde{\eta}/\eta)^{1/d}\,L_{d,\eta}\subsetneqq L_{d,\eta}$. Moreover, $[0,1/2]^d\cap L_{d,\eta}$ is non-empty if and only if $\eta\in (0,2^{-d}]$, and $[0,1/2]^d\cap\Int L_{d,\eta}$ is non-empty if and only if $\eta\in (0,2^{-d})$.
\item Every facet of $L_{d,\eta}$ is a $(d-1)$-dimensional polytope and every $x\in\partial L_{d,\eta}$ lies in some facet. More precisely, if $d\geq 2$ and $F$ is a facet of $L_{d,\eta}$, then there exists $m'=(m,j)\in\Z^{d-2}\times\Z\equiv\Z^{d-1}$ such that $F=F_{\eta,m'}:=\{(\lambda x,g_{\eta}(\lambda x)):x\in G_m,\,\lambda\in [w_{\eta,j},w_{\eta,j-1}]\}$, where $G_m$ is a corresponding facet of $E_{d-1}$ and $w_{\eta,k}\equiv w_{d,\eta,k}:=2^{-k}\eta^{1/d}$ for $k\in\Z$. Moreover, $F_{\eta,m'}=\eta^{1/d}F_{1,m'}=:G_{m'}$ for all $m'\in\Z^{d-1}$, and if $(x_1,\dotsc,x_d)\in F_{\eta,m'}$, then $x_d\in [z_{\eta,j-1},z_{\eta,j}]$, where $z_{\eta,k}\equiv z_{d,\eta,k}:=2^{(d-1)k}\eta^{1/d}$ for $k\in\Z$.
\end{enumerate}
We will then show by induction on $d$ that for all $d\in\N$, there exists $\alpha_d'>0$, depending only on $d$, such that the following hold for all $\eta\in(0,2^{-d}]$:
\begin{enumerate}[resume,label=(\roman*)]
\item $P_\eta\equiv P_{d,\eta}:=\bigcap_{\,g\in G_R(Q)}g(L_{d,\eta}\cap Q)$ is non-empty and $G_R(Q)$-invariant, and $P_{d,\eta}\subseteq J_{d,\eta}$. 
\item $[0,1/2]^d\cap P_{d,\eta}=[0,1/2]^d\cap L_{d,\eta}$ and $[0,1/2]^d\cap\Int P_{d,\eta}=[0,1/2]^d\cap\Int L_{d,\eta}$. Moreover, $\mu_d([0,1/2]^d\setminus P_{d,\eta})\leq\alpha_d'\,\mu_d([0,1/2]^d\setminus J_{d,\eta})$.
\item $P_{d,\eta}$ is a polytope with at most $\alpha_d'\log^{d-1}(1/\eta)$ vertices.
\item If $0<\eta<\tilde{\eta}\leq 2^{-d}$, then $[0,1/2]^d\setminus\Int P_{d,\eta}$ and $[0,1/2]^d\cap (P_{d,\eta}\setminus\Int P_{d,\tilde{\eta}})$ can each be triangulated into at most $\alpha_d'\log^{d-1}(1/\eta)$ $d$-simplices, in such a way that for any $d$-simplex $S$ in the triangulation of $[0,1/2]^d\cap (P_{d,\eta}\setminus\Int P_{d,\tilde{\eta}})$, there exists $m'\in\Z^{d-1}$ such that $S\subseteq\bigcup_{\lambda\in [1,(\tilde{\eta}/\eta)^{1/d}]}\lambda F_{\eta,m'}=\bigcup_{\lambda\in [\eta^{1/d},\,\tilde{\eta}^{1/d}]}\lambda G_{m'}$. 
\end{enumerate}
In view of the $G_R(Q)$-invariance of $P_\eta$, these final four assertions imply parts (i) and (ii) of the desired result. We will address parts (iii) and (iv) of the lemma at the end of the proof.

When $d=1$, we can take $L_{1,\eta}:=A_{1,\eta}=[\eta,\infty)$ and $P_{1,\eta}:=J_{1,\eta}=[\eta,1-\eta]$ for each $\eta\in (0,1/2]$, which trivially have the desired properties. Note that $L_{1,\eta}$ has a single facet $F_{\eta,\varnothing}:=\{\eta\}$, where we write $\varnothing$ for the empty tuple. Next, for fixed $d\geq 2$ and $\eta>0$, let $\overline{h}_\eta(z)\equiv\overline{h}_{d,\eta}(z):=(\eta/z)^{1/(d-1)}$ for each $z\in (0,\infty)$ and let $h_\eta\equiv h_{d,\eta}\colon (0,\infty)\to (0,\infty)$ be the piecewise affine function that is linear on each of the intervals $[z_{\eta,j-1},z_{\eta,j}]$ and satisfies $h_\eta(z_{\eta,j})=w_{\eta,j}=\overline{h}_\eta(z_{\eta,j})$ for all $j\in\Z$. Then $h_\eta$ is a strictly decreasing, convex bijection whose inverse $\inv{h_\eta}\colon (0,\infty)\to(0,\infty)$ is also piecewise affine. Indeed, for $j\in\Z$, let $t_{\eta,j}$ be the (unique) affine function that satisfies $t_{\eta,j}(w_{\eta,j})=z_{\eta,j}$ and $t_{\eta,j}(w_{\eta,j-1})=z_{\eta,j-1}$, and observe that $\inv{h_\eta}(w)\geq t_{\eta,j}(w)$ for all $w\in (0,\infty)$, with equality if and only if $w\in [w_{\eta,j},w_{\eta,j-1}]$.

We now verify that $\overline{h}_\eta\leq h_\eta\leq\gamma_d\,\overline{h}_\eta$ for some $\gamma_d>0$ that depends only on $d$. Indeed, if $z=\lambda z_{\eta,j}\in [z_{\eta,j},z_{\eta,j+1}]$ for some $j\in\Z$ and $\lambda\in [1,2^{d-1}]$, then 
\begin{equation}
\label{Eq:hratio}
\frac{h_\eta(z)}{\overline{h}_\eta(z)}=\frac{\cbr{1-(\lambda-1)/(2^{d}-2)}h_\eta(z_{\eta,j})}{\lambda^{-1/(d-1)}\,\overline{h}_\eta(z_{\eta,j})}=\frac{(2^d-\lambda-1)\,\lambda^{1/(d-1)}}{2^d-2}\,,
\end{equation}
which is independent of $j$ and attains its maximum value when $\lambda=(2^d-1)/d$. Also, it is easy to see that $h_{\eta}(z)=\eta^{1/d\,}h_1(z/\eta^{1/d})$ and $\inv{h_{\eta}}(w)=\eta^{1/d\,}\inv{h_1}(w/\eta^{1/d})$ for all $z,w\in (0,\infty)$. Since $h_1$ is strictly decreasing, it follows that $h_{\eta}<h_{\tilde{\eta}}$ whenever $0<\eta<\tilde{\eta}$.

Using the notation of Lemma~\ref{Lem:conveasy}, we claim that $L_{d,\eta}:=(E_{d-1})^{h_\eta}$ has the required properties (i)--(v), where $E_{d-1}\equiv L_{d-1,1}$. 
\begin{proof}[Properties (i) and (ii)]
\let\qed\relax
By part (ii) of the inductive hypothesis and Lemma~\ref{Lem:conveasy}(iv), it follows that $L_{d,\eta}$ is closed and convex, and that $g_\eta:=\inv{h_\eta}\circ\tau_{E_{d-1}}$ is a convex function whose epigraph is $L_{d,\eta}$. Since $\inv{h_\eta}$ and $\tau_{E_{d-1}}$ are continuous, it follows that $g_\eta$ is continuous and hence that $\partial L_{d,\eta}=\{(x,g_\eta(x)):x\in (0,\infty)^{d-1}\}$. 
In addition, Lemma~\ref{Lem:conveasy}(v) implies that $\lambda L_{d,\eta}\subseteq L_{d,\eta}$ for all $\lambda\geq 1$ and $(0,\infty)^d=\bigcup_{\lambda>0}\lambda L_{d,\eta}$. Now for each $z\in (0,\infty)$, observe that
\begin{align}
\label{Eq:lslice}
L_{d,\eta}\cap\bigl((0,\infty)^{d-1}\times\{z\}\bigr)&=(h_\eta(z)\cdot E_{d-1})\times\{z\}\\
\label{Eq:polyinclusion}
&\subseteq(h_\eta(z)\cdot A_{d-1,1})\times\{z\}\subseteq(\overline{h}_\eta(z)\cdot A_{d-1,1})\times\{z\}\\
&=A_{d-1,\,\eta/z}\times\{z\}=A_{d,\eta}\cap\bigl((0,\infty)^{d-1}\times\{z\}\bigr);\notag
\end{align}
indeed, the first inclusion in~\eqref{Eq:polyinclusion} follows from part (ii) of the inductive hypothesis, which ensures that  $E_{d-1}=L_{d-1,1}\subseteq A_{d-1,1}$, and the second inclusion follows from the definition of $A_{d-1,1}$ and the fact that $\overline{h}_\eta\leq h_\eta$. This shows that $L_{d,\eta}\subseteq A_{d,\eta}$.
\end{proof}
\begin{proof}[Property (iii)]
\let\qed\relax
Now fix $(x,z)\in L_{d,\eta}$ and let $x'\in (0,\infty)^{d-1}$ be such that $x_j'\geq x_j$ for all $1\leq j\leq d-1$. It follows from parts (ii) and (iii) of the inductive hypothesis that $\tau_{E_{d-1}}(x')\geq\tau_{E_{d-1}}(x)$. Since $L_{d,\eta}=\{(x,z):x\in (0,\infty)^d,\,z\geq g_\eta(x)\}$, we see that if $z'\geq z$, then \[z'\geq z\geq g_\eta(x)=(\inv{h}_\eta\circ\tau_{E_{d-1}})(x)\geq (\inv{h}_\eta\circ\tau_{E_{d-1}})(x')=g_\eta(x'),\]
so $(x',z')\in L_{d,\eta}$. This yields the first assertion of (iii). Since $\Int L_{d,\eta}=\{(x,z):x\in (0,\infty)^d,\,z>g_\eta(x)\}$ by property (i), we can argue similarly to obtain the corresponding conclusion when $L_{d,\eta}$ is replaced by $\Int L_{d,\eta}$ throughout.
\end{proof}
\begin{proof}[Property (iv)]
\let\qed\relax
Note that $(x,z)\in (0,\infty)^d$ lies in $L_{d,\eta}$ if and only if $(\eta^{-1/d}x)/h_1(\eta^{-1/d}z)=x/h_\eta(z)\in E_{d-1}$, i.e.\ $\eta^{-1/d\,}(x,z)\in L_{d,1}$. Thus, if $\tilde{\eta}>\eta$, then $L_{d,\tilde{\eta}}=\tilde{\eta}^{1/d}L_{d,1}\equiv\tilde{\eta}^{1/d}E_d=(\tilde{\eta}/\eta)^{1/d}\,L_{d,\eta}\subsetneqq L_{d,\eta}$ by property (ii) and Lemma~\ref{Lem:conveasy}(i). Moreover, $h_\eta(\eta^{1/d})=\overline{h}_\eta(\eta^{1/d})=\eta^{1/d}$, and part (iv) of the inductive hypothesis asserts that $E_{d-1}$ contains $(1,\dotsc,1)\in\R^{d-1}$, so we deduce from~\eqref{Eq:lslice} that $L_{d,\eta}$ contains $(\eta^{1/d},\dotsc,\eta^{1/d})\in\R^d$. It follows from this and property (iii) that $\tau_{E_d}(x)\leq\tau_{E_d}((1/2,\dotsc,1/2))=1/2$ for all $x\in [0,1/2]^d$. Together with property (ii) and Lemma~\ref{Lem:conveasy}(i), this shows that $[0,1/2]^d\cap L_{d,\eta}=\{x\in [0,1/2]^d:\tau_{E_{d-1}}(x)\geq\eta^{1/d}\}$ is non-empty if and only if $\eta\in (0,2^{-d}]$, and that $[0,1/2]^d\cap\Int L_{d,\eta}=\{x\in [0,1/2]^d:\tau_{E_{d-1}}(x)>\eta^{1/d}\}$ is non-empty if and only if $\eta\in (0,2^{-d})$.
\end{proof}
\begin{proof}[Property (v)]
\let\qed\relax
Next, we investigate the facial structure of $L_{d,\eta}$. By part (v) of the inductive hypothesis, every facet of $E_{d-1}$ is a polytope of the form $G_m\equiv F_{1,m}$ for some $m\in\Z^{d-2}$. For each such $m$, let $\theta_m\in\R^{d-1}$ be such that $H_m:=\{u\in\R^{d-1}:\tm{\theta_m}u=1\}$ is a supporting hyperplane to $E_{d-1}$ with $E_{d-1}\cap H_m=G_m$. Since $H_m$ is disjoint from $\Int E_{d-1}$ and $x\in (\tm{\theta_m}x)\,H_m$ for all $x\in (0,\infty)^{d-1}$, it follows from Lemma~\ref{Lem:conveasy}(i) that $\tau_{E_{d-1}}(x)\leq\tm{\theta_m}x$ for all $x\in (0,\infty)^{d-1}$, with equality if and only if $x\in (\tm{\theta_m}x)\,(E_{d-1}\cap H_m)=(\tm{\theta_m}x)\,G_m$, i.e.\ $x\in\bigcup_{\lambda>0}\lambda G_m$. Therefore,
\begin{equation}
\label{Eq:support}
g_{\eta}(x)=(\inv{h_\eta}\circ\tau_{E_{d-1}})(x)\geq\inv{h_\eta}(\tm{\theta_m}x)\geq t_{\eta,j}(\tm{\theta_m}x)
\end{equation}
for all $x\in (0,\infty)^{d-1}$, with equality if and only if $x\in D_{m,j}:=\bigcup_{\lambda\in [w_{\eta,j},w_{\eta,j-1}]}\lambda G_m$. 

Observe that $D_{m,j}$ is a $(d-1)$-dimensional polytope; indeed, writing $V_m$ for the (finite) set of extreme points of the polytope $G_m$, we see that $V_{m,j}:=\{\lambda u:\lambda\in\{w_{\eta,j},\,w_{\eta,j-1}\},\,u\in V_m\}$ is the set of extreme points of $D_{m,j}$. Setting $m'=(m,j)\in\Z^{d-2}\times\Z\equiv\Z^{d-1}$, we deduce from~\eqref{Eq:support} that the restriction of $g_\eta$ to $D_{m,j}$ is an affine function $x\mapsto t_{\eta,j}(\tm{\theta_m}x)$, and moreover that $L_{d,\eta}$ is contained within the closed half-space $H_{\eta,m'}^+:=\{(x,z)\in\R^d:z\geq t_{\eta,j}(\tm{\theta_m}x)\}$. It follows that $H_{\eta,m'}:=\partial H_{\eta,m'}^+=\{(x,t_{\eta,j}(\tm{\theta_m}x)):x\in\R^{d-1}\}\subseteq\R^d$ is a supporting hyperplane to $L_{d,\eta}$ and that 
\[F_{\eta,m'}:=L_{d,\eta}\cap H_{\eta,m'}=\{(x,g_\eta(x)):x\in D_{m,j}\}=\{(x,t_{\eta,j}(\tm{\theta_m}x)):x\in D_{m,j}\}\subseteq\partial L_{d,\eta}\]
is a facet of $L_{d,\eta}$. In addition, the set of extreme points of $F_{\eta,m'}$ is $V_{\eta,m'}:=\{(x,g_\eta(x)):x\in V_{m,j}\}$, so $F_{\eta,m'}$ is also a $(d-1)$-dimensional polytope. 

Since $g_\eta(x)=(\inv{h}_\eta\circ\tau_{E_{d-1}})(x)=\eta^{1/d}\,\inv{h}_\eta(\tau_{E_{d-1}}(x)/\eta^{1/d})=\eta^{1/d}g_1(x/\eta^{1/d})$ for all $x\in (0,\infty)^{d-1}$, it follows that $x'\in F_{\eta,m'}$ if and only if there exists $\lambda'\in [2^{-j},2^{-j+1}]=[w_{1,j},w_{1,j-1}]$ and $x\in G_m$ such that $x'=(\eta^{1/d}\lambda'x,g_\eta(\eta^{1/d}\lambda'x))=\eta^{1/d}(\lambda'x,g_1(\lambda'x))$. This shows that $F_{\eta,m'}=\eta^{1/d}F_{1,m'}\equiv G_{m'}$. Moreover, since $\tm{\theta_m}x=\tau_{E_{d-1}}(x)\in [w_{\eta,j},w_{\eta,j-1}]$ for all $x\in D_{m,j}$, we have $g_\eta(x)=t_{\eta,j}(\tm{\theta_m}x)\in[\inv{h}_\eta(w_{\eta,j-1}),\inv{h}_\eta(w_{\eta,j})]=[z_{\eta,j-1},z_{\eta,j}]$ for all $x\in D_{m,j}$.

By applying parts (ii) and (v) of the inductive hypothesis, we find that
\[\textstyle(0,\infty)^{d-1}=\bigcup_{\lambda>0}\lambda\,\partial E_{d-1}=\bigcup_{m\in\Z^{d-2}}\bigcup_{j\in\Z}\,\bigcup_{\lambda\in [w_{\eta,j},w_{\eta,j-1}]}\lambda G_m=\bigcup_{(m,j)\in\Z^{d-2}\times\Z}D_{m,j}.\]
Thus, for every $x'=(x,g_\eta(x))\in\partial L_{d,\eta}$, there exists $m'=(m,j)\in\Z^{d-1}$ such that $x\in D_{m,j}$, so that $x'\in F_{\eta,m'}$. 
Furthermore, if $F$ is an arbitrary facet of $L_{d,\eta}$, then there exist $m'=(m,j)\in\Z^{d-1}$ and $x\in\Int D_{m,j}$ such that $(x,g_\eta(x))\in\relint F\subseteq\partial L_{d,\eta}$. 
But since $(x,g_\eta(x))\in\relint F_{\eta,m'}$, it follows that $(\relint F)\cap (\relint F_{\eta,m'})\neq\emptyset$, whence $F=F_{\eta,m'}$ by~\citet[Theorem~2.1.2]{Sch14}. 
\end{proof}
Having established properties (i)--(v) of $L_{d,\eta}$, we now verify that $P_{d,\eta}=\bigcap_{\,g\in G_R(Q)}g(L_{d,\eta}\cap Q)$ has the required properties (vi)--(ix). 

\begin{proof}[Property (vi)]
\let\qed\relax
Since $L_{d,\eta}\cap Q$ is compact and convex, it follows that $P_{d,\eta}$ is a compact, convex and $G_R(Q)$-invariant subset of $Q$. Moreover, we have \[\textstyle P_{d,\eta}=\bigcap_{\,g\in G_R(Q)}g(L_{d,\eta}\cap Q)\subseteq\bigcap_{\,g\in G_R(Q)}g(A_{d,\eta}\cap Q)=J_{d,\eta}\]
by property (ii) of $L_{d,\eta}$ and Lemma~\ref{Lem:invelopebox}(ii).
\end{proof}
\begin{proof}[Property (vii)]
\let\qed\relax
Recalling the definition of the function $M\colon Q\to [0,1/2]^d$ from the paragraph before Lemma~\ref{Lem:invelopebox}, we deduce from property (iii) of $L_{d,\eta}$ that for $x\in Q$, we have $g(x)\in L_{d,\eta}$ for all $g\in G_R(Q)$ if and only if $M(x)\in L_{d,\eta}$. Thus, it follows as in the proof of Lemma~\ref{Lem:invelopebox}(ii) that $P_{d,\eta}=\{x\in Q:M(x)\in L_{d,\eta}\}$ and hence that $[0,1/2]^d\cap P_{d,\eta}=[0,1/2]^d\cap L_{d,\eta}$. By a similar argument based on property (iii), we deduce that $\Int P_{d,\eta}=\{x\in Q:M(x)\in\Int L_{d,\eta}\}$ and hence that $[0,1/2]^d\cap\Int P_{d,\eta}=[0,1/2]^d\cap\Int L_{d,\eta}$. 
Turning to the last assertion of (vii), it suffices to show that there exists $\alpha_d''>0$, depending only on $d$, such that
\begin{equation}
\label{Eq:slicevol}
\mu_{d-1}(([0,1/2]^{d-1}\times\{z\})\setminus P_{d,\eta})\leq\alpha_d''\,\mu_{d-1}(([0,1/2]^{d-1}\times\{z\})\setminus J_{d,\eta})
\end{equation}
for all $z\in [0,1/2]$, since we can then integrate this inequality with respect to $z$ to conclude that $\mu_d([0,1/2]^d\setminus P_{d,\eta})\leq\alpha_d''\,\mu_d([0,1/2]^d\setminus J_{d,\eta})$. To this end, fix $z\in [0,1/2]$ and observe that by~\eqref{Eq:lslice} and property (vi), the left-hand side of~\eqref{Eq:slicevol} is equal to
\begin{align*}
\mu_{d-1}(([0,1/2]^{d-1}\times\{z\})\setminus L_{d,\eta})&=\mu_{d-1}([0,1/2]^{d-1}\setminus (h_{\eta}(z)\cdot E_{d-1}))\\
&=\mu_{d-1}([0,1/2]^{d-1}\setminus L_{d-1,\,h_\eta(z)^{d-1}}).
\end{align*}
By applying part (vii) of the inductive hypothesis, Lemma~\ref{Lem:invelopebox}(iii),~\eqref{Eq:hratio} and Lemma~\ref{Lem:invelopebox}(i) in that order, we find that
\begin{align*}
\mu_{d-1}([0,1/2]^{d-1}\setminus L_{d-1,\,h_\eta(z)^{d-1}})&=\mu_{d-1}([0,1/2]^{d-1}\setminus P_{d-1,\,h_\eta(z)^{d-1}})\\
&\leq\alpha_{d-1}'\,\mu_{d-1}([0,1/2]^{d-1}\setminus J_{d-1,\,h_\eta(z)^{d-1}})\\
&\leq \alpha_{d-1}'\,\{h_\eta(z)/\overline{h}_\eta(z)\}^{d-1}\,\mu_{d-1}([0,1/2]^{d-1}\setminus J_{d-1,\,\overline{h}_\eta(z)^{d-1}})\\
&\leq \alpha_{d-1}'\,\gamma_d^{d-1}\mu_{d-1}([0,1/2]^{d-1}\setminus A_{d-1,\,\overline{h}_\eta(z)^{d-1}})\\
&=\alpha_{d-1}'\gamma_d^{d-1}\,\mu_{d-1}(([0,1/2]^{d-1}\times\{z\})\setminus A_{d,\eta}),\\
&=\alpha_{d-1}'\gamma_d^{d-1}\,\mu_{d-1}(([0,1/2]^{d-1}\times\{z\})\setminus J_{d,\eta}),
\end{align*}
which completes the proof of~\eqref{Eq:slicevol} and hence that of (vii).
\end{proof}
\begin{proof}[Property (viii)]
\let\qed\relax
In view of properties (iv) and (vii), it suffices to consider $\eta\in (0,2^{-d}]$, since otherwise $P_{d,\eta}=\emptyset$. Note that for $x\in (0,\infty)^{d-1}$, Lemma~\ref{Lem:conveasy}(i) implies that $g_\eta(x)=(\inv{h}_\eta\circ\tau_{E_{d-1}})(x)\leq 1/2$ if and only if $\tau_{E_{d-1}}(x)\geq h_\eta(1/2)$. By property (iv), this is the case if and only if $x\in h_\eta(1/2)\cdot E_{d-1}=L_{d-1,\,h_\eta(1/2)^{d-1}}$. In view of property (vii), it follows that
\begin{align}
\hspace{-0.1cm}[0,1/2]^d\cap\partial P_{d,\eta}=[0,1/2]^d\cap\partial L_{d,\eta}&=\bigl\{(x,g_\eta(x)):x\in [0,1/2]^{d-1},\,g_\eta(x)\leq 1/2\bigr\}\notag\\
\label{Eq:polybd}
&=\bigl\{(x,g_\eta(x)):x\in [0,1/2]^{d-1}\cap L_{d-1,\,h_\eta(1/2)^{d-1}}\bigr\}.
\end{align}
With the aid of this identity and the inductive hypothesis, we will identify a finite set $U'$ such that $U'\subseteq [0,1/2]^d\cap\partial P_{d,\eta}\subseteq\conv U'$ and $\abs{U'}\lesssim_d\log^{d-1}(1/\eta)$. We will then deduce that $P_{d,\eta}$ is the convex hull of $U:=\bigcup_{g\in G_R(Q)}g(U')$ and hence that (viii) holds.

To this end, fix $\eta\in (0,2^{-d}]$ and let $j_-:=\bigl\lfloor\log_2(2\eta^{1/d})\bigr\rfloor+1$ and $j_+:=\bigl\lceil\log_2\bigl(\eta^{1/d}/h_\eta(1/2)\bigr)\bigr\rceil$, so that $j_-$ is the smallest integer $j$ for which $w_{\eta,j}=2^{-j}\eta^{1/d}<1/2$ and $j_+$ is the smallest integer $j$ for which $w_{\eta,j}\leq h_\eta(1/2)$. Since $h_\eta(1/2)\geq\overline{h}_\eta(1/2)=(2\eta)^{1/(d-1)}$, we have
\begin{align*}
j_+&\leq 1+\log_2\bigl(\eta^{1/d}/h_\eta(1/2)\bigr)\leq 1+\log_2\bigl(\eta^{1/d}/\overline{h}_\eta(1/2)\bigr)\leq 1+\tfrac{1}{d(d-1)}\log_2(1/\eta)\quad\text{and}\\ 
j_-&\geq 1+\log_2(2\eta^{1/d})=2-\tfrac{1}{d}\log_2(1/\eta),
\end{align*}
so $j_+-j_-+1\leq\frac{1}{d-1}\log_2(1/\eta)\leq 2\log(1/\eta)$. For $j=j_-,\dotsc,j_+$, let $L_j':=L_{d-1,\,\{w_{\eta,j}\vee h_\eta(1/2)\}^{d-1}}$ and $P_j':=P_{d-1,\,\{w_{\eta,j}\vee h_\eta(1/2)\}^{d-1}}$, and for convenience, set $P_{j_--1}:=\emptyset$. Then by property (iv) above, we have $L_j'=L_{d-1,\,w_{\eta,j}^d}=w_{\eta,j}E_{d-1}=2L_{j-1}'$ for $j=j_-+1,\dotsc,j_+-1$ and \[L_{j_+}'=L_{d-1,\,h_\eta(1/2)^{d-1}}=h_\eta(1/2)\cdot E_{d-1}.\]
Recall also that by properties (iii) and (iv) above, we have $\tau_{E_{d-1}}(x)\leq\tau_{E_{d-1}}((1/2,\dotsc,1/2))=1/2$ for all $x\in [0,1/2]^{d-1}$. Thus, continuing on from~\eqref{Eq:polybd} and recalling part (vii) of the inductive hypothesis, we can write
\begin{align}
[0,1/2]^{d-1}\cap P_{j_+}'=[0,1/2]^{d-1}\cap L_{j_+}'&=\bigl\{x\in [0,1/2]^{d-1}:h_\eta(1/2)\leq\tau_{E_{d-1}}(x)\leq 1/2\bigr\}\notag\\
\label{Eq:wjshelling}
&=\textstyle\bigcup_{j=j_-}^{\,j_+}\bigl\{[0,1/2]^{d-1}\cap (P_j'\setminus\Int P_{j-1}')\bigr\};
\end{align}
note in particular that by Lemma~\ref{Lem:conveasy}(i) and our choice of $j_\pm$, it follows that
\begin{align}
[0,1/2]^{d-1}\cap (P_j'\setminus\Int P_{j-1}')&=[0,1/2]^{d-1}\cap (L_j'\setminus\Int L_{j-1}')\notag\\
&=\bigl\{x\in [0,1/2]^{d-1}:\tau_{E_{d-1}}(x)\in [w_{\eta,j}\vee h_\eta(1/2),w_{\eta,j-1}\wedge 1/2]\bigr\}\notag\\
\label{Eq:wjshell}
&\textstyle\subseteq [0,1/2]^{d-1}\cap\bigcup_{\lambda\in[w_{\eta,j},w_{\eta,j-1}]}\lambda\,\partial E_{d-1}
\end{align}
for all $j\in\{j_-,\dotsc,j_+\}$, and moreover that the interiors of these sets are non-empty and pairwise disjoint. Since $2^{-(d-1)}=h_{2^{-d}}(1/2)^{d-1}\geq h_\eta(1/2)^{d-1}\geq\overline{h}_\eta(1/2)^{d-1}=2\eta$,
we deduce from part (ix) of the inductive hypothesis and~\eqref{Eq:wjshell} that the following holds for all $j\in\{j_-,\dotsc,j_+\}$: the set $[0,1/2]^{d-1}\cap (P_j'\setminus\Int P_{j-1}')$ can be triangulated into at most $\alpha_{d-1}'\log^{d-2}(1/\eta)$ simplices of dimension $d-1$, in such a way that for every constituent simplex $S$, there exists $m\in\Z^{d-2}$ such that $S\subseteq\bigcup_{[w_{\eta,j},w_{\eta,j-1}]}\lambda G_m=D_{m,j}$. 

Recall from~\eqref{Eq:support} that $\restr{g_\eta}{D_{m,j}}$ is affine on $D_{m,j}$ for all $(m,j)\in\Z^{d-2}\times\Z$. Thus, by the deduction in the previous paragraph and~\eqref{Eq:wjshelling}, it follows that there is a triangulation of $[0,1/2]^{d-1}\cap L_{j_+}'$ into $(d-1)$-simplices $S_1,\dotsc,S_N$, where $N\leq (j_+-j_-+1)\,\alpha_{d-1}'\log^{d-2}(1/\eta)\leq 2\alpha_{d-1}'\log^{d-1}(1/\eta)$ and the restriction of $g_\eta$ to each $S_k$ is an affine function. For each $1\leq k\leq N$, this implies that $R_k:=\{(x,g_\eta(x)):x\in S_k\}$ is a $(d-1)$-simplex and that there exists $m'\in\Z^{d-1}$ such that $R_k\subseteq F_{\eta,m'}\subseteq\partial L_{d,\eta}$.
Writing $U_k$ for the set of vertices of $R_k$ and setting $U':=\bigcup_{k=1}^N U_k$, we deduce from~\eqref{Eq:polybd} that 
\begin{equation}
\label{Eq:polybdsimp}
\textstyle U'\subseteq [0,1/2]^d\cap\partial P_{d,\eta}=\bigcup_{\,k=1}^{\,N}R_k\subseteq\conv U'.
\end{equation}
By applying the (affine) isometries in $G_R(Q)$, we conclude that every $u\in\partial P_{d,\eta}$ lies in the convex hull of $U:=\bigcup_{g\in G_R(Q)}g(U')=\bigcup_{g\in G_R(Q)}\bigcup_{\,k=1}^{\,N}g(U_k)$, and it then follows from the convexity of $P_{d,\eta}$ that
$P_{d,\eta}=\conv\partial P_{d,\eta}=\conv U$. Since $\abs{U_k}=d$ for all $1\leq k\leq N$ and $\abs{G_R(Q)}=2^d$, this implies that $P_{d,\eta}$ is a polytope with at most $\abs{U}\leq 2^d\,dN\leq 2^{d+1}\,d\alpha_{d-1}'\log^{d-1}(1/\eta)$ vertices, so the proof of (viii) is complete.
\end{proof}
\begin{proof}[Property (ix)]
\let\qed\relax
We start by observing that in each of the following cases, the construction from~\citet[][Section~17.5.1]{Lee04} yields a triangulation of any polytope $K\subseteq\R^d$ of the specified form into $d$ simplices of dimension $d$, each of which is the convex hull of $d+1$ vertices of $K$: 
\begin{enumerate}[label=(\alph*)]
\item $K=S\times [0,1]$, where $S\subseteq\R^{d-1}$ is a $(d-1)$-simplex;
\item $K=S_h^{\downarrow}:=\{(x,z)\in S\times\R:0\leq z\leq h(x)\}$, where $S\subseteq\R^{d-1}$ is a $(d-1)$-simplex and $h\colon S\to\R$ is an affine function that is strictly positive on $S$;
\item $K=\bigcup_{\lambda\in [a,b]}\lambda S$, where $0<a<b$ and $S\subseteq\R^d\setminus\{0\}$ is a $(d-1)$-simplex.
\end{enumerate}
Indeed, it is easily seen that all such polytopes are combinatorially isomorphic in the sense of~\citet[Section~16.1.1]{HRGZ04}, so the same explicit construction works in all cases. 

We will now triangulate $[0,1/2]^d\setminus\Int P_{d,\eta}$ by `lifting' an suitable triangulation of $[0,1/2]^{d-1}$ and then applying the fact above. Let $S_1,\dotsc,S_N$ be the $(d-1)$-simplices that constitute the triangulation of $P_{j_+}'=P_{d-1,\,h_\eta(1/2)^{d-1}}$ obtained in the proof of (viii). By part (ix) of the inductive hypothesis, we can also triangulate $[0,1/2]^{d-1}\setminus\Int P_{j_+}'$ into $(d-1)$-simplices $S_{N+1},\dotsc,S_{N+M}$, where $M\leq\alpha_{d-1}'\log^{d-2}(1/\eta)$. 

Recall now that $\Int L_{d,\eta}=\{(x,z)\in (0,\infty)^d:z>g_\eta(x)\}$ by property (i) above. For $x\in (0,\infty)^{d-1}$, Lemma~\ref{Lem:conveasy}(i) implies that $g_\eta(x)\leq 1/2$ if and only if $x\in L_{j_+}'$, and $g_\eta(x)<1/2$ if and only if $x\in\Int L_{j_+}'$; see also~\eqref{Eq:polybd}. Also, by property (vii), we have $[0,1/2]^{d-1}\cap P_{j_+}'=[0,1/2]^{d-1}\cap L_{j_+}'$ and $[0,1/2]^{d-1}\cap\Int P_{j_+}'=[0,1/2]^{d-1}\cap\Int L_{j_+}'$. Therefore, for $x\in [0,1/2]^{d-1}\cap P_{j_+}'$, we have $(x,z)\in [0,1/2]^d\setminus\Int L_{d,\eta}$ if and only if $0\leq z\leq g_\eta(x)$, whereas for $x\in [0,1/2]^{d-1}\setminus\Int P_{j_+}'$, we have $(x,z)\in [0,1/2]^d\setminus\Int L_{d,\eta}$ for all $z\in [0,1/2]$. Recalling from the paragraph before~\eqref{Eq:polybdsimp} that $g_\eta$ is affine on each of the simplices $S_1,\dotsc,S_N$, we deduce that
\begin{align*}
[0,1/2]^d\setminus\Int P_{j_+}'=[0,1/2]^d\setminus\Int L_{j_+}'&=\bigl\{(x,z):x\in [0,1/2]^{d-1}\cap P_{j_+}',\,0\leq z\leq g_\eta(x)\bigr\}\,\cup\\
&\phantom{{}={}}\bigl\{(x,z):x\in [0,1/2]^{d-1}\setminus\Int P_{j_+}',\,z\in [0,1/2]\bigr\}\\
&=\bigl(\textstyle\bigcup_{\,k=1}^{\,N}\,(S_k)_{g_\eta}^\downarrow\bigr)\cup\,\bigl(\textstyle\bigcup_{\,m=1}^{\,M} S_{N+m}\times [0,1/2]\bigr).
\end{align*}
By appealing to cases (a) and (b) of the fact above, we conclude that $[0,1/2]^d\setminus\Int P_{j_+}'$ can be triangulated into $d(N+M)\leq 3d\alpha_{d-1}'\log^{d-1}(1/\eta)$ simplices.

Now if $0<\eta<\tilde{\eta}\leq 2^{-d}$, then it follows from properties (ii) and (iv) above as well as Lemma~\ref{Lem:conveasy}(i) that
\begin{equation}
\label{Eq:polyshells}
\textstyle L_{d,\eta}\setminus\Int L_{d,\tilde{\eta}}=\bigl\{x\in (0,\infty)^d:\tau_{E_d}(x)\in [\eta^{1/d},\tilde{\eta}^{1/d}]\bigr\}=\bigcup_{\lambda\in [1,(\tilde{\eta}/\eta)^{1/d}]}\lambda\,\partial L_{d,\eta}.
\end{equation}
In addition, $[0,1/2]^d\cap(\lambda\,\partial L_{d,\eta})\subseteq\lambda\,([0,1/2]^d\cap\partial L_{d,\eta})$ for all $\lambda>0$, so by property (vii) and~\eqref{Eq:polybdsimp}, we can write 
\begin{align*}
[0,1/2]^d\cap (P_{d,\eta}\setminus\Int P_{d,\tilde{\eta}})&=[0,1/2]^d\cap (L_{d,\eta}\setminus\Int L_{d,\tilde{\eta}})\\
&=\textstyle [0,1/2]^d\cap\bigl\{\bigcup_{\lambda\in [1,(\tilde{\eta}/\eta)^{1/d}]}\lambda\bigl([0,1/2]^d\cap\partial P_{d,\eta}\bigr)\bigr\}\\
&=\textstyle[0,1/2]^d\cap\bigl(\bigcup_{\,k=1}^{\,N}\,\bigcup_{\lambda\in [1,(\tilde{\eta}/\eta)^{1/d}]}\lambda R_k\bigr).
\end{align*}
By appealing to case (c) of the fact above, we deduce that for every $1\leq k\leq N$, there is a triangulation of $\bigcup_{\lambda\in [1,(\tilde{\eta}/\eta)^{1/d}]}\lambda R_k$ into $d$-simplices $T_{k1},\dotsc,T_{kd}$, so that
\begin{equation}
\label{Eq:polyintsimp}
\textstyle [0,1/2]^d\cap (P_{d,\eta}\setminus\Int P_{d,\tilde{\eta}})=\bigcup_{\,k=1}^{\,N}\,\bigcup_{\,\ell=1}^{\,d}\,\bigl([0,1/2]^d\cap T_{k\ell}\bigr).
\end{equation}
Also, for each $1\leq k\leq N$, recall from the paragraph before~\eqref{Eq:polybdsimp} that $R_k\subseteq F_{\eta,m'}$ for some $m'\in\Z^{d-1}$, so by property (iv), it follows that $T_{k\ell}\subseteq\bigcup_{\lambda\in [1,(\tilde{\eta}/\eta)^{1/d}]}\lambda F_{\eta,m'}=\bigcup_{\lambda\in [\eta^{1/d},\,\tilde{\eta}^{1/d}]}\lambda G_{m'}$ for all $1\leq\ell\leq d$.

Before proceeding, we will now verify that the left-hand side of~\eqref{Eq:polyintsimp} is in fact the union of those sets $T_{k\ell}^\dagger:=[0,1/2]^d\cap T_{k\ell}$ for which $\Int T_{k\ell}^\dagger\neq\emptyset$. In view of~\eqref{Eq:polyintsimp} and Lemma~\ref{Lem:clint}, it suffices to show that $[0,1/2]^d\cap (P_{d,\eta}\setminus\Int P_{d,\tilde{\eta}})$ is the closure of its interior. If $[0,1/2]^d\cap\Int P_{d,\tilde{\eta}}=\emptyset$, then $[0,1/2]^d\cap (P_{d,\eta}\setminus\Int P_{d,\tilde{\eta}})$ is convex and therefore has the required property by~\citet[Theorem~1.1.15]{Sch14}, so suppose now that this is not the case. Fix $\tilde{x}\in\Int([0,1/2]^d\setminus P_{d,\tilde{\eta}})$ and let $x\in [0,1/2]^d\cap (P_{d,\eta}\setminus P_{d,\tilde{\eta}})$. Since $[0,1/2]^d\cap P_{d,\eta}$ is convex,~\citet[Lemma~1.1.9]{Sch14} implies that $[\tilde{x},x)\subseteq\Int([0,1/2]^d\cap P_{d,\eta})$. 
Also, since $[0,1/2]^d\cap P_{d,\tilde{\eta}}$ is a closed, convex set that does not contain $x$, there is a unique $x'\in (\tilde{x},x)$ such that $[\tilde{x},x]\cap ([0,1/2]^d\cap P_{d,\tilde{\eta}})=[\tilde{x},x']$. Therefore, \[\emptyset\neq (x',x)\subseteq \Int([0,1/2]^d\cap P_{d,\eta})\cap\cm{P_{d,\tilde{\eta}}}=\Int\{[0,1/2]^d\cap (P_{d,\eta}\setminus\Int P_{d,\tilde{\eta}})\},\]
so $x\in\Cl (x',x)\subseteq\Cl\Int\{[0,1/2]^d\cap (P_{d,\eta}\setminus\Int P_{d,\tilde{\eta}})\}$. If instead $x\in [0,1/2]^d\cap\partial P_{d,\tilde{\eta}}$, then $\tau_{E_d}(\lambda x)=\lambda\tau_{E_d}(x)=\lambda\tilde{\eta}^{1/d}$  and $\lambda x\in (0,1/2)^d$ for all $\lambda\in (0,1)$. Thus, if $\lambda\in ((\eta/\tilde{\eta})^{1/d},1)$, then $\lambda x\in\{u\in (0,1/2)^d:\eta^{1/d}<\tau_{E_d}(u)<\tilde{\eta}^{1/d}\}=\Int\{[0,1/2]^d\cap (P_{d,\eta}\setminus\Int P_{d,\tilde{\eta}})\}$, where the final equality follows from property (iv),~\eqref{Eq:polyshells} and the continuity of $\tau_{E_d}$. This implies that $x=\lim_{\lambda\nearrow\,1}\lambda x\in\Cl\Int\{[0,1/2]^d\cap (P_{d,\eta}\setminus\Int P_{d,\tilde{\eta}})\}$, as required.

Returning to the proof of (ix), note that for all $k,\ell$, the set $T_{k\ell}^\dagger=[0,1/2]^d\cap T_{k\ell}$ is the intersection of a $d$-simplex and at most $2d$ closed half-spaces, so $T_{k\ell}^\dagger$ is a polytope and the number of vertices of $T_{k\ell}^\dagger$ is bounded above by a constant that depends only on $d$. Therefore, there exists $\Gamma_d>0$, depending only on $d$, such that whenever $\Int T_{k\ell}^\dagger\neq\emptyset$, the $d$-dimensional polytope $T_{k\ell}^\dagger$ can be triangulated into at most $\Gamma_d$ $d$-simplices~\citep[e.g.][Corollary~2.3]{RS85}, each of which is the convex hull of $d+1$ vertices of $T_{k\ell}^\dagger$.
It follows from this and the paragraph above that $[0,1/2]^d\cap (P_{d,\eta}\setminus\Int P_{d,\tilde{\eta}})$ can be triangulated into at most $Nd\,\Gamma_d\leq 2d\,\Gamma_d\,\alpha_{d-1}'\log^{d-1}(1/\eta)$ $d$-simplices, in such a way that for each constituent $d$-simplex $T$, there exist $1\leq k\leq N$, $1\leq\ell\leq d$ and $m'\in\Z^{d-1}$ such that $T\subseteq T_{k\ell}\subseteq\bigcup_{\lambda\in [1,(\tilde{\eta}/\eta)^{1/d}]}\lambda F_{\eta,m'}=\bigcup_{\lambda\in [\eta^{1/d},\,\tilde{\eta}^{1/d}]}\lambda G_{m'}$. This completes the inductive step.
\end{proof}
We have now established properties (vi)--(ix), from which we can easily deduce assertions (i) and (ii) of the lemma. To obtain part (iii) of the lemma, recall from Lemma~\ref{Lem:invelopesimp} that $Q^\triangle$ is a polytope with $2d$ facets and that $[0,1/(d+1)]^d\subseteq Q^\triangle\subseteq [0,1/2]^d$. Thus, $\tilde{P}_\eta=Q^\triangle\cap P_\eta$ is a polytope for all $\eta>0$, and since $[0,1/2]^d\setminus J_\eta\subseteq\frac{d+1}{2}\,([0,1/(d+1)]^d\setminus J_\eta)\subseteq \frac{d+1}{2}\,(Q^\triangle\setminus J_\eta)$, it follows from property (vii) above that
\begin{align*}
\mu_d(Q^\triangle\setminus\tilde{P}_\eta)=\mu_d(Q^\triangle\setminus P_\eta)\leq\mu_d([0,1/2]^d\setminus P_\eta)&\leq\alpha_d'\,\mu_d([0,1/2]^d\setminus J_\eta)\\
&\leq\alpha_d'\rbr{\frac{d+1}{2}}^d\,\mu_d(Q^\triangle\setminus J_\eta).
\end{align*}

Turning now to part (iv) of the lemma, we first verify that $\tilde{P}_\eta$ is non-empty if and only if $\eta\in (0,(d+1)^{-d}]$. Indeed, if $x=(x_1,\dotsc,x_d)\in Q^\triangle$, then it follows from the definition of $Q^\triangle$ that $1-\sum_{j=1}^d x_j\leq 1/(d+1)$, so by the AM--GM inequality, we have $\prod_{j=1}^d x_j\leq\bigl(\sum_{j=1}^d x_j/d\bigr)^d\leq (d+1)^{-d}$. Together with the definition of $J_\eta$, this implies that $\tilde{P}_\eta=Q^\triangle\cap P_\eta\subseteq Q^\triangle\cap J_\eta$ is non-empty only if $\eta\in (0,(d+1)^{-d}]$. Conversely, if $\eta\in (0,(d+1)^{-d}]$, then property (iv) above implies that $(\eta^{1/d},\dotsc,\eta^{1/d})\in [0,1/(d+1)]^d\cap P_\eta\subseteq Q^\triangle\cap P_\eta=\tilde{P}_\eta$, as required.

For $0<\eta<\tilde{\eta}\leq (d+1)^{-d}$, we know from property (ix) above that there exist triangulations of $[0,1/2]^d\setminus\Int P_\eta$ and $[0,1/2]^d\cap (P_\eta\setminus\Int P_{d,\tilde{\eta}})$ into at most $\alpha_d'\log(1/\eta)$ $d$-simplices. Since $Q^\triangle\subseteq [0,1/2]^d$, both $Q^\triangle\setminus\Int\tilde{P}_{d,\eta}=Q^\triangle\cap ([0,1/2]^d\setminus\Int P_\eta)$ and $\tilde{P}_{d,\eta}\setminus\Int\tilde{P}_{d,\tilde{\eta}}=Q^\triangle\cap\{[0,1/2]^d\cap (P_\eta\setminus\Int P_{d,\tilde{\eta}})\}$ can be expressed as the union of $Q^\triangle\cap T$ over all $d$-simplices $T$ in the respective triangulations above. By Lemma~\ref{Lem:invelopesimp} and its proof, $Q^\triangle$ is the intersection of $2d$ half-spaces, so each set of the form $Q^\triangle\cap T$ is the intersection of a $d$-simplex and $2d$ half-spaces. Arguing as in the proof of (ix) above, we deduce that if $\Int(Q^\triangle\cap T)\neq\emptyset$, then $Q^\triangle\cap T$ is a $d$-dimensional polytope that can be triangulated into at most $\Gamma_d$ simplices, each of which is the convex hull of $d+1$ vertices of $Q^\triangle\cap T$. 

Moreover, by a very similar argument to that given in the penultimate paragraph of the proof of (ix), the sets $Q^\triangle\setminus\Int\tilde{P}_{d,\eta}$ and $\tilde{P}_{d,\eta}\setminus\Int\tilde{P}_{d,\tilde{\eta}}$ are each equal to the closure of their interior. Indeed, note that in common with $[0,1/2]^d$, the set $Q^\triangle$ has the property that if $x\in Q^\triangle$, then $\lambda x\in\Int Q^\triangle$ for all $\lambda\in (0,1)$. Thus, it follows as in the proof of (ix) that $Q^\triangle\setminus\Int\tilde{P}_{d,\eta}$ and $\tilde{P}_{d,\eta}\setminus\Int\tilde{P}_{d,\tilde{\eta}}$ can each be expressed as the union of those $Q^\triangle\cap T$ for which $T$ is a $d$-simplex in the corresponding original triangulation and $\Int(Q^\triangle\cap T)\neq\emptyset$. We conclude that $Q^\triangle\setminus\Int\tilde{P}_{d,\eta}$ and $\tilde{P}_{d,\eta}\setminus\Int\tilde{P}_{d,\tilde{\eta}}$ can each be triangulated into at most $\Gamma_d\,\alpha_d'\log^{d-1}(1/\eta)$ $d$-simplices, so the proof of part (iv) of the lemma is complete.
\end{proof}
By applying a simple transformation to the polytopes $\tilde{P}_\eta$ from Lemma~\ref{Lem:polyapprox}, we obtain suitable approximating polytopes $P_{\eta,j}^\triangle$ for the closed, convex sets $R_j\cap J_\eta^\triangle\subseteq\triangle$ defined in Lemma~\ref{Lem:invelopesimp}. See Figure~\ref{Fig:SimplexEntropy} for an illustration.
\begin{corollary}
\label{Cor:polyapproxsimp}
There exists $\alpha_d>0$, depending only on $d\in\N$, such that for every $\eta\in(0,(d+1)^{-d}]$ and every $1\leq j\leq d+1$, we can construct a polytope $P_{d,\eta,j}^\triangle\equiv P_{\eta,j}^\triangle\subseteq R_j\cap J_\eta^\triangle$ with the following properties:
\begin{enumerate}[label=(\roman*)]
\item $\mu_d(R_j\setminus P_{\eta,j}^\triangle)\leq\alpha_d\,\mu_d(R_j\setminus J_\eta^\triangle)$.
\item If $0<\eta<\tilde{\eta}\leq (d+1)^{-d}$, then $P_{\tilde{\eta},j}^\triangle\subsetneqq P_{\eta,j}^\triangle$, and the regions $R_j\setminus\Int P_{\eta,j}^\triangle$ and $P_{\eta,j}^\triangle\setminus\Int P_{\tilde{\eta},j}^\triangle$ can each be triangulated into at most $\alpha_d\log^{d-1}(1/\eta)$ $d$-simplices.
\end{enumerate}
\end{corollary}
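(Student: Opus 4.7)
\textbf{Proof plan for Corollary~\ref{Cor:polyapproxsimp}.} The plan is to transport the polytopal approximations $\tilde{P}_\eta \subseteq Q^\triangle$ from Lemma~\ref{Lem:polyapprox}(iii)--(iv) into $R_{d+1} \cap J_\eta^\triangle$ via the affine bijection $\Pi|_{R_{d+1}}$ provided by Lemma~\ref{Lem:invelopesimp}, and then to handle the remaining indices $j \in \{1,\dotsc,d\}$ by exploiting the permutation symmetry of $\triangle$.

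First, since $\restr{\Pi}{\aff\triangle} \colon \aff\triangle \to \R^d$ is an affine bijection whose restriction to $R_{d+1}$ is a bijection onto $Q^\triangle$, let $T \colon Q^\triangle \to R_{d+1}$ denote the affine inverse of $\restr{\Pi}{R_{d+1}}$. Define $P_{\eta,d+1}^\triangle := T(\tilde{P}_\eta)$, which is a polytope since $\tilde{P}_\eta$ is. By Lemma~\ref{Lem:invelopesimp}(i) we have $T(Q^\triangle \cap J_\eta) = R_{d+1} \cap J_\eta^\triangle$, and since $\tilde{P}_\eta \subseteq Q^\triangle \cap J_\eta$ (Lemma~\ref{Lem:polyapprox}(iii) together with $\tilde{P}_\eta \subseteq J_\eta$ inherited from $P_\eta \subseteq J_\eta$), it follows that $P_{\eta,d+1}^\triangle \subseteq R_{d+1} \cap J_\eta^\triangle$. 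Using the Jacobian relation $\mu_d(\Pi(A)) = \mu_d(A)/\sqrt{d+1}$ from Lemma~\ref{Lem:invelopesimp}, applied to subsets of $R_{d+1}$, gives $\mu_d(R_{d+1} \setminus P_{\eta,d+1}^\triangle) = \sqrt{d+1}\,\mu_d(Q^\triangle \setminus \tilde{P}_\eta)$ and likewise $\mu_d(R_{d+1} \setminus J_\eta^\triangle) = \sqrt{d+1}\,\mu_d(Q^\triangle \setminus J_\eta)$, so the volume bound in (i) for $j=d+1$ follows directly from Lemma~\ref{Lem:polyapprox}(iii).

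Next, for $j \in \{1,\dotsc,d\}$, let $\sigma_j \colon \R^{d+1} \to \R^{d+1}$ be the isometry that transposes the $j$-th and $(d+1)$-th coordinates, and define $P_{\eta,j}^\triangle := \sigma_j(P_{\eta,d+1}^\triangle)$. Since $\sigma_j$ permutes the coordinates, we have $\sigma_j(R_{d+1}) = R_j$ and $\sigma_j(J_\eta^\triangle) = J_\eta^\triangle$ (the latter because $J_\eta^\triangle$ is defined by the symmetric conditions $\prod_{\ell \neq i} x_\ell \geq \eta$ for all $i$). Consequently $P_{\eta,j}^\triangle \subseteq R_j \cap J_\eta^\triangle$ and, since $\sigma_j$ preserves $\mu_d$ on $\aff\triangle$, the volume bound in (i) transfers verbatim to each $j$. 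For the nesting in (ii), the strict inclusions $\tilde{P}_{\tilde\eta} \subsetneqq \tilde{P}_\eta$ from Lemma~\ref{Lem:polyapprox}(iv) transport through $T$ and $\sigma_j$ to give $P_{\tilde\eta,j}^\triangle \subsetneqq P_{\eta,j}^\triangle$.

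Finally, the triangulation statement in (ii) follows by pushing the triangulations from Lemma~\ref{Lem:polyapprox}(iv) through the affine maps $\sigma_j \circ T$. Explicitly, $T(Q^\triangle \setminus \Int \tilde{P}_\eta) = R_{d+1} \setminus \Int P_{\eta,d+1}^\triangle$ and $T(\tilde{P}_\eta \setminus \Int \tilde{P}_{\tilde\eta}) = P_{\eta,d+1}^\triangle \setminus \Int P_{\eta,\tilde\eta}^\triangle$, and since affine bijections send $d$-simplices to $d$-simplices and preserve pairwise disjoint interiors, the bound of at most $\alpha_d \log^{d-1}(1/\eta)$ $d$-simplices carries over with the same constant (possibly absorbing the $\sqrt{d+1}$ factor into a larger $\alpha_d$ that still depends only on $d$). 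I do not anticipate a significant obstacle: every claim reduces to combining Lemma~\ref{Lem:invelopesimp} with Lemma~\ref{Lem:polyapprox}(iii)--(iv), and the only care needed is to check the symmetry properties of $R_j$ and $J_\eta^\triangle$ under coordinate transpositions and the measure-scaling factor $\sqrt{d+1}$.
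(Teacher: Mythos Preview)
Your proposal is correct and follows essentially the same route as the paper: define $P_{\eta,d+1}^\triangle$ as the preimage of $\tilde{P}_\eta$ under the affine bijection $\Pi|_{R_{d+1}}$, transfer the volume and triangulation bounds from Lemma~\ref{Lem:polyapprox}(iii)--(iv) via the Jacobian factor $\sqrt{d+1}$ from Lemma~\ref{Lem:invelopesimp}, and then use a coordinate-permuting isometry of $\triangle$ to handle the remaining $j$. The only quibble is that no $\sqrt{d+1}$ factor needs absorbing into $\alpha_d$: in the volume inequality it cancels between numerator and denominator, and in the simplex count no measure is involved at all.
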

\begin{proof}
It suffices to establish the result for $j=d+1$, since for any other $1\leq j\leq d$, there is an (affine) isometry of $\triangle$ that sends $J_\eta^\triangle$ to itself and maps $R_{d+1}$ to $R_j$. Writing $\Pi\colon\R^{d+1}\to\R^d$ for the projection onto the first $d$ coordinates, recall from Lemma~\ref{Lem:invelopesimp} that $\tilde{\Pi}:=\restr{\Pi}{R_{d+1}}\colon R_{d+1}\to Q^\triangle=\Pi(R_{d+1})$ is a bijection which preserves Lebesgue measure up to a factor of $1/\sqrt{d+1}$. By Lemma~\ref{Lem:invelopesimp}(i) and the properties of the sets $P_\eta,\tilde{P}_\eta$ constructed in Lemma~\ref{Lem:polyapprox}, it follows that $P_{\eta,\,d+1}^\triangle:=\inv{\tilde{\Pi}}(\tilde{P}_\eta)=\inv{\tilde{\Pi}}(Q^\triangle\cap P_\eta)\subseteq\inv{\tilde{\Pi}}(Q^\triangle\cap J_\eta)=R_{d+1}\cap J_\eta^\triangle$ for all $\eta\in (0,(d+1)^{-d}]$. Moreover, $P_{\eta,\,d+1}^\triangle$ is a polytope since $\Pi$ is linear and $\tilde{P}_\eta$ is a polytope, and we deduce from Lemma~\ref{Lem:polyapprox}(iii) that 
\begin{align*}
\mu_d(R_{d+1}\setminus P_{\eta,\,d+1}^\triangle)&=\mu_d\bigl(\tilde{\Pi}(R_{d+1}\setminus P_{\eta,\,d+1}^\triangle)\bigr)/\sqrt{d+1}\\
&=\mu_d(Q^\triangle\setminus\tilde{P}_\eta)/\sqrt{d+1}\leq\alpha_d\,\mu_d(Q^\triangle\setminus J_\eta)/\sqrt{d+1}=\alpha_d\,\mu_d(R_{d+1}\setminus J_\eta^\triangle),
\end{align*}
where $\alpha_d>0$ is taken from Lemma~\ref{Lem:polyapprox} and depends only on $d$. Finally, if $0<\eta<\tilde{\eta}\leq (d+1)^{-d}$, then by Lemma~\ref{Lem:polyapprox}(iv), there exist triangulations of $Q^\triangle\setminus\Int\tilde{P}_\eta$ and $\tilde{P}_{\eta}\setminus\Int\tilde{P}_{\tilde{\eta}}$ into at most $\alpha_d\log^{d-1}(1/\eta)$ $d$-simplices. 
By applying $\inv{\tilde{\Pi}}$ to all the $d$-simplices in these triangulations, we obtain suitable triangulations of $R_{d+1}\setminus\Int P_{\eta,\,{d+1}}^\triangle=\inv{\tilde{\Pi}}(Q^\triangle\setminus\Int\tilde{P}_\eta)$ and $P_{\eta,\,d+1}^\triangle\setminus\Int P_{\tilde{\eta},\,d+1}^\triangle=\inv{\tilde{\Pi}}(\tilde{P}_\eta\setminus\Int\tilde{P}_{\tilde{\eta}})$ into at most $\alpha_d\log^{d-1}(1/\eta)$ $d$-simplices, as required.
\end{proof}

\section{Supplementary material for Sections~\ref{Sec:Smoothness} and~\ref{Subsec:SmoothnessProofs}}
\label{Sec:SeparationSupp}
\subsection{Technical preparation for Section~\ref{Subsec:SmoothnessProofs}}
\label{Subsec:SmoothnessSupp}
In this subsection, we restrict attention to the case $d=3$ and work towards a proof of Proposition~\ref{Prop:SmoothEntropy} in Section~\ref{Subsec:SmoothnessProofs}, the key local bracketing entropy bound that implies the main result (Theorem~\ref{Thm:Smoothness}) in Section~\ref{Sec:Smoothness}. The following two technical lemmas provide pointwise upper and lower bounds on functions $f$ belonging to a $\delta$-Hellinger neighbourhood of some $f_0\in\mathcal{F}^{(\beta,\Lambda)}\cap\mathcal{F}^{0,I}$. For $g\in\mathcal{F}_d$ and $t\geq 0$, let $U_{g,t}:=\{w\in\R^d:g(w)\geq t\}$, which is closed and convex.
\begin{lemma}
\label{Lem:smoothlb}
Let $d=3$, and fix $\beta\geq 1$ and $\Lambda>0$. If $f_0\in\mathcal{F}^{(\beta,\Lambda)}\cap\mathcal{F}^{0,I}$ and $f\in\tilde{\mathcal{F}}(f_0,\delta)$ for some $\delta>0$, then $f(x)\geq f_0(x)/2$ for all $x\in\R^3$ satisfying $f_0(x)\geq(\tilde{c}\Lambda^3\delta^2)^{\beta/(\beta+3)}$, where $\tilde{c}:=6144\inv{\pi}$.
\end{lemma}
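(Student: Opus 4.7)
The plan is to argue by contradiction: suppose there exists $x_0\in\R^3$ with $t_0:=f_0(x_0)\geq(\tilde{c}\Lambda^3\delta^2)^{\beta/(\beta+3)}$ but $f(x_0)<t_0/2$, and derive a lower bound on $\int(\sqrt{f_0}-\sqrt{f})^2$ that exceeds $\delta^2$. Two ingredients will combine to pin down $f_0$ from below and $f$ from above on a common half-ball around $x_0$.

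First, since $f_0\in\mathcal{F}_3^{0,I}$ we have $\Sigma_{f_0}=I$, so the separation condition~\eqref{Eq:Separation} (applicable for every $\tau>0$ since $f_0\in\mathcal{F}^{(\beta,\Lambda)}$) applied at $x=x_0$ rearranges to $f_0(y)\geq t_0-\Lambda t_0^{1-1/\beta}\|y-x_0\|$ whenever $f_0(y)<t_0$, and holds trivially otherwise. Choosing $r:=t_0^{1/\beta}/(4\Lambda)$ then yields $f_0\geq 3t_0/4$ throughout $\bar{B}(x_0,r)$. Second, log-concavity makes the super-level set $\{y:f(y)>t_0/2\}$ convex, and $f(x_0)<t_0/2$ excludes $x_0$ from it, so the separating hyperplane theorem supplies a unit vector $\alpha$ with $\alpha^{\top}(y-x_0)\geq 0$ for every $y$ in that set. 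Equivalently, $f\leq t_0/2$ on the closed half-space $H^-:=\{y:\alpha^{\top}(y-x_0)\leq 0\}$ (outside a set of measure zero).

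These facts combine on the half-ball $B:=\bar{B}(x_0,r)\cap H^-$, which has Lebesgue measure $2\pi r^3/3$: there we have both $f_0\geq 3t_0/4$ and $f\leq t_0/2$ almost everywhere. Applying the concavity inequality $\sqrt{b}-\sqrt{a}\geq(b-a)/(2\sqrt{b})$ with $b=3t_0/4$ and $a=t_0/2$ gives $\sqrt{3t_0/4}-\sqrt{t_0/2}\geq\sqrt{t_0}/(4\sqrt{3})$, so $(\sqrt{f_0}-\sqrt{f})^2\geq t_0/48$ on $B$. Integrating,
\[
\delta^2\geq\int_B(\sqrt{f_0}-\sqrt{f})^2\geq\frac{t_0}{48}\cdot\frac{2\pi r^3}{3}=\frac{\pi\,t_0^{(\beta+3)/\beta}}{4608\,\Lambda^3},
\]
which rearranges to $t_0\leq(4608\,\Lambda^3\delta^2/\pi)^{\beta/(\beta+3)}$. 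Since $4608<6144$, this is strictly less than $(\tilde{c}\Lambda^3\delta^2)^{\beta/(\beta+3)}$ with $\tilde{c}=6144/\pi$, contradicting the standing assumption on $t_0$ and completing the proof.

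There is no deep obstacle here: the argument is short once both ingredients are in place. The main care required is bookkeeping the constant through the choice of $r$ and the elementary Hellinger estimate, and the slack between $4608$ and the stated $6144$ is comfortable enough to absorb the crude inequality $(\sqrt{3/4}-\sqrt{1/2})^2\geq 1/48$ used above.
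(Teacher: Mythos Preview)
Your proof is correct. The high-level strategy matches the paper's---lower-bound $f_0$ on a ball via the separation condition, upper-bound $f$ on a half-space via a supporting/separating hyperplane for a superlevel set of $f$, and integrate on the resulting half-ball---but your execution is more direct. The paper parametrises by $t:=(f_0(x)-f(x))/2$, works with the level sets $U_H=\{f_0\geq f_0(x)\}$ and $U_L=\{f_0\geq f_0(x)-t\}$, and centres its half-ball at an auxiliary point $x'\in\argmax_{w\in U_H}u^\top w$; this yields the quantitative intermediate bound $f(x)\geq f_0(x)-2(6/\pi)^{1/5}\delta^{2/5}\Lambda^{3/5}f_0(x)^{4/5-3/(5\beta)}$, from which the $f_0(x)/2$ conclusion is then read off. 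You instead centre the half-ball at $x_0$ itself, apply the separation condition directly at $x_0$ to get $f_0\geq 3t_0/4$ on $\bar{B}(x_0,r)$, and go straight for the threshold via contradiction. Your route avoids the level-set geometry and the detour through $x'$, at the cost of not producing the sharper intermediate inequality; since only the lemma as stated is needed downstream, this is a clean simplification.
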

\begin{proof}
Fix $x\in\R^3$ with $f_0(x)\geq(\tilde{c}\Lambda^3\delta^2)^{\beta/(\beta+3)}$. It can be assumed without loss of generality that $f(x)<f_0(x)$, and we begin by setting
\[t:=\frac{f_0(x)-f(x)}{2},\quad U:=U_{f,\,f(x)},\quad U_H:=U_{f_0,\,f_0(x)}\quad\text{and}\quad U_L:=U_{f_0,\,f_0(x)-t}.\]
Then it follows from the defining condition~\eqref{Eq:Separation} for $\mathcal{F}^{(\beta,\Lambda)}$ that
\begin{equation}
\label{Eq:levsep}
\min_{y\in\partial U_L}\min_{z\in\partial U_H}\norm{y-z}\geq\frac{t\inv{\Lambda}}{f_0(x)^{1-1/\beta}}=:\tilde{r}.
\end{equation}
Also, note that there exists $u\in\R^3\setminus\{0\}$ such that $f\leq f(x)$ on $H_{x,u}^+:=\{w\in\R^3:\tm{u}w\geq\tm{u}x\}$. Indeed, this is clear if $f(x)=\max_{w\in\R^3} f(w)$, and in the case where $f(x)<\max_{w\in\R^3} f(w)$, we have $x\in\partial U$ by the concavity of $\log f$, so there exists a suitable supporting half-space to $U$ at $x$. 

Next, we fix $x'\in\argmax_{w\in U_H}\tm{u}w$ (which necessarily lies in $\partial U_H$) and apply~\eqref{Eq:levsep} to deduce that $\bar{B}(x',\tilde{r})\cap H_{x',u}^+\subseteq (U_L\setminus\Int U_H)\cap H_{x,u}^+$, where $H_{x',u}^+:=\{w\in\R^3:\tm{u}w\geq\tm{u}x'\}$. Since $f_0\geq f_0(x)-t$ on $(U_L\setminus\Int U_H)$ and $f\leq f(x)=f_0(x)-2t$ on $H_{x,u}^+$, we have 
\begin{align}
\delta^2\geq\int_{(U_L\setminus\Int U_H)\,\cap\,H_{x,u}^+}\bigl(f_0^{1/2}-f^{1/2}\bigr)^2&\geq\bigl(\sqrt{f_0(x)-t}-\sqrt{f_0(x)-2t}\bigr)^2\,\mu_3\bigl(\bar{B}(x',\tilde{r})\cap H_{x',u}^+\bigr)\notag\\
\label{Eq:lbineq}
&=\frac{2\pi}{3}\,\bigl(\sqrt{f_0(x)-t}-\sqrt{f_0(x)-2t}\bigr)^2\rbr{\frac{t\inv{\Lambda}}{f_0(x)^{1-1/\beta}}}^3\!.
\end{align}

\vspace{-0.5cm}
\noindent Now since
\[\sqrt{f_0(x)-t}-\sqrt{f_0(x)-2t}=\frac{t}{\sqrt{f_0(x)-t}+\sqrt{f_0(x)-2t}}\geq\frac{t}{2\sqrt{f_0(x)}},\]
it follows from~\eqref{Eq:lbineq} that
\[\delta^2\geq\frac{\pi\,t^5\Lambda^{-3}}{6f_0(x)^{4-3/\beta}},\]
so $\{f_0(x)-f(x)\}/2=t\leq (6/\pi)^{1/5}\,\delta^{2/5}\,\Lambda^{3/5}\,f_0(x)^{\frac{4}{5}-\frac{3}{5\beta}}$. Rearranging this, we obtain the bound
\begin{equation}
\label{Eq:lbinterm}
f(x)\geq f_0(x)-2(6/\pi)^{1/5}\,\delta^{2/5}\,\Lambda^{3/5}\,f_0(x)^{\frac{4}{5}-\frac{3}{5 \beta}}. 
\end{equation}
The right-hand side of~\eqref{Eq:lbinterm} is bounded below by $f_0(x)/2$ if and only if
\[f_0(x)\geq(6\times 4^5/\pi)^{\beta/(\beta+3)}\,\Lambda^{3\beta/(\beta + 3)}\,\delta^{2\beta/(\beta+3)}=(\tilde{c}\Lambda^3\delta^2)^{\beta/(\beta+3)}.\]
This completes the proof.
\end{proof}
\begin{lemma}
\label{Lem:smoothub}
Let $d=3$, and fix $\beta\geq 1$ and $\Lambda>0$. There exists a universal constant $\tilde{c}'>0$ such that whenever $0<\delta<\inv{e}\wedge (\tilde{c}'\Lambda^{-3/2}\log_+^{-1}\Lambda)$, $f_0\in\mathcal{F}^{(\beta,\Lambda)}\cap\mathcal{F}^{0,I}$ and $f\in\tilde{\mathcal{F}}(f_0,\delta)\cap\tilde{\mathcal{F}}^{1,\eta}$, we have $f(x)\lesssim f_0(x)\vee\{\Lambda^3\delta^2\log^2(\Lambda^{-3}\delta^{-2})\}^{\beta/(\beta+3)}$ for all $x\in\R^3$.
\end{lemma}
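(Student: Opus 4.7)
The strategy mirrors the proof of Lemma~\ref{Lem:smoothlb} but in reverse: we argue by contradiction, assuming that $s := f(x)$ satisfies $s \geq C\,[f_0(x) \vee M]$, where $M := \{\Lambda^3\delta^2\log^2(\Lambda^{-3}\delta^{-2})\}^{\beta/(\beta+3)}$ and $C$ is a universal constant to be taken large. The envelope bound from \citet[Corollary~3(a)]{KS16}, which applies because $f \in \tilde{\mathcal{F}}^{1,\eta}$, gives $s \leq e^{-\tilde{a}_3\norm{x}+\tilde{b}_3}$. In the relevant regime $\delta < \tilde{c}'\Lambda^{-3/2}\log_+^{-1}\Lambda$, this forces $s \lesssim 1$, $\norm{x} \lesssim \log(1/s)$, and $\log(1/s) \asymp \log(\Lambda^{-3}\delta^{-2})$.

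We first exhibit a neighbourhood of $x$ on which $f_0$ is small. Because $f_0 \in \mathcal{F}^{(\beta,\Lambda)} \cap \mathcal{F}^{0,I}$ and $f_0(x) \leq s/C \leq s/8$, the contour-separation inequality~\eqref{Eq:Separation}, applied with $z = y$ of level $f_0(y) = s/4$ and $w = x$, gives $\norm{y-x} \geq (s/8)\{\Lambda(s/4)^{1-1/\beta}\}^{-1} \gtrsim s^{1/\beta}/\Lambda$. Setting $r := c_1 s^{1/\beta}/\Lambda$ for a suitable universal $c_1 > 0$, we conclude that $f_0 \leq s/4$ throughout $\bar{B}(x, r)$.

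Next, we construct $W \subseteq \bar{B}(x, r)$ with $f \geq s/2$ on $W$ and $\mu_3(W) \gtrsim r^3/\log^2(1/s)$. By \citet[Theorem~5.14]{LV06} applied to the isotropic version of $f$, together with $\norm{\Sigma_f - I} \leq \eta$ and Fradelizi's inequality, there exists $x^* \in \R^3$ with $\norm{x^*} \leq 1$ and $f(x^*) \geq c_*$ for a universal $c_* > 0$. In the contradictory regime $s \lesssim c_*$, so log-concavity of $f$ along $[x, x^*]$ yields $f \geq s$ on this entire segment. The set $W$ is taken to be a narrow tetrahedral sleeve around the initial sub-segment of $[x, x^*]$ of length $\asymp r$: concretely, with one vertex at $x$, a second vertex at the point of $[x,x^*]$ at distance $r$ from $x$, and the remaining two vertices obtained by perturbing the second vertex by a multiple of $r$ in two directions orthogonal to $x^* - x$. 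Log-concavity of $f$, together with the envelope controlling the rate at which $\log f$ may decay at small scales, forces $f \geq s/2$ on $W$; the accounting yields $\mu_3(W)$ of order $r^3$ divided by $\log^2(1/s)$.

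Combining the two steps yields $(\sqrt{f}-\sqrt{f_0})^2 \gtrsim s$ on $V := W$, so that
\[
\delta^2 \geq \int_V\bigl(\sqrt{f}-\sqrt{f_0}\bigr)^2 \gtrsim s\cdot\mu_3(V) \gtrsim \frac{s^{(\beta+3)/\beta}}{\Lambda^3\log^2(1/s)}.
\]
Rearranging and using $\log(1/s) \asymp \log(\Lambda^{-3}\delta^{-2})$ gives $s \lesssim M$, contradicting $s \geq CM$ once $C$ is chosen large enough. The main technical obstacle is the volume bound on $W$: constructing a subset of $\{f \geq s/2\} \cap \bar{B}(x, r)$ of three-dimensional volume at least $r^3/\log^2(1/s)$ using only log-concavity, the envelope, and near-isotropy, since the scale $r = c_1 s^{1/\beta}/\Lambda$ may be much smaller than the natural continuity scale of a general near-isotropic log-concave density at $x$. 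The polylogarithmic loss incurred here is precisely what produces the $\log^2$ factor inside $M$.
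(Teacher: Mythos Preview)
Your overall strategy matches the paper's: use the contour separation to trap $f_0$ below a multiple of $s$ on a ball $\bar B(x,r)$ with $r\asymp s^{1/\beta}/\Lambda$, then find a region near $x$ on which $f$ is at least of order $s$, and read off a lower bound on the Hellinger integral. The paper obtains exactly the same final inequality $\delta^2\gtrsim s^{(\beta+3)/\beta}\Lambda^{-3}\|x\|^{-2}$, and then uses the envelope-based restriction $\|x\|\lesssim\log(\Lambda^{-3}\delta^{-2})$ to conclude.

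There is, however, a genuine gap in your construction of $W$. You secure only a \emph{single} point $x^*$ with $f(x^*)\geq c_*$, and then form a tetrahedron whose off-segment vertices are perturbations of a point on $[x,x^*]$. Log-concavity gives you $f\geq s$ on the segment $[x,x^*]$ but tells you nothing about nearby points unless you already have a lower bound there. Your appeal to ``the envelope controlling the rate at which $\log f$ may decay at small scales'' is the problematic step: the envelope $f\leq M(\cdot)$ is an \emph{upper} bound and provides no control on how fast $\log f$ can drop in an orthogonal direction. A near-isotropic log-concave density can have arbitrarily large directional derivatives of $\log f$ at a given point, so without further input your perturbed vertices need not satisfy $f\geq s/2$. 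The paper sidesteps this by first establishing a \emph{ball} on which $f$ is uniformly bounded below (using \citet[Theorem~5.14]{LV06} for $f_0\in\mathcal{F}^{0,I}$ to get $\inf_{\|w\|\leq 1/9}f_0(w)\geq 6c_0$, then Lemma~\ref{Lem:smoothlb} to transfer this to $f$), and takes $W$ to be the intersection of $\bar B(x,r)$ with the cone $\conv\bigl(\{x\}\cup(\bar B(0,1/9)\cap H_x)\bigr)$; log-concavity then gives $f\geq s\wedge 3c_0$ on the entire cone for free. This is what produces the volume factor $\|x\|^{-2}$ and hence your $\log^{-2}(1/s)$. Your argument can be repaired in the same way, but as written it does not go through.

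A minor point: your claim $\log(1/s)\asymp\log(\Lambda^{-3}\delta^{-2})$ is too strong; only $\log(1/s)\lesssim\log(\Lambda^{-3}\delta^{-2})$ holds (from $s\geq CM$), and only this direction is needed in the final step.
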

\begin{proof}
Since the bound~\eqref{Eq:nisoenv} applies to $f\in\tilde{\mathcal{F}}^{1,\eta}$, there exists a universal constant $\tilde{C}_3>0$ such that $f(x)\leq\Lambda^3\delta^2$ whenever $\norm{x}>\tilde{C}_3\log(\Lambda^{-3}\delta^{-2})$. Moreover, if $f_0(x)\geq 2^{-8d}/6=2^{-24}/6=:c_0$, then it follows from~\eqref{Eq:nisoenv} that \[f(x)\leq\exp(\tilde{b}_3)\leq\inv{c_0}\exp(\tilde{b}_3)f_0(x).\]
Thus, we may restrict attention to $x\in\R^3$ such that $\norm{x}\leq \tilde{C}_3\log(\Lambda^{-3}\delta^{-2})$,  $f_0(x)<c_0$ and $f(x)>f_0(x)$. Fixing such an $x$, we aim to show that there exist universal constants $C''>C'>0$ such that 
\begin{numcases}{}
\label{Eq:ub1}
\,f(x)\leq 3f_0(x)\quad & if $f_0(x)\geq C't$;\\
\label{Eq:ub2}
\,f(x)\leq 2C''t\quad & otherwise,
\end{numcases}
where $t:=\{\Lambda^3\delta^2\log^2(\Lambda^{-3}\delta^{-2})\}^{\beta/(\beta+3)}$. First, since $f_0\in\mathcal{F}^{0,I}$, it follows from~\citet[Theorem~5.14]{LV06} that $\inf_{\norm{w}\leq 1/9}\,f_0(w)\geq 2^{-8d}=6c_0$, and hence that $\norm{x}>1/9$. Also, Lemma~\ref{Lem:smoothlb} implies that $\inf_{\norm{w}\leq 1/9}\,f(w)\geq\inf_{\norm{w}\leq 1/9}\,f_0(w)/2\geq 3c_0$, provided that
\begin{equation}
\label{Eq:cond1}
6c_0\geq(\tilde{c}\Lambda^3\delta^2)^{\beta/(\beta+3)}.
\end{equation} 
In addition, let $s:=\bigl(f_0(x)+\{f(x)\wedge (3c_0)\}\bigr)/2$ and $H_x:=\{w\in\R^d:\tm{w}x=0\}$. Since $f$ is log-concave, we have $f(w)\geq f(x)\wedge (3c_0)=2s-f_0(x)$ for all $w\in\conv\bigl(\{\bar{B}(0,1/9)\cap H_x\}\cup\{x\}\bigr)=:K$. Note that $K$ is a cone of volume $\mu_3(K)=9^{-2}\,\pi\norm{x}/3$.

Since $f_0\in\mathcal{F}^{(\beta,\Lambda)}$ and $f_0(x)<s$, we have $f_0(w)<s$ for all $w\in\bar{B}(x,r')$, where $r':=\inv{\Lambda}\{s-f_0(x)\}/s^{1-1/\beta}$. Therefore, $f(w)\geq 2s-f_0(x)>s>f_0(w)$ for all $w\in K\cap\bar{B}(x,r')=:K'$. Noting that $K'-x\supseteq\bigl(r'/\sqrt{\norm{x}^2+9^{-2}}\bigr)(K-x)$ and recalling that $\norm{x}>1/9$, we deduce that \[\mu_3(K')\geq\rbr{\frac{r'}{\sqrt{\norm{x}^2+9^{-2}}}}^3\mu_3(K)\gtrsim\rbr{\frac{s-f_0(x)}{\Lambda s^{1-1/\beta}}}^3\norm{x}^{-2}.\]
Now since $\norm{x}\leq\tilde{C}_3\log(\Lambda^{-3}\delta^{-2})$, it follows that 
\[\delta^2\geq\int_{K'}\,\bigl(f^{1/2}-f_0^{1/2}\bigr)^2\gtrsim\bigl(\sqrt{2s-f_0(x)}-\sqrt{s}\bigr)^2\rbr{\frac{s-f_0(x)}{\Lambda s^{1-1/\beta}}}^3\log^{-2}(\Lambda^{-3}\delta^{-2}),\]
which implies that \[t^{(\beta+3)/\beta}=\Lambda^3\,\delta^2\log^2(\Lambda^{-3}\delta^{-2})\gtrsim\frac{\{s-f_0(x)\}^5}{s^{3\,(1-1/\beta)}\bigl(\sqrt{2s-f_0(x)}+\sqrt{s}\bigr)^2}\gtrsim\frac{\{s-f_0(x)\}^5}{s^{4-3/\beta}}.\]
This shows that
\begin{equation}
\label{Eq:ubineq}
s^\alpha-f_0(x)s^{\alpha-1}\leq Ct^\alpha,
\end{equation}
where $\alpha:=(\beta+3)/(5\beta)\in (0,1)$ and $C>0$ is a suitable universal constant. For our fixed $x\in\R^3$, we see that the function $g\colon [f_0(x),\infty)\to [0,\infty)$ defined by $g(w):=w^\alpha-f_0(x)w^{\alpha-1}$ is strictly increasing. Observe also that there exists a universal constant $C'>0$ such that if $f_0(x)\geq C't$, then
$g(2f_0(x))\geq Ct^{\alpha}\geq g(s)$. In this case, it follows from~\eqref{Eq:ubineq} that \[\bigl(f_0(x)+\{f(x)\wedge (3c_0)\}\bigr)/2=s\leq 2f_0(x)\] and hence that $f(x)\wedge (3c_0)\leq 3f_0(x)$. But since it was assumed that $f_0(x)<c_0$, we deduce that $f(x)\leq 3f_0(x)$, which yields~\eqref{Eq:ub1}.

Otherwise, if $f_0(x)<C't$, then $g(w)\geq w^\alpha-C'tw^{\alpha-1}$ for $w\geq f_0(x)$, so there exists a universal constant $C''>C'$ such that $g(C''t)\geq Ct^{\alpha}\geq g(s)$. As above, we deduce from~\eqref{Eq:ubineq} that $s\leq C''t$, and so long as 
\begin{equation}
\label{Eq:cond2}
3c_0>2C''t,
\end{equation}
this yields the desired bound~\eqref{Eq:ub2}. 
It is easy to verify that there exists a universal constant $\tilde{c}'>0$ such that~\eqref{Eq:cond1} and~\eqref{Eq:cond2} are satisfied whenever $0<\delta<\inv{e}\wedge (\tilde{c}'\Lambda^{-3/2}\log_+^{-1}\Lambda)$. This completes the proof.
\end{proof}
In the proof of Proposition~\ref{Prop:SmoothEntropy}, we also apply the following geometric result, which is due to~\citet{BI75}.
\begin{lemma}
\label{Lem:BI75}
For each $d\in\N$, there exist $\bar{\eta}\equiv\bar{\eta}_d>0$ and $\bar{C}^*\equiv\bar{C}_d^*>0$, depending only on $d$, such that for each $\eta\in (0,\bar{\eta})$ and each compact, convex $D\subseteq\bar{B}(0,1)\subseteq\R^d$, we can find a polytope $P$ with at most $\bar{C}^*\eta^{-(d-1)/2}$ vertices with the property that $D\subseteq P\subseteq D+\bar{B}(0,\eta)$. 
\end{lemma}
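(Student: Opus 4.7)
The plan is to follow the classical construction of Bronstein and Ivanov, building $P$ as an intersection of supporting half-spaces to $D$ at points of a sufficiently fine net on $\partial D$, and then converting the resulting facet bound to the stated vertex bound via Euler's formula in the low dimensions of interest. As a preliminary reduction, I would replace $D$ with a slight thickening $D':=D+\bar{B}(0,\eta/3)$ when $D$ has empty interior, so that $D'$ is a convex body with non-empty interior contained in $\bar{B}(0,1+\eta/3)$; this adds only a constant to the final Hausdorff error and reduces the problem to the `fat' case.

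Next, I would take a maximal $c\sqrt{\eta}$-separated subset $\{u_1,\dots,u_N\}\subseteq\partial D'$ for a suitable constant $c=c(d)>0$, noting that the standard volume-of-caps estimate applied to the convex surface $\partial D'$ gives $N\lesssim_d \eta^{-(d-1)/2}$. At each $u_i$ I fix a supporting affine hyperplane $H_i$ to $D'$, and set $P:=\bigcap_{i=1}^N H_i^+$, where $H_i^+$ is the closed half-space bounded by $H_i$ that contains $D'$. Clearly $D\subseteq D'\subseteq P$. The bound $P\subseteq D+\bar{B}(0,\eta)$ is then established by the classical chord-versus-arc argument: if a point $x\in P$ lay at distance greater than $\eta$ from $D$, then for the foot $w$ of the perpendicular from $x$ to $\partial D'$ there would exist a net point $u_i$ within distance $c\sqrt{\eta}$ of $w$; the local geometry of the convex surface around $u_i$, quantified by the support function of $D'$, would then force $H_i$ to separate $x$ from $D'$, contradicting $x\in P$. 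Choosing $c$ small enough in terms of $d$ gives the required Hausdorff bound.

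The principal obstacle is to convert the facet bound $N$ into the claimed \emph{vertex} bound, since a $d$-polytope with $N$ facets may in general have many more vertices than $N$. For the dimensions actually used in the main text, $d\in\{2,3\}$, this step is automatic: in $d=2$ the numbers of facets and vertices of any polygon coincide, while in $d=3$ Euler's formula gives $v\leq 2f-4$ for any $3$-polytope with $v$ vertices and $f$ facets, as used in the proof of Lemma~\ref{Lem:euler} and the remark following it. Hence the vertex count of $P$ is at most $2N\lesssim_d\eta^{-(d-1)/2}$, as required. For $d\geq 4$ one has to import the more delicate construction of~\citet{BI75}, in which both the facet and vertex counts can simultaneously be kept of the optimal order $\eta^{-(d-1)/2}$; but this is not needed for any application in the present paper.
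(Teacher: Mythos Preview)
The paper does not prove this lemma; it is stated as a direct citation of \citet{BI75}, so there is no proof to compare against. Your proposal is in the right spirit, but the Hausdorff step has a genuine gap. The assertion that the supporting half-space $H_i$ at a net point $u_i$ within $c\sqrt\eta$ of the foot $w$ must separate $x$ from $D'$ rests on a quadratic depth estimate of the form $\langle n_i, u_i - w\rangle\lesssim\|u_i-w\|^2$, which is not available for general convex $D'$: when $D'$ is thin, nearby boundary points can have nearly orthogonal outer normals. As a concrete failure, take $D=\{0\}$, so that $D'=\bar B(0,\eta/3)$; for small $\eta$ this sphere has diameter $2\eta/3<c\sqrt\eta$, a maximal $c\sqrt\eta$-separated subset of $\partial D'$ consists of a single point, and your $P$ is an unbounded half-space, so certainly $P\not\subseteq D+\bar B(0,\eta)$.

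The actual Bronshteyn--Ivanov construction places the net on an \emph{enclosing} sphere rather than on $\partial D'$: take a $c\sqrt\eta$-net $\{s_j\}$ on $\partial\bar B(0,2)$, set $K:=D+\bar B(0,\eta/2)$, let $u_j$ be the nearest point of $K$ to $s_j$, and define $P:=\conv\{u_j\}$. This bounds the number of vertices directly by the net size $\lesssim_d\eta^{-(d-1)/2}$ in every dimension, with no Euler-formula detour. For any unit direction $n$ with support point $y\in\partial K$, choosing $s_j$ within $c\sqrt\eta$ of the point where the ray $y+[0,\infty)n$ meets $\partial\bar B(0,2)$ and using the projection inequality $\langle s_j-u_j,\,y-u_j\rangle\leq 0$ gives $h_K(n)-h_P(n)\leq\langle n,y-u_j\rangle\lesssim c^2\eta$; for small $c$ this yields both $P\subseteq K\subseteq D+\bar B(0,\eta)$ and $D\subseteq P$.
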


\subsection{H\"older classes}
\label{Subsec:Holder}
First, we extend the notions of H\"older regularity discussed in Section~\ref{Sec:Smoothness} to general exponents $\beta>1$. It will be helpful to introduce the following additional notation. For finite-dimensional vector spaces $V,W$ over $\R$, let $\mathcal{L}(V,W) \equiv \mathcal{L}^{(1)}(V,W)$ denote the $\{\dim(V) \times \dim(W)\}$-dimensional vector space of all linear maps from $V$ to $W$. For positive integers $k\geq 2$, we inductively define $\mathcal{L}^{(k)}(V,W):=\mathcal{L}(V,\mathcal{L}^{(k-1)}(V,W))$. This may be identified with the space of all $k$-linear maps from $V^k=V\times\dotsm\times V$ to $W$,
or equivalently the space $\mathcal{L}(V^{\otimes k},W)$, where $V^{\otimes k}=V\otimes\,\dotsm\,\otimes V$
denotes the $k$-fold tensor product of $V$ with itself. 
For $k\in\N$ and a linear map $T\colon V\to W$, we write $T^{\otimes k}\colon V^{\otimes k}\to W^{\otimes k}$ for the $k$-fold tensor product of $T$ with itself, which sends $u_1\otimes\dotsm\otimes u_k\in V^{\otimes k}$ to $Tu_1\otimes\dotsm\otimes Tu_k\in W^{\otimes k}$.
When $V=\R^d$ for some $d\in\N$ and $W=\R$, we define the Hilbert--Schmidt norm on $\mathcal{L}\bigl((\R^d)^{\otimes k},\R\bigr)$ for $k\in\N$ by
\vspace{-0.2cm}
\begin{equation}
\label{Eq:TensorNorm}
\norm{\alpha}_{\mathrm{HS}}:=\tr(\alpha\alpha^*)^{1/2}=\tr(\alpha^*\alpha)^{1/2}=\cbr{\sum_{i_1,\dotsc,i_k=1}^d\,\alpha(e_{i_1}\otimes\dotsm\otimes e_{i_k})^2}^{1/2},
\end{equation}
where $\alpha^*\in\mathcal{L}\bigl(\R,(\R^d)^{\otimes k}\bigr)$ denotes the adjoint of $\alpha$ (as a linear map between inner product spaces). In an abuse of notation, we also write $\norm{{\cdot}}_{\mathrm{HS}}$ for the norm this induces on $\mathcal{L}^{(k)}(\R^d,\R)$. This is the natural analogue of the Frobenius norm for general multilinear forms; indeed, when $k=2$, the expression in~\eqref{Eq:TensorNorm} coincides with the Frobenius norm of the matrix that represents the symmetric bilinear form corresponding to $\alpha$ with respect to the standard basis of $\R^d$.

The reason for making these definitions is that if $f\colon V\to W$ is a map between finite-dimensional normed spaces and $f$ is differentiable at $x\in V$, then the derivative of $f$ at $x$, written $Df(x)$, is an element of $\mathcal{L}(V,W)$. In particular, if $f$ is itself linear, then $Df(x)=f$ for all $x\in V$. More generally, if $k\in\N$ and $f\colon V\to W$ is $(k-1)$-times differentiable in a neighbourhood of $x\in V$ and $k$ times differentiable at $x$, then the $k$th derivative of $f$ at $x$, written $D^kf(x)$, is an element of $\mathcal{L}^{(k)}(V,W)$. It is conventional to regard $D^kf(x)$ as a $k$-linear map $(u_1,\dotsc,u_k)\mapsto D^kf(x)(u_1,\dotsc,u_k)$. By repeatedly applying the chain rule, we can establish the following:
\begin{lemma}
\label{Lem:chainrule}
If $f\colon\R^d\to\R$ is $k$ times differentiable for some $k\in\N$ and $T\colon\R^m\to\R^d$ is linear, then $D^k(f\circ T)(x)(u_1,\dotsc,u_k)=D^kf(Tx)(Tu_1,\dotsc,Tu_k)$ for all $x,u_1,\dotsc,u_k\in\R^m$. Equivalently, if we view $D^k(f\circ T)(x)$ and $D^kf(x)$ as elements of $\mathcal{L}\bigl((\R^m)^{\otimes k},\R\bigr)$ and $\mathcal{L}\bigl((\R^d)^{\otimes k},\R\bigr)$ respectively, then $D^k(f\circ T)(x)=D^kf(x)\circ T^{\otimes k}$.
\end{lemma}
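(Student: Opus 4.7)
The natural strategy is induction on $k$. The base case $k=1$ is the ordinary chain rule together with the observation that a linear map $T$ satisfies $DT(x)=T$ at every $x$; hence $D(f\circ T)(x)(u)=Df(Tx)(Tu)$, which is exactly the claim when we identify $D(f\circ T)(x)\in\mathcal{L}(\R^m,\R)$ with $Df(Tx)\circ T$.

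For the inductive step, I would assume the identity for $k-1$ and rewrite it as $D^{k-1}(f\circ T)=\Phi\circ D^{k-1}f\circ T$, where $\Phi\colon\mathcal{L}((\R^d)^{\otimes(k-1)},\R)\to\mathcal{L}((\R^m)^{\otimes(k-1)},\R)$ is the \emph{linear} pullback map $\alpha\mapsto\alpha\circ T^{\otimes(k-1)}$. Because $\Phi$ is linear, $D\Phi(\alpha)=\Phi$ for every $\alpha$, so the ordinary chain rule applied to this composition yields
\begin{equation*}
D\bigl(D^{k-1}(f\circ T)\bigr)(x)=\Phi\circ D(D^{k-1}f)(Tx)\circ DT(x)=\Phi\circ D^kf(Tx)\circ T.
\end{equation*}
Unpacking this equality in $\mathcal{L}(\R^m,\mathcal{L}^{(k-1)}(\R^m,\R))$ by evaluating on vectors $u_1,\ldots,u_k\in\R^m$ gives
\begin{equation*}
D^k(f\circ T)(x)(u_1,\ldots,u_k)=\Phi\bigl(D^kf(Tx)(Tu_1)\bigr)(u_2,\ldots,u_k)=D^kf(Tx)(Tu_1,\ldots,Tu_k),
\end{equation*}
which closes the induction. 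The tensor-product formulation $D^k(f\circ T)(x)=D^kf(Tx)\circ T^{\otimes k}$ then follows from the universal property of $(\R^m)^{\otimes k}$.

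The proof is essentially bookkeeping, and the only genuine obstacle is managing the cascade of identifications $\mathcal{L}^{(k)}(V,W)\cong\mathcal{L}(V,\mathcal{L}^{(k-1)}(V,W))\cong\mathcal{L}(V^{\otimes k},W)$, so that the derivative of the vector-valued map $D^{k-1}(f\circ T)\colon\R^m\to\mathcal{L}^{(k-1)}(\R^m,\R)$ is correctly reinterpreted as a $k$-linear form on $\R^m$. Presenting the argument via the auxiliary linear map $\Phi$ isolates this issue cleanly, since linearity of $\Phi$ reduces all of the differentiation to a single application of the classical chain rule at each inductive step and avoids having to differentiate any genuinely multilinear object by hand.
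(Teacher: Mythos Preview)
Your proof is correct and follows essentially the same approach as the paper's: both argue by induction on $k$, introduce the linear pullback map $\alpha\mapsto\alpha\circ T^{\otimes(k-1)}$ (your $\Phi$ is the paper's $T_{(k-1)}$), and exploit its linearity together with the ordinary chain rule to carry out the inductive step. The only cosmetic difference is that the paper decomposes the inductive hypothesis as $T_{(k-1)}\circ g_{k-1}$ with $g_{k-1}(x):=D^{k-1}f(Tx)$ and applies the chain rule in two stages, whereas you handle the composition $\Phi\circ D^{k-1}f\circ T$ in a single line.
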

\begin{proof}
For $k\in\N$, define the dual map $(T^{\otimes k})^*\colon\mathcal{L}\bigl((\R^d)^{\otimes k},\R\bigr)\to\mathcal{L}\bigl((\R^m)^{\otimes k},\R\bigr)$ to $T^{\otimes k}$ by $(T^{\otimes k})^*(\alpha):=\alpha\circ T^{\otimes k}$, and write $T_{(k)}$ for the corresponding linear map from $\mathcal{L}^{(k)}(\R^d,\R)$ to $\mathcal{L}^{(k)}(\R^m,\R)$. Viewing $D^k(f\circ T)(x)$ and $D^kf(x)$ as elements of $\mathcal{L}^{(k)}(\R^m,\R)$ and $\mathcal{L}^{(k)}(\R^d,\R)$ respectively, we will establish by induction on $k$ that $D^k(f\circ T)(x)=T_{(k)}\circ D^kf(Tx)$ for all $k\in\N$, which implies the desired conclusion. The base case $k=1$ follows by applying the chain rule directly to the function $f\circ T$. For a general $k\geq 2$, the inductive hypothesis asserts that $D^{k-1}(f\circ T)(x)=(T_{(k-1)}\circ g_{k-1})(x)$, where $g_{k-1}(x):=D^{k-1}f(Tx)$, and we deduce by a further application of the chain rule that $Dg_{k-1}(x)=D^k(Tx)\circ T$. Recalling that $T_{(k-1)}$ is linear, we apply the chain rule once again to conclude that
\begin{align*}
D^k(f\circ T)(x)=D(T_{(k-1)}\circ g_{k-1})(x)&=T_{(k-1)}\circ Dg_{k-1}(x)\\
&=T_{(k-1)}\circ D^kf(Tx)\circ T=T_{(k)}\circ D^kf(Tx),
\end{align*}
as required.
\end{proof}

Using the above notation, we are now ready to formulate definitions of $\beta$-H\"older regularity for functions defined on $\R^d$ for general $\beta>1$ and $d\in\N$. For $\beta>1$ and $L>0$, we say that $h\colon\R^d\to\R$ is \emph{$(\beta,L)$-H\"older} on $\R^d$ if $h$ is $k:=\ceil{\beta}-1$ times differentiable on $\R^d$ and 
\begin{equation}
\label{Eq:holdergen}
\norm{D^kh(y)-D^kh(x)}_{\mathrm{HS}}\leq L\norm{y-x}^{\beta-k}
\end{equation}
for all $x,y\in\R^d$. 
We say that $h$ is \emph{$\beta$-H\"older} if there exists $L>0$ such that $h$ is $(\beta,L)$-H\"older. 

In addition, we seek to extend the definition~\eqref{Eq:HolderOriginal} of the affine invariant classes $\mathcal{H}^{\beta,L}$ in Example~\ref{Ex:Holder2} in Section~\ref{Sec:Smoothness} to encompass general $\beta>1$, rather than confine ourselves to working with $\beta\in (1,2]$. To this end, we define a scaled version of~\eqref{Eq:TensorNorm} for each $S\in\mathbb{S}^{d\times d}$ by
\begin{equation}
\label{Eq:TensorNormS}
\norm{\alpha}_S':=\Norm{\alpha\circ (S^{-1/2})^{\otimes k}}_{\mathrm{HS}}\,\det^{-1/2}\!S=\cbr{\sum_{i_1,\dotsc,i_k=1}^d\,\alpha\bigl(S^{-1/2}e_{i_1}\otimes\dotsm\otimes S^{-1/2}e_{i_k}\bigr)^2\,\det^{-1}\!S}^{1/2};
\end{equation}
note that since $(S^{-1/2})^{\otimes k}$ is invertible with inverse $(S^{1/2})^{\otimes k}$, $\norm{{\cdot}}_S'$ does indeed constitute a norm on $\mathcal{L}\bigl((\R^d)^{\otimes k},\R\bigr)$. In a further abuse of notation, we also write $\norm{{\cdot}}_S'$ for the corresponding norm on $\mathcal{L}^{(k)}(\R^d,\R)$.
Now for general $\beta>1$ and $L>0$, let $\mathcal{H}^{\beta,L}$ denote the collection of all $f\in\mathcal{F}_d$ for which $f$ is $k:=\ceil{\beta}-1$ times differentiable on $\R^d$ and 
\begin{equation}
\label{Eq:holderaffinv}
\norm{D^kf(y)-D^kf(x)}_{\inv{\Sigma}_f}'\leq L\norm{y-x}_{\Sigma_f}^{\beta-k}
\end{equation}
for all $x,y\in\R^d$, where the norm on the left-hand side is given by~\eqref{Eq:TensorNormS}. Note that when $\beta\in (1,2]$ or $\beta\in (2,3]$, this specialises to the definitions~\eqref{Eq:HolderOriginal} and~\eqref{Eq:HolderFrob} of the classes $\mathcal{H}^{\beta,L}$ in Example~\ref{Ex:Holder2}.
We now verify that:
\begin{lemma}
For general $\beta>1$ and $L>0$, $\mathcal{H}^{\beta,L}$ is an affine invariant class of densities.
\label{Lem:holderaffinv}
\end{lemma}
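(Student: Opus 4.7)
The plan is to verify condition~\eqref{Eq:holderaffinv} directly for $g$ with the same constant $L$. Fix $f\in\mathcal{H}^{\beta,L}$, $b\in\R^d$ and invertible $A\in\R^{d\times d}$, and set $T(y):=\inv{A}(y-b)$ and $g(y):=\inv{\abs{\det A}}\,f(T(y))$, so that $\Sigma_g=A\Sigma_f\tm{A}$ and $\det\Sigma_g=(\det A)^2\det\Sigma_f$. First, I would apply (a translated version of) Lemma~\ref{Lem:chainrule}---valid because $T$ is affine with derivative $\inv{A}$---to obtain
\[D^k g(y)-D^k g(x)=\inv{\abs{\det A}}\,[D^k f(T(y))-D^k f(T(x))]\circ (\inv{A})^{\otimes k},\]
which in particular shows that $g$ is $k$-times differentiable on $\R^d$ with $k:=\ceil{\beta}-1$.

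The crux is to rewrite $\norm{D^k g(y)-D^k g(x)}_{\inv{\Sigma}_g}'$ using this identity. The pivotal observation is that although the definition~\eqref{Eq:TensorNormS} of $\norm{\cdot}_{\inv{S}}'$ uses the symmetric positive definite square root $S^{1/2}$, the quantity $\norm{\alpha\circ T^{\otimes k}}_{\mathrm{HS}}$ in fact depends on $T$ only through $T\tm{T}$: any two matrices $T_1,T_2$ with $T_i\tm{T_i}=S$ differ by right-multiplication by an orthogonal matrix $Q$, so their $k$-fold tensor powers differ by $Q^{\otimes k}$, and $\norm{\beta\circ Q^{\otimes k}}_{\mathrm{HS}}=\norm{\beta}_{\mathrm{HS}}$ because the sum in~\eqref{Eq:TensorNorm} may be evaluated using any orthonormal basis of $(\R^d)^{\otimes k}$---in particular the image of the standard basis under $Q^{\otimes k}$. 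Since $A\Sigma_f^{1/2}$ satisfies $(A\Sigma_f^{1/2})\tm{(A\Sigma_f^{1/2})}=\Sigma_g$, I would substitute $A\Sigma_f^{1/2}$ for $\Sigma_g^{1/2}$ inside~\eqref{Eq:TensorNormS}; this unlocks the cancellation $(\inv{A})^{\otimes k}\circ A^{\otimes k}=I$ on $(\R^d)^{\otimes k}$ and yields
\[\norm{D^k g(y)-D^k g(x)}_{\inv{\Sigma}_g}'=\frac{\det^{1/2}\Sigma_g}{\abs{\det A}}\,\norm{[D^kf(T(y))-D^kf(T(x))]\circ(\Sigma_f^{1/2})^{\otimes k}}_{\mathrm{HS}}.\]

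Because $\det^{1/2}\Sigma_g=\abs{\det A}\,\det^{1/2}\Sigma_f$, the prefactor collapses and the right-hand side is exactly $\norm{D^kf(T(y))-D^kf(T(x))}_{\inv{\Sigma}_f}'$, which by $f\in\mathcal{H}^{\beta,L}$ is at most $L\norm{T(y)-T(x)}_{\Sigma_f}^{\beta-k}$. The Mahalanobis identity $\norm{T(y)-T(x)}_{\Sigma_f}^2=\tm{(y-x)}\tm{(\inv{A})}\inv{\Sigma}_f\inv{A}(y-x)=\norm{y-x}_{\Sigma_g}^2$ then closes the argument. I expect the only delicate point to be the orthogonal-invariance step that permits replacing $\Sigma_g^{1/2}$ by $A\Sigma_f^{1/2}$ inside the Hilbert--Schmidt norm; once that is in hand, the rest is routine bookkeeping with determinants, the chain rule, and tensor powers.
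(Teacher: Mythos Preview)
Your proposal is correct and follows essentially the same route as the paper. The only cosmetic difference is in justifying the key invariance step: the paper records it as the trace identity $\norm{\alpha\circ(A^{-1}\Sigma_g^{1/2})^{\otimes k}}_{\mathrm{HS}}^2=\tr\bigl(\alpha\circ\{A^{-1}\Sigma_g\,\tm{(A^{-1})}\}^{\otimes k}\circ\alpha^*\bigr)=\tr\bigl(\alpha\circ\Sigma_f^{\otimes k}\circ\alpha^*\bigr)=\norm{\alpha\circ(\Sigma_f^{1/2})^{\otimes k}}_{\mathrm{HS}}^2$, whereas you phrase the same fact as orthogonal invariance of the Hilbert--Schmidt norm (two square roots of $\Sigma_g$ differ by a right orthogonal factor, and $Q^{\otimes k}$ is orthogonal); the remaining determinant bookkeeping, chain-rule application, and Mahalanobis identity are identical.
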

\begin{proof}
Fix $f\in\mathcal{H}^{\beta,L}$ and define $g\in\mathcal{F}_d$ by $g(x):=\inv{\abs{\det A}}f(\inv{A}(x-b))$, where $b\in\R^d$ and $A\in\R^{d\times d}$ is invertible. Since $\Sigma_g=A\Sigma_f\tm{A}$, every $\alpha\in\mathcal{L}\bigl((\R^d)^{\otimes k},\R\bigr)$ satisfies
\begin{align}
\Norm{\alpha\circ(\inv{A}\Sigma_g^{1/2})^{\otimes k}}_{\mathrm{HS}}&=\tr\Bigl(\alpha\circ\bigl\{\inv{A}\Sigma_g\tm{(\inv{A})}\bigr\}^{\otimes k}\circ\alpha^*\Bigr)^{1/2}\notag\\
\label{Eq:affinvident}
&=\tr\bigl(\alpha\circ\Sigma_f^{\otimes k}\circ\alpha^*\bigr)^{1/2}=\bigl\|\alpha\circ(\Sigma_f^{1/2})^{\otimes k}\bigr\|_{\mathrm{HS}}.
\end{align}
Setting $k:=\ceil{\beta}-1$ and viewing $D^kf(x)$ and $D^kg(x)$ as elements of $\mathcal{L}\bigl((\R^d)^{\otimes k},\R\bigr)$, we deduce from Lemma~\ref{Lem:chainrule} and~\eqref{Eq:affinvident} that
\begin{align*}
&\norm{D^kg(y)-D^kg(x)}_{\inv{\Sigma}_g}'\\
&\hspace{1cm}=\Norm{\{D^kf(\inv{A}(y-b))-D^kf(\inv{A}(x-b))\}\circ(\inv{A})^{\otimes k}}_{\inv{\Sigma}_g}\\
&\hspace{1cm}=\Norm{\{D^kf(\inv{A}(y-b))-D^kf(\inv{A}(x-b))\}\circ(\inv{A}\Sigma_g^{1/2})^{\otimes k}}_{\mathrm{HS}}\det^{1/2}\Sigma_f\\[2pt]
&\hspace{1cm}=\bigl\|\{D^kf(\inv{A}(y-b))-D^kf(\inv{A}(x-b))\}\circ(\Sigma_f^{1/2})^{\otimes k}\bigr\|_{\mathrm{HS}}\det^{1/2}\Sigma_f\\
&\hspace{1cm}=\bigl\|D^kf(\inv{A}(y-b))-D^kf(\inv{A}(x-b))\bigr\|_{\inv{\Sigma_f}}\\
&\hspace{1cm}\leq L\norm{\inv{A}(y-x)}_{\Sigma_f}^{\beta-k}=L\norm{y-x}_{\Sigma_g}^{\beta-k}
\end{align*}
for all $x,y\in\R^d$, as required.
\end{proof}
Next, we establish that at least when $\beta\in (1,2]$, the classes $\mathcal{H}^{\beta,L}$ are nested with respect to the H\"older exponent $\beta$ in the sense of part (iii) of the following result.
\begin{proposition}
\label{Prop:Nested}
For each $d\in\N$, we have the following:
\begin{enumerate}[label=(\roman*)]
\item If $f\colon\R^d\to\R$ is a differentiable density and $\nabla f\colon\R^d\to\R^d$ is uniformly continuous, then $f$ and $\nabla f$ are in fact bounded.
\item For $\beta\in (1,2]$ and $L>0$, there exists $C\equiv C(d,\beta,L)>0$ such that whenever $f\colon\R^d\to\R$ is a differentiable density with the property that $\norm{\nabla f(x)-\nabla f(y)}\leq L\norm{x-y}^{\beta-1}$ for all $x,y\in\R^d$, we have $f(x)\leq C$ and $\norm{\nabla f(x)}\leq C$ for all $x\in\R^d$.
\item For $\beta\in (1,2]$ and $L>0$, there exists $\tilde{L}\equiv\tilde{L}(d,\beta,L)>0$ such that $\mathcal{H}^{\beta,L}\subseteq\mathcal{H}^{\alpha,\tilde{L}}$ for all $\alpha\in (1,\beta]$.
\end{enumerate}
\end{proposition}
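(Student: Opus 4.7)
My plan for part (i) is to first show $\nabla f$ is bounded, then deduce boundedness of $f$. Suppose for contradiction that $\norm{\nabla f(x_0)}$ is very large at some $x_0 \in \R^d$. Uniform continuity yields a radius $\delta > 0$ (independent of $x_0$) such that $\norm{\nabla f(x) - \nabla f(x_0)} \leq 1$ for all $x \in \bar{B}(x_0,\delta)$, so that the directional derivative $u^\top \nabla f(x)$ along $u := \nabla f(x_0)/\norm{\nabla f(x_0)}$ is at least $M := \norm{\nabla f(x_0)} - 1$ on this ball. The plan is then to integrate along $u$ across a small cylinder $C := \{x_0 + su + v : s \in [-\delta/2, \delta/2],\, v \perp u,\, \norm{v} \leq \delta/2\} \subseteq \bar{B}(x_0,\delta)$: for any admissible $v$ and $s$, the fundamental theorem of calculus gives $f(x_0 + su + v) \geq f(x_0 - (\delta/2)u + v) + (s + \delta/2)M \geq (s + \delta/2)M$, where the last step uses $f \geq 0$. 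Integrating this bound over $C$ produces $\int_C f \gtrsim_d M \delta^{d+1}$, which contradicts $\int_{\R^d} f = 1$ once $M$ is sufficiently large. Having obtained $K := \sup_{x \in \R^d}\norm{\nabla f(x)} < \infty$, one bounds $f$ itself by noting that if $f(x_0) > 0$ then $f \geq f(x_0)/2$ on $\bar{B}(x_0, f(x_0)/(2K))$, so $1 \geq f(x_0)/2 \cdot \mu_d(\bar{B}(0,1))\,\{f(x_0)/(2K)\}^d$, bounding $f(x_0)$ in terms of $d$ and $K$.

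For part (ii), the assumed $(\beta-1)$-H\"older bound is simply a quantitative form of uniform continuity of $\nabla f$: taking $\delta := L^{-1/(\beta-1)}$ ensures the increment of $\nabla f$ is at most $1$ on balls of radius $\delta$, and the proof of (i) then goes through with explicit dependence of all the constants on $(d,\beta,L)$.

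For part (iii), the plan is to reduce to the isotropic case via the affine invariance of $\mathcal{H}^{\beta,L}$ (Lemma~\ref{Lem:holderaffinv}); indeed, each $f \in \mathcal{H}^{\beta,L}$ is affinely equivalent to some $g \in \mathcal{H}^{\beta,L} \cap \mathcal{F}_d^{0,I}$, for which $\Sigma_g = I$ and both $\norm{\cdot}_{\Sigma_g}$ and $\norm{\cdot}_{\Sigma_g^{-1}}'$ agree with the Euclidean norm. It then remains to split on the magnitude of $\norm{y-x}$: for $\norm{y-x} \leq 1$ we have $\norm{y-x}^{\beta-1} \leq \norm{y-x}^{\alpha-1}$ since $\alpha \leq \beta$, so $\norm{\nabla g(y) - \nabla g(x)} \leq L \norm{y-x}^{\alpha-1}$ directly from the assumed bound; for $\norm{y-x} > 1$, part (ii) furnishes $\norm{\nabla g(y) - \nabla g(x)} \leq 2 C(d,\beta,L) \leq 2 C(d,\beta,L)\norm{y-x}^{\alpha-1}$. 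Combining the two regimes, $\tilde{L} := \max\{L, 2 C(d,\beta,L)\}$ works for $g$, and affine invariance transfers the conclusion back to $f$.

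The main obstacle will be part (i), specifically the geometric setup of the cylinder $C$ and the combination of the directional lower bound on $\partial_u f$ with $f \geq 0$ into a quantitatively useful lower bound on $\int_C f$. Once this is done, parts (ii) and (iii) follow by bookkeeping and a clean case split based on affine invariance.
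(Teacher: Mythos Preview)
Your proposal is correct, and parts (ii) and (iii) match the paper's argument essentially verbatim (including the choice $\tilde{L}=L\vee 2C$ and the reduction to $\mathcal{F}_d^{0,I}$ via Lemma~\ref{Lem:holderaffinv}).

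For part (i), however, you take a genuinely different and in some respects cleaner route. The paper proceeds in the \emph{opposite order}: it first argues by contradiction that $f$ is bounded, and only then deduces that $\nabla f$ is bounded. Its argument for $f$ bounded is more involved: assuming $f(x_n)\geq n$, it uses the density constraint to locate a nearby point $w_n$ with $f(w_n)\leq n/2$, applies the mean value theorem to find $z_n\in[w_n,x_n]$ with large gradient, and then uses uniform continuity of $\nabla f$ to build a half-ball region $K_n$ on which $f\geq n$; integrating over $K_n$ yields the contradiction. Boundedness of $\nabla f$ is then obtained by showing that a large gradient would force $f$ to be large along a short segment. Your cylinder argument bypasses this detour entirely: you go straight from uniform continuity and $f\geq 0$ to a lower bound $\int_C f\gtrsim_d M\delta^{d+1}$, bounding $\nabla f$ in one shot, and then the bound on $f$ follows from a simple Lipschitz estimate. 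The paper's ordering has no real advantage here; your version yields the quantitative dependence needed for (ii) just as readily and with less geometric bookkeeping.
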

\begin{proof}
For (i), it follows from the uniform continuity of $\nabla f$ that there exists $\delta>0$ such that $\norm{\nabla f(x)-\nabla f(y)}\leq 1$ whenever $\norm{x-y}\leq\delta$. First, we show that $f$ is bounded. Suppose to the contrary that for every $n\in\N$, there exists $x_n\in\R^d$ such that $f(x_n)\geq n$. Fix any $n\in\N$ and let $B_n:=\bar{B}\bigl(x_n,(n V_d/2)^{-1/d}\bigr)$, where $V_d:=\mu_d(\bar{B}(0,1))$. Then $(2/n)\inf_{x\in B_n}f(x)=\mu_d(B_n)\inf_{x\in B_n}f(x)\leq\int_{B_n}f\leq 1$ since $f$ is a density, so we can find $w_n\in B_n$ such that $f(w_n)\leq n/2$. By applying the chain rule and mean value theorem to the function that maps $t\in [0,1]$ to $f(w_n+t(x_n-w_n))$, we deduce that there exists $z_n\in [w_n,x_n]\subseteq B_n$ such that $\tm{\nabla f(z_n)}(x_n-w_n)=f(x_n)-f(w_n)\geq n/2$, whence
\begin{equation}
\label{Eq:largeDeriv}
\norm{\nabla f(z_n)}\geq\frac{n/2}{\norm{x_n-w_n}}\geq\rbr{\frac{n}{2}}^{1+1/d}V_d^{1/d}.
\end{equation}
Now let $S_n:=\{w\in\R^d:\tm{w}x\geq 0\text{ for all }x\in\bar{B}(\nabla f(z_n),1)\}$ and $K_n:=x_n+(\bar{B}(0,\delta/2)\cap S_n)$. We claim that if $n\geq 2\inv{V_d}(2/\delta)^d=:N_1$, then $f(x)\geq n$ for all $x\in K_n$. Indeed, for each fixed $x\in K_n$, there exists $y\in [x_n,x]\subseteq B(x_n,\delta/2)$ such that $f(x)-f(x_n)=\tm{\nabla f(y)}(x-x_n)$, as can be seen by applying the chain rule and mean value theorem to the function that maps $t\in [0,1]$ to $f(x_n+t(x-x_n))$. If $n\geq N_1$, then $(n V_d/2)^{-1/d}\leq\delta/2$, so $z_n\in B_n\subseteq\bar{B}(x_n,\delta/2)$ and $y\in\bar{B}(x_n,\delta/2)\subseteq\bar{B}(z_n,\delta)$. Recalling the first line of the proof, we deduce that $\nabla f(y)\in\bar{B}(\nabla f(z_n),1)$. Since $x-x_n\in S_n$, it follows from the definition of $S_n$ that $f(x)=f(x_n)+\tm{\nabla f(y)}(x-x_n)\geq f(x_n)\geq n$, as required.

Also, if $1<(n/2)^{1+1/d}\,V_d^{1/d}$, then we may define $\theta:=\arcsin\bigl((2/n)^{1+1/d}\,V_d^{-1}\bigr)\in [0,\pi/2]$ and deduce from~\eqref{Eq:largeDeriv} that $\mu_d(K_n)\geq\mu_d\bigl(\bar{B}(x_n,\delta/2)\bigr)(\pi-2\theta)/(2\pi)$. Thus, for any fixed $\gamma\in (0,1/2)$, there exists $N_2>0$ (depending only on $d$ and $\gamma$) such that $\mu_d(K_n)\geq\gamma\mu_d(\bar{B}(x_n,\delta/2))=\gamma(\delta/2)^d\,V_d$ if $n\geq N_2$. Together with the previous claim, this implies that $\int_{K_n}f\geq n\gamma(\delta/2)^d\,V_d=2\gamma n/N_1>1$ whenever $n>\{\inv{(2\gamma)}N_1\}\vee N_2$, which contradicts the fact that $f$ is a density. This shows that $f$ is bounded.

It remains to show that $\nabla f$ is bounded. Assume for a contradiction that for each $n\in\N$, there exists $y_n\in\R^d$ such that $\norm{\nabla f(y_n)}\geq n$. For each fixed $n\geq 2$, let $y_n':=y_n+\delta\,\nabla f(y_n)/\norm{\nabla f(y_n)}$. Then by another application of the chain rule and mean value theorem, there exists $y_n''\in [y_n,y_n']$ such that 
\begin{equation}
\label{Eq:largef}
f(y_n')-f(y_n)=\tm{\nabla f(y_n'')}(y_n'-y_n)=\delta\,\norm{\nabla f(y_n'')}\,\frac{\tm{\nabla f(y_n'')}\nabla f(y_n)}{\norm{\nabla f(y_n'')}\,\norm{\nabla f(y_n)}}.
\end{equation}
Now $y_n''\in\bar{B}(y_n,\delta)$, so $\nabla f(y_n'')\in\bar{B}(\nabla f(y_n),1)$ by the first line of the proof. Also, if $u,v\in\R^d$ are such that $\norm{v}\geq 2$ and $u\in\bar{B}(v,1)$, then $\tm{u}v\geq\sqrt{3}\,\norm{u}\norm{v}/2$. Thus, returning to~\eqref{Eq:largef}, we deduce that $f(y_n')-f(y_n)\geq\sqrt{3}\,\delta\norm{\nabla f(y_n'')}/2\geq\sqrt{3}\,\delta (n-1)/2$ for all $n\geq 2$. But this contradicts the fact that $f$ is bounded, so $\nabla f$ is indeed bounded. This establishes (i).

Under the additional hypothesis that $\norm{\nabla f(x)-\nabla f(y)}\leq L\norm{x-y}^{\beta-1}$ for all $x,y\in\R^d$, note that in the proof above, we can choose suitable values for $\delta,N_1,N_2$ that depend only on $d,\beta,L$ (and not on the specific $f$ under consideration). By carefully tracking through the rest of the argument above, we obtain bounds on $f$ and $\nabla f$ that also depend only on $d,\beta,L$, so (ii) holds.

Finally, to deduce (iii), fix $\beta\in (1,2]$ and $L>0$ and consider any $f\in\mathcal{H}^{\beta,L}\cap\mathcal{F}^{0,I}$. The defining condition \eqref{Eq:HolderOriginal} implies that $\norm{\nabla f(x)-\nabla f(y)}\leq L\norm{x-y}^{\beta-1}$ for all $x,y\in\R^d$, and part (ii) yields $C\equiv C(d,\beta,L)>0$ such that $f(x)\leq C$ and $\norm{\nabla f(x)}\leq C$ for all $x\in\R^d$. Let $\tilde{L}:=L\vee (2C)$ and note that if $\alpha\in (1,\beta]$, then \[\norm{\nabla f(x)-\nabla f(y)}\leq L\norm{x-y}^{\beta-1}\leq L\norm{x-y}^{\alpha-1}\leq\tilde{L}\norm{x-y}^{\alpha-1}\]
whenever $\norm{x-y}\leq 1$. On the other hand, if $\norm{x-y}>1$, then \[\norm{\nabla f(x)-\nabla f(y)}\leq 2C<\tilde{L}\norm{x-y}^{\alpha-1}.\]
These bounds apply to every $f\in\mathcal{H}^{\beta,L}\cap\mathcal{F}^{0,I}$, so by the affine invariance of $\mathcal{H}^{\beta,L}$ (Lemma~\ref{Lem:holderaffinv}), it follows that $\mathcal{H}^{\beta,L}\subseteq\mathcal{H}^{\alpha,\tilde{L}}$ for all $\alpha\in (1,\beta]$, as required.
\end{proof}
\begin{remark*}
For $\beta\in (1,2]$, this argument shows that classes of $\beta$-H\"older \emph{densities} defined on the whole of $\R^d$ are nested with respect to $\beta$. However, the same is not true of classes of general $\beta$-H\"older functions on $\R^d$; see the discussion at the end of Example~\ref{Ex:Holder1} in Section~\ref{Sec:Smoothness}. 
\end{remark*}
The following result shows that the classes $\tilde{\mathcal{H}}^{\gamma,L}$ and $\mathcal{H}^{\beta,L}$ defined in Examples~\ref{Ex:Holder1} and~\ref{Ex:Holder2} respectively (in Section~\ref{Sec:Smoothness}) are contained within the more general classes $\mathcal{F}^{(\beta',\Lambda')}$ for suitably chosen values of $\beta'$ and $\Lambda'$ in each case.
\begin{proposition}
\label{Prop:Holder}
For $L>0$, we have the following:
\begin{enumerate}[label=(\roman*)]
\item If $\beta\in (1,2]$, then every $f\in\mathcal{H}^{\beta,L}$ satisfies~\eqref{Eq:GradBd} and therefore~\eqref{Eq:Separation} with this value of $\beta$ and $\Lambda=\Lambda(\beta,L):=L^{1/\beta}(1-1/\beta)^{-1+1/\beta}$. Consequently, we have $\mathcal{H}^{\beta,L}\subseteq\mathcal{F}^{(\beta,\,\Lambda(\beta,L))}$.
\item For $\beta\in (1,2]$, suppose that $g\colon\R^d\to [0,\infty)$ satisfies $\norm{\nabla g(y)-\nabla g(x)}\leq L\norm{y-x}^{\beta-1}$ for all $x,y\in\R^d$. Then we have $\norm{\nabla g(x)}\leq\Lambda(\beta,L)\,g(x)^{1-1/\beta}$ for all $x\in\R^d$ and $\norm{x-y}\geq\inv{\Lambda}(\beta,L)\{g(x)-g(y)\}/g(x)^{1-1/\beta}$ whenever $x,y\in\R^d$ satisfy $g(x)>g(y)$.
\item If $\beta\in (2,3]$, then there exists $\Lambda\equiv\Lambda(\beta,L)>0$ such that every $f\in\mathcal{H}^{\beta,L}$ satisfies~\eqref{Eq:GradBd} and therefore~\eqref{Eq:Separation} with this value of $\beta$ and $\Lambda=\Lambda(\beta,L)$. Consequently, we have $\mathcal{H}^{\beta,L}\subseteq\mathcal{F}^{(\beta,\,\Lambda(\beta,L))}$.
\item If $\beta'\geq 1$ and $\gamma\in (1,2]$, then there exists $\bar{B}\equiv\bar{B}_d>0$ depending only on $d$ such that every $f\in\tilde{\mathcal{H}}^{\gamma,L}$  satisfies~\eqref{Eq:GradBd} and therefore~\eqref{Eq:Separation} with $\beta=\beta'$, $\Lambda=\beta\Lambda(\gamma,L)\bar{B}$ and any $\tau\leq\inv{e}$. Consequently, for any $\beta\geq 1$, we have $\bigcup_{\gamma \in [1,2]}\tilde{\mathcal{H}}^{\gamma,L}\subseteq\mathcal{F}^{(\beta,\,\Lambda'(\beta,L))}$, where $\Lambda'(\beta,L):=\beta(L\vee L^{1/2})\bar{B}e^{1/e}(eB_d)^{1/\beta}$ and $B_d>0$ is taken from Proposition~\ref{Prop:SepProperties}(ii).
\end{enumerate}
\end{proposition}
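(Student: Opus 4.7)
By the affine invariance of both the H\"older classes (Lemma~\ref{Lem:holderaffinv}) and the contour separation classes (Proposition~\ref{Prop:SepProperties}(i)), it suffices throughout to verify the gradient bound~\eqref{Eq:GradBd} for \emph{isotropic} densities, since the implication~\eqref{Eq:GradBd} $\Rightarrow$~\eqref{Eq:Separation} is exactly Proposition~\ref{Prop:Differentiable}. In the isotropic case $\Sigma_f = I$, the two scaled norms appearing in~\eqref{Eq:HolderOriginal},~\eqref{Eq:HolderFrob} and~\eqref{Eq:GradBd} all collapse to the ordinary Euclidean and Hilbert--Schmidt norms.

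The technical core is part (ii). For non-negative $g$ on $\R^d$ with $(\beta-1)$-H\"older gradient, I would apply Taylor's theorem with integral remainder along $[x,y]$: writing
\[
g(y) - g(x) - \tm{\nabla g(x)}(y-x) = \int_0^1 \tm{[\nabla g(x + s(y-x)) - \nabla g(x)]}(y-x)\,ds,
\]
the H\"older bound and Cauchy--Schwarz give $|g(y) - g(x) - \tm{\nabla g(x)}(y-x)| \leq (L/\beta)\|y-x\|^\beta$. Setting $y = x - t\,\nabla g(x)/\|\nabla g(x)\|$ and exploiting $g(y) \geq 0$ yields $\|\nabla g(x)\| \leq g(x)/t + Lt^{\beta-1}/\beta$; minimising over $t > 0$ produces the sharp constant $\Lambda(\beta,L) = L^{1/\beta}(1-1/\beta)^{-1+1/\beta}$. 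The second inequality of (ii), comparing $\|x-y\|$ with $\{g(x)-g(y)\}/g(x)^{1-1/\beta}$, follows from the first by the mean value argument in the forward direction of Proposition~\ref{Prop:Differentiable}. Part (i) is then immediate: for isotropic $f \in \mathcal{H}^{\beta,L}$ with $\beta \in (1,2]$, condition~\eqref{Eq:HolderOriginal} reduces to a Euclidean H\"older bound on $\nabla f$, so applying the gradient bound of (ii) with $g = f$ gives~\eqref{Eq:GradBd}.

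For part (iii) with $\beta \in (2,3]$, I would apply \emph{second}-order Taylor: integrating the H\"older bound on $Hf$ twice gives
\[
\bigl|f(y) - f(x) - \tm{\nabla f(x)}(y-x) - \tfrac12\tm{(y-x)}Hf(x)(y-x)\bigr| \leq \frac{L\|y-x\|^\beta}{\beta(\beta-1)}.
\]
Taking $y = x \pm tv$ with $v = -\nabla f(x)/\|\nabla f(x)\|$ and using $f(y) \geq 0$ produces $t\|\nabla f(x)\| \leq f(x) + \tfrac{t^2}{2}\tm{v}Hf(x)v + O(t^\beta)$, in which the Hessian term has indeterminate sign. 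To close the argument I would first bound $\|Hf(x)\|_{\mathrm{op}}$ by applying the same non-negativity trick along an eigenvector of $Hf(x)$: the boundedness $f \leq B_d$ from Proposition~\ref{Prop:SepProperties}(ii) controls the positive eigenvalues (through $f(x \pm tu) \leq B_d$) and $f \geq 0$ controls the negative ones, so that an optimisation in $t$ yields a bound of the shape $\|Hf(x)\|_{\mathrm{op}} \lesssim_{\beta,L} f(x)^{(\beta-2)/\beta} + 1$. Substituting this back into the first-order bound and re-optimising in $t$ then produces~\eqref{Eq:GradBd} with a constant $\Lambda(\beta,L)$. The main obstacle is precisely this two-step Landau--Kolmogorov-type interpolation, which mixes the pointwise $L^\infty$ constraint coming from log-concavity with the $(\beta-2)$-H\"older smoothness of $Hf$.

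For part (iv), given isotropic $f \in \tilde{\mathcal{H}}^{\gamma,L}$ with $\gamma \in (1,2]$, let $\phi = \log f$ and fix a maximiser $x_0$ of $\phi$ (which exists since $f$ is continuous with exponentially decaying tails by log-concavity). The shifted log-density $\phi_0 := \phi(x_0) - \phi$ is non-negative, and $\nabla \phi_0 = -\nabla \phi$ is still $(\gamma-1)$-H\"older with constant $L$, so part (ii) applied to $\phi_0$ gives $\|\nabla\phi(x)\| \leq \Lambda(\gamma,L)\,(\phi(x_0) - \phi(x))^{1-1/\gamma}$. Combined with $\phi(x_0) \leq \log B_d$ from Proposition~\ref{Prop:SepProperties}(ii) and the chain rule identity $\|\nabla f(x)\| = f(x)\|\nabla \phi(x)\|$, the target bound $\|\nabla f(x)\| \leq \Lambda'(\beta,L)f(x)^{1-1/\beta}$ for $x$ with $f(x) < \tau \leq 1/e$ reduces, after setting $u := -\phi(x) > 1$, to the uniform boundedness of the map $u \mapsto (u + \log B_d)^{1-1/\gamma}\,e^{-u/\beta}$ on $[1,\infty)$. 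Calibrating the maximum of this quantity explicitly gives the claimed constant $\Lambda'(\beta,L) = \beta(L\vee L^{1/2})\bar{B}e^{1/e}(eB_d)^{1/\beta}$, and then Proposition~\ref{Prop:SepProperties}(ii) converts the bound at $\tau = 1/e$ into the unrestricted inclusion $\bigcup_{\gamma \in [1,2]}\tilde{\mathcal{H}}^{\gamma,L} \subseteq \mathcal{F}^{(\beta,\Lambda'(\beta,L))}$.
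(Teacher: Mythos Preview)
Your treatment of parts (i), (ii) and (iv) is sound and essentially matches the paper's: the paper also reduces to a one-dimensional restriction along $u=-\nabla f(x)/\|\nabla f(x)\|$, integrates the H\"older bound once, and optimises over the step length to produce exactly $\Lambda(\beta,L)=L^{1/\beta}(1-1/\beta)^{-1+1/\beta}$. For (iv) the paper applies the \emph{second} inequality of (ii) to $\psi:=\tilde B-\log f$ and then invokes an elementary logarithmic inequality (their Lemma~S33), whereas you apply the \emph{first} inequality of (ii) to $\phi(x_0)-\phi$ and then the chain rule; both routes are valid.

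Part (iii) has a real gap. Your plan is to bound $\|Hf(x)\|_{\mathrm{op}}$ via the eigenvector trick, using $f\le B_d$ for positive eigenvalues and $f\ge 0$ for negative ones. This does give $\|Hf(x)\|_{\mathrm{op}}\lesssim f(x)^{(\beta-2)/\beta}+1$, but the additive constant is fatal: for small $f(x)$ it dominates, so the subsequent first-order bound reads $\|\nabla f(x)\|\le f(x)/t+Ct+C't^{\beta-1}$, and the optimal $t\asymp f(x)^{1/2}$ yields only $\|\nabla f(x)\|\lesssim f(x)^{1/2}$. Since $1-1/\beta>1/2$ for $\beta>2$, this is \emph{strictly weaker} than~\eqref{Eq:GradBd} and cannot be upgraded to it by a uniform constant $\Lambda$. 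The paper avoids this by never bounding the full Hessian. Instead it exploits log-concavity in a sharper way: with $h(t):=f(x+tu)$ and $u=-\nabla f(x)/\|\nabla f(x)\|$, log-concavity forces $h$ to be \emph{monotone decreasing} on $[0,\infty)$, so $-h'\ge 0$ there. One can then apply part (ii) with exponent $\beta-1$ to the nonnegative one-dimensional function $-h'$, obtaining $h''(0)=u^\top Hf(x)u\le\Lambda(\beta-1,L)\,\|\nabla f(x)\|^{(\beta-2)/(\beta-1)}$. This directional Hessian bound scales correctly with $\|\nabla f(x)\|$ (not with $B_d$), and substituting it into the second-order Taylor expansion, rescaling via $t\mapsto\|\nabla f(x)\|^{1/(\beta-1)}t$, and using $h\ge 0$ closes to $\|\nabla f(x)\|\le C(\beta,L)\,f(x)^{1-1/\beta}$. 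The missing idea, then, is that log-concavity gives monotonicity of the one-dimensional slice in the steepest-descent direction, which lets you control $u^\top Hf(x)u$ by $\|\nabla f(x)\|$ itself rather than by the crude upper envelope $B_d$.
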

\begin{proof}
As in the proof of Proposition~\ref{Prop:Differentiable}, we write $\Sigma\equiv\Sigma_f$ for convenience and let $\ipr{v}{w}':=(\det\Sigma)\,(\tm{v}\Sigma w)$ denote the inner product that gives rise to the norm $\norm{{\cdot}}_{\inv{\Sigma}}'$ on $\R^d$. To verify that~\eqref{Eq:GradBd} holds with the stated values of $\beta,\Lambda$, first fix $x\in\R^d$. The bound~\eqref{Eq:GradBd} holds trivially if $\nabla f(x)=0$, so we may assume that $\nabla f(x)\neq 0$. Let $u:=-\Sigma\,\nabla f(x)$, and for $t\in\R$, let $h(t):=f(x+tu)$. By the chain rule, we have $h'(t)=\tm{\nabla f(x+tu)}u$ for all $t\in\R$, so upon applying the Cauchy--Schwarz inequality and the defining condition~\eqref{Eq:HolderOriginal} for the class $\mathcal{H}^{\beta,L}$, it follows that 
\begin{align*}
\abs{h'(t)-h'(s)}&=\abs{\tm{\{\nabla f(x+tu)-\nabla f(x+su)\}}u}\\
&=\inv{(\det\Sigma)}\,\ipr{\nabla f(x+tu)-\nabla f(x+su)}{\inv{\Sigma}u}'\\
&\leq\inv{(\det\Sigma)}\,\norm{\nabla f(x+tu)-\nabla f(x+su)}_{\inv{\Sigma}}'\,\norm{\inv{\Sigma}u}_{\inv{\Sigma}}'\\
&\leq (\det\Sigma)^{-1}\,L\norm{(t-s)u}_\Sigma^{\beta-1}\,\norm{\nabla f(x)}_{\inv{\Sigma}}'\\
&=(\det\Sigma)^{-(\beta+1)/2}\,L\abs{t-s}^{\beta-1}\,(\norm{\nabla f(x)}_{\inv{\Sigma}}')^\beta
\end{align*}
for all $t,s\in\R$. Hence, writing $\lambda:=(\det\Sigma)^{-1/2}\,\norm{\nabla f(x)}_{\inv{\Sigma}}'$, we deduce that \[h'(t)\leq h'(0)+Lt^{\beta-1}\lambda^\beta\,(\det\Sigma)^{-1/2}=-\lambda^2+Lt^{\beta-1}\lambda^\beta\,(\det\Sigma)^{-1/2}\]
for all $t\geq 0$. Since $h$ takes non-negative values, we can apply the fundamental theorem of calculus to deduce that 
\begin{align*}
-f(x)=-h(0)\leq h(t)-h(0)&=\int_0^t h'(s)\,ds\leq\int_0^t\,\bigl\{-\lambda^2+Ls^{\beta-1}\lambda^\beta\,(\det\Sigma)^{-1/2}\bigr\}\,ds\\
&=(\det\Sigma)^{-1/2}\,\Bigl\{-\norm{\nabla f(x)}_{\inv{\Sigma}}'\,(\lambda t)+\frac{L}{\beta}\,(\lambda t)^\beta\Bigr\}=:G(\lambda t)
\end{align*}
for all $t\geq 0$. We now optimise this bound by setting $t=a^*/\lambda$, where $a^*:=\argmin_{t\geq 0} G(t)=(\norm{\nabla f(x)}_{\inv{\Sigma}}'/L)^{1/(\beta-1)}$, which yields
\[f(x)\geq -G(a^*)=(\det\Sigma)^{-1/2}\,(1-1/\beta)\,L^{-1/(\beta-1)}\,(\norm{\nabla f(x)}_{\inv{\Sigma}}')^{\beta/(\beta-1)}.\]
This establishes the desired bound~\eqref{Eq:GradBd} on $\norm{\nabla f(x)}_{\inv{\Sigma}}'$, and in view of Proposition~\ref{Prop:Differentiable}, the final assertion of (i) now follows immediately. Observe in particular that, in addition to the defining condition~\eqref{Eq:HolderOriginal}, the key property of $f\in\mathcal{H}^{\beta,L}$ that we exploit in the argument above is the non-negativity of $f$. Since we do not appeal to the log-concavity of $f$ or even the fact that $f$ is a density, we may therefore run through the same proof with $\Sigma$ replaced by $I$ throughout in order to establish (ii) for general non-negative functions $g\colon\R^d\to [0,\infty)$ that satisfy $\norm{\nabla g(y)-\nabla g(x)}\leq L\norm{y-x}^{\beta-1}$ for all $x,y\in\R^d$.

For (iii), since $\mathcal{H}^{\beta,L}$ is affine invariant (cf.\ Example~\ref{Ex:Holder2} and Lemma~\ref{Lem:holderaffinv}), it suffices to consider $f\in\mathcal{H}^{\beta,L}\cap\mathcal{F}^{0,I}$. To verify that~\eqref{Eq:GradBd} holds with the stated values of $\beta,\Lambda$, fix $x\in\R^d$ with $\nabla f(x)\neq 0$, and let $u:=-\nabla f(x)/\norm{\nabla f(x)}$. Note that by the log-concavity of the density $f$, the function $h\colon [0,\infty)\to [0,\infty)$ defined by $h(t):=f(x+tu)$ is strictly decreasing and non-negative. Using the chain rule, we find that $h'(t)=\tm{\nabla f(x+tu)}u$ and $h''(t)=\tm{u}Hf(x+tu)u$ for all $t$, and it follows from the defining condition~\eqref{Eq:HolderFrob} for the class $\mathcal{H}^{\beta,L}$ that
\begin{align*}
\abs{h''(t)-h''(s)}&=\abs{\tm{u}\{Hf(x+tu)-Hf(x+su)\}u}\leq\norm{Hf(x+tu)-Hf(x+su)}\\
&\leq\norm{Hf(x+tu)-Hf(x+su)}_{\mathrm{F}}\leq L\abs{t-s}^{\beta-2}
\end{align*}
for all $t,s\geq 0$. Therefore, recalling that $h$ is decreasing, we now apply the fundamental theorem of calculus to deduce that \[-h'(0)\geq h'(t)-h'(0)=\int_0^t h''(s)\,ds\geq\int_0^t\,\bigl(h''(0)-Ls^{\beta-2}\bigr)\,ds=h''(0)t-\frac{L}{\beta-1}t^{\beta-1},\]
for all $t\geq 0$. Setting $\lambda:=\norm{\nabla f(x)}=-h'(0)$ and $\tilde{\Lambda}:=\Lambda(\beta-1,L)$, as defined in (i), we now optimise this bound with respect to $t$ as in the proof of (i) and conclude that $h''(0)\leq\tilde{\Lambda}\lambda^{(\beta-2)/(\beta-1)}$. On the other hand, we have \[h'(t)-h'(0)=\int_0^t h''(s)\,ds\leq\int_0^t\,\bigl(h''(0)+Ls^{\beta-2}\bigr)\,ds=h''(0)t+\frac{L}{\beta-1}t^{\beta-1},\]
for all $t\geq 0$, and a further application of the fundamental theorem of calculus yields
\begin{align*}
h(t)-h(0)\leq\int_0^t h'(s)\,ds&\leq\int_0^t\,\rbr{h'(0)+h''(0)s+\frac{L}{\beta-1}s^{\beta-1}}\,ds\\
&\leq -\lambda t+\frac{\tilde{\Lambda}}{2}\lambda^{(\beta-2)/(\beta-1)}\,t^2+\frac{L}{\beta(\beta-1)}t^\beta
\end{align*}
for all $t\geq 0$. Replacing $t$ by $\lambda^{1/(\beta-1)}t$ and using the fact that $h\geq 0$, we see that \[0\leq h(0)-\rbr{t-\frac{\tilde{\Lambda}}{2}t^2-\frac{L}{\beta(\beta-1)}t^\beta}\lambda^{\beta/(\beta-1)}=:h(0)-\alpha(t)\lambda^{\beta/(\beta-1)}\]
for all $t\geq 0$. Letting $\tilde{\alpha}:=\max_{t\geq 0}\alpha(t)>0$, it follows that $\norm{\nabla f(x)}=\lambda\leq\{\tilde{\alpha}^{-1}h(0)\}^{1-1/\beta}=\{\tilde{\alpha}^{-1}f(x)\}^{1-1/\beta}$. Since $\tilde{\alpha}$ depends only on $\beta$ and $L$, the proof of (iii) is complete.

Finally, in view of the affine invariance of the conditions~\eqref{Eq:GradBd},~\eqref{Eq:LogDensity1},~\eqref{Eq:LogDensity2}, it suffices to prove assertion (iv) for $f\in\mathcal{F}^{0,I}\cap\tilde{\mathcal{H}}^{\gamma,L}$. We begin by recalling that there exists $\tilde{B}\equiv\tilde{B}_d>0$ such that $h(x)\leq e^{\tilde{B}_d}$ for all $h\in\mathcal{F}_d^{0,I}$ and $x\in\R^d$, \citep[e.g.][Theorem~2(a)]{KS16}. Consequently, if $f\in\mathcal{F}^{0,I}\cap\tilde{\mathcal{H}}^{\gamma,L}$, then the function $\psi\colon\R^d\to\R$ defined by $\psi(x)=-\log f(x)+\tilde{B}$ is $(\gamma,L)$-H\"older and takes non-negative values. Thus, if $x,y\in\R^d$ satisfy $f(y)<f(x)<\inv{e}$ and if $\beta\geq 1$, then
\begin{align*}
\norm{x-y}&\geq\inv{\Lambda(\gamma,L)}\,\frac{\psi(y)-\psi(x)}{\psi(y)^{1-1/\gamma}}=\inv{\Lambda(\gamma,L)}\,\frac{\log f(x)-\log f(y)}{\{\log^{\phantom{A}}\!\!\!\{1/f(y)\}+\tilde{B}\}^{1-1/\gamma}}\\[10pt]
&\geq (\tilde{B}+1)^{-1}\,\inv{\Lambda(\gamma,L)}\,\frac{\log f(x)-\log f(y)}{\log^{\!\!\!\!\phantom{A}}\{1/f(y)\}}
\geq(\tilde{B}+1)^{-1}\,\inv{\Lambda(\gamma,L)}\,\inv{\beta}\,\frac{f(x)-f(y)}{f(x)^{1-1/\beta}},
\end{align*}
where we applied assertion (ii) of the proposition and Lemma~\ref{Lem:logineq} below to obtain the first and last inequalities in the display above. This yields the first assertion of (iv). Since $\Lambda(\gamma,L)\leq (L\vee L^{1/2})\max_{\,0< w\leq 1/2} w^{-w}=(L\vee L^{1/2})\,e^{1/e}$ for all $\gamma\in (1,2]$, the final conclusion of (iv) follows immediately upon setting $\tau=\inv{e}$ and invoking Proposition~\ref{Prop:SepProperties}(ii).
\end{proof}
The following elementary bound is used in the proof of Proposition~\ref{Prop:Holder}(iii) above.
\begin{lemma}
\label{Lem:logineq}
If $\beta\geq 1$ and $0<a<b<1$, then
\begin{equation}
\label{Eq:logineq}
\frac{\log b-\log a}{\log\,(1/a)}>\frac{\inv{\beta}(b-a)}{b^{1-1/\beta}}.
\end{equation}
\end{lemma}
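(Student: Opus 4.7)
The plan is to reduce the inequality to a clean one-variable statement via logarithmic substitution, then verify positivity using convexity. Specifically, I would first multiply \eqref{Eq:logineq} through by the positive quantity $\beta b^{1-1/\beta}\log(1/a)$ to rewrite it as
\[
\beta b^{1-1/\beta}(\log b-\log a)>(b-a)\log(1/a).
\]
Setting $x:=\log(1/a)>0$ and $y:=\log(1/b)>0$, so that $0<y<x$, and introducing $u:=x-y=\log(b/a)>0$, the inequality above becomes
\[
\beta e^{-y(1-1/\beta)}u>(e^{-y}-e^{-x})x.
\]
After dividing both sides by $e^{-y}>0$ and using $x=y+u$, this is equivalent to
\[
\beta u e^{y/\beta}>(y+u)(1-e^{-u}),\qquad y\geq 0,\ u>0,\ \beta\geq 1. \tag{$\ast$}
\]

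To establish $(\ast)$, I would fix $u>0$ and $\beta\geq 1$ and treat
\[
g(y):=\beta u\,e^{y/\beta}-(y+u)(1-e^{-u})
\]
as a smooth function of $y\in[0,\infty)$. Three calculations suffice: first, $g(0)=\beta u-u(1-e^{-u})=u(\beta-1+e^{-u})>0$; second, $g'(0)=u-(1-e^{-u})>0$, which follows from the standard inequality $e^{-u}>1-u$ for $u>0$; and third, $g''(y)=(u/\beta)\,e^{y/\beta}>0$, so that $g$ is strictly convex on $[0,\infty)$. Convexity combined with $g'(0)>0$ gives $g'\geq g'(0)>0$ throughout, hence $g$ is increasing and $g(y)\geq g(0)>0$ for every $y\geq 0$. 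This yields $(\ast)$ and therefore \eqref{Eq:logineq}.

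The main obstacle is really just spotting the right change of variables: once \eqref{Eq:logineq} is recast in terms of $(u,y)$, the dependence on $\beta$ is exponential in $y$ on the left and only linear on the right, so the asymptotics cannot fail, and the role of $\beta\geq 1$ is confined to ensuring $g(0)>0$. A naive direct approach (e.g.\ fixing $a$ and differentiating in $b$, or bounding $\log(b/a)$ by $(b-a)/b$) either gives away too much or leads to a non-convex one-variable problem; the virtue of $(\ast)$ is that convexity in $y$ becomes transparent.
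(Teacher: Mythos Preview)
Your proof is correct. The substitution $y=\log(1/b)$, $u=\log(b/a)$ cleanly reduces the claim to $(\ast)$, and the convexity argument for $g$ is watertight: $g''>0$ gives $g'\geq g'(0)>0$ on $[0,\infty)$, hence $g(y)\geq g(0)=u(\beta-1+e^{-u})>0$.

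Your route is genuinely different from the paper's. The paper first establishes the auxiliary bound $b^{1/\beta}\log(e^\beta/b)<\beta$ by maximising $w\mapsto we^{1-w/\beta}$, and then splits into two cases: for $a\in[be^{-\beta},b]$ it uses the mean value theorem to get $(\log b-\log a)/(b-a)\geq 1/b$ and combines this with the auxiliary bound, while for $a\in(0,be^{-\beta})$ it argues via monotonicity of the left-hand side in $a$. Your approach avoids both the case split and the separate optimisation lemma by absorbing everything into the single convex function $g$; the price is that the roles of the two regimes (small vs.\ large $u$) are less visible. The paper's argument makes explicit where the threshold $a=be^{-\beta}$ comes from, which connects to how the lemma is actually used (compare the exponent $\beta$ in the separation condition), but for proving the inequality itself your method is shorter and more uniform.
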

\begin{proof}
Setting $u:=\log(e^\beta/b)$, we can differentiate the function $w\mapsto we^{1-w/\beta}$ to deduce that
\begin{equation}
\label{Eq:logbd}
b^{1/\beta}\log(e^{\beta}/b)=ue^{1-u/\beta}<\beta
\end{equation}
for all $b\in (0,1)$. Now fix $b\in (0,1)$ and suppose first that $a\in [be^{-\beta},b]$. By the mean value theorem, there exists $c\in (a,b)$ such that $(\log b-\log a)/(b-a)=1/c$, and it follows from~\eqref{Eq:logbd} that
\[\frac{\log b-\log a}{b-a}=\frac{1}{c}\geq\frac{1}{b}>\frac{\inv{\beta}\log(e^\beta/b)}{b^{1-1/\beta}}\geq\frac{\inv{\beta}\log(1/a)}{b^{1-1/\beta}},\]
so~\eqref{Eq:logineq} holds when $a\in [be^{-\beta},b]$. On the other hand, when $a\in (0,be^{-\beta})$, note that the left-hand side of~\eqref{Eq:logineq} is a decreasing function of $a$ for each fixed value of $b$. Thus, we see that
\[\frac{\log b-\log a}{\log\,(1/a)}\geq\frac{\beta}{\log(e^\beta/b)}\geq b^{1/\beta}>\frac{\inv{\beta}(b-a)}{b^{1-1/\beta}},\]
as required, where we have again used~\eqref{Eq:logbd} to obtain the second inequality above.
\end{proof}
\subsubsection{Review of results on nonparametric density estimation over H\"older classes}
\label{Subsec:HolderRev}
When $d=3$, $\beta\in (1,2]$ and $L>0$, we saw in Example~\ref{Ex:Holder2} that $\sup_{f_0\in\mathcal{H}_d^{\beta,L}}\E_{f_0}\{\dex^2(\hat{f}_n,f_0)\}\lesssim_{\beta,L}n^{-(\beta+3)/(\beta+7)}\log^{\lambda_\beta}n$, where $\lambda_\beta:=(16\beta+39)/(2(\beta+7))$. To relate this result to the existing literature on nonparametric density estimation over H\"older classes of densities (without shape constraints), we work with $\dhell^2$ instead of $\dex^2$ divergence (since the latter is specific to the estimator $\hat{f}_n$), and borrow some definitions from~\citet{GL14}. For $d\in\N$, let $\mathbf{1}_d:=(1,\dotsc,1)\in\R^d$, and for 
$\beta\in (1,2]$ and $L>0$, denote by $\mathscr{H}(\beta,L)\equiv\mathscr{H}_d(\beta,L)$ the anisotropic Nikol'skii class $\mathscr{N}_{\infty\mathbf{1}_d,d}(\beta\mathbf{1}_d,L\mathbf{1}_d)=\mathscr{N}_{\infty\mathbf{1}_d,d}(\beta\mathbf{1}_d,L\mathbf{1}_d,L)$ of densities $g\colon\R^d\to\R$ satisfying $\abs{g(x)}\leq L$ and $\abs{g(x+2h)-2g(x+h)+g(x)}\leq L\norm{h}^\beta$ for all $x,h\in\R^d$; see (3.2) on page~488 of~\citet{GL14}. 
This is slightly different to our H\"older conditions~\eqref{Eq:HolderOriginal} and~\eqref{Eq:holdergen}, but with the aid of Taylor's theorem (with the mean value form of the remainder) and Proposition~\ref{Prop:Nested}, we can verify that whenever $\beta\in (1,2]$ and $L>0$, there exists $L'\equiv L'(d,\beta,L)>0$ such that $\mathcal{H}^{\beta,L}\subseteq\mathscr{H}(\beta,L')$. 

Recalling the standard fact that $\dhell^2(f,g)\geq\norm{f-g}_1^2/4=(\int_{\R^d}\,\abs{f-g})^2/4$ for all densities $f,g$, we can apply the Cauchy--Schwarz inequality and take $p=1$ in~\citet[Theorem~3(i)]{GL14} to deduce that there exists $c\equiv c(\beta,d)>0$ such that
\begin{equation}
\label{Eq:Inconsistent}
\inf_{\tilde{f}_n}\,\sup_{f_0\in\mathscr{H}(\beta,L')}\E_{f_0}\{\dhell^2(\tilde{f}_n,f_0)\}\geq\inf_{\tilde{f}_n}\,\sup_{f_0\in\mathscr{H}(\beta,L')}\E_{f_0}\{\norm{\tilde{f}_n-f_0}_1\}^2/4\geq c>0,
\end{equation}
where the infimum is taken over all estimators $\tilde{f}_n$ based on $n$ observations. In other words, it is not even possible to achieve consistency over $\mathscr{H}(\beta,L')$ with respect to $L^1$ (and hence $\dhell^2$) loss. In view of this, it will be more meaningful to compare the result of Example~\ref{Ex:Holder2} with (a lower bound on) the minimax $\dhell^2$ risk over a carefully chosen subclass of $\mathscr{H}(\beta,L')$. 

Suppose henceforth that $d=3$, and for fixed $\beta\in (1,2]$ and $L>0$, let $L'\equiv L'(3,\beta,L)>0$ and $\mathscr{H}(\beta,L')\equiv\mathscr{H}_3(\beta,L')$ be as above. In the notation of~\citet[Sections~3.3 and 4]{GL14}, the parameters associated with this class are $\beta_1=\beta_2=\beta_3=\beta$ and $s=\infty$, and to avoid confusion, we write $\tilde{\beta}$ for the quantity $\bigl(\sum_{j=1}^d\,\beta_j^{-1}\bigr)^{-1}=\beta/d=\beta/3$ that these authors denote by $\beta$. By Example~\ref{Ex:Holder2} and the affine invariance of $\dhell^2$, we have
\begin{equation}
\label{Eq:Holder3Iso}
\sup_{f_0\in\mathcal{H}^{\beta,L}\cap\mathcal{F}^{0,I}}\E_{f_0}\{\dhell^2(\hat{f}_n,f_0)\}=\sup_{f_0\in\mathcal{H}^{\beta,L}}\E_{f_0}\{\dhell^2(\hat{f}_n,f_0)\}\lesssim_{\beta,L}n^{-(\beta+3)/(\beta+7)}\log^{\lambda_\beta}n.
\end{equation}
We now show that $\mathcal{H}^{\beta,L}\cap\mathcal{F}^{0,I}$ is contained within a subclass of $\mathscr{H}(\beta,L')$ over which the minimax $L^1$ risk is $\tilde{O}(n^{-2\beta/(2\beta+3)})$, rather than $O(1)$ as in~\eqref{Eq:Inconsistent}. The key fact we exploit is that $\mathcal{F}^{0,I}$ has an envelope function that decays exponentially in the following sense. For $a>0$ and $b\in\R$, denote by $\mathscr{G}(a,b)$ the set of $g\colon\R^3\to\R$ such that $g(x)\leq e^{-a\norm{x}+b}$ for all $x\in\R^3$, and recall that there exist universal constants $A>0$ and $B\in\R$ such that $\mathscr{G}(A,B)\supseteq\mathcal{F}^{0,I}\equiv\mathcal{F}_3^{0,I}$~\citep[e.g.][Theorem~2(a)]{KS16}. Now for $g\in\mathscr{G}(a,b)$, define $g^*\colon\R^3\to [0,\infty)$ as in (4.1) on page~493 of~\citet{GL14}, so that $g^*(x):=\sup_{H_x}\mu_3(H_x)^{-1}\int_{H_x}g$ for each $x\in\R^3$, where the supremum is taken over all hyperrectangles $H_x$ of the form $\prod_{j=1}^3\,[x_j-h_j/2,\,x_j+h_j/2]$ with $h_1,h_2,h_3\in (0,2]$. Then $g^*(x)\leq\sup_{h\in [-1,1]^3}g(x+h)\leq\sup_{h\in [-1,1]^3}e^{-a\norm{x+h}+b}\leq e^{-a\norm{x}+(a\sqrt{3}+b)}$ for all $x\in\R^3$. Setting $\theta\equiv\theta(\beta):=1/(2+1/\tilde{\beta})=\beta/(2\beta+3)\in (0,1)$, we deduce that there exists $R=R(a,b,\beta)>0$ such that every $g\in\mathscr{G}(a,b)$ satisfies the `tail dominance' condition $\bigl(\int_{\R^3}\,(g^*)^\theta\bigr)^{1/\theta}\leq R$, which is the defining property (4.2) of the class $\mathscr{G}_\theta(R)$ on page 493 of~\citet{GL14}. Therefore, $\mathcal{F}^{0,I}\subseteq\mathscr{G}(A,B)\subseteq\mathscr{G}_\theta(R')$, where $R':=R(A,B,\beta)$.

Recall that $s=\infty$ and that our $\tilde{\beta}=\beta/3$ corresponds to the parameter $\beta$ in their notation, so that $\nu^*(\theta)=1/(2+1/\tilde{\beta})=\beta/(2\beta+3)$ in the last display on page 493 of~\citet{GL14}. Consequently, by taking $p=1$ in~\citet[Theorem~4(i)]{GL14}, we deduce that there exists $R''\equiv R''(\beta,L)>0$ such that
\begin{equation}
\label{Eq:StandardHolder}
\inf_{\tilde{f}_n}\,\sup_{f_0\in\mathscr{H}(\beta,L')\cap\mathscr{G}_\theta(\tilde{R})}\E_{f_0}\{\dhell^2(\tilde{f}_n,f_0)\}\geq\inf_{\tilde{f}_n}\,\sup_{f_0\in\mathscr{H}(\beta,L')\cap\mathscr{G}_\theta(\tilde{R})}\E_{f_0}\{\norm{\tilde{f}_n-f_0}_1\}^2/4\gtrsim_{\beta,L}n^{-2\beta/(2\beta+3)}
\end{equation}
for all $\tilde{R}\geq R''$, where the first inequality follows as in~\eqref{Eq:Inconsistent} and the second inequality is tight up to logarithmic factors by Remark~3(4) on page 495 of~\citet{GL14}. Since $\mathcal{H}^{\beta,L}\cap\mathcal{F}^{0,I}\subseteq\mathscr{H}(\beta,L')\cap\mathscr{G}_\theta(R'\vee R'')$, the minimax lower bound in~\eqref{Eq:StandardHolder} suggests that under the assumption of log-concavity, it is possible to achieve faster rates of convergence at least when $\beta\in (1,9/5)$. However, this does not rule out the possibility that these accelerated rates could be obtained under a weaker exponential tail condition in place of the stronger constraint of log-concavity. To show that this is not the case, we observe that the proof of~\eqref{Eq:StandardHolder} considers only a subset of densities in $\mathscr{H}(\beta,L')\cap\mathscr{G}_\theta(R')$ whose supports are contained within $[-N,N]^3$ for some $N\equiv N(\beta,L)>0$. There exist $a'\equiv a'(\beta,L)\in (0,A]$ and $b'\equiv b'(\beta,L)\geq B$ such that all such densities are contained within $\mathscr{G}(a',b')$, so we actually have 
\begin{equation}
\label{Eq:StandardHolderExp}
\inf_{\tilde{f}_n}\,\sup_{f_0\in\mathscr{H}(\beta,L')\cap\mathscr{G}(a',b')}\E_{f_0}\{\dhell^2(\tilde{f}_n,f_0)\}\geq\inf_{\tilde{f}_n}\,\sup_{f_0\in\mathscr{H}(\beta,L')\cap\mathscr{G}(a',b')}\E_{f_0}\{\norm{\tilde{f}_n-f_0}_1\}^2/4\gtrsim_{\beta,L}n^{-2\beta/(2\beta+3)}.
\end{equation}
Since $\mathcal{H}^{\beta,L}\cap\mathcal{F}^{0,I}\subseteq\mathscr{H}(\beta,L')\cap\mathscr{G}(A,B)\subseteq\mathscr{H}(\beta,L')\cap\mathscr{G}(a',b')$, we may justifiably conclude on the basis of~\eqref{Eq:Holder3Iso} and~\eqref{Eq:StandardHolderExp} that the improvement in the rates attainable is indeed due to the log-concavity shape constraint rather than the exponential tail decay exhibited by log-concave densities.

\end{document}